\newtheorem{mthm}{Main Theorem}
\newtheorem{thm}{Theorem}[chapter]
\newtheorem{lem}[thm]{Lemma}
\newtheorem{prop}[thm]{Proposition}
\newtheorem{cor}[thm]{Corollary}
\newtheorem{defn}[thm]{Definition}
\numberwithin{equation}{chapter}
\numberwithin{section}{chapter}
\providecommand{\abs}[1]{\lvert#1\rvert}
\providecommand{\norm}[1]{\lVert#1\rVert}
\DeclareMathOperator{\tr}{tr}
\DeclareMathOperator{\diam}{diam}
\newcommand{\ho}{\accentset{\circ}{h}}
\newcommand{\lo}{\accentset{\circ}{\lambda}}
\newcommand{\p}{\partial}
\newcommand{\nablap}{\accentset{\bot}{\nabla}}
\newcommand{\Rp}{\accentset{\perp}{R}}
\newcommand{\Wdot}{\accentset{\bullet}{W}}
\newcommand{\Wcirc}{\accentset{\circ}{W}}
\newcommand{\Cdot}{\accentset{\bullet}{C}}
\newcommand{\Ccirc}{\accentset{\circ}{C}}
\newcommand{\Fdot}{\accentset{\bullet}{F}}
\DeclareMathOperator{\osc}{osc}
\newcommand{\gp}{\accentset{\perp}{g}}
\DeclareMathOperator{\inj}{inj}
\DeclareMathOperator{\vol}{vol}
\newcommand{\pip}{\accentset{\bot}{\pi}}
\begin{document}
\frontmatter

\cleardoublepage
\thispagestyle{empty}
\begin{center}
\vspace*{\fill} \Huge
                        The mean curvature flow of submanifolds of high codimension
\\
\vfill\vfill\Large
                          Charles Baker
\\
\vfill\vfill
                          November 2010
\\
\vfill\vfill \normalsize
         A thesis submitted for the degree of Doctor of Philosophy\\
         of the Australian National University
\vfill

\end{center}

\cleardoublepage
\thispagestyle{empty}
\vspace*{\fill}

\begin{center}
	\emph{For Gran and Dar}

\end{center}

\vfill\vfill\vfill

\chapter*{Declaration}\label{declaration}
\thispagestyle{empty}
The work in this thesis is my own except where otherwise stated.

\vspace{1in}

\hfill\hfill\hfill
Charles Baker
\hspace*{\fill}

\chapter*{Abstract}\label{abstract}


A geometric evolution equation is a partial differential equation that evolves some kind of geometric object in time.  The protoype of all parabolic evolution equations is the familiar heat equation.  For this reason parabolic geometric evolution equations are also called geometric heat flows or just geometric flows.  The heat equation models the physical phenomenon 
whereby heat diffuses from regions of high temperature to regions of cooler temperature. A defining characteristic of 
this physical process, as one readily observes from our surrounds, is that it occurs smoothly: A hot cup 
of coffee left to stand will over a period of minutes smoothly equilibrate to the ambient temperature.  In the case of a geometric flow, it is some kind of geometric object that diffuses smoothly down a driving gradient.  The most natural extrinsically defined geometric heat flow is the mean curvature flow. This flow evolves regions of curves and surfaces with high curvature to regions of smaller curvature. For example, an ellipse with highly curved, pointed ends evolves to a circle, thus minimising the distribution of curvature. It is precisely this smoothing, energy-minimising characteristic that makes geometric flows powerful mathematical tools. From a pure mathematical perspective, this is a useful property because unknown and complicated objects can be smoothly deformed into well-known and easily understood objects. From an applications point of view, it is an observed natural law that 
physical systems will move towards a state that minimises some notion of energy.  As an example, crystal grains will try to arrange themselves so as to minimise the curvature of the interface between them.

The study of the mean curvature flow from the perspective of partial differential equations began with Gerhard Huisken's pioneering work in 1984.  Since that time, the mean curvature flow of hypersurfaces has been a lively area of study.  Although Huisken's seminal paper is now just over twenty-five years old, the study of the mean curvature flow of submanifolds of higher codimension has only recently started to receive attention.  The mean curvature flow of submanifolds is the main object of investigation in this thesis, and indeed, the central results we obtain can be considered as high codimension analogues of some early hypersurface theorems.  The result of Huisken's 1984 paper roughly says that convex hypersurfaces evolve under the mean curvature flow to round points in finite time.  Here we obtain the result that if the ratio of the length of the second fundamental form to the length of the mean curvature vector is bounded (by some explicit constant depending on dimension but not codimension), then the submanifold will evolve under the mean curvature flow to a round point in finite time.  We investigate evolutions in flat and curved backgrounds, and explore the singular behaviour of the flows as the first singular time is approached.

\tableofcontents
\mainmatter

\chapter{Introduction}\label{ch: introduction}

The mean curvature flow is a well-known geometric evolution equation.  The study of the mean curvature flow from the perspective of partial differential equations commenced with Huisken's seminal paper \cite{gH84} on the flow of convex hypersurfaces.  Since the appearance of that paper the mean curvature flow of hypersurfaces has been a lively area of study, and indeed continues to be so. Although this seminal paper is now just over twenty-five years old, the study of the mean curvature flow of submanifolds of higher codimension has only very recently started to receive attention.  This thesis is concerned with the mean curvature flow of submanifolds of arbitrary codimension, and the main results we obtain can be considered high codimension analogues of some early hypersurface results due to Huisken.  To give these high codimension results some context, we first briefly survey the relevant hypersurface theory.

Let $F : \Sigma^n \rightarrow N^{n+k}$ be a smooth immersion of a closed manifold $\Sigma$, and $H(p,t)$ be the mean curvature vector of $\Sigma_t(p) := F(\Sigma(p), t)$.  The mean curvature flow of an initial immersion $F_0$ is given by a time-dependent family of immersions $F : \Sigma \times [0,T) \rightarrow N^{n+k}$ that satisfy
\begin{equation}\label{e: defn MCF}
	\begin{cases}
		\frac{\p}{\p t} F(p,t) = H(p,t), \quad p \in \Sigma, \, t \geq 0 \\
		F( \cdot, 0) = F_0.
	\end{cases}
\end{equation}
The mean curvature flow equation determines a weakly parabolic quasilinear system of second order.  We refer to the initial-boundary value problem \eqref{e: defn MCF} as `MCF.'  We advise the reader that we shall sometimes refer to MCF as an equation, and at other times, as a system.  We also point out that by hypersurface or submanifold, we mean an object that has dimension greater than or equal to two.  For the entirety of this thesis the reader is to assume that $n \geq 2$. Flows of space curves have been studied before, however the techniques are not the same (the Codazzi equation is vacuous for a curve). The main theorem of \cite{gH84} asserts that the mean curvature flow evolves a convex hypersurface of Euclidean space to a round point in finite time.  Huisken's approach to this problem was inspired by Richard Hamilton's seminal work on the Ricci flow \cite{rH82}, which had appeared two years earlier.   Because the normal bundle of a hypersurface is one-dimensional, both the second fundamental form and the mean curvature can be very profitably viewed as essentially scalar-valued objects.  The second fundamental form can be treated as a scalar-valued symmetric $(1,1)$-tensor, similar to the Ricci tensor, and many of the techniques developed by Hamilton in his study of the Ricci flow can be used.  The first crucial step in \cite{gH84} is to show that convexity of the surface in preserved by the mean curvature flow, and this is achieved by Hamilton's tensor maximum principle.  After tackling the problem of hypersurfaces of Euclidean space, Huisken next went on to investigate the flow of hypersurfaces in a general Riemannian manifold, and slightly later, of hypersurfaces of the sphere.  The Riemannian case showed that negative curvature of the background hindered the flow, whilst positive curvature helped.  Although in this thesis we do not investigate the case of arbitrary Riemannian backgrounds, we mention that in \cite{gH86} convergence results similar \cite{gH84} are still true provided the initial hypersurface is sufficiently positively curved to overcome the negative curvature of the ambient space.  On the other hand, Huisken's results in \cite{gH87} are particularly relevant to some of the work in this thesis.  Since the sphere has positive curvature this helps the flow, and in this case Huisken was able to attain convergence results when the initial hypersurface satisfies a non-convex pinching condition.  The pinching condition we work with for submanifolds is very similar that of \cite{gH87}.

A feature of \cite{gH84} was that at the finite maximal time of existence, the entire hypersurface disappeared into a point at the same time.  The `roundness' of the point is made precise by magnifying the hypersurface as the singular time is approached.  This distinguishing feature is a manifestation of the convexity of the initial hypersurface.  If this condition is relaxed and the initial hypersurface is only assumed to have positive mean curvature, then in general more highly curved regions will shrink faster than less curved regions, and a singularity will develop at some point before the entire hypersurface disappears.  This naturally leads one to ask what are the possible limiting shapes of an evolving hypersurface as the (first) singular time is approached.  It is customary to break up the kinds of singularities that can form into two categories depending on the rate at which the singularity forms.  For the present discussion is suffices just to say these are called type 1 and type II singularities.  It turns out that type I singularities are much easier to analyse than type II singularities, and in the type I case Huisken was able to obtain a complete classification.  This was carried out in two papers, \cite{gH90a}, where compact blow-up limits were classified, and in \cite{gH90b}, which treated the more general complete case.  A key element of this singularity analysis was the monotonicity formula introduced in \cite{gH90a}.

Having briefly sketched the first developments in the study of the flow of hypersurfaces, we now turn to the study of the mean curvature flow of submanifolds, what is known and the results contained in this thesis.  Much of the previous work on high codimension mean curvature flow has used assumptions on the Gauss image, focussing on graphical \cites{CLT02,LiLi03,mtW02,mtW04}, symplectic or Lagrangian submanifolds \cites{SW02,CL04,mtW01,kS04,aN07}.  Another line of approach has been to make use of the fact that convex subsets of the Grassmannian are preserved \cites{TW04,mtW03,mtW05}.  In this thesis we work with conditions on the extrinsic curvature (second fundamental form), which have the advantage of being invariant under rigid motions.  Several difficulties arise in carrying out this program:  First, in high codimension the second fundamental form has a much more complicated structure than in the hypersurface case.  In particular, under MCF the second fundamental form evolves according to a reaction-diffusion system in which the reaction terms are rather complicated, whereas in the hypersurface case they are quite easily understood.  Thus it can be extremely difficult to determine whether the reaction terms are favourable for preserving a given curvature condition.  Second, there do not seem to be any useful invariant conditions on the extrinsic curvature which define convex subsets of the space of second fundamental forms.  This lack of convexity is forced by the necessity for invariance under rotation of the normal bundle.  This means that the vector bundle maximum principle formulated by Hamilton in \cite{rH86}, which states that the reaction-diffusion system will preserve an invariant convex set if the reaction terms are favourable, cannot be applied.  The latter maximum principle has been extremely effective in the Ricci flow in high dimensions \cites{BW08,BS09,sB09} where the algebraic complexity of the curvature tensor has presented similar difficulties.  For arbitrary reaction-diffusion systems, the convexity condition is necessary for a maximum principle to apply.  However, in our setting the Codazzi equation adds a constraint on the first derivatives of solutions that allows some non-convex sets to be preserved.  As we have already mentioned, a similar situation arose in \cite{gH87}, where a non-convex condition was preserved.

The content of this thesis is as follows.  In the first chapter we summarise some standard facts on the geometry of submanifolds in high codimension from a `modern' perspective.  A key aspect of this is the machinery of connections on vector bundles, which we employ extensively in deriving the evolution equations for geometric quantities.  In particular we introduce connections on tangent and normal bundles defined over both space and time, which prove very useful in deriving evolution equations and allowing simple commutation of time and space derivatives.  This connection also provides a natural interpretation of the `Uhlenbeck trick' introduced in \cite{rH86} to take into account the change in length of spatial tangent vectors under the flow.

The second chapter fills in some details in the proof of short time existence for fully nonlinear parabolic systems of even order and applies this to the mean curvature flow.  This a `standard' result that is frequently quoted in the literature, yet a complete proof, especially in the setting of equations defined on a manifold, continues to remain elusive.  In this regard, we draw attention to Lamm's Diploma Thesis, where he comprehensively proves local existence for fully nonlinear parabolic systems of even order in Euclidean space.  We reconstruct some details of the following theorem:

\begin{mthm}\label{mthm: main thm 1}
Let $E \times (0, \omega)$ be a vector bundle over $M \times (0, \omega)$, where $M$ is a smooth closed manifold, and let $U$ be a section $\Gamma(E \times (0, \omega)$.  Consider the following initial value problem:
	\begin{equation}
		\begin{cases}\label{eqn: nonlinear exist problem}
			P(U) := \p_t U - F(x,t,U, \nabla U, \ldots, \nabla^{2m}U) = 0 \text{ in } E \times (0, \omega) \\
			U(M, 0) = U_0,
		\end{cases}
	\end{equation}
with $U_0 \in C^{2m,1,\alpha}(E_{\omega})$.  The linearised operator of $P$ at $U_0$ in the direction $V$ is then given by
\begin{equation*}
\p P[U_0]V = \p_t V + (-1)^m\sum_{\abs{I} \leq 2m } A^I(x,t,U_0, \nabla U_0, \ldots, \nabla^{2m}U_0) \nabla_I V.
\end{equation*}
Suppose that the following conditions are satsified:
\begin{enumerate}
	\item  The leading coefficient $A_{b}^{a i_1 j_1 \cdots i_m j_m }$ satisfies the symmetry condition $A_{b}^{a i_1 j_1 \cdots i_m j_m } =  A_{ a }^{ b j_1 i_1 \cdots j_m i_m }$
	\item The leading coefficient satisfies the Legendre-Hadamard condition with constant $\lambda$
	\item There exists a uniform constant $\Lambda < \infty$ such that $\sum_{ \abs{I} \leq 2m } \abs{A^I}_{ \alpha; \, E_{\omega} } \leq \Lambda$
	\item $\Fdot$ is a continuous function of all its arguments
\end{enumerate}
Then there exists a unique solution $U \in C^{2m,1, \beta}(E_{\omega})$, where $\beta < \alpha$, for some short time $t_{\epsilon} > 0$ to the above initial value problem.  Furthermore, if $U_0$ and all the coefficients of the linearised operator are smooth, this solution is smooth.
\end{mthm}

Chapter 3 contains what is the main result of this thesis, which is a high codimension analogue of Huisken's original theorem on the flow of convex hypersurfaces:

\begin{mthm}\label{mthm: main thm 2}  Suppose $\Sigma_0=F_0(\Sigma^n)$ is a closed submanifold smoothly immersed in $\mathbb{R}^{n+k}$.  If $\Sigma_0$ satisfies $\abs{ H }_{ \text{min} } > 0$ and $\abs{h}^2 \leq c\abs{H}^2$, where
	\begin{equation*}
		c \leq \begin{cases}\frac{4}{3n},& \quad \text{ if }2\leq n \leq 4 \\
						\frac{1}{n-1},& \quad \text{ if }n \geq 4,
						\end{cases}
	\end{equation*}
then MCF has a unique smooth solution $F:\ \Sigma\times[0,T)\to \mathbb{R}^{n+k}$ on a finite maximal time interval, and the submanifolds $\Sigma_t$ converge uniformly to a point $q\in \mathbb{R}^{n+k}$ as $t\to T$.  A suitably normalised flow exists for all time, and the normalised submanifolds $\tilde{\Sigma}_{\tilde{t}}$ converge smoothly as $\tilde{t} \rightarrow \infty$ to a $n$-sphere in some $(n+k)$-subspace of $\mathbb{R}^{n+k}$.
\end{mthm}

As the following simple example shows, the pinching ratio in Main Theorem \ref{mthm: main thm 2} is optimal in dimensions greater than or equal to four. Consider the submanifolds $\mathbb{S}^{n-1}(\epsilon) \times \mathbb{S}^1(1)\subset \mathbb{R}^{n}\times\mathbb{R}^2$, where $\epsilon$ is a small positive number.  The second fundamental form is given by
	\begin{equation*}
		h\big|_{(\varepsilon x,y)} = \begin{pmatrix}
				\frac{1}{\epsilon} & & &\\
				& \ddots & & \\
				& & \frac{1}{\epsilon} & \\
				& & & 0
			\end{pmatrix}(x,0) + \begin{pmatrix}
						0 & & &\\
						& \ddots & & \\
						& & 0 & \\
						& & & 1
					\end{pmatrix}(0,y)
		\end{equation*}
and so they satisfy $\abs{h}^2 = \frac{1}{n-1}\left(1+\frac{\epsilon^2(n-2)}{(n-1)^2+\epsilon^2}\right)\abs{H}^2$.  These submanifolds collapse to $\mathbb{S}^1$ under the mean curvature flow and do not contract to points.  In dimensions two and three the size of the gradient and reaction terms of equation \eqref{e: h2-cH2} prevents the optimal result from being achieved.  This is similar to the situation in \cite{gH87}, where in dimension two the difficulty in controlling the gradient terms prevents the optimal result from being obtained.  We remark that contrary to the situation in \cite{gH87}, one cannot expect to obtain such a result with $c=1/(n-1)=1$ in the case $n=2$ in arbitrary codimension as the Veronese surface provides a counter-example:  This is a surface in $\mathbb{R}^5$ that satisfies $\abs{h}^2=\frac56\abs{H}^2$, but which contracts without changing shape under the mean curvature flow.  We are not aware of any such counter-examples in dimension three (there are none among minimal submanifolds of spheres \cite{CO73}).

Curvature pinching conditions similar to those in our theorem have appeared previously in a number of results for special classes of submanifolds:   In \cite{mO73} Okumura shows that if a submanifold of Euclidean space with parallel mean curvature vector and flat normal bundle satisfies $\abs{h}^2 < 1/(n-1) \abs{H}^2$, then the submanifold is a sphere. The equivalent result for hypersurfaces of the sphere with $\abs{h}^2<\frac{1}{n-1}\abs{H}^2+2$ (where the flat normal bundle condition is vacuous) was proved by Okumura in \cite{mO74}. Chen and Okumura \cite{CO73} later removed the assumption of flat normal bundle and so proved that if a submanifold of Euclidean space with parallel mean curvature vector satisfies  $\abs{h}^2 < 1/(n-1)\abs{H}^2$, then the submanifold is a sphere (or, in the case $n=2$, a minimal surface with positive intrinsic curvature in a sphere, such as the Veronese surface).  The broad structure of the proof of Main Theorem \ref{mthm: main thm 2} closely follows \cite{gH84}, which in turn, draws upon Hamilton's seminal paper on Ricci flow \cite{rH82}.

After presenting the case of a Euclidean background we progress to discuss the situation where the ambient space is a sphere of contant curvature $\bar{K}$.  We obtain the following theorem, which can likewise be considered a high codimension analogue of \cite{gH87}:

\begin{mthm}\label{mthm: main thm 3}
Suppose $\Sigma_0 = F_0(\Sigma)$ is a closed submanifold smoothly immersed in $\mathbb{S}^{n+k}$.  If $\Sigma_0$ satisfies
\begin{equation*}
	\begin{cases}
		\abs{h}^2 \leq \frac{4}{3n}\abs{H}^2 + \frac{2(n-1)}{3} \bar K, \quad n = 2,3 \\
		\abs{h}^2 \leq \frac{1}{n-1}\abs{H}^2 + 2\bar K, \quad n \geq 4,
	\end{cases}
\end{equation*}
then either
\begin{enumerate}
	\item MCF has a unique, smooth solution on a finite, maximal time interval $0 \leq t < T < \infty$ and the submanifolds $\Sigma_t$ contract to a point as $t \rightarrow T$; or
	\item MCF has a unique, smooth solution for all time $0 \leq t < \infty$ and the submanifolds $\Sigma_t$ converge to a totally geodesic submanifold $\Sigma_{\infty}$.
\end{enumerate}
\end{mthm}
The assumptions of Main Theorem 2  required that $\abs{ H }_{ \text{min} } > 0$.  In Main Theorem \ref{mthm: main thm 3} no assumption on the size of the mean curvature is made, so the initial submanifold could, for example, be minimal.  In this case the positive curvature of the background sphere still allows us to obtain convergence results.  For similar reasons to the Euclidean case, the second main theorem is also optimal in dimensions greater than and equal to four.

In the final chapter we follow Huisken's work in \cite{gH90a} and \cite{gH90b} to give a partial classification of type I singularities of the mean curvature flow in high codimension.  We pursue a slightly different blow-up argument than that used in \cite{gH90a} and \cite{gH90b}; in particular, we consider a sequence of parabolically rescaled flows rather than a continuous rescaling.  We also provide an alternate proof of the Breuning-Langer compactness theorem for immersed submanifolds of arbitrary codimension using the well-known Cheeger-Gromov compactness theorem.

\begin{mthm}\label{mthm: main thm 4}
Suppose $F_{\infty} : \Sigma_{\infty}^n \times (-\infty, 0) \rightarrow \mathbb{R}^{n+k}$ arises as the blow-up limit of the mean curvature flow $F: \Sigma^n \times [0, T) \rightarrow \mathbb{R}^{n+k}$ about a special singular point.  If $\Sigma_0$ satisfies $\abs{ H }_{ \text{min} } > 0$ and $\abs{ h }^2 \leq 4/(3n) \abs{ H }^2$, then at time $s=-1/2$, $F_{\infty}(\Sigma_{\infty})$ must be a sphere $\mathbb{S}^m(m)$ or one of the cylinders $\mathbb{S}^{m}(m) \times \mathbb{R}^{n-m}$, where $1 \leq m \leq n-1$.
\end{mthm}
We close out the last chapter by showing how a simple blow-up argument can be used instead of the convergence arguments of Section \ref{sec: The normalised flow and and convergence to the sphere} of Chapter \ref{ch: The flow of submanifolds of Euclidean space} to determine the limiting spherical shape.

The results of Chapters 2 and 4 appear in the paper `Mean curvature flow of pinched submanifolds to spheres', which is coauthored with the author's PhD supervisor, Ben Andrews.  This paper has been accepted to appear in the Journal of Differential Geometry.

\chapter{Submanifold geometry in high codimension}\label{ch: submanifold geometry in high codimension}

In order to work with the normal bundle we first discuss vector bundles, including pullback bundles and sub-bundles.  The machinery we develop is useful and new even in the codimension one case, as we work with the tangent and normal bundles as vector bundles over the space-time domain, and introduce natural
metrics and connections on these.  In particular, the connection we introduce on the `spatial' tangent bundle (as a bundle over spacetime) contains more information than the Levi-Civita connections of the metrics at each time, and this proves particularly useful in computing evolution equations for geometric quantities.

\section{Connections on vector bundles}
\subsection{Vector bundles}

We denote the space of smooth sections of a vector bundle $E$ by $\Gamma(E)$.   
If $E$ is a vector bundle over $N$, the dual bundle $E^*$ is the bundle whose fibres are the dual spaces of the fibres of $E$.  If $E_1$ and $E_2$ are vector bundles over $N$, the tensor product $E_1\otimes E_2$ is the vector bundle whose fibres are the tensor products $(E_1)_p\otimes (E_2)_p$.

\subsubsection{Metrics}

A metric $g$ on a vector bundle $E$ is a section of $E^*\otimes E^*$ which is an inner product on $E_p$ for each $p$ in $N$. A metric on $E$ defines a bundle isomorphism $\#_g$ from $E$ to $E^*$, defined by
$$
(\#_g(\xi))(\eta) = g(\xi,\eta)
$$
for all $\xi,\eta\in E_p$.  If $g$ is a metric on $E$, then there is a unique metric on $E^*$ (also denoted $g$) such that the identification $\#_g$ is a bundle isometry:  For all $\xi,\eta\in E_p$,
$$
g(\#_g(\xi),\#_g(v)) = g(\xi,\eta).
$$
If $g_i$ is a metric on $E_i$, $i=1,2$, then $g=g_1\otimes g_2\in\Gamma((E_1^*\otimes E_1^*)\otimes (E_2^*\otimes E_2^*))\simeq\Gamma((E_1\otimes E_2)^*\otimes(E_1\otimes E_2)^*)$ is the unique metric on $E_1\otimes E_2$ such that 
$g(\xi_1\otimes\eta_1,\xi_2\otimes\eta_2) = g_1(\xi_1,\xi_2)g_2(\eta_1,\eta_2)$.

\subsubsection{Connections}

A connection $\nabla$ on a vector bundle $E$ over $N$ is a map $\nabla:\ \Gamma(TN)\times\Gamma(E)\to\Gamma(E)$ which is $C^\infty(N)$-linear in the first argument and $\mathbb{R}$-linear in the second, and satisfies 
$$
\nabla_U(f\xi) = f\nabla_U\xi + (Uf)\xi
$$
for any $U\in\Gamma(TN)$, $\xi\in\Gamma(E)$, and $f\in C^\infty(N)$.  Here the notation $Uf$ means the derivative of $f$ in direction $U$.
Given a connection $\nabla$ on $E$, there is a unique connection on $E^*$ (also denoted $\nabla$) such that for all $\xi\in\Gamma(E)$, $\omega\in\Gamma(E^*)$, and $X\in \Gamma(TN)$,
\begin{equation}\label{eq:dual.connection}
X(\omega(\xi)) = (\nabla_X\omega)(\xi) + \omega(\nabla_X\xi).
\end{equation}
If $\nabla^i$ is a connection on $E_i$ for $i=1,2$, then there is a unique connection $\nabla$ on $E_1\otimes E_2$ such that 
\begin{equation}\label{eq:conn.tens.prod}
\nabla_X(\xi_1\otimes\xi_2) = (\nabla^1_X\xi_1)\otimes\xi_2 + \xi_1\otimes(\nabla^2_X\xi_2)
\end{equation}
for all $X\in\Gamma(TN)$, $\xi_i\in\Gamma(E_i)$.  In particular, for $S\in\Gamma(E_1^*\otimes E_2)$ (an $E_2$-valued tensor acting on $E_1$),  $\nabla S\in \Gamma(T_*N\otimes E_1^*\otimes E_2)$ is given by
\begin{equation}\label{eq:conn_on_tensor}
(\nabla_XS)(\xi) = \nabla^{E_2}_X(S(\xi)) - S ( \nabla^{E_1}_X\xi ).
\end{equation}
A connection $\nabla$ on $E$ is \emph{compatible} with a metric $g$ if for any $\xi,\eta\in\Gamma(E)$ and $X\in\Gamma(TN)$,
$$
Xg(\xi,\eta) = g(\nabla_X\xi,\eta) + g(\xi,\nabla_X\eta).
$$
If $\nabla$ is compatible with a metric $g$ on $E$, then the induced connection on $E^*$ is compatible with the induced metric on $E^*$.  Similarly, if $\nabla_i$ is a connection on $E_i$ compatible with a metric $g_i$ for $i=1,2$, then the metric $g_1\otimes g_2$ is compatible with the connection on $E_1\times E_2$ defined above.

Another important property of connections is that they are locally determined.
\begin{prop}\label{prop: connections locally defined}
Let $E$ be a vector bundle over $N$ and $p$ a point in $N$.  If $\xi_1$ and $\xi_2$ are two section of $E$ such that $\xi_1 = \xi_2$ on an open neighbourhood $U$ of $p$, then
$$
\nabla_X \xi_1(p) = \nabla_X \xi_2(p) 
$$
for all $X \in \Gamma(TN)$.
\end{prop}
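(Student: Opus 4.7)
The plan is to reduce to showing that if a section $\xi \in \Gamma(E)$ vanishes on an open neighbourhood $U$ of $p$, then $\nabla_X \xi(p) = 0$. Setting $\xi := \xi_1 - \xi_2$ and using the $\mathbb{R}$-linearity of $\nabla_X$ in its second argument reduces the general statement to this case. The obstacle is that a connection is \emph{not} tensorial in the section slot (unlike the $X$ slot, where $C^\infty(N)$-linearity immediately gives locality), so we cannot simply say $\xi \equiv 0$ near $p$ implies $\nabla_X \xi(p) = 0$ from multilinearity; we must use the Leibniz rule together with a bump function trick.

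Concretely, I would choose a smooth bump function $\phi \in C^\infty(N)$ with $\phi \equiv 1$ on some smaller open neighbourhood $V$ of $p$ with $\overline{V} \subset U$, and with $\operatorname{supp} \phi \subset U$. The key algebraic observation is that the section $(1-\phi)\xi$ agrees with $\xi$ everywhere on $N$: outside $U$ we have $\phi = 0$, and on $U$ we have $\xi = 0$, so both expressions vanish there. Hence $\xi = (1-\phi)\xi$ as global sections of $E$.

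Now apply $\nabla_X$ to both sides and use the Leibniz rule from the definition of a connection:
\begin{equation*}
\nabla_X \xi = \nabla_X\bigl((1-\phi)\xi\bigr) = (1-\phi)\,\nabla_X \xi - (X\phi)\,\xi.
\end{equation*}
Evaluating at $p$, the first term on the right is zero because $\phi(p) = 1$, and the second term is zero because $\xi(p) = 0$ (since $p \in U$). Therefore $\nabla_X \xi(p) = 0$.

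Applying this to $\xi = \xi_1 - \xi_2$ and invoking $\mathbb{R}$-linearity of $\nabla_X$ in the second argument yields $\nabla_X \xi_1(p) = \nabla_X \xi_2(p)$, as claimed. The argument works for every $X \in \Gamma(TN)$, since $\phi$ and the vanishing of $\xi$ at $p$ are independent of $X$. Standard partition-of-unity considerations on the smooth manifold $N$ guarantee that a bump function with the required properties exists, so the proof is complete.
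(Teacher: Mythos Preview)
Your proof is correct and follows essentially the same bump-function-plus-Leibniz-rule argument as the paper. The only cosmetic difference is that you first reduce to the case of a section vanishing on $U$ and multiply by $1-\phi$, whereas the paper works with both sections directly and multiplies by the bump function itself; the underlying idea is identical.
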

\begin{proof}
It is obvious from the definition of a connection that the covariant derivative only depends on $X$ at the point $p$.  To show that it depends locally on $\xi$, let $\rho$ be a smooth cut-off function with support in $U$.  Then $\rho \xi_1 = \rho \xi_2$ on $U$ and hence $\nabla_X \rho \xi_1(p) = \nabla_X \rho \xi_2$.  Futhermore, the Leibniz property of a connection gives
$$
	\nabla_X \rho \xi_1(p) = (X\rho)(p)\xi_1(p) + \rho(p) \nabla_X \xi_1(p) =  \nabla_X \xi_1(p).
$$
The a same holds for $\nabla_X \rho \xi_2(p)$ too, thus $\nabla_X \xi_1(p) = \nabla_X \xi_2(p)$ as stated.
\end{proof}

\subsubsection{Curvature}

Let $E$ be a vector bundle over $N$.
If $\nabla$ is a connection on $E$, then the curvature of $\nabla$ is the section $R_\nabla\in \Gamma(T^*N\otimes T^*N\otimes E^*\otimes E)$ defined by
$$
R_\nabla(X,Y)\xi = \nabla_Y ( \nabla_X\xi ) - \nabla_X ( \nabla_Y\xi ) -\nabla_{[Y,X]}\xi.
$$

The curvature of the connection on $E^*$ given by Equation \eqref{eq:dual.connection} is characterized by the formula
$$
0 = (R(X,Y)\omega)(\xi) + \omega(R(X,Y)\xi)
$$
for all $X,Y\in\Gamma(TN)$, $\omega\in \Gamma(E^*)$ and $\xi\in\Gamma(E)$.

The curvature on a tensor product bundle (with connection defined by equation \eqref{eq:conn.tens.prod}) can be computed in terms of the curvatures of the factors by the formula
$$
R_{\nabla}(X,Y)(\xi_1\otimes\xi_2) = (R_{\nabla^1}(X,Y)\xi_1)\otimes \xi_2
+ \xi_1\otimes(R_{\nabla^2}(X,Y)\xi_2).
$$

In particular, the curvature on $E_1^*\otimes E_2$ ($E_2$-valued tensors acting on $E_1$) is given by
\begin{equation}\label{eq:curv_on_tensor}
(R(X,Y)S)(\xi) = R_{\nabla^2}(X,Y)(S(\xi)) - S(R_{\nabla^1}(X,Y)\xi).
\end{equation}

\subsection{Pullback bundles}

Let $M$ and $N$ be smooth manifolds, and let $E$ be a vector bundle over $N$ and $f$ a smooth map from $M$ to $N$.   Then $f^*E$ is the pullback bundle of $E$ over $M$, which is a vector bundle with fibre $(f^*E)_x = E_{f(x)}$.  If $\xi\in\Gamma(E)$, then we denote by $\xi_f$ the section of $f^*E$ defined by $\xi_f(x) = \xi(f(x))$ for each $x\in M$ (called the \emph{restriction} of $\xi$ to $f$).

The pull-back operation on vector bundles commutes with taking duals and tensor products, so the tensor bundles constructed from a vector bundle $E$ pull back to give the tensor bundles of the pull-back bundle $f^*E$.  In particular, if $g$ is a metric on $E$, then $g$ is a section of $E^*\otimes E^*$, and the restriction $g_f\in\Gamma(f^*(E^*\otimes E^*))\simeq \Gamma((f^*E)^*\otimes (f^*E)^*)$ defines a metric on $f^*E$.

\begin{prop}
If $\nabla$ is a connection on $E$, then there is a unique connection ${}^f\nabla$ on $f^*E$, called the \emph{pullback connection} which satisfies ${}^f\nabla_u(X_f) = \nabla_{f_*u}X$ for any $u\in TM$ and $X\in\Gamma(E)$.
\end{prop}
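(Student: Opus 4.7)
The plan is to prove existence and uniqueness by the usual local-frame argument, using the fact (from Proposition \ref{prop: connections locally defined}) that any connection is determined locally.

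For uniqueness, I would argue that the required identity ${}^f\nabla_u(X_f) = \nabla_{f_*u}X$, together with the Leibniz rule and $C^\infty(M)$-linearity in $u$, determines the connection completely. Indeed, given any local frame $\{e_\alpha\}_{\alpha=1}^r$ of $E$ on an open set $V \subset N$, an arbitrary section $\sigma \in \Gamma(f^*E)$ restricted to $U := f^{-1}(V)$ can be written uniquely as $\sigma = \sigma^\alpha (e_\alpha)_f$ with $\sigma^\alpha \in C^\infty(U)$, because the pulled-back frame $\{(e_\alpha)_f\}$ is a local frame of $f^*E$ over $U$. Any connection satisfying the stated property must then, by Leibniz and $\mathbb{R}$-linearity in the section argument, satisfy
\begin{equation*}
{}^f\nabla_u \sigma = u(\sigma^\alpha)\,(e_\alpha)_f + \sigma^\alpha\, \nabla_{f_*u} e_\alpha
\end{equation*}
at each point of $U$, so two such connections must agree locally and hence globally.

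For existence, I would take the displayed formula above as a local definition on each $U$. The main steps are: (i) check that the definition is independent of the choice of local frame $\{e_\alpha\}$; (ii) check that it satisfies the defining properties of a connection ($C^\infty(M)$-linearity in $u$, $\mathbb{R}$-linearity and the Leibniz rule in $\sigma$); and (iii) check that it satisfies ${}^f\nabla_u(X_f) = \nabla_{f_*u}X$ for $X \in \Gamma(E)$. Step (i) follows by a direct calculation: if $\{\tilde e_\beta\} = A_\beta^\alpha e_\alpha$ is another frame on $V$, then using $\nabla_{f_*u}(A_\beta^\alpha e_\alpha) = (f_*u)(A_\beta^\alpha) e_\alpha + A_\beta^\alpha \nabla_{f_*u} e_\alpha$ together with the chain rule $(f_*u)(A_\beta^\alpha) = u((A_\beta^\alpha)_f)$, the two definitions coincide. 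Steps (ii) and (iii) are immediate from the formula. Finally, by uniqueness, the local definitions agree on overlaps and so patch together to give a globally defined connection ${}^f\nabla$ on $f^*E$.

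The only mildly subtle point is the frame-independence in step (i), which is really just the chain rule combined with the fact that the pullback of a sum is the sum of pullbacks; everything else is a routine verification. I do not expect any serious obstacle.
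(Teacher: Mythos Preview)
Your proposal is correct and follows essentially the same approach as the paper: derive the local formula ${}^f\nabla_u\sigma = u(\sigma^\alpha)(e_\alpha)_f + \sigma^\alpha\nabla_{f_*u}e_\alpha$ from the defining property and the Leibniz rule, then check frame-independence and invoke Proposition~\ref{prop: connections locally defined}. If anything, your outline is more complete than the paper's, which simply asserts frame-independence as ``a further computation'' without spelling out steps (i)--(iii).
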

\begin{proof}
Suppose that $\xi$ is an arbitrary section $\xi \in \Gamma( f^*E )$ and $p \in M$.  Let $Z_i$ be a local frame for $E$ about $f(p)$.  The sections $ \{ Z_{i, f} \}$ thus form a local frame for $f^*E$ about $p$ so we can write  $\xi = \xi^i ((Z_i)_f$.  The properties of a connection and the pullback then give
\begin{align*}
	{^f}\nabla _v \xi &= {^f} \nabla_v (\xi^i (Z_i)_f ) \\
	&= v(\xi^i) (Z_i)_f + \xi^i {^f} \nabla_v (Z_i)_f \\
	&= v(\xi^i) (Z_i)_f + \xi^i \nabla_{f_*v} Z_i.
\end{align*}
A further computation shows that this is independent of the local frame used, and because connections are locally defined by Propostion \ref{prop: connections locally defined}, the pullback connection is well-defined.
\end{proof}
At first one might think that for the pullback connection to be well-defined, it would be necessary to extend the section $f_*v$ to a neighbourhood of $f(p)$ in order to operate on it locally.  The above proposition shows that this is not necessary, although in order to define the pull-back connection, we had to define it terms of a local frame.  Often in submanifold geometry the induced connection is defined in terms of a projection in $N$ onto the image of the tangent space of $M$.  This definition is frame independent, however it is necessary to extend the vector fields in order for the definition to make sense.  One can then show afterwards that the definition is independent of the extension used.

\begin{prop}
If $g$ is a metric on $E$ and $\nabla$ is a connection on $E$ compatible with $g$, then ${}^f\nabla$ is compatible with the restriction metric $g_f$.
\end{prop}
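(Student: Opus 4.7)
The plan is to reduce compatibility of ${}^{f}\nabla$ with $g_f$ to compatibility of $\nabla$ with $g$ at the image point, via the local frame description of ${}^{f}\nabla$ established in the preceding proposition. Since all the objects involved (the connection, the metric, and compatibility itself) are locally defined by Proposition~\ref{prop: connections locally defined}, it suffices to verify the identity
\[
v\bigl(g_f(\xi,\eta)\bigr) = g_f\bigl({}^{f}\nabla_v\xi,\eta\bigr) + g_f\bigl(\xi,{}^{f}\nabla_v\eta\bigr)
\]
at an arbitrary point $p\in M$ and an arbitrary $v\in T_pM$, after choosing a convenient local frame.

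First I would fix a local frame $\{Z_i\}$ for $E$ near $f(p)$ and use the induced local frame $\{(Z_i)_f\}$ for $f^{*}E$ near $p$ to write $\xi=\xi^i(Z_i)_f$ and $\eta=\eta^j(Z_j)_f$. By the defining property of the restriction metric stated just before the pullback connection proposition,
\[
g_f\bigl((Z_i)_f,(Z_j)_f\bigr) = g(Z_i,Z_j)\circ f,
\]
so $g_f(\xi,\eta) = \xi^i\eta^j\,\bigl(g(Z_i,Z_j)\circ f\bigr)$. Differentiating along $v$ and using the chain rule $v(\varphi\circ f) = (f_{*}v)(\varphi)$ for $\varphi\in C^{\infty}(N)$, I obtain
\[
v\bigl(g_f(\xi,\eta)\bigr) = v(\xi^i)\,\eta^j\,g(Z_i,Z_j)_f + \xi^i\,v(\eta^j)\,g(Z_i,Z_j)_f + \xi^i\eta^j\,(f_{*}v)\bigl(g(Z_i,Z_j)\bigr).
\]

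Next I would apply compatibility of $\nabla$ with $g$ to the last term, giving
\[
(f_{*}v)\bigl(g(Z_i,Z_j)\bigr) = g\bigl(\nabla_{f_{*}v}Z_i,Z_j\bigr) + g\bigl(Z_i,\nabla_{f_{*}v}Z_j\bigr).
\]
Now the formula for the pullback connection from the previous proposition gives
\[
{}^{f}\nabla_v\xi = v(\xi^i)(Z_i)_f + \xi^i\,\bigl(\nabla_{f_{*}v}Z_i\bigr),
\]
and analogously for ${}^{f}\nabla_v\eta$. Substituting these expressions back and using bilinearity of $g_f$ together with the identification $g_f(\,\cdot\,,(Z_j)_f)=g(\,\cdot\,,Z_j)_f$ on sections that are restrictions, the two sides match term by term: the $v(\xi^i)$ and $v(\eta^j)$ contributions assemble into the derivative-of-coefficient parts of $g_f({}^{f}\nabla_v\xi,\eta)$ and $g_f(\xi,{}^{f}\nabla_v\eta)$, while the $\xi^i\eta^j(f_{*}v)(g(Z_i,Z_j))$ term splits via compatibility of $\nabla$ into the remaining two pieces.

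There is no real obstacle here; the only point requiring care is to recognize that the naturality of the restriction construction under tensor products (which was built into the definition of $g_f$ at the end of the pullback bundles subsection) is precisely what converts a statement about the connection on the tensor bundle $(f^{*}E)^{*}\otimes(f^{*}E)^{*}$ into a pointwise computation using $g$. One might alternatively give a frame-free proof by extending $f_{*}v$ locally on $N$, but the local-frame argument is cleaner and sidesteps any dependence on the extension.
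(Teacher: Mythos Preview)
Your proof is correct, but it takes a different route from the paper. The paper gives a one-line argument: compatibility of $\nabla$ with $g$ is equivalent to $\nabla g=0$ (viewing $g$ as a section of $E^*\otimes E^*$), and since $g_f$ is the restriction of $g$, the defining property of the pullback connection gives ${}^f\nabla_v(g_f)=\nabla_{f_*v}g=0$ immediately. Your approach instead verifies the Leibniz-rule form of compatibility directly via a local frame expansion. The paper's argument is slicker because it exploits the tensor-bundle machinery set up earlier (the induced connections on duals and tensor products, and the identification $f^*(E^*\otimes E^*)\simeq (f^*E)^*\otimes(f^*E)^*$), though it does leave implicit the check that the pullback connection on $f^*(E^*\otimes E^*)$ agrees with the connection induced from ${}^f\nabla$ on $(f^*E)^*\otimes(f^*E)^*$. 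Your computation avoids relying on that naturality by unpacking everything in a frame, which is more self-contained but correspondingly longer.
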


\begin{proof}
$\nabla$ is compatible with $g$ if and only if $\nabla g=0$.  We must therefore show that ${}^f\nabla g_f=0$ if $\nabla g=0$.  But this is immediate, since ${}^f\nabla_v(g_f) = \nabla_{f_*v}g =0$.
\end{proof}

\begin{prop}\label{prop:curvature}
The curvature of the pull-back connection is the pull-back of the curvature of the original connection.  Here $R_\nabla\in\Gamma(T^*N\otimes T^*N\otimes E^*\otimes E)$, so that
$$
f^*(R_\nabla)\in \Gamma(T^*M\otimes T^*M\otimes f^*(E^*\otimes E))
=\Gamma(T^*M\otimes T^*M\otimes (f^*E)^*\otimes f^*E).
$$
\end{prop}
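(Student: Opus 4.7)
The approach is tensorial reduction to a local frame followed by a Cartan-type structure equation. Both $R_{{}^f\nabla}$ and $f^*R_\nabla$ are $C^\infty(M)$-linear in each of their arguments — curvatures are always tensorial, and pullbacks of tensors remain tensors — so it suffices to verify the identity on sections that locally span $f^*E$, namely on the pulled-back frame sections $(Z_i)_f$ arising from a local frame $\{Z_i\}$ of $E$ on a neighbourhood $V$ of $f(p)$. By Proposition \ref{prop: connections locally defined} and its evident tensorial analogue, verifying equality pointwise on such generators is enough.

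With such a frame fixed, I would introduce the connection $1$-forms $\omega_i^j\in\Gamma(T^*V)$ via $\nabla_Y Z_i=\omega_i^j(Y)Z_j$. The defining property ${}^f\nabla_u(Z_i)_f=\nabla_{f_*u}Z_i$ then reads
$${}^f\nabla_u(Z_i)_f=(f^*\omega_i^j)(u)\,(Z_j)_f,$$
which says that in the frame $\{(Z_i)_f\}$ the pullback connection is represented by the pulled-back $1$-forms $f^*\omega_i^j\in\Gamma(T^*M)$. Iterating a second covariant derivative and reorganising the resulting five-term expression with the identity $d\eta(u,v)=u\,\eta(v)-v\,\eta(u)-\eta([u,v])$ for a $1$-form $\eta$ yields the Cartan-type formula
$$R_{{}^f\nabla}(u,v)(Z_i)_f=\bigl[-d(f^*\omega_i^k)+(f^*\omega_i^j)\wedge(f^*\omega_j^k)\bigr](u,v)\,(Z_k)_f,$$
and the same manipulation applied to $\nabla$ gives $R_\nabla(X,Y)Z_i=[-d\omega_i^k+\omega_i^j\wedge\omega_j^k](X,Y)\,Z_k$.

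To conclude I would invoke the fact that pullback of differential forms commutes with both $d$ and $\wedge$, rewriting the bracket for ${}^f\nabla$ as $f^*[-d\omega_i^k+\omega_i^j\wedge\omega_j^k]$. Evaluated at $(u,v)$ this equals the corresponding bracket for $\nabla$ at $(f_*u,f_*v)$, which is precisely the coefficient of $Z_k$ in $R_\nabla(f_*u,f_*v)Z_i$; restricted to $M$ this is $(f^*R_\nabla)(u,v)(Z_i)_f$. Tensoriality in the section argument then extends the equality to all of $\Gamma(f^*E)$. The main subtlety — rather than a genuine obstacle — is that $f_*u$ is only a vector along $f$ and need not extend to a vector field on $N$, so one cannot literally iterate $\nabla$ at $f(p)$; the local-frame formulation bypasses this because all quantities are pulled back to $M$ before being differentiated, and one never has to differentiate $f_*u$ itself, only the scalar function $(f^*\omega_i^j)(u)$ on $M$.
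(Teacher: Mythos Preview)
Your proof is correct and takes a genuinely different route from the paper's. The paper works directly in local coordinates $\{x^i\}$ on $M$ and $\{y^a\}$ on $N$: it expands $R_{{}^f\nabla}(\partial_i,\partial_j)(Z_p)_f$ as the commutator of two pulled-back covariant derivatives, repeatedly applies the defining identity ${}^f\nabla_u(Z_p)_f=\nabla_{f_*u}Z_p$, and tracks the partial derivatives $\partial_i f^a$ through the computation until the symmetric second derivatives $\partial_i\partial_j f^a$ cancel and what remains is manifestly $R_\nabla(f_*\partial_i,f_*\partial_j)Z_p$. Your argument instead packages the connection into connection $1$-forms $\omega_i^j$, identifies the pullback connection with $f^*\omega_i^j$, and then appeals to the Cartan structure equation together with the naturality of $f^*$ under $d$ and $\wedge$. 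What you gain is conceptual clarity: the entire proposition reduces to two standard facts about differential forms, and no indices from the base manifolds ever appear. What the paper's approach gains is self-containment --- it does not presuppose the structure equation, and it makes visible exactly where the symmetry of mixed partials enters. Your remark at the end about $f_*u$ not extending to a vector field on $N$ is well-taken and correctly handled; the paper sidesteps the same issue by computing only with coordinate vector fields $\partial_i$ on $M$, for which $[\partial_i,\partial_j]=0$.
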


\begin{proof}
Since curvature is tensorial, it is enough to check the formula for a basis.  Choose a local frame $\{Z_p\}_{p=1}^k$ for $E$.  Then $\{(Z_p)_f\}$ is a local frame for $f^*E$.  Choose local coordinates $\{ y^a \}$ for $N$ near $f(p)$ and $\{x^i\}$ for $M$ near $p$, and write $f^a=y^a\circ f$.  Then
\begin{align*}
R_{{}^f\nabla}(\partial_i,\partial_j)(Z_p)_f &= 
{}^f\nabla_{\partial_j}({}^f\nabla_{\partial_i}(Z_p)_f)-
(i\leftrightarrow j)\\
&={}^f\nabla_j(\nabla_{f_*\partial_i}(Z_p))-(i\leftrightarrow j)\\
&={}^f\nabla_j(\partial_if^a\nabla_a Z_p)-(i\leftrightarrow j)\\
&=(\partial_j\partial_if^a)\nabla_a Z_p + \partial_if^a{}^f\nabla_j((\nabla_a Z_p)_f) - (i\leftrightarrow j)\\
&=\partial_if^a\nabla_{f_*\partial_j}(\nabla_a Z_p)-(i\leftrightarrow j)\\
&=\partial_if^a\partial_jf^b(\nabla_b(\nabla_a Z_p)-(a\leftrightarrow b))\\
&=\partial_if^a\partial_jf^b R_\nabla(\partial_a,\partial_b)Z_p\\
&=R_\nabla(f_*\partial_i,f_*\partial_j)Z_p.
\end{align*}
\end{proof}

In the case of pulling back a tangent bundle, there is another important property:

\begin{prop}\label{prop:symmetry}
If $\nabla$ is a symmetric connection on $TN$, then the pull-back connection ${}^f\nabla$ on $f^*TN$ is symmetric, in the sense that for any $U,V\in \Gamma(TM)$, 
$$
{}^f\nabla_U(f_*V) - {}^f\nabla_V(f_*U) = f_*([U,V]).
$$
\end{prop}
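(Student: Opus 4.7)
The plan is to reduce the identity to a direct computation in local coordinates, much as in the proof of Proposition \ref{prop:curvature}. Choose coordinates $\{x^i\}$ on $M$ near $p$ and $\{y^a\}$ on $N$ near $f(p)$, and write $f^a := y^a \circ f$. Since both sides of the desired identity are $C^\infty(M)$-linear in neither $U$ nor $V$, and contain first derivatives, one cannot just check it on coordinate vector fields; one has to carry along the coefficients. So I would write $U = U^j\partial_j$, $V = V^i\partial_i$, and record that, as a section of $f^*TN$,
\begin{equation*}
f_*V = V^i\,\partial_i f^a\,(\partial_a)_f.
\end{equation*}

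Next I would apply the defining property ${}^f\nabla_u(X_f) = \nabla_{f_*u}X$ together with the Leibniz rule. Using $C^\infty(M)$-linearity in the first slot and Leibniz in the second:
\begin{equation*}
{}^f\nabla_U(f_*V) = U^j\partial_j\!\left(V^i\partial_i f^a\right)(\partial_a)_f + U^j V^i\,\partial_i f^a\,\partial_j f^b\,\nabla_{\partial_b}\partial_a,
\end{equation*}
and similarly for ${}^f\nabla_V(f_*U)$ with $U$ and $V$ interchanged. Subtracting gives two groups of terms: the Christoffel-type terms involving $\nabla_{\partial_b}\partial_a$, and the terms arising from $\partial_j$ hitting the coefficients.

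For the first group I would invoke the symmetry of $\nabla$ on $TN$, which since $[\partial_a,\partial_b]=0$ says $\nabla_{\partial_b}\partial_a = \nabla_{\partial_a}\partial_b$; this makes $\partial_i f^a\partial_j f^b\nabla_{\partial_b}\partial_a$ symmetric in $i,j$, so its contraction against the antisymmetric combination $U^jV^i - V^jU^i$ vanishes. For the second group, expanding $\partial_j(V^i\partial_if^a)$ by the product rule produces terms with $\partial_j\partial_if^a$; these are symmetric in $i,j$ and cancel with their $U\leftrightarrow V$ counterparts. What remains is
\begin{equation*}
\left(U^j\partial_j V^i - V^j\partial_j U^i\right)\partial_i f^a\,(\partial_a)_f = [U,V]^i\,f_*\partial_i = f_*[U,V],
\end{equation*}
which is precisely the claim.

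The only mildly delicate point is bookkeeping: $f_*V$ lives in $f^*TN$, whereas $\nabla_{\partial_b}\partial_a$ produced after applying the defining property of ${}^f\nabla$ is initially a section of $TN$ (and must then be read through the pullback). Making this identification carefully, together with the two symmetry/antisymmetry cancellations described above, is the whole content of the argument; no genuine obstacle appears, since both symmetry of $\nabla$ and equality of mixed partials do exactly the work required.
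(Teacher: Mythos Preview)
Your proposal is correct and follows essentially the same approach as the paper: both reduce to a local-coordinate computation, write $f_*V = V^i\partial_i f^a(\partial_a)_f$, apply Leibniz and the defining property of the pullback connection, and then observe that the mixed-partial terms $\partial_i\partial_j f^a$ and the connection terms $\nabla_{\partial_b}\partial_a$ each cancel by symmetry, leaving $f_*([U,V])$. Your bookkeeping remark about identifying $\nabla_{\partial_b}\partial_a$ with its restriction $(\nabla_{\partial_b}\partial_a)_f$ is exactly the care the paper's computation implicitly assumes.
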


\begin{proof}
Choose local coordinates ${x^i}$ for $M$ near $p$, and ${y^a}$ for $N$ near $f(p)$, and write $U=U^i\partial_i$ and $V=V^j\partial_j$.  Then
\begin{align*}
{}^f\nabla_U(f_*V)-(U\leftrightarrow V) &= {}^f\nabla_U(V^j\partial_jf^a\partial_a)-(U\leftrightarrow V)\\
&=U^i\partial_i(V^j\partial_jf^a)\partial_a + V^j(\partial_jf^a){}^f\nabla_U\partial_a-(U\leftrightarrow V)\\
&=(U^i\partial_iV^j-V^i\partial_iU^j)\partial_jf^a\partial_a + U^iV^j(\partial_i\partial_jf^a-\partial_j\partial_if^a)\partial_a\\
&\quad\null +V^jU^i\partial_jf^a\partial_if^b(\nabla_b\partial_a-\nabla_a\partial_b)\\
&=f_*([U,V]).
\end{align*}
\end{proof}

\subsection{Subbundles}

\newcommand{\nablaK}{\accentset{K}{\nabla}}
\newcommand{\nablaL}{\accentset{L}{\nabla}}

A \emph{subbundle} $K$ of a vector bundle $E$ over $M$ is a vector bundle $K$ over $M$ 
with an injective vector bundle homomorphism $\iota_K:\ K\to E$ covering the identity
map on $M$.
We consider complementary sub-bundles $K$ and $L$, so that $E_x=\iota_K(K_x)\oplus \iota_L(L_x)$, and denote by $\pi_K$ and $\pi_L$ the corresponding projections onto $K$ and $L$ (so $\pi_K\circ\iota_K=\text{Id}_K$, $\pi_L\circ\iota_L=\text{Id}_L$, $\pi_K\circ\iota_L=0$, $\pi_L\circ\iota_K=0$, and $\iota_K\circ\pi_K+\iota_L\circ\pi_L=\text{Id}_E$). If $\nabla$ is a connection on $E$, we define a connection $\nablaK$ on $K$ and a tensor $h^K\in\Gamma(T^*M\otimes K^*\otimes L)$ (the second fundamental form of $K$) by
\begin{equation}\label{eq:structure}
\nablaK_u\xi = \pi_K(\nabla_u(\iota_K\xi));\qquad h^K(u,\xi)=\pi_L(\nabla_u(\iota_K\xi));
\end{equation}
so that
\begin{equation}\label{eq:structure2}
\nabla_u(\iota_K\xi)=\iota_K(\nablaK_u\xi)+\iota_L(h^K(u,\xi))
\end{equation}
for any $u\in TM$ and $\xi\in\Gamma(K)\subset\Gamma(E)$.  The curvature $R^K$ of $\nablaK$ is related to the second fundamental form $h^K$ and the curvature of $\nabla$ via the Gauss equation:
\begin{equation}\label{eq:Gauss}
R^{K}(u,v)\xi = \pi_{K}(R_\nabla(u,v)(\iota_K\xi)) +h^{L}(u,h^K(v,\xi))-h^{L}(v,h^K(u,\xi))
\end{equation}
for all $u,v\in T_xM$ and $\xi\in\Gamma(K)$.  The other important identity relating the second fundamental form to the curvature is the Codazzi identity, which states:
\begin{equation}\label{eq:Codazzi}
\pi_{L}\!(R_\nabla(v,u)\iota_K\xi) = \nablaL_u\!(h^K(v,\xi)\!) - \nablaL_v\!(h^K(u,\xi)\!)-h^K\!(u,\nablaK_v\xi)+h^K\!(v,\nablaK_u\xi)-h^K([u,v],\xi).
\end{equation}
If we are supplied with an arbitrary symmetric connection on $TM$, then we can make sense of the covariant derivative $\nabla h^K$ of the second fundamental form $h^K$, and the Codazzi identity becomes
\begin{equation}\label{eq:Codazzi2}
\nabla_u h^K(v,\xi) - \nabla_v h^K(u,\xi) = \pi_{L}(R_\nabla(v,u)(\iota_K\xi)).
\end{equation}
An important case is where $K$ and $L$ are orthogonal with respect to a metric $g$ on $E$ compatible with $\nabla$.  Then $\nablaK$ is compatible with the induced metric $g^K$, and $h^K$ and $h^L$ are related by
\begin{equation}\label{eq:weingarten}
g^L(h^K(u,\xi),\eta) + g^K(\xi,h^{L}(u,\eta))=0
\end{equation}
for all $\xi\in\Gamma(K)$ and $\eta\in\Gamma(L)$. 

\section{The tangent and normal bundles of a time-dependent immersion}

The machinery introduced above is familiar in the following setting:  If $F:\ M^n\to N^{n+k}$ is an immersion, then $F_*:\ TM\to F^*TN$ defines the tangent sub-bundle of $F^*TN$, and its orthogonal complement is the normal bundle $NM=F_*(TM)^\perp$. If $\bar g$ is a metric on $TN$ with Levi-Civita connection $\bar\nabla$, then the metric $g^{TM}$ is the induced metric on $M$, and $\nabla^{TM}$ is its Levi-Civita connection, while $h^{TM}\in\Gamma(T^*M\otimes T^*M\otimes NM)$ is the second fundamental form, and $h^{NM}$ is minus the Weingarten map.  The Gauss identities \eqref{eq:Gauss} for $TM$ are the usual Gauss equations for a submanifold, while those for $NM$ are usually called the Ricci identities.  The Codazzi identities for the two are equivalent to each other.

Denote by $\pi$ the orthogonal projection from $F^*TN$ onto $TM$, by $\pip$ the orthogonal projection onto $NM$, and by $\iota$ the inclusion of $NM$ in $F^*TM$.  For $u$, $v \in TM$, equation \eqref{eq:structure} is exactly the usual Gauss relation:
\begin{equation*}
	{^F}\nabla_u F_* v = F_*( \nabla_u v ) + \iota h(u, v).
\end{equation*}
whilst for $\xi \in NM$ we recover the usual Weingarten relation:
\begin{equation*}
	{^F}\nabla_u F_* v = \iota ( \nablap_u \xi ) - F_*( \mathcal{W}(u, \xi) ).
\end{equation*}
At the moment, the right hand side of both of these relations is really just notation expressing the fact that we have the decomposition $F^*TN = F_*TM \oplus \iota NM$ into orthogonal sub-bundles.  We want to show, as the notation suggests, that the tangential component is the induced Levi-Civita connection on $M$, and that $h$ is a symmetric bilinear form.  Let $\alpha$ and $\beta$ be functions on $N$, then since $^F \nabla$ is a connection and $\alpha$ and $\beta$ restrict smoothly to functions on $M$,
\begin{gather*}
	{^F}\nabla_{ \alpha u } F_* v = \alpha ^F \nabla_u F_* v \\
	{^F}\nabla_{ u } \beta F_* v = (u\beta) F_*v + \beta ^F \nabla_u F_* v.
\end{gather*}
Therefore,
\begin{align*}
	{^F}\nabla_{ \alpha u } F_* v &= F_* ( \nabla_{ \alpha u } F_* v ) + \iota h( \alpha u, v ) \\
	&= \alpha F_* ( \nabla_u F_* v ) + \alpha \iota h( \alpha u, v ),
\end{align*}
and
\begin{align*}
	{^F}\nabla_{ u } \beta F_* v &= (u\beta)F_*v + \beta \big( F_*( \nabla_u F_*v ) + \iota h(u, v) \big) \\
	&= F_*( \nabla_u \beta F_*v ) + \iota h(u, \beta v).
\end{align*}
After projecting these equations onto $TM$ and $NM$ we get
\begin{align*}
	\nabla_{ \alpha u } F_*v = \alpha \nabla_u F_*v, \quad \nabla_u \beta F_v = (u\beta) F_*v + \beta \nabla_u F_*v \\
	h( \alpha u, v ) = \alpha h( u, v ), \quad h( u, \beta v ) = \beta h( u, v ).
\end{align*}
This shows that $\nabla := \pi \circ {^F}\bar{ \nabla } \circ F_*$ is indeed a connection on $M$, and that $h$ is bilinear.  Furthermore, since $^F \bar{ \nabla }$ is torsion-free and using Proposition \ref{prop:symmetry} we have
\begin{align*}
	0 &= {^F}\nabla_u F_*v + {^F}\nabla_u F_*v - [ F_*u, F_*v ] \\
	&= F_*( \nabla_u F_*v ) - F_*( \nabla_v F_*u ) - F_*( [ u, v ] ) + \iota h(u,v) - \iota h(v, u),
\end{align*}
which shows $\nabla$ is also torsion-free and $h$ is symmetric.  Finally, since $^F \nabla$ is metric-compatible, for $u, v, w \in \Gamma(TM)$,
\begin{align*}
	\nabla_w ( g( u, v ) ) &= {^F}\nabla_w ( \bar{g}( F_*u, F_*v ) ) \\
	&= \bar{g}( {^F}\nabla_w F_*u, F_*v ) + ( u \leftrightarrow v ) \\
	&= \bar{g} ( F_*( \nabla_w u ), F_*v ) + ( u \leftrightarrow v ) \\
	&= g( \nabla_w u, v ) +  g( u, \nabla_w v ),
\end{align*}
thus by uniqueness of the Levi-Civita connection, the induced connection $\nabla$ is the Levi-Civita connection on $M$.  Similar calculations show that  the Weingarten map is bilinear in both its arguments, and that $\nablap := \pip \circ {^F}\bar{ \nabla } \circ \iota$ is a metric compatible connection on the normal bundle.  Differentiating $\bar{g}( F_*u, \iota \xi ) = 0$ shows that $\gp( h(u, v), \xi ) = g( \mathcal{w}(u, \xi), v)$.  In local coordinates $\{x^i\}$ for $M$ near $p$  and $\{ y^a \}$ for $N$ near $f(p)$ the Gauss-Weingarten relations become
\begin{gather*}
	\frac{\partial^2 F^a}{\partial x^i \partial x^j} - \Gamma_{ij}^k \frac{ \p F^a }{ \p x^k } + \bar\Gamma_{cb}^a \frac{\partial F^c}{\partial x^j} \frac{\partial F^b}{\partial x^i}  = h_{ij}{^{\alpha}} \nu_{ \alpha }^a \\
	\frac{ \p \nu_{\alpha}^a }{ \p x^k } + \bar\Gamma_{cb}^a \frac{ \p F^b }{ \p x^k } \nu_{ \alpha }^c =  C_{k\alpha}^{\beta}\nu_{\beta}  - h_{kp}^{ \alpha } g^{pq} \frac{ \p F^a }{ \p x^q },
\end{gather*}
where $\Gamma_{ij}^k$ are the Christoffel symbols of the submanifold, $\Gamma_{cb}^a$ the Christoffel symbols of the ambient space, and $C_{ i \alpha }^{ \beta }$ the normal connection forms.  The Christoffel symbols of the ambient space are obviously zero if the background is flat, and the normal connection forms are zero if $M$ is a hypersurface.
 
In this thesis we want to apply the same machinery in a setting adapted to time-dependent immersions.  If $I$ is a real interval, then the tangent space $T(\Sigma\times I)$ splits into a direct product ${\mathcal H}\oplus {\mathbb{R}}\partial_t$, where ${\mathcal H}=\{u\in T(\Sigma\times I):\ dt(u)=0\}$ is the `spatial' tangent bundle.  We consider a smooth map $F:\ \Sigma^n\times I\to N^{n+k}$ which is a time-dependent immersion, i.e. for each $t\in I$, $F(.,t):\ \Sigma\to N$ is an immersion.  Then $F^*TN$ is a vector bundle over $\Sigma\times I$, which we can equip with the restriction metric $\bar g_F$ and pullback connection ${}^F\bar\nabla$ coming from a Riemannian metric $\bar g$ on $N$ and its Levi-Civita connection $\bar\nabla$.  
The map $F_*:\ {\mathcal H}\to F^*TN$ defines a sub-bundle of $F^*TN$ of rank $n$.  The orthogonal complement of $F_*({\mathcal H})$ in $F^*TN$ is a vector bundle of rank $k$ which we denote by ${\mathcal N}$ and refer to as the (spacetime) normal bundle. We denote by $\pi$ the orthogonal projection from $F^*TN$ onto ${\mathcal H}$, and by $\pip$ the orthogonal projection onto ${\mathcal N}$, and by $\iota$ the inclusion of ${\mathcal N}$ in $F^*TN$.  
 The restrictions of these bundles to each time $t$ are the usual tangent and normal bundles of the immersion $F_t$.  

The construction of the previous section gives a metric $g(u,v)=\bar g(F_*u,F_*v)$ and a connection $\nabla:=\pi\circ{}^F\bar\nabla\circ F_*$ on the bundle ${\mathcal H}$ over $\Sigma\times I$, which agrees with the Levi-Civita connection of $g$ for each fixed $t$.   
We denote by $\gp$ the metric induced on ${\mathcal N}$, given by $\gp(\xi,\eta)=\bar g(\iota\xi,\iota\eta)$.
The construction also gives a connection
$\nablap:=\pip\circ{}^F\bar\nabla\circ\iota$ on ${\mathcal N}$.  We denote by 
$h\in \Gamma({\mathcal H}^*\otimes{\mathcal H}^*\otimes{\mathcal N})$ the restriction of $h^{\mathcal H}=\pip\circ{}^F\nabla\circ F_*$ to ${\mathcal H}$ in the first argument.  Proposition \ref{prop:symmetry} implies that $h$ is a symmetric bilinear form on ${\mathcal H}$ with values in ${\mathcal N}$.  The remaining components of $h^{\mathcal H}$ are given by
\begin{align}\label{eq:h(i,t)}
h^{\mathcal H}(\partial_t,v)&= \pip({}^F\nabla_tF_*v)\notag\\
&=\pip({}^F\nabla_vF_*\partial_t + F_*([\partial_t,v])\notag\\
&=\nablap_v(\pip F_*\partial_t)+h(v,\pi F_*\partial_t)
\end{align}
where we used Proposition \ref{prop:symmetry}.  Henceforward we restrict to normal variations (with $\pi F_*\partial_t=0$), since this is the situation for the mean curvature flow.  We also define ${\mathcal W}\in\Gamma({\mathcal H}^*\otimes {\mathcal N}\otimes{\mathcal H})$ by
$
{\mathcal W}(u,\xi) = -h^{\mathcal N}(u,\xi)=-\pi({}^F\nabla_u\iota\xi)
$
for any $u\in\Gamma({\mathcal H})$ and $\xi\in\Gamma({\mathcal N})$ (we refer to this as the Weingarten map).  The Weingarten relation \eqref{eq:weingarten} gives two identities:  
\begin{align}
\gp(h(u,v),\xi) &= g(v,{\mathcal W}(u,\xi));\label{eq:weingarten2}\\
g(h^{\mathcal N}(\partial_t,\xi),v) &= -\gp(\nablap_v\pip F_*\partial_t,\xi)\label{eq:weingarten3}
\end{align}
where the latter identity used \eqref{eq:h(i,t)}.
The Gauss and Codazzi identities for ${\mathcal H}$ and ${\mathcal N}$ give the following identities for the second fundamental form:  First, if $u$ and $v$ are in ${\mathcal H}$, then the Gauss equation \eqref{eq:Gauss} for ${\mathcal H}$ amounts to the usual Gauss equation at the fixed time, i.e.
\begin{subequations}
\begin{align}
R(u,v)w &= {\mathcal W}(v,h(u,w))-{\mathcal W}(u,h(v,w))+\pi(\bar R(F_*u,F_*v)F_*w)\label{eq:spatialGauss}\\
R(u,v,w,z) &= \gp (h(u,w),h(v,z)) - \gp(h(v,w),h(u,z)) + F^*\bar R(u,v,w,z).\label{eq:spatialGauss2}
\end{align}
\end{subequations}
If $u=\partial_t$ but $v\in{\mathcal H}$, then we find:
\begin{equation}
R(\partial_t,v,w,z) ={\gp}(\nablap_w\pip F_*\partial_t, h(v,z))-{\gp}(\nablap_z\pip F_*\partial_t,h(v,w))+ F^*\bar R(\partial_t,v,w,z).\label{eq:timelikeGauss}\\
\end{equation}

The Gauss equation for the curvature $\Rp$ of ${\mathcal N}$ also splits into two parts:  If $u$ and $v$ are spatial these are simply the Ricci identities for the submanifold at a fixed time:
\begin{equation}\label{eq:Ricci}
\Rp(u,v)\xi = h(v,{\mathcal W}(u,\xi))-h(u,{\mathcal W}(v,\xi))
+\pip(\bar R(F_*u,F_*v)(\iota\xi));
\end{equation}
while if $u=\partial_t$ and $v\in{\mathcal H}$, then we have the identity
\begin{equation}\label{eq:timelikeRicci}
\Rp(\partial_t,v,\xi,\eta) = \bar R(F_*\partial_t,F_*v,\iota\xi,\iota\eta) 
- \gp(\nablap_{{\mathcal W}(v,\xi)}\pip F_*\partial_t,\eta)
+\gp(\nablap_{{\mathcal W}(v,\eta)}\pip F_*\partial_t,\xi).
\end{equation}
Finally, the Codazzi identities resolve into the tangential Codazzi identities, given by
\begin{equation}\label{eq:spatialCodazzi}
\nabla_uh(v,w) - \nabla_vh(u,w) = \pip(\bar R(F_*v,F_*u)F_*w)
\end{equation}
for all $u,v,w\in\Gamma({\mathcal H})$,
and the `timelike' part, where $u=\partial_t$ and $v,w\in\Gamma({\mathcal H})$:
\begin{equation}\label{eq:timelikeCodazzi}
\pip(\bar R(F_*v,F_*\partial_t)F_*w) = \nabla_{\partial_t}h(v,w)-\nabla_v\nabla_w(\pip F_*\partial_t)-h(w,{\mathcal W}(v,\pip F_*\partial_t)).
\end{equation}
Note that here $\nabla h\in\Gamma(T^*(\Sigma\times I)\otimes{\mathcal H}^*\otimes{\mathcal H}^*\otimes{\mathcal N})$ is defined using the connections $\nabla$ and $\nablap$ as in Equation \eqref{eq:conn_on_tensor}, that is
$\nabla_{\partial_t}h(u,v) = \nablap_{\partial_t}(h(u,v))-h(\nabla_{\partial_t}u,v)-h(u,\nabla_{\partial_t}v)$.

We remark that by construction we have $\nabla g=0$ and $\nabla\gp=0$.  In contrast to the situation in other work on evolving hypersurfaces, we have $\nabla_{\partial_t} g=0$.  That is, the connections we have constructed automatically build in the so-called `Uhlenbeck trick' \cite{rH86}*{Section 2}.

\begin{prop} The tensors
$F_*\in\Gamma({\mathcal H}^*\otimes F^*TN)$, $\iota\in\Gamma({\mathcal N}^*\otimes F^*TN)$, $\pi\in\Gamma(F^*TN\otimes{\mathcal H})$ and $\pip\in\Gamma(F^*TN\otimes{\mathcal N})$ satisfy
\begin{align}
(\nabla_U F_*)(V) &= \iota h(U,V)\label{eq:nablaF*}\\
(\nabla_U\iota)(\xi) &= -F_*{\mathcal W}(U,\xi)\label{eq:nabla.iota}\\
(\nabla_U\pi)(X) &= {\mathcal W}(U,\pip X)\label{eq:nablapiT}\\
(\nabla_U\pip)(X) &= -h(U,\pi X)\label{eq:nablapiN}
\end{align}
for all $U,V\in\Gamma({\mathcal H})$, $\xi\in\Gamma({\mathcal N})$ and $X\in\Gamma(F^*TN)$.
\end{prop}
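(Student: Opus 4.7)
The plan is to observe that each of the four tensors is a bundle homomorphism, so its covariant derivative is computed by the standard rule \eqref{eq:conn_on_tensor}, namely $(\nabla_U T)(V) = \nabla_U(T(V)) - T(\nabla_U V)$, where on each side one uses the appropriate connection on the relevant bundle (that is, ${}^F\bar\nabla$ on $F^*TN$, $\nabla$ on $\mathcal{H}$, and $\nablap$ on $\mathcal{N}$). Once this is set up, everything follows from the Gauss and Weingarten relations already recorded in the preceding discussion, namely ${}^F\bar\nabla_U F_*V = F_*(\nabla_U V) + \iota h(U,V)$ and ${}^F\bar\nabla_U \iota\xi = \iota(\nablap_U\xi) - F_*\mathcal{W}(U,\xi)$.

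For \eqref{eq:nablaF*}, I would simply write $(\nabla_U F_*)(V) = {}^F\bar\nabla_U(F_*V) - F_*(\nabla_U V)$ and substitute the Gauss relation; the $F_*(\nabla_U V)$ terms cancel and the second fundamental form survives. The identity \eqref{eq:nabla.iota} is proved in exactly the same way, starting from $(\nabla_U\iota)(\xi) = {}^F\bar\nabla_U(\iota\xi) - \iota(\nablap_U\xi)$ and substituting the Weingarten relation; again the $\iota(\nablap_U\xi)$ terms cancel, leaving $-F_*\mathcal{W}(U,\xi)$.

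For \eqref{eq:nablapiT} and \eqref{eq:nablapiN}, which involve the projections, I would first decompose an arbitrary $X\in\Gamma(F^*TN)$ as $X = F_*\pi X + \iota\pip X$, apply ${}^F\bar\nabla_U$ to both sides and use the Gauss and Weingarten relations on each summand. Taking the tangential part gives
\begin{equation*}
\pi({}^F\bar\nabla_U X) = \nabla_U(\pi X) - \mathcal{W}(U,\pip X),
\end{equation*}
and taking the normal part gives
\begin{equation*}
\pip({}^F\bar\nabla_U X) = \nablap_U(\pip X) + h(U,\pi X).
\end{equation*}
Rearranging each of these against the definition $(\nabla_U\pi)(X) = \nabla_U(\pi X) - \pi({}^F\bar\nabla_U X)$ (and the analogous one for $\pip$) yields the two claimed formulas.

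There is no real obstacle: the only point requiring care is matching the connections on the correct bundles when applying \eqref{eq:conn_on_tensor}, since $F_*$, $\iota$, $\pi$ and $\pip$ each map between different bundles, and the relevant induced connection on, e.g., $\mathcal{H}^*\otimes F^*TN$ uses $\nabla$ on the $\mathcal{H}^*$ factor and ${}^F\bar\nabla$ on the $F^*TN$ factor. Once this bookkeeping is correctly in place the identities are immediate consequences of the already-established Gauss and Weingarten equations.
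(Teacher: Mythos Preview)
Your proposal is correct and matches the paper's own proof essentially line for line: the paper also invokes \eqref{eq:conn_on_tensor}, proves \eqref{eq:nablaF*} exactly as you describe, declares \eqref{eq:nabla.iota} ``similar,'' and for \eqref{eq:nablapiT} decomposes $X=F_*\pi X+\iota\pip X$ inside $\pi({}^F\nabla_U X)$ before applying the Gauss and Weingarten relations, with \eqref{eq:nablapiN} again left as ``similar.'' Your treatment of the projections is slightly more streamlined in that you extract the tangential and normal parts simultaneously, but the substance is identical.
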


\begin{proof}
These follow from our construction and Equation \eqref{eq:conn_on_tensor}:  For the first we have (since $F_*$ is a $F^*TN$-valued tensor acting on ${\mathcal H}$)
$$
(\nabla_U F_*)(V) = {}^F\nabla_U(F_*V)-F_*(\nabla_UV)
=F^*(\nabla_UV)+\iota h(U,V)-F_*(\nabla_UV)=\iota h(U,V),
$$
where we used the definitions of $h$ and $\nabla$.  The second identity is similar.  For the third we have:
\begin{align*}
(\nabla_U\pi)(X)&= \nabla_U(\pi X)
-\pi({}^F\nabla_UX)\\
&=\nabla_U(\pi X)
-\pi ({}^F\nabla_U(F_*\pi X+\iota \pip X))\\
&=\nabla_U(\pi X) - 
	\nabla_U(\pi X)+{\mathcal W}(U,\pip X)\\
&={\mathcal W}(U,\pip X).
\end{align*}
The fourth identity is similar to the third.
\end{proof}

We illustrate the application of the above identities in the proof of Simons' identity, which amounts to the statement that the second derivatives of the second fundamental form are totally symmetric, up to corrections involving second fundamental form and the curvature of $N$:
\begin{prop}\label{prop:Simons}
\begin{align*}
\nabla_w\nabla_zh(u,v)
-\nabla_u\nabla_vh(w,z)
&=h(v,{\mathcal W}(u,h(w,z)))-h(z,{\mathcal W}(w,h(u,v)))-h(u,{\mathcal W}(w,h(v,z)))\\
&\quad\null
+h(w,{\mathcal W}(u,h(v,z)))
+h(z,{\mathcal W}(u,h(w,v)))-h(v,{\mathcal W}(w,h(u,z)))\\
&\quad\null
-h(u,\pi \bar R(F_*v,F_*w)F_*z)
-h(w,\pi \bar R(F_*u,F_*z)F_*v)\\
&\quad\null
-h(z,\pi \bar R(F_*u,F_*w)F_*v)
-h(v,\pi \bar R(F_*u,F_*w)F_*z)\\
&\quad\null+\pip\bar R(\iota h(u,v),F_*w)F_*z-\pip\bar R(\iota h(w,z),F_*u)F_*v\\
&\quad\null
+\pip\bar R(F_*u,F_*w)\iota h(v,z)+\pip\bar R(F_*v,F_*z)\iota h(u,w)\\
&\quad\null+\pip\bar R(F_*v,F_*w)\iota h(u,z)+\pip\bar R(F_*u,F_*z)\iota h(v,w)\\
&\quad\null
+\pip \bar \nabla_{F_*u} \bar R(F_*v,F_*w)F_*z-\pip\bar \nabla_{F_*w} \bar R(F_*z,F_*u)F_*v.
\end{align*}
\end{prop}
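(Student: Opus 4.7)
The plan is to rearrange $\nabla_w\nabla_z h(u,v)$ into $\nabla_u\nabla_v h(w,z)$ by a sequence of Codazzi exchanges combined with a single commutator of second covariant derivatives; the commutator produces intrinsic and normal curvatures, which are then eliminated using the Gauss and Ricci equations in favour of $h$, $\mathcal{W}$ and the ambient $\bar R$. This is a bundle-valued, high-codimension version of the classical Simons identity proof, with additional bookkeeping for the ambient curvature contributions.

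Concretely, I would first apply the spatial Codazzi identity \eqref{eq:spatialCodazzi}, using the symmetry of $h$ in its $\mathcal{H}$-arguments freely, to rewrite $\nabla_z h(u,v) = \nabla_u h(z,v) + \pip \bar R(F_*u, F_*z) F_*v$. Taking $\nabla_w$ of both sides and expanding by the tensor Leibniz rule together with the structure identities \eqref{eq:nablaF*}--\eqref{eq:nablapiN} yields $\nabla_w \nabla_u h(z,v)$ plus ambient correction terms: these come both from the connection hitting $\bar R$ itself, producing a $\pip \bar\nabla \bar R$ contribution, and from the connection hitting the $F_*$'s or $\pip$, producing several $\pip \bar R \cdot \iota h$ and $h \cdot \pi \bar R$ pieces. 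Next I would use the Ricci identity for $\mathcal{H}^* \otimes \mathcal{H}^* \otimes \mathcal{N}$-valued tensors to commute $\nabla_w \nabla_u h(z,v)$ into $\nabla_u \nabla_w h(z,v)$ up to a curvature commutator acting on $h$, namely $\Rp(w,u)(h(z,v)) - h(R^{\mathcal{H}}(w,u) z, v) - h(z, R^{\mathcal{H}}(w,u) v)$. A further application of Codazzi to $\nabla_w h(z,v) = \nabla_v h(w,z) + \pip \bar R(F_*v, F_*w) F_*z$, followed by $\nabla_u$ and another Leibniz expansion, brings the leading term into the desired form $\nabla_u \nabla_v h(w,z)$. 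Finally, I would eliminate the intrinsic curvature $R^{\mathcal{H}}$ using the spatial Gauss equation \eqref{eq:spatialGauss} and the normal curvature $\Rp$ using the Ricci equation \eqref{eq:Ricci}: this is the step that produces the six $h(\cdot, \mathcal{W}(\cdot, h(\cdot, \cdot)))$ reaction terms and the four $h(\cdot, \pi \bar R \cdot)$ ambient terms on the right-hand side.

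The main obstacle is combinatorial. Three applications of Codazzi, one curvature commutator, and the Leibniz expansions of the various $\pip \bar R(F_* \cdot, F_* \cdot) F_* \cdot$ pieces generate a large number of ambient contributions, and these must cancel and regroup into exactly the six $\pip \bar R(F_* \cdot, F_* \cdot) \iota h(\cdot, \cdot)$ terms and two $\pip \bar\nabla \bar R$ terms in the stated identity. Sign tracking requires some care, because the convention $R_\nabla(X,Y)\xi = \nabla_Y \nabla_X \xi - \nabla_X \nabla_Y \xi - \nabla_{[Y,X]}\xi$ used in this chapter is opposite to the more common one; the Weingarten relation \eqref{eq:weingarten2} must also be invoked to convert $\gp(h(\cdot, \cdot), \cdot)$ expressions into $g(\cdot, \mathcal{W}(\cdot, \cdot))$ form so that the reaction terms appear with the indices in the pattern printed above.
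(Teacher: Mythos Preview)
Your approach is essentially identical to the paper's: apply Codazzi once to swap the inner pair, differentiate, commute second covariant derivatives using the curvature-on-tensors formula \eqref{eq:curv_on_tensor}, apply Codazzi again, differentiate once more, and then expand $R^{\mathcal H}$ and $\Rp$ via the Gauss and Ricci equations while unpacking the $\nabla(\pip\bar R(F_*\cdot,F_*\cdot)F_*\cdot)$ terms with \eqref{eq:nablaF*} and \eqref{eq:nablapiN}. Two small slips to watch: only two Codazzi applications are needed (not three), and with the paper's sign convention the commutator is $(R(u,w)h)(v,z)$ rather than $(R(w,u)h)$; also the Weingarten relation is not actually required, since the Gauss equation \eqref{eq:spatialGauss} already delivers the reaction terms directly in the $h(\cdot,\mathcal W(\cdot,h(\cdot,\cdot)))$ form.
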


\begin{proof}
Since the equation is tensorial, it suffices to work with $u,v,w,z\in\Gamma({\mathcal H})$ for which $\nabla u=0$, etc, at a given point.  Computing at that point we find
\begin{align*}
\nabla_w\!\nabla_zh(u,v)\!&=\nabla_w(\nabla_uh(z,v)+\pip \bar R(F_*u,F_*z)F_*v)\\
&=\nabla_u\nabla_wh(v,z)+(R(u,w)h)(v,z)+\nabla_w (\pip \bar R(F_*u,F_*z)F_*v)\\
&=\nabla_u\!\!(\!\nabla_vh(w,z)+\pip\bar R(F_*v,F_*w)F_*z\!)+(R(u,w)h)(v,z)+\nabla_w\!\! (\!\pip \bar R(F_*u,F_*z)F_*v\!)\\
&=\nabla_u\!\nabla_vh(w,z)\!+\!(R(u,w)h)(v,z)\!+\!\nabla_w \!\!(\!\pip \bar R(F_*u,F_*z)F_*v\!)\!+\!\nabla_u \!\!(\!\pip \bar R(F_*v,F_*w)F_*z\!)
\end{align*}
where we used the Codazzi identity in the first and third lines, and the definition of curvature in the second.  Since $h$ is a ${\mathcal N}$-valued tensor with arguments in ${\mathcal H}$, the second term may be computed using the identity \eqref{eq:curv_on_tensor} to give
$$
(R(u,w)h)(v,z) = \Rp(u,w)(h(v,z))-h(R(u,w)v,z)-h(v,R(u,w)z).
$$
This in turn can be expanded using the Gauss identity \eqref{eq:spatialGauss} for $R$ and the Ricci identity \eqref{eq:Ricci} for $\Rp$.  In the third term (and similarly the fourth) we apply the identity \eqref{eq:conn_on_tensor} to $\pip$:
\begin{equation*}
\nabla_w(\pip \bar R(F_*u,F_*z)F_*v)
=\nabla_w\pip(\bar R(F_*u,F_*z)F_*v) + \pip({}^F\nabla_w(\bar R(F_*u,F_*z)F_*v)).
\end{equation*}
In the first term here we apply the identity \eqref{eq:nablapiN}.  In the second we can expand further as follows:
\begin{align*}
{}^F\nabla_w(\bar R(F_*u,F_*z)F_*v) &= ({}^F\nabla_w\bar R)(F_*u,F_*z)F_*v 
+\bar R((\nabla_wF_*)u,F_*z)F_*v\\
&\quad\null+\bar R(F_*u,\nabla_wF_*(z))F_*v+\bar R(F_*u,F_*z)(\nabla_uF_*(v)).
\end{align*}
In the terms involving $\nabla F_*$ we apply \eqref{eq:nablaF*}, and we also observe that 
${}^F\nabla_w \bar R = \bar \nabla_{F_*w}\bar R$ by the definition of the connection ${}^F\nabla$.  Substituting these identities gives the required result.
\end{proof}

In subsequent computations we often work in a local orthonormal frame $\{e_i\}$ for the spatial tangent bundle ${\mathcal H}$, and a local orthonormal frame $\{\nu_\alpha\}$ for the normal bundle ${\mathcal N}$.  We use greek indices for the normal bundle, and latin ones for the tangent bundle.  When working in such orthonormal frames we sum over repeated indices whether raised or lowered.  For example
the mean curvature vector $H\in\Gamma({\mathcal N})$ may be written in the various forms 
$$
H=\tr_gh = g^{ij}h_{ij}={h_i}^i = h_{ii} = g^{ij}{h_{ij}}^\alpha\nu_\alpha=h_{ii\alpha}\nu_\alpha.
$$
Similarly, we write $|h|^2 = g^{ik}g^{jl}g^{\mathcal N}_{\alpha\beta}{h_{ij}}^\alpha{h_{kl}}^\beta=h_{ij\alpha}h_{ij\alpha}$.  The Weingarten relation \eqref{eq:weingarten2} becomes 
$$
{\mathcal W}(e_i,\nu_\alpha) = h_{iq\alpha} e_q,
$$
while the Gauss equation \eqref{eq:spatialGauss} becomes
$$
R_{ijkl} =h_{ik\alpha} h_{jl\alpha}-h_{jk\alpha} h_{il\alpha}
+\bar R_{ijkl},
$$
where we denote $\bar R_{ijkl}=\bar R(F_*e_i,F_*e_j,F_*e_k,F_*e_l)$.
The Ricci equations \eqref{eq:Ricci} give
$$
\Rp_{ij\alpha\beta} = h_{ip\alpha}h_{jp\beta}-h_{jp\alpha}h_{ip\beta}+\bar R_{ij\alpha\beta},
$$
where $\bar R_{ij\alpha\beta}=\bar R(F_*e_i,F_*e_j,\iota\nu_\alpha,\iota\nu_\beta)$, and the Codazzi identity \eqref{eq:spatialCodazzi} gives
$$
\nabla_ih_{jk}-\nabla_jh_{ik} = \bar R_{jik\alpha}\nu_\alpha.
$$
In this notation the identity from Proposition \ref{prop:Simons} takes the following form:
\begin{align*}
\nabla_k\nabla_lh_{ij}&=
\nabla_i\nabla_jh_{kl}+h_{kl\alpha}h_{ip\alpha}h_{jp}-h_{ij\alpha}h_{kp\alpha}h_{lp}\\
&\quad\null
+h_{jl\alpha}h_{ip\alpha}h_{kp}+h_{jk\alpha}h_{ip\alpha}h_{lp}-h_{il\alpha}h_{kp\alpha}h_{jp}
-h_{jl\alpha}h_{kp\alpha}h_{ip}\\
&\quad\null +h_{kl\alpha}\bar R_{i\alpha j\beta}\nu_\beta-h_{ij\alpha}\bar R_{k\alpha l\beta}\nu_\beta+\bar R_{kjlp}h_{ip}+\bar R_{kilp}h_{jp}-\bar R_{iljp}h_{kp}-\bar R_{ikjp}h_{lp}\\
&\quad\null+h_{jl\alpha}\bar R_{ik\alpha\beta}\nu_\beta+h_{ik\alpha}\bar R_{jl\alpha\beta}\nu_\beta
+h_{il\alpha}\bar R_{jk\alpha\beta}\nu_\beta+h_{jk\alpha}\bar R_{il\alpha\beta}\nu_\beta\\
&\quad\null +\bar\nabla_i\bar R_{jkl\beta}\nu_\beta - \bar\nabla_k\bar R_{lij\beta}\nu_\beta.
\end{align*}
Particularly useful is the equation obtained by taking a trace of the above identity over $k$ and $l$:  
\begin{align}
\Delta h_{ij}&=\nabla_i\nabla_jH+H\cdot h_{ip}h_{pj}-h_{ij}\cdot h_{pq}h_{pq}+2h_{jq}\cdot h_{ip}h_{pq}
-h_{iq}\cdot h_{qp}h_{pj}-h_{jq}\cdot h_{qp}h_{pi}\notag\\
&\quad\null+H_\alpha\bar R_{i\alpha j\beta}\nu_\beta-h_{ij\alpha}\bar R_{k\alpha k\beta}\nu_\beta
+\bar R_{kjkp}h_{pi}+\bar R_{kikp}h_{pj}-2\bar R_{ipjq}h_{pq}\notag\\
&\quad\null +2h_{jp\alpha}\bar R_{ip\alpha\beta}\nu_\beta+2h_{ip\alpha}\bar R_{jp\alpha\beta}\nu_\beta+\bar\nabla_i\bar R_{jkk\beta}\nu_\beta-\bar\nabla_k\bar R_{kij\beta}\nu_\beta.\label{eq:SimonsTrace}
\end{align}
Here the dots represent inner products in ${\mathcal N}$.

\section{The method of moving frames}
The Gauss, Ricci and Codazzi equations can also be quite nicely derived using Cartan's method of moving frames.  We shall only need to use this machinery once, and then only briefly, in Chapter 6, but it is nonetheless instructive to see how this can be done.  We work in the setting of a fixed immersion $F:\ M^n\to N^{n+k}$, and let $\{ e_a : 1 \leq a \leq n+k \}$ be an adapted local frame for $N$, so that $\{ e_i : 0 \leq i \leq n \}$ are tangent to $M$, and $\{ e_{ \alpha } : n+1 \leq a \leq n+k \}$ are normal to $M$.  In the Cartan formalism, the Levi-Civita connection on $N$ is given by the structure equations
\begin{gather*}
	d\omega^a = -\omega^a_b \wedge \omega^b \\ 
	\omega_b^a + \omega_a^b = 0.
\end{gather*}
The first structure equation determines a torsion-free connection, and the second guarantees the connection is metric-compatible.  The curvature of the connection is given by the structure equation
\begin{equation}
	d\omega^a_b = -\omega^a_c \wedge \omega^c_b + \bar\Omega_b^a. \label{eqn: Cartan structure 3 N}
\end{equation}
We now restrict the indices to $M$, so $\omega^{ \alpha } = 0$.  From the above structure equations we obtain
\begin{gather}
	d\omega^i = -\omega^i_j \wedge \omega^j \label{eqn: Cartan structure 1} \\ 
	\omega_j^i + \omega_i^j = 0 \label{eqn: Cartan structure 2} \\
	d\omega^i_j = -\omega^i_k \wedge \omega^k_j + \Omega_j^i  \label{eqn: Cartan structure 3} 
\end{gather}
which determines the Levi-Civita connection on $M$.  Furthermore, since $\omega^i = 0$ in the normal bundle, we also obtain
\begin{gather*}
	d\omega^{ \alpha } = -\omega^{ \alpha }_{ \beta } \wedge \omega^{ \beta } \\ 
	\omega_{ \beta }^{ \alpha } + \omega_{ \alpha }^{ \beta } = 0 \\
	d\omega^{ \alpha }_{ \gamma } = -\omega^{ \alpha }_{ \gamma } \wedge \omega^{ \gamma }_{ \beta } + \Omega_{ \beta }^{ \alpha }
\end{gather*}
which determines a metric compatible connection on the normal bundle.
Let us show how to derive only the Codazzi equation.  Since $0 = d\omega^{ \alpha } = -\omega_i^{ \alpha } \wedge \omega^j$, by Cartan's Lemma the $\omega_i^{ \alpha }$ can be expressed as a linear combination of the $\omega^j$: $\omega_i^{ \alpha } = h_{ij}{^\alpha} \omega^j$, and  also $h_{ij}{^\alpha} = h_{ji}{^\alpha}$.  In equation \eqref{eqn: Cartan structure 3 N} we restrict $a$ to $\alpha$ and $b$ to $i$ to get
\begin{equation}
	d\omega_i^{ \alpha } + \omega_j^{ \alpha } \wedge \omega_i^j + \omega_{ \beta }^{ \alpha } \wedge \omega_i^{ \beta } = \bar \Omega_i^{ \alpha }. \label{eqn: Codazzi moving frames 1}
\end{equation}
Exterior differentiation of $\omega_i^{ \alpha } = h_{il}{^{\alpha}} \omega^l$ gives $dh_{il}{^{\alpha}} \wedge \omega^l - h_{ij}{^{\alpha}}\omega_l^j \wedge \omega^l$,
and putting this together with \eqref{eqn: Codazzi moving frames 1} we obtain
$$
	( dh_{ij}{^{\alpha}} - h_{lj}{^{\alpha}}\omega_i^l - h_{il}{^{\alpha}}\omega_j^l + h_{ij}{^{\beta}}\omega_{ \beta }^{ \alpha } ) \wedge \omega^l = \bar \Omega_i^{ \alpha }.
$$
We define
$$
	h_{ijk}{^{ \alpha }} \omega^k = dh_{ij}{^{\alpha}} - h_{lj}{^{\alpha}}\omega_i^l - h_{il}{^{\alpha}}\omega_j^l + h_{ij}{^{\beta}}\omega_{ \beta }^{ \alpha },
$$
which is just the first covariant derivative of $h$, and assuming a flat background, we recover the usual Codazzi equation: $h_{ijk}{^{\alpha}} = h_{ikj}{^{\alpha}}$.

\chapter{Short-time existence theory}

The mean curvature flow equation determines a weakly parabolic quasilinear system of second order.  It is now well-known that many geometrically-defined partial differential equations possess zeroes in their principal symbol because of some kind of geometric invariance displayed by the equations. In order to assert short-time existence to the mean curvature flow we use the well-known `DeTurck trick' to first solve a related strongly parabolic equation, and we then recover a solution to the mean curvature flow from this related solution.  The DeTurck trick was first invented to solve the Ricci flow, however the method applies to many other geometric flows.  In his lecture notes \cite{rH89}, Hamilton shows how the DeTurck trick can be applied to the Ricci, mean curvature and Yang-Mills flows.  We have also seized this opportunity to fill in a few details in the proof of short-time existence for fully nonlinear parabolic systems of even order defined on a manifold.  In \cite{rH75} Hamilton gives a proof of local existence for the harmonic map heat flow (a strongly parabolic quasilinear system) using Sobolev spaces and the inverse function theorem, and we were inspired to adapt his proof to fully nonlinear systems in the H\"{o}lder space setting.  Towards the end of this task Tobias Lamm pointed out to us the he proved the short-time existence of fully nonlinear operators in Euclidean space using Schauder estimates in his Diploma Thesis \cite{tL}.  Given that we started reconstructing this theory on our own, we have still decided to include this chapter.  Since we are not claiming anything essentially new in this chapter, we have freely borrowed from Lamm's thesis to improve our own exposition.  In particular, we now use Simon's method of scaling to derive the Schauder estimates for parabolic systems, as opposed to Trudinger's method of mollification which we had originally used.  We still show how Trudinger's method can be combined with the mean value property of subsolutions to the heat equation to provide a remarkably simple proof of the Schauder estimates in the case of single equations.  We emphasise the global aspects of solving the problem more than Lamm, and we work in the setting of parabolic systems defined in sections of a vector bundle over a closed manifold.

The strategy for proving such an existence theorem is well-known: one begins with a solution to the heat equation and then uses the method of continuity and the Schauder estimates to prove existence for general linear operators.  The short-time nonlinear existence result then follows by linearising the nonlinear operator and applying the inverse function theorem.  In reconstructing the $L^2$ and linear theory, our main reference has been the Chinese text \cite{Gu}.  As we have mentioned above we use Trudinger's method of mollification to derive the interior Schauder estimate for second order parabolic equations, and we simply cite Lamm's thesis for the derivation of the Schauder estimates for even order parabolic systems in Euclidean space.  The application of the inverse function theorem to yield the nonlinear existence result was inspired by \cite{rH75}.  Our method is different to Lamm's, however in showing the solution is an appropriate H\"{o}lder space with exponent $\beta$, for $\beta < \alpha$ where $\alpha$ the H\"{o}lder exponent of the initial data, we have benefited from \cite{tL}.  The application of the DeTurck trick to the mean curvature flow first appears in \cite{rH89}, and we have simply expanded on these notes of Hamilton's, adding in a few calculations.  Combining the harmonic map heat flow with the mean curvature flow to give a simple proof of uniqueness of the mean curvature flow is also due to Hamilton; the equivalent result for the Ricci flow first appeared in \cite{rH93b}.  We adapt the Ricci flow result to the mean curvature flow, following the detailed expositions given in \cite{HA, CLN}.

\section[Short-time existence for fully nonlinear parabolic systems]{Short-time existence for fully nonlinear parabolic systems of even order}

In this section we give a proof of short-time existence for fully nonlinear parabolic systems of even order.  The nonlinear existence result that is our ultimate goal is attained by an application of the classical inverse function theorem in Banach spaces, and is in fact quite short once we have all the linear theory in place.  The linear theory plays an essential role in the nonlinear theory and most of the following is devoted to establishing the linear theory.

The setting for our study of systems of partial differential equations defined on a manifold is slightly different to that of the more familiar Euclidean case.  Here we are interested in differential operators that act on sections of a vector bundle over a manifold, and not simply functions defined on some domain of Euclidean space.  So that the reader can accustom to this setting, let us first consider linear systems of second order.  Let $E$ and $F$ be two vector bundles over $M$.  Let the indices $a,b,c, \ldots$ range from $1$ to $N$, and the indices $i,j,k, \ldots$ range from $1$ to $n$.  Suppose $\{ e_a \}$ is a local frame for the bundle $E$ over a coordinate neighbourhood of $M$ with local coordinates $\{ x^i \}$.  It may be helpful to keep in mind the specific example of the mean curvature flow, in which case the section we are interested in the the position vector of the submanifold, $N$ is dimension of the background space and $n$ the dimension of the submanifold.   A linear differential operator of second order is a map $L : \Gamma(E) \rightarrow \Gamma(F)$ which in any coordinate chart is of the form
\begin{equation*}
	L(U) := \frac{ \p }{ \p t }U^a -A_b^{ a i j }(x,t)\p_i \p_j U^b - B_b^{ a k }(x,t) \p_k U^b - C_b^a(x,t)U^b,
\end{equation*}
where $A \in \Gamma( \text{Sym}^2(T^*M) \otimes E \otimes E^* )$, $B \in \Gamma ( T^*M \otimes E \otimes E^* )$ and $C \in \Gamma(E \times E^*)$.  The notion of parabolicity is defined in terms of the principal symbol of the differential operator.  The principal symbol $\hat\sigma$ of the above linear operator $L$ in direction $\xi \in \Gamma(TM)$ is the vector bundle homomorphism
\begin{equation*}
	\hat\sigma[L](\xi) := A_b^{ a i j }\xi_i \xi_j e_a \otimes e^{* \, b}.
\end{equation*}
The system is said to be strongly (weakly) parabolic if the eigenvalues of the principal symbol are positive (non-negative).  The principal symbol encodes algebraically the analytic properties of the leading term of the differential operator.  A linear differential operator of order $2m$ is a map $L : \Gamma(E) \rightarrow \Gamma(F)$ which in any local coordinate chart is of the form
\begin{equation}\label{e: even order linear operator}
	\frac{ \p }{ \p t }U^a + (-1)^m \sum_{ I \leq 2m } A^I \p_I U,
\end{equation}
or in full
\begin{equation*}
	\frac{ \p }{ \p t }U^a + (-1)^m \big( A_b^{a \, i_1, \ldots i_{2m} }(x,t)\p_{i_1} \cdots \p_{i_{2m}} U^b + \cdots + B_b^{ a \, k }(x,t)\p_k U^b + C_b^a(x,t)U^b \big) .
\end{equation*}
The principal symbol is defined in a similar manner as before.  Let us now move on to fully nonlinear differential operators.  A fully nonlinear differential operator of order $2m$ is a map $L : \Gamma(E) \rightarrow \Gamma(F)$ which in any coordinate chart is of the form
\begin{equation}\label{eqn: fully nonlinear 1}
	\frac{ \p }{ \p t }U^a - F^a(x,t, U^{b_1}, \p U^{b_2}, \ldots, \p^{2m} U^{ b_{2m} } ),
\end{equation}
or equivalently, is globally of the form
\begin{equation}
	\frac{ \p }{ \p t }U^a - F^a(x,t, U^{b_1}, \nabla U^{b_2}, \ldots, \nabla^{2m} U^{ b_{2m} } ).
\end{equation}
We will often drop the indices relating to the section, and simply write the above equation as
\begin{equation}\label{e: fully nonlinear operator}
	\frac{ \p }{ \p t }U - F(x,t, U, \nabla U, \ldots, \nabla^{2m} U).
\end{equation}
We have chosen to categorise the equations (linear, quasilinear, etc) in terms of their form in a local chart.  This is important for operators that are not fully nonlinear, because if the connection is varying in time, the initial global appearance of the equation can be betraying.  As we shall see in the next section, this is indeed the case for the mean curvature flow.  This difference is immaterial for fully nonlinear operators, and so nothing is lost by saying such an operator looks globally of the form \eqref{e: fully nonlinear operator}.  We mention that as an alternative to passing to local coordinate descriptions, we could define the equations globally with respect to a fixed connection, which then reveals the true nature of the equations.

The notion of parabolicity for a nonlinear operator is defined in terms of its linearised operator.  The linearisation of a nonlinear operator $L$ about some fixed function $U_0$ in the direction $V$ is the linear operator given by
\begin{align*}
	\p L[U_0](V) &=  \frac{ \p }{ \p s } L(U_0 + sV) \Big|_{s=0} \\
	&= \p_t V - \Fdot^{ i_1, \ldots, i_{2m} }( x, t, U_0, \nabla U_0, \ldots, \nabla^{2m} U_0) \nabla_{ i_1 } \cdots \nabla_{ i_{2m} }V \\
	&\quad - \cdots - \Fdot^k(x, t, U_0, \nabla U_0, \ldots, \nabla^{2m} U_0) \nabla_k V - F( x, t, U_0, \nabla U_0, \ldots, \nabla^{2m} U_0 )V.
\end{align*}
A fully nonlinear operator is said to be parabolic if its linearisation evaluated at the initial time is parabolic.  The principal symbol of the linearised operator evaluated at the initial time is
\begin{equation*}
	\hat \sigma ( \p L[U_0])(\xi) =  \Fdot^{ a \,  i_1, \ldots, i_{2m} }( x, 0, U_0^{b_0}, \nabla U_0^{b_1}, \ldots, \nabla^{2m} U_0^{b_{2m}}) \xi_{ i_1 } \cdots \xi_{ i_{2m} } e_a \otimes e^{* b_{2m} },
\end{equation*}
and thus the nonlinear operator is strongly (weakly) parabolic if $\hat \sigma (\p F[U_0])(\xi) > (\geq) 0$.  Closely related to the notion of parabolicity is the Legendre-Hadamard condition.  The linear operator \eqref{e: even order linear operator} is said to satisfy the Legendre-Hadamard condition if there exists a positive constant $\lambda > 0$ such that coefficient of the leading term satisfies
\begin{equation*}
	A_b^{ a I }\xi_{ I } \eta_a \eta^{*b} = A_b^{a \, i_1, \ldots, i_{2m} }\xi_1 \cdots \xi_{2m} \eta_a \eta^{*b} > \lambda \abs{ \xi }^{2m} \abs{ \eta }^2,
\end{equation*}
for all $\xi, \eta \in \Gamma(E)$ and $\eta^* \in \Gamma(E^*)$.  We can now state the local existence theorem we wish to prove.

\begin{mthm}
Let $E \times (0, \omega)$ be a vector bundle over $M \times (0, \omega)$, where $M$ is a smooth closed manifold, and let $U$ be a section $\Gamma(E \times (0, \omega)$.  Consider the following initial value problem:
	\begin{equation}
		\begin{cases}\label{eqn: nonlinear exist problem}
			P(U) := \p_t U - F(x,t,U, \nabla U, \ldots, \nabla^{2m}U) = 0 \text{ in } E \times (0, \omega), \\
			U(M, 0) = U_0,
		\end{cases}
	\end{equation}
with $U_0 \in C^{2m,1,\alpha}(E_{\omega})$.  The linearised operator of $P$ at $U_0$ in the direction $V$ is then given by
\begin{equation*}
\p P[U_0]V = \p_t V + (-1)^m\sum_{\abs{I} \leq 2m } A^I(x,t,U_0, \nabla U_0, \ldots, \nabla^{2m}U_0) \nabla_I V.
\end{equation*}
Suppose that the following conditions are satsified:
\begin{enumerate}
	\item  The leading coefficient $A_{b}^{a i_1 j_1 \cdots i_m j_m }$ satisfies the symmetry condition $A_{b}^{a i_1 j_1 \cdots i_m j_m } =  A_{ a }^{ b j_1 i_1 \cdots j_m i_m }$
	\item The leading coefficient satisfies the Legendre-Hadamard condition with constant $\lambda$
	\item There exists a uniform constant $\Lambda < \infty$ such that $\sum_{ \abs{I} \leq 2m } \abs{A^I}_{ \alpha; \, E_{\omega} } \leq \Lambda$
	\item $\Fdot$ is a continuous function of all its arguments
\end{enumerate}
Then there exists a unique solution $U \in C^{2m,1, \beta}(E_{\omega})$, where $\beta < \alpha$, for some short time $t_{\epsilon} > 0$ to the above initial value problem.  Furthermore, if $U_0$ and all the coefficients of the linearised operator are smooth, this solution is smooth.
\end{mthm}

\subsection{Function spaces and preliminary results}

We introduce some more of our notation and the necessary function spaces.  We denote differentiation in space by $\p_i$ or $\p_x$, and differentiation in time by $\p_t$.  As an example of this notation, the spatial gradient $\sum_{\alpha=1}^N\sum_{i=1}^d \abs{ \p_i u^{\alpha} }^2$ is $\abs{ \p_x u }^2$.  The $k$th order derivative is denoted by a raised index: $\p_x^k$.  We set $\p^{k,l}$ equal to $\p_x^k + \p_t^l$; note the lack of a subscript for this combined derivative.  Usually this combined derivative will be used when referring to the $2m$th order derivative in space and the first derivative in time: $\p^{2m,1} := \p_x^{2m} + \p_t$.  We shall need to work in both parabolic H\"{o}lder and Sobolev spaces, initially on $\mathbb{R}^{d+1}$ and later on a closed manifold.  We reserve $\Omega$ for an open domain contained in $\mathbb{R}^d$, and $M$ for a closed manifold.  We shall work with the parabolic domains $P := \Omega \times (0,\omega)$ and $M_{\omega} := M \times (0,\omega)$.  For any two points $X = (x,t)$, $Y = (y,s) \in P$, the parabolic distance between them is given by
\begin{equation*}
	d(X,Y) = \max \{ \abs{ x - y }, \abs{ t - s }^{ \frac{ 1 }{ 2m } } \}.
\end{equation*}
We shall also work with the parabolic domain $P_{\delta} := \{ X =(x, t) \in P : \text{dist}(x, \Omega) > \delta \}$ and the backwards parabolic cylinders $Q_R(X_0) := \{ X \in \mathbb{R}^N \times \mathbb{R} : d(X, X_0) < R, t < t_0 \} = B_R(x_0) \times (t_0 - R^{2m}, t_0) \}$.  Let $u : P \rightarrow \mathbb{R}^N$.  For $\alpha \in (0,1)$, the H\"{o}lder semi-norm is given by
\begin{align*}
	&[u]_{\alpha; \, P } := \sup_{ X \neq Y  \in P } \frac{ \abs{  u(X) - u(Y) } }{ d(X,Y)^{ \alpha } }.
\end{align*}
The H\"{o}lder norms are given by
\begin{align*}
	&\abs{ u }_{2m, 1; \, P } := \sum_{ k=0 }^{2m} \abs{ \p_x^k u }_{ 0; \, P } + \abs{ \p_t u }_{ 0; \, P } \\
	&\abs{ u }_{2m, 1, \alpha; \, P } := \abs{ u }_{2m, 1; \, P } + [ \p^{2m, 1} u ]_{ \alpha; \, P }. \\
\end{align*}	
The set of functions
\begin{equation*}
	\{ u \in C^{2m, 1}(P) : [u]_{ 2m, 1, \alpha; \, P } < \infty \}
\end{equation*}
endowed with the norm $\abs{ u }_{ 2m, 1, \alpha; \, P }$ is called a H\"{o}lder space.  Written out in full the norm is
\begin{equation*}
	\abs{ u }_{ 2m, 1, \alpha; \, P } := \sum_{ k=0 }^{2m} \abs{ \p_x^k u }_{ 0; \, P } + \abs{ \p_t u }_{ 0; \, P }+ [ \p_x^{2m} u ]_{ \alpha; \, P } + [ \p_t u ]_{ \alpha; \, P }.
\end{equation*}
These H\"{o}lder spaces are Banach spaces.  Next we define the analogous spaces on a closed manifold.  Let $E$ be a vector bundle over a closed manifold $M$, and let $E$ and $TM$ be equipped with metrics $g$ and connections $\nabla$.  Let $d_g(X,Y)$ be the geodesic distance on $M$ measured by $g$, and let $i_g$ be the injectivity radius of the manifold $M$.  For any two points $X = (x,t)$, $Y = (y,s) \in M_{\omega}$, the parabolic distance between them is given by
\begin{equation*}
	d(X,Y) = \max \{ d_g(x,y), \abs{ t - s }^{ \frac{ 1 }{ 2m } } \}.
\end{equation*}
The definition of the H\"{o}lder space of functions $u$ on $M_{\omega}$ mimics that of functions on $\mathbb{R}^{d+1}$.  For $\alpha \in (0,1)$, we define the semi-norms
\begin{align*}
	&\abs{u}_{ 2m, 1; \, M_{\omega} } := \sum_{k=0}^{2m} \abs{ \nabla_x^k u}_{0; \, M_{\omega}} + \abs{ \p_t u }_{0; \, M_{\omega}} \\
	&\abs{u}_{2m, 1, \alpha; \, M_{\omega} } := [u]_{ 2m, 1; \, M_{\omega} } + \sup_{ X \neq Y  \in M_{\omega} } \frac{ \abs{ \nabla^{2m, 1}u(X) - \nabla^{2m, 1}u(Y) } }{ d(X,Y)^{ \alpha } }.
\end{align*}
Here we use the notation $\nabla^{2m, 1} := \nabla_x^{2m} + \p_t$.  To define H\"{o}lder spaces of sections of $E$ we need to be a little more careful: The points $X$ and $Y$ live in different vector spaces above $M$, and so parallel translation is needed to indentify the spaces in order to perform the subtraction.  As we are working on a closed manifold, geodesics always exists between any two points, however beyond the injectivity radius the geodesics may not be unique.  Let $\mathcal{P}_{ Y, X }$ denote the parallel translation along a geodesic from $Y$ to $X$.  For $U \in \Gamma(E_{\omega})$, we define the norms
\begin{align*}
	&\abs{ U }_{ 2m, 1; \, E_{\omega} } := \sum_k^{2m} \abs{ \nabla_x^{2m} U(X) }_{0; E_{\omega} } + \abs{ \p_t U(X) }_{0; \, E_{\omega} } \\
	&\abs{U}_{ 2m, 1, \alpha; \, E_{\omega} } := \abs{U}_{ 2m, 1; \, E_{\omega} } + \sup_{ \substack{ X \neq Y  \in M_{\omega} \\ d_g(x,y) < i_g} } \frac{ \abs{ \nabla^{2m, 1}U(X) - \mathcal{P}_{Y, X} \nabla^{ 2m, 1}U(Y) } }{ d(X,Y)^{ \alpha } }.
\end{align*}
The norm $\abs{ \nabla^{2m, 1}U(X) - \mathcal{P}_{Y, X} \nabla^{ 2m, 1}U(Y) }$ is measured by the bundle metric $g$, but for convenience we shall supress this dependence in our notation.  We mention in passing that  these H\"older spaces are well-defined on closed manifolds, since on a closed manifold all metrics are equivalent and the injectivity radius is always positive.  Both these conditions fail to be true on arbitrary complete manifolds.  The remaining definitions are entirely analogous to the Euclidean case and we repeat them to avoid confusion.  The H\"{o}lder norms are given by
\begin{align*}
	&\abs{ U }_{2m, 1; \, E_{\omega} } := \sum_{ k=0 }^{2m} \abs{ \nabla_x^k U }_{ 0; \, E_{\omega} } + \abs{ \p_t U }_{ 0; \, E_{\omega} } \\
	&\abs{ U }_{2m, 1, \alpha; \, E_{\omega} } := \abs{ U }_{2m, 1; \, E_{\omega} } + [ \nabla^{2m, 1} U ]_{ \alpha; \, E_{\omega} }. \\
\end{align*}	
The set of tensor fields
\begin{equation*}
	\{ U \in C^{2m, 1}(E_{\omega}) : [U]_{ 2m, 1, \alpha; \, E_{\omega} } < \infty \}
\end{equation*}
endowed with the norm $\abs{ U }_{ 2m, 1, \alpha; \, E_{\omega} }$ is again called a H\"{o}lder space and it is easily verified that it too is a Banach space.  Written out in full the norm is
\begin{equation*}
	\abs{ U }_{ 2m, 1, \alpha; \, E_{\omega} } := \sum_{ k=0 }^{2m} \abs{ \nabla_x^k U }_{ 0; \, E_{\omega} } + \abs{ \p_t U }_{ 0; \, E_{\omega} }+ [ \nabla_x^{2m} U ]_{ \alpha; \, E_{\omega} } + [ \p_t U ]_{ \alpha; \, E_{\omega} }.
\end{equation*}
Next we introduce the anisotropic Sobolev spaces we wish to work in.  The set 
\begin{equation*}
	\big\{ u : \p_x^i \p_t^j u \in L^2(P), i + 2mj \leq 2m \big\}
\end{equation*}
endowed with the norm
\begin{equation*}
	\norm{u}_{W_2^{2m,1}(P)} := \left( \iint_{P} \sum_{i+2mj \leq 2m} \abs{ \p_x^i \p_t^j u}^2 \, dx \, dt \right)^{ 1/2 }
\end{equation*}
is the Sobolev space denoted by $W_2^{2m,1}(P)$.  On a manifold the norm is given by
\begin{equation*}
	\norm{U}_{W_2^{2m,1}(E_{\omega})} := \left( \iint_{M_{\omega}} \sum_{i+2mj \leq 2m} \abs{ \nabla_{ \negthickspace x }^i \p_t^j u}^2 \, dV_g \, dt \right)^{ 1/2 }.
\end{equation*}
These spaces are also Banach spaces.  If we interchange the order of the covariant and time derivatives in our definition we obtain an equivalent norm.  Note that each time derivative counts for $2m$ space derivatives, and that we have again suppressed the dependence on the bundle metric.  We shall also need the following spaces, in which the highest order spacial derivatives is of order $m$:
\begin{equation*}
	\norm{u}_{ W_2^{m,1}(E_{\omega}) } := \bigg( \iint_{M_{\omega}} \Big( \sum_{ k \leq m} \abs{ \nabla_{ \negthickspace x }^k u}^2 + \abs{ \p_t u }^2 \Big) dV_g \, dt \bigg)^{ 1/2 }.
\end{equation*}
We also define the space
\begin{equation*}
	V(E_{\omega}) = \big\{ U \in \Wdot_2^{1,1}(E_{\omega}) : \nabla \p_t \in L^2(E_{\omega}) \big\},
\end{equation*}
and note that $V(E_{\omega})$ is dense in $\Wdot_2^{1,1}(E_{\omega})$.  

Let $\Ccirc^{ \infty }(\overline { \Omega }_T )$ be the set of all smooth functions that vanish near the spatial boundary $\{ (x,t) : x \in \p \Omega, t \in (0, \omega) \}$ of $P$, and let $\Cdot^{ \infty }(\overline{ \Omega }_{\omega})$ be the set of all smooth functions that vanish near the parabolic boundary $\{ (x,t) : x \in \p \Omega, t \in (0, T) \} \cup \{ (x,t) : x \in \overline{ \Omega }, t = 0 \}$ of $P$.  Denote by $\Wcirc_2^{2m,1}(P)$ the closure of $\Ccirc^{ \infty }(\overline{ \Omega}_T)$ in $W_2^{2m,1}(P)$, and by $\Wdot_2^{2m,1}(P)$ the closure of $\Cdot^{ \infty }(\overline{ \Omega } )$ in $W_2^{2m,1}(P)$.  We similarly define the spaces $\Wcirc_2^{m,1}(P)$ and $\Wdot_2^{m,1}(P)$.

To close out this section we recall some important results that are used in the following.  The first result is a well-known covering lemma that allows us to patch local Euclidean estimates together to give global estimate on the manifold.  For a proof we refer the reader to \cite[Corallary 4.12]{rH95} and \cite{He}.
\begin{lem}\label{lem: bounds in chart}
Let $(M,g)$ be a Riemannian manifold, $p \in M$ and $r_0 \in (0, i_g(p)/4)$.  Suppose that for each $q \geq 0$ there exist constants $A_q$ such that $\abs{ \nabla^q Rm } \leq A_q$ in $B_p(r_0)$.  Then in normal coordinates $\{x^i\}$ on $B_p(r_0)$ there exist constants $C_q = C_q(n, i_g, A_0, \ldots A_q)$ such that for each $q$ the
estimates \[ \frac12 \delta_{ij} \leq g_{ij} \leq 2  \delta_{ij} \quad \text{and} \quad \left| \frac{ \p^p g_{ij} }{ \p x^p } \right| \leq C_q \]
hold in $B_p(\min \{ A_1/\sqrt{A_0}, r_0 \})$.
\end{lem}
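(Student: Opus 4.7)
My plan is to express the metric in normal coordinates through Jacobi fields along radial geodesics from $p$ and apply ODE comparison arguments. Fix $v \in T_pM$ with $\abs{v}=1$ and consider the radial geodesic $\gamma_v(s) = \exp_p(sv)$; the differential of the exponential map satisfies $(d\exp_p)_{sv}(w) = \frac{1}{s} J_w(s)$, where $J_w$ is the Jacobi field along $\gamma_v$ with $J_w(0)=0$ and $J_w'(0)=w$. Consequently, in the normal coordinates on $B_p(r_0)$ the metric components $g_{ij}$ can be read off directly from the inner products $\langle J_{e_i}(s), J_{e_j}(s)\rangle$ for a fixed orthonormal frame $\{e_i\}$ of $T_pM$.

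The first step is to establish the two-sided bound $\tfrac12 \delta_{ij} \leq g_{ij} \leq 2\delta_{ij}$. The Jacobi equation $J'' = -R(\gamma_v',J)\gamma_v'$, combined with the hypothesis $\abs{Rm} \leq A_0$, gives by a Gr\"onwall/Rauch-type comparison that $J_w(s)$ deviates from $sw$ by at most a fixed multiplicative factor once $s$ is restricted by a constant multiple of $A_0^{-1/2}$. This yields the claimed pointwise bound on $g_{ij}$ on the restricted ball.

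The second step is to bound the Euclidean partial derivatives $\partial^q g_{ij}/\partial x^q$. Starting from $g_{ij}(0)=\delta_{ij}$ and $\partial_k g_{ij}(0)=0$, which follow from being in normal coordinates, I would differentiate the Jacobi equation iteratively with respect to the basepoint parameter $v$ and the radial parameter $s$; each differentiation produces an ODE for higher-order derivative fields whose forcing term involves $\nabla^r Rm$ for $r \leq q-1$, controlled by $A_1, \ldots, A_{q-1}$. Integrating these derivative ODEs by Gr\"onwall controls the covariant derivatives $\nabla^q g$ in terms of $A_0, \ldots, A_q$. Finally, one inductively converts covariant into Euclidean partial derivatives using $\partial_k g_{ij} = \nabla_k g_{ij} + (\text{Christoffel terms})$ and the standard expression of $\Gamma_{ij}^k$ in terms of $\partial g$, with the already-established lower-order bounds feeding back into each induction step.

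The main obstacle is the bookkeeping in this last step: the passage from covariant derivatives of the metric (which the Jacobi field analysis delivers cleanly) to flat coordinate derivatives requires a careful induction, because $\partial^q g$ is a polynomial in lower-order $\partial^r g$ and in $\nabla^r g$, and one must track how the constants $A_0,\dots,A_q$ propagate without blow-up. Once the recursive algebraic structure is set up, each ODE estimate is routine; the delicate point is ensuring the recursion closes with constants depending only on $n$, $i_g$, and $A_0,\dots,A_q$ as stated.
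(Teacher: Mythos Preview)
The paper does not give its own proof of this lemma; it simply cites Hamilton \cite{rH95} (Corollary~4.12) and Hebey \cite{He}. Your overall strategy---expressing $g_{ij}$ in normal coordinates via Jacobi fields along radial geodesics, then using Rauch-type comparison for the $C^0$ bound and differentiated Jacobi equations for the higher derivatives---is exactly the standard route taken in those references, so in spirit you are aligned with what the paper has in mind.

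There is, however, a genuine conceptual slip in your second step. You write that the Gr\"onwall argument ``controls the covariant derivatives $\nabla^q g$'' and that you then convert these to coordinate derivatives via $\partial_k g_{ij} = \nabla_k g_{ij} + (\text{Christoffel terms})$. But $\nabla g \equiv 0$ identically for the Levi-Civita connection, so ``$\nabla^q g$'' carries no information whatsoever; your conversion formula collapses to $\partial_k g_{ij} = \Gamma_{ki}^l g_{lj} + \Gamma_{kj}^l g_{il}$, which is a tautology, not an estimate. The Jacobi field machinery does not produce bounds on $\nabla^q g$---it produces bounds directly on the \emph{coordinate} derivatives $\partial^q g_{ij}$, because the components of the Jacobi fields (and their variations in the initial direction $v$) in a parallel frame are precisely what encode $g_{ij}(\exp_p(sv))$ and its partial derivatives. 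So the correct organisation is: differentiate the Jacobi equation in the parameters $(s,v)$ to obtain linear ODEs along $\gamma_v$ whose inhomogeneities involve $\nabla^r Rm$ for $r\le q$, integrate those by Gr\"onwall, and read off $\partial^q g_{ij}$ directly. No covariant-to-partial conversion step is needed (or even meaningful). Once you remove that spurious intermediate step, the induction you describe closes cleanly with constants depending only on $n$, $i_g$, and $A_0,\dots,A_q$.
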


The second result is known as G\aa rding's inequality.  G\aa rding's inequality on $\mathbb{R}^N$ is a well-known result.  The inequality also holds on a closed manifold, where one uses patching arguments similar to those we shall use later on, to lift the Euclidean estimate on to the manifold.  A proof of G\aa rding's inequality in the Euclidean case can be found in many places; for example \cite{Gi}.

\begin{lem}[G\aa rding's inequality]
Suppose that $A_b^{ a \, IJ }$ is a smooth section of a tensor bundle $E$ over a smooth closed manifold $M$ that satisfies the Legendre-Hadamard condition.  Then there exist positive constants $\lambda_0$ and $\lambda_1$ such that the bilinear form defined by
\begin{equation*}
	\mathcal{B}(U,V) := \int_M A_b^{ a \, IJ }\nabla_I^m U^b \nabla_J^m U^b \, dV_g
\end{equation*}
satisfies the inequality
\begin{equation*}
	\mathcal{B}(U,V) \geq \lambda_0 \int_M \abs{ \nabla^m U }^2 \, dV_g - \lambda_1 \int_M \abs{ U }^2 \, dV_g.
\end{equation*}
\end{lem}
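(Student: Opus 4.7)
The plan is to establish the inequality in three stages, following the standard strategy: first prove it on $\mathbb{R}^n$ with constant coefficients via the Fourier transform, then promote it to variable coefficients by a freezing argument, and finally transport it to the closed manifold $M$ via a finite atlas of normal coordinate charts. For the constant-coefficient Euclidean case, I would take $u \in C_c^{\infty}(\mathbb{R}^n, \mathbb{R}^N)$ and use Plancherel's theorem together with $\widehat{\partial_I^m u^b}(\xi) = i^m \xi_I \hat u^b(\xi)$ to rewrite $\int A_b^{a\,IJ}\partial_I^m u^b\,\partial_J^m u^a\,dx$ as $\int A_b^{a\,IJ}\xi_I \xi_J \hat u^b(\xi)\overline{\hat u^a(\xi)}\,d\xi$. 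At each $\xi$ the integrand is precisely the Legendre-Hadamard form evaluated on the vectors $\xi$ and $\eta=\hat u(\xi)$, so it is bounded below by $\lambda|\xi|^{2m}|\hat u(\xi)|^2$, and a second application of Plancherel returns $\lambda\|\nabla^m u\|_{L^2}^2$.

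For variable coefficients on $\mathbb{R}^n$, I would freeze the leading coefficient at a base point $x_0$ and, for $u$ supported in a small ball $B_\rho(x_0)$, control the remainder by $\omega(\rho)\|\nabla^m u\|_{L^2}^2$, where $\omega$ is the modulus of continuity of $A$; for $\rho$ small this error is absorbed by half of the frozen estimate. A quadratic partition of unity $\sum_\alpha \chi_\alpha^2 \equiv 1$ with $\mathrm{supp}\,\chi_\alpha \subset B_\rho(x_\alpha)$ then patches the frozen estimates; the commutators $[\chi_\alpha,\nabla^m]$ and other errors are of order strictly less than $m$ in $u$, and can be absorbed using the standard interpolation inequalities $\|\nabla^k u\|_{L^2} \leq \varepsilon\|\nabla^m u\|_{L^2} + C_\varepsilon\|u\|_{L^2}$ for $0 < k < m$, at the cost of producing the negative $L^2$ term $-\lambda_1\|u\|_{L^2}^2$.

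Finally, for the manifold, I would cover $M$ by finitely many normal coordinate balls of geodesic radius $r_0 \leq i_g(M)/4$ with a subordinate partition of unity $\sum_\alpha \chi_\alpha^2 \equiv 1$. Since $M$ is closed and $g$ is smooth, Lemma \ref{lem: bounds in chart} supplies uniform $C^q$ bounds on the metric components in each chart, hence uniform bounds on the Christoffel symbols and on any local bundle-connection forms. After trivialising $E$ over each chart, the covariant operator $\nabla^m$ differs from the coordinate partial derivative $\partial^m$ by a differential operator of order $\leq m-1$ with uniformly bounded coefficients, and the Legendre-Hadamard condition transfers to the coordinate-expressed leading coefficient with a uniform positive constant (possibly shrunk from $\lambda$). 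Applying the Euclidean estimate to each $\chi_\alpha U$, summing in $\alpha$, and absorbing the lower-order errors via interpolation then yields the global inequality. The main obstacle throughout is the careful bookkeeping of these lower-order errors — from freezing, from cutoffs, and from the difference between covariant and partial derivatives — together with the verification that the partition of unity can be chosen finely enough (and $\varepsilon$ in the interpolation inequalities small enough) that the aggregate absorbs into a small multiple of $\|\nabla^m U\|_{L^2}^2$, leaving a strictly positive $\lambda_0$ and a finite $\lambda_1$ in the final inequality.
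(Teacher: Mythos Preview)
Your proposal is correct and follows the standard route; in fact the paper does not give a proof of this lemma at all, but simply remarks that the Euclidean case is well-known (citing \cite{Gi}) and that the manifold case follows by ``patching arguments similar to those we shall use later on.'' Your three-stage outline (Fourier transform for constant coefficients, freezing plus partition of unity for variable coefficients, then transport to $M$ via a finite atlas with Lemma~\ref{lem: bounds in chart} controlling the chart transition errors) is exactly the argument the paper has in mind and defers to the literature.
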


We shall also require the Poincar\'{e} inequality:
\begin{prop}[Poincar\'{e} inequality]
Let $M$ be a smooth, closed manifold.  For any $u \in \Wdot_1^1$ there exists a positive constant $C$ such that
\begin{equation}\label{e: elliptic Poincare}
	\int_M \abs{ u }^2 \, dV_g \leq C \int_M \abs{ \nabla_{ \negthickspace x } u }^2 \, dV_g.
\end{equation}
\end{prop}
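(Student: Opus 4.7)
The plan is to establish the inequality by a contradiction argument combined with Rellich--Kondrachov compactness. Suppose no such constant $C$ exists; then one can extract a sequence $u_n \in \Wdot_1^1$ normalised so that $\int_M |u_n|^2 \, dV_g = 1$ while $\int_M |\nabla_{\negthickspace x} u_n|^2 \, dV_g \to 0$. In particular the sequence is uniformly bounded in $W^{1}_2(M)$.

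Since $M$ is a closed manifold, a finite partition of unity together with the covering lemma (Lemma~\ref{lem: bounds in chart}) reduces the compactness statement on $M$ to the standard Euclidean Rellich--Kondrachov theorem. Extracting a subsequence gives strong $L^2(M)$ convergence to some $u_\infty$ with $\norm{u_\infty}_{L^2(M)} = 1$. Since $\nabla u_n \to 0$ in $L^2$, pairing against smooth test sections in a chart and integrating by parts shows $\nabla u_\infty = 0$ distributionally. A distribution with vanishing gradient is locally constant, and the connectedness of $M$ then forces $u_\infty$ to be a (non-zero) constant function on $M$.

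The final step is to exploit the vanishing condition built into $\Wdot_1^1$: as the closure of a subspace of sections that vanish on a portion of the parabolic boundary (respectively, that have vanishing mean, in the version appropriate to a closed manifold with no spatial boundary), this linear constraint is continuous under $L^2$ convergence, passes to $u_\infty$, and rules out any non-zero constant limit. This contradicts $\norm{u_\infty}_{L^2} = 1$ and establishes the inequality. The main obstacle is pinning down precisely which vanishing condition makes the statement correct on a closed manifold, since the inequality as written fails for non-zero constants; once this is identified the argument above is routine. An attractive alternative is to invoke the spectral decomposition of $-\Delta_g$ on the closed manifold $M$: on the orthogonal complement of the constants the Poincar\'e inequality holds with the sharp constant $C = 1/\lambda_1$, where $\lambda_1$ is the first non-zero eigenvalue of $-\Delta_g$, thereby recovering the same conclusion with a quantitative constant.
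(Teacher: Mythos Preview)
The paper does not actually prove this proposition; it simply refers the reader to Hebey's book \cite[pg.~40]{He}. Your compactness--contradiction argument is the standard route and is correct, and you have rightly flagged the only genuine subtlety: on a closed manifold the inequality fails for nonzero constants, so the result hinges on the space $\Wdot_1^1$ excluding them. Your spectral alternative via $\lambda_1(-\Delta_g)$ is equally valid and gives the sharp constant.
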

For a proof this proposition we refer the reader to \cite[pg 40.]{He}.  Since the Poincar\'{e} inequality holds at each timeslice of $M_{\omega}$ we can integrate \eqref{e: elliptic Poincare} in time to get
\begin{equation*}
	\iint_{M_{\omega}} \abs{ u }^2 \, dV_g \, dt \leq C \iint_{M_{\omega}} \abs{ \nabla_{ \negthickspace x} u }^2 \, dV_g \, dt,
\end{equation*}
and then by the Kato inequality $\abs{ \nabla \abs{ \nabla u } } \leq \abs{ \nabla^2 u }$ we also obtain
\begin{equation}\label{e: parabolic Poincare inequality}
	\iint_{M_{\omega}} \abs{ u }^2 \, dV_g \, dt \leq C \iint_{M_{\omega}} \abs{ \nabla^q_{ \negthickspace x} u }^2 \, dV_g \, dt
\end{equation}
for any $q \geq 1$.

\subsection{Hilbert space theory}
We commence our existence program by studying differential operators in divergence form.  Consider the problem
\begin{equation}\
	\begin{cases}\label{e: Hilbert IVP heat}
			\frac{ \p }{ \p t }U^a + (-1)^{ \abs{J} } \sum_{ 0 \leq \abs{I},\abs{J} \leq m } \nabla_J (A_b^{a \, IJ}(X)\nabla_I U^b ) = F^a(X), \quad X \in M_{\omega} \\
		U(M,0) = U_0,
	\end{cases}
\end{equation}
If $U_0$ is is sufficiently smooth then we can consider the problem for $V := U - U_0$, so without loss of generality we can assume $U_0 = 0$.  Ultimately we are interested in smooth solutions, so for us $U_0$ will always be smooth and this transformation is always possible.  Henceforth, we will usually assume $U_0 = 0$.  For simplicity, we assume that the connection does not depend on time, so we can commute time and space derivatives without introducing derivatives of the Christoffel symbols. We now want to introduce the notion of a weak solution to the above problem, and then recast the problem in terms of bilinear form on a Hilbert space.

\begin{defn}
A section $U \in \Wcirc_2^{m,1}(\Gamma(E_{\omega}))$ is called a weak solution of the initial value problem \eqref{e: Hilbert IVP heat} if for any $\varphi \in \Ccirc^{\infty}(\Gamma(E_{\omega}))$, the equation
\begin{equation}\label{d: weak solution 1}
\begin{split}
	&\iint_{M_{\omega}} \bigg( U^a_t \varphi^a + \sum_{ \abs{I} = \abs{J} = m }A_b^{a \, IJ}\nabla_I U^b \nabla_J \varphi^a + \sum_{ \substack{ 0 \leq \abs{I}, \abs{J} \leq m \\ \abs{I} + \abs{J} \leq 2m-1 } } B_b^{ a \, IJ }\nabla_I U^b \nabla_J\varphi^a \bigg)  \, dV_g \, dt \\
	&\quad = \iint _{M_{\omega}} F^a\varphi^a \, dV_g \, dt \end{split}
\end{equation}
holds.
\end{defn}
Since $\Ccirc^{ \infty }(E_{\omega})$ is dense in $\Wcirc_2^{m,0}(M_{\omega})$, the test function can in fact be any function in $\Wcirc_2^{m,0}(E_{\omega})$.  For ease of reading will again often drop the indices running over the section $U$ and simply write
\begin{equation*}
	\iint_{M_{\omega}} \bigg( U_t \varphi + \sum_{ \abs{I} = \abs{J} = m }A^{IJ}\nabla_I^m U \nabla_J^m \varphi + \sum_{ \substack{ 0 \leq \abs{I}, \abs{J} \leq m \\ \abs{I} + \abs{J} \leq 2m-1 } } B^{IJ }\nabla_I U \nabla_J\varphi  \bigg) \, dV_g \, dt = \iint _{M_{\omega}} F\varphi \, dV_g \, dt
\end{equation*}

We have the following two characterisations of weak solutions.
\begin{prop}
A section $U \in \Wcirc_2^{m,1}(E_{\omega})$  satisfies \eqref{d: weak solution 1} if and only if $U$ satisfies
\begin{equation}\label{d: weak solution 2}
	\begin{split}
		&\iint_{M_{\omega}} U^a_t \bigg( \varphi_t^a + \sum_{ \abs{I} = \abs{J} = m }A_b^{a \, IJ}\nabla_I^m U^b \nabla_J^m \varphi_t^a + \sum_{ \substack{ 0 \leq \abs{I}, \abs{J} \leq m \\ \abs{I} + \abs{J} \leq 2m-1 } } B_b^{ a \, IJ }\nabla_I U^b \nabla_J\varphi_t^a  \bigg) \, dV_g \, dt \\
		&\quad = \iint _{M_{\omega}} F^a\varphi_t^a \, dV_g \, dt \end{split}
\end{equation}
for any $\varphi \in \Ccirc^{ \infty }(E_{\omega})$.
\end{prop}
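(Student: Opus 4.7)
The plan is to observe that equation \eqref{d: weak solution 2} is essentially equation \eqref{d: weak solution 1} with the test section $\varphi$ replaced throughout by its time derivative $\varphi_t$; the equivalence of the two formulations then reduces to the statement that the map $\varphi \mapsto \partial_t \varphi$ sends $\Ccirc^\infty(E_\omega)$ onto itself. Since $M$ is closed, there is no spatial boundary and $\Ccirc^\infty(E_\omega)$ consists of all smooth compactly-in-time-admissible sections over $M_\omega$.

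For the forward direction, I would suppose $U$ satisfies \eqref{d: weak solution 1} for every test section $\varphi \in \Ccirc^\infty(E_\omega)$. Given such a $\varphi$, its time derivative $\partial_t \varphi$ is again a smooth section of the same class, hence admissible as a test function. Substituting $\partial_t \varphi$ in place of $\varphi$ in \eqref{d: weak solution 1} directly produces \eqref{d: weak solution 2}.

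For the reverse direction, I would suppose \eqref{d: weak solution 2} holds and let $\psi \in \Ccirc^\infty(E_\omega)$ be an arbitrary test section. Define the time antiderivative
\[ \varphi(x,t) := \int_0^t \psi(x,s)\, ds. \]
Smooth dependence of the integrand on the spatial variable makes $\varphi$ smooth on $M_\omega$, and differentiation under the integral sign gives $\partial_t \varphi = \psi$; since $\psi$ has the required spatial support properties, so does $\varphi$. Substituting this $\varphi$ into \eqref{d: weak solution 2} recovers \eqref{d: weak solution 1} for the arbitrary test function $\psi$, establishing the converse.

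The argument presents no serious obstacle, since it is essentially a notational reformulation of the same weak equation. The only point requiring attention is the verification that antidifferentiation in $t$ preserves membership in the test-function space $\Ccirc^\infty(E_\omega)$, which is immediate on a closed manifold where no spatial boundary conditions enter.
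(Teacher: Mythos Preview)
Your argument is correct and matches the paper's own proof essentially verbatim: the forward direction substitutes $\varphi_t$ as a test function, and the converse substitutes the time antiderivative $\int_0^t \psi(x,s)\,ds$. Your additional remark that on a closed manifold the antiderivative remains in $\Ccirc^\infty(E_\omega)$ is a useful clarification the paper leaves implicit.
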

\begin{proof}
Suppose that $U  \in \Wcirc_2^{m,1}(E_{\omega})$ satisfies \eqref{d: weak solution 1} for any $\varphi \in \Ccirc^{ \infty }(E_{\omega})$.  Because $\varphi$ is smooth, $\varphi_t$ is a valid test function, and so \eqref{d: weak solution 2} holds.  Conversely, suppose that $U  \in \Wcirc_2^{m,1}(E_{\omega})$ satisfies \eqref{d: weak solution 2} for any $\varphi \in \Ccirc^{ \infty }(M_{\omega})$.  Then since  $\varphi$ is smooth, $\int_0^t \varphi(x,s) \, ds$ is a valid test function, and choosing the test function as such in \eqref{d: weak solution 2} shows \eqref{d: weak solution 1} holds.
\end{proof}

\begin{prop}\label{p: Hilbert IVP 1}
A section $U \in \Wcirc_2^{m,1}(E_{\omega})$ satisfies \eqref{d: weak solution 1} if and only if $U$ satisfies
\begin{equation}\label{d: weak solution 3}
	\begin{split}
	&\iint_{M_{\omega}} \bigg( U^a_t \varphi_t^a + \sum_{ \abs{I} = \abs{J} = m }A_b^{a \, IJ}\nabla_I^m U^b \nabla_J^m \varphi_t^a + \sum_{ \substack{ 0 \leq \abs{I}, \abs{J} \leq m \\ \abs{I} + \abs{J} \leq 2m-1 } } B_b^{ a \, IJ }\nabla_I U^b \nabla_J\varphi_t^a  \bigg) e^{ -\theta t } \, dV_g \, dt \\
	&\quad = \iint _{M_{\omega}} F^a\varphi^a_t e^{ -\theta t } \, dV_g \, dt \end{split}
\end{equation}
for any $\varphi \in \Ccirc^{ \infty }(E_{\omega})$, where $\theta$ is a positive constant.
\end{prop}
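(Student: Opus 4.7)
The plan is to deduce this as an immediate consequence of the previous proposition, which already establishes the equivalence of \eqref{d: weak solution 1} with the ``time-differentiated'' formulation \eqref{d: weak solution 2}. So the task reduces to showing that \eqref{d: weak solution 2} and \eqref{d: weak solution 3} are equivalent. These two equations differ only by the weight $e^{-\theta t}$ multiplying each integrand of \eqref{d: weak solution 3}. Since $e^{-\theta t}$ is a positive smooth function of $t$ alone, it is invariant under the spatial covariant derivatives $\nabla_I$ and passes freely through every occurrence of $\nabla_I^m$, $\nabla_J^m$, $\nabla_I$, $\nabla_J$; thus inserting $e^{-\theta t}$ into the integrands of \eqref{d: weak solution 2} is exactly the same as replacing the expression $\varphi_t$ in \eqref{d: weak solution 2} by $\varphi_t\, e^{-\theta t}$ throughout.

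The key step is therefore to verify that, as one ranges over all test functions $\varphi \in \Ccirc^\infty(E_\omega)$, the classes
\begin{equation*}
\{\varphi_t : \varphi \in \Ccirc^\infty(E_\omega)\} \quad \text{and} \quad \{\varphi_t\, e^{-\theta t} : \varphi \in \Ccirc^\infty(E_\omega)\}
\end{equation*}
coincide as admissible inputs for the bilinear identity. I would exhibit the explicit correspondence: given $\varphi \in \Ccirc^\infty(E_\omega)$, define
\begin{equation*}
\psi(x,t) = \int_0^t \varphi_t(x,s)\, e^{-\theta s}\, ds.
\end{equation*}
Then $\psi$ is smooth, satisfies $\psi_t = \varphi_t\, e^{-\theta t}$, and inherits any vanishing property of $\varphi$ built into the class $\Ccirc^\infty(E_\omega)$. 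The reverse construction is identical with weight $e^{\theta s}$ in place of $e^{-\theta s}$, so the two classes are in bijection.

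With this in hand, substituting the $\psi$ obtained from an arbitrary $\varphi$ into \eqref{d: weak solution 2} produces exactly \eqref{d: weak solution 3} for the original $\varphi$; running the correspondence in reverse gives the opposite implication. The whole argument is bookkeeping, and I do not foresee any serious obstacle. If anything, the only item requiring a moment of care is confirming that the constructed $\psi$ genuinely lives in $\Ccirc^\infty(E_\omega)$, which on a closed manifold amounts to checking smoothness and the temporal vanishing convention encoded in the definition, both of which are manifestly preserved by the integral formula above.
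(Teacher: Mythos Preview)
Your proposal is correct and follows essentially the same approach as the paper. The paper's forward direction is marginally more direct---it substitutes $\varphi_t e^{-\theta t}$ straight into \eqref{d: weak solution 1} rather than passing through \eqref{d: weak solution 2}---and for the converse it writes the test function as $\varphi(x,t)e^{\theta t} - \theta\int_0^t \varphi(x,s)e^{\theta s}\,ds$ (which differs from your $\int_0^t \varphi_s e^{\theta s}\,ds$ only by the constant-in-time term $\varphi(x,0)$, via integration by parts); but the underlying idea of building the right test function by a time integral is identical.
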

\begin{proof}
Suppose that $U  \in \Wcirc_2^{m,1}(E_{\omega})$ satisfies \eqref{d: weak solution 1} for any $\varphi \in \Ccirc^{ \infty }(
E_{\omega})$.  Because $\varphi$ is smooth, $\varphi_t e^{ -\theta t }$ is a valid test function, and so \eqref{d: weak solution 3} holds.  Conversely, suppose that $U  \in \Wcirc_2^{m,1}(E_{\omega})$ satisfies \eqref{d: weak solution 3} for any $\varphi \in \Ccirc^{ \infty }(E_{\omega})$.  Then since  $\varphi$ is smooth, $\varphi(x,t) e^{ -\theta t } - \theta \int_0^t \varphi(x,s) e^{ \theta s } \, ds$ is a valid test function, and choosing the test function as such in \eqref{d: weak solution 3} shows \eqref{d: weak solution 2} holds, and thus \eqref{d: weak solution 1}.
\end{proof}

We shall use the Lax-Milgram lemma to prove existence and uniqueness of a weak solution to problem \eqref{d: weak solution 1}.  Our approach is similar to that of \cite{HP} and \cite{Sh}, where slightly different function spaces were used.

\begin{thm}\label{t: Lax-Milgram}
Let $H$ be a Hilbert space and $V$ an inner product space continuously embedded in $H$.  Let $\mathcal{B} : H \times V \rightarrow \mathbb{R}$ be a bilinear form with the following properties:
\begin{enumerate}
	\item For all $U \in H$ and $W \in V$, there exists a constant $C$ such that $\abs{ \mathcal{B}(U,W) } \leq C \norm{U}_H \norm{W}_V$ \\
	\item $\mathcal{B}$ is coercive, namely, there exists a $\lambda > 0$ such that $\mathcal{B}(W,W) \geq \lambda \norm{ W }_{V}^2$
\end{enumerate}
Then for any bounded, linear functional $F(W)$ in $H$, there exists a $U \in H$ such that $F(W) = \mathcal{B}(U,W)$ for each $W \in V$.  Moreover, if $W$ is dense in $H$, then $U$ is unique.
\end{thm}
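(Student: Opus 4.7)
The plan is to reduce the problem to an application of the Riesz representation theorem on $H$ together with a Hahn--Banach extension, using coercivity to control the inverse of the map we construct. Since $V$ is only assumed to be an inner product space (not necessarily complete), a direct variational minimisation is awkward; instead I would transport the problem from $V$ into the Hilbert space $H$ via a Riesz-type representation of the bilinear form in its first slot.

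First I would fix $W\in V$ and consider the linear functional $U\mapsto \mathcal B(U,W)$ on $H$. By the continuity hypothesis this functional has norm at most $C\|W\|_V$, so the Riesz representation theorem in $H$ yields a unique element $TW\in H$ with
\begin{equation*}
\mathcal B(U,W)=(U,TW)_H\qquad\text{for every }U\in H,
\end{equation*}
and the assignment $T:V\to H$ is linear with $\|TW\|_H\leq C\|W\|_V$. Coercivity, combined with the continuous embedding $V\hookrightarrow H$ (with constant $c$, say), then gives the reverse bound
\begin{equation*}
\lambda\|W\|_V^2\leq \mathcal B(W,W)=(W,TW)_H\leq\|W\|_H\|TW\|_H\leq c\|W\|_V\|TW\|_H,
\end{equation*}
so $\|W\|_V\leq(c/\lambda)\|TW\|_H$; in particular $T$ is injective. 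Next I would define a linear functional $\phi$ on the range $R(T)\subset H$ by $\phi(TW):=F(W)$, which is well defined by injectivity of $T$. The estimate just derived gives
\begin{equation*}
|\phi(TW)|=|F(W)|\leq\|F\|_{H^*}\|W\|_H\leq c\|F\|_{H^*}\|W\|_V\leq\frac{c^2}{\lambda}\|F\|_{H^*}\|TW\|_H,
\end{equation*}
so $\phi$ is bounded on $R(T)$. Extending $\phi$ to all of $H$ by Hahn--Banach and representing the extension via Riesz produces $U\in H$ with $(U,X)_H=\tilde\phi(X)$ for all $X\in H$; specialising to $X=TW$ gives $\mathcal B(U,W)=(U,TW)_H=\phi(TW)=F(W)$, which is the desired identity.

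For uniqueness under the additional hypothesis that $V$ is dense in $H$, suppose $U\in H$ satisfies $\mathcal B(U,W)=0$ for every $W\in V$, and pick a sequence $U_n\in V$ with $U_n\to U$ in $H$. Taking $W=U_n$ in the vanishing relation and using coercivity with continuity of $\mathcal B$ in the first argument yields
\begin{equation*}
\lambda\|U_n\|_V^2\leq\mathcal B(U_n,U_n)=\mathcal B(U_n-U,U_n)\leq C\|U_n-U\|_H\|U_n\|_V,
\end{equation*}
whence $\|U_n\|_V\leq(C/\lambda)\|U-U_n\|_H\to0$. The embedding then forces $U_n\to 0$ in $H$, and comparison with $U_n\to U$ gives $U=0$.

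The main subtlety is dealing with the asymmetry of the two slots of $\mathcal B$: continuity controls the first variable in the $H$-norm but the second only in the stronger $V$-norm, so $\mathcal B$ does not extend naturally to $H\times H$ and the standard symmetric Lax--Milgram argument does not apply directly. The device that bridges this gap is the lower bound $\|TW\|_H\geq(\lambda/c)\|W\|_V$, which turns coercivity in $V$ into a statement inside $H$ and is exactly what makes $\phi$ bounded on $R(T)$; the same estimate is what drives the uniqueness argument after approximating an element of $H$ by elements of $V$.
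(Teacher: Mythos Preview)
Your argument is correct. The paper does not actually supply its own proof of this theorem; it simply refers the reader to \cite[p.~118]{Sh} for a careful proof, so there is no in-paper argument to compare against. Your approach---representing $\mathcal B(\,\cdot\,,W)$ via Riesz to obtain $T:V\to H$, using coercivity plus the embedding to get the lower bound $\|TW\|_H\geq(\lambda/c)\|W\|_V$, and then extending the functional $\phi(TW)=F(W)$ from $R(T)$---is the standard route for this asymmetric (Lions-type) Lax--Milgram lemma and is essentially what appears in the reference cited. One small remark: since $H$ is Hilbert, you do not really need Hahn--Banach; you can extend $\phi$ by continuity to $\overline{R(T)}$ and then set it to zero on $\overline{R(T)}^\perp$, which avoids any appeal to choice. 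Your uniqueness argument via the approximating sequence $U_n\to U$ is also clean and correct.
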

For a careful proof of this result we recommend to the reader \cite[pg 118.]{Sh}.

\begin{thm}
If $F$, $U_0 \in L^2(E_{\omega})$, then the initial value problem \eqref{e: Hilbert IVP heat} admits a weak unique solution $U \in \Wdot_2^{m,1}(E_{\omega})$.
\end{thm}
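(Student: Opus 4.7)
The plan is to apply the Lax--Milgram theorem (Theorem~\ref{t: Lax-Milgram}) to the variational formulation supplied by Proposition~\ref{p: Hilbert IVP 1}. Take $H=\Wdot_2^{m,1}(E_{\omega})$ and let $V\subset H$ be the subspace of those $\varphi$ for which $\partial_t\varphi\in \Wdot_2^{m,0}(E_{\omega})$; density of $V$ in $H$ holds because $\Cdot^{\infty}(E_{\omega})\subset V$ is dense in $H$ by definition of $\Wdot$. Define
\begin{equation*}
\mathcal{B}(U,\varphi)=\iint_{M_{\omega}}\!\Big(U_t\varphi_t+A^{IJ}\nabla_I^m U\,\nabla_J^m \varphi_t+B^{IJ}\nabla_I U\,\nabla_J\varphi_t\Big)e^{-\theta t}\,dV_g\,dt,
\end{equation*}
and $L(\varphi)=\iint_{M_{\omega}}F\,\varphi_t\,e^{-\theta t}\,dV_g\,dt$, with $\theta>0$ a parameter to be fixed. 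Boundedness of $\mathcal{B}$ on $H\times V$ and of $L$ on $V$ is routine: each term is estimated by Cauchy--Schwarz using the uniform $L^{\infty}$ bounds on $A^{IJ}$ and $B^{IJ}$, and the observation that $\varphi_t$ and $\nabla_J^m\varphi_t$ live in $L^2$ precisely because $\varphi\in V$.

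The main task is coercivity on $V$. Setting $U=\varphi=W$ and invoking the symmetry $A^{ab,IJ}=A^{ba,JI}$, the leading $A$-term can be rewritten as
\begin{equation*}
A^{ab,IJ}\nabla_I^m W^b\,\nabla_J^m W_t^a=\tfrac{1}{2}\partial_t\!\big(A^{ab,IJ}\nabla_I^m W^b\,\nabla_J^m W^a\big)-\tfrac{1}{2}(\partial_t A^{ab,IJ})\nabla_I^m W^b\,\nabla_J^m W^a.
\end{equation*}
Integrating by parts in $t$ against the weight $e^{-\theta t}$, the boundary term at $t=0$ vanishes because $W\in\Wdot$, the boundary term at $t=\omega$ is non-negative by the Legendre--Hadamard condition, and $-\partial_t e^{-\theta t}=\theta e^{-\theta t}$ produces a gain $\tfrac{\theta}{2}\iint A^{IJ}\nabla^m W\,\nabla^m W\,e^{-\theta t}$ which, by G\aa rding's inequality applied slicewise, dominates $\tfrac{\theta\lambda_0}{2}\iint |\nabla^m W|^2 e^{-\theta t}-\tfrac{\theta\lambda_1}{2}\iint |W|^2 e^{-\theta t}$. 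The resulting $\iint |W|^2$ term is in turn controlled by $\iint W_t^2$ through the estimate $|W(x,t)|^2\le t\int_0^t|W_t|^2\,ds$ which holds because $W|_{t=0}=0$. Together with the manifest diagonal contribution $\iint W_t^2 e^{-\theta t}$, and after absorbing the error $(\partial_t A^{IJ})(\nabla^m W)^2$ and the lower-order $B^{IJ}\nabla_I W\,\nabla_J W_t$ terms by Young's inequality (distributing a small weight onto the $W_t$ factor and using the Kato form of the Poincar\'e inequality \eqref{e: parabolic Poincare inequality} to interpolate intermediate derivatives into $\|\nabla^m W\|_{L^2}$), a sufficiently large choice of $\theta$ gives
\begin{equation*}
\mathcal{B}(W,W)\ge\lambda\bigl(\|W_t\|_{L^2}^2+\|\nabla^m W\|_{L^2}^2\bigr)\ge\lambda'\|W\|_V^2.
\end{equation*}

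Theorem~\ref{t: Lax-Milgram} then produces $U\in H$ with $\mathcal{B}(U,\varphi)=L(\varphi)$ for every $\varphi\in V$, and the density of $V$ in $H$ upgrades this to uniqueness. Applying Proposition~\ref{p: Hilbert IVP 1} in the reverse direction converts the identity back to the original weak formulation \eqref{d: weak solution 1}, whence $U\in\Wdot_2^{m,1}(E_{\omega})$ is the desired weak solution. The principal obstacle is the coercivity step: the appearance of $\varphi_t$ rather than $\varphi$ in the test slot forces us to extract the full $V$-norm from a quantity that is not manifestly positive, and this is precisely what the symmetry hypothesis buys us, by letting the leading term be integrated by parts in time to reveal the damping factor $\theta$.
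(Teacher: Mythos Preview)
Your argument is essentially the same as the paper's: both take $H=\Wdot_2^{m,1}(E_{\omega})$, use a dense subspace with extra time regularity on $\varphi_t$, set up the weighted bilinear form of Proposition~\ref{p: Hilbert IVP 1}, exploit the symmetry of $A^{IJ}$ to integrate the leading term by parts in $t$ and harvest the $\theta$-gain from the weight, and then apply Lax--Milgram. The only substantive difference is in how you dispose of the zeroth-order error $\tfrac{\theta\lambda_1}{2}\iint|W|^2e^{-\theta t}$ coming from G\aa rding: the paper absorbs it into the $|\nabla^m W|^2$ term via the spatial Poincar\'e inequality \eqref{e: parabolic Poincare inequality}, whereas you feed it into the $W_t^2$ term via the time-integration estimate from $W|_{t=0}=0$. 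Your route is fine, but as written the naive bound $|W(t)|^2\le t\int_0^t|W_s|^2\,ds$ loses the weight and leaves you with a $\theta$-dependent error competing against a fixed $W_t^2$ term; to make it close you should use the weighted version $|W(t)|^2e^{-\theta t}\le\theta^{-1}\int_0^t|W_s|^2e^{-\theta s}\,ds$ (Cauchy--Schwarz against $e^{\theta s/2}$), which recovers exactly the $1/\theta$ decay needed. With that refinement your argument and the paper's are interchangeable.
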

\begin{proof}
Let $U \in \Wdot_2^{m,1}(E_{\omega})$, $V \in V(E_{\omega})$ and $\theta$ be some constant greater than zero that will be fixed later on.  Consider the bilinear form associated to the differential operator in problem \eqref{e: Hilbert IVP heat}
\begin{equation}\label{e: bilinear form 1}
	\mathcal{B}(U,V) := \iint_{M_{\omega}} \bigg( U_tV_t + \sum_{ \abs{I} = \abs{J} = m }A_b^{a \, IJ}\nabla_I^m U^b \nabla_J^m V^a + \sum_{ \substack{ 0 \leq \abs{I}, \abs{J} \leq m \\ \abs{I} + \abs{J} \leq 2m-1 } } B_b^{ a \, IJ }\nabla_I U^b \nabla_J V^a \bigg) e^{-\theta t}  \, dV_g \, dt.
\end{equation}
We want to show that the bilinear form $\mathcal{B}$ satisfies the conditions of the Lax-Milgram Lemma.  First, it's easy to see
\begin{equation*}
	\abs{ \mathcal{B}(U,V) } \leq \norm{ U }_{ W_2^{m,1}(E_{\omega}) } \norm{ V }_{ V(E_{\omega}) },
\end{equation*}
and so $\mathcal{B}$ is bounded. Next we show $\mathcal{B}$ is also coercive.  For convenience, write $\mathcal{B} = I_1 + I_2$, where $I_1$ and $I_2$ refer to the two summation terms of \eqref{e: bilinear form 1}.  Focussing on $I_1$, for $V \in V(E_{\omega})$ we have
\begin{align*}
	I_1 &= \iint_{M_{\omega}} \sum_{ \abs{I} = \abs{J} = m }A_b^{a \, IJ}\nabla_I V^b \nabla_J V_t^a e^{ - \theta t } \, dV_g \, dt \\
	&\geq \frac{1}{2} \iint_{M_{\omega}} \frac{\p}{\p t} \big( A_B^{ a \, IJ } \nabla_I V \nabla_J V \big) e^{ -\theta t } - \frac{1}{2} \abs{ \p_t A }_0  \iint_{M_{\omega}} \nabla_I V \nabla_J V e^{ -\theta t } \, dV_g \, dt \\
	&\geq \frac{1}{2} \iint_{M_{\omega}} \frac{\p}{\p t} \big( A_B^{ a \, IJ } \nabla_I V \nabla_J V  e^{ -\theta t } \big) + \frac{ \theta }{ 2 } \iint_{M_{\omega}} A_b^{a \, IJ} \nabla_I V \nabla_J V e^{ -\theta t } \, dV_g \, dt \\
	&\quad - \frac{1}{2} \abs{ \p_t A }_0  \iint_{M_{\omega}} \nabla_I V \nabla_J V e^{ -\theta t } \, dV_g \, dt
\end{align*}
Upon integrating the first term on the right we find both terms are non-negative: the endpoint $t = T$ from G\aa rding's inequality and $t= 0$ because $V \in V(E_{\omega})$, and we discard these terms.  We are left with
\[
	I_1 \geq \Big( \frac{ \theta \lambda }{ 2 } - \frac{1}{2} \abs{ \p_t A }_0 \Big) e^{ -\theta T}\iint_{M_{\omega}} \abs{ \nabla^m V }^2. \]
By choosing $\theta$ sufficiently large the first term on the right can be made positive.  Now we deal with $I_2$.  By using the Peter-Paul inequality on the terms of $I_2$ they are either of the order $\abs{ \nabla^m V }^2$ multiplied by an $\epsilon$, or lower order terms divided by $\epsilon$.  In the case of the former, they can again be absorbed by choosing $\theta$ sufficiently large.  In the case of all lower order terms, they can also be absorbed by using the Poincar\'e inequality and then choosing $\theta$ sufficiently large.  After all such estimation we obtain
\begin{equation*}
	\mathcal{B}(V,V) \geq \delta \norm{ V }_{ W_2^{p,1}(E_{\omega}) }
\end{equation*}
for some constant $\delta > 0$.  This shows $\mathcal{B}$ is coercive and we may now apply the Lax-Milgram Lemma.  In Theorem \ref{t: Lax-Milgram}, choose $\Wdot_2^{m,1}(E_{\omega})$ as the space $H$, $V(E_{\omega})$ as the space $W$, and $F(V) = \iint_{M_{\omega}} FV_t e^{ -\theta t } \, dV_g \, dt$.  By the Lax-Milgram Lemma, there exists a unique $U \in \Wdot_2^{m,1}\Gamma(E_{\omega})$ such that $\mathcal{B}(U,V) = F(V)$ for all $V \in V(E_{\omega})$.  Thus $U$ is the unique weak solution to problem \eqref{e: Hilbert IVP heat} by Propostion \ref{p: Hilbert IVP 1}.
\end{proof}

Next we discuss the regularity of the weak solution to problem \eqref{e: Hilbert IVP heat}.  We first need to first recall some basic facts about difference quotients.  Difference quotient approximations to weak derivatives are a common tool in PDE and proofs of the following facts can be found in many texts, for example \cite{Gi, GT83}.  Let $u : \Omega \rightarrow \mathbb{R}^N$ be function and $\Omega' \subset \subset \Omega$.  The difference quotient in direction $e_k$ is defined for all $x \in \Omega'$ by
\begin{equation*}
	D_{k,h}(x) := \frac{ u(x + he_k, t) - u(x,t) }{ h },
\end{equation*}
where $0 < \abs{h} < \text{dist}(\Omega', \Omega)$ and $k = 1, \ldots, N$.
\begin{prop}\label{prop: difference quotients 2}
\begin{enumerate}
\item Suppose $u \in W_2^p(\Omega)$ and $1 \leq p < \infty$. Then for each $\Omega' \subset \subset \Omega$ the estimate
	\begin{equation}
		\norm{ D_{h,k} }_{L^2(\Omega')} \leq C\norm{ \p u }_{L^2(\Omega)}
	\end{equation}
holds for some constant $C$ and all $0 < h < (1/2) \text{dist}(\Omega', \p \Omega)$. \\
\item Suppose that $u \in L^p(\Omega')$, $1 < p < \infty$, and that there exists a constant $C$ such that
	\begin{equation*}
		\norm{ D_{h,k}u }_{L^p(\Omega')} \leq C
	\end{equation*}
holds for all $0 < h < (1/2) \text{ dist}(\Omega', \p \Omega)$.  Then $ \p u \in L^2(\Omega')$ and
	\begin{equation*}
		\norm{ \p u }_{L^2(\Omega')} \leq C.
	\end{equation*}
\end{enumerate}
\end{prop}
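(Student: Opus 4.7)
The plan is to treat the two parts by quite different techniques: the first by an elementary integral representation and a density argument, and the second by a weak compactness argument in the reflexive space $L^{p}(\Omega')$.

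For part (1), I would first establish the estimate for smooth $u$, for which the fundamental theorem of calculus gives the pointwise representation
\begin{equation*}
D_{h,k}u(x) \;=\; \frac{1}{h}\int_{0}^{h}\partial_{k}u(x+s e_{k})\,ds
\end{equation*}
valid for every $x\in\Omega'$ since $|h|<\tfrac{1}{2}\operatorname{dist}(\Omega',\partial\Omega)$ keeps the segment inside $\Omega$. I would apply H\"older's inequality in the $s$-variable to get $|D_{h,k}u(x)|^{p}\leq \tfrac{1}{h}\int_{0}^{h}|\partial_{k}u(x+se_{k})|^{p}\,ds$, then integrate over $\Omega'$ and swap the order of integration by Fubini. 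A translation of the $x$-variable then dominates the inner integral by $\|\partial_{k}u\|_{L^{p}(\Omega)}^{p}$, yielding $\|D_{h,k}u\|_{L^{p}(\Omega')}\leq\|\partial_{k}u\|_{L^{p}(\Omega)}$ with constant $C=1$. To pass from smooth $u$ to general $u\in W^{1,p}(\Omega)$, I would invoke density of $C^{\infty}(\Omega)\cap W^{1,p}(\Omega)$ in $W^{1,p}(\Omega)$ (Meyers--Serrin) on a slightly larger domain and take the limit; both sides of the inequality are continuous in the $W^{1,p}$ topology for fixed $h>0$.

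For part (2), the starting point is that $1<p<\infty$ makes $L^{p}(\Omega')$ reflexive, so the bounded family $\{D_{h,k}u\}_{h}$ admits a sequence $h_{j}\downarrow 0$ and a function $v\in L^{p}(\Omega')$ with $D_{h_{j},k}u \rightharpoonup v$ weakly in $L^{p}(\Omega')$. The key identity is integration by parts for the discrete difference quotient: for any $\varphi\in C_{c}^{\infty}(\Omega')$ and $|h_{j}|$ smaller than the distance from $\operatorname{supp}\varphi$ to $\partial\Omega'$, a change of variables gives
\begin{equation*}
\int_{\Omega'}\!D_{h_{j},k}u \cdot \varphi \,dx \;=\; -\int_{\Omega'}\! u\cdot D_{-h_{j},k}\varphi \,dx .
\end{equation*}
Since $\varphi$ is smooth with compact support, $D_{-h_{j},k}\varphi\to -\partial_{k}\varphi$ uniformly, so the right-hand side converges to $-\int_{\Omega'}u\,\partial_{k}\varphi\,dx$, while weak convergence handles the left-hand side. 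This identifies $v$ as the weak derivative $\partial_{k}u$, and the bound $\|\partial_{k}u\|_{L^{p}(\Omega')}\leq C$ then follows from lower semicontinuity of the $L^{p}$ norm under weak convergence.

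I expect the main subtlety to be the second part: one must be careful that $1<p<\infty$ is genuinely needed, since $L^{1}$ is not reflexive and the argument would fail (as it does in the classical statement). The first part, by contrast, is essentially a one-line computation once the representation formula is written down; I would take care only to state the density step cleanly, working on an intermediate domain $\Omega''$ with $\Omega'\subset\subset\Omega''\subset\subset\Omega$ so that smooth approximations enjoy uniform $W^{1,p}$ bounds on $\Omega''$ and the estimate passes to the limit.
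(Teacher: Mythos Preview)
Your proposal is correct and follows the standard textbook argument. The paper does not actually give a proof of this proposition: it simply states that ``proofs of the following facts can be found in many texts, for example \cite{Gi, GT83}.'' The proof you outline---fundamental theorem of calculus plus H\"older and Fubini for part (1), weak compactness in the reflexive space $L^{p}$ plus discrete integration by parts for part (2)---is precisely the argument one finds in Gilbarg--Trudinger and Giaquinta, so your approach is exactly what the paper defers to.
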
		

\begin{prop}[interior regularity]
Suppose that $F \in L^2(E_{\omega})$ and $U \in \Wdot_2^{m,1}(E_{\omega})$ is a weak solution to problem \eqref{e: Hilbert IVP heat}.  Then $U \in W_2^{2m,1}(E_{\omega})$ and the estimate
\begin{equation*}
	\norm{ U }_{ W_2^{2m,1}(P_{\delta}) } \leq C \big( \norm{ U }_{ W_2^{m,0}(P) } + \norm{ F }_{ L^2(P)} \big).
\end{equation*}
holds.
\end{prop}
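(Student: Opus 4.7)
The plan is to gain spatial regularity by Nirenberg's difference-quotient method iterated $m$ times, and then read the time derivative directly off the equation. First I would localise the weak formulation: choose a smooth spatial cut-off $\zeta$ which equals $1$ on $P_\delta$ and is supported in a slightly larger set $P_{\delta/2}$, and (since the statement is interior) work in coordinate charts, patching finitely many using Lemma \ref{lem: bounds in chart}. For small $h$ and each spatial coordinate direction $e_k$, the candidate test function is $\varphi = -D_{-h,k}(\zeta^{2m} D_{h,k} U)$, which by construction lies in $\Wcirc_2^{m,0}(E_\omega)$ and is therefore admissible in \eqref{d: weak solution 1}. Plugging it in and using the discrete integration-by-parts formula $\iint u\, D_{-h,k} v\, dV_g\, dt = -\iint D_{h,k} u\, v\, dV_g\, dt$ transfers the difference operator onto $U$ and the coefficients on the left-hand side.

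Next I would invoke Gårding's inequality on the leading part. After the discrete Leibniz rule is used to move derivatives onto $\zeta$, the principal quadratic form yields a lower bound of shape
\begin{equation*}
\mathcal{B}(U,\varphi) \geq \lambda_0 \norm{\zeta^m \nabla^m D_{h,k} U}_{L^2(E_\omega)}^2 - C\,\norm{\nabla^m U}_{L^2(E_\omega)}^2,
\end{equation*}
while the right-hand side $\iint F \varphi\, dV_g\, dt$ together with all lower-order commutator terms (produced by the $x$-dependence of $A^{IJ}$ and $B^{IJ}$, bounded uniformly in $h$ by smoothness of the coefficients) are controlled by Cauchy--Schwarz combined with Peter--Paul, absorbing a small multiple of $\norm{\zeta^m \nabla^m D_{h,k} U}_{L^2}^2$ into the left. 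This yields an $h$-uniform bound
\begin{equation*}
\norm{\zeta^m \nabla^m D_{h,k} U}_{L^2(E_\omega)} \leq C\bigl(\norm{U}_{W_2^{m,0}(P)} + \norm{F}_{L^2(P)}\bigr),
\end{equation*}
and then part (2) of Proposition \ref{prop: difference quotients 2} upgrades this to $\nabla^{m+1} U \in L^2(P_{\delta/2})$ with the same estimate. Iterating the procedure $m$ times, each step differentiating the weak equation once more and shrinking the cut-off support by roughly $\delta/m$, gives $\nabla^{2m} U \in L^2(P_\delta)$ with the required bound. Finally the time derivative is handled from the equation itself: $\partial_t U$ equals a linear combination of spatial derivatives of $U$ of order at most $2m$ plus $F$, and each of these now lies in $L^2(P_\delta)$, completing the $W_2^{2m,1}(P_\delta)$ estimate.

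The main obstacle is the careful commutator bookkeeping. Since $A_b^{a\,IJ}$ and $B_b^{a\,IJ}$ depend on $x$ and $t$, applying $D_{h,k}$ to the weak formulation generates discrete commutator terms involving $D_{h,k} A^{IJ}$ and cross-terms hitting the cut-off $\zeta$; these are formally lower order (as difference quotients of smooth coefficients are uniformly bounded), but one must arrange the Peter--Paul absorption at each stage of the iteration so that only a small fraction of the leading term $\norm{\nabla^m D_{h,k} U}_{L^2}^2$ is consumed, while the remainder is swallowed by $\norm{U}_{W_2^{m,0}} + \norm{F}_{L^2}$. A secondary subtlety is the passage from the Euclidean chart estimate to the intrinsic statement on $E_\omega$, for which the uniform metric bounds from Lemma \ref{lem: bounds in chart} ensure that the constants in the patching remain controlled.
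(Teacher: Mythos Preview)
Your outline follows the paper's method --- spatial difference quotients, G\aa rding's inequality, and absorption via Peter--Paul --- and is correct in spirit. One point is missing, and one differs from the paper.

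The missing point: when you substitute $\varphi = -D_{-h,k}(\zeta^{2m}D_{h,k}U)$ into the weak form \eqref{d: weak solution 1}, the term $\iint_{M_\omega} U_t\,\varphi\, dV_g\, dt$ sits alongside $\mathcal B(U,\varphi)$, and you never dispose of it. It is in fact harmless --- after one discrete integration by parts in space and then integrating in time, using that $\zeta$ is purely spatial and $U(\cdot,0)=0$, it equals $\tfrac12\int_M \zeta^{2m}|D_{h,k}U(\cdot,\omega)|^2\, dV_g \ge 0$ and can be dropped --- but without saying this your energy inequality does not follow from the weak formulation. The paper makes this step explicit by building a time cut-off $\chi_{[0,s]}$ into its test function, which in addition yields a sup-in-time bound $\sup_{0<s<\omega}\int_\Omega \eta^2|D_{h,k}u(\cdot,s)|^2\,dx$.

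The difference: for time regularity you read $\partial_t U\in L^2$ directly from the equation once $\nabla^{2m}U\in L^2$ is established. The paper instead uses difference quotients in the time variable. Your route is valid and arguably cleaner here, since the divergence-form operator with smooth coefficients expands to a combination of spatial derivatives of $U$ of order $\le 2m$, all of which are now in $L^2(P_\delta)$.
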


\begin{proof}
We give the proof for the case $m = 1$.  The proof for systems of even order follows in a similar manner way, with small changes needed to incorporate the scaling of the system; in this regard see \cite{HP}.  Regularity is a local problem, so we derive the necessary regularity estimates on Euclidean space and then lift them to the manifold using patching argument.  As our starting point we therefore work with the following definition of a weak solution
\begin{equation}\label{e: Hilbert int reg 1}
\iint_{P} u^a_t \varphi^a + A_b^{a \, ij}\p_i u^b \p_j \varphi^a + B_b^{ a \, ij }\p_i u^b \p_j \varphi^a  \, dx \, dt = \iint _{P} f^a\varphi^a \, dx \, dt,
\end{equation}
which holds for all $\varphi \in \Wcirc_2^{m,0}(P)$.  Rewrite this as
\begin{equation*}
\iint_{P} u^a_t \varphi^a + A_b^{a \, ij}\p_i u^b \p_j \varphi^a = \iint _{P} g^a\varphi^a \, dx \, dt,
\end{equation*}
where $g^a = f^a - B_b^{ a \, ij }\p_i u^b \p_j \varphi^a$.  Choose $\varphi = -D_{-h,k}(\eta^2 D_{h,k} u^a )\chi_{ [0,s] }$.  This is a valid choice as we have restricted $h$ to be sufficiently small.  With this choice of $\varphi$ equation \eqref{e: Hilbert int reg 1} reads
\begin{equation}\label{e: Hilbert int reg 2}
	\begin{split}
	&\iint u^a_t ( -D_{-h,k}(\eta^2 D_{h,k} u^a ) ) + A_b^{a \, ij}\p_i u^b \p_j (-D_{-h,k}(\eta^2 D_{h,k} u^a )) \, dx \, dt  \\
	&\quad = \iint  g^a ( -D_{-h,k}(\eta^2 D_{h,k} u^a ) ) \, dx \, dt. \end{split}
\end{equation}
We focus on the term involving the time derivative.  Using the properties of difference quotients we have
\begin{align*}
	\iint u^a_t ( -D_{-h,k}(\eta^2 D_{h,k} u^a ) ) \, dx \, dt &= \iint \p_t( D_{ h,k }u^a) \eta^2 D_{h,k} u^a \, dx \, dt \\
	&= \frac{1}{2} \iint \p_t \big( \eta^2 ( D_{h,k} u^a )^2 \big) \, dx \, dt \\
	&=  \frac{1}{2} \int_{ \Omega } \big( \eta^2 ( D_{h,k} u(x,t)^a )^2 \big) \, dx \Big|_{t=0}^{t=s} \\
	&= \frac{1}{2} \int_{ \Omega } \eta^2 ( D_{h,k} u(x,s)^a )^2 \, dx.
\end{align*}
Now focus on the second term on the right of \eqref{e: Hilbert int reg 2}.  By Proposition \ref{prop: difference quotients 2}, in order to achieve the desired spatial regularity it suffices to suitably bound the $L^2$ norm of $D_{h,k} \p u$.  Using various properties of difference quotients we estimate
\begin{align*}
	&\iint_{P} A_b^{ a \, ij } \p_i u^a \p_j ( -D_{-h,k} ( \eta^2 D_{h,k} u^b ) ) \, dx \, dt \\
	&\quad = \iint_{P} A_b^{ a \, ij } \p_i u^a -D_{-h,k} \p_j ( \eta^2 D_{h,k} u^b ) \, dx \, dt \\
	&\quad = \iint_{P} D_{h,k} ( A_b^{ a \, ij } \p_i u^b ) \p_j ( \eta^2 D_{h,k} u^a ) \, dx \, dt \\
	&\quad = \iint_{P} \Big( A_b^{ a \, ij }(x + he_k, t) D_{h,k} \p_i u^b + (D_{h,k} A_b^{ a \, ij } \p_i u^b \Big) \p_j(\eta^2 D_{h,k} u^a) \, dx \, dt \\
	&\quad = \iint_{P} \Big( A_b^{ a \, ij }(x + he_k, t) D_{h,k} \p_i u^b + (D_{h,k} A_b^{ a \, ij } \p_i u^b \Big) (\eta^2 \p_j D_{h,k} u^a - 2 \eta \p_j \eta D_{h,k} u^a) \, dx \, dt \\
	&\quad = \iint_{P} \eta^2 A_b^{ a \, ij }(x + he_k, t) \p_i D_{h,k} u^b \p_j D_{h,k} u^a \, dx \, dt  \\
	&\quad \quad - 2 \eta \p_j \eta \iint_{P} A_b^{ a \, ij }(x + he_k, t) D_{h,k} \p_i u^a D_{h,k} u^b \, dx \, dt \\
	&\quad \quad + \iint_{P} ( D_{h,k} A_b^{ a \, ij } ) \p_u^a (\eta^2 \p_j D_{h,k} u^a - 2 \eta \p_j \eta D_{h,k} u^a) \, dx \, dt \\
	&\quad \geq \eta^2 \lambda_0 \iint_{ P_{\delta} } \abs{ D_{ h, k } \p u }^2 \, dx \, dt - \iint_{P} S_k^a \, dx \, dt.
\end{align*}
In going to the last line we have used G\aa rding's inequality and grouped the remaining terms into the term $S_k^a$.  By using the properties of difference quotients and the Peter-Paul inequality, $S_k^a$ as well as the term involving $g$ on the right hand side of equation \eqref{e: Hilbert int reg 2} can both be estimated by the $L^2$ norm of $\p u$ and $f$.  Recombining this estimate on the spatial derivatives with the estimate on the time derivative gives
\begin{align*}
	\sup_{0 < s < \omega} \int_{ \Omega } \eta^2 ( D_{h,k} u^a(x,s) )^2 \, dx + \iint_{P_{\delta} } \abs{ D_{h,k} \p_u }^2 \, dx \, dt \leq C \big( \norm{ \p u }_{L^2(P)} + \norm{ f }_{L^2(P) } \big).
\end{align*}
From Proposition \ref{prop: difference quotients 2} it now follow that
\begin{equation*}
	 \iint_{P_{\delta} } \abs{ \p_x^2 u }^2 \, dx \, dt \leq C \big( \norm{ \p u }^2_{L^2(P)} + \norm{ f }^2_{L^2(P) } \big).
\end{equation*}
The estimate on the time derivative can be proved in a similar fashion using the difference operator in time.  For the time derivative we obtain the estimate
\[ \iint_{P_{\delta}} (\p_t u)^2 \, dx dt \leq C\big( \norm{ \p u }_{L^2(P)}^2 + \norm{ f }_{L^2(P)}^2 \big), \]
and combining the space and time estimates complete the proof in the case $m=1$.
\end{proof}
The near-bottom boundary estimate can also derived in a similar fashion, and then the local interior and near-bottom estimates can be lifted to a closed manifold using a patching argument to give an estimate holding globally on $E_{\omega}$.  Higher regularity estimates can also be obtained using standard bootstrap arguments.  As a simple consequence of the higher regularity estimates and the parabolic Sobolev inequality (on a manifold) we have the following existence theorem in the smooth category.
\begin{cor}
Suppose that $U \in \Wdot_2^{m,1}(E_{\omega})$ is a weak solution to problem \eqref{e: Hilbert IVP heat}.  If $F \in C^{ \infty }(E_{\omega})$, then $U \in C^{ \infty }(E_{\omega})$.
\end{cor}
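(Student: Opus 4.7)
The plan is a standard bootstrap argument combining iterated application of the interior regularity estimate with parabolic Sobolev embedding. The point is that once we have one round of regularity, differentiating the equation shows that derivatives of $U$ satisfy an equation of the same form, whose right-hand side is again in $L^2$ because the coefficients and $F$ are assumed smooth.

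First I would note that the preceding interior regularity proposition, together with the near-bottom estimate and a patching argument over a finite atlas of coordinate balls (using Lemma \ref{lem: bounds in chart} to control the geometry), gives $U \in W_2^{2m,1}(E_\omega)$ with a global estimate. Next I would formally differentiate the weak formulation: working in local normal coordinates and applying a tangential difference quotient $D_{h,k}$ in a spatial direction, the commutator with the differential operator produces new lower-order terms of the form $(\nabla A^{IJ})\cdot\nabla^{m}U$ and $(\nabla B^{IJ})\cdot\nabla^{\leq m-1}U$ on the right-hand side, together with $\nabla F$. Because the coefficients are smooth and $U\in W_2^{2m,1}(E_\omega)$, all these terms are in $L^2(E_\omega)$, so $\nabla_x U$ is itself a weak solution of a problem of the form \eqref{e: Hilbert IVP heat} with $L^2$ data, and thus lies in $W_2^{2m,1}(E_\omega)$ by the regularity proposition.

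Iterating this step $k$ times gives $\nabla_x^{\,k} U \in W_2^{2m,1}(E_\omega)$ for every $k\geq 0$; differentiating the equation in $t$ and using the fact that each time derivative counts as $2m$ space derivatives in the anisotropic scaling then yields $U\in W_2^{j,\ell}(E_\omega)$ for arbitrarily large $j,\ell$. I would take care to handle the time-initial issue by working on slightly shrunken time intervals $(\epsilon,\omega)$ where $\partial_t U$ is already as regular as we need from the original equation, and letting $\epsilon\to 0$ using the smoothness of $U_0$ (recall we normalised to $U_0=0$).

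Finally, the parabolic Sobolev embedding on the closed manifold $M$ (obtained by combining the usual Sobolev embedding on each time slice with the anisotropic scaling, or equivalently by chart-wise application of the Euclidean parabolic embedding and a partition of unity) gives $W_2^{j,\ell}(E_\omega)\hookrightarrow C^{p,q}(E_\omega)$ for $j,\ell$ sufficiently large relative to $p,q$. Letting $j,\ell\to\infty$ yields $U\in C^{\infty}(E_\omega)$. The only genuinely delicate step is Step 2, since one must verify that the commutators between covariant derivatives and the bilinear form are of lower order and hence controllable; once this is accepted, the argument is routine.
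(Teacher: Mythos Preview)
Your proposal is correct and follows essentially the same route as the paper: the paper does not spell out a proof of this corollary but simply notes that ``higher regularity estimates can also be obtained using standard bootstrap arguments'' and that the result is ``a simple consequence of the higher regularity estimates and the parabolic Sobolev inequality (on a manifold).'' Your write-up is a faithful expansion of exactly this outline---difference-quotient bootstrap to gain arbitrarily many derivatives in $W_2$, then parabolic Sobolev embedding---so there is nothing to add.
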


\subsection{Schauder theory}
In this section we derive the interior Schauder estimates in Euclidean space, and then lift these local estimates to the vector bunlde $E_{\omega}$ to obtain a global Schauder estimate holding on the bundle.  A number of methods can be used to derive the Schauder estimates; we shall present two of these.  Trudinger's method of mollification offers a simple proof of the Schauder estimates for elliptic and parabolic equations of second order.  This method extends to systems of even order, and indeed we pursued this route in an early draft of this thesis.  But perhaps an even easier and cleaner method of the deriving the Schauder estimates is Leon Simon's method of scaling \cite{Sim1}.  We shall use Simon's method to derive the estimates for systems of even order.

Trudinger's method was introduced in \cite{nT86} where he treated both equations and systems of elliptic type.  For second order equations, the method is remarkably simple, and makes use of the solid mean value inequality.  His method of mollification extends to systems of even order, where the application of the mean value inequality is replaced by an $L^2$ estimate and the Sobolev embedding theorem.  Wang \cite{xW} has used Trudinger's method of mollification to derive Schauder estimates for second order parabolic equations, where the application of the solid mean value inequality was replaced by estimates coming differentiating the fundamental solution of the heat equation.  Here we show how the mean value property of the heat equation can be used in exactly the same way as the solid mean value inequality to provide the desired estimates.  Simon's method of scaling is remarkably simple, with the transition from second order equations to high order systems made by essentially only changing notation.  Simon's method, first published in journal form in \cite{Sim1}, can also be found in his book \cite{Sim2} (which appeared some years earlier), and complete details can also be found in Simon's lecture notes on PDE \cite{Sim3}.  In \cite{Sim1} Simon's indicates how his method adapts to encompass equations and systems of parabolic type, and this is pursued in Lamm's Diploma Thesis \cite{tL}.  Before proceeding, we first recall the the H\"{o}lder space interpolation inequality, which we shall use often in the derivation of the Schauder estimates.

\begin{prop}[H\"{o}lder space interpolation inequality]
Let $\rho > 0$, $\epsilon > 0$ and $Q_{\rho} \subset \mathbb{R}^{d+1}$.  Suppose $u \in C^{2m,1,\alpha}(Q_{\rho})$.  There exists a constant $C = C(n, d, \epsilon, \alpha, m)$ such that
\begin{equation*}
	\begin{split}
		&\rho^{\alpha}[u]_{ \alpha; \, Q_{\rho} } + \rho \abs{ \p_x u }_{ 0; \, Q_{\rho} } + \ldots + \rho^{2m-1+\alpha}[ \p_x^{2m-1} u ]_{ \alpha; \, Q_{\rho} } + \rho^{2m} \abs{ \p^{2m,1} u }_{ 0; \, Q_{\rho} } \\
		&\quad \leq \epsilon\rho^{2m+\alpha} [ \p^{2m,1} u ]_{ \alpha; \, Q_{\rho} } + C\abs{ u }_{ 0; \, Q_{\rho} } \end{split}
	\end{equation*}
holds.
\end{prop}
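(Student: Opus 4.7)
The plan is to prove this as a parabolic Landau–Kolmogorov inequality, reducing first by scaling to the case $\rho = 1$. Setting $v(y,s) := u(\rho y, \rho^{2m} s)$ on $Q_1$, one checks that each term on both sides of the claimed inequality scales by the same power of $\rho$, so it suffices to establish the statement for $\rho = 1$ and then undo the rescaling. From this point the argument splits into two standard ingredients which I would carry out in order: (i) the base inequality $\abs{\p_x^k u}_{0; Q_1} + \abs{\p_t^j u}_{0; Q_1} \leq \epsilon \abs{\p^{2m,1} u}_{0; Q_1} + C(\epsilon) \abs{u}_{0; Q_1}$ for every $0 \leq k < 2m$ and $j < 1$, and (ii) the corresponding bound on the intermediate Hölder seminorms $[\p_x^k u]_{\alpha; Q_1}$.

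For (i), I would start from the one-variable identity $f'(x) = \eta^{-1}(f(x+\eta) - f(x)) - \eta^{-1}\int_x^{x+\eta} (x+\eta-t)f''(t)\,dt$, which gives the elementary estimate $\abs{f'}_0 \leq \eta \abs{f''}_0 + 2\eta^{-1}\abs{f}_0$ on any interval long enough to contain $[x, x+\eta]$. Iterating this one derivative at a time in each spatial direction controls $\abs{\p_x^k u}_0$ for $0 < k < 2m$ by a small multiple of $\abs{\p_x^{2m} u}_0$ plus a large multiple of $\abs{u}_0$; choosing $\eta$ appropriately as a function of $\epsilon$ absorbs the leading coefficient into $\epsilon$. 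Because in parabolic Hölder spaces each time derivative carries the weight of $2m$ spatial derivatives, the time derivative is treated by an identical one-dimensional argument, replacing $\eta$ by $\eta^{2m}$ in the time variable.

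For (ii), fix $k$ with $0 \leq k \leq 2m-1$ and, for a threshold $\delta > 0$ to be chosen, split pairs of points $X, Y \in Q_1$ according to whether $d(X, Y) \leq \delta$ or not. In the first case the mean value theorem (applied separately in each coordinate and in time, using the parabolic weighting $\abs{t-s}^{1/2m}$) bounds $\abs{\p_x^k u(X) - \p_x^k u(Y)}$ by $d(X,Y)$ times a maximum of a derivative one order higher, so the Hölder ratio is at most $\delta^{1-\alpha}(\abs{\p_x^{k+1} u}_{0} + \abs{\p_t u}_{0})$; in the second case the trivial bound $2\abs{\p_x^k u}_0$ yields a Hölder ratio at most $2\delta^{-\alpha}\abs{\p_x^k u}_0$. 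Each norm on the right has already been controlled by step (i), so feeding those bounds in and choosing $\delta$ small enough as a function of $\epsilon$ delivers the desired estimate on every intermediate Hölder seminorm.

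The main obstacle is not any single step but the bookkeeping: one must simultaneously track spatial and time Hölder differences because the seminorm is measured in parabolic distance, and the Peter–Paul constants generated at every iteration of (i) and every application of the splitting in (ii) must be composed into a single constant $C(n, d, \epsilon, \alpha, m)$. This is routine once the scaling ambiguity has been eliminated at the outset, which is precisely why reducing to $\rho = 1$ by the initial dilation is the crucial first move.
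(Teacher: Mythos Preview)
The paper does not actually prove this proposition; it only remarks that the inequality ``can be established by simple contradiction arguments or directly using the mean value theorem'' and refers to Lamm's thesis. Your proposal follows the second of these two routes, and the overall architecture---reduce to $\rho=1$ by parabolic scaling, then iterate a one-variable Landau--Kolmogorov step---is the standard direct approach.

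That said, there are two genuine gaps in your writeup. First, your step (i) only controls $\abs{\p_x^k u}_0$ for $k<2m$ in terms of the \emph{top sup norm} $\abs{\p^{2m,1} u}_0$, whereas the right-hand side of the claimed inequality involves the H\"older seminorm $[\p^{2m,1} u]_\alpha$. Since $\rho^{2m}\abs{\p^{2m,1} u}_0$ itself sits on the left-hand side, you still owe the estimate $\abs{\p^{2m,1} u}_0 \leq \epsilon [\p^{2m,1} u]_\alpha + C\abs{u}_0$ on $Q_1$. One more pass of the same idea does this: writing $f'(x) = \eta^{-1}(f(x+\eta)-f(x)) - \eta^{-1}\int_x^{x+\eta}(f'(t)-f'(x))\,dt$ with $f=\p_x^{2m-1}u$ yields $\abs{\p_x^{2m}u}_0 \leq 2\eta^{-1}\abs{\p_x^{2m-1}u}_0 + C\eta^\alpha[\p_x^{2m}u]_\alpha$, and combining with step (i) and choosing $\eta$ closes the loop.

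Second, and more substantively, in step (ii) you assert that the mean value theorem in time bounds $\abs{\p_x^k u(x,t)-\p_x^k u(x,s)}$ by a multiple of $\abs{\p_t u}_0$. For $k\geq 1$ this is not what the mean value theorem gives: differentiating $\p_x^k u$ in time produces $\p_t\p_x^k u$, and mixed derivatives are \emph{not} part of the $C^{2m,1,\alpha}$ norm. The standard fix is to mollify in space at scale $\tau=\abs{t-s}^{1/2m}$, integrate by parts to transfer the $k$ spatial derivatives onto the mollifier, and only then differentiate in time; this yields $\abs{\p_x^k u(x,t)-\p_x^k u(x,s)} \leq C\abs{t-s}^{(2m-k)/2m}\bigl(\abs{\p_t u}_0+\abs{\p_x^{2m}u}_0\bigr)$, which suffices. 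Alternatively, this is precisely the step that falls out cleanly from the contradiction/compactness argument the paper mentions as the other option.
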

The interpolation inequality can be established by simple contradiction arguments or directly using the mean value theorem; see, for example, \cite{tL}.

\subsubsection{Trudinger's method of mollification}

A (parabolic) mollifier (of order ${2m}$) is a fixed smooth function $\rho \in C_0^{ \infty }( \mathbb{R}^{d+1})$ with $\iint_{ \mathbb{R}^{d+1} } \rho dX = 1$.  For $\tau > 0$ we define the scaled mollifier
	\begin{equation}
		\rho_{\tau}(x,t) := \frac{1}{\tau^{d+2m}} \rho\left(\frac{x}{\tau},\frac{t}{\tau^{2m}}\right).
	\end{equation}
Let  $P \in \mathbb{R}^{n+1}$ and $u \in L^1_{\text{loc}}(P)$.  For $0 < \tau < d(X, \p P)$, the mollification of $u$ is given by
	\begin{equation*}
		u_{\tau}(x,t) := \frac{1}{\tau^{d+2m}} \iint \rho \left(\frac{x-y}{\tau},\frac{t-s}{\tau^{2m}} \right) u(y,s) \, dy \, ds
	\end{equation*}
and satisifes spt $u_{\tau} \subset P_{\tau}$, where $P_{\tau} = \{ X \in P : d(X, \p P) > \tau \}$.
\begin{prop}
We have $u_{\tau} \in C_0^{\infty}$.
\end{prop}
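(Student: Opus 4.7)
The plan is to establish smoothness of $u_\tau$ by differentiation under the integral sign, and the support property as a direct consequence of the compact support of $\rho$. Neither step presents a serious obstacle; the only real task is to justify interchanging derivatives and the integral via a dominated convergence argument.

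First, I would fix $X = (x,t)$ at parabolic distance at least $\tau$ from $\partial P$, so that the translated and scaled mollifier $\rho\bigl(\frac{x-\cdot}{\tau}, \frac{t-\cdot}{\tau^{2m}}\bigr)$ is supported in a fixed compact subset $K \subset P$ (namely the parabolic $\tau$-ball around $X$, up to the support of $\rho$). Since $u \in L^1_{\mathrm{loc}}(P)$, the integral defining $u_\tau(X)$ is absolutely convergent. For a multi-index $\beta$ and integer $\gamma \geq 0$, the difference quotients of the integrand converge pointwise to
\[
\frac{1}{\tau^{d+2m}}\,\partial_x^{\beta}\partial_t^{\gamma}\!\!\left[\rho\!\left(\tfrac{x-y}{\tau},\tfrac{t-s}{\tau^{2m}}\right)\right] u(y,s),
\]
and, because $\rho \in C_0^\infty(\mathbb{R}^{d+1})$, they are bounded uniformly on a neighborhood of $X$ by $C(\tau,\beta,\gamma)\,\mathbf{1}_K(y,s)\,|u(y,s)|$, which is integrable. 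The dominated convergence theorem then allows the derivative to pass inside the integral, giving
\[
\partial_x^{\beta}\partial_t^{\gamma} u_\tau(x,t) = \frac{1}{\tau^{d+2m}}\iint \partial_x^{\beta}\partial_t^{\gamma}\!\!\left[\rho\!\left(\tfrac{x-y}{\tau},\tfrac{t-s}{\tau^{2m}}\right)\right] u(y,s)\,dy\,ds.
\]
Since the right-hand side is continuous in $X$ by a second application of dominated convergence, $u_\tau \in C^{\infty}$ on the set where it is defined.

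For the support statement, observe that the integrand in $u_\tau(X)$ vanishes unless $(y,s)$ lies in the parabolic $\tau$-ball around $X$ (by the compact support of $\rho$). Consequently $u_\tau$ vanishes at any point $X$ whose parabolic $\tau$-neighborhood misses $\operatorname{spt} u$, and in particular $\operatorname{spt} u_\tau \subset P_\tau$ as asserted. The main (and only) technical obstacle is the dominated convergence bound, which is routine given that $\rho$ and all its derivatives are bounded with compact support and $u$ is locally integrable.
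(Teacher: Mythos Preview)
Your argument is correct and is precisely the standard mollification proof: differentiate under the integral sign using dominated convergence (legitimate because $\rho$ and all its derivatives are bounded with fixed compact support and $u$ is locally integrable), and read off the support statement from the compact support of $\rho$. The paper in fact gives no proof of this proposition at all---it is stated as a standard fact and left to the reader---so your write-up simply fills in the routine details the author omitted.
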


\begin{prop}
Let $u \in L^1_{\text{loc}}(\Omega)$.  The following estimates hold:
	\begin{align}
		&\abs{ u_{ \tau } }_{0; \, P_{\tau} } \leq \abs{u}_{ 0; \, P_{\tau} } \label{e: Holder est 1} \\
		&\abs{ \p_x^i \p_t^j u_{\tau}(x,t) }_{ 0; \, P_{\tau} } \leq C \tau^{-i-2mj} \abs{u}_{0; \, P_{\tau} }. \label{e: Holder est 2}
	\end{align}
\end{prop}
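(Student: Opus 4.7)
The plan is to exploit the convolution structure of $u_\tau$, the normalization $\iint \rho \, dX = 1$, and the parabolic scaling $\rho_\tau(x,t) = \tau^{-(d+2m)}\rho(x/\tau, t/\tau^{2m})$. Both estimates follow by pulling the absolute value inside the integral and then performing a change of variables to isolate the $\tau$-dependence.

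For \eqref{e: Holder est 1}, I would fix $(x,t) \in P_\tau$ and note that the change of variables $y = x - \tau y'$, $s = t - \tau^{2m} s'$ together with $\iint \rho \, dX = 1$ yields $\iint \rho_\tau(x - y, t - s) \, dy \, ds = 1$. Since $\rho_\tau \geq 0$ (assuming the standard convention; otherwise one bounds $|\rho_\tau|$ in $L^1$), pulling the absolute value inside and bounding $|u(y,s)|$ by its supremum gives
\begin{equation*}
    |u_\tau(x,t)| \leq \iint \rho_\tau(x-y, t-s) \, |u(y,s)| \, dy \, ds \leq |u|_{0; \, P_\tau},
\end{equation*}
(where the supremum is really taken over the support of $\rho_\tau(x - \cdot, t - \cdot)$, a subset of $P$ by the definition of $P_\tau$).

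For \eqref{e: Holder est 2}, since $\rho \in C_0^\infty$ the derivatives pass freely under the integral, so
\begin{equation*}
    \partial_x^i \partial_t^j u_\tau(x,t) = \iint (\partial_x^i \partial_t^j \rho_\tau)(x-y, t-s) \, u(y,s) \, dy \, ds.
\end{equation*}
Differentiating the scaling formula yields
\begin{equation*}
    \partial_x^i \partial_t^j \rho_\tau(x, t) = \tau^{-(d + 2m + i + 2mj)} \bigl(\partial_x^i \partial_t^j \rho\bigr)\!\left(\tfrac{x}{\tau}, \tfrac{t}{\tau^{2m}}\right),
\end{equation*}
so by the same change of variables as above,
\begin{equation*}
    \iint |\partial_x^i \partial_t^j \rho_\tau(x - y, t - s)| \, dy \, ds = \tau^{-i - 2mj} \iint |\partial_x^i \partial_t^j \rho(y', s')| \, dy' \, ds' = C \tau^{-i - 2mj},
\end{equation*}
where $C = C(\rho, i, j)$. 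Bounding $|u|$ by $|u|_{0; \, P_\tau}$ and combining gives the claim.

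There is no real obstacle here; the entire argument is bookkeeping of the parabolic scaling weights $1$ for space versus $2m$ for time, which is precisely what makes $\partial_t$ count as $2m$ space derivatives in all the function spaces of this section. The same template will be used repeatedly when establishing the mollifier estimates needed for Trudinger's proof of the Schauder estimates.
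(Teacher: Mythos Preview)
Your proof is correct and follows essentially the same approach as the paper: bound $|u|$ by its sup norm and use the parabolic scaling of $\rho_\tau$ (via change of variables) to extract the $\tau^{-i-2mj}$ factor. Your write-up is somewhat more explicit about the change of variables and the constant $C = \|\partial_x^i\partial_t^j\rho\|_{L^1}$, but the argument is the same.
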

\begin{proof}
To prove \eqref{e: Holder est 1}, we have
\begin{align*}
	u_{ \tau }(x,t) &= \frac{1}{ \tau^{d+2m} } \iint \rho \left( \frac{x-y}{\tau},\frac{t-s}{ \tau^{2m}} \right) u(y,s) \, dy \, ds \\
	&\leq \abs{ u }_{0; \, P_{\tau} } \cdot \frac{1}{ \tau^{d+2m} } \iint \rho \left( \frac{x-y}{\tau},\frac{t-s}{ \tau^{2m}} \right) \, dy \, ds \\
	&= \abs{ u }_{0; \, P_{\tau} }.
\end{align*}
And for \eqref{e: Holder est 2}:
\begin{align*}
	\p_x^i \p_t^j u_{\tau}(x,t) &= \frac{1}{ \tau^{d+2m} } \iint_{P_{\tau}} \p_x^i \p_t^j \rho \left( \frac{x-y}{\tau},\frac{t-s}{ \tau^{2m}} \right) u(y,s) \, dy \, ds \\
	&\leq C \tau^{-i-2mj}\abs{ u }_{ 0; \, P_{\tau} }.
\end{align*}
\end{proof}

\begin{prop}
Let $u \in C^{\alpha}_{\text{loc}}(P)$.  The following estimates hold:
	\begin{align}
		&\abs{ u_{\tau}(x,t) - u(x,t) }_{ 0; \, P_{\tau} }\leq \tau^{\alpha} [u]_{\alpha; \, P_{\tau} } \label{e: Holder est 3} \\
		&\abs{ \p_x^i \p_t^j u_{\tau}(x,t) }_{ 0; \, P_{\tau} } \leq C \tau^{ \alpha -i-2mj } [u]_{\alpha; \, P_{\tau} }. \label{e: Holder est 4}
	\end{align}
\end{prop}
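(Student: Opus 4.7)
The plan is to prove both estimates by the standard trick of inserting the identity $\int \rho_\tau = 1$ (or its derivative analogue) so that the H\"older seminorm rather than the $L^\infty$ norm controls the integrand.

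For estimate \eqref{e: Holder est 3}, I would write
\begin{equation*}
u_\tau(x,t) - u(x,t) = \iint \rho_\tau(x-y, t-s)\bigl(u(y,s) - u(x,t)\bigr)\,dy\,ds,
\end{equation*}
using $\iint \rho_\tau = 1$. The mollifier is supported where $d((x,t),(y,s)) \leq C\tau$, so on that support $\abs{u(y,s) - u(x,t)} \leq [u]_{\alpha;\,P_\tau}\,\tau^\alpha$. Factoring this out and using $\iint \abs{\rho_\tau}\,dy\,ds = 1$ yields the bound.

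For estimate \eqref{e: Holder est 4}, the key observation is that when $i + 2mj \geq 1$, the derivatives $\p_x^i \p_t^j$ annihilate the constant $u(x,t)$. Therefore
\begin{equation*}
\p_x^i \p_t^j u_\tau(x,t) = \iint \p_x^i \p_t^j \rho_\tau(x-y, t-s)\bigl(u(y,s)- u(x,t)\bigr)\,dy\,ds.
\end{equation*}
By the parabolic scaling of $\rho_\tau$, $\abs{\p_x^i \p_t^j \rho_\tau} \leq C\tau^{-d-2m-i-2mj}$, while the support of $\rho_\tau$ has measure at most $C\tau^{d+2m}$ and $\abs{u(y,s)-u(x,t)} \leq [u]_{\alpha;\,P_\tau}\tau^\alpha$ on that support. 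Multiplying these three factors gives exactly $C\tau^{\alpha - i - 2mj}[u]_{\alpha;\,P_\tau}$. The case $i=j=0$ is already covered by \eqref{e: Holder est 3} (or trivially by the previous proposition).

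There is no real obstacle; the only thing that could trip one up is forgetting the cancellation trick for \eqref{e: Holder est 4} and trying to bound $\p_x^i \p_t^j u_\tau$ directly, which would only recover the weaker estimate \eqref{e: Holder est 2} from the previous proposition. Subtracting $u(x,t)$ inside the integral is the essential move that converts the $L^\infty$ bound into an $\alpha$-H\"older bound and produces the crucial additional factor of $\tau^\alpha$.
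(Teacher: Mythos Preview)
Your proposal is correct and follows essentially the same approach as the paper: for both estimates the paper inserts $u(x,t)$ inside the integral using $\iint\rho_\tau=1$, and for \eqref{e: Holder est 4} it justifies the vanishing of the extra constant term via the Divergence Theorem applied to the compactly supported mollifier, which is exactly your observation that the derivatives annihilate constants. The remaining estimation via the scaling of $\rho_\tau$ and the H\"older bound on $u(y,s)-u(x,t)$ is identical.
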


\begin{proof}
For estimate \eqref{e: Holder est 3} we have
\begin{align*}
	u_{\tau}(x,t) - u(x,t) &= \frac{1}{ \tau^{d+2m} } \iint \rho \left( \frac{x-y}{\tau},\frac{t-s}{ \tau^{2m}} \right) (u(y,s) - u(x,t)) \, dy \, ds \\
	&\leq \osc_{P_{\tau} } u \\
	&\leq \tau^{\alpha} [u]_{\alpha; P_{\tau} }.
\end{align*}
To prove the second estimate we have
\begin{align*}
		\p_x^i \p_t^j u_{\tau}(x,t) &= \frac{1}{ \tau^{d+2m} } \iint \p_x^i \p_t^j \rho \left( \frac{x-y}{\tau},\frac{t-s}{ \tau^{2m}} \right) u(y,s) \, dy \, ds \\
		&=  \frac{1}{ \tau^{d+2m} } \iint \p_x^i \p_t^j \rho \left( \frac{x-y}{\tau},\frac{t-s}{ \tau^{2m}} \right) (u(y,s) - u(x,t)) \, dy \, ds \\
		&\quad +  \frac{u(x,t)}{ \tau^{d+2m} } \iint \p_x^i \p_t^j \rho \left( \frac{x-y}{\tau},\frac{t-s}{ \tau^{2m}} \right) \, dy \, ds.
\end{align*}
The mollifier $\rho$ is has compact support on $P_{\tau}$ and so the last term vanishes by the Divergence Theorem.  Continuing, we have
	\begin{align*}
		\p_x^i \p_t^j u_{\tau}(x,t) &= \frac{1}{ \tau^{d+2m} } \iint_{P_{\tau}} \p_x^i \p_t^j \rho \left( \frac{x-y}{\tau},\frac{t-s}{ \tau^{2m}} \right) (u(y,s) - u(x,t)) \, dy \, ds \\
		&\leq C \tau^{-i-2mj}\osc_{Q_\tau} u \\
		&\leq C \tau^{ \alpha -i-2mj } [u]_{ \alpha; \, P_{\tau} }.
	\end{align*}
\end{proof}

To motivate things a little in the parabolic settting, we first briefly show how Trudinger's method works in the elliptic setting by treating the Poisson equation.  The crucial ingredient in Trudinger's method is the following norm equivalence:

\begin{lem}
Let $u \in C^{ \alpha }( \mathbb{R}^d )$, $R > 0$ and $\alpha \in (0,1)$.  There exists constant $C = C(d, \alpha)$ such that the norm equivalence
\begin{equation*}
	\frac{1}{C}[u]_{ \alpha; \, B_{R} } \leq \sup_{ 0 < \tau < R/2  } \tau^{1-\alpha} \abs{\p_x u_{\tau}}_{ 0; \, B_{ R } } \leq C[u]_{ \alpha; \, B_{R} }.
\end{equation*}
is valid.
\end{lem}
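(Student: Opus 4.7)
The right-hand inequality is immediate from the preceding proposition: applying estimate \eqref{e: Holder est 4} with $i=1$ and $j=0$ gives $\abs{\p_x u_\tau}_{0;\,B_R} \leq C\tau^{\alpha-1}[u]_{\alpha;\,B_R}$, so multiplying by $\tau^{1-\alpha}$ and taking the supremum over $\tau \in (0, R/2)$ yields $\sup_\tau \tau^{1-\alpha}\abs{\p_x u_\tau}_{0;\,B_R} \leq C[u]_{\alpha;\,B_R}$.

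For the left-hand inequality, set $M := \sup_{0<\tau<R/2}\tau^{1-\alpha}\abs{\p_x u_\tau}_{0;\,B_R}$ and fix $x, y \in B_R$ with $r := \abs{x-y} < R/2$. My plan is to decompose
\begin{equation*}
u(x) - u(y) = [u_r(x) - u_r(y)] + [u(x) - u_r(x)] - [u(y) - u_r(y)],
\end{equation*}
and treat the middle bracket by the mean value theorem applied to the smooth function $u_r$: $\abs{u_r(x) - u_r(y)} \leq r\abs{\p_x u_r}_{0;\,B_R} \leq r \cdot Mr^{\alpha-1} = Mr^\alpha$. The outer brackets reduce the task to proving the pointwise estimate $\abs{u(z) - u_r(z)} \leq CMr^\alpha$ for each $z \in B_R$.

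For this pointwise bound, the continuity of $u$ ensures $u_\tau(z) \to u(z)$ as $\tau \to 0$, so I will use the telescoping identity
\begin{equation*}
u(z) - u_r(z) = \sum_{k=0}^{\infty}\bigl[u_{r/2^{k+1}}(z) - u_{r/2^k}(z)\bigr].
\end{equation*}
Each dyadic increment will be estimated by $CMs^\alpha$ with $s = r/2^k$ via the splitting $u_{s/2}(z) - u_s(z) = [u_{s/2}(z) - (u_{s/2})_s(z)] + [(u_{s/2})_s(z) - u_s(z)]$: the first bracket is controlled by the mean value theorem applied to the smooth function $u_{s/2}$, giving $Cs\abs{\p_x u_{s/2}}_{0} \leq CMs^\alpha$, while the second is the scale-$s$ mollification of $u_{s/2} - u$ and is absorbed into an iteration exploiting that $\abs{\p_x u_\tau}_0$ is controlled uniformly across all scales. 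Summing the geometric series $\sum_k 2^{-k\alpha}$ then gives $\abs{u(z) - u_r(z)} \leq CMr^\alpha$, which combined with the middle bracket yields $[u]_{\alpha;\,B_R} \leq C'M$.

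The principal obstacle is closing the recursion for $\abs{(u_{s/2})_s - u_s}$, since the naive inequality $\psi(s) \leq 2\psi(s/2) + CMs^\alpha$ (with $\psi(s) := \abs{u - u_s}_0$) does not converge under dyadic iteration because the factor $2$ dominates $2^{-\alpha}$ when $\alpha < 1$. A conceptually cleaner alternative I would fall back on is a compactness-and-rescaling argument in the spirit of Simon: if the inequality failed, one would obtain a sequence $u_n$ with $[u_n]_{\alpha;\,B_R} = 1$ and $M_n \to 0$; rescaling around pairs $(x_n, y_n)$ nearly realizing the Hölder seminorm and passing to a $C^\beta_{\text{loc}}$ limit ($\beta < \alpha$) would produce a nonconstant Hölder function $w$ with $\p_x w_\sigma = 0$ for every $\sigma>0$, forcing $w$ itself to be constant and delivering the contradiction.
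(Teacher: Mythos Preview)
Your right-hand inequality is fine and matches the paper. The left-hand inequality, however, has a genuine gap that you yourself identify: the dyadic recursion $\psi(s)\le 2\psi(s/2)+CMs^{\alpha}$ does not close, and your ``splitting'' of $u_{s/2}-u_s$ produces exactly this bad factor, so the telescoping argument as written does not yield $\abs{u(z)-u_r(z)}\le CMr^{\alpha}$. The compactness-and-rescaling fallback is only a sketch; making it rigorous would require care about the domain (you are on a ball $B_R$, not all of $\mathbb{R}^d$) and about why the limit inherits $\partial_x w_\sigma=0$ from $M_n\to 0$ uniformly in $\sigma$.

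The paper's proof avoids all of this by a simple absorption trick that you missed. One does \emph{not} need to bound $\abs{u(z)-u_\tau(z)}$ purely in terms of $M$; instead one uses directly the estimate $\abs{u(z)-u_\tau(z)}\le \tau^{\alpha}[u]_{\alpha;B_R}$ (this is \eqref{e: Holder est 3}). With $\tau=\epsilon\abs{x-y}$ for a small fixed $\epsilon$, the triangle inequality gives
\[
\abs{u(x)-u(y)}\le 2\epsilon^{\alpha}\abs{x-y}^{\alpha}[u]_{\alpha;B_R}+\abs{x-y}\,\abs{\partial_x u_\tau}_{0;B_R},
\]
so dividing by $\abs{x-y}^{\alpha}$ yields
\[
(1-2\epsilon^{\alpha})\frac{\abs{u(x)-u(y)}}{\abs{x-y}^{\alpha}}\le \epsilon^{\alpha-1}\,\tau^{1-\alpha}\abs{\partial_x u_\tau}_{0;B_R}\le \epsilon^{\alpha-1}M.
\]
Choosing $\epsilon$ small enough that $2\epsilon^{\alpha}<1$ and taking the supremum over $x,y$ finishes the proof in two lines. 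The point is that having $[u]_\alpha$ appear on the right with a coefficient strictly less than $1$ is harmless, since it can be absorbed into the left-hand side; you were working too hard by insisting the intermediate bound be expressed in $M$ alone.
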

\begin{proof}
The inequality on the right follows directly from equation \eqref{e: Holder est 4} (the elliptic version) by choosing the appropriate values for the indices $i$: choosing $i = 1$ (there is no $j$ in the elliptic mollifier) gives
\begin{equation*}
	\abs{ \p_x u_{\tau}}_{ 0; \, B_{ R } } \leq C \tau^{ \alpha - 1 } [u]_{\alpha; \, B_{ R } }.
\end{equation*}
The first inequality requires a little more work.  Let $x, y \in \mathbb{R}^d$ and $\tau \in (0, R/2)$. For $ \abs{ x - y } < R$, by the triangle inequality
\begin{align*}
		\abs{u(x) - u(y)} &\leq \abs{u(x) - u_{\tau}(x)} + \abs{ u_{\tau}(x) - u_{\tau}(y) } + \abs{ u_{\tau}(y) - u(y) } \\
		&\leq 2\tau^{\alpha}[u]_{ \alpha; \, B_{ R }} + \abs{ \p_x u_{\tau} }_{ 0; \, B_{ R } }\abs{x-y}.
\end{align*}
Set $\tau = \epsilon \abs{x-y}$, where $\epsilon < 1/2$.  Factoring out and dividing by $\abs{x-y}^{\alpha}$ we find
\begin{equation*}
	(1-2\epsilon^{\alpha}) \frac{ \abs{ u(x) - u(y) } }{ \abs{ x - y }^{ \alpha } } \leq \epsilon^{\alpha - 1}\tau^{1-\alpha} \abs{ \p_x u_{\tau} }_{0; \, B_R}.
\end{equation*} 
Choosing $\epsilon < (1/2)^{-\alpha}$ and taking the supremum over $\tau \in (0, R/2)$ completes the proof.
\end{proof}

We now derive the Schauder estimate for Poisson's equation.  For simplicity we consider solutions with compact support in $\mathbb{R}^d$ (the techniques for treating the general case will be seen later on when we treat parabolic equations). Fix $\alpha \in (0,1)$ and suppose that $u \in C_0^{2, \alpha}(\mathbb{R}^d)$ solves
\begin{equation*}
	-a^{ij}(x)\p_{ij}u(x) = f(x),
\end{equation*}
where we assume $a^{ij}, f \in C^{\alpha}(\mathbb{R}^d)$ and $\lambda \abs{\xi}^2 \leq a^{ij}\xi_i\xi_j \leq \Lambda \abs{ \xi }^2$.  We proceed by the method of freezing coefficients, and accordingly fix a point $x_0 \in \mathbb{R}^d$ a rewrite the above equation equation as
\begin{align}
\notag	-a^{ij}(x_0)\p_{ij}u(x) &= (a^{ij}(x_0) - a^{ij}(x) )\p_{ij}u + f(x) \\
	&:= g(x) \label{eqn: Poisson 1}
\end{align}
By a linear coordinate transformation we can assume $a^{ij}(x_0) = \delta^{ij}$ so that equation \eqref{eqn: Poisson 1} becomes the Poisson equation.  We now mollify equation \eqref{eqn: Poisson 1} to get
\begin{equation*}
	-\Delta u_{\tau} = g_{\tau}
\end{equation*}
and then differentiate thrice with respect to $x$ to obtain
\begin{equation*}
	-\Delta \p_x^3 u_{\tau} = \p_x^3 g_{\tau}.
\end{equation*}
We choose a radius $R>0$ and work in the ball $B_R$.  Using inequality \eqref{e: Holder est 4} we can estimate
\begin{align*}
	\abs{ \p_x^3 g_{\tau} }_{0; \, B_R } &\leq C(n) \tau^{-3} \abs{ g }_{0; \, B_{R+\tau} } \\ 
	&\leq C(n) \tau^{-3} (R+\tau)^{\alpha} [ g ]_{0; \, B_{R+\tau} } \\
	&\leq C(n) \tau^{-3} (R+\tau)^{\alpha} \big( [ a ]_{\alpha; \, B_{R+\tau} } \abs{ \p_x^2 u }_{0; \, B_{R+\tau} } + [ f ]_{\alpha; \, B_{R+\tau} } \big).
\end{align*}
We now recall the solid mean value inequality for subharmonic functions: If $v$ solves $-\Delta v(x) \leq 0$ on a ball $B_R(x) \subset \mathbb{R}^d$, then $v$ satisfies
\[ v(x) \leq \frac{ C(n) }{ R^n } \int_{B_R} v(y) \, dy. \]
To apply this inequality to our situation, noting $\Delta \abs{ x }^2 = 2n$, we have
\[	-\Delta \left( \p_x^3 u_{\tau} + \frac{ \abs{ \p_x^3 g_{\tau} }_{0; \, B_R } \abs{ x }^2 }{ 2n } \right) = -\Delta \p_x^3 u_{\tau} - \abs{ \p_x^3 g_{\tau}  }_{0; \, B_R} \leq 0. \]
Thus the function $\p_x^3 u_{\tau} + \abs{ \p_x^3 g_{\tau} }_{0; \, B_R } \abs{ x }^2 / (2n)$ is subharmonic and applying the mean value inequality and estimating we obtain
\begin{align*}
	\abs{ \p_x^3 u_{\tau}(x_0) } &\leq C(n) \left( R^{-n} \left\lvert \int_{B_R} \p_y^3 u_{\tau}(y)  \, dy \right\rvert +  R^2 \abs{ \p_x^3 g_{\tau} }_{0; \, B_R } \right) \\
	&\leq C(n) \left( \frac1R \osc_{ B_R } \p_x^2 u_{\tau}(x) + \tau^{-3} R^2(R+\tau)^{\alpha} \big( [ a ]_{\alpha; B_{R+\tau} } \abs{ \p_x^2 u }_{0; \, B_{R+\tau} } + [ f ]_{\alpha; \, B_{R+\tau} } \big) \right) \\
	&\leq C(n) \left( R^{ \alpha - 1 } [ \p_x^2 u ]_{ \alpha; \, B_R } + \tau^{-3} R^2 (R+\tau)^{\alpha} \big( [ a ]_{\alpha; B_{R+\tau} } \abs{ \p_x^2 u }_{0; \, B_{R+\tau} } + [ f ]_{\alpha; \, B_{R+\tau} } \big) \right).
\end{align*}
Setting $R = N\tau$ and returning to the original coordinates we find
\[ \tau^{1-\alpha}\abs{ \p_x^3 u_{\tau}(x_0) } \leq C(n, \lambda, \Lambda, \alpha) \left( N^{ \alpha - 1 } [ \p_x^2 u ]_{ \alpha; \, B_R } + N^{2+\alpha} \big( [ a ]_{\alpha; B_{R+\tau} } \abs{ \p_x^2 u }_{0; \, B_{R+\tau} } + [ f ]_{\alpha; \, B_{R+\tau} } \big) \right). \]
Now taking the supremum over $\tau > 0$ and using the norm equivalence we obtain
\[ [ \p_x^2 u ]_{\alpha; \, \mathbb{R}^d } \leq C(n, \lambda, \Lambda, \alpha) \left( N^{ \alpha - 1 } [ \p_x^2 u ]_{ \alpha; \, \mathbb{R}^d } + N^{2+\alpha} \big( [ a ]_{\alpha; \mathbb{R}^d } \abs{ \p_x^2 u }_{0; \, \mathbb{R}^d } + [ f ]_{\alpha; \, \mathbb{R}^d } \big) \right). \]
Choosing $N$ sufficiently large and using the H\"{o}lder space interpolation inequality on the right gives the desired estimate, namely
\[ [ \p_x^2 u ]_{\alpha; \, \mathbb{R}^d } \leq C \big( \abs{ u }_{0; \, \mathbb{R}^d } + [ f ]_{\alpha; \, \mathbb{R}^d } \big), \]
where $C$ depends on $n, \lambda, \Lambda$, and $\alpha$.
Having given a feel for Trudinger's method, we move on to use this method to derive the Schauder estimates for second order parabolic equations.  The crucial equivalence of norms lemma in the parabolic setting is the following:

\begin{lem}
Let $u \in C^{ \alpha }( \mathbb{R}^{d+1} )$, $R > 0$ and $\alpha \in (0,1)$.  There exists constant C depending only on $d$ and $\alpha$ such that the norm equivalence
\begin{equation*}
		\frac{1}{C}[u]_{ \alpha; \, Q_{R} } \leq \sup_{0 < \tau < R/2 } \left\{ \tau^{1-\alpha} \abs{\p_x u_{\tau}}_{ 0; \, Q_{ R } } + \tau^{2m-\alpha}\abs{\p_t u_{\tau}}_{0; Q_{R} } \right\} \leq C[u]_{ \alpha; \, Q_{R} }.
	\end{equation*}
is valid.
\end{lem}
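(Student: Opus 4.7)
The plan is to get the right-hand inequality essentially for free from the mollifier estimate \eqref{e: Holder est 4} already established, and to prove the left-hand inequality by reconstructing $u$ from $u_\tau$ via the triangle inequality, splitting the resulting $u_\tau$-increment along an L-shaped path in spacetime, and then calibrating the mollification parameter $\tau$ against the parabolic distance $d(X,Y)$. Specifically, taking $(i,j)=(1,0)$ in \eqref{e: Holder est 4} gives $\tau^{1-\alpha}\abs{\partial_x u_\tau}_{0;Q_R}\leq C[u]_{\alpha;Q_R}$, and $(i,j)=(0,1)$ gives $\tau^{2m-\alpha}\abs{\partial_t u_\tau}_{0;Q_R}\leq C[u]_{\alpha;Q_R}$; summing and taking the supremum over $\tau\in(0,R/2)$ yields the upper bound.

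For the lower bound, fix $X=(x,t),Y=(y,s)\in Q_R$ and $\tau\in(0,R/2)$ and write
$$\abs{u(X)-u(Y)} \leq \abs{u(X)-u_\tau(X)} + \abs{u_\tau(X)-u_\tau(Y)} + \abs{u_\tau(Y)-u(Y)}.$$
The outer two terms are each at most $\tau^\alpha[u]_{\alpha;Q_R}$ by \eqref{e: Holder est 3}. For the middle term I would decompose $X=(x,t)\to(y,t)\to(y,s)=Y$ and apply the mean value theorem on each leg, obtaining
$$\abs{u_\tau(X)-u_\tau(Y)}\leq \abs{\partial_x u_\tau}_{0;Q_R}\abs{x-y}+\abs{\partial_t u_\tau}_{0;Q_R}\abs{t-s}.$$
Writing $M$ for the middle supremum in the statement and using $\abs{x-y}\leq d(X,Y)$ and $\abs{t-s}\leq d(X,Y)^{2m}$, this last quantity is at most $M\bigl(\tau^{\alpha-1}d(X,Y)+\tau^{\alpha-2m}d(X,Y)^{2m}\bigr)$.

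The decisive step is then the scaling choice $\tau=\epsilon\,d(X,Y)$ for a small parameter $\epsilon>0$ to be fixed. Since $d(X,Y)$ is bounded by a harmless multiple of $R$ on $Q_R$, this falls in $(0,R/2)$ once $\epsilon$ is small enough. Plugging in and factoring out $d(X,Y)^\alpha$ gives
$$\abs{u(X)-u(Y)}\leq d(X,Y)^\alpha\bigl(2\epsilon^\alpha[u]_{\alpha;Q_R}+CM(\epsilon^{\alpha-1}+\epsilon^{\alpha-2m})\bigr).$$
Dividing through by $d(X,Y)^\alpha$ and taking the supremum over distinct $X,Y$ yields $[u]_{\alpha;Q_R}\leq 2\epsilon^\alpha[u]_{\alpha;Q_R}+C'M\epsilon^{\alpha-2m}$, and choosing $\epsilon$ so that $2\epsilon^\alpha\leq 1/2$ lets us absorb the first term on the right into the left, leaving $[u]_{\alpha;Q_R}\leq C''M$ as required.

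The only genuine subtlety is domain bookkeeping: $u_\tau$ is strictly defined only on $P_\tau$, so one must either extend $u$ past $Q_R$ (by reflection or cutoff, as the elliptic prototype treated above implicitly does), restrict the left-hand side to a slightly shrunk cylinder such as $Q_{R/2}$, or enlarge the domain on which the supremum defining $M$ is computed. This is a standard technicality for Campanato/Trudinger-type norm equivalences and does not affect the $\epsilon$-absorption at the heart of the argument, so I expect it will be handled exactly as in the mollification estimates already written down in the excerpt.
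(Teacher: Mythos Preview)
Your proposal is correct and follows essentially the same route as the paper: the upper bound comes straight from \eqref{e: Holder est 4} with $(i,j)=(1,0)$ and $(0,1)$, and the lower bound uses the same triangle-inequality decomposition with the L-shaped path $(x,t)\to(y,t)\to(y,s)$, the same scaling choice $\tau=\epsilon\,d(X,Y)$, and the same absorption of $2\epsilon^\alpha[u]_{\alpha;Q_R}$ into the left-hand side. Your remark on domain bookkeeping is a fair technical caveat that the paper simply glosses over.
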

\begin{proof}
The second inequality follows directly from equation \eqref{e: Holder est 4} by choosing the appropriate values for the indices $i$ and $j$.  To prove the spatial part of the second inequality, choosing  $i = 1$ and $j = 0$ in estimate \eqref{e: Holder est 4} gives
\begin{equation*}
	\abs{ \p_x u_{\tau}(x,t) }_{ 0; \, Q_{ R } } \leq C \tau^{ \alpha - 1 } [u]_{\alpha; \, Q_{ R } }.
\end{equation*}
The temporal estimate follows similarly. Let $X, Y \in \mathbb{R}^{d+1}$ and $\tau \in (0, R/2)$. For $ d(X, Y) < R$, by the triangle inequality

\begin{align*}
		\abs{u(X) - u(Y)} &\leq \abs{u(X) - u_{\tau}(X)} + \abs{ u_{\tau}(Y) - u(Y) } + \abs{ u_{\tau}(x,t) - u_{\tau}(y,t) } + \abs{ u_{\tau}(y,t) - u_{\tau}(y,s) } \\
		&\leq 2\tau^{\alpha}[u]_{ \alpha; \, Q_{ R }} + \abs{x-y} \abs{ \p_x u_{\tau} }_{ 0; \, Q_{ R } } + \abs{t-s} \abs{ \p_t u_{\tau} }_{ 0; \, Q_{ R } }.
\end{align*}
Set $\tau = \epsilon d(X,Y)$, where $\epsilon < 1/2$.  Factoring out $d(X,Y)^{\alpha}$ we have
\begin{equation*}
	\abs{u(X) - u(Y)} \leq d(X,Y)^{\alpha} \left( 2\epsilon^{\alpha}[u]_{\alpha; \, Q_{ R }} + \epsilon^{\alpha-1}\tau^{1-\alpha}\abs{ \p_x u_{\tau} }_{ 0; \, Q_{ R } } + \epsilon^{\alpha-2m}\tau^{2m-\alpha}\abs{ \p_t u_{\tau} }_{ 0; \, Q_{ R } } \right).
\end{equation*}
The proposition follows by fixing $\epsilon$ sufficiently small and taking the supremum over $\tau \in (0, R/2)$.
\end{proof}
We now proceed similarly to Poisson's equation to derive the Schauder estimate for the nonhomongeneous heat equation.  Fix $\alpha \in (0,1)$ and suppose that $u \in C_0^{2, \alpha}(\mathbb{R}^{d+1})$ solves
\[ \p_t u(x,t) - a^{ij}(x,t)\p_{ij}u(x,t) = f(x,t), \]
where we assume $a^{ij}, f \in C^{\alpha}(\mathbb{R}^{d+1})$ and $\lambda \abs{ \xi }^2 \leq a^{ij}\xi_i \xi_j \leq \Lambda \abs{ \xi }^2$.  Again we freeze coefficients at a point $(x_0, t_0) \in \mathbb{R}^{d+1}$, perform a coordinate transformation and mollify the equation to get
\begin{equation}
	\p_t u_{\tau} - \Delta u_{\tau}(x,t) = g_{\tau}. \label{eqn: heat subsol 0}
\end{equation}
Given the form of the norm equivalence, the desired Schauder estimate will follow if we can establish the estimates (for the spatial component of the Schauder estimate)
\begin{align}
	\abs{ \p_x^3 u_\tau(x_0, t_0) } &\leq C \left( \frac1R \osc_{ Q_R } \p_x^2 u + R^2\abs{ \p_x^3 g_{\tau} }_{0; \, Q_R} \right) \label{eqn: heat subsol 1} \\
	\abs{ \p_t\p_x^2 u_\tau(x_0, t_0) } &\leq C \left( \frac{1}{R^2} \osc_{ Q_R } \p_x^2 u + R^2\abs{ \p_t\p_x^2 g_{\tau} }_{0; \, Q_R} \right), \label{eqn: heat subsol 3}
\end{align}
and for the temporal part
\begin{align}
	\abs{ \p_x\p_t u_\tau(x_0, t_0) } &\leq C \left( \frac1R \osc_{ Q_R } \p_t u + R^2\abs{ \p_x\p_t g_{\tau} }_{0; \, Q_R} \right) \label{eqn: time subsol 1} \\
	\abs{ \p_t^2 u_\tau(x_0, t_0) } &\leq C \left( \frac{1}{R^2} \osc_{ Q_R } \p_t u + R^2\abs{ \p_t^2 g_{\tau} }_{0; \, Q_R} \right).  \label{eqn: time subsol 2}
\end{align}
We show how to obtain the spatial estimates, as the time estimates follow in exactly the same way.  We recall the mean value property for subsolutions of the heat equation:  If $v$ is a subsolution to the heat equation on $\mathbb{R}^{d+1}$, that is if $v$ satisfies $\p_v - \Delta v \leq 0$, then $v$ satisfies
\begin{equation*}
	v(x_0, t_0) \leq \frac{1}{4r^n}\iint_{E(x_0,t_0;\,r)} v(y,s) \frac{ \abs{ x_0 - y }^2 }{ (t_0-s)^2 } \, dy ds
\end{equation*}
for each $E(x_0,t_0; r) \subset \mathbb{R}^{d+1}$.  Recall the heat ball $E(x,t;r)$ is the set given by $E(x_0,t_0;r) = \{ (y,s) \in \mathbb{R}^{d+1} : \abs{x_0-y}^2 \leq \sqrt{ -2\pi s \log[r^2/(-4\pi s)] }, s \in (t_0 - r^2/(4\pi s), t_0) \}$.  We denote the radius of the heat ball by $R_r(s) : \sqrt{ -2\pi s \log[r^2/(-4\pi s)] }$.  For further information on the mean value property of the heat equation we refer the reader to \cite{Ev} and \cite{kE04}.  Let us now show \eqref{eqn: heat subsol 1}:  Differentiate \eqref{eqn: heat subsol 0} thrice in space.  Since $\abs{ \p_x^3 g_{\tau} }_{0; \, E}\abs{x}^2/(2n)$ is independent of time we see
\begin{align*}
	\p_t \left( \p_x^3 u_{\tau} + \abs{ \p_x^3 g_{\tau} }_{0; \, E}\frac{ \abs{x}^2 }{2n} \right) - \Delta \left( \p_x^3 u_{\tau} + \abs{ \p_x^3 g_{\tau} }_{0; \, E}\frac{ \abs{x}^2 }{2n} \right) &= \p_t(\p_x^3 u_{\tau}) - \Delta (\p^3_x u_{\tau}) - \abs{ \p_x^3 g_{\tau} }_{0; \, E} \\
	&= \p_x^3 g_{\tau} - \abs{ \p_x^3 g_{\tau} }_{0; \, E} \leq 0,
\end{align*}
and hence the function $\p_x^3 u_{\tau} + \abs{ \p_x^3 g_{\tau} }_{0; \, E(x_0, t_0; r) } \abs{ x }^2 / (2n)$ is subsolution of the heat equation.  From the mean value property of subsolutions we have
\begin{equation}
\p_x^3 u_{\tau}(x_0, t_0) \leq \frac{1}{4r^n} \iint_{E(x,t;r)} \left( \p_y^3 u_{\tau}(y,s) + \abs{\p_x^3 g_{\tau}}_{0}\abs{ y }^2 \right) \frac{ \abs{ x_0 - y }^2 }{ \abs{ t_0 - s }^2 } \, dy ds. \label{eqn: heat subsol 4}
\end{equation}
By translating coordinates we can assume that $(x_0, t_0) = (0,0)$.  All the desired estimates involve evaluation the integral
\[ \frac{1}{r^n}\int_{ \frac{-r^2}{4\pi} } \frac{ R_r(s)^{\alpha} }{ s^{\beta} } \, ds, \]
where $\alpha$ and $\beta$ are given integers.  The constants can be computed explicitly, however we are only interested in the scaling behaviour with respect to the radius $r$ (and that the integral is finite).  We compute
\begin{align*}
	\frac{1}{r^n}\int_{ \frac{-r^2}{4\pi} }^0 \frac{ R_r(s)^{\alpha} }{ s^{\beta} } \, ds &= \frac{1}{r^n}\int_{ \frac{-r^2}{4\pi} }^0 \frac{ \big( -2ns \log[ r^2/ (-4\pi s ) ] \big)^{ \alpha/2} }{ -s^{\beta} } \\
	&= C(n, \alpha, \beta) r^{-n+\alpha-2\beta+2} \int_{ \frac{1}{4\pi} }^0 t^{\alpha/2 - \beta} \big( \log( 4 \pi t) \big)^{\alpha/2} \,dt \\
	&= C(n, \alpha, \beta) r^{-n+\alpha-2\beta+2}\int_0^{\infty} s^{\alpha/2} e^{-\alpha/2-\beta+1} \, ds.
\end{align*}
With further substitution this integral can be converted into the Gamma function, which is finite as long as $\alpha/2 > -1$.  Returning to \eqref{eqn: heat subsol 4}, we have
\begin{equation}
	\p_x^3 u_{\tau}(x_0, t_0)  \leq 4r^{-n} \iint_E \p_y^3 u_{\tau}(y,s) \frac{ \abs{y}^2 }{ s^2 } \, dy ds + 4\abs{ \p_x^3 g_{ \tau } }_{0; \, E} r^{-n} \iint_E \frac{ \abs{y}^4 }{ s^2 } \, dy ds. \label{eqn: heat subsol 5}
\end{equation}
We estimate the first term on the right by
\begin{align*}
	4r^{-n} \iint_E \p_y^3 u_{\tau}(y,s) \frac{ \abs{y}^2 }{ s^2 } \, dy ds &\leq Cr^{-n} \int_{ \frac{ -r^2 }{4\pi} }^0 \frac{ R_r(s)^2 }{s^2} \left( \int_{B_{R_r(s)}} \p_y^3 u_{\tau} \, dy \right) ds \\
	&\leq Cr^{-n} \int_{ \frac{ -r^2 }{4\pi} }^0 \frac{ R_r(s)^2 }{s^2} \left( \int_{\p B_{R_r(s)}} \osc \p_y^2 u \, dy \right) ds \\
	&\leq Cr^{-n} \osc_E \p_x^2 u \int_{ \frac{ -r^2 }{4\pi} }^0 \frac{ R_r(s)^{n+1} }{s^2} ds\\
	&\leq \frac{ C(n) }{ r } \osc_E \p_x^2 u.
\end{align*}
The second term on the right of \eqref{eqn: heat subsol 5} can be estimated more simply to give
\[ 4\abs{ \p_x^3 g_{ \tau } }_{0; \, E} r^{-n} \iint_E \frac{ \abs{y}^4 }{ s^2 } \, dy ds \leq C(n) r^2 \abs{ \p_x^3 g_{ \tau } }_{0; \, E}. \]
The estimates involving time derivatives can also be estimated in a similar manner.  For example, by integrating by parts in time, we have
\begin{align*}
	4r^{-n} \iint_E \p_t\p_y^2 u_{\tau}(y,s) \frac{ \abs{y}^2 }{ s^2 } \, dy ds &\leq Cr^{-n} \iint_E \osc \p_y^2 u \frac{ \abs{y}^2 }{ s^3 } \\
	&\leq \frac{ C(n) }{ r^2 } \osc_E \p_x^2 u.
\end{align*}
The derivation now continues in the exactly the same was as for the Poisson equation, using the estimates \eqref{eqn: heat subsol 1} - \eqref{eqn: time subsol 1}, the equivalence of norms lemma and the H\"{o}lder space interpolation inequality; we ultimately obtain the desired Schauder estimate:
\begin{equation}
	[\p^{2,1} u ]_{\alpha; \, \mathbb{R}^d } \leq C \big( [ f ]_{\alpha; \, \mathbb{R}^d }+ \abs{ u }_{0; \, \mathbb{R}^d } \big), \label{eqn: eqn Schauder}
\end{equation}
where $C$ depends on $n, \lambda, \Lambda$, and $\alpha$.  The method extends to more general equations and domains by using cutoff functions and Simon's absorption lemma, as we shall soon see in the case of systems.

\subsubsection{Simon's method of scaling}
As we have mentioned before, Trudinger's method extends to systems of even order, where the application of the mean value inequalities are replaced by $L^2$ estimates and the Sobolev embedding theorem.  For parabolic systems the method becomes a little computationally cumbersome, and instead we shall use the Simon's method of scaling.  For the derivation of the Schauder estimates for elliptic systems, in addition the Simon's original paper \cite{Sim1}, we highly recommend his lecture notes on PDE \cite{Sim3}.  Once one has defined the notion of a parabolic polynomial his method adapts immediately to parabolic systems.  Here we simply quote the interior and near-bottom Schauder estimates for parabolic systems of even order on Euclidean space, and refer the reader to \cite{tL} for complete proofs.  Any errors or inconsistencies are due to us.

\begin{prop}[interior Schauder estimate]
Suppose $u \in C^{2m,1,\alpha}(\bar Q_R(X_0))$ is a solution of a general linear $2m$-order parabolic system
\begin{equation}\label{eqn: interior Schauder 1}
	Lu^{a} := \p_t u^{a} + (-1)^m \sum_{ \abs{I} \leq 2m } A_{b}^{a I}(x,t)\p_I u^{b} = f^{a}.
\end{equation}
Suppose the following conditions are satisfied:
\begin{enumerate}
	\item  The leading coefficient $A_{b}^{a i_1 j_1 \cdots i_m j_m }$ satisfies the symmetry condition $A_{b}^{a i_1 j_1 \cdots i_m j_m } =  a_{ a }^{ b j_1 i_1 \cdots j_m i_m }$
	\item The leading coefficient satisfies the Legendre-Hadamard condition with constant $\lambda$
		\item There exists a uniform constant $\Lambda < \infty$ such that $\sum_{ \abs{I} \leq 2m } \abs{A^I}_{\alpha; Q_R(X_0)} \leq \Lambda$.
\end{enumerate}
Then there exists a constant $C = C(n, N, \theta, \lambda, \Lambda)$ such that the estimate
\[ [ \p^{2m,1}u]_{\alpha; \; \theta Q_R } \leq C \big( [ f ]_{\alpha; \, Q_R} + R^{-2m-\alpha} \abs{ u }_{0; \, Q_R} \big) \]
holds for each $\theta \in (0,1)$.
\end{prop}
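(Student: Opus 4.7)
The plan is to follow Simon's scaling method, whose structure is: first establish a decay estimate for solutions to the constant-coefficient homogeneous system (approximation by parabolic polynomials), then transfer to the variable-coefficient setting through an iteration/absorption argument. By translating $X_0$ to the origin and exploiting the parabolic scaling $(x,t) \mapsto (Rx, R^{2m}t)$, which preserves the hypotheses (the Legendre--Hadamard constant is invariant, and the $C^\alpha$-norm of the coefficients scales favourably), one reduces to the case $R = 1$ and $X_0 = 0$.

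First I would prove the constant-coefficient baseline. For a homogeneous system $\p_t v + (-1)^m A_{b,0}^{a I} \p_I v = 0$ on $Q_1$ with constant $A_0^I$ satisfying symmetry and Legendre--Hadamard, standard energy arguments plus hypoellipticity yield smoothness of $v$ in the interior, together with the polynomial approximation estimate: for every $\rho \in (0, 1/2)$ there exists a parabolic polynomial $P$ of degree $\leq 2m$ (counting $\p_t$ as $2m$ space derivatives) with
\[
	\sup_{Q_\rho} \abs{v - P} \leq C\, \rho^{2m+\alpha} \sup_{Q_1} \abs{v}.
\]
This decay is the engine of the whole method. Next I would introduce the approximation seminorm
\[
	\Phi(X, \rho) := \rho^{-2m-\alpha}\, \inf_{P \in \mathcal{P}_{2m}} \sup_{Q_\rho(X)} \abs{u - P},
\]
for $X \in \theta Q_R$, $\rho \leq (1-\theta)R$, where $\mathcal{P}_{2m}$ is the space of parabolic polynomials of the appropriate degree. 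A Taylor-type characterisation of H\"older continuity shows that $\sup_{X,\rho} \Phi(X,\rho)$ is equivalent, modulo lower-order terms controlled by $\abs{u}_{0;\,Q_R}$, to $[\p^{2m,1}u]_{\alpha;\,\theta Q_R}$.

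The core iteration step freezes coefficients at $X$: on $Q_\rho(X)$, decompose $u = v + w$, where $v$ solves the frozen homogeneous system $\p_t v + (-1)^m A^I(X) \p_I v = 0$ with the same Dirichlet data as $u$ on $\p_{\mathrm{par}}Q_\rho(X)$. Applying the baseline decay to $v$ at a smaller scale $\theta\rho$ gives an $O((\theta\rho)^{2m+\alpha})$ polynomial approximation. The residual $w$ satisfies a constant-coefficient equation with inhomogeneity $f + (A^I(X) - A^I)\p_I u$, and a constant-coefficient $L^\infty$ bound (e.g.\ via Green's representation or a further energy estimate) dominates $\sup_{Q_\rho}\abs{w}$ by $\rho^{2m}[f]_{\alpha;\,Q_R} + \rho^\alpha\Lambda \sum_{\abs{I}\leq 2m}\sup_{Q_\rho}\abs{\p_I u}$, the latter using the $C^\alpha$-continuity of the $A^I$. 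Combining yields
\[
	\Phi(X, \theta\rho) \leq C\theta^\alpha\, \Phi(X, \rho) + C\bigl( [f]_{\alpha;\,Q_R} + \text{(lower-order norms of $u$)} \bigr).
\]
Choosing $\theta$ small so that $C\theta^\alpha \leq 1/2$ and iterating via Simon's absorption lemma produces $\sup_{X,\rho} \Phi(X,\rho) \leq C\bigl([f]_{\alpha;\,Q_R} + \abs{u}_{0;\,Q_R}\bigr)$, and the H\"older space interpolation inequality converts lower-order seminorms into $\abs{u}_{0;\,Q_R}$ plus a small multiple of $[\p^{2m,1}u]_{\alpha}$ which is absorbed on the left. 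Restoring the scaling in $R$ produces the factor $R^{-2m-\alpha}$.

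The main obstacle is the apparent circularity in the iteration step: the variable-coefficient error $(A^I(X) - A^I)\p_I u$ at top order $\abs{I}=2m$ involves precisely the derivatives whose H\"older seminorm is the quantity we are trying to bound. The resolution is that this top-order contribution, after one scale of iteration, is bounded by $\Phi$ itself up to interpolation-controlled lower-order terms, so that a sufficiently small choice of $\theta$ allows it to be absorbed into the left-hand side of the recursion. A secondary technical point is selecting the correct class of parabolic polynomials $\mathcal{P}_{2m}$ compatible with the anisotropic scaling (each time derivative counts as $2m$ spatial derivatives), so that the approximation norm $\Phi$ indeed captures $[\p^{2m,1}u]_\alpha$.
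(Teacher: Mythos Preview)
Your proposal follows Simon's scaling method, which is exactly the approach the paper adopts for this proposition; indeed the paper does not give its own proof but simply quotes the estimate and refers the reader to Lamm's Diploma Thesis \cite{tL} for complete details, with only Simon's absorption lemma stated explicitly afterwards. Your outline---polynomial approximation for the constant-coefficient system, the seminorm $\Phi$, freezing coefficients, iteration with a small $\theta$, and absorption via interpolation---is a faithful sketch of that method and is consistent with what the paper cites.
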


\begin{prop}[near-bottom Schauder estimate]
Suppose $u \in C^{2m,1,\alpha}(\bar Q_R^+(X_0))$, with $u( \cdot, 0) = u_0$,  is a solution of a general linear $2m$-order parabolic system
\begin{equation}\label{eqn: interior Schauder 1}
	Lu^{a} := \p_t u^{a} + (-1)^m \sum_{ \abs{I} \leq 2m } A_{b}^{a I}(x,t)\p_I u^{b} = f^{a}.
\end{equation}
Suppose the following conditions are satisfied:
\begin{enumerate}
	\item  The leading coefficient $A_{b}^{a i_1 j_1 \cdots i_m j_m }$ satisfies the symmetry condition $A_{b}^{a i_1 j_1 \cdots i_m j_m } =  a_{ a }^{ b j_1 i_1 \cdots j_m i_m }$
	\item The leading coefficient satisfies the Legendre-Hadamard condition with constant $\lambda$
		\item There exists a uniform constant $\Lambda < \infty$ such that $\sum_{ \abs{I} \leq 2m } \abs{A^I}_{\alpha; Q_R(X_0)} \leq \Lambda$.
\end{enumerate}
Then there exists a constant $C = C(n, N, \theta, \lambda, \Lambda)$ such that the estimate
\[ [ \p^{2m,1}u ]_{\alpha; \; \theta Q^+_R} \leq C \big( [ f ]_{\alpha; \, Q^+_R} + [\p_x^{2m} u_0]_{\alpha; \, Q^+_R} +R^{-2m-\alpha} \abs{ u }_{0; \, Q^+_R} \big) \]
holds for each $\theta \in (0,1)$.
\end{prop}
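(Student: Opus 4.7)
The plan is to follow Simon's method of scaling, adapted to half-cylinders touching the initial hyperplane $\{t=0\}$, and argue by contradiction via a blow-up and compactness argument. First I would reduce to the case of zero initial data: set $\tilde u_0(x,t):=u_0(x)$ (extended trivially in time) and $v:=u-\tilde u_0$, so that $v(\cdot,0)=0$ and
\[
Lv \;=\; f \,-\, (-1)^m\sum_{|I|\le 2m}A^I\partial_I u_0 \;=:\; \tilde f.
\]
Condition (3) together with the H\"older interpolation inequality gives
\[
[\tilde f]_{\alpha;Q_R^+}\le [f]_{\alpha;Q_R^+}+C\Lambda\bigl([\partial_x^{2m}u_0]_{\alpha;Q_R^+}+R^{-2m-\alpha}|u_0|_{0;Q_R^+}\bigr),
\]
so both the $[\partial_x^{2m}u_0]_\alpha$ and $|u_0|_0$ contributions on the right of the asserted inequality are absorbed into the $[\tilde f]_\alpha$ and $R^{-2m-\alpha}|v|_0$ terms. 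It therefore suffices to prove the estimate under the additional assumption $u_0\equiv 0$.

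Under $u_0\equiv 0$, suppose the estimate fails. Then there exists a sequence $(u_k,f_k,A_k^I)$ satisfying (1)--(3) uniformly, with $u_k(\cdot,0)=0$, yet
\[
M_k\,:=\,[\partial^{2m,1}u_k]_{\alpha;\theta Q_R^+}\;\ge\; k\bigl([f_k]_{\alpha;Q_R^+}+R^{-2m-\alpha}|u_k|_{0;Q_R^+}\bigr).
\]
Pick pairs $X_k,Y_k\in\theta Q_R^+$ almost realizing the seminorm of $\partial^{2m,1}u_k$, set $\rho_k:=d(X_k,Y_k)$, and rescale
\[
\tilde u_k(x,t)\,:=\,M_k^{-1}\rho_k^{-2m-\alpha}\bigl[u_k\bigl(X_k+(\rho_k x,\rho_k^{2m}t)\bigr)-P_k(x,t)\bigr],
\]
where $P_k$ is the parabolic Taylor polynomial of $u_k$ at $X_k$ of weighted degree $2m$ (subtracting $P_k$ does not alter $\partial^{2m,1}u_k$). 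The rescaled $\tilde u_k$ solve a rescaled parabolic system whose inhomogeneity tends to zero in $C^\alpha_{\mathrm{loc}}$, whose coefficients converge along a subsequence to constants $A_\infty^I$ still satisfying Legendre--Hadamard, and whose seminorm satisfies $[\partial^{2m,1}\tilde u_k]_\alpha=1$. Depending on the behaviour of $\rho_k/\mathrm{dist}(X_k,\{t=0\})$, the rescaled domains exhaust either all of $\mathbb{R}^d\times\mathbb{R}$ (whole-space case) or a half-space $\mathbb{R}^d\times[-T_\infty,\infty)$ with the trace condition $u_\infty(\cdot,-T_\infty)=0$ (half-space case).

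Arzel\`a--Ascoli, combined with H\"older interpolation to control lower-order seminorms, produces a subsequential $C^{2m,1}_{\mathrm{loc}}$ limit $u_\infty$ solving the constant-coefficient homogeneous system $\partial_t u_\infty+(-1)^m\sum A_\infty^I\partial_I u_\infty=0$, with $[\partial^{2m,1}u_\infty]_\alpha=1$ and polynomial growth of parabolic order at most $2m+\alpha$. A Liouville-type theorem for constant-coefficient parabolic systems then forces $u_\infty$ to be a polynomial of weighted degree at most $2m$; hence $\partial_t u_\infty$ and $\partial_x^{2m}u_\infty$ are both constants, whence $[\partial^{2m,1}u_\infty]_\alpha=0$, contradicting the normalization. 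In the half-space case the initial trace restricts $u_\infty$ to the form $(t+T_\infty)v(x)$, but the degree bound again reduces $\partial^{2m,1}u_\infty$ to a constant and delivers the same contradiction. The main obstacle will be the bookkeeping for the half-space alternative: verifying that the normalization passes to the limit so that $[\partial^{2m,1}u_\infty]_\alpha=1$ is genuinely attained after the rescaling, and establishing the parabolic Liouville theorem in the version needed (polynomial growth plus vanishing initial trace) strongly enough to rule out all non-trivial limits.
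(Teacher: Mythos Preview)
Your proposal is correct and follows essentially the same approach the paper invokes: the paper does not give its own proof of this proposition but simply quotes it, indicating that Simon's method of scaling (as carried out for parabolic systems in Lamm's Diploma Thesis \cite{tL}) yields the result, and your blow-up/compactness/Liouville argument is precisely that method. Your reduction to $u_0\equiv 0$ and the dichotomy between whole-space and half-space limits are the expected steps; the ``bookkeeping'' concerns you raise at the end are genuine but standard, and are handled in the references the paper cites (\cite{Sim1}, \cite{Sim3}, \cite{tL}).
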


The above estimates are the localised counterparts to equation \eqref{eqn: eqn Schauder}.  In order to localise the estimate, the following adsorption lemma is needed:
\begin{lem}[Simon's adsorption lemma]
Let $S$ be a real-valued monotone sub-additive function on the class of all convex subsets of $B_R(x_0) ($i.e. $S(A) \leq \sum_{i=1}^N S(A_j)$ whenever $A, A_1, \ldots, A_N$ are convex subsets with $A \subset \cup_{j=1}^N \subset B_R(x_0)$.  Suppose that $\theta_0 \in (0, 1)$, $\mu \in (0,1]$, $\gamma \geq 1$ and $l \geq 0$ are given constants.  There exists an $\epsilon_0 = \epsilon_0(l, \theta,n) > 0$ such that if
\[ \rho^lS(B_{\theta\rho}(y)) \leq \epsilon_0\rho^lS(B_{\rho}(y)) + \gamma \]
whenever $B_{\rho}(y) \subset B_R(x_0)$ and $\rho \leq \mu R$, then
\[ R^lS(B_{\theta R}(x_0)) \leq C\gamma, \]
where $C=C(n,\theta,\mu, l)$.
\end{lem}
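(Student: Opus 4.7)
The plan is a standard iteration-plus-covering argument. After rescaling, I would assume $R = 1$ without loss of generality. The key quantity to control is
\[
M := \sup \bigl\{ \rho^l S(B_{\rho}(y)) : B_{\rho}(y) \subset B_1(x_0), \; \rho \leq \mu \bigr\},
\]
which is a priori finite: since $S$ is real-valued and monotone on convex subsets of $B_1(x_0)$, we have $\rho^l S(B_{\rho}(y)) \leq \mu^l S(B_1(x_0)) < \infty$ for every admissible pair. The conclusion will follow from showing $M \leq C\gamma$, since $R^l S(B_{\theta R}(x_0))$ is then bounded by $M$ up to a factor depending only on $\theta$ and $\mu$ (by monotonicity, comparing $B_{\theta R}(x_0)$ with an admissible ball whose radius is $\min\{\theta R, \mu R\}$).

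For the first step I would feed the hypothesis into the supremum: for any admissible $(\rho, y)$,
\[
\rho^l S(B_{\theta \rho}(y)) \leq \epsilon_0 \rho^l S(B_{\rho}(y)) + \gamma \leq \epsilon_0 M + \gamma,
\]
so $(\theta \rho)^l S(B_{\theta \rho}(y)) \leq \theta^l(\epsilon_0 M + \gamma)$. This already controls $S$ on every ball of the form $B_{\theta \rho}(y)$ with $(\rho, y)$ admissible, but in order to bound $M$ itself we need the same estimate for arbitrary admissible balls $B_{\sigma}(z)$. Here I would invoke subadditivity: cover $B_{\sigma}(z)$ by a finite family of balls $B_{\theta \rho_j}(y_j)$ with $\rho_j$ comparable to $\sigma$ and each $(\rho_j, y_j)$ admissible; standard Euclidean packing shows this can be done using at most $N = N(n, \theta)$ balls. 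Summing gives
\[
\sigma^l S(B_{\sigma}(z)) \leq N \theta^l \bigl( \epsilon_0 M + \gamma \bigr)
\]
for every admissible $(\sigma, z)$, and taking the supremum yields $M \leq N \theta^l \epsilon_0 M + N \theta^l \gamma$.

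To close the argument I would choose $\epsilon_0 = \epsilon_0(l, \theta, n)$ so small that $N \theta^l \epsilon_0 \leq \tfrac{1}{2}$. The $M$ term on the right is then absorbed into the left side, and we obtain $M \leq 2 N \theta^l \gamma$; translating back through the rescaling gives the claimed estimate with $C = C(n, \theta, \mu, l)$.

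The main obstacle is the covering step, specifically its behaviour near $\partial B_1(x_0)$: the hypothesis requires the \emph{larger} ball $B_{\rho_j}(y_j)$ to lie in $B_1(x_0)$, which is strictly stronger than $B_{\theta \rho_j}(y_j) \subset B_{\sigma}(z) \subset B_1(x_0)$ when $y_j$ is near the boundary. One has to trade off: either shrink $\rho_j$ slightly (paying a bounded factor in the final constant, absorbed into $N$) or nudge the centers $y_j$ inward by an amount proportional to $\sigma$, using subadditivity to enlarge the covering balls so they still cover $B_{\sigma}(z)$. Either way, one must verify that the covering number $N$ depends only on $n$ and $\theta$ (and possibly $\mu$) and not on the position of $B_{\sigma}(z)$, which is the one point where the convex-subset hypothesis on $S$ and the full flexibility of Euclidean coverings is used.
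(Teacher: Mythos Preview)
The paper does not supply its own proof of this lemma: immediately after stating it, the author writes ``The proof can be found in \cite{Sim1} and \cite{Sim3}.'' So there is nothing in the paper to compare your argument against beyond the references to Simon's original work.

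Your outline is the standard covering-plus-absorption argument that Simon uses, and the structure is correct: pass to the scale-invariant supremum $M$, feed the hypothesis back in via a covering of bounded multiplicity $N=N(n,\theta)$, and choose $\epsilon_0$ so that $N\theta^l\epsilon_0<1$. You have also correctly isolated the only genuine subtlety, namely arranging the covering so that the \emph{enlarged} balls $B_{\rho_j}(y_j)$ remain inside $B_R(x_0)$ even when $B_\sigma(z)$ abuts the boundary; your proposed fixes (shrink $\rho_j$ by a fixed factor or displace the centres inward by an amount $\sim\sigma$) are exactly what one does, and the resulting $N$ still depends only on $n$, $\theta$, and $\mu$.
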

The proof can be found in \cite{Sim1} and \cite{Sim3}.  In localising the Schauder estimate we need to apply the adsorption lemma in the case $S(A) = [ u ]_{\alpha; \, A}$.  We confirm that the lemma holds in this case, that is $S$ is monotone and sub-additive on convex subsets of $Q_R$.  Let $R >0$ a given radius and $A \subset Q_R$.  Since the H\"{o}lder constant is defined by taking the supremum over a set, monotonicity clearly holds.  To show sub-additivity, suppose $A \subset A_1 \cup A_2$, where all sets are convex.  Fix $X, Y \in A$.  If either $X,Y \in A_1$ or $X,Y \in A_2$, then
\[ \frac{ \abs{ u(X) - u(Y) } }{ d(X,Y)^{\alpha} } \leq \max \{ [u]_{\alpha; \, A_1}, [u]_{\alpha; \, A_2} \} \leq [u]_{\alpha; A_1} + [u]_{\alpha; \, A_2}. \]
If on the other hand $X \in A_1$ and $Y \in A_2$, the choose $Z \in A_1 \cap A_2$ lying on the line segment between $X$ and $Y$.  Then
\begin{align*}
	\frac{ \abs{ u(X) - u(Y) } }{ d(X,Y)^{ \alpha } } &\leq \frac{ \abs{ u(X) - u(Z) } + \abs{ u(Z) - u(Y) } }{ d(X,Z)^{\alpha} + d(Z,Y)^{\alpha} } \\
	&\leq \frac{ \abs{ u(X) - u(Z) } }{ d(X,Z)^{\alpha} }  + \frac{ \abs{ u(Z) - u(Y) } }{d(Z,Y)^{\alpha} } \\
	&\leq [u]_{\alpha; \, A} + [u]_{\alpha; \, A_2}.
\end{align*}
The general case follows by induction.

\subsubsection{Global Schauder estimate}
The above Schauder estimate holds on a small parabolic cylinder $Q_R \subset \mathbb{R}^N$.  We now want to lift these local estimates to the vector bundle $E \times (0, \omega)$ to obtain Schauder estimate globally on $E_{\omega}$.  Let $\widetilde\psi : V \times I \rightarrow \mathbb{R}^n \times \mathbb{R}_+$ be the coordinate map for a sufficiently small neighbourhood $V \times I \subset M \times (0, \omega)$.  By definition of a vector bundle, there exists a bundle trivialisation $\Psi : E_{\omega}|_{V \times I } \rightarrow V \times I \times \mathbb{R}^N$.    In fact, if $\psi$ is the coordinate map for $V$, then $\widetilde\psi = \psi \times id$. Using the bundle trivialisation and the coordinate maps we can locally identify a section $\Gamma(E \times (0, \omega))$ as a subset of $\mathbb{R}^n \times \mathbb{R}_+ \times \mathbb{R}^N $.  We  will abuse notation slighly, and for $U \in \Gamma(E \times (0, \omega))$, we shall write $(\widetilde\psi^{-1})^*U$ to mean the local trivialisation $U|_{V \times I}$ pulled back to $\mathbb{R}^n \times \mathbb{R}_+ \times \mathbb{R}^N$ via the coordinate map $\widetilde\psi$.

Next we want to control the norm of section measured with the bundle metric in terms of the Euclidean norm of the pulled-back section.
\begin{prop}\label{prop: bundle covering}
Let $U \in \Gamma(E \times (0, \omega))$ and $(V_i, \psi_i)$ be a covering of $M \times (0, \omega)$ by a finite number of normal charts of sufficiently small radius $R_0$.  Then there exists a constant $C = C(n, R_0)$ such that in each neighbourhood $V_i$ the equivalence of norms
\[ \frac1C \abs{ U \circ \widetilde\psi^{-1} }_{2m, 1, \alpha; \, \widetilde\psi_i(V_i \times I_i) } \leq \abs{ U }_{2m, 1, \alpha; \, V_i \times I_i} \leq C\abs{ U \circ \widetilde\psi^{-1} }_{2m, 1, \alpha; \, \widetilde\psi_i(V_i \times I_i) } \]
is valid.
\end{prop}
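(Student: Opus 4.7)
The plan is to reduce the equivalence to three ingredients: comparison of geodesic with Euclidean distances, comparison of covariant derivatives with coordinate partial derivatives, and comparison of the H\"older seminorm with and without parallel transport. All three rely on uniform bounds on the geometry supplied by Lemma~\ref{lem: bounds in chart}. I begin by choosing $R_0$ smaller than the injectivity radius $i_g$ of $M$, so that on each $V_i$ the exponential map is a diffeomorphism and any two points of $V_i$ are joined by a unique minimising geodesic. Trivialise $E$ over $V_i$ by radial parallel transport from the chart centre; then the connection one-forms on $E$ vanish at the origin. Applying Lemma~\ref{lem: bounds in chart} together with smoothness of the data on the compact manifold gives uniform bounds $\tfrac{1}{2}\delta_{ij} \le g_{ij} \le 2\delta_{ij}$ and $C^k$-bounds on $g_{ij}$, on the Christoffel symbols $\Gamma_{ij}^k$ of $\nabla$ on $TM$, and on the connection symbols $\Gamma_{ib}^{a}$ of $\nabla$ on $E$, all depending only on $n$ and $R_0$. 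In particular the geodesic distance $d_g$ differs from the coordinate distance by at most a factor of $\sqrt{2}$, so the parabolic distances on $V_i\times I_i$ and $\widetilde\psi_i(V_i\times I_i)$ are equivalent.

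For the $C^{2m,1}$ portion of the norms I use that a direct induction on $k$ gives a pointwise identity of the form
\[
\nabla^k U = \partial^k(U\circ\widetilde\psi^{-1}) + \sum_{j<k} P_{k,j}\bigl(g,\Gamma,\partial\Gamma,\ldots\bigr)\,\partial^j(U\circ\widetilde\psi^{-1}),
\]
where each $P_{k,j}$ is a polynomial in the Christoffel and connection symbols and their derivatives up to total order $k-1$. The inverse expression, writing $\partial^k(U\circ\widetilde\psi^{-1})$ in terms of $\{\nabla^j U\}_{j\le k}$, has the same structure. Consequently both sides admit pointwise two-sided estimates with constants depending only on $n$ and $R_0$, and so $\sum_{k\le 2m}\abs{\nabla_x^k U}_0 + \abs{\partial_t U}_0$ is equivalent to $\sum_{k\le 2m}\abs{\partial_x^k(U\circ\widetilde\psi^{-1})}_0 + \abs{\partial_t(U\circ\widetilde\psi^{-1})}_0$ on $V_i\times I_i$.

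For the H\"older seminorm the bundle-side definition compares fibres via parallel transport $\mathcal{P}_{Y,X}\colon E_Y\to E_X$, whereas the Euclidean seminorm implicitly uses the trivialisation identification $\mathcal{I}_{Y,X}$ induced by $\Psi_i$. Since $\mathcal{P}_{Y,X}$ solves a linear ODE along the geodesic from $Y$ to $X$ whose coefficients are the bounded connection symbols, a Gronwall estimate yields
\[
\bigl\lvert \mathcal{P}_{Y,X} - \mathcal{I}_{Y,X} \bigr\rvert \le C\, d(X,Y)
\]
as operators between fibres. Therefore
\[
\bigl\lvert \nabla^{2m,1}U(X) - \mathcal{P}_{Y,X}\nabla^{2m,1}U(Y) \bigr\rvert \le \bigl\lvert \nabla^{2m,1}U(X) - \mathcal{I}_{Y,X}\nabla^{2m,1}U(Y) \bigr\rvert + C\, d(X,Y)\,\abs{\nabla^{2m,1}U}_0,
\]
and after dividing by $d(X,Y)^{\alpha}$ the error term is dominated by $C R_0^{1-\alpha}\abs{\nabla^{2m,1}U}_0$, which is controlled by the $C^{2m,1}$-portion of the norm already handled. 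Converting $\mathcal{I}_{Y,X}\nabla^{2m,1}U(Y)$ into coordinate partial derivatives using the previous step introduces only lower-order terms that are again absorbed, and the reverse inequality follows by symmetry. Combining this with the equivalence of parabolic distances yields both inequalities in the proposition.

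The main technical obstacle is producing the Gronwall-type bound on $\mathcal{P}_{Y,X}-\mathcal{I}_{Y,X}$ with constants that depend only on $n$ and $R_0$, uniformly across the finitely many charts. Once $\Psi_i$ is built from radial parallel transport, so that the connection forms vanish at each chart centre, and the Christoffel bounds of Lemma~\ref{lem: bounds in chart} are in place, the Gronwall argument closes; the remainder of the proof is essentially bookkeeping using the interpolation inequality to absorb lower-order contributions.
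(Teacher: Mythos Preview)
Your proof is correct and follows essentially the same approach as the paper: compare covariant and coordinate derivatives via the Christoffel symbols using Lemma~\ref{lem: bounds in chart}, and handle the H\"older seminorm by treating parallel transport as the solution of a linear ODE with bounded coefficients. The paper's proof is only a two-sentence sketch of exactly these two steps, whereas you have supplied the details (the inductive identity for $\nabla^k U$, the Gronwall bound on $\mathcal P_{Y,X}-\mathcal I_{Y,X}$, and the equivalence of parabolic distances).
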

\begin{proof}
For the parts of the H\"{o}lder norm involving suprema this is easy to show, as one simple writes the covariant derivative in terms of ordinary derivatives and the Christoffel symbols and uses Lemma \ref{lem: bounds in chart}.  To deal with H\"{o}lder semi-norm, we note that parallel translation is defined in terms of solving an ordinary differential equation.  We then have control on the size of the H\"{o}lder coefficient in terms of the initial condition for the ODE in a finite number of charts, thus it too is uniformly bounded.
\end{proof}

Using the above lemma, we can now patch together the local Euclidean Schauder estimates to give the desired global Schauder estimate.
\begin{prop}[Global Schauder estimate]
Let $E \times (0, \omega)$ be a vector bundle over $M \times (0, \omega)$, where $M$ is a closed manifold.  Let $L : \Gamma(E_{\omega} ) \rightarrow \Gamma(F_{\omega})$ be linear differential operator of order $2m$.  In any local coordinate chart $L$ is of the form
\begin{equation}\label{e: even order linear operator}
	\frac{ \p }{ \p t }U^a + (-1)^m \sum_{ I \leq 2m } A^I \p_I U,
\end{equation}
or in full
\begin{equation*}
	\frac{ \p }{ \p t }U^a + (-1)^m \big( A_b^{a \, i_1, \ldots i_{2m} }(x,t)\p_{i_1} \cdots \p_{i_{2m}} U^b + \cdots + B_b^{ a \, k }(x,t)\p_k U^b + C_b^a(x,t)U^b \big),
\end{equation*}
with $U( \cdot, 0) = U_0$.  Suppose that in any coordinate chart the following conditions are satisfied:
\begin{enumerate}
	\item  The leading coefficient $A_{b}^{a i_1 j_1 \cdots i_m j_m }$ satisfies the symmetry condition $A_{b}^{a i_1 j_1 \cdots i_m j_m } =  a_{ a }^{ b j_1 i_1 \cdots j_m i_m }$
	\item The leading coefficient satisfies the Legendre-Hadamard condition with constant $\lambda$
		\item There exists a uniform constant $\Lambda < \infty$ such that $\sum_{ \abs{I} \leq 2m } \abs{A^I}_{\alpha; Q_R} \leq \Lambda$.
\end{enumerate}
Then there exists a constant $C = C(n, N, \lambda, \Lambda, M, \omega)$ such that the estimate
\[ \abs{ U }_{2m, 1, \alpha; \,  E_{\omega} } \leq C \big( \abs{ F }_{\alpha; \, E_{\omega} } +  \abs{ U_0 }_{2m, \alpha; \, E_{\omega} } + \abs{ U }_{0; \, E_{\omega} } \big). \]
\end{prop}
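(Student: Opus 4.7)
The plan is to reduce the global estimate on $E_{\omega}$ to the already-established interior and near-bottom Euclidean Schauder estimates by means of a finite-cover/patching argument, using Proposition \ref{prop: bundle covering} as the bridge between intrinsic and Euclidean H\"older norms. First, I would cover $M$ by finitely many normal coordinate charts $(V_i, \psi_i)$, all of sufficiently small geodesic radius $R_0 < i_g/4$ so that Lemma \ref{lem: bounds in chart} applies on each chart; since $M$ is closed such a finite cover exists. Over each $V_i$, trivialise $E$ by a fixed local orthonormal frame $\{e_a^{(i)}\}$. In time, cover $[0, \omega]$ by finitely many intervals of length comparable to $R_0^{2m}$, so that in space-time we obtain a finite collection of parabolic boxes $V_i \times I_{i,k}$. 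Those with $0 \in \bar I_{i,k}$ will be handled by the near-bottom estimate; the rest by the interior estimate.

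Next I would express the operator $L$ in each such chart. Writing covariant derivatives in terms of partial derivatives introduces extra terms built from the Christoffel symbols $\Gamma_{ij}^{k}$ of $g$ and the connection forms $C_{k\alpha}^{\beta}$ of $E$. These get absorbed into the lower order coefficients $A^I$ with $\abs{I}<2m$. The leading coefficient differs from the original by tensorial contractions with $g^{ij}$ only, so Lemma \ref{lem: bounds in chart} (bounds on $g_{ij}$ and its derivatives in normal coordinates) together with hypothesis (3) ensures: (i) the leading coefficient in Euclidean coordinates still satisfies the Legendre-Hadamard condition with a uniform constant $\tilde\lambda = \tilde\lambda(\lambda, R_0, M)$; (ii) the symmetry hypothesis is preserved; (iii) $\sum_{\abs{I}\leq 2m}\abs{A^I}_{\alpha; Q_R} \leq \tilde\Lambda(\Lambda, R_0, M)$. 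Thus the hypotheses of the interior and near-bottom Euclidean estimates apply uniformly on each box.

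Then I would apply the appropriate local Schauder estimate in each box, choosing $\theta$ so that the shrunken interior boxes $\theta Q_R$ (resp.\ $\theta Q_R^+$) still cover $\widetilde\psi_i(V_i\times I_{i,k})$. This yields, in Euclidean coordinates on each box,
\begin{equation*}
\bigl[\, \partial^{2m,1}(U\circ\widetilde\psi_i^{-1}) \, \bigr]_{\alpha} \leq C\Bigl(\, [F\circ\widetilde\psi_i^{-1}]_\alpha + [\partial_x^{2m}(U_0\circ\widetilde\psi_i^{-1})]_\alpha + \abs{U\circ\widetilde\psi_i^{-1}}_0\,\Bigr).
\end{equation*}
By Proposition \ref{prop: bundle covering}, each Euclidean H\"older norm appearing above is comparable, up to a constant depending only on $n$ and $R_0$, to the corresponding intrinsic H\"older norm of $U$, $F$, $U_0$ on $V_i\times I_{i,k}$. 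Summing (or taking the supremum) over the finite collection of boxes, the lower order terms on the right hand side of the local Schauder estimates can be absorbed with the interpolation inequality on the left, and we obtain the desired global bound with constant $C=C(n,N,\lambda,\Lambda,M,\omega)$ (the dependence on $M$ and $\omega$ enters through the number of charts in the cover and the geometry bounds of Lemma \ref{lem: bounds in chart}).

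The main obstacle will be item (i) above, namely checking that the Euclidean-coordinate form of $L$ inherits uniform Legendre-Hadamard and H\"older bounds for its coefficients from the intrinsic hypotheses. This is where the compactness of $M$ and the uniformity of the normal coordinate charts (via Lemma \ref{lem: bounds in chart}) are essential: without them, the Christoffel-symbol corrections could degrade the ellipticity constant or blow up the coefficient norms. Everything else is a routine exercise in combining finitely many local estimates and invoking the norm equivalence of Proposition \ref{prop: bundle covering}.
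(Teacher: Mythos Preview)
Your approach is essentially the paper's: cover $M\times(0,\omega)$ by finitely many small charts, apply the local (interior or near-bottom) Euclidean Schauder estimates in each, and transfer via Proposition~\ref{prop: bundle covering}. Two small remarks: first, the hypotheses of the proposition are already stated ``in any coordinate chart'', so your ``main obstacle'' of checking that Legendre--Hadamard and the coefficient bounds survive passage to local coordinates is not actually an obstacle---it is assumed. Second, the one step you gloss over that the paper makes explicit is the dichotomy for the global H\"older seminorm: when $d(X,Y)<R_0$ the pair lies in a common chart and the local estimate applies directly, while when $d(X,Y)\geq R_0$ one bounds the quotient trivially by $2R_0^{-\alpha}\abs{\nabla^{2m,1}U}_{0}$ and controls the sup norm by interpolation against the already-obtained local H\"older bounds.
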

\begin{proof}
Because $M$ is compact, we can cover $M \times (0, \omega)$ by a finite number of coordinate patches $(V_i, \psi_i)$ of sufficiently small radii $R_i \leq R_0$ so that we can apply Proposition \ref{prop: bundle covering}.  Suppose $X, Y \in M_{\omega}$ are any two points.  If $d(X,Y) < R_0$, then we estimate
\begin{align*}
	\frac{ \abs{ \nabla^{2m,1} U(X) - \mathcal{P}_{Y,X} \nabla^{2m,1} U(Y) } }{ d(X,Y)^{\alpha} } &\leq C \sum_i [ \nabla^{2m,1} U ]_{ \alpha; \, V_i } \\
	&\leq C\sum_i \Big( \abs{ F \circ \widetilde{\psi}_i^{-1} }_{0, 0, \alpha ; \, \widetilde{\psi}_i(V_i) } + \abs{ U \circ \widetilde{\psi}_i^{-1} }_{0 ; \, \widetilde{\psi}_i(V_i) } \Big) \\
	&\leq C\sum_i \Big( \abs{ F }_{0, 0, \alpha ; \, V_i } + \abs{ U  }_{0 ; \, V_i } \Big) \\
	&\leq C \big( \abs{ F }_{0, 0, \alpha ; \, E_{\omega} } + \abs{ U }_{0 ; \, E_{\omega} } \big).
\end{align*}
On the other hand, if $d(X,Y) \geq R_0$ we estimate
\begin{align*}
	\frac{ \abs{ \nabla^{2m,1} U(X) - \mathcal{P}_{Y,X} \nabla^{2m,1} U(Y) } }{ d(X,Y)^{\alpha} } &\leq C \abs{ \nabla^{2m,1} U }_{0; \, E_{\omega}} R_0^{ \alpha } \\
	&\leq C \abs{ U }_{2m, 1, \alpha; \, E_{\omega}} \\
	&\leq C \big( \abs{ F }_{0, 0, \alpha ; \, E_{\omega} } + \abs{ U }_{0 ; \, E_{\omega} } \big).
\end{align*}
Note that we have again used the fact that we have a finite covering, as we have needed to take the supremum over the all H\"{o}lder coefficients in each chart.
\end{proof}

\subsection{Linear existence theory}
The next step in our existence program is to prove existence and uniqueness for linear operators in H\"{o}lder space.  We begin with the $2m$th order heat operator.

\begin{prop}\label{p: heat IVP}
Consider the following initial value problem:
\begin{equation}\label{e: heat IVP}
	\begin{cases}
		\p_t U + (-\Delta^m) U = F(X), \quad X \in M_{\omega} \\
		U( \cdot ,0) = U_0.
	\end{cases}
\end{equation}
Suppose that $F \in C^{0,0,\alpha}(E_{\omega})$ and $U_0 \in C^{2m,1,\alpha}(E_{\omega})$, where $\alpha \in (0,1)$.  Then problem \eqref{e: heat IVP} has a unique solution $U \in C^{2m,1,\alpha}(E_{\omega})$.
\end{prop}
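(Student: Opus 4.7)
The plan is to combine the Hilbert space existence theory of Section 2.3 (Lax--Milgram together with the smooth-data regularity corollary) with the global Schauder estimate just established, via an approximation argument in the smooth category. The operator $L_0 U := \p_t U + (-\Delta)^m U$ is a divergence-form operator with smooth leading symbol $g^{i_1 j_1}\cdots g^{i_m j_m}$ satisfying both the Legendre--Hadamard and symmetry conditions, so the Lax--Milgram weak existence theorem, its smooth-data regularity corollary, and the global Schauder estimate all apply directly to $L_0$.

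First I will approximate the data: by mollification in local charts patched with a partition of unity, choose smooth $F_k \to F$ and smooth $U_{0,k} \to U_0$, uniformly bounded in $C^{0,\alpha}$ and $C^{2m,\alpha}$ respectively and converging in every weaker $C^{0,\beta}$ and $C^{2m,\beta}$ norm with $\beta < \alpha$. The Hilbert existence theorem and its smooth-data corollary produce unique smooth sections $U_k$ solving the approximate problem $\p_t U_k + (-\Delta)^m U_k = F_k$ with $U_k(\cdot,0) = U_{0,k}$, and the global Schauder estimate yields
\[ \abs{U_k}_{2m,1,\alpha;\,E_\omega} \leq C\bigl(\abs{F_k}_{\alpha;\,E_\omega} + \abs{U_{0,k}}_{2m,\alpha;\,E} + \abs{U_k}_{0;\,E_\omega}\bigr). \]
To absorb the $\abs{U_k}_0$ term a priori I would use the standard $L^2$ energy estimate for $L_0$ (pair the PDE with $U_k$ and integrate by parts, then Gr\"{o}nwall), bootstrapped through the Hilbert interior regularity and finished off with the parabolic Sobolev embedding on $M_\omega$ to give uniform $L^\infty$ control in terms of the data alone. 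Hence $\{U_k\}$ is uniformly bounded in $C^{2m,1,\alpha}(E_\omega)$.

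By Arzel\`a--Ascoli a subsequence converges in $C^{2m,1,\beta}(E_\omega)$ for every $\beta < \alpha$ to some limit $U$; passing to the limit in the PDE shows that $U$ classically solves the problem with $U(\cdot,0) = U_0$, and lower semi-continuity of the H\"{o}lder seminorm under uniform convergence upgrades $U$ to $C^{2m,1,\alpha}(E_\omega)$. Uniqueness follows from linearity: the difference $W$ of two $C^{2m,1,\alpha}$ solutions satisfies $L_0 W = 0$ with $W(\cdot,0) = 0$, and the basic $L^2$ energy estimate (with vanishing data) forces $W \equiv 0$ via Gr\"{o}nwall. The main obstacle is closing the Schauder a priori estimate with an $L^\infty$ bound that does not itself invoke Schauder; for the model operator $L_0$ this is clean via the energy method, but the analogous step for the variable-coefficient linear operator treated in the next proposition will require more care, presumably iterating Schauder on short time intervals and closing up with Gr\"{o}nwall.
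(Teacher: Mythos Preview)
Your overall architecture is exactly the paper's: reduce to smooth data by mollification, produce smooth approximate solutions $U_k$ via the Hilbert theory, bound them uniformly in $C^{2m,1,\alpha}$ via the global Schauder estimate, extract a limit by Arzel\`a--Ascoli, and prove uniqueness by the $L^2$ energy identity. The paper also reduces to $U_0=0$ at the outset (so only $F$ is mollified), but this is cosmetic.

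The one substantive difference is how the sup bound $|U_k|_0$ is obtained in order to close the Schauder inequality. You propose energy $\rightarrow$ Hilbert regularity bootstrap $\rightarrow$ parabolic Sobolev embedding. This is fine when $W_2^{2m,1}(M_\omega)\hookrightarrow C^0$, but in general (when $2m\le n$) getting $L^\infty$ from Sobolev requires bootstrapping to $W_2^{2m+l,1}$ for some $l>0$, and those higher regularity estimates need corresponding higher Sobolev norms of $F_k$. Since $F_k$ is a mollification of a merely $C^\alpha$ section, these higher norms blow up as $k\to\infty$, so the resulting $L^\infty$ bound is not uniform in $k$. The paper sidesteps this with an interpolation inequality (a short contradiction argument, citing Simon):
\[
|U_k|_{0;\,E_\omega}\ \le\ \epsilon\,|U_k|_{2m,1,\alpha;\,E_\omega}\ +\ c(\epsilon)\,\|U_k\|_{L^2(E_\omega)}.
\]
The small term is absorbed into the left side of the Schauder estimate, and $\|U_k\|_{L^2}$ is controlled by $\|F_k\|_{L^2}\le C|F|_\alpha$ via the $W_2^{2m,1}$ regularity estimate already proved. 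This closes the loop without ever asking for more than $L^2$ control of the right-hand side, and hence works uniformly in $k$ regardless of the relation between $2m$ and $n$. You correctly flag this step as the main obstacle; the interpolation trick is the clean resolution.
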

\begin{proof}
As usual, we may assume without loss of generality that $U_0 = 0$. By mollification we can construct a section $F_{\epsilon} \in C^{\infty}(E_{\omega})$ such that
\begin{equation*}
	\abs{ F_{\epsilon} }_{ \alpha; \, E_{\omega} } \leq 2 \abs{ F }_{ \alpha; \, E_{\omega} }.
\end{equation*}
Now consider the approximate problem
\begin{equation}\label{e: approx problem}
	\begin{cases}
		\p_t U_{ \epsilon } + (-\Delta^m) U_{ \epsilon } = F_{ \epsilon }(X), \quad X \in M_{\omega}\\
		U( \cdot, 0) = 0.
	\end{cases}
\end{equation}
From the $L^2$ theory, there exists a unique smooth solution $U_{ \epsilon } \in C^{ \infty }(E_{\omega})$ to the above approximate problem.  A short contradiction argument (see \cite{Sim3}) shows we can estimate
\begin{equation*}
	\abs{ U_{ \epsilon } }_{ 0: \, E_{\omega} } \leq \epsilon \abs{ U_{ \epsilon } }_{ 2m, 1, \alpha; \, E_{\omega} } + c(\epsilon) \norm{ U_{ \epsilon } }_{ L^2(E_{\omega}) },
\end{equation*}
and then using the Hilbert space regularity estimates we may estimate
\begin{equation*}
	\norm{ U_{ \epsilon } }_{ L^2(E_{\omega}) } \leq C \norm{ F_{ \epsilon } }_{ L^2(E_{\omega}) } \leq C\abs{ F_{ \epsilon } }_{ 0, 0, \alpha; \, E_{\omega} }.
\end{equation*}
We point out that in the case of second order equations, using the maximum principle it is a slightly simpler matter to estimate
\begin{equation*}
	\abs{ U_{\epsilon} }_{ 0; \, E_{\omega}} \leq C\abs{ F_{\epsilon} }_{ 0; \, E_{\omega} } \leq C\abs{ F_{\epsilon} }_{\alpha; \, E_{\omega} }.
\end{equation*}
Combining this estimate with the global Schauder estimate, $\abs{ U_{ \epsilon } }_{ 2m, 1, \alpha; \, E_{\omega} } \leq C( \abs{ U_{ \epsilon } }_{ 0; \, E_{\omega} } + \abs{ F_{\epsilon } }_{\alpha; \, E_{\omega} } )$, we get
\begin{equation*}
	\abs{ U_{ \epsilon } }_{ 2m, 1, \alpha; \, E_{\omega} } \leq C\abs{ F }_{\alpha; \, E_{\omega} },
\end{equation*}
where the constant $C$ is independent of $\epsilon$.  Given that $F \in C^{\alpha}(E_{\omega})$, the left hand side is uniformly bounded.  The Arzela-Ascoli theorem now applies to give a subsequence such that $U_{ \epsilon } \rightarrow U$ uniformly in $C^{ 2m, 1 }(E_{\omega})$ as $\epsilon \rightarrow 0$, and moreover $U \in C^{ 2m, 1, \alpha }(E_{\omega})$.  Last of all, we show uniqueness by the energy method.  Suppose that $U_1$ and $U_2$ are two solutions to \eqref{e: heat IVP}, and consider the problem for $W := U_1 - U_2$, where $W$ now solves the homogeneous heat equation with zero initial condition. For $0 \leq t \leq T$ we define the energy $e(t)$ by
\[ e(t) = \int_M W(x,t)^2 \, dV_g. \]
Then
\begin{align*}
\frac{ d }{ dt } e(t) &= 2 \int_M W W_t \, dV_g \\
&= 2 \int_M W (-\Delta^m)W \, dV_g \\
&\leq 0.
\end{align*}
Thus $e(t) \leq e(0)$ for all $t \in [0,T]$, and consequently $U_1 = U_2$ and the solution is unique.
\end{proof}

With a solution to the heat operator in place we can now use the method of continuity to solve the general linear problem.

\begin{thm}[method of continuity]\label{t: method of ontinuity}
Let $B$ be a Banach space, $V$ a normed linear space, and $L_0$ and $L_1$ bounded linear operators from $B$ to $V$.  For $t \in [0,1]$ define
\begin{equation*}
	L_{\tau} := (1- \tau)L_0 + \tau L_1
\end{equation*}
and suppose there exists a constant $C$ such that the estimate
\begin{equation*}
	\norm{ u }_B \leq C \norm{ L_tu }_V
\end{equation*}
holds independent of $\tau$.  Then $L_1$ maps $B$ onto $V$ if and only if $L_0$ maps $B$ onto $V$.
\end{thm}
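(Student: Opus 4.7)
The plan is to prove the stronger statement that the set $T := \{\tau \in [0,1] : L_\tau(B) = V\}$ is either empty or equal to all of $[0,1]$. This immediately gives the theorem, since $L_0$ onto means $0 \in T$ and $L_1$ onto means $1 \in T$, so the two conditions are equivalent (each forces $T = [0,1]$). The uniform a priori estimate $\norm{u}_B \leq C \norm{L_\tau u}_V$ already shows every $L_\tau$ is injective; consequently, whenever $\sigma \in T$ the map $L_\sigma : B \to V$ is a continuous linear bijection whose inverse satisfies $\norm{L_\sigma^{-1}}_{V \to B} \leq C$.

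The main step is an openness argument with a step size depending only on the given data. Fix $\sigma \in T$ and consider nearby $\tau$. Writing $L_\tau = L_\sigma + (\tau - \sigma)(L_1 - L_0)$, the equation $L_\tau u = f$ is equivalent, after applying $L_\sigma^{-1}$, to the fixed-point problem
\begin{equation*}
u = \mathcal{T}_\tau(u) := L_\sigma^{-1}\bigl[f - (\tau - \sigma)(L_1 - L_0)u\bigr].
\end{equation*}
For any $u,v \in B$ one has
\begin{equation*}
\norm{\mathcal{T}_\tau(u) - \mathcal{T}_\tau(v)}_B \leq C\,|\tau - \sigma|\bigl(\norm{L_0} + \norm{L_1}\bigr)\norm{u - v}_B,
\end{equation*}
so if $\delta := \bigl[2C(\norm{L_0} + \norm{L_1} + 1)\bigr]^{-1}$, then $\mathcal{T}_\tau$ is a strict contraction on $B$ for every $\tau$ with $|\tau - \sigma| < \delta$. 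Since $B$ is complete, the Banach fixed-point theorem produces a unique $u \in B$ with $L_\tau u = f$; as $f \in V$ was arbitrary, $\tau \in T$. Hence $(\sigma - \delta, \sigma + \delta) \cap [0,1] \subset T$, and the crucial feature is that $\delta$ is independent of $\sigma$.

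Because $\delta$ is uniform, any nonempty $T$ can be bootstrapped to all of $[0,1]$: starting from $\sigma_0 \in T$, we place $\sigma_1 = \sigma_0 \pm \delta/2 \in T$, then $\sigma_2 = \sigma_1 \pm \delta/2 \in T$, and so on, reaching $0$ and $1$ in finitely many steps. This completes the proof. No subtle obstacle arises here beyond ensuring that the contraction constant is genuinely independent of the base point $\sigma$, which is guaranteed by the hypothesis that the a priori constant $C$ does not depend on $\tau$ and by the fact that $\norm{L_0}$ and $\norm{L_1}$ are fixed; it is precisely this uniformity that will allow us to convert Schauder-type a priori estimates into an existence theorem for the general linear operator of the previous proposition.
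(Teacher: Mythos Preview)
Your proof is correct and is precisely the standard contraction-mapping argument; the paper does not give its own proof but simply refers the reader to Gilbarg--Trudinger, where exactly this argument appears. Nothing further is needed.
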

For a proof of the method of continuity we refer the reader to \cite[pg. 75]{GT83}.
\begin{prop}
Consider the following initial value problem:
\begin{equation}\label{e: linear IVP}
	\begin{cases}
		\p_t U^a + (-1)^m \sum_{ \abs{I} \leq 2m } A_b^{ a I }(X) \nabla_{ I } U^b = F^a(X), \quad X \in M_{\omega}  \\
		U( \cdot ,0) = U_0.
	\end{cases}
\end{equation}
Suppose that the following conditions are satisfied:
\begin{enumerate}
	\item  The coefficients $A_{b}^{a i_1 j_1 \cdots i_m j_m }$ satisfy the symmetry condition $A_{b}^{a i_1 j_1 \cdots i_m j_m } =  A_{ a }^{ b j_1 i_1 \cdots j_m i_m }$
	\item The leading coefficient satisfies the Legendre-Hadamard condition with constant $\lambda$ \\
	\item There exists a uniform constant $\Lambda < \infty$ such that $\sum_{ \abs{I} \leq 2m } \abs{A^I}_{\alpha} + \abs{F}_{ \alpha } \leq \Lambda$,
\end{enumerate}
Then problem \eqref{e: heat IVP} has a unique solution $U \in C^{2m,1,\alpha}(E_{\omega})$.
\end{prop}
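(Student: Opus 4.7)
The plan is to apply the Method of Continuity (Theorem~\ref{t: method of ontinuity}) to interpolate between the $2m$th-order heat operator $L_0 U := \p_t U + (-\Delta)^m U$, for which surjectivity is supplied by Proposition~\ref{p: heat IVP}, and the given operator
\begin{equation*}
L_1 U := \p_t U + (-1)^m \sum_{\abs{I} \leq 2m} A_b^{aI} \nabla_I U^b.
\end{equation*}
First I would reduce to homogeneous initial data by subtracting off a time-independent extension of $U_0$: if $\widetilde U$ solves the equation with inhomogeneity $F - L_1 U_0 \in C^{0,0,\alpha}(E_\omega)$ (whose norm is controlled by $\Lambda \abs{U_0}_{2m,\alpha;\,E_\omega}$) and vanishing initial data, then $U := \widetilde U + U_0$ solves the original problem. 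Define the closed subspace $B := \{U \in C^{2m,1,\alpha}(E_\omega) : U(\cdot,0) = 0\}$ of a Banach space, and set $L_\tau := (1-\tau) L_0 + \tau L_1$ for $\tau \in [0,1]$. Each $L_\tau \colon B \to C^{0,0,\alpha}(E_\omega)$ is bounded linear, and because symmetry, the Legendre-Hadamard condition, and the $C^{0,0,\alpha}$ coefficient bound are all preserved under convex combinations, the hypotheses of the Global Schauder Estimate hold uniformly in $\tau$ with constants depending only on $\lambda$ and $\Lambda$.

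The crucial step is a $\tau$-independent a priori estimate
\begin{equation*}
\abs{U}_{2m,1,\alpha;\,E_\omega} \leq C\,\abs{L_\tau U}_{\alpha;\,E_\omega}
\end{equation*}
for all $U \in B$. The Global Schauder Estimate first yields $\abs{U}_{2m,1,\alpha;\,E_\omega} \leq C\bigl(\abs{L_\tau U}_{\alpha;\,E_\omega} + \abs{U}_{0;\,E_\omega}\bigr)$, so it remains to absorb the sup-norm. Following the argument sketched in the proof of Proposition~\ref{p: heat IVP}, I would test $L_\tau U = F$ in $L^2$, integrate by parts on the top-order term, and invoke G\aa rding's inequality to obtain
\begin{equation*}
\tfrac{d}{dt} \norm{U(\cdot,t)}_{L^2(M)}^2 \leq C\bigl(\norm{U(\cdot,t)}_{L^2(M)}^2 + \norm{F(\cdot,t)}_{L^2(M)}^2\bigr).
\end{equation*}
Combined with $U(\cdot,0) = 0$, Gr\"onwall's lemma gives $\norm{U}_{L^2(E_\omega)} \leq C\abs{F}_{0;\,E_\omega}$. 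The standard interpolation $\abs{U}_{0;\,E_\omega} \leq \epsilon\abs{U}_{2m,1,\alpha;\,E_\omega} + c(\epsilon)\norm{U}_{L^2(E_\omega)}$ already used earlier in the chapter then absorbs $\abs{U}_{0;\,E_\omega}$ on the left for small $\epsilon$, yielding the claimed bound.

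With the uniform a priori estimate in place, Proposition~\ref{p: heat IVP} supplies surjectivity of $L_0$, and the Method of Continuity upgrades this to surjectivity of $L_1$, producing a solution $U \in C^{2m,1,\alpha}(E_\omega)$ of \eqref{e: linear IVP}. Uniqueness is immediate from the same a priori estimate applied to the difference of two candidate solutions with $F = 0$. The main obstacle is tracking $\tau$-uniformity of constants in the middle step: one must verify that G\aa rding's constants for $L_\tau$ and the integration-by-parts remainders are controlled only by $\lambda$ and $\Lambda$, independently of $\tau$. This is automatic once one notes that the principal symbol of $L_\tau$ satisfies the Legendre-Hadamard condition with constant at least $\min\{1, \lambda\}$ and that its lower-order coefficients have $C^{0,0,\alpha}$-norm at most $\max\{1, \Lambda\}$, so all estimates go through with constants depending only on these fixed quantities.
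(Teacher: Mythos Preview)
Your proposal is correct and follows essentially the same route as the paper: interpolate via the method of continuity between the $2m$th-order heat operator and $L_1$, observe that the Legendre--Hadamard and coefficient bounds hold for $L_\tau$ with constants $\min\{1,\lambda\}$ and $\max\{1,\Lambda\}$, and obtain the $\tau$-uniform a priori estimate by combining the global Schauder estimate with an $L^2$ bound and the interpolation $\abs{U}_0 \leq \epsilon\abs{U}_{2m,1,\alpha} + c(\epsilon)\norm{U}_{L^2}$. The paper's proof is terser---it simply says ``in exactly the same way as for the heat equation'' for the $L^2$ step---but your explicit G\aa rding--Gr\"onwall argument fills in precisely what is meant.
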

\begin{proof}
As always, we may assume without loss of generality that $U_0 = 0$.  Define the operators
\begin{align*}
	&L_0 = \p_t U^a + (-\Delta^m)U^a \\
	&L_1 = \p_t U^a + (-1)^m \sum_{ \abs{I} \leq 2m } A_b^{ a I }(X) \nabla_{ I } U^b.
\end{align*}
Consider the family of equations
\begin{equation*}
	L_{\tau} := (1 - \tau)L_0  U + \tau L_1 U = F,
\end{equation*}
where $\tau$ is a parameter with $\tau \in [0, 1]$.  The operator $L_{\tau}$ satisfies the assumption of the theorem with $\lambda_{\tau}$ and $\Lambda_{\tau}$ taken as $\lambda_{\tau} = \min \{1, \lambda \}$ and $\lambda_{\tau} = \max \{1, \Lambda \}$.  Suppose that $U_{\tau}$ is a solution to \eqref{e: linear IVP}.  Then in exactly the same way as for the heat equation, using $L^2$ regularity, the same short contradiction argument and the global Schauder estimate we obtain the estimate
\begin{equation*}
	\abs{ U_{ \epsilon } }_{ 2m, 1, \alpha; \, E_{\omega} } \leq C\abs{ F }_{\alpha; \, E_{\omega} },
\end{equation*}
where $C$ is independent of $\tau$.  We may now apply the method of continuity, and since $L_0$ is solvable by Theorem \ref{p: heat IVP}, $L_1$ is also solvable.
\end{proof}

\subsection{Nonlinear existence theory}
With all the linear existence theory now in place, we are ready to prove Main Theorem \ref{mthm: main thm 1}.  We do this by an appliation of the inverse function theorem in Banach spaces.

\begin{thm}[inverse function theorem]\label{t: ift}
Let $X$ and $Y$ be Banach spaces, $P : X \rightarrow Y$ a map from $X$ to $Y$, and $U_0$ and element of $X$.  Suppose that $P$ satisfies the following:
	\begin{enumerate}
		\item $P$ is continuously differentiable at $U_0$
		\item The Fr{\'e}chet derivative of $P$ at $U_0$ is invertible.
	\end{enumerate}
Then there exists an open neighbourhood $\mathcal{U}$ of $U_0$ in $X$, and an open neighbourhood $\mathcal{V}$ of $V := P[U_0]$ in $Y$ such that $P : \mathcal{U} \rightarrow \mathcal{V}$ is an isomorphism.
\end{thm}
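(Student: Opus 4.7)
The plan is to prove this via the Banach contraction mapping principle, following the standard reduction to a fixed-point problem. First I would perform two elementary reductions to simplify notation: by translating in both $X$ and $Y$ I may assume $U_0 = 0$ and $P[U_0] = 0$, and by composing $P$ with the bounded linear isomorphism $(\partial P[U_0])^{-1} : Y \to X$ (which exists by hypothesis \emph{(2)} and the bounded inverse theorem, since invertibility of a bounded linear map between Banach spaces yields boundedness of the inverse), I may assume $Y = X$ and $\partial P[0] = \mathrm{Id}_X$. Writing $P(U) = U + R(U)$, the residual $R := P - \mathrm{Id}$ then satisfies $R(0) = 0$ and $\partial R[0] = 0$.

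Next, for each target $V \in X$ near $0$, I would recast the equation $P(U) = V$ as the fixed-point problem $U = \Phi_V(U)$ where $\Phi_V(U) := V - R(U)$. Continuous differentiability of $P$ at $0$, together with $\partial R[0] = 0$, gives $\delta > 0$ such that $\|\partial R[U]\|_{\mathrm{op}} \leq \tfrac{1}{2}$ whenever $\|U\|_X \leq \delta$. The mean value inequality in Banach spaces (applied along the segment joining $U_1$ and $U_2$, which is valid because $\partial R$ is continuous on the convex ball $\overline{B_\delta(0)}$) then yields
\begin{equation*}
\|R(U_1) - R(U_2)\|_X \leq \tfrac{1}{2}\|U_1 - U_2\|_X
\end{equation*}
for all $U_1, U_2 \in \overline{B_\delta(0)}$. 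Setting $\varepsilon := \delta/2$ and restricting to $\|V\|_X \leq \varepsilon$, I would check that $\Phi_V$ maps $\overline{B_\delta(0)}$ into itself: indeed, $\|\Phi_V(U)\|_X \leq \|V\|_X + \|R(U) - R(0)\|_X \leq \varepsilon + \delta/2 = \delta$. So $\Phi_V$ is a $\tfrac{1}{2}$-contraction on the complete metric space $\overline{B_\delta(0)}$, and the Banach fixed-point theorem supplies a unique $U = U(V) \in \overline{B_\delta(0)}$ with $P(U) = V$.

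Finally I would set $\mathcal{V} := B_\varepsilon(0)$ and $\mathcal{U} := P^{-1}(\mathcal{V}) \cap B_\delta(0)$, so that $P : \mathcal{U} \to \mathcal{V}$ is a bijection with inverse $V \mapsto U(V)$. Continuity of $P^{-1}$ is immediate from the contraction estimate: if $V_1, V_2 \in \mathcal{V}$ and $U_i = P^{-1}(V_i)$, then $U_1 - U_2 = (V_1 - V_2) - (R(U_1) - R(U_2))$, so $\|U_1 - U_2\|_X \leq \|V_1 - V_2\|_X + \tfrac{1}{2}\|U_1 - U_2\|_X$, giving the Lipschitz bound $\|P^{-1}(V_1) - P^{-1}(V_2)\|_X \leq 2\|V_1 - V_2\|_X$. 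Shrinking $\mathcal{U}$ to an open subset of $B_\delta(0)$ (replacing $\mathcal{U}$ by its interior, which contains $0$ by continuity of $P$) completes the construction of the topological isomorphism. The main technical obstacle is the contraction estimate on $R$: it rests crucially on \emph{continuous} differentiability at $U_0$ (not merely Fréchet differentiability), since one needs uniform control on $\|\partial R[U]\|_{\mathrm{op}}$ in a neighbourhood of $0$ in order to invoke the mean value inequality.
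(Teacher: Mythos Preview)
Your proof is correct and is the standard contraction-mapping argument for the inverse function theorem in Banach spaces. The paper itself does not give a proof of this statement at all: immediately after the theorem it simply writes ``For a detailed proof of the inverse function theorem we recommend to the reader \cite[pg.~215]{AE},'' and then proceeds to apply the theorem as a black box in the proof of Main Theorem~\ref{mthm: main thm 1}. So there is nothing to compare against, and your argument stands on its own. One minor remark: the set $\mathcal{U} := P^{-1}(\mathcal{V}) \cap B_\delta(0)$ is already open as the intersection of two open sets (continuity of $P$ makes $P^{-1}(\mathcal{V})$ open), so the final ``shrinking to the interior'' step is unnecessary, though harmless.
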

For a detailed proof of the inverse function theorem we recommend to the reader \cite[pg. 215]{AE}.

\begin{proof}[Proof of Main Theorem \ref{mthm: main thm 1}]
Let $X = C^{2m,1,\beta}(E_{\omega})$ and $Y = C^{\beta}(E_{\omega})$, where $\beta < \alpha$.  We consider the nonlinear operator $P$ as a map $P : X \rightarrow Y$.  To begin, linearise the nonlinear operator $P$ at the initial value $U_0$.  The linearisation of $P$ about $U_0$ in the direction $V$ is a linear system in the unknown $V$ which uniquely solvable by the Schauder theory presented in the previous section. Call $U_l$ the solution to this linear system.  From the Schauder theory we also know $U_l \in C^{2m,1,\alpha}(E_{\omega})$.  Now linearise $P$ about the solution to the linear problem $U_l$.  Next we confirm that the conditions of the inverse function theorem hold for the nonlinear operator $P$ about $U_l$.  The (G\^ateaux) derivative of $P$ at $U_l$ in the direction $V$ is given by
\begin{align}
\notag	P'(U_l)V &= \frac{ \p }{ \p s } F(U_l + sV) \Big|_{s=0} \\
	\begin{split}
		&= \p_t V - \Fdot^{ i_1, \ldots, i_{2m} }( x, t, U_l, \nabla U_l, \ldots, \nabla^{2m}_{ i_1, \ldots, i_{2m} } U_l) \nabla_{ i_1 } \cdots \nabla_{ i_{2m} }V \\
		&\quad \cdots - \Fdot^k(x, t, U_l, \nabla_k U_l, \ldots, \nabla^{2m} U_l) \nabla_k V - F( x, t, U_l, \nabla U_l, \ldots, \nabla^{2m} U_l )V.
	\end{split}\label{e: Gateaux}
\end{align}
The regularity assumptions in the statement of the theorem ensure that $P$ is continuously differentiable and Fr\'echet differentiable.  We have
\begin{align*}
	&\abs{ P(U_l +V) - P(V) - P'(U_l) }_{ 0, 0, \alpha; \, E_{\omega} } \\
	&\quad = \Big| \Big( \int_0^1 P'(U_l + sV) - P'(U_l) \, ds \Big)V \Big|_{ 0, 0, \alpha; \, E_{\omega} } \\
	&\quad \leq \norm{ P'(U_l + sV) - P'(U_l) }_{ \mathcal{L} ( C^{2m,1,\alpha}(E_{\omega}), C^{0,0,\alpha}(E_{\omega}) ) } \abs{ V }_{2m,1,\alpha; \, E_{\omega} } \\
	&\quad = o(  \abs{ V }_{2m,1,\alpha; \, E_{\omega} } ).
\end{align*}
Because $\Fdot$ is continuous in all its arguments,
\begin{equation*}
	\norm{ P'(U_l + sV) - P'(U_l) }_{ \mathcal{L}(C^{2m,1,\alpha}(E_{\omega}), C^{0,0,\alpha}(E_{\omega}) } \rightarrow 0 \quad \text{as} \quad s \rightarrow 0
\end{equation*}
and the last line above follows.  This shows that $P$ is Fr\'echet differentiable at $U_l$.  The linearisation of $P$ about $U_l$ in the direction $V$ is again a linear system in the unknown $V$ that is uniquely solvable by the Schauder theory, and thus the Fr\'echet derivative of $P$ is invertible at $U_l$.

The inverse function theorem applies and guarantees an open neighbourhood $\mathcal{U}$ of $U_l$ in $X$, and an open neighbourhood $\mathcal{V}$ of $P[U_l]$ in $Y$, such that $P : U \rightarrow V$ is an isomorphism.  For convenience, set $f_l(t) := P[U_l]$.  Define the function $f_{ \chi }(t) := \chi(t)f_l(t)$, where $\chi(t)$ is a smooth cutoff function with the properties $\chi(t) = 0$ for $t < t_{\epsilon}/2$ and $\chi(t) = 1$ for $t \geq t_{\epsilon}$, and $t_{\epsilon}$ is small number to be fixed sufficiently small.  We claim for $t \in [0, t_{\epsilon})$ where $t_{ \epsilon }$ is sufficiently small, that $f_{ \chi }$ is in $\mathcal{V}$.Beginning with the supremum estimate, if $t \geq t_{\epsilon}$, then $\abs{ f_l - f_{\chi} }_0 = 0$.  For $t \leq t_{ \epsilon }$ we use the crucial fact that since $U_l$ is the solution to the linear problem, $f_l$ satisfies $f_l(0) = 0$:
\begin{align*}
		\abs{ f_ l(t) - f_{ \chi }(t) } &= \abs{ f_ l(t) - f_{ \chi }(t) } - \abs{ f_ l(0) - f_{ \chi }(0) } \\
		&\leq [ f_ l - f_{ \chi } ]_{ \alpha } t_{ \epsilon }^{ \frac{ \alpha }{ 2m } },
	\end{align*}
and so $\abs{ f_l - f_{ \chi } }_0 \leq C t_{ \epsilon }^{ \alpha/(2m) }$ since $U_l$ is H\"{o}lder continuous.  For the H\"{o}lder estimate we consider two cases.  We may assume without loss of generality that $t > s$.  If $\abs{ t - s } < t_{\epsilon}$, then
we need to consider two further subcases: 1) $s < t_{\epsilon}$ with $0 < s < t < 2t_{\epsilon}$; and 2) $t,s \geq t_{\epsilon}$.  In the first subcase we begin estimating
\begin{align*}
	\abs{ f_ l(t) - f_{ \chi }(t) - ( f_ l(s) - f_{ \chi }(s) ) } &\leq  \abs{ f_ l(t) - f_l(s) } + \abs{ f_{\chi}(t) - f_{ \chi }(s) } \\
	&\leq \big( [ f_l ]_{ \alpha } + \abs{ \chi }_0 [ f_l ]_{\alpha} + [ \chi ]_{\alpha}\abs{ f_l}_0 \big) \abs{ t -s }^{ \frac{ \alpha }{ 2m } }.
\end{align*}
The second term on the right is easy to deal with, since $\abs{ \chi }_0 \leq 1$.  To deal with the second, we note that in this case we can estimate
\[ [ \chi ]_{\alpha}\abs{ f_l}_0 \leq \frac{ C }{ (t_{\epsilon}/2)^{\alpha/(2m)} } \cdot (2t_{\epsilon})^{\alpha/(2m)}. \]
Combining estimates we see
\[ \abs{ f_ l(t) - f_{ \chi }(t) - ( f_ l(s) - f_{ \chi }(s) ) } \leq  C \abs{ t - s }^{ \frac{ \beta }{2m} } t_{\epsilon}^{ \frac{ \alpha - \beta }{2m} } \]
where $\beta < \alpha$, and so $[ f_l - f_{ \chi } ]_{\beta} \leq Ct_{\epsilon}^{ (\alpha - \beta)/(2m)}$.  The second subcase is easy, since if $t, s \geq t_{\epsilon}$, then $\abs{ f_{\chi}(t) - f_{ \chi }(s) } = \abs{ f_ l(t) - f_l(s) }$.  To treat the second main case, namely if $\abs{ t - s } \geq t_{\epsilon}$, then
\begin{align*}
	\abs{ f_ l(t) - f_{ \chi }(t) - ( f_ l(s) - f_{ \chi }(s) ) } &\leq \abs{ f_ l(t) - f_{ \chi }(t) } + \abs{ f_ l(s) - f_{ \chi }(s) } \\
	&\leq 2 \abs{ f_l - f_{ \chi } }_0 \\
		&\leq 2C t_{ \epsilon }^{ \frac{ \alpha }{ 2m } } \\
	&\leq C\abs{ t - s }^{ \frac{ \beta }{2m} } t_{ \epsilon }^{ \frac{ \alpha - \beta }{2m} }.
\end{align*}
Therefore $\abs{ f_l - f_{ \chi } }_{\beta}$ can be made arbitrarily small on small time intervals, and so we can fix $t_{\epsilon}$ sufficiently small so that for all $t \in [0, t_{ \epsilon })$, $f_{ \chi }$ is in $V$.  By the inverse function theorem there exists a unique element $U_{\chi} \in X$ such that $P[U_{\chi}] = f_{ \chi }$, and moreover, for $t < t_{\epsilon}/2$, $P[U_{\chi}] = 0$.  Thus the element $U_{\chi}$ is the unique solution to the initial value problem \eqref{eqn: nonlinear exist problem} for some short-time $t_{\epsilon}/2$ and the proof is complete.
\end{proof}

\section{Short-time existence for the mean curvature flow}

Here we apply the nonlinear existence theory espoused in the previous section to give a proof of short time existence of the mean curvature flow.  In this section we denote the mean curvature flow, considered as a differential operator, by $M$, and the mean-curvature-DeTurck flow by $MD$.  We begin by showing that mean curvature flow is only a weakly parabolic quasilinear system, and as such we cannot immediately apply the `standard' theory.  With respect to the induced metric the Laplacian of $F$ is just
\begin{align*}
	\Delta_g F &= g^{ij} \nabla_i \nabla_j F \\
	&= g^{ij} \left( \frac{ \p^2 F }{ \p x^i \p x^j } - \Gamma_{ij}^k \frac{ \p F }{ \p x^k } \right) \\ 
	&= g^{ij}h_{ij} \\
	&= H.
\end{align*}
The mean curvature flow equation can therefore be written as
\begin{equation*}
	\frac{ \p }{ \p t } F = \Delta_g F.
\end{equation*}
The similarity is however deceptive:  The induced metric is evolving in time, and this adds extra terms to the principal symbol that result in the presence of zeroes.  The principal symbol can be computed by
\begin{align*}
	\Delta_g F^a &= g^{ij} \left( \frac{ \p^2 F^a }{ \p x^i \p x^j } - \Gamma_{ij}^k \frac{ \p F^a }{ \p x^k } \right) \\
	&= g^{ij} \left( \frac{ \p^2 F^a }{ \p x^i \p x^j } - \frac{1}{2} g^{kl} \left( \frac{ \p }{ \p x^i }g_{jl} +  \frac{ \p }{ \p x^j }g_{il} - \frac{ \p }{ \p x^l }g_{ij} \right) \frac{ \p F^a }{ \p x^k } \right) \\
	&=  g^{ij} \frac{ \p^2 F^a }{ \p x^i \p x^j } - \frac{1}{2} g^{ij} g^{kl} \frac{ \p F^a }{ \p x^k } \left( \frac{ \p^2 F^b }{ \p x^i \p x^j }\frac{ \p F^b }{ \p x^l  } + \frac{ \p F^b }{ \p x^j  } \frac{ \p^2 F^b }{ \p x^i \p x^l } + \cdots \right) \\
	&= g^{ij} \frac{ \p^2 F^a }{ \p x^i \p x^j } - g^{ij}g^{kl} \frac{ \p F^a }{ \p x^k } \frac{ \p F^b }{ \p x^l } \frac{ \p^2 F^b }{ \p x^i \p x^j } \\
	&= g^{ij} \left( \delta_b^a - g^{kl} \frac{ \p F^a }{ \p x^k } \frac{ \p F^b }{ \p x^l } \right) \frac{ \p^2 F^b }{ \p x^i \p x^j }.
\end{align*}
Observe that the term $g^{kl} \frac{ \p F^a }{ \p x^k } \frac{ \p F^b }{ \p x^l }$ is the orthogonal projection onto the tangent space of the submanifold: for any $\xi \in T\mathbb{R}^{n+k}$,
\begin{align*}
	\pi_{T\Sigma}(\xi) &= g^{kl} \big\langle \xi, F_*\p_k \big\rangle F_*\p_l \\
		&= g^{kl}\xi^{a} \frac{ \p F^{a} }{ \p x^k }\frac{ \p F^{b} }{ \p x^l } \frac{ \p }{ \p y^b }.
\end{align*}
To examine the principal symbol, without loss of generality we may assume at a point that $g_{ij} = \delta_{ij}$ and also that $\abs{\xi} = 1$, so we can choose $\xi_1 = 1$ and $\xi_i = 0$ for $i \geq 2$.  The principal symbol is thus
\begin{align*}
	\hat\sigma[M](\xi) &= \abs{\xi}^2 \big( Id - \pi_{T\Sigma}(\xi) \big) \\
	&= \abs{ \xi }^2 \pi_{N\Sigma}(\xi),
\end{align*}
which is zero if $\xi \in T\Sigma$.  Another way to see that the mean curvature flow is only weakly parabolic is to observe from the start that the equation is degenerate in tangential directions.  We have just computed that
\begin{equation*}
	 H = g^{ij} \left( \frac{ \p^2 F }{ \p x^i \p x^j } - \Gamma_{ij}^k \frac{ \p F }{ \p x^k } \right),
\end{equation*}
so the mean curvature flow can also be written as
\begin{equation*}
	\frac{ \p }{ \p t } F = \pi_{N\Sigma} \left( g^{ij} \frac{ \p^2 F }{ \p x^i \p x^j } \right).
\end{equation*}
For any $\xi \in \mathbb{R}^{n+k}$,
\begin{align*}
	\pi_{N\Sigma}(\xi) &= \xi - \pi_{T\Sigma} (\xi) \\
	&= \xi^a \frac{ \p }{ \p y^a } - g^{kl}\xi^{a} \frac{ \p F^{a} }{ \p x^k }\frac{ \p F^{b} }{ \p x^l } \frac{ \p }{ \p y^b },
\end{align*}
so again we find the mean curvature flow is given by
\begin{equation*}
	\frac{ \p F^a }{ \p t } = g^{ij} \left( \delta_b^a - g^{kl} \frac{ \p F^a }{ \p x^k } \frac{ \p F^b }{ \p x^l } \right) \frac{ \p^2 F^b }{ \p x^i \p x^j }.
\end{equation*}

The mean curvature flow is therefore not strongly parabolic and the almost standard parabolic theory cannot immediately be conjured to yield existence for a short time.  To overcome this difficulty we are going to adapt a variant of the DeTurck trick first elaborated by Hamilton \cite{rH93b} that combines the mean curvature-DeTurck flow and the harmonic map heat flow.  As the next proposition shows, the mean curvature flow is invariant under a tangential parametrisation.  This means that adding a tangential term to the mean curvature flow equation results in a solution that differs from the solution of the mean curvature flow itself only by a reparametrisation of the submanifold.  The DeTurck trick involves adding a tangential term to the mean mean curvature flow to break the geometric invariance of the equation.  The modified flow is then strongly parabolic and the almost standard parabolic theory can now be summoned to ensure short time existence.  The solution to the mean curvature flow is then recovered from the solution to the mean curvature-DeTurck flow.  Hamilton's coupling of the modified flow with the harmonic map flow serves to provide a simple proof of uniqueness.

\begin{prop}\label{prop: DeTurck trick}
Let $W$ be a time-dependent family of vector fields defined on $\Sigma \times [0, T)$.  Suppose that $F$ is a solution to
\begin{equation*}
\begin{cases}
	\frac{ \p F}{ \p t } = \Delta_g F + \nabla_W F \\
	F( \cdot, 0) = F_0.
\end{cases}
\end{equation*}
Then there exists a solution $\tilde F$ to the mean curvature flow with $\tilde F_0 = F_0$.
\end{prop}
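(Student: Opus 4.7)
The plan is to absorb the extra term $\nabla_WF$ by pulling back along a suitable time-dependent family of diffeomorphisms of $\Sigma$. Since $\Sigma$ is closed and $W$ is a smooth time-dependent vector field on $\Sigma\times[0,T)$, I would first appeal to the standard theory of ODEs on closed manifolds to produce a unique smooth one-parameter family $\varphi:\Sigma\times[0,T)\to\Sigma$ solving
\begin{equation*}
\frac{\partial \varphi}{\partial t}(p,t) = -W(\varphi(p,t),t),\qquad \varphi(p,0)=p,
\end{equation*}
and to observe that each $\varphi_t:=\varphi(\cdot,t)$ is a diffeomorphism of $\Sigma$ on the full interval $[0,T)$.

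Next I would set $\tilde F(p,t):= F(\varphi(p,t),t)$ and compute $\partial_t\tilde F$ via the chain rule. Using the hypothesis $\partial_t F=\Delta_g F+\nabla_W F$ together with $\nabla_W F=F_*W$, the two tangential contributions cancel:
\begin{equation*}
\frac{\partial\tilde F}{\partial t}(p,t)
= \big(\partial_t F\big)(\varphi(p,t),t) + F_*\!\left(\frac{\partial\varphi}{\partial t}(p,t)\right)
= \Delta_g F(\varphi(p,t),t) + F_*W - F_*W.
\end{equation*}
The first term is precisely the mean curvature vector $H$ of the immersion $F(\cdot,t)$ evaluated at $\varphi(p,t)$. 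Because the mean curvature vector is a purely geometric object depending only on the image submanifold, and $\tilde F(\cdot,t)$ and $F(\cdot,t)$ differ only by the diffeomorphism $\varphi_t$, this equals the mean curvature vector $\tilde H(p,t)$ of $\tilde F$ at $p$. Hence $\partial_t\tilde F=\tilde H$, and the initial condition $\tilde F(p,0)=F(\varphi_0(p),0)=F_0(p)$ is immediate.

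The verification that $\varphi_t$ is a diffeomorphism on the whole interval (rather than just locally) is the only analytic point requiring care; this follows from compactness of $\Sigma$ and the uniform smoothness of $W$, which prevent finite-time blow-up or loss of injectivity. Once this is in hand, the geometric invariance of $H$ under reparametrisation is the crux of the argument, and the remainder is a direct computation. I therefore expect the proof to be short, with the main (minor) obstacle being just the standard ODE existence argument for $\varphi$ on a closed manifold.
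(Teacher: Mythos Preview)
Your proposal is correct and follows essentially the same approach as the paper: define $\tilde F=F\circ\varphi_t$ where $\varphi_t$ solves $\partial_t\varphi=-W\circ\varphi$, $\varphi_0=\mathrm{id}$, and use the chain rule together with the reparametrisation invariance of the mean curvature vector to see that the tangential terms cancel. The paper presents the computation first and then identifies the required ODE, whereas you solve the ODE first, but the content is identical.
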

\begin{proof}
For the moment, assume that there exists a time-dependent family of diffeomorphisms $\varphi_t : \Sigma \times [0,T) \rightarrow \Sigma$. Computing in local coordinates $\{ x^k \}$ around $\varphi_t(p)$ we calculate
\begin{align*}
	\frac{ \p \tilde F }{ \p t }(p,t) &= \frac{ \p F }{ \p t }(\varphi_t(p),t) + \nabla_k F (\varphi_t(p),t) \cdot \frac{ d \varphi_t(p)^k }{ dt } \\
	&= \Delta_g F (\varphi_t(p),t) + \left( W^k(\varphi_t(p),t) + \frac{ d\varphi_t(p)^k }{ dt } \right) \nabla_k F(\varphi_t(p),t) \\
	&= \tilde \Delta_{ \tilde g } \tilde F(p,t) + \left( W^k(\varphi_t(p),t) + \frac{ d\varphi_t(p)^k }{ dt } \right) \nabla_k F (\varphi_t(p),t).
\end{align*}
Therefore, if we can show there exists a family of diffeomorphisms solving the initial value problem
\begin{equation*}\label{e: DeTurck ODE problem}
	\begin{cases}
		\frac{ d \varphi_t(p) }{ d t } = - W(\varphi_t(p),t) \\
		\varphi_0(p) = \text{id}_{ \Sigma },
	\end{cases}
\end{equation*}
then $\tilde F$ will be the desired solution to the mean curvature flow.  In the case that $\Sigma$ is compact, standard ODE theory (for example, see \cite{Lee}) guarantees that the above ODE problem has a unique solution for as long as $W$ is defined.
\end{proof}

Let us now continue with Hamilton's argument.  Fix a background connection $\bar{ \nabla }$ on $\Sigma$.  For example, we could take the induced connection on $\Sigma$ at $t=0$.  As the vector field $W$ in the above proposition we take $W := g^{ij} ( \Gamma_{ij}^k - \bar{\Gamma}_{ij}^k )$.  Consider the mean curvature-DeTurck flow given by
\begin{align*}
	\frac{ \p F^a }{ \p t } &= \Delta_g F^a + \nabla_W F^a \\
	&= g^{ij} \left( \frac{ \p^2 F^a }{ \p x^i \p x^j } - \Gamma_{ij}^k \right) + g^{ij} ( \Gamma_{ij}^k - \bar{\Gamma}_{ij}^k ) \frac{ \p F^a }{ \p x^k } \\
	&= g^{ij} \left( \frac{ \p^2 F^a }{ \p x^i \p x^j } - \bar{\Gamma}_{ij}^k \frac{ \p F^a }{ \p x^k } \right).
\end{align*}
The principal symbol is now
\begin{equation*}
	\hat\sigma[MD](\xi) = \abs{\xi}^2 \text{id},
\end{equation*}
so the mean curvature-DeTurck flow is strongly parabolic and Main Theorem \ref{mthm: main thm 1} guarantees a unique solution to this modified flow for a least some short time.  The conditions of Main Theorem \ref{mthm: main thm 1} are easily confirmed for the mean curvature-DeTurck flow.  For example, the leading term of the linearised operator in some direction $V$ is given by
\begin{align*}
	\frac{ \p V^a }{ \p t} &= g^{ij}\frac{ \p^2 V^a }{ \p x^i \p x^j } \\
	&= g^{ij}\delta_b^a\frac{ \p^2 V^b }{ \p x^i \p x^j }.
\end{align*}
Hence $A_b^{a \, ij} = g^{ij}\delta_b^a$, and $A_b^{a \, ij} = A_a^{b \, ji}$.  As the mean curvature-DeTurck flow possesses a unique solution for some short time, the family of vector fields $W(t)$ also exist on this short time interval, and the above ODE problem has a unique solution on the same time interval.  By Proposition \ref{prop: DeTurck trick}, we recover a solution to the mean curvature flow $\tilde F$ by pulling-back the solution of the mean curvature-DeTurck flow by the diffeomorphism $\varphi_t$, that is $\tilde F(p,t) = \varphi_t^* F(p,t) = F(\varphi_t(p), t)$.

We now show uniqueness of the above solution to the mean curvature flow.  Suppose that $\tilde F$ is a solution the mean curvature flow and denote associated the induced metric by $\tilde g$.  Let $\varphi_0 : \Sigma \rightarrow \Sigma$ be a diffeomorphism.  Fix a metric $\bar g$ and associated Levi-Civita connection on the target manifold $\Sigma$, and consider the harmonic map heat flow
\begin{equation*}
	\frac{ \p }{ \p t } \varphi = \Delta_{ \tilde g, \bar g } \varphi
\end{equation*}
with respect to the domain metric $\tilde g$ and the target metric $\bar g$.  The harmonic map heat flow is a strongly parabolic quasilinear system (see, for example, \cite{rH75} or \cite{rH89}) and thus possesses a unique solution $\varphi_t$ for at least some short time.  We now define $F := \varphi_{t*} \tilde F = (\varphi_t^{-1})^*\tilde F$ and claim this is a solution to the mean curvature-DeTurck flow.  Repeating the calculation in Proposition \ref{prop: DeTurck trick} shows
\begin{equation*}
	\frac{ \p F }{ \p t } = \Delta_g F + \nabla_V F,
\end{equation*}
where $V(p) = -\varphi_*\p_t |_{ \varphi^{-1}(p) }$. Thus if we can show that $V = W$ then this establishes the claim.  This follows from following result:
\begin{prop}
Suppose $(K^n,k)$, $(M^n, g)$ and $(N^m, h)$ are manifolds, $\psi : K \rightarrow M$ a diffeomorphism and $\varphi : M \rightarrow N$ a map.  Then
\begin{equation*}
	\Delta_{\psi^*g,h}(\varphi \circ \psi)(p) = \Delta_{g,h} \varphi(\psi(p)).
\end{equation*}
\end{prop}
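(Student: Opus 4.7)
The plan is to prove this by exploiting the naturality of the tension field under diffeomorphisms of the source. The cleanest route is to observe that $\Delta_{g,h}\varphi$ is the trace (with respect to $g$) of the generalised Hessian $\nabla d\varphi \in \Gamma(T^*M \otimes T^*M \otimes \varphi^*TN)$, where $\nabla$ is built from the Levi-Civita connections of $g$ and $h$ (and the pullback connection on $\varphi^*TN$, in the sense developed in Chapter 2). All the ingredients in this construction — the metric, the Levi-Civita connection, the pullback bundle connection, and the musical trace — are natural with respect to pullback by diffeomorphisms, so the identity should fall out of functoriality.

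The most concrete way I would carry this out is in adapted local coordinates. First I would choose coordinates $\{x^i\}$ on $M$ near $\psi(p)$ and then transport them to $K$ by setting $y^i := x^i \circ \psi$; since $\psi$ is a diffeomorphism this is a legitimate coordinate chart on $K$ near $p$, and in these charts $\psi$ is literally the identity map. Next I would check the coordinate expressions:
\begin{enumerate}
\item $(\psi^*g)_{ij}(p) = g_{ij}(\psi(p))$ by definition of the pullback metric;
\item consequently the Christoffel symbols of $\psi^*g$ at $p$ agree with those of $g$ at $\psi(p)$, since Christoffel symbols are a fixed polynomial expression in the metric components and their coordinate derivatives;
\item $\dfrac{\partial(\varphi\circ\psi)^a}{\partial y^i}(p) = \dfrac{\partial\varphi^a}{\partial x^i}(\psi(p))$ and similarly for second derivatives, by the chain rule applied to the identity coordinate representation of $\psi$;
\item the Christoffel symbols $\bar\Gamma^a_{bc}$ of $h$ at $\varphi(\psi(p))$ appear on both sides unchanged.
\end{enumerate}
Substituting into the coordinate formula
\begin{equation*}
\Delta_{g,h}\varphi^a = g^{ij}\Bigl(\frac{\partial^2\varphi^a}{\partial x^i\partial x^j} - \Gamma_{ij}^k\frac{\partial\varphi^a}{\partial x^k} + \bar\Gamma^a_{bc}\frac{\partial\varphi^b}{\partial x^i}\frac{\partial\varphi^c}{\partial x^j}\Bigr)
\end{equation*}
then gives termwise equality of $\Delta_{\psi^*g,h}(\varphi\circ\psi)^a(p)$ and $\Delta_{g,h}\varphi^a(\psi(p))$.

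I do not anticipate a real obstacle: the statement is essentially a tautology once one has set up the adapted coordinates, and the only thing to be careful about is bookkeeping between the two different copies of $\mathbb{R}^n$ (the coordinate image of $p \in K$ versus that of $\psi(p) \in M$), which the choice $y^i = x^i \circ \psi$ collapses to a single coordinate image. If a coordinate-free argument is preferred, one can instead verify $\psi^*(\nabla^g d\varphi) = \nabla^{\psi^*g} d(\varphi\circ\psi)$ from the uniqueness of the Levi-Civita connection of $\psi^*g$ (which is $\psi^*\nabla^g$) together with the functoriality of the pullback bundle construction in Proposition \ref{prop:curvature} and the preceding propositions, and then take the trace using $\psi^*g^{-1} = (\psi^*g)^{-1}$; this is the same proof organised differently.
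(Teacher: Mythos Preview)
Your proposal is correct. The paper does not actually give a proof of this proposition: it only states the result and its geometric meaning, then refers the reader to \cite[pg. 117]{CLN} or \cite[pg. 78]{HA} for the details. Your adapted-coordinates argument (transporting the chart from $M$ to $K$ via $\psi$ so that $\psi$ becomes the identity in coordinates, whence all terms in the harmonic-map Laplacian formula match) is exactly the standard proof one finds in those references, so there is nothing further to compare.
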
 
The geometric meaning of this proposition is that the harmonic map Laplacian from a domain manifold to a target manifold is unchanged if we reparametrise the domain manifold.  For a proof of this proposition we refer the reader to \cite[pg. 117]{CLN} or \cite[pg. 78]{HA}.  We have $V(p) =-\varphi_*\p_t |_{ \varphi^{-1}(p) } = -\Delta_{g, \bar g}\varphi (\varphi^{-1}(p)) = \Delta_{g, \bar g} \text{id}_\Sigma$, and then adapting the above proposition to our setting $(\psi = \varphi^{-1}, h = \bar g)$ we see $\Delta_{g, \bar g} \text{id}_\Sigma = g^{ij} ( \Gamma_{ij}^k - \bar \Gamma_{ij}^k )$, and so the two vector fields $V$ and $W$ are in fact identical.

We can now finish the uniqueness argument.  Suppose that there exist two solutions $\tilde F_i$, $i=1,2$ to the mean curvature flow with initial condition $\tilde F_1( \cdot, 0) = \tilde F_2( \cdot, 0)$.  For each domain metric $\tilde g_i$ we can solve uniquely the harmonic map heat flow problem
\begin{equation*}
\begin{cases}
	\frac{ \p }{ \p t } \varphi = \Delta_{ \tilde g_i, \bar g } \varphi \\
	\varphi(\cdot, 0) = \text{id}_{\Sigma}
\end{cases}
\end{equation*}
for the functions $\varphi_i$, which then give two solutions $F_i = \varphi_*\tilde F$ to the mean curvature-DeTurck flow.  Because these two solutions satisfy the same initial condition and solutions to the mean curvature-DeTurck flow are unique, $F_1 = F_2$.  The two diffeomorphisms $\varphi_i(t)$ also solve the same ODE problem
\begin{equation*}
	\begin{cases}
		\frac{ d \varphi_{i} }{ d t } = -W(\varphi_i(p,t),t) \\
		\varphi_i(p,0) = p.
	\end{cases}
\end{equation*}
and so they too are in fact equal on their common interval of existence.  Therefore $\tilde F_1 = \varphi_1^*F_1 = \varphi_2^*F_2 = \tilde F_2$, which concludes the proof of uniqueness.

\chapter{Submanifolds of Euclidean space}\label{ch: The flow of submanifolds of Euclidean space}

Our goal in this chapter is to prove Main Theorem 2:
\begin{mthm}\label{mthm: main thm 2}
Suppose $\Sigma_0=F_0(\Sigma^n)$ is a closed submanifold smoothly immersed in $\mathbb{R}^{n+k}$.  If $\Sigma_0$ satisfies $\abs{ H }_{ \text{min} } > 0$ and $\abs{h}^2 \leq c\abs{H}^2$, where
	\begin{equation*}
		c \leq \begin{cases}\frac{4}{3n},& \quad \text{ if }2\leq n \leq 4 \\
						\frac{1}{n-1},& \quad \text{ if }n \geq 4,
						\end{cases}
	\end{equation*}
then MCF has a unique smooth solution $F:\ \Sigma\times[0,T)\to \mathbb{R}^{n+k}$ on a finite maximal time interval, and the submanifolds $\Sigma_t$ converge uniformly to a point $q\in \mathbb{R}^{n+k}$ as $t\to T$.  A suitably normalised flow exists for all time, and the normalised submanifolds $\tilde{\Sigma}_{\tilde{t}}$ converge smoothly as $\tilde{t} \rightarrow \infty$ to a $n$-sphere in some $(n+k)$-subspace of $\mathbb{R}^{n+k}$.
\end{mthm}

\section{The evolution equations in high codimension}
We begin by deriving evolution equations for various geometric quantities; of particular importance are the evolution equations for $\abs{ h }^2$ and $\abs{ H }^2$.  The mean curvature flow amounts to the prescription $F_*\partial_t=\iota H$ in the notation of the previous chapter.  For the moment we allow the background space $N$ to be an arbitrary Riemannian manifold.  The timelike Codazzi identity \eqref{eq:timelikeCodazzi} is precisely the evolution equation of the second fundamental form under the mean curvature flow:
\begin{equation}\label{eq:evol_h_in_N}
\nabla_{\partial_t} h(u,v) = \nabla_u\nabla_vH+h(v,{\mathcal W}(u,H))+\pip\left(\bar R(F_*u,\iota H)F_*v\right),
\end{equation}
or with respect to arbitrary local frames for the tangent and normal bundles
$$
\nabla_{\partial_t}h_{ij} = \nabla_i\nabla_jH+H\cdot h_{ip} h_{pj}
+H^\alpha\bar R_{i\alpha j}{}^\beta\nu_\beta.
$$
Using Simons' identity \eqref{eq:SimonsTrace}, this converts to a reaction-diffusion equation
\begin{align*}
\nabla_{\partial_t}h_{ij} &= \Delta h_{ij}+h_{ij}\cdot h_{pq}h_{pq}+h_{iq}\cdot h_{qp}h_{pj}
+h_{jq}\cdot h_{qp}h_{pi}-2h_{ip}\cdot h_{jq} h_{pq}\notag\\
&\qquad +2\bar R_{ipjq}h_{pq}-\bar R_{kjkp}h_{pi}-\bar R_{kikp}h_{pj}+h_{ij\alpha}\bar R_{k\alpha k\beta}\nu_\beta\notag\\
&\qquad -2h_{jp\alpha}\bar R_{ip\alpha\beta}\nu_\beta-2h_{ip\alpha}\bar R_{jp\alpha\beta}\nu_\beta
+\bar\nabla_k\bar R_{kij\beta}\nu_\beta-\bar\nabla_i\bar R_{jkk\beta}\nu_\beta.\label{eq:reactdiffuse}
\end{align*}
For the remainder of this chapter we are concerned only with the case $N=\mathbb{R}^{n+k}$, in which case the equation becomes
\begin{equation}\label{eq:reactdiffusEuclidean}
\nabla_{\partial_t}h_{ij} = \Delta h_{ij}+h_{ij}\cdot h_{pq}h_{pq}+h_{iq}\cdot h_{qp}h_{pj}
+h_{jq}\cdot h_{qp}h_{pi}-2h_{ip}\cdot h_{jq} h_{pq}.
\end{equation}
Taking the trace with respect to $g$ we obtain an evolution equation for the mean curvature vector:
\begin{equation}\label{eq:evolveH}
\nabla_{\partial_t}H=\Delta H + H\cdot h_{pq}h_{pq}.
\end{equation}
To derive the evolution equation for $\abs{ h }^2$, first recall that $\nabla_t g = 0$, and then at a point we compute
\begin{align*}
	\p_t \abs{ h }^2 &= \p_t \langle h, h \rangle \\
	&= 2 \langle \nabla_t h_{ij}, h_{ij} \rangle \\
	&= 2 \langle \Delta h_{ij}+h_{ij}\cdot h_{pq}h_{pq}+h_{iq}\cdot h_{qp}h_{pj}
+h_{jq}\cdot h_{qp}h_{pi}-2h_{ip}\cdot h_{jq} h_{pq}, h_{ij} \rangle.
\end{align*}
We now use $\Delta \abs{ h }^2 = 2 \langle \Delta h_{ij}, h_{ij} \rangle + 2 \abs{ \nabla h }^2$, and then noting that three of the reaction terms factor into the normal curvature we obtain
\begin{equation*}
	\frac{ \p }{ \p t} \abs{h}^2 = \Delta\abs{h}^2 - 2\abs{\nabla h}^2 + 2 \sum_{\alpha, \beta} \Big( \sum_{i,j} h_{ij\alpha}h_{ij\beta} \Big)^2 +2 \sum_{i,j,\alpha,\beta} \Big( \sum_p h_{ip\alpha}h_{jp\beta} - h_{jp\alpha}h_{ip\beta} \Big)^2 \label{eq:evol_h2}.
\end{equation*}
Similarly, using equation \eqref{eq:evolveH} the evolution for $\abs{ H }^2$ is given by
\begin{equation*}
			\frac{\partial}{\partial t}\abs{H}^2 = \Delta\abs{H}^2 - 2\abs{\nablap H}^2 + 2\sum_{i,j} \Big( \sum_{\alpha} H_{\alpha}h_{ij\alpha}\Big)^2.\label{eq:evol_H2}
\end{equation*}
The last term in \eqref{eq:evol_h2} is the length squared of the normal curvature, which we denote by $\abs{ \Rp }^2$.  For convenience we label the reaction terms of the above evolution equations as follows:
\begin{gather*}
	R_1 = \sum_{\alpha, \beta} \Big(\!\sum_{i,j} h_{ij\alpha}h_{ij\beta}\!\Big)^2 + \abs{\Rp}^2 \\
	R_2 = \sum_{i,j}\!\Big(\!\sum_{\alpha} H_{\alpha}h_{ij\alpha}\Big)^2.
\end{gather*}

The special connections we have been using are especially convenient for deriving the evolution equations in high codimension.  This will become quite evident when we come to deriving the higher derivative estimates.  Of course the special connections do not have to be used, and the methods used in the hypersurface theory can still be applied.  Let us see how some of this works in high codimension.  Since the ambient metric is fixed, the evolution of the induced metric can be computed by
\begin{align*}
	\frac{ \p }{ \p t } g_{ij} &= \p_t \big\langle \p_i F, \p_j F \big\rangle \\ 
	&= \big\langle \p_i ( H^{ \alpha } \nu_{ \alpha } ), \p_j F \big\rangle + ( i \leftrightarrow j ),
\end{align*}
then using the Weingarten relation: $\p_i \nu_{ \alpha } = C_{i \alpha }^{ \beta } \nu_{ \beta }- h_{ip\alpha}g^{pq}\p_qF$ and noting which terms are orthogonal to each other we have
\begin{align*}
	\frac{ \p }{ \p t } g_{ij} &= - \big\langle H^{\alpha}( h_{ip\alpha}g^{pq}\p_qF ), \p_j F \big\rangle + ( i \leftrightarrow j ) \\
	&= -2 H^{ \alpha } h_{ij\alpha} \\
	&= -2 H \cdot h_{ij}.
\end{align*}
To easily derive further evolution equations in this way it becomes necessary to compute in a suitably chosen evolving local frame for the normal bundle.  Since the normal bundle of a hypersurface in one-dimensional, any rotation of the normal bundle is necessarily tangential. In arbitrary codimension however, the normal vectors may `twist' inside the normal bundle giving possibly both tangential and normal motion.  Here we have
\begin{align*}
	\frac{d}{dt}\nu_{\alpha} &= \big\langle \p_t\nu_{\alpha}, \p_p F \big\rangle g^{pq}\p_q F + \big\langle \p_t\nu_{\alpha}, \nu_{\gamma} \big\rangle \nu_{\gamma}  \\
		&= -\big\langle \nu_{\alpha}, \p_t \p_p F \big\rangle g^{pq}\p_q F + \big\langle \p_t\nu_{\alpha}, \nu_{\gamma} \big\rangle \nu_{\gamma}  \\
		&= -\big\langle \nu_{\alpha}, \nabla^{\bot}_p H \big\rangle g^{pq}\p_q F + \big\langle \p_t\nu_{\alpha}, \nu_{\gamma} \big\rangle \nu_{\gamma}.
\end{align*}

Observe that mean curvature flow of the submanifold only imposes the tangential motion of the normal frame and so we are free to choose the normal motion.  A convenient choice is of course that there is no normal motion.

\begin{lem}
Let $\nu_{\alpha}(0)$, $n+1 \leq \alpha \leq n+p$, be a local orthonormal frame for the normal bundle and define the evolution of the frame by
	\begin{equation*}
		\frac{d}{dt}\nu_{\alpha}(t) = -\big\langle \nu_{\alpha}, \nabla^{\bot}_p H \big\rangle g^{pq} \p_q F.
	\end{equation*}
Then $\nu_{\alpha}(t)$ remains a local orthonormal frame for the normal bundle as long MCF has a solution.
\end{lem}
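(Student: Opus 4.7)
The plan is to establish two things: (i) each $\nu_\alpha(t)$ remains orthogonal to $F_*(T\Sigma_t)$, and (ii) the family $\{\nu_\alpha(t)\}$ remains orthonormal in the ambient inner product. Claim (ii) is an immediate consequence of (i): the prescribed evolution $\frac{d}{dt}\nu_\alpha = -\langle \nu_\alpha, \nabla^{\bot}_p H \rangle g^{pq}\p_q F$ is purely tangential, so once (i) is known for all indices $\langle \frac{d}{dt}\nu_\alpha, \nu_\beta\rangle = 0$, whence $\frac{d}{dt}\langle \nu_\alpha, \nu_\beta\rangle = 0$ and orthonormality propagates from the initial condition. So the substantive task is (i).

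For (i), I would track the quantity $N_{\alpha, j}(t) := \langle \nu_\alpha(t), \p_j F(\cdot, t) \rangle$ and derive a linear homogeneous ODE for it. Differentiating in $t$, the first piece $\langle \frac{d}{dt}\nu_\alpha, \p_j F\rangle$ collapses via $g^{pq}g_{qj} = \delta^p_j$ to $-\langle \nu_\alpha, \nabla^{\bot}_j H\rangle$. For the second piece, I would use $\p_t F = H$ together with the fact that in $\mathbb{R}^{n+k}$ the ambient covariant derivative is just the partial derivative, so $\p_j H = \bar\nabla_{F_*\p_j} H$, and the Weingarten decomposition applied to the normal section $H$ gives
\begin{equation*}
\bar\nabla_{F_*\p_j} H = \iota \nabla^{\bot}_j H - F_*\mathcal{W}(\p_j, H).
\end{equation*}
Pairing with $\nu_\alpha$ produces a normal contribution $\langle \nu_\alpha, \iota\nabla^{\bot}_j H\rangle$ that exactly cancels the contribution from the first piece, leaving only the tangential remainder $-\langle \nu_\alpha, F_*\mathcal{W}(\p_j, H)\rangle = -\mathcal{W}(\p_j, H)^q N_{\alpha, q}$. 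Crucially, this cancellation holds as an identity in $\mathbb{R}^{n+k}$ and does not presuppose that $\nu_\alpha$ is already normal; this is where one must be careful.

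Altogether this shows $\p_t N_{\alpha, j} = -\mathcal{W}(\p_j, H)^q N_{\alpha, q}$, a linear homogeneous ODE along each trajectory $p \in \Sigma$. Since $N_{\alpha, j}(0) = 0$ by the hypothesis that $\{\nu_\alpha(0)\}$ is a normal frame, uniqueness of solutions forces $N_{\alpha, j} \equiv 0$ on the interval of existence of MCF, establishing (i) and hence the lemma. The only real obstacle is bookkeeping around the defining ODE: the pairing $\langle \nu_\alpha, \nabla^{\bot}_p H\rangle$ must be interpreted uniformly as an ambient inner product via the inclusion $\iota$, so that the cancellation between the two contributions is exact without assuming what one is trying to prove. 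Once this interpretation is fixed, the remainder is a routine ODE uniqueness argument.
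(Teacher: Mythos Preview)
Your argument is correct and follows essentially the same route as the paper: both compute the time derivative of $\langle \nu_\alpha, \p_j F\rangle$, use the Weingarten decomposition of $\p_j H$ to obtain the cancellation with the tangential evolution law, and conclude via uniqueness for the resulting linear homogeneous ODE; the orthonormality step is likewise identical. Your explicit remark that the cancellation does not presuppose normality of $\nu_\alpha$ is a useful clarification that the paper leaves implicit.
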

\begin{proof}
We first note that the evolution of the frame is determined by an linear system of ODE's and hence has a unique solution as long as MCF has a solution.  To show that the frame remains normal we compute
	\begin{align*}
		\frac{d}{dt}\big\langle \p_k F, \nu_{\alpha}(t) \big\rangle &= \big\langle \p_t \p_k F, \nu_{\alpha}(t) \big\rangle + \big\langle \p_k F, \p_t \nu_{\alpha}(t) \big\rangle \\
		&= \big\langle \p_k H, \nu_{\alpha}(t) \big\rangle + \big\langle \p_k F, -\big\langle \nu_{\alpha}, \nabla^{\bot}_p H \big\rangle g^{pq} \p_q F \big\rangle\\
		&= \big\langle \nabla^{\bot}_k H - F_* ( \mathcal{W}(\partial_k, H) ), \nu_{\alpha}(t) \big\rangle  -\big\langle \nu_{\alpha}, \nabla^{\bot}_p H \big\rangle g^{pq}g_{kq} \\
		&= - \big\langle F_* ( \mathcal{W}(\partial_k, H) ), \nu_{\alpha}(t) \big\rangle \\
			&= -g^{pq}\big\langle H, h(\p_k,\p_p) \big\rangle \big\langle \p_q F, \nu_{\alpha}(t) \big\rangle.
	\end{align*}
We set $Y_k = \big\langle \p_k F, \nu_{\alpha}(t)\big\rangle$ and $\Lambda_k^q =  -g^{pq}\big\langle H, h(\p_k,\p_p) \big\rangle$, and the last line above now reads
	\begin{equation}\label{e: evol normal frame}
		\frac{d}{dt}Y_k = \Lambda_k^qY_q.
	\end{equation}
Equation \eqref{e: evol normal frame} is a homogenous linear system of ODE's with initial conditions $Y_k(0) = 0$ and $Y'_k(0) = 0$, and thus its unique solution is given by $Y_k(t) = 0$ for all time as long as MCF has a solution.  From this we conclude that if the frame is initially normal then it remains so.  To show that the frame remains orthonormal we easily compute
	\begin{align*}
		\frac{d}{dt}\big\langle \nu_{\alpha}(t), \nu_{\beta}(t) \big\rangle &= \big\langle \frac{ d }{ dt }\nu_{\alpha}(t), \nu_{\beta}(t) \big\rangle + \big\langle \nu_{\alpha}(t), \frac{ d }{ dt } \nu_{\beta}(t) \big\rangle \\
		&= 0.
	\end{align*}
\end{proof}
In the coming sections the reader will note that by using the special connections we avoid needing the evolution equation for the Christoffel symbols.  In high codimension should one wish to commute the usual partial derivative in time with spatial covariant derivatives, it is also necessary to understand how the normal connection forms evolve.  By differentiating the Weingarten relation in time and using the special evolving normal frame one finds the normal connection forms evolve by
\begin{equation*}
		\frac{\partial}{\partial t}C_{k\alpha}^{\beta} = -(\nu_{\alpha} \cdot \nabla^{\bot}_p H) \, h_{kp} \cdot \nu_{\beta} + ( \nu_{\beta} \cdot \nabla^{\bot}_p H ) \, h_{kp} \cdot \nu_{\alpha}.
\end{equation*}
Note that the evolution equations for the Christoffel symbols and the normal connection forms are both of the form $h * \nabla h$.  This information is contained in the temporal Gauss and Ricci equations: they too are of the form $h * \nabla h$ (the usual spatial varieties look like $h * h$).

Another evolution equation we shall need to use on occasion is that of the volume measure.  This is derived in exactly the same manner as for a hypersurface:
\begin{align*}
	\frac{ \p }{ \p t } d\mu_{ g(t) } &= \frac{ \p }{ \p t } \sqrt{ \det g_{ij} } \\
	&= \frac{ 1 }{ 2 \sqrt{ \det g_{ij} } } \det{ g_{ij} } g^{ij} \frac{ \p }{ \p t } g_{ij} \\
	&= - \sqrt{ \det g_{ij} } g^{ij} H \cdot h_{ij} \\
	&= -\abs{H}^2 d\mu_{ g(t) }.
\end{align*}
The evolution equations in case where the background space is a sphere will be needed in the next chapter, and we delay their derivation until then.

\section{Preservation of curvature pinching}\label{l:Preservation of curvature pinching}

In this section we show that a certain curvature pinching condition is preserved by the mean curvature flow.  We will often refer to the next lemma as the Pinching Lemma.

\begin{lem}\label{l: preservation of pinching}
If a solution $F:\ \Sigma\times[0,T)\to\mathbb{R}^{n+k}$ of the mean curvature flow satisfies $\abs{h}^2 +a < c\abs{H}^2$ for some constants $ \alpha \leq \frac1n + \frac{1}{3n}$ and $a>0$ at $t = 0$, then this remains true for all $0 \leq t < T$.
\end{lem}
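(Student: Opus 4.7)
The plan is to apply the scalar maximum principle to
\[
f := |h|^2 + a - c|H|^2.
\]
Since the constant $a$ is annihilated by $\partial_t$ and $\Delta$, combining the two evolution equations of the previous section gives
\[
\partial_t f = \Delta f - 2\bigl(|\nabla h|^2 - c|\nablap H|^2\bigr) + 2\bigl(R_1 - c R_2\bigr).
\]
If the pinching fails, compactness of $\Sigma$ and continuity of $f$ produce a first time $t_0>0$ and a point $p_0\in \Sigma$ at which $f(p_0,t_0)=0$; at this point the standard maximum principle yields $\partial_t f\geq 0$, $\nabla f=0$, and $\Delta f\leq 0$. Moreover $|H|^2\geq a/c>0$ there, so $H$ is non-vanishing in a neighbourhood and one may freely divide by $|H|^2$. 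It therefore suffices to establish the strict pointwise inequality
\[
R_1 - c R_2 \;<\; |\nabla h|^2 - c|\nablap H|^2 \quad\text{at } (p_0,t_0).
\]

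First I would dispose of the gradient term using the refined Kato-type estimate $|\nabla h|^2 \geq \tfrac{3}{n+2}|\nablap H|^2$. In the Euclidean ambient space the Codazzi identity of Chapter~1 reduces to the full symmetry $\nabla_i h_{jk} = \nabla_j h_{ik} = \nabla_k h_{ij}$, and applying the hypersurface-style argument normal-component by normal-component delivers this inequality directly. Because $c\leq\tfrac{4}{3n}\leq\tfrac{3}{n+2}$ for every $n\geq 2$, one obtains
\[
|\nabla h|^2 - c|\nablap H|^2 \;\geq\; \Bigl(1 - \tfrac{(n+2)c}{3}\Bigr)|\nabla h|^2 \;\geq\; 0,
\]
so the contradiction reduces to proving $R_1 - c R_2 < 0$ at $(p_0,t_0)$.

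This reaction-term inequality is the main obstacle and is precisely why the numerical constraint $c \leq \tfrac{1}{n} + \tfrac{1}{3n}$ appears. My strategy is to decompose
\[
h_{ij} \;=\; \ho_{ij} + \tfrac{1}{n}\, g_{ij}\, H,
\]
so that $|h|^2 = |\ho|^2 + \tfrac{1}{n}|H|^2$ and the pinching identity $|h|^2 + a = c|H|^2$ tightens to the sharp bound
\[
|\ho|^2 \;=\; \bigl(c - \tfrac{1}{n}\bigr)|H|^2 - a \;\leq\; \tfrac{1}{3n}|H|^2 - a.
\]
One then expands $R_1 = \sum_{\alpha,\beta}\bigl(\sum_{i,j}h_{ij\alpha}h_{ij\beta}\bigr)^2 + |\Rp|^2$ and $R_2 = \sum_{i,j}\bigl(\sum_\alpha H_\alpha h_{ij\alpha}\bigr)^2$ in the $(\ho,H)$ variables: the mixed contractions involving a single factor of $\ho$ vanish by tracelessness, the pure $|H|^4$ contributions in $R_1$ cancel against the corresponding terms of $cR_2$ precisely because $c$ is the critical ratio $\tfrac{1}{n}+\tfrac{1}{3n}$, and the residual quartic in $\ho$ alone is controlled by a codimension-independent estimate of the form
\[
\sum_{\alpha,\beta}\Bigl(\sum_{i,j}\ho_{ij\alpha}\ho_{ij\beta}\Bigr)^2 + \sum_{i,j,\alpha,\beta}\Bigl(\sum_p \ho_{ip\alpha}\ho_{jp\beta} - \ho_{jp\alpha}\ho_{ip\beta}\Bigr)^2 \;\leq\; \tfrac{3}{2}|\ho|^4,
\]
of Li--Li/Lu type. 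Substituting the sharp bound $|\ho|^2 \leq \tfrac{1}{3n}|H|^2 - a$ and carefully collecting terms then yields $R_1 - cR_2 < 0$ at $(p_0,t_0)$, contradicting $\partial_t f \geq 0$ and $\Delta f \leq 0$ and completing the argument. The delicate part is making sure that the $a$-slack survives the expansion so that the final inequality is strict rather than merely non-strict; this is what forces us to work with the perturbed quantity $|h|^2+a$ instead of $|h|^2$ itself.
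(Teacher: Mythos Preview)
Your overall architecture is right: the evolution equation for $f=|h|^2+a-c|H|^2$, the first-touching argument, and the gradient control via $|\nabla h|^2\geq\tfrac{3}{n+2}|\nablap H|^2$ all match the paper. The gap is in your treatment of the reaction terms, and it is not a minor one.

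First, a factual error: the pure $|H|^4$ contributions do \emph{not} cancel. Expanding $h=\ho+\tfrac1n gH$ one finds that $R_1$ contributes $\tfrac{1}{n^2}|H|^4$ while $cR_2$ contributes $\tfrac{c}{n}|H|^4$, leaving the strictly negative term $-\tfrac1n(c-\tfrac1n)|H|^4$; nothing special happens at $c=\tfrac{4}{3n}$. More seriously, you have overlooked the mixed term that is quadratic in both $\ho$ and $H$: after expansion, $R_1-cR_2$ contains $(\tfrac{2}{n}-c)\sum_{i,j}\bigl(\sum_\alpha H_\alpha\ho_{ij\alpha}\bigr)^2$, which is \emph{positive} in the range $c\leq\tfrac{4}{3n}$. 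Your ``single factor of $\ho$'' remark only kills the cubic-in-$H$ terms, not this one.

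The fatal issue is then applying the Li--Li bound $\leq\tfrac32|\ho|^4$ to the full traceless form. If you do that and bound the mixed term above using $|\ho_1|^2\leq|\ho|^2$, then at a point where $f=0$ (so $(c-\tfrac1n)|H|^2=|\ho|^2+a$) the reaction terms satisfy
\[
R_1-cR_2 \;\leq\; \tfrac12|\ho|^4 - \bigl(1+\tfrac{1}{n(c-1/n)}\bigr)a|\ho|^2 - \tfrac{a^2}{n(c-1/n)},
\]
which is positive whenever $|\ho|^2$ is large. The $a$-slack does not save you.

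What the paper does instead is choose the normal frame with $\nu_1=H/|H|$ and split $\ho$ into the principal-direction piece $\ho_1$ and the transverse piece $\ho_-$. The point is that the $\alpha=\beta=1$ contribution to the Li--Li expression is \emph{exactly} $|\ho_1|^4$, not $\tfrac32|\ho_1|^4$; keeping this exact value, the $|\ho_1|^4$ terms cancel completely against the mixed $|\ho_1|^2|H|^2$ and $|H|^4$ terms once one substitutes $(c-\tfrac1n)|H|^2=|\ho|^2+a$. Li--Li is applied only to the $\alpha,\beta>1$ block, and the $\ho_1$--$\ho_-$ cross terms are estimated separately (by at most $8|\ho_1|^2|\ho_-|^2$). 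One is then left with
\[
2R_1-2cR_2 \;<\; \Bigl(6-\tfrac{2}{n(c-1/n)}\Bigr)|\ho_1|^2|\ho_-|^2 + \Bigl(3-\tfrac{2}{n(c-1/n)}\Bigr)|\ho_-|^4,
\]
and it is the non-positivity of the first coefficient that forces $c\leq\tfrac1n+\tfrac1{3n}$. This splitting is the missing idea in your proposal.
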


Note that under the conditions of Main Theorem \ref{mthm: main thm 2} (at least in the case where the inequalities hold strictly), there exist constants $c < \frac{4}{3n} $ and $a > 0$ such that the conditions of Lemma \ref{l: preservation of pinching} hold.  Thus the result implies both that $H$ remains everywhere non-zero, and that the curvature pinching is preserved.  Consider now the quantity $\mathcal Q=\abs{h}^2+a - c\abs{H}^2$, where $c$ and $a$ are positive constants.  Combining the evolution equations for $\abs{h}^2$ and $\abs{H}^2$ we get
\begin{equation}\label{e: h2-cH2}
		\frac{\partial}{\partial t}\mathcal Q = \Delta\mathcal Q - 2(\abs{\nabla h}^2 - c \abs{\nabla H}^2) + 2R_1 - 2 c R_2.
\end{equation}

By assumption this quantity is initially negative.  If there is a first point and time where ${\mathcal Q}$ becomes zero, then at this point we necessarily have $\frac{\partial Q}{\partial t}\geq 0$ and $\Delta{\mathcal Q}\leq 0$.  We will derive a contradiction by showing that the gradient tems on the right-hand side of equation \eqref{e: h2-cH2} are non-positive, whilst the reaction terms are strictly negative.  We begin by estimating the gradient terms:  

\begin{prop}
We have the estimates
	\begin{subequations}
		\begin{align}
			&\abs{\nabla h}^2 \geq \frac{3}{n+2} \abs{\nabla H}^2 \\
			&\abs{\nabla h}^2 - \frac{1}{n}\abs{\nabla H}^2 \geq \frac{2(n-1)}{3n} \abs{\nabla h}^2. \label{eqn: basic grad est 2}
		\end{align}
	\end{subequations}
\end{prop}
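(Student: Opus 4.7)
The plan is to exploit the flat-background Codazzi identity \eqref{eq:spatialCodazzi}, which in $\mathbb{R}^{n+k}$ reduces to $\nabla_i h_{jk\alpha} = \nabla_j h_{ik\alpha}$. Hence the components $E_{ijk\alpha} := \nabla_i h_{jk\alpha}$ of $\nabla h$ are totally symmetric in the three tangent indices $i,j,k$, and any two-index contraction yields the same tensor $T_{i\alpha} := \nabla_i H_\alpha = g^{jk}E_{ijk\alpha}$. This symmetry is the only geometric input; the estimates are then purely algebraic and live in the space of totally symmetric $(0,3)$-tensors with a normal coefficient.

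My approach is to orthogonally decompose $E$ into a pure trace piece built from $T$ and the metric, and a totally trace-free remainder. Concretely, I would write $E = F + G$, where
\begin{equation*}
F_{ijk\alpha} := \tfrac{1}{n+2}\bigl(g_{ij}T_{k\alpha} + g_{ik}T_{j\alpha} + g_{jk}T_{i\alpha}\bigr).
\end{equation*}
This $F$ is manifestly totally symmetric in $i,j,k$, and the coefficient $\tfrac{1}{n+2}$ is forced by the requirement $g^{jk}F_{ijk\alpha} = T_{i\alpha}$ (using $g^{jk}g_{jk}=n$). Consequently $G := E - F$ is totally symmetric and trace-free in every pair of tangent indices. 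Because $F$ is a sum of terms of the form $g\otimes T$, contracting $F$ against $G$ reduces to traces of $G$, which all vanish; hence $F \perp G$ pointwise and
\begin{equation*}
\abs{\nabla h}^2 \;=\; \abs{E}^2 \;=\; \abs{F}^2 + \abs{G}^2 \;\geq\; \abs{F}^2.
\end{equation*}

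A short combinatorial calculation (the only place that needs care) gives
\begin{equation*}
\abs{F}^2 = \tfrac{1}{(n+2)^2}\bigl(3n+6\bigr)\abs{T}^2 = \tfrac{3}{n+2}\abs{\nabla H}^2,
\end{equation*}
which establishes (a). The second inequality (b) is a purely algebraic rearrangement of (a): the estimate $\abs{\nabla h}^2 - \tfrac{1}{n}\abs{\nabla H}^2 \geq \tfrac{2(n-1)}{3n}\abs{\nabla h}^2$ is equivalent to $\tfrac{n+2}{3n}\abs{\nabla h}^2 \geq \tfrac{1}{n}\abs{\nabla H}^2$, i.e.\ to (a). There is no real obstacle here; the decomposition also shows that the constant $\tfrac{3}{n+2}$ is sharp, with equality exactly when the trace-free remainder $G$ vanishes.
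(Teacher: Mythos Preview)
Your proof is correct and follows exactly the paper's approach: the same orthogonal decomposition of $\nabla h$ into the symmetrised trace part $\tfrac{1}{n+2}(g_{ij}\nabla_k H + g_{ik}\nabla_j H + g_{jk}\nabla_i H)$ and a trace-free remainder, followed by the observation that (b) is an algebraic rearrangement of (a). The only difference is notational (your $F$ is the paper's $E$).
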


\begin{proof}
In exactly the same way as \cite{gH84} and \cite{rH82}, we decompose the tensor $\nabla h$ into orthogonal components $\nabla_i h_{jk} = E_{ijk} + F_{ijk}$,
where
	\begin{equation*}
		E_{ijk} = \frac{1}{n+2}(g_{ij}\nabla_k H + g_{ik}\nabla_j H + g_{jk}\nabla_i H).
	\end{equation*}
Then $\abs{\nabla h}^2 \geq\abs{E}^2 = \frac{3}{n+2} \abs{\nabla H}^2$.  The second estimate follows easily from the first.
\end{proof}

Since $c < \frac{3}{n+2}$ under the assumption of Lemma \ref{l: preservation of pinching}, the gradient terms are non-positive.  In order to estimate the reaction terms of \eqref{e: h2-cH2} it is convenient to work with the traceless part of second fundamental form $\ho=h-\frac1n H g$.  The lengths of $h$ and $\ho$ are related by $\abs{ \ho }^2 =  \abs{h}^2 - \frac{1}{n}\abs{H}^2$.  At a point where ${\mathcal Q}=0$, we certainly have $|H|\neq 0$, so we can choose
a local orthonormal frame  $\{\nu_{\alpha} : \ 1 \leq \alpha \leq k\}$ for ${\mathcal N}$ such that $\nu_1 = H/\abs{H}$.  With this choice of frame the second fundamental form takes the form
\begin{equation*}
	\begin{cases}
		\ho{_1} = h_1 - \frac{\abs{H}}{n} Id \\
		\ho_{\alpha} = h_{\alpha}, \quad \alpha > 1,
	\end{cases}
\end{equation*}
and
\begin{equation*}
	\begin{cases}
		\tr{h_1} = \abs{H} \\
		\tr{h_{\alpha}} = 0, \quad \alpha > 1.
	\end{cases}
\end{equation*}
At a point we may choose a basis for the tangent space such that $h_1$ is diagonal.  We denote the diagonal entries of $h_1$ and $\ho_1$ by $\lambda_i$ and $\lo_i$ respectively.  Additionally, we denote the norm of the $(\alpha \neq 1)$-directions of the second fundamental form by $\abs{\ho_-}^2$, that is, $\abs{\ho}^2 = \abs{\ho_1}^2 + \abs{\ho_-}^2$.  We also adopt from the following piece of notation from \cite{CdCK70}: for a matrix $A = (a_{ij})$, we denote
	\begin{equation*}
		N(A)= \tr \left(A \cdot A^t\right) = \sum_{ij} (a_{ij})^2.
	\end{equation*}
In particular, we have $ \sum_{\alpha,\beta} N(\ho_{\alpha}\ho_{\beta} - \ho_{\beta}\ho_{\alpha}) = \abs{ \Rp }^2$.

To estimate the reaction terms we work with the bases described above and separate the $(\alpha = 1)$-components from the others.  The reaction terms of \eqref{e: h2-cH2} become
\begin{gather*}
	\sum\limits_{\alpha, \beta} \Big( \sum\limits_{i,j} h_{ij\alpha}h_{ij\beta} \Big)^2 = \abs{\ho_1}^4 + \frac{2}{n}\abs{\ho_1}^2\abs{H}^2 + \frac{1}{n^2}\abs{H}^4 + 2\sum\limits_{\alpha > 1} \Big( \sum_{i,j}\ho_{ij1}\ho_{ij\alpha}\Big)^2 \\
	+ \sum\limits_{\alpha, \beta > 1} \Big( \sum\limits_{i,j} \ho_{ij\alpha}\ho_{ij\beta} \Big)^2 \\
	\abs{R^\bot}^2 = 2\sum\limits_{\alpha > 1} N(h_1\ho_{\alpha} - \ho_{\alpha}h_1) + \sum\limits_{\alpha, \beta > 1} N(\ho_{\alpha}\ho_{\beta} - \ho_{\beta}\ho_{\alpha}) \\
	\sum\limits_{i,j} \Big( \sum\limits_{\alpha} H_{\alpha}h_{ij\alpha}\!\Big)^2 = \abs{\ho_1}^2\abs{H}^2 + \frac{1}{n}\abs{H}^4.
	\end{gather*}
Writing out all the reaction terms we now have
		\begin{align}\label{eq:reaction.terms}
			2R_1-2cR_2&=2\sum\limits_{\alpha, \beta}\!\Big(\!\sum\limits_{i,j} h_{ij\alpha}h_{ij\beta}\!\Big)^2 + 2\abs{\Rp}^2 - 2c\sum_{i,j}\!\Big(\!\sum_{\alpha} H_{\alpha}h_{ij\alpha}\Big)^2\notag \\
			&= 2\abs{\ho_1}^4 - 2(c - \frac{2}{n})\abs{\ho_1}^2\abs{H}^2 - \frac{2}{n}(c - \frac{1}{n})\abs{H}^4 \\
			&\qquad\null + 4\sum_{\alpha > 1}\Big(\sum_{i,j}\ho_{ij1}\ho_{ij\alpha}\Big)^2 + 4\sum_{\alpha > 1} N(h_1\ho_{\alpha} - \ho_{\alpha}h_1) \notag\\
			&\qquad\null + 2\sum_{\alpha, \beta > 1}\Big(\sum_{i,j}\ho_{ij\alpha}\ho_{ij\beta}\Big)^2 + 2\sum_{\alpha, \beta > 1}N(\ho_{\alpha}\ho_{\beta} - \ho_{\beta}\ho_{\alpha}).\notag
		\end{align}
Now we use the fact that ${\mathcal Q}=0$ to replace $\left(c-\frac1n\right)\abs{H}^2$ by $\abs{\ho}^2+a$ in the first line of \eqref{eq:reaction.terms}, giving
	\begin{align*}
	&2\abs{\ho_1}^4 - 2(c - \frac{2}{n})\abs{\ho_1}^2\abs{H}^2 - \frac{2}{n}(c - \frac{1}{n})\abs{H}^4\\
			&\qquad= 2\abs{\ho_1}^4 - 2\abs{\ho_1}^2\left(\abs{\ho_1}^2 + \abs{\ho_-}^2+a\right) - \frac{2}{n(c-1/n)}\left(\abs{\ho_-}^2+a\right)\left(\abs{\ho_1}^2 + \abs{\ho_-}^2+a\right)  \\
			&\qquad< -\frac{2c}{c-1/n}\abs{\ho_1}^2\abs{\ho_-}^2-\frac{2}{n(c-1/n)}\abs{\ho_-}^4,
		\end{align*}
where we use the fact that all terms involving $a$ are non-positive, and we have a strictly negative term $-\frac{2a^2}{n(c-1/n)}$.
We need to control the last two lines of \eqref{eq:reaction.terms}.  In the second last line we proceed by expanding the terms and using the fact that $\ho_1$ is diagonal:
\begin{align*}
	\sum_{\alpha > 1}\Big(\sum_{i,j}\ho_{ij1}\ho_{ij\alpha}\Big)^2 &= \sum_{\alpha > 1} \Big( \sum_i\lo_i\ho_{ii\alpha} \Big)^2 \\
	&\leq \Big(\sum_i \lo_i{^2} \Big) \Big(\sum_{\substack{j \\ \alpha > 1}} (\ho_{jj\alpha})^2\Big) \\
	&= \abs{\ho_1}^2 \sum_{\substack{i \\ \alpha > 1}} (\ho_{ii\alpha})^2.
\end{align*}
Also,
\begin{align*}
	\sum_{\alpha > 1} N(h_1\ho_{\alpha} - \ho_{\alpha}h_1) &= \sum_{\substack{i \neq j \\ \alpha > 1}}(\lambda_i - \lambda_j)^2 (\ho_{ij\alpha})^2 \\
	&= \sum_{\substack{i \neq j \\ \alpha > 1}}(\lo_i - \lo_j)^2 (\ho_{ij\alpha})^2 \\
	&\leq \sum_{\substack{i \neq j \\ \alpha > 1}}2(\lo_i{^2} + \lo_j{^2})(\ho_{ij\alpha})^2 \\
	&\leq 2\abs{\ho_1}^2\sum_{\substack{i \neq j \\ \alpha > 1}}(\ho_{ij\alpha})^2 \\
	&= 2\abs{\ho_1}^2 \big(\abs{\ho_-}^2 - \sum_{\substack{i \\ \alpha > 1}} (\ho_{ii\alpha})^2\big),
\end{align*}

so
\begin{align*}
	\sum_{\alpha > 1}\Big(\sum_{i,j}\ho_{ij1}\ho_{ij\alpha}\Big)^2 + \sum_{\alpha > 1} N(h_1\ho_{\alpha} - \ho_{\alpha}h_1) &\leq 2\abs{\ho_1}^2\abs{\ho_-}^2 - \abs{\ho_1}^2\sum_{\substack{i \\ \alpha > 1}}(\ho_{ii\alpha})^2 \\
	&\leq 2\abs{\ho_1}^2\abs{\ho_-}^2.
\end{align*}

To estimate the last line we use an inequality first derived in \cite{CdCK70} for a similar purpose, and later improved \cite{LiLi92} to be independent of the codimension.  In our notation we have
	\begin{equation*}
		\sum_{\alpha,\beta > 1}\Big(\sum_{i,j}\ho_{ij\alpha}\ho_{ij\beta}\Big)^2
	+ \sum_{\alpha, \beta > 1} N(\ho_{\alpha}\ho_{\beta} - \ho_{\beta}\ho_{\alpha}) \leq \frac{3}{2} \abs{\ho_-}^4.
	\end{equation*}
\begin{proof}[Proof of Theorem \ref{l: preservation of pinching}.]
Using the above inequalities we estimate the reaction terms by
\begin{equation*} 
	2R_1-2cR_2 <  \left(6-\frac{2}{n(c-1/n)}\right)\abs{\ho_1}^2\abs{\ho_-}^2 + \left(3-\frac{2}{n(c-1/n)}\right)\abs{\ho_-}^4.
\end{equation*}

The $\abs{ \ho_1 }^2 \abs{ \ho_- }^2$ terms are nonpositive for $c \leq \frac{1}{n} + \frac{1}{3n}$ and the $\abs{ \ho_- }^4$ terms are nonpositive for $c \leq \frac{1}{n} + \frac{2}{3n}$.  The gradient terms are nonpositive for $c \leq \frac{3}{n+2}$, so the right-hand side of \eqref{e: h2-cH2} is negative for $c\leq\frac1n+\frac1{3n}$, while the left-hand side is non-negative.  This is a contradiction, so ${\mathcal Q}$ must remain negative.
\end{proof}

To apply the pinching estimate in the case where equality holds in the assumptions of Main Theorem \ref{mthm: main thm 2}, we need the following result:

\begin{prop}\label{prop:strongMP}
Suppose $\Sigma_0=F_0(\Sigma^n)$ is a submanifold satisfying the conditions of Main Theorem \ref{mthm: main thm 2}.and let $F:\ \Sigma\times[0,T)\to\mathbb{R}^{n+k}$ be the solution of MCF with initial data $F_0$.  Then for any sufficiently small $t>0$ there exists $c\leq\frac1n+\frac1{3n}$ and $a>0$ such that the conditions of Lemma \ref{l: preservation of pinching} hold for $\Sigma_t$.
\end{prop}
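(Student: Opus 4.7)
The plan is to apply the parabolic strong maximum principle to the scalar quantity $\mathcal{Q} := \abs{h}^2 - c\abs{H}^2$ with $c = \tfrac{4}{3n}$, the borderline value $\tfrac{1}{n} + \tfrac{1}{3n}$ permitted in Lemma \ref{l: preservation of pinching}. Under the hypotheses of Main Theorem \ref{mthm: main thm 2} we have $\mathcal{Q}(\cdot,0) \leq 0$ on $\Sigma$ together with $\abs{H}_{\text{min}} > 0$, and short-time existence furnishes a smooth flow on $\Sigma \times [0,t_0)$ for some $t_0 > 0$.

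The first step is to observe that the calculation in the proof of Lemma \ref{l: preservation of pinching} gives more than was needed there. Setting $b := -\mathcal{Q} \geq 0$ and repeating the algebraic substitution $(c - \tfrac{1}{n})\abs{H}^2 = \abs{\ho}^2 + b$ that was used at the first zero of $\mathcal{Q}+a$ in that lemma, the reaction and gradient estimates combine into the pointwise parabolic inequality
\begin{equation*}
\partial_t \mathcal{Q} - \Delta \mathcal{Q} \leq -2\bigl(\abs{\nabla h}^2 - c\abs{\nabla H}^2\bigr) - 3\abs{\ho_-}^4 - 8b\abs{\ho_1}^2 - 12b\abs{\ho_-}^2 - 6b^2 \leq 0,
\end{equation*}
valid at every point where $\mathcal{Q} \leq 0$. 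The weak maximum principle then preserves $\mathcal{Q} \leq 0$ along the flow, and the parabolic strong maximum principle on the closed manifold $\Sigma$ supplies the dichotomy: either $\mathcal{Q}(x,t) < 0$ for every $(x,t) \in \Sigma \times (0,t_0)$, or $\mathcal{Q} \equiv 0$ on $\Sigma \times [0,t_1]$ for some $t_1 > 0$.

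In the first alternative, compactness of $\Sigma$ yields for each sufficiently small $t > 0$ a positive constant $\delta(t)$ with $\max_x \mathcal{Q}(x,t) \leq -\delta(t)$; taking $c = \tfrac{4}{3n}$ and $a = \delta(t)/2$ then satisfies the strict pinching $\abs{h}^2 + a < c\abs{H}^2$ demanded by Lemma \ref{l: preservation of pinching}. The main obstacle is the second alternative. If $\mathcal{Q} \equiv 0$ on a time interval then every nonpositive term on the right of the displayed inequality must vanish identically; since $c = \tfrac{4}{3n} < \tfrac{3}{n+2}$ for $n \geq 2$, this forces $\nabla h \equiv 0$ (so that $H$ is parallel in the normal bundle and $\abs{H}$ is constant) and $\abs{\ho_-}^2 \equiv 0$, meaning the image of $h$ lies pointwise in the one-dimensional span of $H$. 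Erbacher's codimension reduction theorem then places $\Sigma_0$ as a closed hypersurface with parallel second fundamental form inside an $(n+1)$-dimensional affine subspace of $\mathbb{R}^{n+k}$; such a hypersurface is necessarily a round $n$-sphere, which satisfies $\abs{h}^2 = \tfrac{1}{n}\abs{H}^2 \neq \tfrac{4}{3n}\abs{H}^2$, contradicting $\mathcal{Q} \equiv 0$ and excluding this alternative.
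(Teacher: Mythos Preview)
Your proof is correct and follows essentially the same route as the paper's: both derive a parabolic differential inequality for $\mathcal{Q}=|h|^2-c|H|^2$ by carrying out the reaction-term algebra of Lemma~\ref{l: preservation of pinching} with $a$ replaced by the running quantity $b=-\mathcal{Q}$, apply the strong maximum principle, and in the equality case use $\nabla h\equiv 0$ and $|\ho_-|\equiv 0$ together with the codimension-one classification of parallel hypersurfaces to reach a contradiction with compactness. The only cosmetic differences are that the paper drops the (non-positive) $b$-terms before writing down its inequality and disposes of the round-sphere case by assuming it away at the outset, whereas you keep those terms and rule out the sphere directly via $|h|^2=\tfrac{1}{n}|H|^2\neq\tfrac{4}{3n}|H|^2$; both variants are equally valid.
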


\begin{proof}
We assume that $\Sigma_0$ is not a totally umbillic sphere, since in that case the conditions of Lemma \ref{l: preservation of pinching} certainly apply.
Since the solution is smooth, $H$ remains non-zero on a short time interval.  On this interval we can carry out the proof of Lemma \ref{l: preservation of pinching} with $a=0$, yielding 
\[
\frac{\partial}{\partial t}\left(\abs{h}^2-c\abs{H}^2\right)\leq \Delta\left(\abs{h}^2-c\abs{H}^2\right)-2\left(1-\frac{c(n+2)}{3}\right)\abs{\nabla h}^2+\left(3-\frac{2}{n(c-1/n)}\right)\abs{\ho_-}^4.
\]
The coefficients of the last two terms are negative under the assumptions of Main Theorem \ref{mthm: main thm 2}.  By the strong maximum principle, if $\abs{h}^2-c\abs{H}^2$ does not immediately become negative, then $\nabla h\equiv 0$ and $\ho_-\equiv 0$.  The latter implies that $\Sigma_t$ lies in a $(n+1)$-subspace of $\mathbb{R}^{n+k}$, and then $\nabla h = 0$ implies that $\Sigma_t$ is a product $\mathbb{S}^p \times \mathbb{R}^{n-p} \subset \mathbb{R}^{n+k}$ (see Chapter 5), and since $\Sigma_0$ is not a sphere we have $p<n$.  But this is impossible since $\Sigma_t$ is compact.  Therefore for any small $t>0$ there exists $a>0$ such that $\abs{h}^2-c\abs{H}^2\leq -a$ on $\Sigma_t$ and Lemma \ref{l: preservation of pinching} applies.
\end{proof}

\section{Higher derivative estimates and long time existence}\label{s: Higher derivative estimates and long time existence}

Here we consider the long time behaviour of MCF and establish the existence of a solution on a finite maximal time interval determined by the blowup of the second fundamental form.
\begin{thm}\label{t: MCF exists on maximal time interval}
Under the assumptions of Main Theorem \ref{mthm: main thm 2},
	MCF has a unique solution on a finite maximal time interval $0 \leq t < T < \infty$.  Moreover, $\max_{\Sigma_t} \abs{h}^2 \rightarrow \infty$ as $t \rightarrow T$.
\end{thm}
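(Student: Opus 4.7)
The plan is as follows. Short-time existence and uniqueness on an interval $[0, \epsilon)$ are already provided by Main Theorem~\ref{mthm: main thm 1} via the DeTurck coupling constructed in the previous chapter, so let $T \in (0, \infty]$ denote the supremum of times on which a smooth solution exists. To show $T < \infty$, I would follow Huisken and track $|F|^2$. Since $F$ regarded as a map into $\mathbb{R}^{n+k}$ satisfies $\Delta_g F = H$ and $g^{ij}\langle F_*\partial_i, F_*\partial_j\rangle = n$, a direct computation gives
\[
(\partial_t - \Delta)|F|^2 = 2\langle F, H\rangle - 2n - 2\langle F, H\rangle = -2n,
\]
so the parabolic maximum principle yields $\max_{\Sigma_t}|F|^2 \leq \max_{\Sigma_0}|F|^2 - 2nt$. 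Non-negativity of $|F|^2$ then forces $T \leq \max_{\Sigma_0}|F|^2/(2n) < \infty$.

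For the characterisation of $T$, I would argue by contradiction: assume $\sup_{\Sigma \times [0, T)}|h|^2 \leq C_0 < \infty$ and derive Bando--Shi-type bounds $\sup_{\Sigma_t}|\nabla^m h|^2 \leq C_m$ for every $m \geq 0$ by induction on $m$. Covariantly differentiating the reaction--diffusion equation \eqref{eq:reactdiffusEuclidean} and repeatedly commuting derivatives via the Simons-type identity of Proposition~\ref{prop:Simons} produces an evolution inequality of the schematic form
\[
(\partial_t - \Delta)|\nabla^m h|^2 \leq -2|\nabla^{m+1} h|^2 + C\!\!\sum_{p+q+r = m}\!\!|\nabla^p h|\,|\nabla^q h|\,|\nabla^r h|\,|\nabla^m h|.
\]
Combining the estimate at level $m$ with that at level $m-1$ in a linear combination $A|\nabla^{m-1}h|^2 + |\nabla^m h|^2$ with $A$ chosen sufficiently large, and using Peter--Paul absorption together with the inductive bound on $|\nabla^{j}h|$ for $j < m$, allows the positive zeroth-order terms to be swallowed by the good gradient term $-2|\nabla^m h|^2$ coming from the previous level. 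The maximum principle then gives the desired $C_m$ bound.

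With uniform control on every $|\nabla^m h|$ on $[0, T)$, the evolution equation $\partial_t F = H$ yields uniform smoothness of $F(\cdot, t)$ in $t$, and the identity $\partial_t g = -2 H\cdot h$ shows that the induced metrics $g(t)$ stay uniformly equivalent. An Arzel\`a--Ascoli argument then furnishes a smooth limit immersion $F(\cdot, T)$; applying Main Theorem~\ref{mthm: main thm 1} to this smooth initial datum extends the flow smoothly to $[0, T+\delta)$, contradicting the maximality of $T$. Hence $|h|^2$ cannot remain bounded, which gives $\max_{\Sigma_t}|h|^2 \to \infty$ as $t \to T$.

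The main obstacle is the inductive step in the higher-derivative estimates. In arbitrary codimension the commutators produced by moving $\nabla^m$ past $\Delta$ involve not only the intrinsic curvature $R$ but also the normal-bundle curvature $\Rp$, and through the Gauss and Ricci equations \eqref{eq:spatialGauss}, \eqref{eq:Ricci} these bring in further copies of $h$ contracted through both $g$ and $\gp$. What makes the schematic form above legitimate — and hence makes the Peter--Paul absorption close — is precisely the formalism developed in Chapter~\ref{ch: submanifold geometry in high codimension}: the spacetime connections $\nabla$ and $\nablap$ satisfy $\nabla g = \nabla\gp = 0$ and $\nabla_{\partial_t}g = \nabla_{\partial_t}\gp = 0$, so no unwanted derivatives of the evolving bundle metrics appear when one differentiates, and all correction terms remain polynomial expressions in $h$ and its covariant derivatives.
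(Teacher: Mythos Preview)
Your proposal is correct and follows essentially the same strategy as the paper: the $|F|^2$ argument for $T<\infty$, the Bernstein/Shi-type induction for higher derivative bounds (the paper uses the time-weighted variant $G = t^m|\nabla^m h|^2 + mt^{m-1}|\nabla^{m-1}h|^2$ of Proposition~\ref{prop: interior-in-time higher derivative estimates} to get interior-in-time estimates, but your unweighted combination works equally well here since the initial data is smooth), and extension past $T$ via short-time existence. The one place where the paper is more explicit than your sketch is the passage from bounds on $|\nabla^m h|_g$ to $C^\infty$ convergence of $F$: rather than invoking Arzel\`a--Ascoli directly, it fixes a reference metric $\tilde g$ and connection $\tilde\nabla$, proves $g$ stays uniformly comparable to $\tilde g$, and then shows by an induction (equations~\eqref{eq:kth.deriv.embedding}--\eqref{eq:evol.kth.deriv.embedding}) that each $\tilde\nabla^k F$ is bounded and converges as $t\to T$ --- this step is worth spelling out, since your covariant derivative bounds are taken with respect to the \emph{evolving} connection.
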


As a first step we observe that the maximal time of existence is finite.  This follows easily from the equation for the position vector $F$: $\frac{\partial}{\partial t}\abs{F}^2 = \Delta\abs{F}^2-2n$.  The maximum principle implies $\abs{F(p,t)}^2\leq R^2-2nt$ and thus $T\leq \frac{R^2}{2n}$, where $R=\sup\{\abs{F_0(p)}:\ p\in\Sigma\}$.

Next want to prove interior-in-time higher derivative estimates for the second fundamental form.  We use Hamilton's $*$ notation: For tensors $S$ and $T$ (that is, sections of bundles constructed from ${\mathcal H}$ and ${\mathcal N}$ by taking duals and tensor products) the product $S*T$ denotes any linear combination of contractions of $S$ with $T$.

	\begin{prop}\label{p: commute time and kth covariant derivative of h}
		The evolution of the $m$-th covariant derivative of $h$ is of the form
		\begin{equation*}
			\nabla_t\nabla^m h = \Delta\nabla^m h + \sum_{i+j+k=m} \nabla^i h \ast \nabla^j h \ast \nabla^k h.
		\end{equation*}
	\end{prop}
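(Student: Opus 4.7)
The plan is to argue by induction on $m$. The base case $m=0$ is precisely the evolution equation \eqref{eq:reactdiffusEuclidean} derived earlier, which reads $\nabla_{\partial_t} h = \Delta h + h*h*h$; the cubic reaction terms match the $i=j=k=0$ entry of the claimed sum. For the inductive step, assuming the formula at order $m-1$, I would apply $\nabla$ to both sides and use the Leibniz rule on the right-hand side, yielding
\begin{equation*}
\nabla\nabla_{\partial_t}\nabla^{m-1}h = \nabla\Delta\nabla^{m-1}h + \sum_{i+j+k=m}\nabla^i h*\nabla^j h*\nabla^k h.
\end{equation*}
It then remains to commute $\nabla$ past $\nabla_{\partial_t}$ on the left and past $\Delta$ on the right, showing that each commutator contributes only terms already of the allowed schematic form.

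The commutator $[\nabla,\Delta]$ acting on any tensor built from $\mathcal H,\mathcal H^*,\mathcal N,\mathcal N^*$ is expressed, via the formula \eqref{eq:curv_on_tensor}, as a contraction of the spatial curvatures $R$ and $\Rp$ with a spatial derivative of the argument. In the Euclidean ambient setting, the Gauss and Ricci identities \eqref{eq:spatialGauss2} and \eqref{eq:Ricci} give $R=h*h$ and $\Rp=h*h$. Hence $[\nabla,\Delta]\nabla^{m-1}h$ produces terms of schematic type $h*h*\nabla^{m-1}h$, which satisfy $i+j+k=m$ and so are absorbed into the sum.

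The crucial commutator is $[\nabla_{\partial_t},\nabla]$. Applied to a tensor section, it is again given by \eqref{eq:curv_on_tensor} but with the curvatures $R$ and $\Rp$ evaluated on the pair $(\partial_t,\cdot)$. For the mean curvature flow we have $\pi F_*\partial_t=0$ and $\pip F_*\partial_t=H$, so the timelike Gauss identity \eqref{eq:timelikeGauss} and timelike Ricci identity \eqref{eq:timelikeRicci}, specialised to the flat ambient $\mathbb{R}^{n+k}$, reduce to expressions in $\nablap H$ and $h$ of the schematic type $h*\nablap H$. Since $\nablap H = \mathrm{tr}_g\,\nabla h$ by the Codazzi identity \eqref{eq:spatialCodazzi} in Euclidean background, these curvatures are of the form $h*\nabla h$, and so $[\nabla_{\partial_t},\nabla]\nabla^{m-1}h$ yields terms of the form $h*\nabla h*\nabla^{m-1}h$, which again fit into $\sum_{i+j+k=m}\nabla^ih*\nabla^jh*\nabla^kh$.

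The main obstacle is the bookkeeping: every commutator must be shown to introduce exactly one extra factor of $h$ (or a spatial derivative thereof) and no unaccounted objects such as the ambient curvature or derivatives of the metric. Here the special connections constructed earlier in Chapter~\ref{ch: submanifold geometry in high codimension} pay off: because $\nabla g=\nabla\gp=0$ and, crucially, $\nabla_{\partial_t}g=0$ (the built-in Uhlenbeck trick), the metrics can be pushed through all derivatives freely, and the only non-tensorial data entering the commutators are the curvatures already identified above. Once this is in place the induction closes and we obtain $\nabla_{\partial_t}\nabla^m h = \Delta\nabla^m h + \sum_{i+j+k=m}\nabla^i h*\nabla^j h*\nabla^k h$.
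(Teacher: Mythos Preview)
Your argument is essentially the paper's: induction on $m$, commuting $\nabla_{\partial_t}$ with $\nabla$ via the timelike Gauss and Ricci identities \eqref{eq:timelikeGauss}, \eqref{eq:timelikeRicci} (which in the flat background are of the form $h*\nabla h$), and commuting $\nabla$ with $\Delta$ via the spatial ones. One slip to correct: the commutator $[\nabla,\Delta]\nabla^{m-1}h$ is \emph{not} of the schematic form $h*h*\nabla^{m-1}h$ (that would give $i+j+k=m-1$, contradicting your own claim); the commutator formula actually produces $\nabla R*\nabla^{m-1}h + R*\nabla^m h$, i.e.\ terms of type $h*\nabla h*\nabla^{m-1}h$ and $h*h*\nabla^m h$, both of which do satisfy $i+j+k=m$, so the induction closes exactly as you intend.
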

	
	\begin{proof} 
We argue by induction on $m$. The case $m=0$ is given by the evolution equation for the second fundamental form.  Now suppose that the result holds up to $m-1$.  Differentiating the $m$-th covariant derivative of $h$ in time and using the timelike Gauss and Ricci equations to interchange derivatives we find
	\begin{align*}
		\nabla_t\nabla^m h &= \nabla\nabla_t\nabla^{m-1}h + \nabla^{m-1}h \ast h \ast \nabla h \\
		&=\nabla\big(\Delta\nabla^{m-1}h + \sum_{i+j+k=m-1} \nabla^p h \ast \nabla^q h \ast \nabla^r h\big) + \nabla^{m-1}h \ast h \ast \nabla h \\
		&= \nabla\Delta\nabla^{m-1}h + \sum_{i+j+k=m}  \nabla^i h \ast \nabla^j h \ast \nabla^k h.
	\end{align*}
The formula for commuting the Laplacian and gradient of a normal-valued tensor is given by:
	\begin{align*}
		\Delta\nabla_k T= \nabla_k\Delta T + \nabla_m\big(R(\p_k,\p_m)T\big) + \left(\big(R(\p_k,\p_m)(\nabla T\big)\right)(\partial_m).
	\end{align*}
Since $T$ and $\nabla T$ are ${\mathcal N}$-valued tensors acting on ${\mathcal H}$, equation \eqref{eq:curv_on_tensor} gives expressions for $R(\p_k,\p_m)T$ as $R\ast T+\Rp\ast T$, and similarly
$R(\p_k,\p_m)\nabla T = R\ast \nabla T+\Rp\ast \nabla T$, where $R$ and $\Rp$ are the curvature tensors on ${\mathcal H}$ and ${\mathcal N}$, which are both of the form $h\ast h$.
The terms arising in commuting the gradient and Laplacian of $\nabla^{m-1} h$ are of the form $\sum_{i+j+k=m}  \nabla^i h \ast \nabla^j h \ast \nabla^k h$, so we obtain
	\begin{equation*}
		\nabla_t\nabla^m h = \Delta\nabla^m h +  \sum_{i+j+k=m}  \nabla^i h \ast \nabla^j h \ast \nabla^k h
	\end{equation*}
as required.
	\end{proof}

	\begin{prop}\label{p: evol length of the second fundamental form squared}
	The evolution of $\abs{\nabla^mh}^2$ is of the form
		\begin{equation*}
			\frac{\p}{\p t}\abs{\nabla^m h}^2 = \Delta\abs{\nabla^m h}^2 - 2\abs{\nabla^{m+1} h}^2 + \sum_{i+j+k=m} \nabla^i h \ast \nabla^j h \ast \nabla^k h \ast \nabla^m h.
		\end{equation*}
	\end{prop}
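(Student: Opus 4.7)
The plan is to deduce the evolution equation directly from the previous proposition together with the metric-compatibility properties of the connections built into the construction. The key observation is that $|\nabla^m h|^2 = \langle \nabla^m h, \nabla^m h\rangle$, where the inner product is taken with respect to the induced metrics on ${\mathcal H}^*$ and ${\mathcal N}$, and these metrics are parallel with respect to $\nabla$ in \emph{both} spatial and temporal directions (this was already noted as an automatic built-in ``Uhlenbeck trick'' of the construction: $\nabla g = 0$, $\nabla g^\perp = 0$, and in particular $\nabla_{\partial_t} g = 0$).

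First I would write
\begin{equation*}
\frac{\partial}{\partial t}|\nabla^m h|^2 = 2\langle \nabla_{\partial_t}\nabla^m h,\nabla^m h\rangle,
\end{equation*}
using metric compatibility in the $\partial_t$ direction. Substituting the formula from the previous proposition yields
\begin{equation*}
\frac{\partial}{\partial t}|\nabla^m h|^2 = 2\langle \Delta \nabla^m h,\nabla^m h\rangle + \sum_{i+j+k=m}\nabla^i h \ast \nabla^j h \ast \nabla^k h \ast \nabla^m h,
\end{equation*}
since pairing a tensor of the schematic form $\nabla^i h\ast\nabla^j h\ast\nabla^k h$ against $\nabla^m h$ is a contraction and hence just another $\ast$-product.

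Finally I would invoke the standard Bochner-type identity
\begin{equation*}
\Delta |\nabla^m h|^2 = 2\langle \Delta \nabla^m h,\nabla^m h\rangle + 2|\nabla^{m+1} h|^2,
\end{equation*}
which again uses only that $\nabla$ is metric-compatible with respect to the spatial metrics on ${\mathcal H}$ and ${\mathcal N}$. Rearranging and substituting produces exactly the asserted evolution equation. There is no real obstacle here: the content of the proposition has already been packaged into the previous proposition and into the metric compatibility of the connections; the present statement is a formal consequence. The only thing to be careful about is that the Bochner identity is applied to a tensor with values in a bundle (namely $({\mathcal H}^*)^{\otimes(m+2)}\otimes{\mathcal N}$), but this is legitimate precisely because the induced connection on this bundle is compatible with the induced metric, as established earlier.
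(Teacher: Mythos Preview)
Your proof is correct and follows essentially the same approach as the paper: differentiate the squared norm using metric compatibility of the connection in the time direction, substitute the evolution equation for $\nabla^m h$ from the previous proposition, and convert $2\langle \Delta\nabla^m h,\nabla^m h\rangle$ to $\Delta|\nabla^m h|^2 - 2|\nabla^{m+1}h|^2$ via the Bochner-type identity. The paper presents this as a single four-line computation, noting (as you do) that the inner product on $\otimes^{m+2}{\mathcal H}^*\otimes{\mathcal N}$ is compatible with the connection.
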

	
	\begin{proof}
	Denoting by angle brackets the inner product on $\otimes^{m+2}{\mathcal H}^*\otimes{\mathcal N}$, which is compatible with the connection on the same bundle, we have
		\begin{align*}
			\frac{\p}{\p t}\abs{\nabla^m h}^2	 &= \frac{\p}{\p t}\left\langle\nabla^m_p h, \nabla^m_p h \right\rangle \\
			&= 2 \big\langle \nabla^m_p h, \nabla_t\nabla^m_p h \big\rangle \\
			&= 2\big\langle \nabla^m_p h, \Delta\nabla^m_p h + \sum_{i+j+k=m} \nabla^i h \ast \nabla^j h \ast \nabla^k h \big\rangle \\
					&= \Delta\abs{\nabla^m h}^2 - 2\abs{\nabla^{m+1} h}^2 + \sum_{i+j+k=m} \nabla^i h \ast \nabla^j h \ast \nabla^k h * \nabla^m h
		\end{align*}
		as required.
	\end{proof}

\begin{prop}\label{prop: interior-in-time higher derivative estimates}
		Suppose that mean curvature flow of a given submanifold $\Sigma_0$ has a solution on a time interval $t \in [0, \tau]$. If $\abs{h}^2 \leq K$ for all $t \in [0, \tau]$, then $\abs{ \nabla^m h }^2 \leq C_m\left(1+1/t^m\right)$ for all $t \in (0, \tau]$, where $C_m$ is a constant that depends on $m$, $n$ and $K$.
	\end{prop}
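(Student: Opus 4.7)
The plan is to proceed by induction on $m$, using the evolution equation for $\abs{\nabla^m h}^2$ from Proposition \ref{p: evol length of the second fundamental form squared} together with a carefully weighted auxiliary function to which the parabolic maximum principle applies. The case $m=0$ is the hypothesis, so I assume the stated estimate holds for $\abs{\nabla^j h}^2$ with $0 \le j \le m-1$. For the inductive step I would introduce
\[
S_m := \sum_{j=0}^{m} \alpha_j\, t^j \abs{\nabla^j h}^2,
\]
with positive constants $\alpha_j$ chosen so that $2\alpha_{j-1} \ge j\alpha_j$ for $1 \le j \le m$ (for instance $\alpha_j = 1/j!$).

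Differentiating term by term and reindexing the $\abs{\nabla^{j+1}h}^2$ terms yields
\[
\partial_t S_m \le \Delta S_m + \sum_{j=1}^{m} \frac{j\alpha_j - 2\alpha_{j-1}}{t}\,t^j\abs{\nabla^j h}^2 - 2\alpha_m t^m\abs{\nabla^{m+1} h}^2 + \sum_{j=0}^{m} \alpha_j t^j R_j,
\]
where $R_j = \sum_{i+k+l=j} \nabla^i h \ast \nabla^k h \ast \nabla^l h \ast \nabla^j h$. By the choice of $\alpha_j$ the middle sum is non-positive and the gradient term is also non-positive, so the real task is to bound $\sum_j \alpha_j t^j R_j$ by $C_1 S_m + C_2$. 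For $j<m$ every factor in $t^j R_j$ involves only $\nabla^i h$ with $i \le j \le m-1$, and the inductive hypothesis makes this contribution uniformly bounded on $[0,\tau]$. For $j=m$, each summand of $t^m R_m$ can be written as $t^{i/2}\abs{\nabla^i h}\cdot t^{k/2}\abs{\nabla^k h}\cdot t^{l/2}\abs{\nabla^l h}\cdot t^{m/2}\abs{\nabla^m h}$ with $i+k+l=m$; if one of the indices equals $m$ the term is controlled by $K\, t^m\abs{\nabla^m h}^2 \le K\, S_m/\alpha_m$, while otherwise all three indices are strictly less than $m$ and Cauchy--Schwarz combined with the inductive bounds yields a bound of the form $\varepsilon\, t^m\abs{\nabla^m h}^2 + C(\varepsilon)$.

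Collecting these estimates gives $\partial_t S_m \le \Delta S_m + C_1 S_m + C_2$ with constants depending only on $m$, $n$, $K$ and the inductive constants $C_0,\dots,C_{m-1}$. The parabolic maximum principle applied on $\Sigma \times [0,\tau]$, with initial value $S_m(\cdot,0) = \alpha_0\abs{h_0}^2 \le \alpha_0 K$, then produces a uniform bound on $S_m$, hence on $t^m\abs{\nabla^m h}^2$, which is equivalent to the desired estimate $\abs{\nabla^m h}^2 \le C_m(1+1/t^m)$. The principal obstacle will be the careful bookkeeping of the reaction terms $R_m$ to verify that, after weighting by $t^m$, every trilinear term distributes its powers of $t$ correctly against the $\nabla^i h$ factors; once the scaling correspondence $t^j \leftrightarrow \nabla^j h$ is respected, the Cauchy--Schwarz and induction steps become routine.
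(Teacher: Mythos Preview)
Your strategy is close to the paper's, but as written there is a genuine gap: the constant you produce depends on $\tau$, whereas the proposition requires $C_m$ to depend only on $m$, $n$, and $K$. The problem is that you discard the negative term $(m\alpha_m-2\alpha_{m-1})t^{m-1}\abs{\nabla^m h}^2$ and instead bound the top-order reaction contribution $\abs{h}^2\,t^m\abs{\nabla^m h}^2$ by $(K/\alpha_m)S_m$. This leaves you with $\partial_t S_m\le\Delta S_m+C_1 S_m+C_2$, and the maximum principle then yields only $S_m(t)\lesssim e^{C_1 t}$, a bound that blows up with $\tau$. A related issue: your claim that $t^jR_j$ for $j<m$ is ``uniformly bounded on $[0,\tau]$'' relies on the inductive estimate $t^j\abs{\nabla^j h}^2\le C_j(t^j+1)$, which is again only uniform on bounded time intervals, so your $C_2$ also depends on $\tau$.

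The paper avoids this by \emph{keeping} the good gradient term at the top level and working only on a short interval whose length depends on $K$ alone. With the simpler two-term function $G=t^m\abs{\nabla^m h}^2+m\,t^{m-1}\abs{\nabla^{m-1}h}^2$, the net coefficient of $\abs{\nabla^m h}^2$ is $(c_3 t-m)t^{m-1}$, which is non-positive for $t\le m/c_3$; restricting also to $t\le 1$ makes the lower-order reaction terms genuinely bounded by the inductive hypothesis, and one obtains $\partial_t G\le\Delta G + c$ with no $G$ on the right. The maximum principle then gives a $\tau$-independent bound on $G$, hence on $t^m\abs{\nabla^m h}^2$, for $t\in(0,T_0]$ with $T_0=\min\{1,m/c_3\}$. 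For $t>T_0$ one simply translates in time and reapplies the same argument on $[t-T_0,t]$, obtaining a uniform bound on $\abs{\nabla^m h}^2$ itself. Your $S_m$ would work too once you retain the absorption at level $m$ and add this time-translation step; the restart trick is exactly the missing ingredient.
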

The strength of this estimate is that assuming only a bound on the second fundamental form (and no information about its derivatives) we can bound all higher derivatives.  The fact that these estimates
blow up as $t$ approaches zero poses no difficulty, since the short time existence result bounds all derivatives of $h$ for a short time.  While not crucial here, the interior-in-time estimates are useful in singularity analysis.	
\begin{proof}
		The proof is by induction on $m$.  We first prove the Lemma for $m=1$.  We consider the quantity $G = t\abs{ \nabla h }^2 + \abs{ h }^2$, which has a bound at $t=0$ depending only on curvature.  The strategy is now to use the good term from the evolution of $\abs{ h }^2$ to control the bad term in the evolution of $\abs{ \nabla h }^2$:  Differentiating $G$ we get
	\begin{align*}
		\frac{\p G}{\p t} &= \abs{ \nabla h }^2 + t \big( \Delta\abs{ \nabla h }^2 - 2\abs{ \nabla^2 h }^2 + h*h*\nabla h * \nabla h \big) \\
			&\qquad + \big( \Delta\abs{h}^2 - 2\abs{ \nabla h }^2 + h * h * h * h \big) \\
		&\leq \Delta G + (c_1t\abs{h}^2 - 1)\abs{ \nabla h }^2 + c_2\abs{h}^4.
	\end{align*}
For $t\leq 1/(c_1K)$ we can estimate
	\begin{equation*}
		\frac{\p}{\p t}G \leq \Delta G + c_2 K^2,
	\end{equation*}
and the maximum principle implies $\max_{x,t} G \leq K+c_2K^2t$.  Then $\abs{ \nabla h }^2 \leq G/t \leq K/t+c_2K^2$ for $t \in (0, 1/(c_1K)]$.  If $t>1/(c_1K)$ we apply the same argument on the interval $[t-1/(c_1K),t]$, yielding $\abs{\nabla h}^2 (t)\leq (c_1+c_2)K^2$.
This completes the proof for $m = 1$.  Now suppose the estimate holds up to $m-1$, and consider $G = t^m\abs{ \nabla^m h }^2 + mt^{m-1}\abs{ \nabla^{m-1} h }^2$.  Differentiating $G$ gives
	\begin{align*}
		\frac{\p}{\p t}G &= mt^{m-1}\abs{ \nabla^m h }^2 + t^m \Big\{ \Delta\abs{ \nabla^m h }^2 - 2\abs{ \nabla^{m+1} h }^2 + \sum_{i+j+k=m} \nabla^i h \ast \nabla^j h \ast \nabla^k h * \nabla^m h \Big\} \\
		&\qquad + m\Big\{(m-1)t^{m-2}\abs{ \nabla^{m-1} h }^2 + t^{m-1}\big( \Delta\abs{ \nabla^{m-1} h }^2 - 2\abs{ \nabla^m h }^2 \\
		&\qquad + \sum_{i+j+k=m-1} \nabla^i h \ast \nabla^j h \ast \nabla^k h * \nabla^{m-1} h \big).\Big\}
		\end{align*}
Noticing that in the quartic reaction terms there can only be one or two occurences of the highest order derivative, using Young's inequality we can estimate
	\begin{equation*}
		\begin{split}
		\frac{\p}{\p t}G &\leq mt^{m-1}\abs{ \nabla^m h }^2 + t^m \Big\{ \Delta\abs{ \nabla^m h }^2 + c_3\abs{ \nabla^m h }^2 + \frac{c_4}{t^m} \Big\} \\
		&\qquad + m\Big\{ (m-1)t^{m-2}\abs{ \nabla^{m-1} h }^2 + t^{m-1}\big( \Delta\abs{ \nabla^{m-1} h }^2 - 2\abs{ \nabla^{m} h }^2 + c_5\abs{ \nabla^{m-1} h }^2 + \frac{c_6}{t^{m-1}} \big) \Big\}.
		\end{split}
	\end{equation*}
We split the gradient term of order $m$ out of the second line, and then since $m$ is at least two, all other terms are bounded by the induction hypothesis for $t\leq 1$, giving
	\begin{equation*}
			\frac{\p}{\p t}G \leq \Delta G +  (c_3t - m)t^{m-1}\abs{ \nabla^m h }^2 + c_7.
	\end{equation*}
Thus $\frac{\p}{\p t}G \leq \Delta G + c_{8}$ if $t\leq \min\{1,m/c_3\}$, so by the maximum principle $\abs{ \nabla^m h }^2 \leq C/t^m$ for $t\leq \min\{1,m/c_3\}$. The same argument on later time intervals gives the result for larger $t$.
	\end{proof}

\begin{proof}[Proof of Theorem \ref{t: MCF exists on maximal time interval}]
Fix a smooth metric $\tilde g$ on $\Sigma$ with Levi-Civita connection $\tilde\nabla$.  $\tilde g$ extends to a time-independent metric on ${\mathcal H}$, and $\tilde\nabla$ extends to ${\mathcal H}$ by taking $\tilde\nabla_{\partial_t}u=0$ whenever $[\partial_t,u]=0$.  The difference $T=\nabla-\tilde\nabla$ restricts to a section of ${\mathcal H}^*\otimes{\mathcal H}^*\otimes{\mathcal H}$.
If $S$ is a section of a bundle constructed from ${\mathcal H}$, ${\mathcal N}$ and $F^*TN$, $\tilde\nabla S$ denotes the derivative of $S$ with the connection on this bundle induced by the connections $\tilde\nabla$ on ${\mathcal H}$, $\nablap$ on ${\mathcal N}$, and ${}^F\nabla$ on $F^*TN$, so that $\tilde\nabla S-\nabla S=S\ast T$.

To prove Theorem \ref{t: MCF exists on maximal time interval} we assume that $\abs{h}$ remains bounded on the interval $[0,T)$, and derive a contradiction.  This suffices to prove the Theorem, since if $\abs{h}$ is bounded on any subsequence of times approaching $T$, then Equation \eqref{eq:evol_h2} implies that $\abs{h}$ is bounded on $\Sigma\times[0,T)$.  Under this assumption
the boundedness of $\tilde\nabla_tg=-2H\cdot h$ implies that the metric $g$ remains comparable to $\tilde g$:  We have
for any non-zero vector $v\in T\Sigma$
\begin{equation*}
\left|\frac{\partial}{\partial t}\left(\frac{g(v,v)}{\tilde g(v,v)}\right)\right| = \left|\frac{\tilde\nabla_tg(v,v)}{g(v,v)}\,\frac{g(v,v)}{\tilde g(v,v)}\right|\leq 2\abs{H}\abs{h}_g \frac{g(v,v)}{\tilde g(v,v)},
\end{equation*}
so that the ratio of lengths is controlled above and below by exponential functions of time, and hence since the time interval is bounded, there exists a positive constant $c_9$ such that
\begin{equation}\label{eq:g.comparable.to.tildeg}
\frac{1}{c_9}\tilde g \leq g\leq c_9 \tilde g.
\end{equation}

Next we observe that covariant derivatives of all orders of $F$ with respect to $\tilde\nabla$ can be expressed in terms of $h$ and $T$ and their derivatives:  We prove by induction that
\begin{align}
\tilde\nabla^kF &= F_*\tilde\nabla^{k-2}T+F_*\left(\sum_{i_0+2i_1+\dots+(k-2)i_{k-3}=k-1}
T^{i_0}\ast\left(\tilde\nabla T\right)^{i_1}\ast\dots\ast\left(\tilde\nabla^{k-3}T\right)^{i_{k-3}}\right)\label{eq:kth.deriv.embedding}\\
&\quad\null+(\iota+F_*)\ast\sum_{j=1}^{k-1}\left(\sum_{\sum (n+1)i_n=k-1-j}\prod_{n=0}^{k-2-j}
\left(\tilde\nabla^nT\right)^{i_n}\right)\ast
\left(\sum_{\sum (m+1)p_m=j}\,\,\prod_{m=0}^{j-1}\left(\nabla^{m}h\right)^{p_m}\right).\notag
\end{align}
This is true for $k=2$, since 
\begin{equation}\label{eq:tildenablaF*}
\tilde\nabla_{u,v}^2F = {}^F\nabla_u (F_*v)-F_*(\tilde\nabla_uv) = 
F_*(\nabla_uv-\tilde\nabla_uv)+\iota h_{u,v}=F_*T_{u,v}+\iota h_{u,v}.
\end{equation}
To deduce the result for higher $k$ by induction, we note that equation \eqref{eq:tildenablaF*} implies a formula for the derivative of $F_*$:
$$
(\tilde\nabla F_*)(V) = F_*T(.,V)+\iota h(.,V) = F_* T\ast V + \iota h\ast V,
$$
while equation \eqref{eq:nabla.iota} gives
$$
(\tilde\nabla\iota)(\xi) = -F_*{\mathcal W}(.,\xi) = F_* h\ast\xi.
$$
The result for $k+1$ now follows by differentiating the expression \eqref{eq:kth.deriv.embedding}, and writing
$\tilde\nabla(\nabla^nh) = \nabla^{n+1}h+\nabla^nh\ast T$.   It follows that if $|\tilde\nabla^jF|_{\tilde g}$ is bounded for $j=1,\dots,k-1$, then
\begin{equation}\label{eq:bound.for.T}
|\tilde\nabla^{k-2}T|_{\tilde g}\leq C\left(1+|\tilde\nabla^kF|_{\tilde g}\right).
\end{equation}

The above observations allow us to prove $C^k$ convergence of $F$ as $t\to T$ for every $k$:  We have $\tilde\nabla_tF = \iota H$, so the boundedness of $H$ implies that $F$ remains bounded and converges uniformly as $t\to T$.  Differentiating as above, we find by induction that
\begin{equation}\label{eq:evol.kth.deriv.embedding}
\tilde\nabla_t\tilde\nabla^kF = (F_*+\iota)\ast\sum_{j=0}^{k-1}\left(
	\sum_{\sum(n+1)i_n=k-1-j}\prod_{n=0}^{k-2-j}\left(\tilde\nabla^nT\right)^{i_n}\right)\ast\left(
	\sum_{\sum (m+1)p_m=j+2}\,\,\prod_{m=0}^{j+1}\left(\nabla^mh\right)^{p_m}\right).
\end{equation}
Suppose we have established a bound on $|\tilde\nabla^j F|_{\tilde g}$ for $j\leq k-1$.  Then using the estimate \eqref{eq:bound.for.T}, the bounds on $|\nabla^nh|_g$ from Lemma \ref{prop: interior-in-time higher derivative estimates}, and the comparability of $g$ and $\tilde g$ from \eqref{eq:g.comparable.to.tildeg} we can estimate
$$
|\tilde\nabla_t\tilde\nabla^kF|_{\tilde g}\leq C\left(1+|\tilde\nabla^{k-2}T|_{\tilde g}\right)
\leq C\left(1+|\tilde\nabla^kF|_{\tilde g}\right),
$$
so that $|\tilde\nabla^kF|_{\tilde g}$ remains bounded, and $\tilde\nabla^kF$ converges uniformly as $t\to T$.    This completes the induction, proving that $F(.,t)$ converges in $C^\infty$ to a limit $F(.,T)$ which is an immersion.  

Finally, applying the short time existence result with initial data $F(.,T)$, we deduce that the solution can be continued to a larger time interval, contradicting the maximality of $T$.  This completes the proof of Theorem \ref{t: MCF exists on maximal time interval}.
\end{proof}

\section{A pinching estimate for the traceless second fundamental form}\label{s: A pinching estimate for the traceless second fundamental form}

In this section we show that the pinching actually improves along the flow.  This is the key estimate that will imply that the submanifold is evolving to a ``round" point.

\begin{thm}\label{t: pinch trace 2ff}
Under the assumptions of Main Theorem \ref{mthm: main thm 2}
there exist constants $C_0 < \infty$ and $\delta > 0$ both depending only on $\Sigma_0$ such that for all time $t \in [0, T)$ we have the estimate
	\begin{equation}
		\abs{\ho}^2 \leq C_0\abs{H}^{2-\delta}. \label{eqn: pinch trace 2ff}
	\end{equation}
\end{thm}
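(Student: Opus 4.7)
The plan is to adapt Huisken's hypersurface strategy from \cite{gH84} to this higher codimension setting. For a small parameter $\sigma > 0$ to be chosen, introduce the pinching function
\[
f_\sigma := \frac{\abs{\ho}^2}{\abs{H}^{2-\sigma}},
\]
which is well defined on $\Sigma \times [0,T)$: by the Pinching Lemma \ref{l: preservation of pinching} and Proposition \ref{prop:strongMP} there exist constants $a>0$ and $c \leq 1/n + 1/(3n)$ with $\abs{h}^2 + a \leq c\abs{H}^2$ along the flow, and in particular $\abs{H}^2 \geq a/c$ uniformly. Proving \eqref{eqn: pinch trace 2ff} is equivalent to the bound $\sup_{\Sigma\times[0,T)} f_\sigma \leq C_0$ for some $\sigma > 0$, with $\delta = \sigma$.

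The first step is to differentiate $f_\sigma$, using the evolution equations for $\abs{\ho}^2$ and $\abs{H}^2$ from the previous section, to obtain a reaction-diffusion equation of the schematic form
\[
\partial_t f_\sigma = \Delta f_\sigma + \frac{2(1-\sigma)}{\abs{H}^2}\bigl\langle \nabla \abs{H}^2, \nabla f_\sigma\bigr\rangle - \frac{2}{\abs{H}^{2-\sigma}}\Bigl(\abs{\nabla h}^2 - \tfrac{1}{n}\abs{\nabla H}^2\Bigr) + \sigma(1-\sigma)\,f_\sigma\,\frac{\abs{\nabla\abs{H}^2}^2}{\abs{H}^4} + \frac{2}{\abs{H}^{2-\sigma}}\Bigl(R_1 - \tfrac{R_2}{n}\Bigr).
\]
Inspecting the reaction, the algebraic manipulations used in the proof of the Pinching Lemma (now keeping every term rather than cancelling against $-cR_2$) yield $R_1 - R_2/n \leq C\abs{H}^2\abs{\ho}^2$, so after division by $\abs{H}^{2-\sigma}$ this contribution scales like $\abs{H}^2 f_\sigma$. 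Since $\abs{H}$ need not be bounded above, this term cannot be controlled pointwise at a maximum of $f_\sigma$, and the scalar maximum principle alone is inadequate.

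I will therefore set up an $L^p$ estimate: multiply the evolution by $pf_\sigma^{p-1}$ and integrate over $\Sigma_t$. Integration by parts converts the Laplacian and drift into negative gradient integrals of the form $-p(p-1)\int f_\sigma^{p-2}\abs{\nabla f_\sigma}^2\,d\mu$ and $-cp\int f_\sigma^{p-1}\bigl(\abs{\nabla h}^2-\tfrac{1}{n}\abs{\nabla H}^2\bigr)/\abs{H}^{2-\sigma}\,d\mu$, while the bad reaction and the $\sigma$-gradient contribute at most $C\sigma p\int f_\sigma^p\abs{H}^2\,d\mu$. The central analytic step, and the main obstacle, is a Kato-type absorption inequality showing that for $\sigma$ sufficiently small the bad term is dominated by the two good gradient integrals with constants independent of $p$. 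This uses the refined gradient pinching \eqref{eqn: basic grad est 2} (which leaves strictly positive room since $c < 3/(n+2)$ under the hypotheses of Main Theorem \ref{mthm: main thm 2}), the Codazzi identity to relate $\nabla H$ and $\nabla h$, and the strict pinching to convert spare factors of $\abs{H}^2 f_\sigma$ into $\abs{\nabla h}^2/\abs{H}^{2-\sigma}$ after an integration-by-parts manoeuvre of the form $\int f_\sigma^p \abs{H}^2 \leq \int \abs{\ho}^2\abs{H}^\sigma \cdot f_\sigma^{p-1}$ controlled by Poincaré-type arguments on $\Sigma_t$.

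Once the absorption is secured, the resulting differential inequality $\tfrac{d}{dt}\int f_\sigma^p\,d\mu \leq Cp\int f_\sigma^p\,d\mu$ with $C$ independent of $p$, combined with the Michael-Simon Sobolev inequality on submanifolds of $\mathbb{R}^{n+k}$ (whose constant depends only on $n$), permits a Stampacchia-Moser iteration in the style of \cite{gH84} that upgrades the uniform $L^p$ bounds to an $L^\infty$ bound on $f_\sigma$. Taking $\delta = \sigma$ and $C_0$ equal to the resulting sup-norm constant then yields \eqref{eqn: pinch trace 2ff}, with both constants depending only on $\Sigma_0$ through the initial pinching and volume.
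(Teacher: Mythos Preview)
Your overall architecture is right---define $f_\sigma$, derive $L^p$ bounds via integral estimates, then iterate to $L^\infty$ using the Michael--Simon inequality---and this is indeed the paper's route. But the heart of the argument is missing from your sketch, and the step you label ``Kato-type absorption'' does not work as described.

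The bad reaction term you identified is $2\sigma\int f_\sigma^p\abs{H}^2\,d\mu_g$. You propose to absorb this directly into the good gradient terms coming from differentiating $\int f_\sigma^p$. That cannot succeed: the good gradient integrals scale like $\abs{\nabla H}^2/\abs{H}^{2(1-\sigma)}$ and $\abs{\nabla f_\sigma}^2$, while the bad term has no gradient in it at all. What is needed is a genuine \emph{Poincar\'e-type inequality} bounding $\int f_\sigma^p\abs{H}^2$ above by these gradient integrals, and this comes from integrating the contracted Simons' identity
\[
\tfrac12\Delta\abs{\ho}^2 = \ho_{ij}\!\cdot\!\nabla_i\nabla_j H + \abs{\nabla\ho}^2 + Z,
\]
where $Z = -R_1 + \sum H_\alpha h_{ip\alpha}h_{ij\beta}h_{pj\beta}$. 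The key lemma (whose proof is a substantial algebraic computation using the pinching $c<1/(n-1)$, i.e.\ positive intrinsic sectional curvature) is that $Z \geq \epsilon_Z\abs{\ho}^2\abs{H}^2$ for some $\epsilon_Z>0$. Multiplying by $f_\sigma^{p-1}$, integrating, and applying the Divergence Theorem and Codazzi then yields
\[
\int f_\sigma^p\abs{H}^2\,d\mu_g \leq \frac{p\eta+4}{\epsilon_Z}\int \frac{f_\sigma^{p-1}}{\abs{H}^{2(1-\sigma)}}\abs{\nabla H}^2\,d\mu_g + \frac{p-1}{\epsilon_Z\eta}\int f_\sigma^{p-2}\abs{\nabla f_\sigma}^2\,d\mu_g.
\]
This is the inequality that makes the absorption possible, and it is not visible from the evolution of $f_\sigma$ alone.

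Two further points. First, the absorption constants do depend on $p$: combining the two estimates one finds that the bad term is controlled only when $\sigma \leq c/\sqrt{p}$, and with that restriction one obtains $\tfrac{d}{dt}\int f_\sigma^p \leq 0$, not merely $\leq Cp\int f_\sigma^p$. Your claimed inequality $\tfrac{d}{dt}\int f_\sigma^p \leq Cp\int f_\sigma^p$ would give $\int f_\sigma^p \leq e^{Cpt}\int f_\sigma^p(0)$, which blows up as $p\to\infty$ and is useless for iteration. Second, the Stampacchia iteration in the paper exploits the freedom in $\sigma$ (via $\sigma' = \sigma + n/p$) to absorb the factors of $\abs{H}^n$ that arise from the Michael--Simon inequality; this mechanism also needs to be set up carefully.
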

We wish to bound the function $f_{\sigma} = (\abs{h}^2 - 1/n\abs{H}^2)/\abs{H}^{2(1-\sigma)}$ for sufficiently small $\sigma$.  As in the hypersurface case, a distinguishing feature of mean curvature flow when compared to Ricci flow is that this result cannot be proved by a maximum principle argument alone. Somewhat more technical integral estimates and a Stampacchia iteration procedure are required.  We proceed by first deriving an evolution equation for $f_{\sigma}$.

\begin{prop}
For any $\sigma \in [0, 1/2]$ we have the evolution equation
	\begin{equation}\label{e: evol eqn f_sigma 1}
	\frac{\p}{\p t} f_{\sigma} \leq \Delta f_{\sigma} + \frac{ 4(1-\sigma) }{ \abs{H} } \big\langle \nabla_i\abs{H}, \nabla_i f_{\sigma} \big\rangle - \frac{ 2\epsilon_{\nabla} }{ \abs{H}^{2(1-\sigma)} }  \abs{ \nabla H }^2 + 2\sigma\abs{h}^2f_{\sigma}.
	\end{equation}
\end{prop}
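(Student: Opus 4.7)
\bigskip

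\noindent\textbf{Proof proposal.} The plan is a direct computation: differentiate $f_\sigma=\abs{\ho}^2/\abs{H}^{2(1-\sigma)}$ using the evolution equations for $\abs{h}^2$ and $\abs{H}^2$ already derived, then match the resulting cross-gradient terms against $\nabla f_\sigma$ and estimate the reaction and gradient terms using the pinching lemma and the Kato-type gradient estimates from the previous subsection.

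First I would note that $\abs{\ho}^2=\abs{h}^2-\tfrac1n\abs{H}^2$, so subtracting the two evolution equations gives
\begin{align*}
(\tfrac{\p}{\p t}-\Delta)\abs{\ho}^2 &= -2\bigl(\abs{\nabla h}^2-\tfrac1n\abs{\nabla H}^2\bigr)+2R_1-\tfrac{2}{n}R_2,\\
(\tfrac{\p}{\p t}-\Delta)\abs{H}^2 &= -2\abs{\nabla H}^2+2R_2.
\end{align*}
Writing $f_\sigma=\abs{\ho}^2\cdot(\abs{H}^2)^{-(1-\sigma)}$ and applying the product/chain rule for $\p_t$ and $\Delta$, one obtains
\[
(\tfrac{\p}{\p t}-\Delta)f_\sigma=\frac{(\tfrac{\p}{\p t}-\Delta)\abs{\ho}^2}{\abs{H}^{2(1-\sigma)}}-(1-\sigma)\frac{\abs{\ho}^2\,(\tfrac{\p}{\p t}-\Delta)\abs{H}^2}{\abs{H}^{2(2-\sigma)}}+\mathcal{G},
\]
where $\mathcal{G}$ collects the cross-gradient terms produced by $\Delta$ acting on the product. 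The identity
\[
\nabla f_\sigma=\frac{\nabla\abs{\ho}^2}{\abs{H}^{2(1-\sigma)}}-(1-\sigma)\frac{\abs{\ho}^2\,\nabla\abs{H}^2}{\abs{H}^{2(2-\sigma)}}
\]
then lets me rewrite $\mathcal{G}$ in the form $\tfrac{4(1-\sigma)}{\abs{H}}\langle\nabla\abs{H},\nabla f_\sigma\rangle$ plus the single remainder $-\sigma(1-\sigma)\abs{\ho}^2\abs{\nabla\abs{H}^2}^2/\abs{H}^{2(3-\sigma)}$, which is non-positive and can be discarded.

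After substituting the evolution equations, the right-hand side (apart from $\Delta f_\sigma$ and the first-order transport term) splits into a \emph{gradient part}
\[
\frac{-2(\abs{\nabla h}^2-\tfrac1n\abs{\nabla H}^2)}{\abs{H}^{2(1-\sigma)}}+2(1-\sigma)\frac{\abs{\ho}^2\abs{\nabla H}^2}{\abs{H}^{2(2-\sigma)}}
\]
and a \emph{reaction part}
\[
\frac{2R_1-\tfrac{2}{n}R_2}{\abs{H}^{2(1-\sigma)}}-2(1-\sigma)\frac{\abs{\ho}^2 R_2}{\abs{H}^{2(2-\sigma)}}.
\]
For the gradient part, I use Lemma's inequality \eqref{eqn: basic grad est 2} (or rather its consequence $\abs{\nabla h}^2-\tfrac1n\abs{\nabla H}^2\ge\tfrac{2(n-1)}{n(n+2)}\abs{\nabla H}^2$) together with the pinching bound $\abs{\ho}^2/\abs{H}^2\le c-1/n$ supplied by Lemma~\ref{l: preservation of pinching}. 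Under the hypotheses of Main Theorem~\ref{mthm: main thm 2} the coefficient $\tfrac{2(n-1)}{n(n+2)}-(1-\sigma)(c-1/n)$ is strictly positive for $\sigma\in[0,1/2]$, producing the required term $-2\epsilon_\nabla\abs{\nabla H}^2/\abs{H}^{2(1-\sigma)}$.

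The main obstacle is the reaction estimate, which I expect to be the genuinely delicate step. The required bound
\[
\bigl(R_1-\tfrac1n R_2\bigr)-\tfrac{\abs{\ho}^2}{\abs{H}^2}R_2+\sigma\tfrac{\abs{\ho}^2}{\abs{H}^2}R_2\le\sigma\abs{h}^2\abs{\ho}^2
\]
would follow by revisiting the pointwise algebraic analysis of Section~\ref{l:Preservation of curvature pinching}. There I chose the normal frame with $\nu_1=H/\abs{H}$, diagonalised $h_1$, and split $R_1$ and $R_2$ into contributions from $\ho_1$ and $\ho_-$; combined with the Li--Li bound $\sum_{\alpha,\beta>1}(\sum\ho_{ij\alpha}\ho_{ij\beta})^2+\sum_{\alpha,\beta>1}N(\ho_\alpha\ho_\beta-\ho_\beta\ho_\alpha)\le\tfrac32\abs{\ho_-}^4$ and the control of mixed terms by $2\abs{\ho_1}^2\abs{\ho_-}^2$, the inequality above reduces under pinching to a purely algebraic statement in $\abs{\ho_1}^2,\abs{\ho_-}^2,\abs{H}^2$ which holds for all $\sigma\in[0,1/2]$. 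Putting all pieces together yields \eqref{e: evol eqn f_sigma 1}.
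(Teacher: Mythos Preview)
Your computation is correct and follows the same overall strategy as the paper: differentiate $f_\sigma$ via the product rule, absorb the cross-gradient terms into the transport term $\tfrac{4(1-\sigma)}{|H|}\langle\nabla_i|H|,\nabla_i f_\sigma\rangle$ (discarding the non-positive remainder $-\sigma(1-\sigma)|\ho|^2|\nabla|H|^2|^2/|H|^{2(3-\sigma)}$), and then estimate the gradient and reaction parts separately.

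The one place where you make life harder than necessary is the reaction estimate, which you flag as ``the genuinely delicate step.'' In fact the paper dispatches it in one line by a different grouping. Writing $\tfrac{1}{n}+\tfrac{|\ho|^2}{|H|^2}=\tfrac{|h|^2}{|H|^2}$, your reaction part becomes
\[
\frac{2}{|H|^{2(1-\sigma)}}\Big(R_1-\frac{|h|^2}{|H|^2}R_2\Big)+\frac{2\sigma R_2 f_\sigma}{|H|^2}.
\]
The first bracket is \emph{exactly} the quantity $R_1-cR_2$ at the pointwise value $c=|h|^2/|H|^2\le 4/(3n)$, and the algebra of the Pinching Lemma (with $a=0$) already shows this is non-positive---no need to redo the frame splitting or invoke Li--Li again. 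The second term is bounded by $2\sigma|h|^2 f_\sigma$ using the trivial inequality $R_2\le|h|^2|H|^2$. The paper likewise groups the gradient terms as $|\nabla h|^2-\tfrac{|h|^2}{|H|^2}|\nabla H|^2$ and estimates directly via $|\nabla h|^2\ge\tfrac{3}{n+2}|\nabla H|^2$ and $|h|^2\le c|H|^2$, giving the $\sigma$-independent constant $\epsilon_\nabla=\tfrac{3}{n+2}-c$. Your route gives the same inequality (indeed a marginally sharper $\sigma$-dependent constant), just with more bookkeeping.
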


\begin{proof}
Differentiating $f_{\sigma}$ in time and substituting in the evolutions equations for the squared lengths of the second fundamental form and mean curvature we get
\begin{equation}\label{e: evol eqn f_sigma 2}
	\begin{split}
		\p_t f_{\sigma} &= \frac{\Delta\abs{h}^2 - 2\abs{\nabla h}^2 + 2R_1 }{ (\abs{H}^2)^{1-\sigma} }  - \frac{1}{n}\frac{ ( \Delta\abs{H}^2 - 2\abs{\nabla H}^2 + 2R_2 ) }{ ( \abs{H}^2)^{1-\sigma} } \\
		&\qquad - \frac{ (1-\sigma)(\abs{h}^2 - 1/n\abs{H}^2) }{ (\abs{H}^2)^{2-\sigma} }( \Delta\abs{H}^2 - 2\abs{\nabla H}^2 + 2R_2).
	\end{split}
\end{equation}
The Laplacian of $f_{\sigma}$ is given by
	\begin{equation*}
		\begin{split}
			\Delta f_{\sigma} &= \frac{ \Delta ( \abs{h}^2 - 1/n\abs{H}^2 ) }{ ( \abs{H}^2 )^{1-\sigma} } - \frac{ 2(1-\sigma) }{ ( \abs{H}^2 )^{2-\sigma} } \big\langle \nabla_i (\abs{h}^2 - 1/n\abs{H}^2), \nabla_i\abs{H}^2 \big\rangle  \\
			&\qquad - \frac{ (1-\sigma)(\abs{h}^2 - 1/n\abs{H}^2) }{ ( \abs{H}^2 )^{2-\sigma} } \Delta\abs{H}^2 + \frac{ (2-\sigma)(1-\sigma)(\abs{h}^2 - 1/n\abs{H}^2) }{ ( \abs{H}^2 )^{3-\sigma} } \abs{\nabla\abs{H}^2}^2
		\end{split}
	\end{equation*}
Using this and the identity
	\begin{equation*}
		\begin{split}
		-\frac{ 2(1-\sigma) }{ ( \abs{H}^2 )^{2-\sigma} } \big\langle \nabla_i (\abs{h}^2 - 1/n\abs{H}^2), \nabla_i\abs{H}^2 \big\rangle &= -\frac{ 2(1-\sigma) }{ \abs{H}^2 } \big\langle \nabla_i\abs{H}^2, \nabla_i f_{\sigma} \big\rangle \\
		&\qquad - \frac{ 8(1-\sigma)^2 }{ (\abs{H}^2)^2 } f_{\sigma}\abs{H}^2\abs{ \nabla\abs{H} }^2,
	\end{split}
	\end{equation*}
equation \eqref{e: evol eqn f_sigma 2} can be manipulated into the form
\begin{equation*}
	\begin{split}
		\p_t f_{\sigma} &= \Delta f_{\sigma} + \frac{ 2(1-\sigma) }{ \abs{H}^2 } \big\langle \nabla_i\abs{H}^2, \nabla_i f_{\sigma} \big\rangle - \frac{ 2 }{ (\abs{H}^2)^{1-\sigma} } \Big( \abs{\nabla h}^2 - \frac{ \abs{h}^2 }{ \abs{H}^2 }\abs{ \nabla H }^2 \Big) + \frac{ 2\sigma R_2 f_{\sigma} }{ \abs{ H }^2 } \\
		&\qquad - \frac{ 4\sigma(1-\sigma) }{ \abs{H}^4 } f_{\sigma}\abs{H}^2\abs{ \nabla\abs{H} }^2 - \frac{ 2\sigma(\abs{h}^2 - 1/n\abs{H}^2) }{ (\abs{H}^2)^{2-\sigma} }\abs{\nabla H}^2 \\
		&\qquad + \frac{2}{ (\abs{H}^2)^{1-\sigma} }\Big( R_1 - \frac{ \abs{h}^2 }{ \abs{H}^2 }R_2 \Big).
	\end{split}
\end{equation*}
We discard the terms on the last two lines as these are non-positive under our pinching assumption.  The gradient terms on the first line may be estimated as follows:
\begin{equation*}
	-\frac{ 2 }{ (\abs{H}^2)^{1-\sigma} } \Big( \abs{\nabla h}^2 - \frac{ \abs{h}^2 }{ \abs{H}^2 } \abs{ \nabla H }^2 \Big) \leq -\frac{ 2 }{ (\abs{H}^2)^{1-\sigma} } \Big( \frac{3}{n+2} - c \Big) \abs{ \nabla H }^2,
\end{equation*}
and also $R_2 \leq \abs{h}^2\abs{H}^2$.  Importantly, observe that if $c \leq 4/(3n)$, then $\epsilon_{\nabla}:=3/(n+2) - c$ is strictly positive.
\end{proof}

The small reaction term $2\sigma\abs{h}^2f_{\sigma}$ in this evolution equation is positive and hence we cannot apply the maximum principle.  As in the hypersurface case, we exploit the negative term involving the gradient of the mean curvature by integrating a suitable form of Simons' identity: Contracting equation \eqref{eq:SimonsTrace} with the second fundamental form we obtain
\begin{equation}\label{eq:SimonsContracted}
	\frac{1}{2}\Delta\abs{ \ho }^2 = \ho_{ij} \cdot \nabla_i\nabla_j H  + \abs{ \nabla \ho }^2 + Z,
\end{equation}
where
	\begin{equation*}
		Z = -\sum_{\alpha, \beta} \Big(\!\sum_{i,j} h_{ij\alpha}h_{ij\beta}\!\Big)^2 - \abs{\Rp}^2 + \sum_{\substack{i,j,p \\ \alpha, \beta}}H_{\alpha}h_{ip\alpha}h_{ij\beta}h_{pj\beta}.
	\end{equation*}

\begin{lem}\label{l: Z pos}
	If $\Sigma^n$ is a submanifold of $\mathbb{R}^{n+k}$ that satisfies $H\neq 0$ and $\abs{h}^2 \leq c\abs{H}^2$, where
	\begin{equation*}
		c \begin{cases}
			\leq \frac{ 4 }{ 3n }, \quad n = 2, 3 \\
			< \frac{1}{n-1}, \quad \text{ if } n \geq 4,
		\end{cases}
	\end{equation*}
then there exists $\epsilon_Z>0$ such that
$Z \geq \epsilon_Z\abs{\ho}^2\abs{H}^2$.
\end{lem}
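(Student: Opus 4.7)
The plan is to adapt the algebraic strategy used in Lemma \ref{l: preservation of pinching} to the different quartic quantity $Z$. At any point where the inequality is to be verified, the pinching forces $\abs{H} > 0$, so we may choose a local orthonormal normal frame with $\nu_1 = H/\abs{H}$ and a tangent frame diagonalising $\ho_1 := h_1 - (\abs{H}/n)g$, with diagonal entries $\lo_i$ satisfying $\sum_i \lo_i = 0$. In this frame $h_\alpha = \ho_\alpha$ is trace-free for $\alpha > 1$. A key structural observation is that the pinching $\abs{\ho_1}^2 \leq \abs{\ho}^2 \leq (c - 1/n)\abs{H}^2$ combined with the sharp bound $\abs{\lo_i} \leq \sqrt{(n-1)/n}\,\abs{\ho_1}$ forces every eigenvalue of $h_1$ to be bounded below by $\abs{H}\bigl(1/n - \sqrt{(n-1)(c-1/n)/n}\bigr)$, which is non-negative precisely under $c \leq 1/(n-1)$; in particular $h_1$ is positive semi-definite, a fact that will be used repeatedly.

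Expanding each of the three summands of $Z$ in this frame and separating the $\alpha = 1$ contribution from the $\alpha > 1$ contributions, a direct computation (using $\tr(\ho_1) = 0$ repeatedly and cancelling the $\abs{H}^4/n^2$ pieces) yields
\begin{align*}
Z &= \frac{\abs{H}^2}{n}\abs{\ho}^2 + \abs{H}\,\tr(\ho_1^3) - \abs{\ho_1}^4 + \abs{H}\sum_{\beta > 1}\tr(\ho_1 \ho_\beta^2) \\
&\quad - 2\sum_{\alpha > 1}\bigl(\tr(\ho_1 \ho_\alpha)^2 + N(\ho_1 \ho_\alpha - \ho_\alpha \ho_1)\bigr) \\
&\quad - \sum_{\alpha,\beta > 1}\bigl(\tr(\ho_\alpha \ho_\beta)^2 + N(\ho_\alpha \ho_\beta - \ho_\beta \ho_\alpha)\bigr),
\end{align*}
in which the only manifestly positive term is the leading $(\abs{H}^2/n)\abs{\ho}^2$. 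The remaining five terms are now estimated: Okumura's inequality $\abs{\tr(\ho_1^3)} \leq \frac{n-2}{\sqrt{n(n-1)}}\abs{\ho_1}^3$ controls the cubic term and is what produces the critical threshold $c = 1/(n-1)$; the Chen--do Carmo--Kobayashi / Li--Li inequality gives $\sum_{\alpha,\beta>1}\bigl(\tr(\ho_\alpha\ho_\beta)^2 + N(\ho_\alpha\ho_\beta - \ho_\beta\ho_\alpha)\bigr) \leq \tfrac{3}{2}\abs{\ho_-}^4$; the elementary Cauchy--Schwarz estimate from Lemma~\ref{l: preservation of pinching} gives $\sum_{\alpha>1}\bigl(\tr(\ho_1\ho_\alpha)^2 + N(\ho_1\ho_\alpha - \ho_\alpha\ho_1)\bigr) \leq 2\abs{\ho_1}^2\abs{\ho_-}^2$; and the remaining cross term $\abs{H}\sum_{\beta>1}\tr(\ho_1\ho_\beta^2)$ is bounded by $\sqrt{(n-1)/n}\,\abs{H}\abs{\ho_1}\abs{\ho_-}^2$ using the constraint $\sum_i \lo_i = 0$ (equivalently, the PSD property of $h_1$ noted above).

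Substituting the pinching bound $\abs{\ho}^2 \leq (c - 1/n)\abs{H}^2$ to convert surplus factors of $\abs{\ho}$ into multiples of $\abs{H}$ yields an inequality of the form $Z \geq A(n,c)\abs{H}^2\abs{\ho}^2$; setting $\epsilon_Z := A(n,c)$ then completes the proof, provided $A(n,c) > 0$. For $n \geq 4$ the critical check is the positivity of the $\abs{\ho_1}^2$ coefficient, which reduces (as in the hypersurface Okumura calculation) precisely to the strict inequality $c < 1/(n-1)$. For $n \in \{2,3\}$ the tighter bound $c \leq 4/(3n)$ must be used to arrange the $\abs{\ho_-}^2$ coefficient to be positive; in dimension $n = 2$ an additional simplification is that $\tr(\ho_1^3) \equiv 0$ for every trace-free symmetric $2 \times 2$ matrix, since its eigenvalues are $\pm \lo$. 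The principal obstacle I anticipate is in this final step: the individual estimates leave only a small margin in the sharp-pinching regime, so the bookkeeping in combining them must be done carefully enough to verify the dimension-dependent positivity of $A(n,c)$ in each regime.
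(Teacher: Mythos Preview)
Your expansion of $Z$ in the special frame is correct and matches the paper's, and you have identified the right ingredient inequalities (Okumura for $\tr(\ho_1^3)$, Li--Li for the $\alpha,\beta>1$ block). However, your treatment of the $(\alpha=1,\beta>1)$ cross terms is too crude, and your final claim that ``the critical check is the positivity of the $\abs{\ho_1}^2$ coefficient'' is simply false: with your estimates the $\abs{\ho_-}^2$ coefficient is the real obstacle, and it comes out negative at the sharp pinching in every dimension. Concretely, after your five estimates one has
\[
Z \geq \tfrac{1}{n}\abs{H}^2\abs{\ho}^2 - \tfrac{n-2}{\sqrt{n(n-1)}}\abs{H}\abs{\ho_1}^3 - \abs{\ho_1}^4 - \sqrt{\tfrac{n-1}{n}}\,\abs{H}\abs{\ho_1}\abs{\ho_-}^2 - 4\abs{\ho_1}^2\abs{\ho_-}^2 - \tfrac{3}{2}\abs{\ho_-}^4,
\]
and substituting $\abs{\ho_1},\abs{\ho_-}\leq\sqrt{\gamma}\,\abs{H}$ with $\gamma=c-1/n$ gives an $\abs{H}^2\abs{\ho_-}^2$ coefficient of $\tfrac{1}{n}-\sqrt{\tfrac{n-1}{n}}\sqrt{\gamma}-\tfrac{11}{2}\gamma$. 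At $\gamma=1/(n(n-1))$ the first two terms cancel exactly, leaving $-\tfrac{11}{2n(n-1)}<0$; for $n=2$ with $\gamma=1/6$ the deficit is even worse. So there is no positive $A(n,c)$ to be extracted by bookkeeping alone---a test point such as $\abs{\ho_1}^2=\abs{\ho_-}^2=\gamma\abs{H}^2/2$ makes the right-hand side of your displayed bound genuinely negative.

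The paper repairs this in two ways you are missing. First, rather than estimating $\abs{H}\sum_{\beta>1}\tr(\ho_1\ho_\beta^2)$ and the commutator block separately, it splits $\tr(\ho_1\ho_\beta^2)=\sum_i\lo_i(\ho_{ii\beta})^2+\sum_{i\neq j}\lo_i(\ho_{ij\beta})^2$ and pairs the off-diagonal piece with $N(h_1\ho_\alpha-\ho_\alpha h_1)=\sum_{i\neq j}(\lo_i-\lo_j)^2(\ho_{ij\alpha})^2$, since both live on the same index set; a Peter--Paul with free parameter $\eta$ then gives a bound $-\max\{4,\eta/2\}\abs{\ho_1}^2(\cdots)-\tfrac{1}{4\eta}\abs{H}^2(\cdots)$ trading off the two coefficients, while the diagonal piece is handled by a separate Santos-type inequality with its own Peter--Paul parameter $\rho$. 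Tracking $\sum_{i,\alpha>1}(\ho_{ii\alpha})^2$ separately (not collapsing it into $\abs{\ho_-}^2$) through these estimates is essential; with the optimal choices $\mu=n-2$, $\rho=(n-2)/2$, $\eta=n+2$ one arrives at $Z\geq c_5\abs{\ho}^4$. Second, the conclusion $Z\geq\epsilon_Z\abs{\ho}^2\abs{H}^2$ is obtained not by direct substitution but by interpolating $Z\geq c_5\abs{\ho}^4$ with a cruder bound $Z\geq c_6\abs{\ho}^2\abs{H}^2-c_7\abs{\ho}^4$. (Incidentally, $\sum\lo_i=0$ is not equivalent to $h_1\geq 0$; the eigenvalue bound $\abs{\lo_i}\leq\sqrt{(n-1)/n}\,\abs{\ho_1}$ follows from trace-freeness alone.)
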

The example given in the Introduction shows the best value of $c$ that can be expected is $1/(n-1)$.  In dimensions greater than four, $4/(3n) > 1/(n-1)$ and so somewhere in the analysis the condition $c < 1/(n-1)$ had to manifest itself.  For a submanifold of Euclidean space, the condition $\abs{ h } < 1/(n-1)$ implies that the submanifold has positive intrinsic curvature.  Just as in the hypersurface case (where strict convexity implies positive intrinsic curvature), it is the positive intrinsic curvature that makes this lemma, and indeed the Main Theorem true.

\begin{proof}[Proof of Lemma \ref{l: Z pos}]
Working with the local orthonormal frames of Section \ref{l:Preservation of curvature pinching} we expand $Z$ to get
	\begin{align*}
		Z &= -\abs{\ho_1}^4 + \frac{1}{n}\abs{\ho_1}^2\abs{H}^2 + \frac{1}{n}\abs{\ho_-}^2\abs{H}^2 - 2\sum_{\alpha > 1}\!\!\!\Big(\!\sum_{i,j}\lo_i\ho_{ii\alpha}\!\Big)^2 - 2\sum_{\alpha > 1} N(h_1\ho_{\alpha} - \ho_{\alpha}h_1) \\
	& \qquad -  \sum_{\alpha, \beta > 1}\!\!\!\Big(\!\sum_{i,j}\ho_{ij\alpha}\ho_{ij\beta}\!\Big)^2 - \sum_{\alpha, \beta > 1}N(\ho_{\alpha}\ho_{\beta} - \ho_{\beta}\ho_{\alpha}) \\
	& \qquad + \sum_{ i } \abs{H}\lo_i{^3} + \sum_{\substack{\alpha > 1 \\ i}}\abs{H}\lo_i(\ho_{ii\alpha})^2 + \sum_{\substack{\alpha > 1 \\ i \neq j}}\abs{H}\lo_i(\ho_{ij\alpha})^2.
	\end{align*}
	We estimate the first summation term on line one and the two terms on line two as before, namely
	\begin{gather*}
		-2\sum_{\alpha > 1}\!\!\!\Big(\!\sum_{i,j}\lo_i\ho_{ii\alpha}\!\Big)^2 \geq -2\abs{\ho_1}^2\sum_{\substack{\alpha > 1 \\ i}}(\ho_{ii\alpha})^2 \\
		-\sum_{\alpha, \beta > 1}\!\!\!\Big(\!\sum_{i,j}\ho_{ij\alpha}\ho_{ij\beta}\!\Big)^2 - \sum_{\alpha, \beta > 1}N(\ho_{\alpha}\ho_{\beta} - \ho_{\beta}\ho_{\alpha}) \geq -\frac{3}{2}\abs{\ho_-}^4,
	\end{gather*}
however we need to work somewhat harder with the remaining summation terms.
	\begin{prop}
		For any $\eta > 0$ we have the following estimate
		\begin{multline*}
			-2\sum_{\alpha > 1} N(h_1\ho_{\alpha} - \ho_{\alpha}h_1) + \sum_{\substack{\alpha > 1 \\ i \neq j}}\abs{H}\lo_i(\ho_{ij\alpha})^2 \\
			\geq -\max\Big\{ 4, \frac{\eta}{2} \Big\} \abs{\ho_1}^2\big(\abs{\ho_-}^2 - \sum_{\substack{\alpha > 1 \\ i}}(\ho_{ii\alpha})^2\big) - \frac{1}{4\eta}\abs{H}^2\big(\abs{\ho_-}^2 - \sum_{\substack{\alpha > 1 \\ i}}(\ho_{ii\alpha})^2\big).
		\end{multline*}
	\end{prop}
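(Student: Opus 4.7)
The plan is first to rewrite the left-hand side using the particular frame in which $h_1$ is diagonal.  Since $h_1 = \ho_1 + \frac{\abs{H}}{n}\mathrm{Id}$ has eigenvalues $\lambda_i = \lo_i + \abs{H}/n$, we have $\lambda_i - \lambda_j = \lo_i - \lo_j$, so
\[
N(h_1 \ho_\alpha - \ho_\alpha h_1) = \sum_{i \neq j}(\lo_i - \lo_j)^2 (\ho_{ij\alpha})^2.
\]
For the linear $\abs{H}\lo_i$ contribution, the symmetry $\ho_{ij\alpha} = \ho_{ji\alpha}$ allows the symmetrization
\[
\sum_{\substack{i \neq j \\ \alpha > 1}} \abs{H}\, \lo_i\, (\ho_{ij\alpha})^2 = \frac{\abs{H}}{2} \sum_{\substack{i \neq j \\ \alpha > 1}} (\lo_i + \lo_j)\,(\ho_{ij\alpha})^2,
\]
so the full left-hand side equals
\[
\sum_{\substack{i \neq j \\ \alpha > 1}} \Big[\,-2(\lo_i - \lo_j)^2 + \tfrac{\abs{H}}{2}(\lo_i + \lo_j)\Big] (\ho_{ij\alpha})^2.
\]

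Next I would establish the pointwise bound: for each pair $(i,j)$ with $i \neq j$,
\[
-2(\lo_i - \lo_j)^2 + \tfrac{\abs{H}}{2}(\lo_i + \lo_j) \;\geq\; -\max\!\Big\{4, \tfrac{\eta}{2}\Big\}\,(\lo_i^2 + \lo_j^2) - \tfrac{\abs{H}^2}{4\eta}.
\]
By the weighted AM--GM inequality, $\tfrac{\abs{H}}{2}\abs{\lo_i + \lo_j} \leq \tfrac{\abs{H}^2}{4\eta} + \tfrac{\eta}{4}(\lo_i + \lo_j)^2$. Expanding
\[
-2(\lo_i - \lo_j)^2 - \tfrac{\eta}{4}(\lo_i + \lo_j)^2 = -\Big(2 + \tfrac{\eta}{4}\Big)(\lo_i^2 + \lo_j^2) + \Big(4 - \tfrac{\eta}{2}\Big)\,\lo_i \lo_j
\]
and controlling the cross-term by $\abs{\lo_i \lo_j} \leq (\lo_i^2 + \lo_j^2)/2$ produces a coefficient on $(\lo_i^2 + \lo_j^2)$ of $-\bigl(2 + \tfrac{\eta}{4} + \tfrac{1}{2}\abs{4 - \tfrac{\eta}{2}}\bigr)$. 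A brief case split on $\eta \leq 8$ versus $\eta > 8$ shows this collapses to $-\max\{4,\eta/2\}$ exactly.

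Finally I would multiply the pointwise inequality by $(\ho_{ij\alpha})^2 \geq 0$, sum over $i \neq j$ and $\alpha > 1$, and use the elementary bound $\lo_i^2 + \lo_j^2 \leq \abs{\ho_1}^2$ (valid for every pair) to get
\[
\sum_{\substack{i \neq j \\ \alpha > 1}} (\lo_i^2 + \lo_j^2)(\ho_{ij\alpha})^2 \;\leq\; \abs{\ho_1}^2 \sum_{\substack{i \neq j \\ \alpha > 1}}(\ho_{ij\alpha})^2 \;=\; \abs{\ho_1}^2\Big(\abs{\ho_-}^2 - \sum_{i, \alpha > 1}(\ho_{ii\alpha})^2\Big),
\]
which is exactly the trick already used in Section \ref{l:Preservation of curvature pinching}. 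Combining with the pointwise bound gives the claim.

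The one subtle point is producing the coefficient $\max\{4, \eta/2\}$ rather than an additive $4 + \eta$. Treating the two summands $-2(\lo_i - \lo_j)^2$ and $\tfrac{\abs{H}}{2}(\lo_i + \lo_j)$ independently (e.g.\ using $(\lo_i - \lo_j)^2 \leq 2(\lo_i^2 + \lo_j^2)$ on the first and a separate Cauchy estimate on the second) causes the two coefficients of $(\lo_i^2 + \lo_j^2)$ to add and gives the inferior bound. The sharper form requires first combining the two quadratic terms in $(\lo_i, \lo_j)$, rewriting as the diagonal piece $(\lo_i^2 + \lo_j^2)$ plus the cross-term $\lo_i \lo_j$, and applying AM--GM only to the cross-term, so that the $\eta$-contribution appears just once. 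This parameter $\eta$ is the key trade-off between the $\abs{\ho_1}^2$ and $\abs{H}^2$ parts and will be optimized in the subsequent proof of Lemma \ref{l: Z pos}.
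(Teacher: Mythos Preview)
Your proof is correct and follows essentially the same approach as the paper: rewrite the left-hand side using the diagonal frame for $h_1$ and the symmetrisation of the $|H|\lo_i$ term, apply Peter--Paul to the $\tfrac{|H|}{2}(\lo_i+\lo_j)$ piece, expand the resulting quadratic in $(\lo_i,\lo_j)$, and split into the cases $\eta\le 8$ and $\eta\ge 8$ to absorb the cross-term, then use $\lo_i^2+\lo_j^2\le|\ho_1|^2$ to finish. Your concluding remark about why one must combine the quadratic terms before applying AM--GM (to obtain $\max\{4,\eta/2\}$ rather than an additive constant) is a correct and useful observation, though the paper's proof simply carries out the computation without commenting on this point.
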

	\begin{proof}
Using the Peter-Paul inequality we estimate
	\begin{align*}
			&-2\sum_{\alpha > 1} N(h_1\ho_{\alpha} - \ho_{\alpha}h_1) + \sum_{\substack{\alpha > 1 \\ i \neq j}}\abs{H}\lo_i(\ho_{ij\alpha})^2 \\
			&\qquad = -\sum_{\substack{\alpha > 1 \\ i \neq j}}\Big\{ 2(\lo_i - \lo_j)^2 - \frac{\abs{H}}{2}(\lo_i + \lo_j)\Big\}(\ho_{ij\alpha})^2 \\
			&\qquad \geq -\sum_{\substack{\alpha > 1 \\ i \neq j}}\Big\{ 2(\lo_i - \lo_j)^2 + \frac{\eta}{4}(\lo_i + \lo_j)^2 \Big\}(\ho_{ij\alpha})^2 - \frac{1}{4\eta}\abs{H}^2\big(\abs{\ho_-}^2 - \sum_{\substack{\alpha > 1 \\ i}}(\ho_{ii\alpha})^2\big) \\
			&= -\sum_{\substack{\alpha > 1 \\ i \neq j}}\Big\{ (2+\frac{\eta}{4})(\lo_i{^2} + \lo_j{^2}) + (\frac{\eta}{2} - 4)\lo_i\lo_j \Big\}(\ho_{ij\alpha})^2 \\
			&\qquad - \frac{1}{4\eta}\abs{H}^2\big(\abs{\ho_-}^2 - \sum_{\substack{\alpha > 1 \\ i}}(\ho_{ii\alpha})^2\big).
\end{align*}
If $\eta \geq 8$ we estimate
\begin{align*}
		& \qquad \geq -\sum_{\substack{\alpha > 1 \\ i \neq j}}\Big\{ (2+\frac{\eta}{4})(\lo_i{^2} + \lo_j{^2}) + (\frac{\eta}{4} - 2)(\lo_i{^2} + \lo_j{^2}) \Big\}(\ho_{ij\alpha})^2 \\
		&\qquad \qquad  - \frac{1}{4\eta}\abs{H}^2\big(\abs{\ho_-}^2 - \sum_{\substack{\alpha > 1 \\ i}}(\ho_{ii\alpha})^2\big) \\
		& \qquad = -\frac{\eta}{2}\sum_{\substack{\alpha > 1 \\ i \neq j}}(\lo_i{^2} + \lo_j{^2})(\ho_{ij\alpha})^2 - \frac{1}{4\eta}\abs{H}^2\big(\abs{\ho_-}^2 - \sum_{\substack{\alpha > 1 \\ i}}(\ho_{ii\alpha})^2\big) \\
		& \qquad \geq -\frac{\eta}{2}\abs{\ho_1}^2\big(\abs{\ho_-}^2 - \sum_{\substack{\alpha > 1 \\ i}}(\ho_{ii\alpha})^2\big) - \frac{1}{4\eta}\abs{H}^2\big(\abs{\ho_-}^2 - \sum_{\substack{\alpha > 1 \\ i}}(\ho_{ii\alpha})^2\big),
	\end{align*}
while for $\eta \leq 8$ we can similarly estimate
\begin{align*}
		& \qquad \geq -\sum_{\substack{\alpha > 1 \\ i \neq j}}\Big\{ (2+\frac{\eta}{4})(\lo_i{^2} + \lo_j{^2}) + (2 - \frac{\eta}{4} )(\lo_i{^2} + \lo_j{^2}) \Big\}(\ho_{ij\alpha})^2 \\
		&\qquad \qquad - \frac{1}{4\eta}\abs{H}^2\big(\abs{\ho_-}^2 - \sum_{\substack{\alpha > 1 \\ i}}(\ho_{ii\alpha})^2\big) \\
		& \qquad = -4\sum_{\substack{\alpha > 1 \\ i \neq j}}(\lo_i{^2} + \lo_j{^2})(\ho_{ij\alpha})^2 - \frac{1}{4\eta}\abs{H}^2\big(\abs{\ho_-}^2 - \sum_{\substack{\alpha > 1 \\ i}}(\ho_{ii\alpha})^2\big) \\
		& \qquad \geq -4\abs{\ho_1}^2\big(\abs{\ho_-}^2 - \sum_{\substack{\alpha > 1 \\ i}}(\ho_{ii\alpha})^2\big) - \frac{1}{4\eta}\abs{H}^2\big(\abs{\ho_-}^2 - \sum_{\substack{\alpha > 1 \\ i}}(\ho_{ii\alpha})^2\big).
	\end{align*}
	\end{proof}
To estimate the remaining two terms we use the following two inequalities from \cite{AdC94} and \cite{wS94}:
	\begin{gather*}
		\sum_i\abs{H}\lo_i{^3} \geq -\frac{n-2}{\sqrt{n(n-1)}}\abs{H}\abs{\ho_1}^3 \\
		\sum_{\substack{\alpha > 1 \\ i}}\abs{H}\lo_i(\ho_{ii\alpha})^2 \geq -\frac{n-2}{\sqrt{n(n-1)}}\abs{H}\abs{\ho_1}\sum_{\substack{\alpha > 1 \\ i}}(\ho_{ii\alpha})^2,
	\end{gather*}
and further estimate them using the Peter-Paul inequality to obtain
	\begin{gather*}
		\sum_i\abs{H}\lo_i{^3} \geq -\frac{\mu}{2}\abs{\ho_1}^4 - \frac{1}{2\mu}\frac{(n-2)^2}{n(n-1)}\abs{\ho_1}^2\abs{H}^2 \\
		\sum_{\substack{\alpha > 1 \\ i}}\abs{H}\lo_i(\ho_{ii\alpha})^2 \geq -\rho\abs{\ho_1}^2\sum_{\substack{\alpha > 1 \\ i}}(\ho_{ii\alpha})^2 - \frac{1}{4\rho}\frac{(n-2)^2}{n(n-1)}\abs{H}^2\sum_{\substack{\alpha > 1 \\ i}}(\ho_{ii\alpha})^2.
	\end{gather*}
Note that in dimension $n=2$ above two terms are actually zero and there is no need to further estimate them in this way.  For $n=2$ the remaining quantities can now be estimated as we have done before to give the estimate for $c < 3/4$.  For the higher dimensions, putting everything together we obtain
	\begin{align*}
		Z &\geq -\abs{\ho_1}^4 + \frac{1}{n}\abs{\ho_1}^2\abs{H}^2 + \frac{1}{n}\abs{\ho_-}^2\abs{H}^2 - 2\abs{\ho_1}^2\sum_{\substack{\alpha > 1 \\ i}}(\ho_{ii\alpha})^2 - \frac{3}{2}\abs{\ho_-}^4 \\
		& \qquad - \max\Big\{ 4, \frac{\eta}{2} \Big\} \abs{\ho_1}^2\abs{\ho_-}^2 + \max\Big\{ 4, \frac{\eta}{2} \Big\} \abs{\ho_1}^2\sum_{\substack{\alpha > 1 \\ i}}(\ho_{ii\alpha})^2 - \frac{1}{4\eta}\abs{H}^2\abs{\ho_-}^2 \\
		&\qquad + \frac{1}{4\eta}\abs{H}^2\sum_{\substack{\alpha > 1 \\ i}}(\ho_{ii\alpha})^2 -\frac{\mu}{2}\abs{\ho_1}^4 - \frac{1}{2\mu}\frac{(n-2)^2}{n(n-1)}\abs{\ho_1}^2\abs{H}^2 -\rho\abs{\ho_1}^2\sum_{\substack{\alpha > 1 \\ i}}(\ho_{ii\alpha})^2 \\
		&\qquad - \frac{1}{4\rho}\frac{(n-2)^2}{n(n-1)}\abs{H}^2\sum_{\substack{\alpha > 1 \\ i}}(\ho_{ii\alpha})^2.
	\end{align*}
	We now need to choose the optimal values of the constants $\eta$, $\mu$ and $\rho$.  First, choose $\mu$ to be equal to $n-2$ and $\rho = (n-2)/2$.  Next we want to choose $\eta$ to make the $\abs{\ho_1}^2\sum\limits_{\substack{\alpha > 1 \\ i}}(\ho_{ii\alpha})^2$ terms non-negative, that is, we would like to choose $\eta$ so that
\begin{equation*}
	\max\Big\{ 4, \frac{\eta}{2} \Big\} - 2 - \rho \geq 0.
\end{equation*}
As $\rho = (n-2)/2$, we want
\begin{equation*}
	\max\Big\{ 4, \frac{\eta}{2} \Big\} \geq \frac{ n + 2 }{ 2 },
\end{equation*}		
thus we are able to choose $\eta = n+2$.  In dimensions $3$ to $5$ this term is positive and we discard it, while for $n \geq 8$ it is identically zero.  The only mildly troublesome term that remains is
	\begin{equation*}
		\Big(\frac{1}{4\eta} - \frac{1}{4 \rho }\frac{(n-2)^2}{n(n-1)}\Big)\abs{H}^2\sum_{\substack{\alpha > 1 \\ i}}(\ho_{ii\alpha})^2.
	\end{equation*}
With our choices of $\eta$ and $\rho$ this term this term is negative for $n \geq 3$ and we estimate
	\begin{align*}
		-\Big( \frac{n-2}{2n(n-1)} - \frac{1}{ 4(n+2) } \Big) \abs{H}^2 \sum_{ \substack{ \alpha > 1 \\ i} }( \ho_{ ii\alpha } )^2 \geq -\Big( \frac{n-2}{2n(n-1)} - \frac{1}{ 4(n+2) } \Big) \abs{H}^2 \abs{\ho_-}^2.
	\end{align*}
 After substituting in our choices for $\rho$, $\mu$ and $\eta$ we have, in dimension three to five:
\begin{align*}
		Z &\geq -\abs{\ho_1}^4 + \frac{1}{n}\abs{\ho_1}^2\abs{H}^2 + \frac{1}{n}\abs{\ho_-}^2\abs{H}^2  -\frac{n-2}{2}\abs{\ho_1}^4 - 4 \abs{\ho_1}^2\abs{\ho_-}^2  \\
		& \qquad - \frac{3}{2}\abs{\ho_-}^4 - \frac{n-2}{2n(n-1)}\abs{\ho_1}^2\abs{H}^2 - \frac{n-2}{2n(n-1)}\abs{\ho_-}^2\abs{H}^2,
\end{align*}
and in dimensions six and higher:
	\begin{align*}
		Z &\geq -\abs{\ho_1}^4 + \frac{1}{n}\abs{\ho_1}^2\abs{H}^2 + \frac{1}{n}\abs{\ho_-}^2\abs{H}^2  - \frac{n-2}{2}\abs{\ho_1}^4 -\frac{n+2}{2}\abs{\ho_1}^2\abs{\ho_-}^2 \\
		& \qquad - \frac{3}{2}\abs{\ho_-}^4  - \frac{n-2}{2n(n-1)}\abs{\ho_1}^2\abs{H}^2 - \frac{n-2}{2n(n-1)}\abs{\ho_-}^2\abs{H}^2.
	\end{align*}
We now group like terms, estimate $\abs{H}^2$ from below by $\abs{\ho}{^2}/(c-1/n)$ and calculate the maximum value of $c$ permissable in each case such that the coefficients are all strictly positive.  For $n=2$ and $n=3$ the most restrictive term is the cross-term, and the best value of $c$ is given by $c= 3/4$ and $c=11/24$ respectively.  For the corresponding value of $n$ note that both $3/4$ and $11/24 \geq 4/(3n)$ and so we have simply used $4/(3n)$ in the statement of the lemma.  For $n \geq 4$ the most restrictive term is the $\abs{\ho_1}^4$ term, which is identically zero when $c = 1/(n-1)$.  Thus, for the values $c$ stated in the proposition, we have now shown there exist strictly positive constants $c_2$, $c_3$ and $c_4$ depending on $\Sigma_0$ such that
	\begin{align}
		\notag Z &\geq c_2\abs{\ho_1}^4 + c_3\abs{\ho_1}^2\abs{\ho_-}^2 + c_4\abs{\ho_-}^4 \\
			&\geq c_5\abs{\ho}^4, \label{e: Z pos}
	\end{align}
where $c_5 = \min\{c_2, c_3/2, c_4\}$.
To prove the desired estimate we note that by using Peter-Paul on various terms of $Z$ we can estimate
	\begin{equation*}
		Z \geq c_6\abs{\ho}{^2}\abs{H}^2 - c_7\abs{\ho}{^4}.
	\end{equation*}
Combining this with \eqref{e: Z pos} gives for any $a\in[0,1]$ that
	\begin{equation*}
		Z \geq a(c_6\abs{\ho}{^2}\abs{H}^2 - c_7\abs{\ho}{^4}) + (1-a)c_5\abs{\ho}^4.
	\end{equation*}
Choosing $a = c_5/(c_5 + c_7)$ gives
	\begin{equation*}
		Z \geq \frac{c_5c_6}{c_5+c_7}\abs{\ho}{^2}\abs{H}^2
	\end{equation*}
and the lemma is complete by setting $\epsilon_Z = c_5c_6/(c_5+c_7)$.
\end{proof}
Next we derive the integral estimates.
\begin{prop}\label{p: using Z pos}
For any $p \geq 2$ and $\eta > 0$ we have the estimate
	\begin{equation*}
		\int_{\Sigma} f_{\sigma}^p\abs{H}^2 \, d\mu_g \leq \frac{(p\eta + 4)}{\epsilon_Z}\int_\Sigma\frac{f_{\sigma}^{p-1}}{\abs{H}^{2(1-\sigma)}}\abs{\nabla H}^2 \, d\mu_g + \frac{p-1}{\epsilon_Z\eta}\int_{\Sigma} f_{\sigma}^{p-2}\abs{\nabla f_{\sigma}}^2 \, d\mu_g.
	\end{equation*}
\end{prop}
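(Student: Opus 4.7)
The plan is to derive the estimate by multiplying the contracted Simons identity \eqref{eq:SimonsContracted}, combined with the pointwise lower bound $Z \geq \epsilon_Z|\ho|^2|H|^2$ from Lemma \ref{l: Z pos}, by the weight $\phi := f_\sigma^{p-1}/|H|^{2(1-\sigma)}$, integrating over $\Sigma$, and carefully balancing the resulting Cauchy--Schwarz and Peter--Paul inequalities so as to obtain the specific constants $(p\eta+4)/\epsilon_Z$ and $(p-1)/(\epsilon_Z\eta)$.

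Rearranging Simons gives the pointwise inequality $\epsilon_Z|\ho|^2|H|^2 \le \tfrac12\Delta|\ho|^2 - \ho\cdot\nabla^2 H - |\nabla\ho|^2$. Multiplying by $\phi \ge 0$ and integrating, the left-hand side becomes $\epsilon_Z\int f_\sigma^p|H|^2$ using the identity $\phi|\ho|^2 = f_\sigma^p$. Since $\Sigma$ is closed, integration by parts yields
\[
\int \phi\cdot\tfrac12\Delta|\ho|^2 \,d\mu_g = -\tfrac12\int \nabla\phi\cdot\nabla|\ho|^2\,d\mu_g,
\]
and, integrating the second term by parts once together with the Codazzi identity $\nabla^i\ho_{ij}=\tfrac{n-1}{n}\nabla_j H$ (valid because the background is flat), one obtains
\[
-\int\phi\,\ho_{ij}\cdot\nabla_i\nabla_j H\,d\mu_g = \tfrac{n-1}{n}\int\phi|\nabla H|^2\,d\mu_g + \int\nabla_i\phi\cdot\ho_{ij}\nabla_j H\,d\mu_g.
\]

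Two mixed terms now remain to be bounded: $-\tfrac12\int\nabla\phi\cdot\nabla|\ho|^2$ and $\int\nabla_i\phi\,\ho_{ij}\nabla_j H$. Cauchy--Schwarz applied pointwise gives $|\nabla\phi\cdot\nabla|\ho|^2|\le 2|\nabla\phi||\ho||\nabla\ho|$ and $|\nabla_i\phi\,\ho_{ij}\nabla_jH|\le|\nabla\phi||\ho||\nabla H|$. Applying Peter--Paul with parameter $\lambda=2$ on the $|\nabla\phi||\ho||\nabla\ho|$ product makes the resulting contribution $\tfrac{\lambda}{2}\phi|\nabla\ho|^2$ cancel exactly with the term $-\int\phi|\nabla\ho|^2$ that we kept from Simons, leaving only $\tfrac14\int|\nabla\phi|^2|\ho|^2/\phi$. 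A second Peter--Paul with parameter $\eta$ on $|\nabla\phi||\ho||\nabla H|$ then produces $(\eta/2)\int\phi|\nabla H|^2$ and a further $\tfrac{1}{2\eta}\int|\nabla\phi|^2|\ho|^2/\phi$.

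The residual $\int|\nabla\phi|^2|\ho|^2/\phi$ is expanded via the logarithmic derivative identity
\[
\frac{\nabla\phi}{\phi} = \frac{(p-1)\nabla f_\sigma}{f_\sigma} - \frac{2(1-\sigma)\nabla|H|}{|H|},
\]
which is the mechanism producing the explicit factor of $(p-1)$ in the $\int f_\sigma^{p-2}|\nabla f_\sigma|^2$ term. Expanding the square and bounding the cross term by Peter--Paul, the contribution proportional to $|\nabla|H||^2$ is absorbed into the $\int\phi|\nabla H|^2$ term using Kato's inequality $|\nabla|H||\le|\nabla H|$ together with the pinching bound $|\ho|^2/|H|^2 \le c$ (valid under the hypotheses of Main Theorem \ref{mthm: main thm 2}), which yields $f_\sigma^p|H|^{-2} \le c\,\phi$. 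The main technical obstacle is the careful bookkeeping of parameters: one must choose the absorption constant $\lambda=2$ exactly, then balance the second Peter--Paul parameter against the expansion of $|\nabla\phi|^2|\ho|^2/\phi$ so that the coefficients assemble into precisely $(p\eta+4)/\epsilon_Z$ and $(p-1)/(\epsilon_Z\eta)$, with the constant $4$ absorbing the $(n-1)/n$ contribution from Codazzi together with the pinching-absorbed residuals.
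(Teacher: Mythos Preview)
There is a genuine gap in the coefficient tracking. After your Peter--Paul step with $\lambda=2$ (and the second one with parameter $\eta$) you are left with a residual
\[
\Big(\tfrac14+\tfrac{1}{2\eta}\Big)\int_\Sigma \frac{|\nabla\phi|^2|\ho|^2}{\phi}\,d\mu_g
=\Big(\tfrac14+\tfrac{1}{2\eta}\Big)\int_\Sigma f_\sigma^{\,p}\,\Big|\frac{\nabla\phi}{\phi}\Big|^2\,d\mu_g.
\]
Your logarithmic-derivative identity $\nabla\phi/\phi=(p-1)\nabla f_\sigma/f_\sigma-2(1-\sigma)\nabla|H|/|H|$ is correct, but \emph{squaring} it produces $(p-1)^2\,f_\sigma^{\,p-2}|\nabla f_\sigma|^2$, not $(p-1)$. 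No Peter--Paul on the cross term removes that square: you end up with a coefficient of order $(p-1)^2\big(\tfrac14+\tfrac{1}{2\eta}\big)$ on $\int f_\sigma^{\,p-2}|\nabla f_\sigma|^2$, so the constant $(p-1)/(\epsilon_Z\eta)$ in the statement is unreachable by this route. The extra power of $p$ matters downstream: feeding a $(p-1)^2$ bound into the argument of Lemma~\ref{l: f_sigma non-increase} forces $\sigma\lesssim \epsilon_Z/p$ rather than $\sigma\lesssim 1/\sqrt{p}$, and since $\epsilon_Z$ is typically much smaller than $n$, the shift $\sigma\mapsto\sigma+n/p$ needed in Corollary~\ref{cor: powers of H absorbed} then falls outside the admissible range.

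The paper obtains the linear dependence by a different organisation: it first writes $\Delta f_\sigma$ using the contracted Simons identity together with $\Delta|H|^2=2|H|\Delta|H|+2|\nabla|H||^2$, and only then multiplies by $f_\sigma^{\,p-1}$ and integrates. The point is that $\nabla|\ho|^2/|H|^{2(1-\sigma)}$ is, up to a $\nabla|H|$ correction, exactly $\nabla f_\sigma$; so the would-be dangerous piece gets absorbed into $\int f_\sigma^{\,p-1}\Delta f_\sigma=-(p-1)\int f_\sigma^{\,p-2}|\nabla f_\sigma|^2$, which has the \emph{good} sign and is simply discarded. Every remaining integration by parts brings down only $\nabla(f_\sigma^{\,p-1})=(p-1)f_\sigma^{\,p-2}\nabla f_\sigma$ once, and the subsequent Cauchy--Schwarz/Young estimates on those four cross terms (each linear in $p$) assemble into $(2p\eta+8)$ and $2(p-1)/\eta$. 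To fix your argument you should replace the crude bound on $\int\nabla\phi\cdot\nabla|\ho|^2$ by this substitution $\nabla|\ho|^2=|H|^{2(1-\sigma)}\nabla f_\sigma+2(1-\sigma)f_\sigma|H|\nabla|H|$, which recovers the sign cancellation.
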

\begin{proof}
Using the contracted form of Simons' indentity and $\Delta\abs{H}^2 = 2\abs{H}\Delta\abs{H} + 2\abs{ \nabla \abs{H} }^2$, the Laplacian of $f_{\sigma}$ can be expressed as
	\begin{align*}
		\Delta f_{\sigma} &= \frac{2}{\abs{H}^{2(1-\sigma)}}\big\langle \ho_{ij}, \nabla_i\nabla_j H \big\rangle + \frac{2}{\abs{H}^{2(1-\sigma)}}Z - \frac{4(1-\sigma)}{\abs{H}}\big\langle \nabla_i\abs{H}, \nabla_i f_{\sigma} \big\rangle - \frac{2(1-\sigma)}{\abs{H}}f_{\sigma}\Delta\abs{H} \\
		&\qquad + \frac{ 2 }{ \abs{H}^{2(1-\sigma)}} \Big( \abs{ \nabla h }^2 - \frac{1}{n}\abs{ \nabla H }^2 \Big) -  \frac{ 2(1-\sigma)(1 - 2\sigma) }{ \abs{H}^2 } f_{\sigma} \abs{ \nabla \abs{H} }^2.
	\end{align*}
The combination of the last two terms is non-negative and we discard them. We multiply the remaining terms by $f_{\sigma}^{p-1}$ and integrate over $\Sigma$.  On the left, and in the last term on line one we use Green's first identity, and in integrating the first term on the right we use the Divergence Theorem and the Codazzi equation.  The term arising from integrating on the left is non-negative and we discard it.  Two other terms arising from the integration combine, ultimately giving
	\begin{align*}
		&2\int_{\Sigma} \frac{f_{\sigma}^{p-1}}{\abs{H}^{2(1-\sigma)}}Z \, d\mu_g \leq  2(p-1)\int_{\Sigma} \frac{f_{\sigma}^{p-2}}{\abs{H}^{2(1-\sigma)}}\big\langle \nabla_i f \cdot \ho_{ij}, \nabla_j H \big\rangle \, d\mu_g \\
		&\qquad - 4(1-\sigma)\int_{\Sigma}\frac{f_{\sigma}^{p-1}}{\abs{H}^{2(\sigma-1)+1}}\big\langle \nabla_i\abs{H} \cdot \ho_{ij}, \nabla_j H \big\rangle \, d\mu_g + \frac{2(n-1)}{n}\int_{\Sigma}\frac{f_{\sigma}^{p-1}}{\abs{H}^{2(1-\sigma)}}\abs{\nabla H}^2 \, d\mu_g \\
		&\qquad - 2(1-\sigma)(p-2)\int_{\Sigma} \frac{f_{\sigma}^{p-1}}{\abs{H}}\big\langle \nabla_i \abs{H}, \nabla_i f_{\sigma} \big\rangle \, d\mu_g + 2(1-\sigma)\int_{\Sigma} \frac{f_{\sigma}^p}{\abs{H}^2}\abs{\nabla \abs{H}}^2 \, d\mu_g.
	\end{align*}
Note the terms with an inner product do not have a sign.  Using the Cauchy-Schwarz and Young inequalities, the inequalities $f_{\sigma} \leq c\abs{H}^{2\sigma}$, $\abs{\nabla \abs{H}}^2 \leq \abs{\nabla H}^2$, $1-\sigma \leq 1$, $c \leq 1$, and $\abs{\ho}^2 = f_{\sigma}\abs{H}^{2(1-\sigma)}$ we estimate each term as follows:
	\begin{align*}
		&2(p-1)\int_{\Sigma} \frac{f_{\sigma}^{p-2}}{\abs{H}^{2(1-\sigma)}}\big\langle \nabla_i f_{\sigma} \cdot \ho_{ij}, \nabla_j H \big\rangle \, d\mu_g \\ &\qquad \leq \frac{p-1}{\eta}\int_{\Sigma} f_{\sigma}^{p-2}\abs{\nabla f_{\sigma}}^2 \, d\mu_g + (p-1)\eta \int_{\Sigma}\frac{f_{\sigma}^{p-1}}{\abs{H}^{2(1-\sigma)}} \abs{ \nabla H}^2 \, d\mu_g; \\
		&-4(1-\sigma)\int_{\Sigma}\frac{f_{\sigma}^{p-1}}{\abs{H}^{2(1-\sigma) +1}}\big\langle \nabla_i\abs{H} \cdot \ho_{ij}, \nabla_j H \big\rangle \, d\mu_g \leq 4 \int_{\Sigma}\frac{f_{\sigma}^{p-1}}{\abs{H}^{2(1-\sigma)}}\abs{\nabla H}^2 \, d\mu_g; \\
		&-2(1-\sigma)(p-2)\int_{\Sigma} \frac{f_{\sigma}^{p-1}}{\abs{H}}\big\langle \nabla_i \abs{H}, \nabla f_{\sigma} \big\rangle \, d\mu_g \\
		&\qquad \leq \frac{p-2}{\mu} \int_{\Sigma} f_{\sigma}^{p-2}\abs{\nabla f_{\sigma}}^2 \, d\mu_g + (p-2)\mu \int_{\Sigma} \frac{ f_{\sigma}^{p-1} }{ \abs{H}^{2(1-\sigma)} }\abs{\nabla H}^2 \, d\mu_g; \\
			&2(1-\sigma)\int_{\Sigma} \frac{f_{\sigma}^p}{\abs{H}^2}\abs{\nabla \abs{H}}^2 \, d\mu_g \leq 2 \int_{\Sigma} \frac{ f_{\sigma}^{p-1} }{ \abs{H}^{2(1-\sigma)} } \abs{ \nabla H }^2 \, d\mu_g.
	\end{align*}
Putting all the estimates together we obtain
	\begin{align*}
		2\int_{\Sigma} \frac{f_{\sigma}^{p-1}}{\abs{H}^{2(1-\sigma)}}Z\, d\mu_g &\leq \big( 6 + \frac{2(n-1)}{n} + (p-1)\eta + (p-2)\mu \big) \int_{\Sigma} \frac{f_{\sigma}^{p-1}}{\abs{H}^{2(1-\sigma)}}\abs{\nabla H}^2 \, d\mu_g \\
		& \qquad + \Big( \frac{p-1}{\eta} + \frac{p-2}{\mu} \Big) \int_{\Sigma} f_{\sigma}^{p-2}\abs{\nabla f_{\sigma}}^2 \, d\mu_g.
	\end{align*}
Our use for this inequality will be to show that sufficiently high $L^p$ norms of $f_{\sigma}$ are bounded.  We are not interested in finding optimal values of $p$ and consequently we are going to be a little rough with the final estimates in order to put the lemma into a convenient form.  Setting $\mu = \eta$, and using $p-2 \leq p-1 \leq p$ and Lemma \ref{l: Z pos} we get
	\begin{equation*}
		2\epsilon_Z\int_{\Sigma} f_{\sigma}^p\abs{H}^2 \leq (2p\eta + 8)\int_\Sigma\frac{f_{\sigma}^{p-1}}{\abs{H}^{2-\sigma}}\abs{\nabla H}^2 \, d\mu_g + \frac{2(p-1)}{\eta}\int_{\Sigma} f_{\sigma}^{p-2}\abs{\nabla f_{\sigma}}^2 \, d\mu_g.
	\end{equation*}
Dividing through by $2\epsilon_Z$ completes the Lemma.
\end{proof}
	
\begin{prop}\label{p: d/dt f_sigma}
For any $p \geq \max\{2, 8/(\epsilon_{\nabla} +1)\}$ we have the estimate
	\begin{equation*}
		\begin{split}
				\frac{d}{dt} \int_{\Sigma} f_{\sigma}^p \, d\mu_g  &\leq -\frac{p(p-1)}{2}\int_{\Sigma}f_{\sigma}^{p-2}\abs{\nabla f_{\sigma}}^2 \, d\mu_g \\
				&\qquad - p\epsilon_{\nabla}\int_{\Sigma}\frac{f_{\sigma}^{p-1}}{\abs{H}^{2(1-\sigma)}}\abs{\nabla H}^2 \, d\mu_g + 2p\sigma\int_{\Sigma}\abs{H}^2f_{\sigma}^p \, d\mu_g. \end{split}
	\end{equation*} 
\end{prop}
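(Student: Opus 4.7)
The plan is to differentiate $\int_{\Sigma}f_{\sigma}^p\,d\mu_g$ using the product rule, then insert the pointwise evolution inequality \eqref{e: evol eqn f_sigma 1} together with $\tfrac{\partial}{\partial t}d\mu_g=-\abs{H}^2 d\mu_g$, and finally integrate by parts once to convert the Laplacian term into a good gradient term. Writing the result out,
\begin{align*}
\frac{d}{dt}\int_{\Sigma}f_{\sigma}^p\,d\mu_g
&\leq \int_{\Sigma}p\,f_{\sigma}^{p-1}\Delta f_{\sigma}\,d\mu_g
+4p(1-\sigma)\!\int_{\Sigma}\!\frac{f_{\sigma}^{p-1}}{\abs{H}}\langle\nabla\abs{H},\nabla f_{\sigma}\rangle\,d\mu_g\\
&\qquad -2p\epsilon_{\nabla}\!\int_{\Sigma}\!\frac{f_{\sigma}^{p-1}}{\abs{H}^{2(1-\sigma)}}\abs{\nabla H}^2\,d\mu_g
+2p\sigma\!\int_{\Sigma}\!f_{\sigma}^p\abs{h}^2\,d\mu_g
-\int_{\Sigma}\!f_{\sigma}^p\abs{H}^2\,d\mu_g,
\end{align*}
and integration by parts gives $\int p f_{\sigma}^{p-1}\Delta f_{\sigma}\,d\mu_g=-p(p-1)\int f_{\sigma}^{p-2}\abs{\nabla f_{\sigma}}^2\,d\mu_g$.

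The next step is to absorb the indefinite cross term $4p(1-\sigma)\,f_{\sigma}^{p-1}\abs{H}^{-1}\langle\nabla\abs{H},\nabla f_{\sigma}\rangle$ into the two good gradient terms by Young's inequality. Writing $2\langle A,B\rangle\leq\eta\abs{A}^2+\eta^{-1}\abs{B}^2$ with $A=f_{\sigma}^{(p-2)/2}\nabla f_{\sigma}$ and $B=f_{\sigma}^{p/2}\nabla\abs{H}/\abs{H}$, together with $\abs{\nabla\abs{H}}\leq\abs{\nabla H}$, produces a term of the form $p\eta\,f_{\sigma}^{p-2}\abs{\nabla f_{\sigma}}^2$ and a term involving $f_{\sigma}^p\abs{\nabla H}^2/\abs{H}^2$. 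Using the identity $f_{\sigma}=\abs{\ho}^2/\abs{H}^{2(1-\sigma)}$ and the pinching consequence $\abs{\ho}^2=\abs{h}^2-\tfrac1n\abs{H}^2\leq(c-\tfrac1n)\abs{H}^2$, one has $f_{\sigma}/\abs{H}^2\leq(c-\tfrac1n)\abs{H}^{-2(1-\sigma)}$, which converts the second term to the admissible form $f_{\sigma}^{p-1}\abs{\nabla H}^2/\abs{H}^{2(1-\sigma)}$.

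Choosing $\eta$ in the Young step so that the $\abs{\nabla f_{\sigma}}^2$ contribution from the cross term absorbs into at most $\tfrac{p(p-1)}{2}f_{\sigma}^{p-2}\abs{\nabla f_{\sigma}}^2$, and the $\abs{\nabla H}^2$ contribution into at most $p\epsilon_{\nabla}\,f_{\sigma}^{p-1}\abs{\nabla H}^2/\abs{H}^{2(1-\sigma)}$, yields the two compatibility conditions $\eta\leq(p-1)/2$ and $\eta\geq\operatorname{const}(1-\sigma)^2(c-\tfrac1n)/\epsilon_{\nabla}$. Both can be satisfied precisely when $p$ exceeds the stated threshold $\max\{2,\,8/(\epsilon_{\nabla}+1)\}$; this is the one genuine obstacle and is what forces the lower bound on $p$. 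The main difficulty is thus not conceptual but bookkeeping: the same Young's inequality has to be tuned to destroy both the $\abs{\nabla f_{\sigma}}^2$ and the $\abs{\nabla H}^2$ portions of the cross term simultaneously, and the constants $(1-\sigma)^2\leq1$ and $c-\tfrac1n\leq 1$ (using the pinching hypothesis) are the inputs that reduce the abstract threshold to the clean bound $8/(\epsilon_{\nabla}+1)$.

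Finally, the remaining reaction term is handled by the pinching hypothesis $\abs{h}^2\leq c\abs{H}^2\leq\abs{H}^2$ (valid since $c\leq\tfrac{4}{3n}\leq 1$), giving $2p\sigma\int f_{\sigma}^p\abs{h}^2\,d\mu_g\leq 2p\sigma\int f_{\sigma}^p\abs{H}^2\,d\mu_g$; the leftover $-\int f_{\sigma}^p\abs{H}^2\,d\mu_g$ coming from the evolution of the measure is nonpositive and is simply discarded. Combining everything produces the asserted inequality.
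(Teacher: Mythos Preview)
Your argument is correct and essentially identical to the paper's: differentiate, drop the good $-\int f_\sigma^p\abs{H}^2\,d\mu_g$ from the measure, integrate the Laplacian by parts, and split the cross term with Young's inequality using exactly your $A$--$B$ decomposition. The paper makes the same conversion $f_\sigma/\abs{H}^2\leq (c-\tfrac1n)\abs{H}^{-2(1-\sigma)}$ (implicitly, via $f_\sigma\leq c\abs{H}^{2\sigma}$ with $c\leq 1$) and then chooses the Young parameter explicitly as $\rho=4/(p-1)$, which is the concrete choice realising your two compatibility conditions; your final estimate $\abs{h}^2\leq\abs{H}^2$ is likewise used silently in the paper.
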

\begin{proof}
Differentiating under the integral sign and substituting in the evolution equations for $f_{\sigma}$ and the measure $d\mu_g$ gives
	\begin{align} 
		\notag\frac{d}{dt} \int_{\Sigma} f_{\sigma}^p \, d\mu_g &= \int_{\Sigma}(pf_{\sigma}^{p-1}\frac{\partial f_{\sigma}}{\partial t} - \abs{H}^2f_{\sigma}^p) \, d\mu_g \\
			 \notag&\leq  \int_{\Sigma}pf_{\sigma}^{p-1}\frac{\partial f_{\sigma}}{\partial t} \, d\mu_g \\
		\begin{split}\label{e: d/dt f_sigma 1}
				&\leq -p(p-1)\int_{\Sigma} f_{\sigma}^{p-2}\abs{\nabla f_{\sigma}}^2 \, d\mu_g + 4(1-\sigma)p\int_{\Sigma} \frac{f_{\sigma}^{p-1}}{\abs{H}}\abs{\nabla \abs{H}}\abs{\nabla f_{\sigma}} \, d\mu_g \\
						&\qquad - 2p\epsilon_{\nabla}\int_{\Sigma}\frac{f_{\sigma}^{p-1}}{\abs{H}^{2(1-\sigma)}}\abs{\nabla H}^2 \, d\mu_g + 2p\sigma\int_{\Sigma}\abs{H}^2f_{\sigma}^p \, d\mu_g.
			\end{split}
	\end{align}
We estimate the second integral by
\begin{equation*}
	\begin{split}
		&4(1-\sigma)p\int_{\Sigma} \frac{f_{\sigma}^{p-1}}{\abs{H}}\abs{\nabla \abs{H}}\abs{\nabla f_{\sigma}} \, d\mu_g \\
		&\qquad \leq \frac{2p}{\rho} \int_{\Sigma} f_{\sigma}^{p-2}\abs{\nabla f_{\sigma}}^2 \, d\mu_g + 2p\rho\int_{\Sigma} \frac{ f_{\sigma}^{p-1} }{ \abs{H}^{2(1-\sigma)} } \abs{\nabla H}^2 \, d\mu_g, \end{split}  
	\end{equation*}
and then substituting this estimate back into \eqref{e: d/dt f_sigma 1} gives
	\begin{align*}
		\frac{d}{dt}\int_{\Sigma} f_{\sigma}^p \, d\mu_g &\leq \Big(-p(p-1) + \frac{2p}{\rho}\Big)\int_{\Sigma} f_{\sigma}^{p-2}\abs{\nabla f_{\sigma}}^2 \, d\mu_g \\
		&\qquad - (2p\epsilon_{\nabla} - 2p\rho )\int_{\Sigma} \frac{f_{\sigma}^{p-1}}{\abs{H}^{2(1-\sigma)}}\abs{\nabla H}^2 \, d\mu_g + 2p\sigma\int_{\Sigma}\abs{H}^2f_{\sigma}^p \, d\mu_g \\
		&= -p(p-1) \Big( 1 - \frac{2}{\rho(p-1)} \Big) \int_{\Sigma} f_{\sigma}^{p-2}\abs{\nabla f_{\sigma}}^2 \, d\mu_g \\
			&\qquad- 2p\epsilon_{\nabla} \Big( 1 - \frac{\rho}{\epsilon_{\nabla}} \Big) \int_{\Sigma} \frac{f_{\sigma}^{p-1}}{\abs{H}^{2(1-\sigma)}}\abs{\nabla H}^2 \, d\mu_g + 2p\sigma\int_{\Sigma}\abs{H}^2f_{\sigma}^p \, d\mu_g.
	\end{align*}
We now want to choose $\rho$ so that $1 - 2/(\rho(p-1)) \geq 1/2$ and $p$ so that $1 - \rho/\epsilon_{\nabla} \geq 1/2$.  Choosing $\rho = 4/(p-1)$ and $p \geq \max\{2, 8/(\epsilon_{\nabla} + 1)\}$ gives the result.
\end{proof}

\begin{lem}\label{l: f_sigma non-increase}
There exist constants $c_8$ and $c_9$ depending only on $\Sigma_0$ such that if $p \geq c_8$ and $\sigma \leq c_9/\sqrt{p}$, then for all time $t \in [0, T)$ we have the estimate
	\begin{equation*}
		\Big( \int_{\Sigma}f_{\sigma}^p \, d\mu_g \Big)^{ \frac{ 1 }{ p } } \leq C_1,
	\end{equation*}
where $C_1$ is a uniform constant.
\end{lem}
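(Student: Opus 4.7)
The plan is to combine Propositions \ref{p: using Z pos} and \ref{p: d/dt f_sigma} so as to absorb the only positive (``bad'') term in the evolution of $\int_{\Sigma}f_{\sigma}^p\,d\mu_g$ into its two negative (``good'') gradient terms. This will yield $\frac{d}{dt}\int_{\Sigma}f_\sigma^p\,d\mu_g\le 0$, whence the $L^p$ norm is nonincreasing and controlled by its initial value.

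Concretely, first I would multiply the inequality of Proposition \ref{p: using Z pos} by $2p\sigma$ and substitute into the differential inequality of Proposition \ref{p: d/dt f_sigma}. This produces
\begin{equation*}
\frac{d}{dt}\!\int_{\Sigma}\!f_\sigma^p\,d\mu_g
\le\Bigl(-\tfrac{p(p-1)}{2}+\tfrac{2p\sigma(p-1)}{\epsilon_Z\eta}\Bigr)\!\int_{\Sigma}\!f_\sigma^{p-2}\abs{\nabla f_\sigma}^2\,d\mu_g
+\Bigl(-p\epsilon_\nabla+\tfrac{2p\sigma(p\eta+4)}{\epsilon_Z}\Bigr)\!\int_{\Sigma}\!\tfrac{f_\sigma^{p-1}\abs{\nabla H}^2}{\abs{H}^{2(1-\sigma)}}\,d\mu_g.
\end{equation*}
The two coefficients on the right are nonpositive iff $\sigma\le \epsilon_Z\eta/4$ and $\sigma\le \epsilon_\nabla\epsilon_Z/(2(p\eta+4))$. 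I would choose $\eta=A/\sqrt{p}$ and optimise: balancing the two constraints (which become $\sigma\lesssim\epsilon_Z A/(4\sqrt p)$ and $\sigma\lesssim\epsilon_\nabla\epsilon_Z/(2A\sqrt p)$ for large $p$) leads to $A=\sqrt{2\epsilon_\nabla}$ and the single requirement $\sigma\le c_9/\sqrt{p}$ for a constant $c_9>0$ depending only on $\epsilon_\nabla$ and $\epsilon_Z$ (hence only on $\Sigma_0$). Taking $c_8:=\max\{2,8/(\epsilon_\nabla+1)\}$ ensures the hypothesis of Proposition \ref{p: d/dt f_sigma} is satisfied.

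Under these choices the differential inequality collapses to $\frac{d}{dt}\int_{\Sigma}f_\sigma^p\,d\mu_g\le 0$, so $\int_{\Sigma_t}f_\sigma^p\,d\mu_g\le\int_{\Sigma_0}f_\sigma^p\,d\mu_g$ for all $t\in[0,T)$. The hypothesis $\abs{H}_{\min}>0$ of Main Theorem \ref{mthm: main thm 2} ensures that at $t=0$ the function $f_\sigma=\abs{\ho}^2/\abs{H}^{2(1-\sigma)}$ is bounded uniformly in $\sigma\in[0,1/2]$ by a constant $M=M(\Sigma_0)$. Hence $\bigl(\int_{\Sigma_t}f_\sigma^p\bigr)^{1/p}\le M\,(\vol\Sigma_0)^{1/p}\le C_1$, with $C_1:=M\max\{1,\vol\Sigma_0\}$, which is the claimed bound.

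The only genuinely subtle step is the scaling. A naive choice $\eta=\text{const}$ forces $\sigma\lesssim 1/p$, which is too restrictive for the subsequent Stampacchia iteration to yield a pointwise bound on $f_\sigma$; one needs $\sigma$ and $p$ linked by $\sigma\sqrt{p}\le c_9$ so that, given a fixed small $\sigma$, one can take $p\sim \sigma^{-2}$ and still have a uniform $L^p$ estimate. Getting this scaling right is the crux, and it is dictated by the $p\eta$ term appearing inside the bracket of Proposition \ref{p: using Z pos}, which reflects how closely the Simons-type identity for $\abs{\ho}^2$ can capture the reaction term $\abs{H}^2 f_\sigma$.
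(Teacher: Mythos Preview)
Your proposal is correct and follows essentially the same route as the paper: combine the two propositions, choose $\eta$ of order $1/\sqrt{p}$, and conclude $\frac{d}{dt}\int f_\sigma^p\,d\mu_g\le 0$. The only minor differences are that the paper sets $\eta=4\sigma/\epsilon_Z$ (making the first constraint an equality and leaving only the second to check), and that your $c_8$ should also include a lower bound on $p$ large enough to absorb the ``$+4$'' in the factor $p\eta+4$---the paper adds the condition $p\ge 64/\epsilon_\nabla$ for exactly this reason.
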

\begin{proof}
Combining Propositions \ref{p: using Z pos} and \ref{p: d/dt f_sigma} we get
\begin{align*}
		\frac{d}{dt}\int_{\Sigma}f_{\sigma}^p \, d\mu_g &\leq -{p(p-1)} \Big( \frac{1}{2} - \frac{2\sigma}{\epsilon_Z\eta} \Big) \int_{\Sigma}f_{\sigma}^{p-2}\abs{\nabla f_{\sigma}}^2 \, d\mu_g \\
		&\qquad - \Big(p\epsilon_{\nabla} - \frac{2p\sigma(p\eta+4)}{\epsilon_Z} \Big) \int_{\Sigma}\frac{f_{\sigma}^{p-1}}{\abs{H}^{2(1-\sigma)}}\abs{\nabla H}^2 \, d\mu_g.
	\end{align*}
Suppose that
	\begin{equation*}
		\sigma \leq \frac{\epsilon_Z}{8}\sqrt{\frac{\epsilon_{\nabla}}{p}}.
	\end{equation*}
Set $\eta = 4\sigma/\epsilon_Z$, then
	\begin{equation*}
		\begin{cases}
			\frac{2\sigma_Z}{\epsilon\eta} = \frac{1}{2} \\
			\frac{2p\sigma_Z (p\eta + 4)}{\epsilon} \leq \frac{1}{4}\sqrt{p\epsilon_{\nabla}}(\frac{1}{2}\sqrt{p\epsilon_{\nabla}} + 4) \leq \frac{p\epsilon_{\nabla}}{2}.
			\end{cases}
	\end{equation*}
For the second last inequality to hold we must assume $p \geq 64/\epsilon_{\nabla}$.  We conclude that
	\begin{equation*}
		\frac{d}{dt}\int_{\Sigma}f_{\sigma}^p \, d\mu_g \leq 0.
	\end{equation*}
This implies the lemma with $c_8 = \max\{2, 8/(\epsilon_{\nabla} +1), 64/\epsilon_{\nabla}\}$, $c_9 = \epsilon_Z\sqrt{\epsilon_{\nabla}}/8$ and $C_1 = (\abs{ \Sigma_0 + 1} ) \max_{ \sigma \in [0, 1/2] } ( \max_{ \Sigma_0 } f_{ \sigma } )$.
\end{proof}

An important corollary of this lemma is the following, which states that for larger values of $p$ and smaller values of $\sigma$, powers of $H$ can be absorbed into $f_{\sigma}$.  This property is key in the final iteration argument.
\begin{cor}\label{cor: powers of H absorbed}
For  $p \geq \max\{c_7, 4n^2c_8^2\}$ and $\sigma \leq (c_8/2)/\sqrt{p})$, the estimate
	\begin{equation*}
		\int\limits_{\Sigma} H^n f_{\sigma}^p \, d\mu_g \leq \int\limits_{\Sigma}f_{\sigma'}^p \, d\mu_g.
	\end{equation*}
holds on $t \in [0, T)$.
\end{cor}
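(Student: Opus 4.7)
The plan is to exploit the algebraic structure of $f_\sigma = \abs{\ho}^2/\abs{H}^{2(1-\sigma)}$ and absorb the factor $\abs{H}^n$ by slightly increasing the exponent $\sigma$. Specifically, I would set
\[
\sigma' := \sigma + \frac{n}{2p},
\]
and then observe via the direct computation
\[
\abs{H}^n f_\sigma^p \;=\; \abs{H}^n\cdot\frac{\abs{\ho}^{2p}}{\abs{H}^{2p(1-\sigma)}} \;=\; \frac{\abs{\ho}^{2p}}{\abs{H}^{2p(1-\sigma)-n}} \;=\; \frac{\abs{\ho}^{2p}}{\abs{H}^{2p(1-\sigma')}} \;=\; f_{\sigma'}^p
\]
that the asserted inequality is actually an identity after this change of parameter. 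So no new integration or analysis is required at this algebraic step.

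The remaining substantive task is to verify that $\sigma'$ still satisfies the smallness hypothesis of Lemma \ref{l: f_sigma non-increase}. Under the assumption $\sigma \leq (c_9/2)/\sqrt{p}$ (writing $c_9$ for the constant appearing in Lemma \ref{l: f_sigma non-increase}), one has
\[
\sigma' \;\leq\; \frac{c_9}{2\sqrt{p}} + \frac{n}{2p}.
\]
Requiring the right-hand side to be at most $c_9/\sqrt{p}$ is equivalent to $n/(2p) \leq c_9/(2\sqrt{p})$, i.e.\ $p \geq n^2/c_9^2$; this is precisely the origin of the quadratic-in-$n$ lower bound on $p$ in the hypothesis. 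Taking $p$ at least the larger of $c_8$ (so the lemma applies at all) and a dimensional multiple of $n^2$ (so the shift $n/(2p)$ is negligible compared with $c_9/\sqrt{p}$) makes $\sigma'$ admissible.

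Once $\sigma'$ lies in the admissible range, a direct application of Lemma \ref{l: f_sigma non-increase} to $f_{\sigma'}$ gives $\int_\Sigma f_{\sigma'}^p \, d\mu_g \leq C_1^p$, uniformly in $t \in [0,T)$, and combining this with the identity above yields the claimed bound on $\int_\Sigma \abs{H}^n f_\sigma^p\, d\mu_g$. The only mild obstacle is purely bookkeeping: matching the constants in the statement of the corollary to those produced by Lemma \ref{l: f_sigma non-increase}, and confirming that the two admissibility conditions (size of $p$ and size of $\sigma$) can be simultaneously enforced; there is no genuine analytic difficulty beyond what is already contained in the previous lemma.
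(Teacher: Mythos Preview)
Your approach is correct and essentially identical to the paper's: define a shifted exponent $\sigma'$ so that $\abs{H}^n f_\sigma^p$ becomes $f_{\sigma'}^p$, then check that $\sigma'$ still lies in the admissible range of Lemma~\ref{l: f_sigma non-increase}. Your choice $\sigma' = \sigma + n/(2p)$ actually gives the exact identity $\abs{H}^n f_\sigma^p = f_{\sigma'}^p$, whereas the paper writes $\sigma' = \sigma + n/p$ (which appears to be a slip, and also accounts for the extra factor of $4$ in the threshold for $p$); otherwise the argument and the bookkeeping with the constants are the same.
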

\begin{proof}
We need $\sigma' = \sigma + n/p \leq c_8/\sqrt{p}$ for sufficiently large $p$ and small $\sigma$ .  Suppose that
$p \geq \max\{c_7, 4n^2/c_8^2\}$ and $\sigma \leq (c_8/2)/\sqrt{p})$.  Then
	\begin{equation*}
		\sigma' = \sigma + \frac{n}{p} \leq \frac{c_8}{2\sqrt{p}} + \frac{1}{\sqrt{p}}\frac{n}{\sqrt{p}} \leq \frac{c_8}{\sqrt{p}}
	\end{equation*}
as required.
\end{proof}

Lemma \ref{l: f_sigma non-increase} shows that sufficiently high $L^p$ norms of $f_{\sigma}$ are bounded.  We now proceed to derive the desired sup bound on $f_{\sigma}$ by a Stampacchia iteration argument.  The argument rests on the following well-known iteration lemma.
\begin{lem}\label{l: Stampacchia}
	Let $\varphi(t)$, $k_0 \leq t < \infty$, be a non-negative and non-increasing function which satisfies
		\begin{equation*}
			\varphi(h) \leq \frac{ C }{ (h-k)^{\alpha} }\abs{\varphi(k)}^{\beta}
		\end{equation*}
for $h > k \geq k_0$, where $C$, $\alpha$, and $\beta$ are positive constants with $\beta > 1$.  Then
	\begin{equation*}
		\varphi(k_0+d) = 0,
	\end{equation*}
where
	\begin{equation*}
		d^{\alpha} = C\abs{ \varphi(k_0) }^{\beta-1}2^{\alpha\beta/(\beta-1)}.
	\end{equation*}
\end{lem}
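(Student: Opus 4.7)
The plan is to prove this by a standard geometric iteration of the hypothesis along a carefully chosen sequence of levels accumulating at $k_0 + d$. I would define the sequence
\[
k_n = k_0 + d\bigl(1 - 2^{-n}\bigr), \qquad n = 0, 1, 2, \dots,
\]
so that $k_0$ is the initial level, $k_n \nearrow k_0 + d$, and the increment $k_{n+1} - k_n = d\,2^{-(n+1)}$ shrinks geometrically. Applying the hypothesis with $h = k_{n+1}$ and $k = k_n$ gives the fundamental recursion
\[
\varphi(k_{n+1}) \leq \frac{C\, 2^{\alpha(n+1)}}{d^{\alpha}}\, \varphi(k_n)^{\beta}.
\]

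Next I would seek a geometric decay bound of the form $\varphi(k_n) \leq \varphi(k_0)\, 2^{-\gamma n}$ and determine $\gamma$. Substituting this ansatz into the recursion and requiring the inductive step to go through forces the exponent of $2$ arising from $(\alpha + \gamma)(n+1) - \gamma\beta n$ to remain bounded in $n$, which pins down $\gamma = \alpha/(\beta - 1)$ (here is where the hypothesis $\beta > 1$ is essential). With this choice, the induction reduces to the single inequality
\[
\frac{C}{d^{\alpha}}\, \varphi(k_0)^{\beta - 1}\, 2^{\alpha\beta/(\beta - 1)} \leq 1,
\]
which is precisely the condition imposed on $d$ in the statement of the lemma. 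Thus by induction on $n$, starting from the trivial base case $n = 0$, we obtain $\varphi(k_n) \leq \varphi(k_0)\, 2^{-\gamma n}$ for every $n$.

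Finally, since $\gamma > 0$ the right-hand side tends to $0$ as $n \to \infty$. Because $\varphi$ is non-increasing and $k_0 + d \geq k_n$ for every $n$, we have $\varphi(k_0 + d) \leq \varphi(k_n)$ for all $n$, and letting $n \to \infty$ yields $\varphi(k_0 + d) \leq 0$. Combined with non-negativity of $\varphi$, this gives $\varphi(k_0 + d) = 0$ as claimed. The only subtle point in the argument is the correct choice of $\gamma$ together with the geometric splitting of the interval $[k_0, k_0 + d]$; the rest is routine induction and a monotonicity limit, and the role of $\beta > 1$ is exactly to ensure $\gamma$ is finite and positive so the iteration converges.
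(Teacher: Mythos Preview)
Your argument is correct and is precisely the standard proof of this iteration lemma; the paper itself does not give a proof but simply refers the reader to \cite{KS00}, where exactly this geometric-level-set argument appears. There is nothing to add.
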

For a proof of this lemma we refer to reader to \cite{KS00}.  Continuing with the iteration argument, set $k_0 := \max_{\sigma \in [0,1/2]} \max_{\Sigma_0} f_{\sigma}$.  For any $k \geq k_0$, define the truncated function $f_{\sigma,k} := \max \{ (f_{\sigma} - k ), 0 \}$ and the set $A(k,t) := \{ x \in \Sigma_t : f_{\sigma,k} > 0 \}$.  In exactly the same manner as Proposition \ref{p: using Z pos} we derive the following evolution equation for $f_{\sigma,k}$:
\begin{equation*}
	\begin{split}
		\frac{d}{dt} \int_{A(k,t)} f_{\sigma,k}^p \, d\mu_g &\leq -\frac{p(p-1)}{2}\int_{A(k,t)}f_{\sigma,k}^{p-2}\abs{\nabla f_{\sigma,k}}^2 \, d\mu_g - p\epsilon_{ \nabla } \int_{A(k,t)}\frac{f_{\sigma,k}^{p-1}}{H^{2(1-\sigma)}}\abs{\nabla H}^2 \, d\mu_g \\
		&\qquad + 2p\sigma \int_{A(k,t)}\abs{H}^2f_{\sigma,k}^p \, d\mu_g. \end{split}
\end{equation*}
For $p \geq 8$ we estimate
	\begin{equation*}
		\frac{ p(p-1) }{2}f_{\sigma,k}^{p-2}\abs{ \nabla f_{\sigma,k} }^2 \geq \abs{ \nabla f_{\sigma,k}^{p/2} }^2,
	\end{equation*}
then setting $v_k = f_{\sigma,k}^{p/2}$ and discarding the second term on the right we get
	\begin{equation}\label{e: Stampacchia it 1}
		\frac{d}{dt} \int_{A(k,t)} v_k^2 \, d\mu_g + \int_{A(k,t)} \abs{ \nabla v_k }^2 \, d\mu_g \leq 2p\sigma \int_{A(k,t)}\abs{H}^2f_{\sigma,k}^p \, d\mu_g.
	\end{equation}
We now make us of the Michael-Simon Sobolev inequality \cite{MS73}, which states that for any function $u \in C_0^{0,1}\negmedspace(\Sigma)$ we have
	\begin{equation*}
		\Big( \int_{\Sigma} \abs{u}^{ \frac{n}{n-1} } \, d\mu_g \Big)^{ \frac{n-1}{n} } \leq C_S \int_{\Sigma} \big( \abs{\nabla u } + \abs{ H } u \big) \, d\mu_g,
	\end{equation*}
where $C_S$ (the Sobolev constant) is a constant that depends only on $n$.  The Michael-Simon Sobolev inequality is a generalisation of the standard Sobolev inequality to functions on a submanifold.  The form of the Michael-Simon Sobolev inequality stated above corresponds to the $p=1$ case of the standard Gagliardo-Nirenberg-Sobolov inequality.  To obtain the inequality in the case $1 < p < n$ we set $v := \abs{u}^{\gamma}$, where $\gamma = p(n-1)/(n-p) > 0$, and after a use of Holder's inequality we find
	\begin{equation*}
		\Big( \int_{\Sigma} \abs{u}^{p^*} \, d\mu_g \Big)^{ \frac{1}{p^*} } \leq C_S\gamma\Big( \int_{\Sigma} \abs{\nabla u}^{p} \, d\mu_g \Big)^{ \frac{1}{p} } +  C_S\Big( \int_{\Sigma} \abs{ H }^n \, d\mu_g \Big)^{ \frac{1}{n} } \Big( \int_{\Sigma} \abs{u}^{p^*} d\mu_g \Big)^{ \frac{1}{p^*} }.
			\end{equation*}
We want to take advantage of the good gradient term on the left of \eqref{e: Stampacchia it 1}, and so we need the Sobolev inequality with $p=2$.  Squaring both sides, using $(a+b)^2 \leq 2(a^2 + b^2)$ and then setting $q = n/(n-2)$ if $n > 2$ or any number finite number if $n = 2$, we obtain
	\begin{equation}\label{e: MS sobolev ineq 1}
		\Big( \int_{\Sigma} v_k^{2q} \, d\mu_g \Big)^{ \frac{1}{q} } \leq c_{10}\Big( \int_{\Sigma} \abs{ \nabla v_k }^2 \, d\mu_g \Big) + c_{11}\Big( \int_{\Sigma} \abs{ H }^n \, d\mu_g \Big)^{2/n} \Big( \int_{\Sigma} v_k^{2q} \, d\mu_g \Big)^{ \frac{1}{q} }.
	\end{equation}
Using that by definition $f_{\sigma}$ is zero outside of $A(k,t)$ and Corollary \ref{cor: powers of H absorbed} we estimate
	\begin{equation}
		\begin{split}\label{e: Stampacchia it 4}
		\Big( \int_{A(k,t)} \abs{ H }^n \, d\mu_g \Big)^{2/n} &\leq \Big( \int_{A(k,t)} \abs{ H }^n \frac{ f_{\sigma}^p }{k^p} \, d\mu_g \Big)^{2/n} \\
		&\leq k^{-2p/n} \Big( \int_{A(k,t)} \abs{ H }^n f_{\sigma}^p \, d\mu_g \Big)^{2/n} \\
		&\leq k^{-2p/n} \Big( \int_{A(k,t)} f_{\sigma'}^p \, d\mu_g \Big)^{2/n} \\
		&\leq \Big( \frac{ (\abs{\Sigma_0} + 1)k_0}{k} \Big)^{2p/n}.
		\end{split}
	\end{equation}
Therefore we can fix a $k_1 > k_0$ sufficiently large such that for all $k \geq k_1$ the second term on the right of \eqref{e: MS sobolev ineq 1} can be absorbed into the left giving
\begin{equation}\label{e: MS Sobolev 2}
	\Big( \int_{\Sigma} v_k^{2q} \, d\mu_g \Big)^{ \frac{1}{q} } \leq c_{12}\Big( \int_{\Sigma} \abs{ \nabla v_k }^2 \, d\mu_g \Big).
\end{equation}
Combining equations \eqref{e: Stampacchia it 1} and \eqref{e: MS Sobolev 2} we obtain
	\begin{equation}\label{e: Stampacchia it 2}
		\frac{ d }{ dt }\int_{A(k,t)} v_k^2 \, d\mu_g + \Big( \int_{A(k,t)} v_k^{2q} \, d\mu_g \Big)^{1/q} \leq c_{13}\int_{A(k,t)} \abs{ H }^2 f_{\sigma}^p \, d\mu_g.
	\end{equation}
Integrating this equation from $t = 0$ until some time $\tau \in [0, T]$ we get
	\begin{equation*}
		\int_{\Sigma_{\tau}} v_k^2 \, d\mu_g - \int_{\Sigma_0} v_k^2 \, d\mu_g + \int_0^{\tau} \Big( \int_{A(k,t)} v_k^{2q} d\mu_g \Big)^{1/q} dt \leq c_{13}\int_0^{\tau} \int_{A(k,t)} \abs{ H }^2 f_{\sigma}^p \, d\mu_g \, dt.
	\end{equation*}
By definition of $v_k$, the integral evaluated at the initial time is zero.  Denote by $t_*$ the time when the first integral on the left achieves its supremum, that is $t_* = \sup_{t \in [0, T]} \int_{\Sigma_t} v_k^2 \, d\mu_g$.  We integrate \eqref{e: Stampacchia it 2} until $t_*$ and until $T$ and add the two inequalites, then discarding two unwanted terms on the left and estimating $t_*$ by $T$ on the right we obtain
	\begin{equation}\label{e: Stampacchia it 3}
		\int_{\Sigma_{t_*}} v_k^2 \, d\mu_g + \int_0^{T} \Big( \int_{A(k,t)} v_k^{2q} d\mu_g \Big)^{1/q} dt \leq c_{13}\int_0^{T} \int_{A(k,t)} \abs{ H }^2 f_{\sigma}^p \, d\mu_g \, dt.
	\end{equation}
We now need to estimate the remaining two integrals on the left.  Recalling the standard interpolation inequality for $L^p$ spaces:
	\begin{equation*}
		\norm{ \cdot }_{q_0} \leq \norm{ \cdot }_1^{\theta} \norm{\cdot}_q^{1-\theta},
	\end{equation*}
where $1 \leq q_0 \leq q$ and $0 \leq \theta \leq 1$, we interpolate with $\theta = 1/q_0$ to get
	\begin{equation*}
		\Big( \int_{A(k,t)} v_k^{2q_0} d\mu_g \Big) \leq \Big( \int_{A(k,t)} v_k^2 \, d\mu_g  \Big)^{q_0-1} \Big( \int_{A(k,t)} v_k^{2q} \, d\mu_g \Big)^{1/q}.
	\end{equation*}
Using the above interpolation inequality, and the Holder and Young inequalities we see
	\begin{align*}
		&\Big( \int_0^{T}\int_{A(k,t)} v_k^{2q_0} \, d\mu_g dt \Big)^{1/q_0} \\
		&\qquad \leq \Big( \int_0^{T} \Big( \int_{A(k,t)} v_k^2 \, d\mu_g  \Big)^{q_0-1} \Big( \int_{A(k,t)} v_k^{2q} \, d\mu_g \Big)^{1/q} dt \Big)^{1/q_0}\\
		&\qquad \leq \sup_{t \in [0,T] } \Big( \int_{A(k,t)} v_k^2 \, d\mu_g  \Big)^{ \frac{q_0-1}{q_0} } \cdot \Big( \int_0^{T} \Big( \int_{A(k,t)} v_k^{2q} \, d\mu_g \Big)^{1/q} dt\Big)^{1/q_0} \\
			&\qquad \leq \sup_{t \in [0,T]} \int_{A(k,t)} v_k^2 \, d\mu_g +   q_0^{-1}\Big( \frac{q_0}{q_0-1}\Big)^{-(q_0-1)} \int_0^{T} \Big( \int_{A(k,t)} v_k^{2q} \, d\mu_g \Big)^{1/q} dt \\
		&\qquad \leq \sup_{t \in [0,T]} \int_{A(k,t)} v_k^2 \, d\mu_g +  \int_0^{T} \Big( \int_{A(k,t)} v_k^{2q} \, d\mu_g \Big)^{1/q} dt,
	\end{align*}
and using this to estimate \eqref{e: Stampacchia it 3} from below we obtain
\begin{equation*}
	\Big( \int_0^{T}\int_{A(k,t)} v_k^{2q_0} \, d\mu_g dt \Big)^{1/q_0} \leq c_{13}\int_0^{T}\int_{A(k,t)} \abs{ H }^2 f_{\sigma}^p \, d\mu_g dt.
\end{equation*}
Set $\norm{ A(k,t) } = \int_0^T\int_{A(k,t)} d\mu_g \, dt$.  Estimating the left from below by the Holder inequality gives
\begin{equation*}
	\Big( \int_0^{T}\int_{A(k,t)} v_k^{2q_0} \, d\mu_g dt \Big)^{1/q_0} \geq \Big( \int_0^{T}\int_{A(k,t)} v_k^2 \, d\mu_g dt \Big) \cdot \norm{A(k,t)}^{ \frac{1}{q_0} - 1},
\end{equation*}
and the right from above with Holder's inequality
	\begin{equation*}
		\int_0^{T}\int_{A(k,t)} v_k^2 \, d\mu_g dt \leq c_{13}\norm{A(k,t)}^{2-1/q_0-1/r} \Big( \int_0^{T} \int_{A(k,t)} \abs{ H }^{2r}f_{\sigma}^{pr} \, d\mu_g \, dt \Big)^{1/r}.
	\end{equation*}
Choose $r$ sufficiently large so that $\gamma := 2 - 1/q_0 - 1/r > 1$, and then by the same argument as \eqref{e: Stampacchia it 4}, the second factor on the right can be bounded by a constant. For $h > k \geq k_1$ we estimate the left from below as follows:
	\begin{equation*}
		\int_0^{T}\int_{A(k,t)} v_k^2 \, d\mu_g dt \geq \abs{h-k}^p\norm{A(h,t)},
	\end{equation*}
finally obtaining
	\begin{equation*}
		\abs{h-k}^p\norm{A(h,t)} \leq c_{14}\norm{A(k,t)}^{\gamma}
	\end{equation*}
which again holds for all $h > k \geq k_1$.  From Lemma \ref{l: Stampacchia} and the defintion of $A(k,t)$ it follows that $f_{\sigma} \leq k_1 + d$, where $d^p = c_52^{p\gamma/(\gamma+1)}\norm{A(k_1,t)}^{\gamma-1}$.   Since $\int_{A(k_1,t)} \, d\mu_g \leq \abs{\Sigma_t} \leq \abs{\Sigma_0}$ and the maximal time of existence $T$ is finite, we conclude that $f_{\sigma} \leq C_0$, where $C_0$ is positive uniform constant, and the theorem follows.

\section{A gradient estimate for the mean curvature}\label{s: A gradient estimate for the mean curvature}

In this section we derive a gradient estimate for the mean curvature.  This will be used in the following section to compare the mean curvature of the submanifold at different points.
\begin{thm}\label{t: gradient estimate}
Under the assumptions of Main Theorem \ref{mthm: main thm 2},
for each $\eta > 0$ there exists a constant $C_{\eta}$ depending only on $\eta$ and $\Sigma_0$ such that the estimate
	\begin{equation}\label{eqn: grad est H}
		\abs{\nabla H}^2 \leq \eta\abs{H}^4 + C_{\eta}
	\end{equation}
holds on $\Sigma \times [0,T)$.

\end{thm}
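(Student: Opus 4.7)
The strategy is to adapt the scale-invariant quantity approach developed in the previous section to the gradient of the mean curvature.  Specifically, for a sufficiently small $\sigma > 0$ to be chosen later, we introduce the auxiliary function
\[
g_\sigma := \frac{\abs{\nabla H}^2}{\abs{H}^{2+2\sigma}}
\]
and aim to establish a uniform $L^\infty$ bound $g_\sigma \leq C$ depending only on $\Sigma_0$.  From such a bound we obtain $\abs{\nabla H}^2 \leq C\abs{H}^{2+2\sigma}$, and since $2+2\sigma < 4$, Young's inequality gives $\abs{H}^{2+2\sigma} \leq \eta\abs{H}^4 + C(\eta,\sigma)$ for any $\eta>0$, yielding the desired estimate.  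The quantity $g_\sigma$ is well-defined since the pinching lemma ensures that $\abs{H}$ remains bounded away from zero along the flow.

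The evolution equation for $g_\sigma$ is derived from the evolution of $\nabla H$ together with the evolution of $\abs{H}^{2+2\sigma}$.  Using Proposition \ref{p: commute time and kth covariant derivative of h} with $m=1$ applied to $H$, one finds
\[
\p_t \abs{\nabla H}^2 = \Delta \abs{\nabla H}^2 - 2\abs{\nabla^2 H}^2 + h\ast h\ast \nabla H\ast \nabla H + H \ast h\ast \nabla h\ast \nabla H,
\]
where the reaction terms are controlled in absolute value by $c\abs{H}^2\abs{\nabla H}^2$ plus terms absorbable via Young's inequality into the good $-\abs{\nabla^2 H}^2$ term (using the pinching $\abs{h}^2\leq c\abs{H}^2$ and the Kato-type bound $\abs{\nabla H}\leq\sqrt{n}\abs{\nabla h}$).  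Combined with $\p_t\abs{H}^{2+2\sigma}$ and chain-rule corrections analogous to those in equation \eqref{e: evol eqn f_sigma 1}, the evolution of $g_\sigma$ takes a form structurally identical to the evolution of $f_\sigma$ in Section \ref{s: A pinching estimate for the traceless second fundamental form}: a good Laplacian term, a good gradient correction $-\,\epsilon\abs{\nabla^2 H}^2/\abs{H}^{2+2\sigma}$, and a bad reaction term of schematic form $c\abs{H}^2 g_\sigma$ balanced against $2\sigma\abs{h}^2 g_\sigma$ arising from differentiating the denominator.

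The argument now proceeds exactly as in the proof of Theorem \ref{t: pinch trace 2ff}.  We multiply by $g_\sigma^{p-1}$ and integrate over $\Sigma_t$.  The contracted Simons' identity \eqref{eq:SimonsContracted} together with the positivity $Z \geq \epsilon_Z\abs{\ho}^2\abs{H}^2$ from Lemma \ref{l: Z pos} furnishes integral control of the bad reaction term $c\abs{H}^2 g_\sigma$ by the good gradient term, provided $p$ is sufficiently large and $\sigma \leq c/\sqrt{p}$ for a constant depending only on $\Sigma_0$.  This yields a uniform $L^p$ bound on $g_\sigma$.  Using the pinching improvement $\abs{\ho}^2\leq C_0\abs{H}^{2-\delta}$ from Theorem \ref{t: pinch trace 2ff} (to absorb lower-order contributions via Corollary \ref{cor: powers of H absorbed}), we then apply the Michael--Simon Sobolev inequality and Stampacchia's iteration lemma \ref{l: Stampacchia} in the same fashion as in Section \ref{s: A pinching estimate for the traceless second fundamental form}, upgrading the $L^p$ bound to a uniform $L^\infty$ bound $g_\sigma \leq C$.

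The main obstacle is not conceptual but technical: one must carefully manage the numerous cross terms of the form $\nabla\abs{H}\cdot\nabla g_\sigma/\abs{H}$ and $H\ast\nabla h\ast\nabla H\ast\nabla H$ that appear in the evolution of $g_\sigma$, and verify that each can be absorbed into either the good Laplacian-gradient term or the Stampacchia iteration framework.  However, every such estimate parallels one already appearing in Propositions \ref{p: using Z pos} and \ref{p: d/dt f_sigma}, with the roles of $\abs{\nabla h}^2$ and $\abs{\nabla H}^2$ interchanged; no genuinely new analytic input beyond the machinery already assembled is required.
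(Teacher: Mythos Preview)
Your proposed approach has a genuine gap, and moreover the paper's actual argument is both different and considerably simpler.

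The structural parallel you claim between $g_\sigma = \abs{\nabla H}^2/\abs{H}^{2+2\sigma}$ and $f_\sigma = \abs{\ho}^2/\abs{H}^{2(1-\sigma)}$ breaks down at the decisive point.  For $f_\sigma$, the dominant reaction terms coming from numerator and denominator combine to give the \emph{nonpositive} quantity $R_1 - \tfrac{\abs{h}^2}{\abs{H}^2}R_2$, leaving only the residual $2\sigma\abs{h}^2 f_\sigma$, which is small because of the $\sigma$.  For $g_\sigma$ there is no such cancellation: the reaction term in $\partial_t\abs{\nabla H}^2$ is of order $\abs{H}^2\abs{\nabla h}^2$, and after dividing by $\abs{H}^{2+2\sigma}$ you are left with a term of order $\abs{H}^2 g_\sigma$ which is \emph{not} $\sigma$-small.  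You then assert that Simons' identity and the positivity $Z\geq\epsilon_Z\abs{\ho}^2\abs{H}^2$ absorb this term, but that machinery (Proposition~\ref{p: using Z pos}) produces control of $\int f_\sigma^p\abs{H}^2$ precisely because $Z$ is bounded below by $\abs{\ho}^2\abs{H}^2$ --- the \emph{numerator} of $f_\sigma$ appears.  There is no analogous lower bound of the form $\epsilon\abs{\nabla H}^2\abs{H}^2$ for any quantity arising from Simons' identity, so the Poincar\'e-type step simply does not apply to $g_\sigma$.  Your iteration has no mechanism to control $\int g_\sigma^p\abs{H}^2$ in terms of the good $\abs{\nabla^2 H}^2$ term.

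The paper instead gives a direct maximum-principle argument, with no integral estimates at all.  One considers
\[
g := \abs{\nabla H}^2 + (N_1 + N_2\abs{H}^2)\abs{\ho}^2 - \eta\abs{H}^4.
\]
The evolution of $(N_1+N_2\abs{H}^2)\abs{\ho}^2$, via the good term $-2(\abs{\nabla h}^2-\tfrac1n\abs{\nabla H}^2)\geq -\tfrac{4(n-1)}{3n}\abs{\nabla h}^2$ in $\partial_t\abs{\ho}^2$, manufactures negative $\abs{\nabla h}^2$ and $\abs{H}^2\abs{\nabla h}^2$ terms that absorb the bad $A\abs{H}^2\abs{\nabla h}^2$ from $\partial_t\abs{\nabla H}^2$ once $N_1,N_2$ are chosen large.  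The $-\eta\abs{H}^4$ term contributes a good $-\tfrac{4\eta}{n}\abs{H}^6$.  The only surviving reaction term is $c(N_1,N_2)\abs{\ho}^2(\abs{H}^4+1)$, and here one invokes the already-proved Theorem~\ref{t: pinch trace 2ff}, $\abs{\ho}^2\leq C_0\abs{H}^{2-\delta}$, together with Young's inequality to bound this by a constant plus something absorbed into $-\tfrac{4\eta}{n}\abs{H}^6$.  One arrives at $\partial_t g\leq \Delta g + c_3$, hence $g\leq c_4$, which is the estimate.  The essential idea you are missing is that the good gradient term needed to absorb $A\abs{H}^2\abs{\nabla h}^2$ comes not from the denominator of a quotient but from \emph{adding} a multiple of $\abs{\ho}^2$ and exploiting $\abs{\nabla h}^2 - \tfrac1n\abs{\nabla H}^2 \geq \tfrac{2(n-1)}{3n}\abs{\nabla h}^2$.
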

We begin by deriving a number of evolution equations.

\begin{prop}
There exists a constant $A$ depending only on $\Sigma_0$ such that
	\begin{equation*}
		\frac{ \p }{ \p t }\abs{ \nabla H }^2 \leq \Delta\abs{ \nabla H }^2 - 2\abs{ \nabla^2 H }^2 +A\abs{H}^2\abs{\nabla h}^2.	
	\end{equation*}
\end{prop}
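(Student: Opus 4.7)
The plan is to begin with the evolution equation \eqref{eq:evolveH} for the mean curvature vector, namely
\[
\nabla_{\partial_t}H=\Delta H + H\cdot h_{pq}h_{pq},
\]
and to differentiate it once in space. Applying $\nabla_k$ to both sides gives
\[
\nabla_k\nabla_{\partial_t}H = \nabla_k\Delta H + (\nabla_k H)\cdot h_{pq}h_{pq} + 2H\cdot h_{pq}\nabla_kh_{pq},
\]
and then I would swap the order of $\nabla_{\partial_t}$ and $\nabla_k$. Because $H\in\Gamma(\mathcal N)$, the commutator is controlled by the timelike Ricci identity \eqref{eq:timelikeRicci}, which in flat ambient space reduces to an expression of the schematic form $\Rp(\partial_t,\partial_k)H = h\ast H\ast h$. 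Commuting $\nabla_k$ with $\Delta$ on a normal-valued one-form brings in further curvature terms of the same schematic form, since the Gauss equation \eqref{eq:spatialGauss2} and the Ricci equation \eqref{eq:Ricci} both express the induced and normal curvatures as $h\ast h$. The upshot is an evolution equation
\[
\nabla_{\partial_t}\nabla_k H = \Delta\nabla_k H \;+\; h\ast h\ast \nabla H \;+\; h\ast H\ast \nabla h.
\]

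Next I would pair this with $\nabla H$ in the bundle metric and use $\nabla g=0$, $\nabla\gp=0$, and $\nabla_{\partial_t}g=0$ (the key benefit of the built-in Uhlenbeck trick) to write
\[
\frac{\partial}{\partial t}\abs{\nabla H}^2 = 2\langle \nabla H, \nabla_{\partial_t}\nabla H\rangle,\qquad \Delta\abs{\nabla H}^2 = 2\langle \nabla H, \Delta\nabla H\rangle + 2\abs{\nabla^2 H}^2.
\]
Subtracting, the two principal terms combine into $\Delta\abs{\nabla H}^2 - 2\abs{\nabla^2 H}^2$, and there remains a reaction part of schematic form
\[
h\ast h\ast \nabla H\ast \nabla H \;+\; h\ast H\ast \nabla h\ast \nabla H.
\]

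Finally I would estimate the reaction part. The pinching Lemma \ref{l: preservation of pinching} gives $\abs{h}^2\leq c\abs{H}^2$, and the basic gradient estimate \eqref{eqn: basic grad est 2} yields $\abs{\nabla H}^2\leq C\abs{\nabla h}^2$. Cauchy--Schwarz then bounds the quartic terms by constant multiples of $\abs{H}^2\abs{\nabla h}^2$: for instance,
\[
\babs{h\ast H\ast \nabla h\ast \nabla H}\leq C\abs{h}\abs{H}\abs{\nabla h}\abs{\nabla H}\leq C'\abs{H}^2\abs{\nabla h}^2,
\]
and similarly the $h\ast h\ast \nabla H\ast \nabla H$ term. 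Combining these estimates yields the stated inequality with $A$ depending only on $c$, hence only on $\Sigma_0$.

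The main bookkeeping obstacle is the commutation step: care is needed to verify that every curvature term produced when commuting $\nabla_{\partial_t}$ with $\nabla_k$, and $\nabla_k$ with $\Delta$, falls into one of the two schematic patterns above, so that the pinching estimate can absorb everything into $A\abs{H}^2\abs{\nabla h}^2$ with no leftover cubic-in-$h$ term appearing with the wrong sign. The connections introduced in Chapter 2 make this bookkeeping routine, since all curvature tensors are expressed directly in terms of $h$ via the Gauss and Ricci equations in the flat ambient setting.
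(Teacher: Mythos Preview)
Your approach is essentially identical to the paper's: differentiate the evolution equation for $H$, commute $\nabla_{\partial_t}$ with $\nabla_k$ via the timelike Ricci identity, commute $\nabla_k$ with $\Delta$ via the spatial Gauss and Ricci equations, and then estimate the resulting $h\ast h\ast\nabla h\ast\nabla h$ reaction terms using Cauchy--Schwarz and the Pinching Lemma. One small slip: the timelike Ricci term $\Rp(\partial_t,\partial_k)H$ in flat ambient space is of schematic form $h\ast\nabla H\ast H$ (equivalently $h\ast\nabla h$ acting on $H$), not $h\ast H\ast h$ as you wrote; but your subsequent schematic $h\ast h\ast\nabla H + h\ast H\ast\nabla h$ for the reaction part of $\nabla_{\partial_t}\nabla_kH$ is correct, so the argument goes through unchanged.
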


\begin{proof}
Differentiating $\abs{ \nabla H }^2$ in time gives
	\begin{align}
		\notag \frac{ \p }{ \p t }\abs{ \nabla H }^2 &= \frac{\p}{\p t} \left\langle\nabla H, \nabla H\right\rangle \\
		\notag &= 2\left\langle \nabla_t\nabla H, \nabla H\right\rangle \\
		\notag &= 2\gp( \nabla_k\nabla_t H + \Rp(\p_k, \p_t)H, \nabla_k H) \\
		&= 2\gp\big( \nabla_k(\Delta H + H \cdot h_{pq}h_{pq}), \nabla_k H \big) + 2\gp\big( \Rp(\p_k, \p_t)H, \nabla_k H \big). \label{eq: gradient est 1}
	\end{align}
To manipulate the last line into the desired form we need the following two formulae:
	\begin{align*}
		\Delta\abs{ H }^2 &= 2\gp( \Delta\nabla_k H, \nabla_k H) + 2\abs{ \nabla^2 H }^2 \\
		\Delta\nabla_k H &= \nabla_k\Delta H + \nabla_p\big( \Rp(\p_k, \p_p)H \big) + \Rp(\p_k, \p_p)\nablap_p H + Rc_{pk}\nablap_p H.
	\end{align*}
Substituting these into \eqref{eq: gradient est 1} and observing that the Gauss equation \eqref{eq:spatialGauss} and the Ricci equation \eqref{eq:Ricci} are of the form $R=h\ast h$ and $\Rp=h\ast h$, and that the timelike Ricci equation \eqref{eq:timelikeRicci} is of the form $\Rp( \cdot ,\partial_t)=h*\nabla h$, we find	
\begin{equation*}
		\frac{ \p }{ \p t }\abs{ \nabla H }^2 = \Delta\abs{ \nabla H }^2 - 2\abs{ \nabla^2 H }^2 + h*h*\nabla h*\nabla h.
	\end{equation*}
The proposition now follows from the Cauchy-Schwarz inequality and the Pinching Lemma.	
\end{proof}

\begin{prop}
For any $N_1$, $N_2 > 0$ we have the estimates
\begin{align}
	&\frac{ \p }{ \p t } \abs{ H }^4 \geq \Delta\abs{ H }^2 - 12\abs{H}^2\abs{ \nabla H }^2 + \frac{4}{n}\abs{H}^6 \label{e: evol H4} \\
	\begin{split}
		&\frac{ \p }{ \p t } \big( (N_1 + N_2\abs{H}^2)\abs{ \ho }^2 \big) \leq \Delta\big( (N_1 + N_2\abs{H}^2)\abs{ \ho }^2 \big) - \frac{ 4(n-1) }{ 3n }(N_2 - 1)\abs{ H }^2\abs{ \nabla h }^2 \\
		&\quad- \frac{ 4(n-1) }{ 3n }(N_1 - c_1(N_2))\abs{ \nabla h }^2 + c_2(N_1, N_2)\abs{ \ho }^2( \abs{ H }^4 + 1 ), \label{e: evol N}
	\end{split}
\end{align}
where $c_1$ and $c_2$ depend only on $\Sigma_0$, $N_1$ and $N_2$.
\end{prop}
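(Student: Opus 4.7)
The plan is to manipulate the evolution equations for $\abs{H}^2$ and for $\abs{\ho}^2 = \abs{h}^2 - \frac{1}{n}\abs{H}^2$ (which is obtained by subtracting $\frac{1}{n}$ times the former from the evolution of $\abs{h}^2$ already derived), then combine them via the product rule, and finally control the unsigned gradient cross terms using Young's inequality together with the preserved pinching from Lemma~\ref{l: preservation of pinching}.

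For \eqref{e: evol H4}, I start from $\p_t\abs{H}^4 = 2\abs{H}^2\,\p_t\abs{H}^2$ and the Bochner-type identity $\Delta\abs{H}^4 = 2\abs{H}^2\Delta\abs{H}^2 + 2\abs{\nabla\abs{H}^2}^2$ (the $\Delta\abs{H}^2$ appearing on the right-hand side of the statement is evidently a typo for $\Delta\abs{H}^4$). Substituting the already established evolution of $\abs{H}^2$ gives
\[
\p_t\abs{H}^4 - \Delta\abs{H}^4 = -2\abs{\nabla\abs{H}^2}^2 - 4\abs{H}^2\abs{\nablap H}^2 + 4\abs{H}^2R_2.
\]
The Kato-type inequality $\abs{\nabla\abs{H}^2}^2 \leq 4\abs{H}^2\abs{\nabla H}^2$ together with $\abs{\nablap H}^2 \leq \abs{\nabla H}^2$ collapses the first two terms into $-12\abs{H}^2\abs{\nabla H}^2$, while the Cauchy--Schwarz inequality $\sum_i(H\cdot h_{ii})^2 \geq \frac{1}{n}(\sum_i H\cdot h_{ii})^2 = \frac{1}{n}\abs{H}^4$ produces $R_2 \geq \frac{1}{n}\abs{H}^4$, as required.

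For \eqref{e: evol N}, set $Q := (N_1 + N_2\abs{H}^2)\abs{\ho}^2$ and apply the product rule and its Laplacian analogue:
\[
(\p_t - \Delta)Q = (N_1 + N_2\abs{H}^2)(\p_t-\Delta)\abs{\ho}^2 + N_2\abs{\ho}^2(\p_t-\Delta)\abs{H}^2 - 2N_2\langle\nabla\abs{H}^2,\nabla\abs{\ho}^2\rangle.
\]
Using $(\p_t-\Delta)\abs{\ho}^2 = -2(\abs{\nabla h}^2 - \tfrac{1}{n}\abs{\nablap H}^2) + 2(R_1-\tfrac{1}{n}R_2)$ and the already-derived gradient estimate $\abs{\nabla h}^2 - \tfrac{1}{n}\abs{\nabla H}^2 \geq \frac{2(n-1)}{3n}\abs{\nabla h}^2$, the good gradient contribution becomes $-\frac{4(n-1)}{3n}(N_1 + N_2\abs{H}^2)\abs{\nabla h}^2$.

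The main obstacle is the unsigned cross term $-2N_2\langle\nabla\abs{H}^2,\nabla\abs{\ho}^2\rangle$. Cauchy--Schwarz together with $\abs{\nabla\abs{H}^2}\leq 2\abs{H}\abs{\nabla H}$ and $\abs{\nabla\abs{\ho}^2}\leq 2\abs{\ho}\abs{\nabla\ho}$ bounds it by $8N_2\abs{H}\abs{\ho}\abs{\nabla H}\abs{\nabla\ho}$. I then apply Young's inequality with weight tuned so that precisely $\frac{4(n-1)}{3n}\abs{H}^2\abs{\nabla h}^2$ of the good gradient term is consumed---this is what produces the shift $N_2\mapsto N_2-1$ in \eqref{e: evol N}. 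The residual contribution is of the form $C(N_2)\abs{\ho}^2\abs{\nabla h}^2$, and invoking the pinching $\abs{\ho}^2 \leq (c-1/n)\abs{H}^2$ together with a second Young split (absorbing the portion at small $\abs{H}$ into the pure $\abs{\nabla h}^2$ part of the good gradient) yields the $(N_1 - c_1(N_2))$ coefficient. Finally, the reaction terms $(N_1+N_2\abs{H}^2)(R_1-\tfrac{1}{n}R_2)$ and $N_2\abs{\ho}^2 R_2$ are handled as follows: a direct substitution of $h_{ij\alpha} = \tfrac{1}{n}H_\alpha\delta_{ij}$ shows $R_1 - \frac{1}{n}R_2$ vanishes at umbilic points, so it is pointwise bounded by a constant multiple of $\abs{\ho}^2\abs{h}^2$, hence by $C\abs{\ho}^2\abs{H}^2$ by pinching; and $R_2 \leq \abs{h}^2\abs{H}^2 \leq c\abs{H}^4$. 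The elementary $\abs{H}^2 \leq \frac{1}{2}(\abs{H}^4+1)$ then delivers the final $c_2(N_1,N_2)\abs{\ho}^2(\abs{H}^4+1)$ term, and the most delicate part of the argument is simply the bookkeeping in the two Young splits above.
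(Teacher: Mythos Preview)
Your treatment of \eqref{e: evol H4} is correct and matches the paper (and you are right that $\Delta\abs{H}^2$ in the statement should be $\Delta\abs{H}^4$). Your expansion of $(\partial_t-\Delta)Q$ and your handling of the reaction terms are also essentially the paper's argument.

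The gap is in your estimate of the cross term $-2N_2\langle\nabla\abs{H}^2,\nabla\abs{\ho}^2\rangle$. After your first Young split you are left with a residual of the form $C(N_2)\abs{\ho}^2\abs{\nabla h}^2$, where $C(N_2)$ grows like $N_2^2$ (this is forced: to make the first piece exactly $\tfrac{4(n-1)}{3n}\abs{H}^2\abs{\nabla h}^2$ you must take the Young parameter of order $1/N_2$). You then propose to bound $\abs{\ho}^2$ using the \emph{basic} pinching $\abs{\ho}^2\leq(c-\tfrac1n)\abs{H}^2$ and a ``second Young split''. But this only converts the residual into $C'(N_2)\abs{H}^2\abs{\nabla h}^2$ with $C'(N_2)\sim N_2^2$, which can never be absorbed into the good term $-\tfrac{4(n-1)}{3n}(N_2-1)\abs{H}^2\abs{\nabla h}^2$; there is no Young inequality that turns $\abs{H}^2$ into a bounded constant plus a term with \emph{smaller} $\abs{H}^2$ coefficient. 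The basic pinching is simply too weak here.

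What the paper actually uses at this step is Theorem~\ref{t: pinch trace 2ff}, the improved pinching $\abs{\ho}^2\leq C_0\abs{H}^{2-\delta}$ with $\delta>0$. Then the cross term is bounded directly by
\[
8N_2\abs{H}\abs{\ho}\abs{\nabla H}\abs{\nabla h}\;\leq\;8N_2\sqrt{n}\sqrt{C_0}\,\abs{H}^{2-\delta/2}\abs{\nabla h}^2,
\]
and now Young's inequality on the scalar $\abs{H}^{2-\delta/2}\leq\epsilon\abs{H}^2+C(\epsilon)$ (which works precisely because the exponent is strictly less than $2$) gives $\tfrac{4(n-1)}{3n}\abs{H}^2\abs{\nabla h}^2+c_1(N_2)\abs{\nabla h}^2$ as required. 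The dependence of $c_1$ on $\Sigma_0$ in the statement is exactly the dependence on $C_0$ and $\delta$ from Theorem~\ref{t: pinch trace 2ff}.
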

\begin{proof}
The evolution equation for $\abs{H}^4$ is easily derived from that of $\abs{H}^2$:
\begin{equation*}
	\frac{ \p }{ \p t } \abs{ H }^4 = \Delta\abs{ H }^2 - 2\abs{ \nabla \abs{H}^2 }^2 - 4\abs{H}^2\abs{ \nabla H }^2 + 4R_2\abs{H}^2.
\end{equation*}
Equation \eqref{e: evol H4} follows from the use of $\abs{ \nabla\abs{H} }^2 \leq \abs{ \nabla H }^2$ and $R_2 \geq 1/n \abs{H}^4$.  To prove \eqref{e: evol N}, from the evolution equations for $\abs{h}^2$ and $\abs{H}^2$ we derive
\begin{align*}
	&\frac{ \p }{ \p t } \big( (N_1 + N_2\abs{H}^2)\abs{ \ho }^2 \big) \\
	&\quad= \Delta\big( (N_1 + N_2\abs{H}^2)\abs{ \ho }^2 \big) - 2N_2 \big\langle \nabla_i \abs{H}^2, \nabla_i \abs{ \ho }^2 \big\rangle - 2N_2\abs{ \ho }^2\abs{ \nabla h }^2 + 2N_2R_2\abs{ \ho }^2  \\
	&\quad- 2(N_1 + N_2\abs{ H }^2)( \abs{ \nabla h }^2 - \frac{1}{n}\abs{ \nabla H }^2 ) + 2(N_1 + N_2\abs{ H }^2 )(R_1 - \frac{1}{n} R_2).
\end{align*}
We estimate the second term on the right as follows:
\begin{align*}
	-2N_2 \big\langle \nabla_i \abs{H}^2, \nabla_i \abs{ \ho }^2 \big\rangle &\leq 8N_2\abs{h}\abs{ \ho }\abs{ \nabla H }\abs{ \nabla h } \\
	&\leq 8N_2\abs{ H }\sqrt{n}\abs{ \nabla h }^2 \sqrt{C_0}\abs{H}^{1 - \delta/2} \\
	&\leq \frac{ 4(n-1) }{3n}\abs{H}^2\abs{ \nabla h }^2 + c_1(N_2)\abs{ \nabla h }^2.
\end{align*}
Using Young's inequality, $R_2 \leq \abs{h}^2\abs{H}^2$, and $R_1 - 1/n \, R_2 \leq 2\abs{ \ho }^2\abs{ h }^2$ we estimate
\begin{equation*}
	2N_2R_2\abs{ \ho }^2  + 2(N_1 + N_2\abs{ H }^2 )(R_1 - \frac{1}{n} R_2) \leq c_2(N_1, N_2)\abs{ \ho }^2(\abs{ H }^4 + 1),
\end{equation*}
and equation \eqref{e: evol N} now follows.
\end{proof}

\begin{proof}[Proof of Theorem \ref{t: gradient estimate}]
Consider $f := \abs{ \nabla H }^2 + (N_1 + N_2\abs{H}^2)\abs{ \ho }^2$.  From the evolution equations derived above we see $f$ satisfies
\begin{align*}
	\frac{ \p }{ \p t } f &\leq \Delta f + A\abs{H}^2\abs{ \nabla h }^2 - \frac{ 4(n-1) }{ 3n }(N_2 - 1)\abs{ H }^2\abs{ \nabla h }^2 - \frac{ 4(n-1) }{ 3n }(N_1 - c_1(N_2))\abs{ \nabla h }^2 \\
	&\quad+ c_2(N_1, N_2)\abs{ \ho }^2 ( \abs{ H }^4 + 1 ).
\end{align*}
Choose $N_2$ large enough to consume the positive term arising from the evolution equation for $\abs{ \nabla H }^2$.  This leaves
\begin{equation*}
	\begin{split}
	\frac{ \p }{ \p t } f &\leq \Delta f - \frac{ 4(n-1) }{ 3n }(N_2 - 1)\abs{ H }^2\abs{ \nabla h }^2 - \frac{ 4(n-1) }{ 3n }(N_1 - c_1(N_2))\abs{ \nabla h }^2 \\
	&\qquad + c_2(N_1, N_2)\abs{ \ho }^2 ( \abs{ H }^4 + 1 ). \end{split}
\end{equation*}
Now consider $g := f - \eta\abs{H}^4$.  From the above evolution equations we have
\begin{align*}
	\frac{ \p }{ \p t } g &\leq \Delta g - \frac{ 4(n-1) }{ 3n }(N_2 - 1)\abs{ H }^2\abs{ \nabla h }^2 - \frac{ 4(n-1) }{ 3n }(N_1 - c_1(N_2))\abs{ \nabla h }^2 \\
	&\quad+ c_2(N_1, N_2)\abs{ \ho }^2 ( \abs{ H }^4 + 1 ) \big) + 12\eta \abs{H}^2\abs{ \nabla H }^2 - \frac{4\eta}{n}\abs{H}^6.
\end{align*}
By choosing $N_2$ sufficiently large the gradient term on the last line can be absorbed, and then we choose $N_1$ larger again to make the $\abs{ \nabla h }^2$ term negative.  We finally discard the negative gradient terms to get
\begin{equation*}
	\frac{ \p }{ \p t }g \leq \Delta g + c_2(N_1,N_2)\abs{ \ho }^2 ( \abs{ H }^4 + 1 )  - \frac{4\eta}{n}\abs{H}^6.
\end{equation*}
Using Theorem \ref{t: pinch trace 2ff} and Young's inequality we further estimate
\begin{equation*}
	\frac{ \p }{ \p t }g \leq \Delta g + c_3,
\end{equation*}
from which we conclude $g \leq c_4$.  The gradient estimate now follows from the definition of $g$.
\end{proof}

\section{Contraction to a point}\label{s: Contraction to a point and convergence}

In Section \ref{s: Higher derivative estimates and long time existence}
we established that MCF has a unique solution on a finite maximal time interval $0 \leq t < T$ determined by the blowup of the second fundamental form.  With the results of the previous two sections in place, we can now show that the diameter of the submanifold approaches zero as $t \rightarrow T$, or put another away, the submanifold is shrinking to a point.  This combined with Theorem \ref{t: MCF exists on maximal time interval} then completes the first part of the Main Theorem \ref{mthm: main thm 2}.

\begin{thm}\label{t: diam goes to zero}
Under the conditions of Main Theorem \ref{mthm: main thm 2},  $\diam \Sigma_t \rightarrow 0$ as $t \rightarrow T$.
\end{thm}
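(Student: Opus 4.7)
The argument follows the scheme of Huisken's original paper, adapted to the high-codimension setting.  The plan is to show first that $\abs{H}_{\min}(t)\to\infty$ as $t\to T$, then that the induced metric shrinks uniformly to zero, and finally that the extrinsic diameter does the same.

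I begin by bounding the intrinsic diameter of $\Sigma_t$ uniformly in $t$.  The pinching condition $\abs{h}^2\leq c\abs{H}^2$ with $c<1/(n-1)$ (strict pinching holds on the region covered by the Pinching Lemma with $a>0$; the equality case in the hypotheses of Main Theorem \ref{mthm: main thm 2} is handled via Proposition \ref{prop:strongMP}) implies, through the Gauss equation \eqref{eq:spatialGauss2}, that the intrinsic Ricci curvature of $\Sigma_t$ is bounded below by $c_R\abs{H}^2 g$ for some $c_R>0$.  Since the Pinching Lemma also ensures $\abs{H}^2_{\min}(t)\geq a/(c-1/n)>0$ uniformly in $t$, Myers' theorem yields a uniform intrinsic diameter bound $\diam_g\Sigma_t\leq D_0$ for all $t\in[0,T)$.

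Next, Theorem \ref{t: MCF exists on maximal time interval} combined with $\abs{h}^2\leq c\abs{H}^2$ gives $\abs{H}_{\max}(t)\to\infty$ as $t\to T$.  Rewriting the gradient estimate from Theorem \ref{t: gradient estimate} as $\abs{\nabla\abs{H}}\leq\sqrt{\eta}\abs{H}^2+\sqrt{C_\eta}$ and integrating $\nabla(1/\abs{H})$ along a minimising geodesic of length at most $D_0$ gives, for any $p,q\in\Sigma_t$,
$$\left|\frac{1}{\abs{H}(p)}-\frac{1}{\abs{H}(q)}\right| \leq \sqrt{\eta}\,D_0 + \frac{\sqrt{C_\eta}\,D_0}{\abs{H}^2_{\min}(t)}.$$
Choosing $\eta$ arbitrarily small and using $\abs{H}_{\max}\to\infty$ forces $\abs{H}_{\min}/\abs{H}_{\max}\to 1$, hence $\abs{H}_{\min}(t)\to\infty$.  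To quantify the blow-up rate, at a maximum point of $\abs{H}^2$ the evolution $\frac{\p}{\p t}\abs{H}^2=\Delta\abs{H}^2-2\abs{\nablap H}^2+2R_2$, together with $\Delta\abs{H}^2\leq 0$, the bound $R_2\geq\abs{H}^4/n$, and $\abs{\nablap H}^2\leq\eta\abs{H}^4+C_\eta$ for $\eta<1/(2n)$, yields $\partial_t\abs{H}^2_{\max}\geq (1/(2n))\abs{H}^4_{\max}$ once $\abs{H}_{\max}$ is large.  ODE comparison combined with finiteness of $T$ then gives $\abs{H}^2_{\max}(t)\geq 2n/(T-t)$, and the comparability with $\abs{H}_{\min}$ yields $\abs{H}^2_{\min}(t)\geq c_1/(T-t)$ for some $c_1>0$ and $t$ near $T$.

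For the final step, the refined pinching $\abs{\ho}^2\leq C_0\abs{H}^{2-\delta}$ from Theorem \ref{t: pinch trace 2ff} together with the decomposition $h_{ij}=\tfrac{1}{n}\abs{H}\nu_H g_{ij}+\ho_{ij}$ (with $\nu_H=H/\abs{H}$) and $\partial_t g_{ij}=-2H\cdot h_{ij}$ give, using the bilinear-form bound $\abs{H\cdot \ho_{ij}}\leq\sqrt{C_0}\abs{H}^{2-\delta/2}g_{ij}$,
$$\frac{\p}{\p t}g_{ij}\leq -\left(\frac{1}{n}\abs{H}^2-\sqrt{C_0}\abs{H}^{2-\delta/2}\right)g_{ij}.$$
For $\abs{H}$ sufficiently large this becomes $\partial_t g_{ij}\leq -(1/(2n))\abs{H}^2_{\min}\,g_{ij}\leq -c_1/(2n(T-t))\cdot g_{ij}$, and integrating from $t_0$ to $t$ yields $g(t)\leq\big((T-t)/(T-t_0)\big)^{c_1/(2n)}g(t_0)$ uniformly in $p\in\Sigma$.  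Hence the intrinsic diameter of $\Sigma_t$ tends to zero as $t\to T$, and since the extrinsic distance is dominated by the intrinsic one, $\diam\Sigma_t\to 0$.

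The main obstacle is the bilinear-form bound on $H\cdot\ho_{ij}$ used in the final step: while Theorem \ref{t: pinch trace 2ff} controls the scalar $\abs{\ho}$, one must show that $H\cdot\ho_{ij}$ is controlled as a $(0,2)$-tensor by $\abs{H}\abs{\ho}\,g_{ij}$ with a controlled constant.  This can be established by working in the frame introduced in Section \ref{l:Preservation of curvature pinching} with $\nu_1=H/\abs{H}$ and $h_1$ diagonalised.  The remaining ingredients---Myers' theorem, the gradient estimate, and ODE comparison---are standard given the preceding results of the chapter.
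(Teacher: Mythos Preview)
Your argument has a genuine gap in the step showing $\abs{H}_{\min}/\abs{H}_{\max}\to 1$.  Integrating $\nabla(1/\abs{H})$ over a geodesic of \emph{fixed} length $D_0$ yields
\[
\frac{1}{\abs{H}_{\min}}-\frac{1}{\abs{H}_{\max}}\;\leq\;\sqrt{\eta}\,D_0+\frac{\sqrt{C_\eta}\,D_0}{\abs{H}^2_{\min}},
\]
but $C_\eta$ depends on $\eta$, and you have no control on how $C_\eta$ blows up as $\eta\to 0$.  Thus you cannot make the right-hand side small without first knowing $\abs{H}_{\min}\to\infty$, which is exactly what you are trying to prove.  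The paper's Lemma~\ref{l: Hmin over Hmax} avoids this circularity by exploiting Bonnet's theorem \emph{along geodesics from the maximum point}: along such a geodesic of length $\pi/(\varepsilon\sigma\abs{H}_{\max})$ the gradient estimate forces $\abs{H}\geq\sigma\abs{H}_{\max}$, hence $K\geq\varepsilon^2\sigma^2\abs{H}^2_{\max}$, and Bonnet then bounds the diameter by this same length.  The crucial point is that the diameter bound itself \emph{improves} as $\abs{H}_{\max}$ grows, so the integration length shrinks in tandem with $1/\abs{H}_{\max}$.

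There is also a smaller issue in your ODE comparison: from $\partial_t\abs{H}^2_{\max}\geq c\abs{H}^4_{\max}$ you obtain only an \emph{upper} bound on the blow-up time, not the lower bound $\abs{H}^2_{\max}\geq c'/(T-t)$.  That lower bound actually comes from the opposite inequality $\partial_t\abs{H}^2_{\max}\leq C\abs{H}^4_{\max}$ (which follows from $R_2\leq c\abs{H}^4$), integrated backwards from the singular time.  Finally, once $\abs{H}_{\min}\to\infty$ is established, your metric-evolution argument is correct but unnecessary: the sectional-curvature lower bound $K_{\min}\geq\varepsilon^2\abs{H}^2$ from Proposition~\ref{p: Chen's result} together with Bonnet's theorem gives $\diam\Sigma_t\leq\pi/(\varepsilon\abs{H}_{\min})\to 0$ directly.
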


The proof is an adaption of Hamilton's use of Myer's Theorem in Section 15 of \cite{rH82}, however here our pinching condition gives a strictly positive lower bound on the sectional curvature of $\Sigma_t$ and we can use Bonnet's Theorem instead.  A proof of Bonnet's Theorem can be found in many places, for example \cite{pP06}.
\begin{thm}
	Let $M$ be a complete Riemannian manifold and suppose that $x\in M$ such that the sectional curvature satisfies $K \geq K_\text{min} > 0$ along all geodesics of length $\pi/\sqrt{K_\text{min}}$ from $x$.  Then $M$ is compact and $\diam M \leq \pi/\sqrt{K_\text{min}}$.
\end{thm}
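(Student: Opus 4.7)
The plan is to prove Bonnet's theorem by the classical second variation argument. First I would argue by contradiction: suppose there exists a point $y\in M$ with $d(x,y) > \pi/\sqrt{K_\text{min}}$. Since $M$ is complete, the Hopf--Rinow theorem guarantees a minimising unit-speed geodesic $\gamma:[0,L]\to M$ from $x$ to $y$ with $L=d(x,y)>\pi/\sqrt{K_\text{min}}$, and by hypothesis the sectional curvature is bounded below by $K_\text{min}$ along $\gamma$.

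Next I would construct a suitable proper variation of $\gamma$ to apply the second variation of arc length. Choose a parallel unit vector field $E(t)$ along $\gamma$ with $E(t)\perp \dot\gamma(t)$, and set $V(t)=\sin(\pi t/L)\,E(t)$, so $V$ vanishes at the endpoints and is a legitimate variation field fixing $x$ and $y$. The second variation formula reads
\begin{equation*}
I(V,V)=\int_0^L \Big(\abs{\nabla_{\dot\gamma}V}^2 - \bar K(\dot\gamma,V)\,\abs{V}^2\Big)\,dt,
\end{equation*}
where $\bar K(\dot\gamma,V)$ denotes the sectional curvature of the plane spanned by $\dot\gamma$ and $V$. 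Since $E$ is parallel and of unit length, $\abs{\nabla_{\dot\gamma}V}^2=(\pi/L)^2\cos^2(\pi t/L)$, while $\abs{V}^2=\sin^2(\pi t/L)$ and $\bar K\geq K_\text{min}$. Therefore
\begin{equation*}
I(V,V)\leq \int_0^L \Big(\tfrac{\pi^2}{L^2}\cos^2(\pi t/L) - K_\text{min}\sin^2(\pi t/L)\Big)\,dt = \tfrac{L}{2}\Big(\tfrac{\pi^2}{L^2}-K_\text{min}\Big).
\end{equation*}
Since $L>\pi/\sqrt{K_\text{min}}$ we have $\pi^2/L^2<K_\text{min}$, so $I(V,V)<0$. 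This contradicts the fact that a minimising geodesic has non-negative second variation under any proper variation, so no such $y$ can exist. The main (and only genuinely delicate) step is verifying the sign of the second variation: one must be careful that the hypothesis is a curvature bound along geodesics of length exactly $\pi/\sqrt{K_\text{min}}$ from $x$, which is precisely what is needed on the interval of length $L$ up to the first time the variation test fails.

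It follows that $d(x,y)\leq \pi/\sqrt{K_\text{min}}$ for every $y\in M$, so $M$ is contained in the closed metric ball of radius $\pi/\sqrt{K_\text{min}}$ about $x$. Since $M$ is complete, the Hopf--Rinow theorem implies closed metric balls are compact, hence $M$ itself is compact. Finally, any two points $p,q\in M$ can be joined to $x$ by geodesics of length at most $\pi/\sqrt{K_\text{min}}$, giving $\diam M \leq 2\pi/\sqrt{K_\text{min}}$ immediately, and applying the same second-variation argument with $\gamma$ chosen between any pair of points (using that the curvature bound holds on all of $M$ by the assumption being global along geodesics from $x$, or by reapplying the argument at each base point) sharpens this to $\diam M\leq \pi/\sqrt{K_\text{min}}$.
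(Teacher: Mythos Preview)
The paper does not give its own proof of this statement; it is quoted as Bonnet's theorem with a reference to Petersen's textbook. Your second-variation argument is exactly the classical proof.

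There is one genuine gap. You assert that the curvature bound holds along the full minimising geodesic $\gamma:[0,L]\to M$ with $L>L_0:=\pi/\sqrt{K_\text{min}}$, but the hypothesis only guarantees $K\geq K_\text{min}$ on the initial segment $[0,L_0]$, so your test field $V(t)=\sin(\pi t/L)\,E(t)$ is supported where the bound is not assumed. You flag this as ``delicate'' but do not resolve it. The standard fix is to take instead $V(t)=\sin(\pi t/L_0)\,E(t)$ on $[0,L_0]$ and extend by zero on $[L_0,L]$; the same computation gives $I(V,V)\leq 0$, and since the index form of a minimising geodesic is positive semidefinite, $V$ must lie in its kernel and hence be a smooth Jacobi field vanishing at both endpoints---impossible, as $V$ has a corner at $t=L_0$.

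Your final bootstrap for the sharp diameter bound is correct in outline but should be spelled out rather than left as a parenthetical: once $d(x,\cdot)\leq L_0$ everywhere we have $M=\bar B(x,L_0)$, and reading the hypothesis as bounding all sectional curvatures at each point on those geodesics (which is how it is used in the paper's application, via Chen's estimate), the bound $K\geq K_\text{min}$ then holds globally on $M$; re-running the second-variation argument from an arbitrary basepoint yields $\diam M\leq L_0$. Without this step the triangle inequality gives only $\diam M\leq 2L_0$.
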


We will also need the following result due to Bang-Yen Chen:
\begin{prop}\label{p: Chen's result}
For $n \geq 2$, if $\Sigma^n$ is a submanifold of $\mathbb{R}^{n+k}$, then at each point $p \in \Sigma^n$ the smallest sectional curvature $K_{\text{min}}$ satisfies
	\begin{equation*}
		K_{\text{min}}(p) \geq \frac{1}{2}\Big( \frac{1}{n-1}\abs{ H }(p)^2 - \abs{ h }(p)^2 \Big).
	\end{equation*}
\end{prop}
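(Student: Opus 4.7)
The plan is to reduce Chen's inequality to a purely algebraic statement about symmetric matrices by exploiting the Gauss equation in the Euclidean ambient. From equation \eqref{eq:spatialGauss2} with $\bar R=0$, if $\{e_i\}$ is any orthonormal basis of $T_p\Sigma$ and $\{\nu_\alpha\}$ any orthonormal basis of $N_p\Sigma$, then the sectional curvature of the coordinate plane $\mathrm{span}(e_i,e_j)$ is $K_{ij}=\sum_\alpha(h_{ii\alpha}h_{jj\alpha}-h_{ij\alpha}^{\,2})$. I will choose the basis so that the minimum sectional curvature at $p$ is attained on $\mathrm{span}(e_1,e_2)$, so $K_{\min}(p)=K_{12}$. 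The target inequality is equivalent to $|H|^2-(n-1)|h|^2\le 2(n-1)K_{12}$, and my strategy is to prove this componentwise in $\alpha$ and then sum.

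For each fixed $\alpha$, write $A^\alpha=(h_{ij\alpha})$, set $a_i=h_{ii\alpha}$, and split $|A^\alpha|^2=\sum_i a_i^{\,2}+2\sum_{i<j}(h_{ij\alpha})^2$. The core algebraic claim is that
\begin{equation*}
\Big(\textstyle\sum_i a_i\Big)^{\!2}-(n-1)\sum_i a_i^{\,2}-2(n-1)\,a_1 a_2 \;\le\; 0.
\end{equation*}
I will verify this by writing $b=a_1+a_2$ and $t=\sum_{i\ge 3}a_i$, expanding to obtain $-(n-2)b^2+2bt+t^2-(n-1)\sum_{i\ge 3}a_i^{\,2}$, and then applying Cauchy-Schwarz $(\sum_{i\ge 3}a_i)^2\le (n-2)\sum_{i\ge 3}a_i^{\,2}$ to get $-\tfrac{1}{n-2}\bigl[(n-2)b-t\bigr]^2\le 0$ (the case $n=2$ is trivial since both sides vanish).

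Summing the displayed inequality over $\alpha$ and using $|H|^2=\sum_\alpha(\mathrm{tr}\,A^\alpha)^2$ yields $|H|^2-(n-1)\sum_{\alpha,i}(h_{ii\alpha})^2\le 2(n-1)\sum_\alpha h_{11\alpha}h_{22\alpha}$. Substituting $(n-1)|h|^2=(n-1)\sum_{\alpha,i}(h_{ii\alpha})^2+2(n-1)\sum_{\alpha,i<j}(h_{ij\alpha})^2$ and grouping the $-2(n-1)(h_{12\alpha})^2$ terms with $2(n-1)h_{11\alpha}h_{22\alpha}$ produces exactly $2(n-1)K_{12}$, while the leftover terms $-2(n-1)\sum_{\alpha}\sum_{(i,j)\ne(1,2),\,i<j}(h_{ij\alpha})^2$ are nonpositive and can be dropped. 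This gives $|H|^2-(n-1)|h|^2\le 2(n-1)K_{\min}$, which rearranges to the stated bound.

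The only non-bookkeeping step is the algebraic inequality in the middle paragraph, and that is where I expect any difficulty to lie — everything else is organizing the Gauss equation and separating diagonal from off-diagonal contributions in $|h|^2$. Once the Cauchy-Schwarz estimate on the $i\ge 3$ tail is recognized, the proof collapses to a single completed square.
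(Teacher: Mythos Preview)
Your proof is correct. The paper does not supply its own argument for this proposition but simply cites Chen \cite{byC93}*{Lemma 3.2}; what you have written is essentially Chen's original proof, reducing the Gauss-equation estimate to the algebraic inequality $\bigl(\sum_i a_i\bigr)^2 - (n-1)\sum_i a_i^{\,2} - 2(n-1)a_1 a_2 \le 0$ and dispatching that via Cauchy--Schwarz on the tail $i\ge 3$.
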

The proof is a consequence of careful estimation of terms appearing in the Gauss equation and can be found in \cite{byC93}*{Lemma 3.2}  Combining this with our pinching assumption we see
\begin{equation}\label{e: Kmin}
	K_{\text{min}}(p) \geq \frac{1}{2}\Big( \frac{1}{n-1} - c \Big)\abs{ H }(p)^2 = \epsilon^2\abs{ H }(p)^2 > 0.
\end{equation}

\begin{lem}\label{l: Hmin over Hmax}
	The ratio $\abs{H}_{\text{max}}/\abs{H}_{\text{min}} \rightarrow 1$ as $t \rightarrow T$.
\end{lem}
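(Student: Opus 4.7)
My plan is to combine Bonnet's theorem (already invoked in the proof of Theorem~\ref{t: diam goes to zero}) with the gradient estimate of Theorem~\ref{t: gradient estimate} to bound the oscillation of $\abs{H}$ by a quantity that vanishes at the singular time. The set-up first requires $\abs{H}_{\text{min}}(t)\to\infty$ as $t\to T$: starting from the evolution equation $\p_t\abs{H}^2=\Delta\abs{H}^2-2\abs{\nablap H}^2+2R_2$ and applying the Kato-type inequality $\abs{\nablap H}^2\ge\abs{\nabla\abs{H}^2}^2/(4\abs{H}^2)$, the maximum principle at a spatial minimum of $\abs{H}^2$ (where $\nabla\abs{H}^2=0$ and $\Delta\abs{H}^2\ge0$) gives the ODE inequality $\tfrac{d}{dt}\abs{H}_{\text{min}}^2\ge\tfrac{2}{n}\abs{H}_{\text{min}}^4$, so $\abs{H}_{\text{min}}$ blows up by a finite time. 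Combined with the bound $K_{\min}(p)\ge\epsilon^2\abs{H}(p)^2$ from Proposition~\ref{p: Chen's result} and the pinching (where $\epsilon^2:=\tfrac12(\tfrac{1}{n-1}-c)>0$; the borderline case $c=1/(n-1)$ in dimensions $n\geq4$ is handled via Proposition~\ref{prop:strongMP} applied to any positive time), Bonnet's theorem yields $\diam(\Sigma_t)\le\pi/(\epsilon\abs{H}_{\text{min}}(t))$.

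For any two points $p,q\in\Sigma_t$ joined by a minimising geodesic, integrating $\abs{\nablap H}$ along the geodesic and applying the gradient estimate yields, for any $\eta>0$,
\begin{equation*}
\bigl|\abs{H}(p)-\abs{H}(q)\bigr|\le \diam(\Sigma_t)\sup_{\Sigma_t}\abs{\nablap H}\le \frac{\pi}{\epsilon\abs{H}_{\text{min}}}\bigl(\sqrt{\eta}\,\abs{H}_{\text{max}}^2+\sqrt{C_\eta}\bigr).
\end{equation*}
Specialising to $p,q$ attaining the extrema of $\abs{H}$ and dividing by $\abs{H}_{\text{min}}$, with $R(t):=\abs{H}_{\text{max}}(t)/\abs{H}_{\text{min}}(t)$ I obtain the master inequality
\begin{equation*}
R(t)-1\le \frac{\pi\sqrt{\eta}}{\epsilon}R(t)^2+\frac{\pi\sqrt{C_\eta}}{\epsilon\abs{H}_{\text{min}}(t)^2}.
\end{equation*}

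The argument then has two steps. Writing $a=\pi\sqrt{\eta}/\epsilon$ and $b(t)=\pi\sqrt{C_\eta}/(\epsilon\abs{H}_{\text{min}}(t)^2)$, the master inequality is equivalent to the quadratic inequality $aR^2-R+(1+b(t))\ge 0$, which (whenever the discriminant is positive) forbids $R(t)$ from entering the open interval between the two roots $R_\pm(\eta,b(t))$. I first fix $\eta=\eta_0>0$ small enough that at $t=0$ the discriminant is positive and $R(0)<R_+(\eta_0,b(0))$; this is possible because $R_+\to\infty$ as $\eta_0\to 0$ with the initial data held fixed. Continuity of $R(t)$ in $t$, together with continuity of the roots through $b(t)$, rules out jumps across the forbidden interval, so $R(t)\le R_-(\eta_0,b(t))$ for all $t\in[0,T)$; since $b(t)\to0$ as $t\to T$, this gives a uniform bound $R(t)\le R_1$ depending only on the initial data. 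With $R$ uniformly bounded, I reapply the master inequality for arbitrary $\eta>0$ to obtain $\limsup_{t\to T}(R(t)-1)\le (\pi\sqrt{\eta}/\epsilon)R_1^2$, and letting $\eta\to 0$ forces $R(t)\to 1$.

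The main obstacle is the bootstrap step, where one must verify that a single choice of $\eta_0$ simultaneously makes the discriminant positive and places $R(0)$ below the larger root $R_+$ at the initial time; everything else is then driven by the fact that $b(t)\to 0$. This can always be arranged for $\eta_0$ sufficiently small in terms of $\abs{H}_{\text{min}}(0)$ and $R(0)$, which is enough since the final bound $R(t)\le R_1$ is allowed to depend on initial data.
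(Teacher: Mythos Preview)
Your argument has a genuine gap at the very first step, where you claim $\abs{H}_{\text{min}}(t)\to\infty$. The Kato inequality $\abs{\nablap H}^2\ge\abs{\nabla\abs{H}}^2$ is a \emph{lower} bound on $\abs{\nablap H}^2$; at a spatial minimum of $\abs{H}^2$ it tells you only that $\abs{\nablap H}^2\ge 0$, which does nothing to dispose of the bad term $-2\abs{\nablap H}^2$ in the evolution equation. In codimension one this term really is the squared gradient of a scalar and vanishes at the minimum, but in higher codimension $H$ is normal-bundle valued and $\nablap H$ can be nonzero where $\abs{H}$ is stationary (the direction of $H$ may rotate even though its length is critical). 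The ODE inequality $\tfrac{d}{dt}\abs{H}_{\text{min}}^2\ge\tfrac{2}{n}\abs{H}_{\text{min}}^4$ is therefore unjustified, and with it the global diameter bound $\diam(\Sigma_t)\le\pi/(\epsilon\abs{H}_{\text{min}})$ on which your master inequality rests. (As a secondary matter, your bootstrap step also hides a dependence you do not address: $b(0)$ involves $C_{\eta_0}$, so ``initial data held fixed'' is not quite accurate as $\eta_0\to 0$.)

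The paper avoids this circularity by working from $\abs{H}_{\text{max}}$, which \emph{is} known to blow up (Theorem~\ref{t: MCF exists on maximal time interval} together with the pinching $\abs{h}^2\le c\abs{H}^2$). For any $\sigma\in(0,1)$ one chooses $\eta$ so that, for $t$ close enough to $T$, the gradient bound reads simply $\abs{\nabla H}\le\eta\abs{H}_{\text{max}}^2$. Starting at a point $x$ with $\abs{H}(x)=\abs{H}_{\text{max}}$ and integrating along geodesics of length $L=\pi/(\epsilon\sigma\abs{H}_{\text{max}})$ from $x$, one obtains $\abs{H}\ge\sigma\abs{H}_{\text{max}}$ and hence $K\ge\epsilon^2\sigma^2\abs{H}_{\text{max}}^2$ along those geodesics. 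The version of Bonnet's theorem based at $x$ then forces $\diam\Sigma_t\le L$, so these geodesics already cover the whole submanifold and $\abs{H}_{\text{min}}\ge\sigma\abs{H}_{\text{max}}$. The blow-up of $\abs{H}_{\text{min}}$ is thus a \emph{consequence} of the lemma, not an input to it.
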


\begin{proof}
From Theorem \ref{t: gradient estimate} we know that for each $\eta > 0$ there exists a constant $C_{\eta}$ such that $\abs{\nabla H} \leq \eta\abs{H}^{2} + C_{\eta}$ on $0 \leq t < T$.  Since $\abs{H}_{\text{max}} \rightarrow \infty$ as $t \rightarrow T$, there exists a $\tau(\eta)$ such that $C_{\eta/2} \leq 1/2\eta\abs{H}^{2}_{\text{max}}$ for all $\tau \leq t < T$, and so $\abs{\nabla H} \leq \eta\abs{H}^{2}_{\text{max}}$ for all $t \geq \tau$.  
For any $\sigma\in(0,1)$ choose $\eta = \frac{\sigma(1-\sigma)\varepsilon}{\pi}$.  Let $t\in [\tau(\eta),T)$, and let $x$ be a point with $\abs{ H }(x) =\abs{H}_\text{max}$.  Then along any geodesic of length $\frac{\pi}{\varepsilon\sigma H_\text{max}}$ from $x$ we have $\abs{H}\geq \abs{H}_\text{max}-\frac{\pi}{\varepsilon\sigma \abs{H}_\text{max}}\eta\abs{H}^2_\text{max}=\sigma\abs{H}_\text{max}$, and consequently the sectional curvatures satisfy $K\geq \varepsilon^2\sigma^2\abs{H}_\text{max}^2$.  The Bonnet Theorem applies to prove that $\diam M\leq \frac{\pi}{\varepsilon\sigma H_\text{max}}$, so that $\abs{H}_\text{min}\geq \sigma\abs{H}_\text{max}$ on the entire submanifold $\Sigma_t$ for all $t \in [\tau(\eta),T)$.
\end{proof}
Since $\abs{ H }_{\text{max}} \rightarrow \infty$ as $t \rightarrow T$, the last lemma show that the same is also true for $\abs{ H }_{\text{min}}$.  Bonnet's Theorem now implies that $\diam \Sigma_t \rightarrow 0$ as $t \rightarrow T$, which completes the proof of the first part of Main Theorem \ref{mthm: main thm 2}.

\section{The normalised flow and convergence to the sphere}\label{sec: The normalised flow and and convergence to the sphere}

The second part of the Main Theorem \ref{mthm: main thm 2} deals with the asymptotic shape of the evolving submanifold as $t \rightarrow T$.  Here we shall show that a suitably normalised flow exists for all time and that the (normalised) submanifold converges to a sphere as time approaches infinity.  This clarifies the sense in which un-normalised submanifold shrinks to a `round' point.

We denote quantities pertaining to the normalised flow by a tilde.  We are going to define the normalised flow in such a way so that the size of the area of the evolving submanifold remains constant.  We do this by multiplying the solution of MCF at each time by a positive constant $\psi(t)$ so that the measure of the the normalised submanifold $\tilde{\Sigma}_t$ is equal to the measure of the initial submanifold $\Sigma_0$:
\begin{equation*}
	\tilde{F}( \cdot, t ) = \psi(t) F(\cdot, t)
\end{equation*}
such that
\begin{equation}\label{e: normalised 1}
	\int_{\tilde{ \Sigma} } d\mu_{ \tilde{g}(t) } = \abs{ \Sigma_0 }.
\end{equation}
The above rescaling is so far only a rescaling in space.
\begin{prop}\label{p: rescaling}
Suppose we rescale an immersion $F$ by $\tilde{F} := \psi \cdot F$, where $\psi$ is a positive constant.  Then various geometric quantities rescale as follows:
\begin{gather*}
	\tilde{g} = \psi^2 g \\
	\tilde{h} = \psi h \\
	\tilde{H} = \psi^{-1} H \\
	\abs{ \tilde{h} }^2 = \psi^{-2} \abs{h}^2 \\
	  d\tilde{\mu}_{ \tilde{g}(t) } = \psi^n d\mu_{ g(t) } \\
	  \tilde{ \nabla } = \nabla \\
	  \tilde{ \Delta } = \psi^{-2} \Delta.
\end{gather*}
\end{prop}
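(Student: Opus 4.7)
The plan is to verify each identity directly from the relevant definitions, using that $\psi$ is a spatial constant, so its tangential derivatives vanish and it commutes with every spatial differentiation.

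I would start with the metric: $\tilde g(u,v) = \bar g(\tilde F_*u, \tilde F_*v) = \bar g(\psi F_*u, \psi F_*v) = \psi^2 g(u,v)$, which immediately gives $\tilde g^{ij} = \psi^{-2}g^{ij}$ and $\sqrt{\det\tilde g} = \psi^n\sqrt{\det g}$, hence $d\tilde\mu_{\tilde g} = \psi^n d\mu_g$. Since the Christoffel symbols $\Gamma^k_{ij} = \tfrac{1}{2}g^{kl}(\partial_ig_{jl}+\partial_jg_{il}-\partial_lg_{ij})$ are homogeneous of degree zero in the metric components, the inverse metric factor $\psi^{-2}$ cancels the overall factor $\psi^2$ coming from the derivatives of the scaled metric, so $\tilde\Gamma^k_{ij} = \Gamma^k_{ij}$ and therefore $\tilde\nabla = \nabla$. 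The Laplacian then follows by contraction: $\tilde\Delta u = \tilde g^{ij}\tilde\nabla_i\tilde\nabla_j u = \psi^{-2}g^{ij}\nabla_i\nabla_j u = \psi^{-2}\Delta u$.

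For the second fundamental form, the key observation is that the tangent and normal subspaces of $F^*T\mathbb{R}^{n+k}$ at each point are preserved as linear subspaces under multiplication by the positive scalar $\psi$. Consequently the orthogonal projection onto the normal bundle is represented by the same linear map for $F$ and for $\tilde F$, and one obtains
\begin{equation*}
\tilde h(u,v) = \pi^\perp\bigl({}^F\bar\nabla_u(\psi F_*v)\bigr) = \psi\,\pi^\perp({}^F\bar\nabla_u F_*v) = \psi\,h(u,v).
\end{equation*}
Tracing against $\tilde g$ yields $\tilde H = \tilde g^{ij}\tilde h_{ij} = \psi^{-2}\cdot\psi\,g^{ij}h_{ij} = \psi^{-1}H$. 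For the squared norm, note that the induced metric $\tilde g^\perp$ on the normal bundle, being inherited from the ambient Euclidean metric via the inclusion $\iota$, rescales as $\psi^2$ as well. Hence
\begin{equation*}
|\tilde h|^2 = \tilde g^{ik}\tilde g^{jl}\tilde g^\perp_{\alpha\beta}\tilde h^\alpha_{ij}\tilde h^\beta_{kl} = \psi^{-2}\cdot\psi^{-2}\cdot\psi^2\cdot\psi^2\,|h|^2 = \psi^{-2}|h|^2.
\end{equation*}

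There is no substantive obstacle: every identity reduces to bookkeeping of powers of $\psi$ once it is recognised that $\psi$ commutes with every spatial derivative and that the tangent and normal directions as subspaces of $\mathbb{R}^{n+k}$ are unaffected by the dilation. The only point worth attention is the consistent scaling of the normal bundle metric along with the ambient metric; this is what makes the exponents balance in the computation of $|\tilde h|^2$, and what ensures that the trace giving $\tilde H$ picks up exactly one inverse power of $\psi$.
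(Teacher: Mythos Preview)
Your approach is essentially identical to the paper's---direct verification of each identity from the definitions---and most of it is fine. However, your computation of $|\tilde h|^2$ contains a genuine error.

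You claim that the normal metric $\tilde g^\perp$ rescales by $\psi^2$. This is incorrect: the ambient Euclidean metric on $\mathbb{R}^{n+k}$ is fixed, and (as you yourself observe) the normal subspace at each point is unchanged as a linear subspace, so the restriction of the Euclidean metric to it is unchanged. In the paper's language, $\tilde\nu_\alpha = \nu_\alpha$, so $\tilde g^\perp_{\alpha\beta} = \delta_{\alpha\beta} = g^\perp_{\alpha\beta}$. Moreover, your displayed arithmetic $\psi^{-2}\cdot\psi^{-2}\cdot\psi^2\cdot\psi^2 = \psi^{-2}$ is simply wrong; that product equals $1$. The correct bookkeeping is
\[
|\tilde h|^2 = \tilde g^{ik}\tilde g^{jl}\,\tilde g^\perp(\tilde h_{ij},\tilde h_{kl}) = \psi^{-2}\cdot\psi^{-2}\cdot\psi^2\,g^{ik}g^{jl}g^\perp(h_{ij},h_{kl}) = \psi^{-2}|h|^2,
\]
where the single factor $\psi^2$ comes from $\tilde h_{ij}\cdot\tilde h_{kl} = \psi^2\,h_{ij}\cdot h_{kl}$ and the normal metric contributes nothing. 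So you reach the right answer, but via two cancelling mistakes; the ``only point worth attention'' you flagged is precisely where your argument is inconsistent.
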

\begin{proof}
Beginning with the metric, we have
\begin{align*}
	\tilde{g}_{ij} &= \big\langle \tilde{F}_*\p_i, \tilde{F}_*\p_j \big\rangle \\
	&= \psi^2 \big\langle F_*\p_i, F_*\p_j \big\rangle \\
	&= \psi^2 g_{ij}.
\end{align*}
We also have $\tilde{g}^{-1} = \psi^{-2 } g^{-1}$.  For the second fundamental form, using Gauss' formula and noting $\tilde{ \nu }_{ \alpha } = \nu_{ \alpha }$ we have
\begin{align*}
	\tilde{h}_{ij}{^{\alpha}} &= -\Big\langle \frac{ \p \tilde{F}^{ \alpha } }{ \p x^i \p x^j }, \tilde{\nu}_{ \alpha } \Big\rangle \\
	&= - \psi \Big\langle \frac{ \p F^{ \alpha } }{ \p x^i \p x^j }, \nu_{ \alpha } \Big\rangle \\
	&= \psi h_{ij}{^{\alpha}},
\end{align*}
which is just $\tilde{h}_{ij} = \psi h_{ij}$.  The mean curvature follows from the inverse metric and the second fundamental form:
\begin{equation*}
	\tilde{H} = \tilde{g}^{ij} \tilde{h}_{ij} = \psi^{-1} H.
\end{equation*}
Similarly for the length squared of the second fundamental form we get
\begin{equation*}
	\abs{ \tilde{h} }^2 = \tilde{g}^{ik}\tilde{g}^{jl}\tilde{h}_{ij}\tilde{h}_{kl} = \psi^{-2}\abs{h}^2.
\end{equation*}
For the measure we have
\begin{equation*}
	d\tilde{\mu}_{ \tilde{g}(t) } = \sqrt{ \text{det } \tilde{g}_{ij} } = \psi^n \sqrt{ \text{det } g_{ij} } = \psi^n d\mu_{ g(t) }.
\end{equation*}
The Christoffel symbols are scale-invariant:
\begin{align*}
	\tilde{ \Gamma }_{ij}^k &= \frac{1}{2} \tilde{g}^{kl} ( \p_i \tilde{g}_{jk} + \p_j \tilde{g}_{ik} - \p_k \tilde{g}_{ij} ) \\
	&= \frac{1}{2} \psi^{-2} g^{kl} ( \psi^2 \p_i g_{jk} + \psi^2 \p_j g_{ik} - \psi^2 \p_k g_{ij} ) \\
	&= \Gamma_{ij}^j,
\end{align*}
and thus so too is the connection.  Finally, the Laplacian is given by
\begin{equation*}
	\tilde{ \Delta } = \tilde{g}^{ij}\tilde{ \nabla }_i \tilde{ \nabla }_j = \psi^{-2} g^{ij} \nabla_i \nabla_j = \psi^{-2} \Delta.
\end{equation*}
\end{proof}
We now derive the evolution equation for the normalised flow with respect to the time variable $t$.  Differentiating \eqref{e: normalised 1} with respect to $t$ we have
\begin{align*}
	\frac{d}{dt} \int_{ \Sigma }  \, d\tilde{\mu}_{\tilde{g}(t)} &= \int_{ \Sigma } \frac{d}{dt} d\tilde{\mu}_{ \tilde{g}(t) } \\
	&= 	 \int \frac{d}{dt} \big( \psi^{n} d\mu_{ g(t) } \big) \\
	&=  \int \Big( n\psi^{n-1} \frac{ d }{ dt } \psi \cdot d\mu_{ g(t) } - \psi^n \abs{H}^2 \Big) \, d\mu_{ g(t) } = 0,
\end{align*}
which implies
\begin{equation*}
	\psi^{-1} \frac{ \p \psi }{ \p t } = \frac{1}{n} \frac{ \int \abs{H}^2 \, d\mu_{ g(t) } }{ \int d\mu_{ g(t) } },
\end{equation*}
where the last line follows because the rescaling factor is a function of time and the integration is over spatial variables.  Define the average of the squared length of the mean curvature over the submanifold by
\begin{equation*}
	 \hbar := \frac{ \int \abs{H}^2 \, d\mu_{ g(t) } }{ \int d\mu_{ g(t) } }.
\end{equation*}
The evolution equation for the normalised flow with respect to the time variable $t$ is now given by
\begin{align*}
	\frac{ \p \tilde{F} }{ \p t } &= \frac{ \p \psi }{ \p t } F + \psi \frac{ \p F }{ \p t } \\
	&= \psi^2 \Big( \tilde{H} + \frac{1}{n} \tilde{\hbar}  \tilde{F} \Big).
\end{align*}
We now rescale in time to divide out the factor of $\psi^2$ in the above equation.  Note that from now on a tilde represents a rescaling in both space and time, and not only a rescaling in space as was previously the case.  We define the rescaled time variable by
\begin{equation*}
	\tilde{t}(t) := \int_0^t \psi^2(\tau) \, d\tau,
\end{equation*}
and so $\p \tilde{t}/ \p t = \psi^2$.  We now have
\begin{align*}
	\frac{ \p \tilde{F} }{ \p \tilde{t} } &= \psi^{-2} \frac{ \p \tilde{F} }{ \p t } \\
	&= \tilde{H} + \frac{1}{n} \tilde{\hbar}  \tilde{F},
\end{align*}
where this normalised flow is now defined on the time interval $0 \leq \tilde{t} < \tilde{T}$.  Next we want to show how various estimates and evolution equations for the normalised flow can be obtained from their un-normalised counterparts.  The scaling-invariant estimates are the easiest, since the the normalising factor $\psi$ simply cancels from both sides and the same estimates hold:

\begin{prop}
The following estimates hold for the normalised flow:
\begin{subequations}
	\begin{align}
		&\abs{ \tilde{h} }^2 \leq c \abs{ \tilde{H} }^2 \\
		& \frac{ \abs{ \tilde{H} }^2_{\text{min}} }{ \abs{ \tilde{H} }^2_{\text{max}} } \rightarrow 1 \text{ as } \tilde{t} \rightarrow \tilde{T} \label{eqn: normalised grad est} \\
		& \tilde{K}_{\text{min}} \geq \epsilon^2\abs{ \tilde{H} }^2 \label{eqn: Chen's est normalised}
	\end{align}
\end{subequations}
\end{prop}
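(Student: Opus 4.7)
The strategy is simply to exploit the scaling identities from the proposition listed in the excerpt (the one beginning "Suppose we rescale an immersion $F$ by $\tilde F:=\psi\cdot F$") and observe that each of the three asserted estimates is scale-invariant in the relevant sense. Since the rescaling factor $\psi$ depends only on $t$ and not on the spatial point $p\in\Sigma$, pointwise inequalities pull through the rescaling with the same constants whenever the powers of $\psi$ balance on both sides.

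For (a), the combination of $\abs{\tilde h}^2=\psi^{-2}\abs{h}^2$ and $\abs{\tilde H}^2=\psi^{-2}\abs{H}^2$ immediately gives
\begin{equation*}
\abs{\tilde h}^2=\psi^{-2}\abs{h}^2\le \psi^{-2}c\abs{H}^2=c\abs{\tilde H}^2,
\end{equation*}
using the pinching preserved by Lemma \ref{l: preservation of pinching}. For (c), I would note that the sectional curvature scales as $\tilde K=\psi^{-2}K$ (this is inherited from $\tilde g=\psi^2 g$, or can be read off Gauss's equation together with $\tilde h=\psi h$ and $\tilde g^{-1}=\psi^{-2}g^{-1}$). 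Then the estimate \eqref{e: Kmin} obtained from Chen's Proposition \ref{p: Chen's result} gives
\begin{equation*}
\tilde K_{\text{min}}=\psi^{-2}K_{\text{min}}\ge \psi^{-2}\epsilon^2\abs{H}^2=\epsilon^2\abs{\tilde H}^2.
\end{equation*}

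For (b), the ratio $\abs{H}_{\text{min}}/\abs{H}_{\text{max}}$ is manifestly scale-invariant because the constant factor $\psi^{-1}$ from $\tilde H=\psi^{-1}H$ cancels in the quotient, so
\begin{equation*}
\frac{\abs{\tilde H}^2_{\text{min}}}{\abs{\tilde H}^2_{\text{max}}}=\frac{\abs{H}^2_{\text{min}}}{\abs{H}^2_{\text{max}}}.
\end{equation*}
The only slightly nontrivial point here is to verify that the limits $\tilde t\to\tilde T$ and $t\to T$ correspond to one another: since $\tilde t(t)=\int_0^t\psi^2(\tau)\,d\tau$ is strictly increasing in $t$ and $\psi(t)\to\infty$ as $t\to T$ (because the area condition forces $\psi\sim(T-t)^{-1/2}$ in view of $\abs{H}_{\text{min}}\to\infty$), we have $\tilde T=\lim_{t\to T}\tilde t(t)$, and the two limits describe the same approach. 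With this identification, Lemma \ref{l: Hmin over Hmax} gives (b) directly.

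None of these steps should present a genuine obstacle; the only thing to be careful about is the time-reparametrisation for (b), which I would handle by the monotonicity argument just sketched rather than by any new analytic estimate. Thus the proposition reduces to a collection of bookkeeping computations with $\psi$, and all the hard work has already been done in the earlier pinching, gradient, and Bonnet-diameter arguments.
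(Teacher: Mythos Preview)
Your proposal is correct and follows essentially the same approach as the paper: the paper simply remarks that these are the scale-invariant estimates, so the normalising factor $\psi$ cancels from both sides and the un-normalised estimates carry over unchanged. Your write-up supplies the explicit scaling computations and the time-reparametrisation check for (b), which the paper leaves implicit.
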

The following lemma shows how evolution equations for the normalised flow can be easily obtained from their un-normalised counterparts:
\begin{lem}\label{lem: degree}
Suppose that $P$ and $Q$ depend on $g$ and $h$, and that $P$ satisfies the (un-normalised) evolution equation $\p P/\p t = \Delta P + Q$.  If $P$ has ``degree" $\alpha$, that is, $\tilde{P} = \psi^{ \alpha } P$, then $Q$ has degree $(\alpha-2)$ and $\tilde{P}$ satisfies the normalised evolution equation
\begin{equation*}
	\frac{ \p \tilde{P} }{ \p \tilde{t} } = \tilde{\Delta}\tilde{P} + \tilde{Q} + \frac{ \alpha }{ n } \tilde{ \hbar } \tilde{P}.
\end{equation*}
\end{lem}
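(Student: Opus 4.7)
The plan is a direct computation via the chain rule, leaning on the scaling identities already established in the preceding proposition and on the evolution equation $\psi^{-1}\p_t\psi = \hbar/n$ derived when defining the normalised flow.

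First I would verify that $Q$ automatically has degree $\alpha-2$. Since $P$ and $Q$ are tensorial expressions in $g$ and $h$, applying the rescaling $g\mapsto\psi^2g$, $h\mapsto\psi h$ yields $\tilde P=\psi^\alpha P$ by hypothesis, and the identity $\p_tP=\Delta P+Q$ must rescale consistently. From the proposition, $\Delta$ has degree $-2$, so $\Delta P$ has degree $\alpha-2$; hence $Q$ must also have degree $\alpha-2$, i.e.\ $\tilde Q=\psi^{\alpha-2}Q$. Equivalently one can just observe that since $Q=\p_tP-\Delta P$ on the un-normalised flow and each term on the right carries degree $\alpha-2$ (the time derivative of $\psi^\alpha P$ divided by an appropriate factor producing $\Delta$-like scaling), the degree of $Q$ is forced.

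Next I would compute $\p_{\tilde t}\tilde P$ using $\p\tilde t/\p t=\psi^2$ and the product rule:
\begin{equation*}
\frac{\p\tilde P}{\p\tilde t}=\psi^{-2}\frac{\p}{\p t}\bigl(\psi^\alpha P\bigr)=\psi^{-2}\bigl(\alpha\psi^{\alpha-1}\psi'\,P+\psi^\alpha(\Delta P+Q)\bigr).
\end{equation*}
The second group gives $\psi^{\alpha-2}\Delta P+\psi^{\alpha-2}Q$. Since $\psi$ is spatially constant, $\tilde\Delta\tilde P=\psi^{-2}\Delta(\psi^\alpha P)=\psi^{\alpha-2}\Delta P$, and by the degree computation $\psi^{\alpha-2}Q=\tilde Q$. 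It remains to identify the remaining term $\alpha\psi^{\alpha-3}\psi'P$. Using $\psi'/\psi=\hbar/n$ this equals $(\alpha/n)\psi^{-2}\hbar\,\tilde P$.

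Finally I would reconcile $\hbar$ with $\tilde\hbar$. From the scaling relations $\abs{\tilde H}^2=\psi^{-2}\abs H^2$ and $d\tilde\mu=\psi^n d\mu$, one computes
\begin{equation*}
\tilde\hbar=\frac{\int\abs{\tilde H}^2\,d\tilde\mu}{\int d\tilde\mu}=\frac{\psi^{n-2}\int\abs H^2 d\mu}{\psi^n\int d\mu}=\psi^{-2}\hbar,
\end{equation*}
so $\psi^{-2}\hbar=\tilde\hbar$. Substituting yields the claimed formula $\p_{\tilde t}\tilde P=\tilde\Delta\tilde P+\tilde Q+(\alpha/n)\tilde\hbar\,\tilde P$. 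There is no real obstacle here — the only potential pitfall is keeping track of whether the $\hbar$ that naturally appears from differentiating $\psi$ is the un-normalised or the normalised average, and the scaling check above resolves this cleanly.
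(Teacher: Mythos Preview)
Your proof is correct and is precisely the standard direct computation; the paper itself does not give a proof but simply refers the reader to Lemma~17.1 of \cite{rH82} and Lemma~9.1 of \cite{gH84}, where exactly this chain-rule argument appears. The only minor comment is that the claim ``$Q$ has degree $\alpha-2$'' is most cleanly justified by noting that $Q$, being built from $g$ and $h$, has \emph{some} homogeneity degree, and since $\Delta P$ has degree $\alpha-2$ while the evolution identity must be scale-consistent under parabolic rescaling, that degree is forced to be $\alpha-2$; your first paragraph says this, though the parenthetical ``equivalently'' sentence is a bit muddled and could be dropped.
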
\label{l: degree}
For a proof of this lemma see Lemma 17.1 of \cite{rH82} and Lemma 9.1 of \cite{gH84}.  The evolution equation for the metric does not follow from this lemma, but is easily derived in the same way as the un-normalised equation.
\begin{prop}
The evolution equation for metric under the the normalised flow is given by
\begin{equation*}
	\frac{ \p \tilde{g}_{ij} }{ \p \tilde{t} } =  -2 \tilde{H} \cdot \tilde{h}_{ij} + \frac{2}{n} \tilde{ \hbar } \tilde{g}_{ij}.
\end{equation*}
\end{prop}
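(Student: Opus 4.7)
The plan is to compute $\partial_{\tilde t}\tilde g_{ij}$ directly by the chain rule, using the spatial rescaling $\tilde g = \psi^2 g$, the time rescaling $\partial/\partial\tilde t = \psi^{-2}\partial/\partial t$, and the ODE $\psi^{-1}\psi_t = \hbar/n$ for the dilation factor that enforces area-preservation. (The reason this cannot just be read off from Lemma \ref{lem: degree} is that that lemma applies to scalar- or tensor-valued quantities $P$ evolving by $\partial_t P = \Delta P + Q$; the induced metric evolves purely by a reaction term and, moreover, is the very object used to form the Laplacian in that lemma, so one must handle it by hand.)

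First I would differentiate $\tilde g_{ij} = \psi^2 g_{ij}$ in the un-normalised time variable:
\[
\frac{\partial \tilde g_{ij}}{\partial t} = 2\psi\psi_t\, g_{ij} + \psi^2 \frac{\partial g_{ij}}{\partial t}.
\]
Next I would substitute the un-normalised evolution $\partial_t g_{ij} = -2H\cdot h_{ij}$ derived earlier in the chapter, together with $\psi_t = \psi\hbar/n$, and then multiply by $\psi^{-2}$ to convert to $\tilde t$-derivatives:
\[
\frac{\partial \tilde g_{ij}}{\partial \tilde t} = \frac{2}{n}\hbar\, g_{ij} - 2\, H\cdot h_{ij}.
\]

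The final step is to rewrite the right-hand side in tilded quantities via Proposition \ref{p: rescaling}. Since $\tilde H = \psi^{-1}H$ and $\tilde h_{ij} = \psi h_{ij}$, the product $H\cdot h_{ij} = \tilde H \cdot \tilde h_{ij}$ (the ambient Euclidean inner product on the normal bundle is unaffected by the spatial dilation, as the orthonormal normal frame is unchanged). For the remaining term, a direct computation from the definition of $\tilde\hbar$ using $d\tilde\mu = \psi^n d\mu$ and $|\tilde H|^2 = \psi^{-2}|H|^2$ gives $\tilde\hbar = \psi^{-2}\hbar$, so that $\hbar\, g_{ij} = \tilde\hbar\, \tilde g_{ij}$. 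Assembling these identifications yields
\[
\frac{\partial \tilde g_{ij}}{\partial \tilde t} = -2\,\tilde H\cdot \tilde h_{ij} + \frac{2}{n}\tilde\hbar\, \tilde g_{ij},
\]
which is the claimed identity. No step presents any genuine obstacle; the only mild subtlety is verifying the scaling of $\hbar$, which one should either quote as a direct consequence of Proposition \ref{p: rescaling} applied to the integrand and the measure, or carry out as a one-line check.
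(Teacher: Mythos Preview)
Your proof is correct and follows essentially the same route as the paper: differentiate $\tilde g_{ij}=\psi^2 g_{ij}$ in $t$, insert $\psi^{-1}\psi_t=\hbar/n$ and $\partial_t g_{ij}=-2H\cdot h_{ij}$, convert to $\tilde t$ via $\partial_{\tilde t}=\psi^{-2}\partial_t$, and then rewrite in tilded quantities using the scaling identities. The paper compresses the last step into a single line, while you spell out the identifications $H\cdot h_{ij}=\tilde H\cdot\tilde h_{ij}$ and $\hbar\,g_{ij}=\tilde\hbar\,\tilde g_{ij}$ explicitly, but there is no substantive difference.
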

\begin{proof}
We compute
\begin{align*}
	\frac{ \p }{ \p \tilde{t} } \tilde{g}_{ij} &= \psi^{-2} \frac{ \p }{ \p t } \tilde{g}_{ij} \\
	&= \psi^{-2} ( \p_t \psi^2 g_{ij} + \psi^2 \p_t g_{ij} ) \\
	&= \psi^{-2} \Big( 2 \psi^2 \frac{1}{n} \hbar g_{ij} - \psi^2 2 H \cdot h_{ij} \Big) \\
	&= -2 H \cdot h_{ij} + \frac{2}{n} \hbar g_{ij} \\
	&= -2 \tilde{H} \cdot \tilde{h}_{ij} + \frac{2}{n} \tilde{ \hbar } \tilde{g}_{ij}.
\end{align*}
\end{proof}

Next we want to show that the mean curvature of the evolving normalised submanifold is bounded below by a constant greater than zero, and bounded above by a finite constant.  As we know of no suitable isoperimetric inequality in high codimesion, we adapt Hamilton's intrinsic arguments in \cite{rH82} to our setting.  We will need to use the following fundamental results in comparison geometry to prove these estimates:

\begin{thm}[Bishop-Gromov, G\"{u}nther volume comparison theorem]
Let $M$ be a complete Riemannian manifold and $B(r)$ a ball of radius $r$ in $M$.  Denote by $V^k(r)$ the volume of a ball of radius $r$ in the complete Riemannian manifold of constant curvature $k$.
\begin{enumerate}
	\item If the Ricci curvature of $M$ is bounded below by $Rc \geq (n-1)kg$, then
		\begin{equation*}
			vol( B (r) ) \leq V^k(r).
		\end{equation*}
	\item If the sectional curvature of $M$ is bounded above by some constant $K > 0$, then
		\begin{equation*}
		vol( B (r) ) \geq V^K(r).
		\end{equation*}
\end{enumerate}
\end{thm}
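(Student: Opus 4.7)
The plan is to prove both inequalities by comparing the Riemannian volume element in geodesic polar coordinates centered at a point $p \in M$ with the corresponding element in the model space of constant curvature. First I would write
\[
\vol(B(r)) = \int_{\mathbb{S}^{n-1}}\int_0^{r(\theta)} J(t,\theta)\,dt\,d\mu_{\mathbb{S}^{n-1}}(\theta),
\]
where $r(\theta) = \min\{r, d(p,\mathrm{cut}(p)\cap\gamma_\theta)\}$, $\gamma_\theta(t) = \exp_p(t\theta)$, and $J(t,\theta) = |\det (d\exp_p)_{t\theta}|$ is the Jacobian of the exponential map, expressible as the determinant of $(n-1)$ normal Jacobi fields along $\gamma_\theta$ vanishing at $t=0$ with orthonormal initial derivatives. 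The model quantities $V^k(r)$ and $V^K(r)$ admit the same representation with $J$ replaced by the explicit model Jacobian $J^k(t) = s_k(t)^{n-1}$, where $s_k$ is the standard sine-like function of curvature $k$.

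Next I would derive the evolution of $J$ via the shape operator $A$ of the geodesic sphere $\partial B(t)$, which satisfies the matrix Riccati equation $A' + A^2 + R_\gamma = 0$ with $R_\gamma(\cdot) = R(\cdot,\gamma')\gamma'$, and observe that $(\log J)' = \tr A$. For part (1), applying the Cauchy--Schwarz inequality $\tr(A^2) \geq (\tr A)^2/(n-1)$ and the Ricci lower bound $\tr R_\gamma = \mathrm{Rc}(\gamma',\gamma') \geq (n-1)k$ yields the scalar differential inequality
\[
(\log J)'' + \frac{1}{n-1}\bigl((\log J)'\bigr)^2 + (n-1)k \leq 0,
\]
with equality in the model. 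A Sturm--type comparison then shows $J(t,\theta)/J^k(t)$ is monotone nonincreasing on its domain and bounded above by $1$ (its limit as $t \downarrow 0$), giving $J(t,\theta) \leq J^k(t)$ pointwise up to the cut locus. Integrating and extending by zero beyond $r(\theta)$ yields $\vol(B(r)) \leq V^k(r)$. For part (2), the Rauch comparison theorem applied in the opposite direction, using the sectional curvature upper bound, shows that any normal Jacobi field along $\gamma_\theta$ vanishing at $0$ is pointwise at least as long as the corresponding model Jacobi field; taking the product over an orthonormal basis gives $J(t,\theta) \geq J^K(t)$ on the relevant interval, and integration produces the desired lower bound.

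The main obstacle is the treatment of the cut locus in part (1): the pointwise Jacobian bound from the Riccati argument is a priori valid only before the first conjugate point along each $\gamma_\theta$, and one must argue that the monotonicity of $J/J^k$ together with the fact that $J$ vanishes at conjugate points (and the cut locus has measure zero) allows the inequality to propagate through integration. The classical Heintze--Karcher device, which reformulates the bound as an inequality for the mean curvature of geodesic spheres and uses that it can only decrease through focal points, provides the cleanest path. Part (2), by contrast, is essentially local: one restricts to $r \leq \min\{\inj(p), \pi/\sqrt{K}\}$, where $\exp_p$ is a diffeomorphism and no conjugate points are encountered, so the Rauch estimate integrates without subtlety.
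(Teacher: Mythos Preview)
The paper does not give its own proof of this theorem; immediately after the statement it writes ``For a proof of these theorems we refer the reader to \cite{GHL04}'' and moves on. Your sketch is the standard classical argument (geodesic polar coordinates, Riccati equation for the shape operator, Sturm/Rauch comparison) that one finds in that reference, and it is correct in outline. You also correctly note the restriction $r \leq \min\{\inj(p), \pi/\sqrt{K}\}$ needed for part~(2), which the paper's statement omits but which is implicit in how the result is applied later.
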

For a proof of these theorems we refer the reader to \cite{GHL04}.

\begin{lem}[Klingenberg's Lemma]
Suppose that $M$ is a compact manifold and denote the length of the shortest closed closed geodesic in $M$ by $l_{ \text{short} }$.  If the sectional curvature of $M$ is bounded above by some constant $K > 0$, then the injectivity radius of $M$ is bounded below by
\begin{equation*}
	inj_{g}(M) \geq \min \Big\{ \frac{ \pi }{ \sqrt{K} }, \frac{ 1 }{ 2 } l_{ \text{short} } \Big\}.
\end{equation*}
\end{lem}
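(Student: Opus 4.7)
The plan is to use the standard structural description of the injectivity radius in terms of the cut locus, and to bound the two possible contributions separately: distance to a conjugate point, controlled by Rauch/Morse--Schoenberg comparison, and the length of a geodesic loop, controlled by closing it to a genuine closed geodesic.

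First I would use compactness of $M$ and lower-semicontinuity of the injectivity-radius function $p \mapsto \mathrm{inj}(p)$ to pick a point $p_{0} \in M$ at which $\mathrm{inj}(p_{0}) = \mathrm{inj}(M) =: r$. By the standard description of the cut locus there exists a point $q_{0} \in M$ at distance exactly $r$ from $p_{0}$ such that $q_{0}$ is a cut point of $p_{0}$ along some minimizing unit-speed geodesic $\gamma_{1}\colon [0,r]\to M$. The basic dichotomy for cut points then gives two cases: either (a) $q_{0}$ is conjugate to $p_{0}$ along $\gamma_{1}$, or (b) there is a second, distinct minimizing geodesic $\gamma_{2}\colon [0,r]\to M$ from $p_{0}$ to $q_{0}$.

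In case (a) I would invoke the Rauch comparison theorem (or equivalently the Morse--Schoenberg theorem): the upper sectional curvature bound $K_{\sec}\leq K$ forces every nontrivial Jacobi field along $\gamma_{1}$ vanishing at $p_{0}$ to be non-vanishing on $(0,\pi/\sqrt{K})$, so the first conjugate point cannot occur before arclength $\pi/\sqrt{K}$. Hence $r \geq \pi/\sqrt{K}$ in this case, which is more than enough.

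Case (b) is the main obstacle, and here I would use Klingenberg's closing argument. The idea is to show that the broken geodesic $\gamma_{1}\ast\bar\gamma_{2}$ obtained by running along $\gamma_{1}$ from $p_{0}$ to $q_{0}$ and back along $\gamma_{2}$ is, in fact, smooth at both $p_{0}$ and $q_{0}$, hence a closed geodesic of length $2r$. To see this I would perturb $p_{0}$ slightly along a chosen direction and use a first-variation argument, together with the fact that $\mathrm{inj}(p_{0})$ is minimal, to show $\dot\gamma_{1}(0)=-\dot\gamma_{2}(0)$ (otherwise a nearby point would have strictly smaller injectivity radius, contradicting the choice of $p_{0}$); a symmetric argument at $q_{0}$, using that the roles of $p_{0}$ and $q_{0}$ are interchangeable at the minimum, yields $\dot\gamma_{1}(r)=-\dot\gamma_{2}(r)$. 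The concatenation is therefore a smooth closed geodesic of length $2r$, so $l_{\mathrm{short}}\leq 2r$, i.e. $r\geq \tfrac{1}{2}l_{\mathrm{short}}$. Combining cases (a) and (b) gives
\begin{equation*}
\mathrm{inj}(M) \;\geq\; \min\!\left\{\tfrac{\pi}{\sqrt{K}},\ \tfrac{1}{2}l_{\mathrm{short}}\right\},
\end{equation*}
as required. The technical heart of the argument is the smoothness-at-the-endpoints assertion in case (b); everything else is either comparison geometry (standard) or compactness of $M$.
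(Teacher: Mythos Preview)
Your argument is the standard textbook proof of Klingenberg's Lemma and is correct. The paper does not actually prove this lemma; it simply cites Petersen's \emph{Riemannian geometry} for the proof, which is essentially the argument you have given.
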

For a proof of Klingenberg's Lemma we refer the reader to \cite{pP06}.  Since the second fundamental form of the evolving normalised submanifolds is bounded above, the length of the smallest closed geodesic must be bounded below, and therefore so too the injectivity radius:  In order for a small loop to be forming, the second fundamental form must be blowing-up, and this is not the case.  Heintze and Karcher derive an explicit lower bound for the length of the shortest closed geodesic in \cite{HK}, although it suffices for our purposes to note that this is greater than zero.

\begin{prop}
We have
	\begin{equation*}
		\abs{ \tilde{H} }_{\text{max}} \leq C_{\text{max}} < \infty
	\end{equation*}
for all time $\tilde{t} \in [0, \tilde{T})$.
\end{prop}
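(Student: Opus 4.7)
The plan is to argue by contradiction. Assume the conclusion fails, so there is a sequence of times $\tilde t_k \to \tilde T$ (finite or infinite) along which $|\tilde H|_{\max}(\tilde t_k) \to \infty$. Since the ratio $|\tilde H|_{\max}/|\tilde H|_{\min}$ is scale-invariant, the normalised asymptotic gradient estimate \eqref{eqn: normalised grad est} gives $|\tilde H|_{\min}(\tilde t_k)/|\tilde H|_{\max}(\tilde t_k) \to 1$ as $\tilde t_k\to\tilde T$, so $|\tilde H|_{\min}(\tilde t_k)\to\infty$ as well.

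Next I would invoke the scale-invariant version of Chen's inequality \eqref{eqn: Chen's est normalised}, which combined with the preceding step yields
\[
\tilde K_{\min}(\tilde t_k)\;\geq\;\varepsilon^{2}\,|\tilde H|^{2}_{\min}(\tilde t_k)\;\longrightarrow\;\infty.
\]
Bonnet's theorem (already invoked in the proof of Theorem \ref{t: diam goes to zero}) then forces
\[
\operatorname{diam}(\tilde\Sigma_{\tilde t_k})\;\leq\;\frac{\pi}{\sqrt{\tilde K_{\min}(\tilde t_k)}}\;\leq\;\frac{\pi}{\varepsilon\,|\tilde H|_{\min}(\tilde t_k)}\;\longrightarrow\;0.
\]

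To finish, I would use the Bishop-Gromov volume comparison to translate the diameter collapse into a volume collapse. Because the sectional curvature bound yields a lower Ricci bound $\widetilde{\mathrm{Rc}}\geq (n-1)\tilde K_{\min}\,\tilde g$, and the whole manifold $\tilde\Sigma_{\tilde t_k}$ coincides with the geodesic ball of radius $\operatorname{diam}(\tilde\Sigma_{\tilde t_k})$ around any of its points, the volume comparison theorem gives
\[
|\tilde\Sigma_{\tilde t_k}|\;\leq\;V^{\tilde K_{\min}(\tilde t_k)}\!\bigl(\operatorname{diam}(\tilde\Sigma_{\tilde t_k})\bigr)\;\leq\;V^{\tilde K_{\min}(\tilde t_k)}\!\Bigl(\tfrac{\pi}{\sqrt{\tilde K_{\min}(\tilde t_k)}}\Bigr).
\]
The right-hand side is the volume of a round $n$-sphere of radius $1/\sqrt{\tilde K_{\min}(\tilde t_k)}$, which equals $c_n\,\tilde K_{\min}(\tilde t_k)^{-n/2}\to 0$. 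This contradicts the area-preservation condition $|\tilde\Sigma_{\tilde t_k}|=|\Sigma_0|>0$ built into the definition of the normalised flow, and the claim follows.

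The only place where some care is needed is in checking that the ingredients transfer cleanly from the un-normalised to the normalised setting; but the pinching inequality, Chen's estimate, and the ratio $|H|_{\max}/|H|_{\min}$ are all invariant under the homothetic rescaling $\tilde F=\psi F$ (by Proposition \ref{p: rescaling}), so no new work is required beyond pointing this out. The slight conceptual obstacle is that the Bishop-Gromov step is what converts the extrinsic normalisation (constant area) into an intrinsic contradiction via diameter collapse; absent such a comparison geometric input (no useful isoperimetric inequality being available in high codimension), one would have no direct way to propagate $|\tilde H|_{\min}\to\infty$ into a bound on the ambient size of $\tilde\Sigma_{\tilde t_k}$.
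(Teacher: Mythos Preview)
Your argument is correct and assembles the same ingredients as the paper's proof: Chen's estimate, Bonnet, Bishop--Gromov volume comparison, the area-preservation of the normalised flow, and the ratio estimate \eqref{eqn: normalised grad est}. The paper simply arranges them as a direct bound rather than a contradiction. It uses \eqref{eqn: Chen's est normalised} only to conclude $\tilde K_{\min}\geq 0$, applies Bishop--Gromov with $k=0$ (Euclidean comparison) to obtain $\vol(\tilde\Sigma)\leq C\tilde d^{\,n}$, then feeds in Bonnet's bound $\tilde d\leq C/|\tilde H|_{\min}$ to get $|\Sigma_0|=\vol(\tilde\Sigma)\leq C|\tilde H|_{\min}^{-n/2}$, hence $|\tilde H|_{\min}\leq C$; the ratio estimate then gives $|\tilde H|_{\max}\leq C$. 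Your route via contradiction and the sharper comparison $k=\tilde K_{\min}$ is equally valid; the direct version just delivers an explicit uniform constant without passing to a subsequence.
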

\begin{proof}
From equation \eqref{eqn: Chen's est normalised} the intrinsic sectional curvature of $\tilde{\Sigma}_{ \tilde{t} }$ satisifies $\tilde{K}_{ \text{min} } \geq 0$.  The Bishop-Gromov volume comparison theorem now implies $\vol( \tilde{\Sigma} ) \leq C\tilde{d}^n$, where $\tilde{d}$ the diameter.  From Bonnet's Theorem we also have $\tilde{d} \leq C/\sqrt{ \abs{ \tilde{H} }_{\text{min}} }$, and thus $\tilde{V} \leq C\abs{ \tilde{H} }^{-n/2}_{\text{min}}$.  In the normalised setting $\vol( \tilde{\Sigma} ) = \abs{ \Sigma_0 }$, so $\abs{ \tilde{H} }_{\text{min}} \leq C$ and then \eqref{eqn: normalised grad est} implies that $\abs{ \tilde{H} }_{ \text{max} } \leq C$.  
\end{proof}

\begin{prop}
There exists as constant $C_{\text{min}}$ depending only on $\Sigma_0$ such that
	\begin{equation*}
		\abs{ \tilde{H} }_{\text{min}} \geq C_{\text{min}} > 0
	\end{equation*}
holds for all time $\tilde{t} \in [0, \tilde{T})$.
\end{prop}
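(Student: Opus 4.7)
The plan is to obtain a uniform positive lower bound on $\abs{\tilde H}_{\max}$ via the Michael--Simon Sobolev inequality, then to propagate this to $\abs{\tilde H}_{\min}$ using the scale-invariance of Lemma \ref{l: Hmin over Hmax} near $\tilde T$, and finally to handle the remaining (bounded) range of normalised times by a continuity argument based on preservation of the strict pinching.

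First I would apply the Michael--Simon Sobolev inequality to the constant function $u\equiv 1$ on $\tilde\Sigma_{\tilde t}$. Since $\nabla 1 = 0$ and $\vol(\tilde\Sigma_{\tilde t})=\abs{\Sigma_0}$ by construction of the normalised flow, this gives
$$
\abs{\Sigma_0}^{\frac{n-1}{n}} = \vol(\tilde\Sigma_{\tilde t})^{\frac{n-1}{n}}
\leq C_S \int_{\tilde\Sigma_{\tilde t}} \abs{\tilde H}\,d\tilde\mu_{\tilde g(\tilde t)}
\leq C_S \abs{\tilde H}_{\max}(\tilde t)\,\abs{\Sigma_0},
$$
whence the uniform lower bound
$$
\abs{\tilde H}_{\max}(\tilde t) \;\geq\; c_* \;:=\; \frac{1}{C_S\abs{\Sigma_0}^{1/n}} \;>\; 0,\qquad \tilde t\in[0,\tilde T).
$$

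Next I would observe that the ratio $\abs{H}_{\max}/\abs{H}_{\min}$ is invariant under spatial rescaling $F\mapsto\psi F$, so the conclusion of Lemma \ref{l: Hmin over Hmax} transfers verbatim to the normalised flow: $\abs{\tilde H}_{\min}/\abs{\tilde H}_{\max}\to 1$ as $\tilde t\to\tilde T$. Choose $\tilde t_0\in[0,\tilde T)$ so that $\abs{\tilde H}_{\min}\geq\tfrac{1}{2}\abs{\tilde H}_{\max}\geq c_*/2$ on $[\tilde t_0,\tilde T)$. On the complementary compact interval $[0,\tilde t_0]$, preservation of the strict pinching condition (Lemma \ref{l: preservation of pinching} together with Proposition \ref{prop:strongMP}) gives $\abs{h}^2+a\leq c\abs{H}^2$ for some $a>0$, and in particular $\abs{H}_{\min}^2(t)\geq a/c>0$. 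Since $\psi(t)=(\abs{\Sigma_0}/\vol(\Sigma_t))^{1/n}$ is continuous and finite on $[0,t_0]$, we obtain
$$
\abs{\tilde H}_{\min}(\tilde t) \;=\; \psi(t)^{-1}\abs{H}_{\min}(t) \;\geq\; \psi(t_0)^{-1}\sqrt{a/c} \;>\; 0
$$
on $[0,\tilde t_0]$. Setting $C_{\min}$ equal to the smaller of the two lower bounds completes the argument.

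The main obstacle is that every previously established control on the normalised flow is either an upper bound (on $\abs{\tilde H}_{\max}$, on sectional curvature, on diameter via Bonnet) or a scale-invariant ratio; none of these prevents $\abs{\tilde H}$ from degenerating toward zero while respecting the fixed volume. The Michael--Simon inequality supplies exactly the missing integral lower bound, because it ties the $L^1$ norm of $\abs{\tilde H}$ to the prescribed volume, and it is here that the Euclidean ambient geometry enters in an essential way.
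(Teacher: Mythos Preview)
Your proof is correct, and it takes a genuinely different route from the paper's. Both arguments aim first at a uniform lower bound on $\abs{\tilde H}_{\max}$ and then transfer it to $\abs{\tilde H}_{\min}$ via the scale-invariant ratio result. The difference lies entirely in how the lower bound on $\abs{\tilde H}_{\max}$ is obtained.

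The paper works intrinsically on the universal cover $\tilde{\tilde\Sigma}$: the upper bound on $\abs{\tilde H}_{\max}$ gives an upper sectional curvature bound, Klingenberg's lemma then bounds the injectivity radius from below, and G\"unther's volume comparison yields $\vol(\tilde{\tilde\Sigma}) \geq C\abs{\tilde H}_{\max}^{-n/2}$. Since $\vol(\tilde{\tilde\Sigma}) = \abs{\pi_1(\tilde\Sigma)}\abs{\Sigma_0}$ is constant in time (Bonnet gives $\abs{\pi_1}$ finite and unchanging), this forces $\abs{\tilde H}_{\max}$ to stay bounded below. The author explicitly remarks that this intrinsic detour is taken because ``we know of no suitable isoperimetric inequality in high codimension.''

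Your argument supplies exactly such an inequality: applying the Michael--Simon Sobolev inequality (which the paper itself uses in the Stampacchia iteration and which holds in arbitrary codimension) to $u\equiv 1$ on the closed submanifold $\tilde\Sigma_{\tilde t}$ gives $\abs{\Sigma_0}^{(n-1)/n} \leq C_S\int\abs{\tilde H}\,d\tilde\mu \leq C_S\abs{\tilde H}_{\max}\abs{\Sigma_0}$ directly. This is shorter and avoids the universal cover, comparison theorems, and Klingenberg's lemma entirely. Your Step~3, handling the bounded time interval via the preserved strict pinching $\abs{H}^2 \geq a/c$ and the monotonicity of $\psi$, is also cleaner than invoking intrinsic geometry; the paper does not separate the argument this way because its lower bound on $\abs{\tilde H}_{\max}$ holds for all $\tilde t$ uniformly.
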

\begin{proof}
We work with the universal cover $\tilde{ \tilde{ \Sigma } }$ of the normalised submanifold $\tilde{ \Sigma }$.  By the G\"{u}nther volume comparison theorem, the volume of $\tilde{ \tilde{ \Sigma } }$ is some multiple its injectivity radius: $\vol(\tilde{ \tilde{ \Sigma } } ) \geq C \inj( \tilde{ \tilde{ \Sigma } } )^n$.  From the Gauss equation and the Pinching Lemma, the intrinsic sectional curvature of $\tilde{ \Sigma }$ is bounded above by some multiple of $\abs{ \tilde{H} }_{\text{max}}$, which is uniformly bounded above by the previous proposition.  Moreover, since the second fundamental form of the normalised submanifolds is also bounded above, from Klingernberg's Lemma we obtain a lower bound for the injectivity radius.  We may now estimate
\begin{equation}\label{eqn: H below 1}
		\vol( \tilde{ \tilde{ \Sigma } } ) \geq C \inj(\tilde{ \tilde{ \Sigma } } )^n \geq C \Big( \frac{ \pi }{ \sqrt{ K } } \Big)^n \geq C \abs{ \tilde{H} }_{ \text{max} }^{- \frac{ n }{ 2 } }.
\end{equation}
The evolving submanifold is not undergoing any topological change before the singularity time, so by Bonnet's Theorem the first fundamental group of $\tilde{\Sigma}$ is finite and constant in time.  We have
\begin{equation*}
	\vol(\tilde{ \tilde{ \Sigma } } ) = \abs{ \pi_1( \tilde{ \Sigma } ) } \vol{ \tilde{ \Sigma } },
\end{equation*}
and since both the first fundamental group and volume of the normalised submanifold are constant in time, $\vol(\tilde{ \tilde{ \Sigma } } )$ is also constant.  This combined with equation \eqref{eqn: H below 1} gives a lower bound on $\abs{ \tilde{H} }_{ \text{max} }$, and then equation \eqref{eqn: normalised grad est} gives the desired lower bound on  $\abs{ \tilde{H} }_{ \text{min} }$.
\end{proof}

\begin{prop}
We have
\begin{equation*}
	\int_0^T \abs{ H }^2_{ \text{max} }(t) \, dt = \infty.
\end{equation*}
\end{prop}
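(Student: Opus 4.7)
The plan is to exploit the ODE satisfied by the rescaling factor $\psi$ derived at the beginning of this section, namely
\[
\psi^{-1}\frac{\partial\psi}{\partial t}=\frac{\hbar}{n},\qquad \hbar(t)=\frac{\int_{\Sigma}\abs{H}^2\,d\mu_{g(t)}}{\int_{\Sigma}d\mu_{g(t)}}.
\]
Integrating from $0$ to $t$ and using the trivial pointwise bound $\hbar(\tau)\leq \abs{H}^2_{\text{max}}(\tau)$ gives
\[
\log\!\frac{\psi(t)}{\psi(0)}\;=\;\frac{1}{n}\int_0^t\hbar(\tau)\,d\tau\;\leq\;\frac{1}{n}\int_0^t\abs{H}^2_{\text{max}}(\tau)\,d\tau.
\]
Thus it suffices to show that $\psi(t)\to\infty$ as $t\to T$, since then letting $t\to T$ on the left forces the integral on the right to diverge.

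To show $\psi(t)\to\infty$, I would combine three facts already in hand. By the preceding proposition, $\abs{\tilde H}_{\text{max}}\leq C_{\text{max}}$ uniformly on $[0,\tilde T)$. The pinching assumption is scaling-invariant, so $\abs{\tilde h}^2\leq c\abs{\tilde H}^2\leq cC_{\text{max}}^2$ on the entire normalised flow. Using Proposition \ref{p: rescaling}, the scaling relation $\abs{\tilde h}^2=\psi^{-2}\abs{h}^2$ rearranges to
\[
\abs{h}^2_{\text{max}}(t)\;\leq\;cC_{\text{max}}^2\,\psi^2(t).
\]
Since Theorem \ref{t: MCF exists on maximal time interval} asserts $\abs{h}^2_{\text{max}}\to\infty$ as $t\to T$, it follows that $\psi(t)\to\infty$ as $t\to T$.

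Combining the two steps: as $t\to T$, the left-hand side $\log(\psi(t)/\psi(0))$ tends to $+\infty$, and therefore so must $\int_0^T\abs{H}^2_{\text{max}}(t)\,dt$. There is no real obstacle here; the argument is essentially a bookkeeping exercise that packages the uniform bound on the rescaled curvature (giving $\psi\to\infty$) together with the ODE for $\psi$ (bounding $\log\psi$ by the time integral of $\abs{H}^2_{\text{max}}$). This is the high-codimension analogue of the corresponding step in \cite{gH84}, and the proof transfers essentially verbatim once the normalised-flow estimates of this section are available.
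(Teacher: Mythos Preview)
Your proof is correct but takes a different route from the paper. The paper directs the reader to follow Hamilton's Theorem~15.3 in \cite{rH82}, using the evolution inequality
\[
\frac{\partial}{\partial t}\abs{H}^2 \leq \Delta\abs{H}^2 + 2c\abs{H}^2_{\max}\abs{H}^2,
\]
which via the maximum principle yields $\frac{d}{dt}\log\abs{H}^2_{\max} \leq 2c\abs{H}^2_{\max}$; integrating and using $\abs{H}^2_{\max}\to\infty$ (from the pinching and Theorem~\ref{t: MCF exists on maximal time interval}) forces the integral to diverge. You instead take $\psi$ as the blowing-up quantity and use the already-established bound $\abs{\tilde H}_{\max}\leq C_{\max}$ to show $\psi\to\infty$. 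Structurally the two arguments are identical---bound the logarithmic derivative of something that blows up at $T$ by $\abs{H}^2_{\max}$ and integrate---but the paper's route is self-contained (needing only the evolution equation and the pinching), whereas yours leans on the normalised-flow estimate $\abs{\tilde H}_{\max}\leq C_{\max}$, which in turn rests on Bonnet, Bishop--Gromov, and the gradient estimate. Your version is perfectly legitimate here since that bound precedes this proposition, and it has the minor virtue of making explicit the equivalence between divergence of $\int_0^T\abs{H}^2_{\max}$ and blow-up of the rescaling factor.
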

\begin{proof}
Follow the proof of Theorem 15.3 in \cite{rH82} with $R_{\max}$ replaced by $\abs{ H }^2_{ \text{max} }$ and use
\begin{equation*}
	\frac{ \p }{ \p t } \abs{ H }^2 \leq \Delta \abs{ H }^2 + 2c\abs{ H }_{ \text{max} }^2 \abs{ H }^2.
\end{equation*}
\end{proof}

\begin{prop}
The normalised flow exists for all time, that is, $\tilde{T} = \infty$.
\end{prop}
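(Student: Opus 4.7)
The plan is to show directly that $\tilde T=\int_0^T\psi^2(\tau)\,d\tau=\infty$, using only two ingredients from the preceding material: the uniform upper bound $\abs{\tilde H}_{\max}\leq C_{\max}$ for the normalised mean curvature, and the divergence of the integral $\int_0^T\abs{H}^2_{\max}(t)\,dt=\infty$ established in the immediately preceding proposition.

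First I would recall the definition of the rescaled time variable $\tilde t(t)=\int_0^t\psi^2(\tau)\,d\tau$, so that the maximal time of the normalised flow is exactly $\tilde T=\lim_{t\to T}\tilde t(t)=\int_0^T\psi^2(\tau)\,d\tau$. Showing $\tilde T=\infty$ is therefore equivalent to showing that $\psi^2$ is not integrable on $[0,T)$. Next I would invoke the scaling identity $\tilde H=\psi^{-1}H$ from Proposition \ref{p: rescaling}, which at each time and each point gives $\abs{H}(\cdot,t)=\psi(t)\abs{\tilde H}(\cdot,\tilde t(t))$. Taking the supremum over $\Sigma$ and using the uniform bound $\abs{\tilde H}_{\max}\leq C_{\max}$ from the previous proposition yields
\begin{equation*}
\abs{H}_{\max}(t)\leq C_{\max}\,\psi(t)\quad\text{for all }t\in[0,T),
\end{equation*}
and squaring gives $\abs{H}^2_{\max}(t)\leq C_{\max}^2\,\psi^2(t)$.

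Integrating this pointwise-in-time inequality over $[0,T)$ and using the preceding proposition, which asserts $\int_0^T\abs{H}^2_{\max}(t)\,dt=\infty$, gives
\begin{equation*}
\int_0^T\psi^2(t)\,dt\;\geq\;\frac{1}{C_{\max}^2}\int_0^T\abs{H}^2_{\max}(t)\,dt\;=\;\infty,
\end{equation*}
so $\tilde T=\infty$, as required. There is no hard step here: the entire argument consists in passing a scaling-invariant supremum bound through the relation $H=\psi\tilde H$ and then comparing with the already-established non-integrability of $\abs{H}^2_{\max}$; the two estimates on $\abs{\tilde H}_{\max}$ and $\abs{\tilde H}_{\min}$ proved just above are precisely what makes this comparison available, and the real work was done in producing those bounds and the divergence of $\int_0^T\abs{H}^2_{\max}\,dt$.
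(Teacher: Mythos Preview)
Your proof is correct and follows essentially the same route as the paper: both combine the scaling relation $\tilde H=\psi^{-1}H$ with the uniform bound $\abs{\tilde H}_{\max}\leq C_{\max}$ and the divergence of a time-integral to force $\int_0^T\psi^2\,dt=\infty$. The only cosmetic difference is that the paper phrases the argument in terms of the average $\hbar$ rather than the maximum, writing $\int_0^{\tilde T}\tilde\hbar\,d\tilde t=\int_0^T\hbar\,dt=\infty$ and then bounding $\tilde\hbar\leq C_{\max}^2$; your version, using $\abs{H}^2_{\max}$ directly, matches the immediately preceding proposition more transparently and avoids the need to justify separately why $\int_0^T\hbar\,dt=\infty$.
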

\begin{proof}
We have $d\tilde{t}/ dt = \psi^2$ and $\abs{ \tilde{ H } }^2 = \psi^{-2} \abs{ H }^2$, so
\begin{equation*}
	\int_0^{ \tilde{T} } \tilde{\hbar}(\tilde{ t }) \, d\tilde{ t } = \int_0^T \hbar(t) \, dt = \infty;
\end{equation*}
however $\tilde{\hbar} \leq \tilde{H}_{ \text{max} }^2 \leq C_{ \text{max} }^2$ and therefore $\tilde{T} = \infty$.
\end{proof}

The key step in the convergence argument is to show that the length of the traceless second fundamental form decays exponentially in time.  Similar to the un-normalised setting, one considers the scale-invariant quantity $\abs{ \ho }^2/ \abs{ H }$.  The reaction terms of the evolution equation for $\abs{ \ho }^2/ \abs{ H }$ are again not quite favourable enough to use the maximum principle directly, and one proceeds in a similar manner to the un-normalised setting via intergral estimates (see \cite{gH84} and below).  The Stampacchia iteration is not needed, but only the Poincar\'{e} inequality obtained from integrating Simons' identity (see Propostion \ref{p: normalised integral Pinching Improves} below).  We shall present a new argument based on the maximum principle, which simplifies the existing argument by avoiding the use of integral estimates.  Since this argument in new even for the case of hypersurfaces, we first treat the codimension one case as considered by Huisken in \cite{gH84}.

\begin{prop}
Suppose $\tilde{\Sigma}_{ \tilde{ t } }$ is an initially strictly convex hypersurface smoothly immersed in $\mathbb{R}^{n+1}$ moving by the normalised mean curvature flow.  For all time $\tilde{t} \in [0, \infty)$ we have the estimate
\begin{equation*}
	\abs{ \tilde{\nabla} \tilde{h} }^2 + \abs{ \tilde{\ho} }^2 \leq Ce^{ -\delta \tilde{t} }.
\end{equation*}
\end{prop}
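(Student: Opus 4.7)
The plan is to prove exponential decay by combining the improved pinching of Theorem~\ref{t: pinch trace 2ff} with the scaling properties of the normalised flow, followed by a maximum principle argument on a combined quantity to handle the gradient term. The argument breaks into three steps.

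First I would determine the asymptotic rate of the conformal factor $\psi(\tilde t)$. Differentiating the normalisation $\psi^n\vol(\Sigma_t)=\vol(\Sigma_0)$ and using the area evolution yields $\psi^{-1}\partial_t\psi=\hbar/n$, and then the change of time $d\tilde t/dt=\psi^2$ gives
\[
\frac{d\log\psi}{d\tilde t}=\frac{\tilde\hbar}{n}.
\]
Since the previous section established $0<C_{\min}\leq\abs{\tilde H}\leq C_{\max}$ uniformly, we have $\tilde\hbar\geq C_{\min}^2$, so $\psi(\tilde t)\geq\psi_0\exp\bigl((C_{\min}^2/n)\tilde t\bigr)$. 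The improved pinching $\abs{\ho}^2\leq C_0\abs{H}^{2-\delta}$ from Theorem~\ref{t: pinch trace 2ff} then scales via the identities of Proposition~\ref{p: rescaling} as
\[
\abs{\tilde\ho}^2=\psi^{-2}\abs{\ho}^2\leq C_0\psi^{-\delta}\abs{\tilde H}^{2-\delta}\leq C\psi^{-\delta}\leq Ce^{-\delta'\tilde t},
\]
with $\delta'=\delta C_{\min}^2/n$, which handles the $\abs{\tilde\ho}^2$ piece.

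For the gradient part, I would apply the maximum principle to the combined quantity $G=\abs{\tilde\nabla\tilde h}^2+N\abs{\tilde\ho}^2$ for $N$ to be chosen large. Using Proposition~\ref{p: evol length of the second fundamental form squared} together with Lemma~\ref{lem: degree} (the degree of $\abs{\nabla h}^2$ is $-4$), the normalised evolution of $\abs{\tilde\ho}^2$, and the Kato-type estimate $\abs{\tilde\nabla\tilde h}^2-\frac{1}{n}\abs{\tilde\nabla\tilde H}^2\geq \frac{2(n-1)}{3n}\abs{\tilde\nabla\tilde h}^2$ from \eqref{eqn: basic grad est 2}, the quantity $G$ satisfies
\[
\partial_{\tilde t}G\leq\tilde\Delta G+\Bigl[C_1\abs{\tilde h}^2-\tfrac{4\tilde\hbar}{n}-N\tfrac{4(n-1)}{3n}\Bigr]\abs{\tilde\nabla\tilde h}^2+2N\Bigl[\abs{\tilde h}^2-\tfrac{\tilde\hbar}{n}\Bigr]\abs{\tilde\ho}^2.
\]
Choosing $N$ large enough that the coefficient of $\abs{\tilde\nabla\tilde h}^2$ is uniformly negative (possible since $\abs{\tilde h}^2$ and $\tilde\hbar$ are uniformly bounded), writing $\abs{\tilde h}^2-\tilde\hbar/n=\abs{\tilde\ho}^2+\frac{1}{n}(\abs{\tilde H}^2-\tilde\hbar)$, and using the exponential decay of $\abs{\tilde\ho}^2$ together with $\abs{\tilde H}^2-\tilde\hbar\to 0$ (from the normalised version of Lemma~\ref{l: Hmin over Hmax}), the inequality reduces to $\partial_{\tilde t}G\leq\tilde\Delta G-\epsilon_1 G+C_2 e^{-\delta''\tilde t}$ for sufficiently large $\tilde t$. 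Gronwall applied at the maximum of $G$ then yields $G\leq Ce^{-\delta'''\tilde t}$.

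The main obstacle is producing a \emph{rate} for the gradient decay: the un-normalised gradient estimate $\abs{\nabla H}^2\leq\eta\abs{H}^4+C_\eta$ scales to $\abs{\tilde\nabla\tilde H}^2\leq\eta\abs{\tilde H}^4+C_\eta\psi^{-4}$, which only yields $\limsup\abs{\tilde\nabla\tilde H}^2\leq C\eta$ for any $\eta>0$ with no definite rate, so pure scaling is insufficient for the gradient. The maximum principle on $G$ supplies the rate, but relies crucially on $\abs{\tilde H}_{\max}/\abs{\tilde H}_{\min}\to 1$ so that $\abs{\tilde H}^2-\tilde\hbar$ can be made arbitrarily small, together with the exponential decay of $\abs{\tilde\ho}^2$ which allows the residual reaction term to be treated as a lower-order perturbation; once $G$ decays exponentially, $\abs{\tilde\nabla\tilde h}^2\leq G\leq Ce^{-\delta\tilde t}$ combined with the bound from the second step completes the proof.
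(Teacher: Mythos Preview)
Your argument is essentially correct, but it takes a genuinely different route from the paper, and in doing so misses the point of the proposition.

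Your approach is a two-step bootstrap: first you import the un-normalised estimate $\abs{\ho}^2\leq C_0\abs{H}^{2-\delta}$ and scale it to obtain $\abs{\tilde\ho}^2\leq Ce^{-\delta'\tilde t}$; then you run a maximum principle on $G=\abs{\tilde\nabla\tilde h}^2+N\abs{\tilde\ho}^2$, treating the $\abs{\tilde\ho}^2$-reaction term as an exponentially decaying forcing (one small point you glossed over: to get $-\epsilon_1 G$ on the right you must substitute $\abs{\tilde\nabla\tilde h}^2=G-N\abs{\tilde\ho}^2$ and absorb the resulting $NCe^{-\delta'\tilde t}$ into the inhomogeneity). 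Note also that Theorem~\ref{t: pinch trace 2ff} is stated under the pinching hypotheses of Main Theorem~\ref{mthm: main thm 2}, which do \emph{not} cover arbitrary strictly convex hypersurfaces; you really need Huisken's original version from \cite{gH84}.

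The paper does something quite different. It works with $f=\epsilon\abs{\nabla h}^2+N\abs{\ho}^2/\abs{H}^2$ and proves the decay of both pieces \emph{simultaneously} from a single maximum principle, with no appeal to the improved pinching estimate. The two new ingredients are: (i) the antisymmetric part of $\nabla^2 h$, via Simons' identity, gives $\abs{\nabla^2 h}^2\geq n\kappa_{\min}^4\abs{\ho}^2$, supplying a strictly negative $\abs{\tilde\ho}^2$-term in the evolution of $\abs{\tilde\nabla\tilde h}^2$; and (ii) a compactness argument on the space $\{\abs{h}^2=\abs{B}^2=1\}$ shows $\abs{H\nabla_i h_{kl}-h_{kl}\nabla_i H}^2\geq\epsilon_2\abs{\nabla h}^2$, supplying a strictly negative $\abs{\tilde\nabla\tilde h}^2$-term in the evolution of $\abs{\tilde\ho}^2/\abs{\tilde H}^2$. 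Both estimates use strict convexity directly.

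The trade-off: your route is shorter once you grant the improved pinching, but that estimate itself is proved by Stampacchia iteration, so you have not achieved the stated goal of ``avoiding the use of integral estimates''. The paper's argument is self-contained at the level of the maximum principle and isolates a new structural fact (the curvature content of the antisymmetric Hessian of $h$) which is of independent interest.
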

\begin{proof}
The idea is to consider $f:= \epsilon \abs{ \nabla h }^2 + N \abs{ \ho }^2/\abs{ H }^2$, where $\epsilon > 0$ will be chosen small and $N$ sufficiently large.  For the moment we work in the un-normalised setting.  The evolution equation for $\abs{ \nabla h }^2$ is of the form
\begin{equation*}
	\frac{ \p }{ \p t } \abs{ \nabla h }^2 = \Delta \abs{ \nabla h }^2 - 2\abs{ \nabla^2 h }^2 + h*h*\nabla h * \nabla h,
\end{equation*}
so we obtain the estimate
\begin{equation*}
	\frac{ \p }{ \p t } \abs{ \nabla h }^2 \leq \Delta \abs{ \nabla h }^2 - 2\abs{ \nabla^2 h }^2 + c_1 \abs{ H }^2 \abs{ \nabla h }^2.
\end{equation*}
The evolution equation for $\abs{ \ho }^2 / \abs{ H }^2$ is given by
\begin{equation*}
	\frac{ \p }{\p t} \Big( \frac{ \abs{ \ho }^2 }{ \abs{ H }^2 } \Big) = \Delta \Big( \frac{ \abs{ \ho }^2 }{ \abs{ H }^2 } \Big) + \frac{ 2 }{ \abs{ H }^2 } \big\langle \nabla_i \abs{ H }^2, \nabla_i \Big( \frac{ \abs{ \ho }^2 }{ \abs{ H }^2 } \Big) \big\rangle - \frac{ 2 }{ \abs{ H }^2 } \abs{ H \cdot \nabla_i h_{kl} - \nabla_i H \cdot h_{kl} }^2
\end{equation*}
(see Lemma 5.2 of \cite{gH84} and set $\sigma = 0$).
The importance of including the gradient term $\abs{ \nabla h }^2$ in $f$ is the following: the antisymmetric part of $\abs{ \nabla h }^2$ contains curvature terms which we can use to obtain exponential convergence.  We split $\nabla^2 h$ into symmetric and anti-symmetric components, and upon discarding the the symmetric part we obtain
\begin{align*}
	\abs{ \nabla^2 h }^2 &\geq \frac{ 1 }{ 4 } \abs{ \nabla_i \nabla_j h_{kl} - \nabla_k \nabla_l h_{ij} }^2 \\
	&= \frac{ 1 }{ 4 } \abs{ R_{ikjp}h_{pl} + R_{iklp}h_{jp} }^2,
\end{align*}
where the last line follows from Simons' identity.  Some computation shows
\begin{equation*}
	\abs{ R_{ikjp}h_{pl} + R_{iklp}h_{jp} }^2 = 4 \sum_{i, j} ( \kappa_i^2 \kappa_j^4 - \kappa_i^3 \kappa_j^3 ),
\end{equation*}
then using that $\kappa_{ \text{min} } > 0$ we estimate
\begin{align}
	\notag \sum_{i, j} ( \kappa_i^2 \kappa_j^4 - \kappa_i^3 \kappa_j^3 ) &\geq \kappa_{ \text{min} }^2 \sum_{ i < j } \kappa_i \kappa_j(\kappa_i - \kappa_j)^2 \\
	&\geq n\kappa_{ \text{min} }^4 \abs{ \ho }^2 := \epsilon_1 \abs{ \ho }^2. \label{eqn: new max arg 1}
\end{align}
The next important step is to estimate the term $ \abs{ H \cdot \nabla_i h_{kl} - \nabla_i H \cdot h_{kl} }^2$ from below in terms of $\abs{ \nabla h }^2$.  It is a relatively simple matter to estimate this term from below in terms of $\abs{ \nabla H }^2$, however we want to use this good negative term to control the bad reaction term $c_1\abs{ H }^2 \abs{ \nabla h }^2$ of the evolution equation for $\abs{ \nabla h }^2$, so we need an estimate in terms of $\nabla h$.  To do this, as always let $h$ denote the second fundamental form and $B$ a totally symmetric three tensor (we have $\nabla h$ in mind).  Consider the space $\mathcal{A} := \{ h, B : \abs{ h }^2 =1, \abs{ B }^2 =1 \}$, and we also assume strict convexity of $h$.  The conditions on $h$ and $B$ imply this space is compact.  Now consider the function $G(B) =  \abs{ h_{pp} \cdot B_{ijk} - h_{ij} B_{kpp} }^2$.  We claim $G(B) \geq \delta$ for some $\delta > 0$.  Since $\mathcal{A}$ is compact, by the extreme value theorem $G$ assumes its minimum value at some element of $\mathcal{A}$.  We show by contradiction that $G \neq 0$ which proves the claim.  The anti-symmetric part of $G$ is $\abs{ B_{ipp} \cdot h_{jk} - B_{jpp} \cdot h_{ik} }^2$.  We compute at a point where $G$ obtains its minimum, and rotating coordinates so that $e_1 = \nabla H / \abs{ \nabla H }$ we have
\begin{equation*}
	\abs{ B_{ipp} \cdot h_{jk} - B_{jpp} \cdot h_{ik} }^2 = \abs{ B_{ipp} }^2 \Big( \abs{ h }^2 - \sum_{ k =1 }^n h_{1k}^2 \Big),
\end{equation*}
so $\abs{ B_{ipp} }^2 = 0$ or $ \abs{ h }^2 = \sum_{ k =1 }^n h_{1k}^2$.  The latter implies that $\abs{ h }^2 = h_{11}^2$, which contradicts the strict convexity of the hypersurface.  Therefore, if $G(B) = 0$, then $\abs{ B_{ipp} }^2 = 0$.  From the definition of $G$ it now follows that the full tensor $\abs{ B }^2 = 0$.  This contradicts $\abs{ B }^2 =1$ and the claim follows.  The term $h_{pp} \cdot B_{ijk} - h_{ij} B_{kpp}$ is a quadratic form, so for arbitrary $h$ and $B$ we obtain $G(B) \geq \delta \abs{ h }^2 \abs{ B }^2$ by scaling.  Applying this to our situation, we have $\abs{ B }^2 = \abs{ \nabla h }^2$, then estimating $\abs{ h }^2 \geq n \kappa_{\text{min}}^2$ we obtain
\begin{equation}
	\abs{ H \cdot \nabla_i h_{kl} - \nabla_i H \cdot h_{kl} }^2 \geq \delta n \kappa_{\text{min}}^2 \abs{ \nabla h }^2 := \epsilon_2 \abs{ \nabla h }^2 . \label{eqn: new max arg 2}
\end{equation}
Returning now to the evolution equation for $f$, converting to the normalised setting and using the estimates \eqref{eqn: new max arg 1} and \eqref{eqn: new max arg 2} we get
\begin{align*}
	\frac{ \p }{ \p \tilde{ t } } \tilde{ f }  &\leq \tilde{ \Delta } \tilde{ f } - \epsilon_1 \abs{ \tilde{ \ho } }^2 + c_1 \abs{ \tilde{ H } }^2 \abs{ \tilde{ \nabla } \tilde{ h } }^2 + \frac{ 2 }{ \abs{ \tilde{ H } }^2 }  \big\langle \tilde{ \nabla }_i \abs{ \tilde{ H } }^2, \tilde{ \nabla }_i \tilde{ f } \big\rangle - \frac{ 2 }{ \abs{ \tilde{ H } }^2 } \big\langle \tilde{ \nabla }_i \abs{ \tilde{ H } }^2, \tilde{ \nabla }_i ( \epsilon \abs{ \tilde{ \nabla } \tilde{ h } }^2 ) \big\rangle \\
	&\qquad - \frac{ 2 \epsilon_2 N }{ \abs{ \tilde{ H } }^2 } \abs{ \tilde{ \nabla } \tilde{ h } }^2 - \frac{ 4 \epsilon }{ n } \tilde{ \hbar } \abs{ \tilde{ \nabla } \tilde{ h } }^2.
\end{align*}
In the normalised setting the second fundamental form, and therefore all higher derivatives, are bounded above.  We can therefore estimate
\begin{equation*}
	\big\langle \tilde{ \nabla }_i \abs{ \tilde{ H } }^2, \tilde{ \nabla }_i ( \epsilon \abs{ \tilde{ \nabla } \tilde{ h } }^2 ) \big\rangle \leq 4 \abs{ \tilde{ H } } \abs{ \tilde{ \nabla } \tilde{ H } } \abs{ \tilde{ \nabla } \tilde{ h } } \abs{ \tilde{ \nabla }^2 \tilde{ h } } \leq C \abs{ \tilde{ \nabla } \tilde{ h } }^2.
\end{equation*}
Using $0 < C_{ \text{min} } \leq \abs{ \tilde{ H } }_{ \text{min} } \leq \abs{ \tilde{ H } }_{ \text{max} } \leq  C_{ \text{max} }$, we make $N$ sufficiently large to consume the bad $\abs{ \tilde{ \nabla } \tilde{ h } }^2$ terms and then we discard these terms.  Using again $C_{ \text{min} } \leq \abs{ \tilde{ H } }_{ \text{min} }$ we estimate
\begin{equation*}
		-\epsilon_1 \abs{ \tilde{ \ho } }^2 - \frac{ 4 \epsilon }{ n } \tilde{ \hbar } \abs{ \tilde{ \nabla } \tilde{ h } }^2 \leq - \delta \tilde{ f }
\end{equation*}
for some small $\delta$.  We ultimately obtain  
\begin{equation*}
	\frac{ \p }{ \p \tilde{ t } } \tilde{ f }  \leq \tilde{ \Delta } \tilde{ f } + \tilde{U}^k \tilde{ \nabla }_k \tilde{ f } - \delta \tilde{ f }.
\end{equation*}
This implies
\begin{equation*}
	\frac{ \p }{ \p \tilde{ t } } ( e^{ \delta \tilde{ t } } \tilde{ f } ) \leq \tilde{ \Delta } ( e^{ \delta \tilde{ t } } \tilde{ f } ) + U^k \tilde{ \nabla }_k ( e^{ \delta \tilde{ t } } \tilde{ f } ),
\end{equation*}
and from the maximum principle we conclude $e^{ \delta \tilde{ t } } \tilde{ f } \leq C$ and the theorem follows since $\abs{ \tilde{ H } }_{ \text{max} } \leq  C_{ \text{max} }$.
\end{proof}
Note that we obtain exponential decay of both $\abs{ \tilde{ \nabla } \tilde{ h } }^2$ and $\abs{ \tilde{ \ho } }^2$ at the same time.  Since we have pointwise control on the decay of $\abs{ \tilde{ \ho } }^2$, exponential decay of the higher derivatives can be proved by the maximum principle in a similar manner as the un-normalised estimates.  The important modification needed is that one adds in $\abs{ \tilde{ \ho } }^2$, which is exponentially decaying, rather than $\abs{ h }^2$, which is not, to generate the favourable gradient terms.  In our high codimension setting it is a simpler matter to estimate the good gradient term in the corresponding evolution equation for $\abs{ \ho }^2 / \abs{ H }^2$ in terms of $\abs{ \nabla h }^2$, and the same proof goes through provided we can estimate a lower bound for
\begin{align*}
	\abs{ \nabla^2 h }^2 &\geq \frac{ 1 }{ 4 } \abs{ \nabla_i \nabla_j h_{kl} - \nabla_k \nabla_l h_{ij} }^2 \\
	&= \frac{ 1 }{ 4 } \abs{ \Rp_{ik\alpha\beta}h_{jl\alpha}\nu_{\beta} + R_{ikjp}h_{pl} + R_{iklp}h_{jp} }^2
\end{align*}
in terms of $\abs{ \ho }^2$.  Such an estimate could hold for $c < 1/(n-1)$, although at this stage we can no longer muster the patience to attempt the index gymnastics involved.  In the absence of this calculation, we give a sketch of the original proof contained in \cite{gH84}, with the necessary adjustments made for the high codimension.  We remark that it would be nice to use the same idea in the un-normalised setting, and avoid the integral estimates.  Unfortunately, at the moment we can only make such an argument work if the submanifold is already extremely pinched.

\begin{prop}\label{p: normalised integral Pinching Improves}
There exist positive constants $C$ and $\delta$ both depending only on $\Sigma_0$ such that the estimate
	\begin{equation*}
		\int_{ \Sigma } \abs{ \tilde{ \ho } }^2 d\tilde{\mu}_{ \tilde{g}(\tilde{t}) } \leq Ce^{-\delta\tilde{t} }
	\end{equation*}
holds for all time $\tilde{t} \in [\tilde{t}_0, \infty)$, where $\tilde{t}_0$ is some sufficiently long time.
\end{prop}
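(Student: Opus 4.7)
The plan is to establish a differential inequality of the form
$$\frac{d}{d\tilde{t}}\int_{\tilde\Sigma}\abs{\tilde\ho}^{2}\,d\tilde\mu \leq -\delta\int_{\tilde\Sigma}\abs{\tilde\ho}^{2}\,d\tilde\mu$$
for some $\delta>0$ and all $\tilde{t}\geq \tilde{t}_{0}$, from which the stated estimate follows by Gr\"onwall's inequality. The three ingredients I need are (i) the normalised evolution equation for $\abs{\tilde\ho}^{2}$; (ii) an integrated form of the contracted Simons identity \eqref{eq:SimonsContracted} which, combined with Lemma \ref{l: Z pos}, plays the role of a Poincar\'e-type inequality and replaces the Stampacchia iteration of the un-normalised setting; and (iii) the uniform two-sided bound $C_{\min}\leq \abs{\tilde H}\leq C_{\max}$ together with Lemma \ref{l: Hmin over Hmax}.

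First I would compute, using the un-normalised evolution of $\abs{\ho}^{2}$, Lemma \ref{lem: degree} applied with degree $-2$, the identity $\abs{\nabla h}^{2}-\tfrac{1}{n}\abs{\nabla H}^{2}=\abs{\nabla\ho}^{2}$, and the measure evolution $\partial_{\tilde{t}}\,d\tilde\mu=(\tilde\hbar-\abs{\tilde H}^{2})\,d\tilde\mu$, that
$$\frac{d}{d\tilde{t}}\int\abs{\tilde\ho}^{2}\,d\tilde\mu = -2\int\abs{\tilde\nabla\tilde\ho}^{2}\,d\tilde\mu + 2\int\Big(\tilde R_{1}-\tfrac{1}{n}\tilde R_{2}\Big)d\tilde\mu + \int\Big(\tfrac{n-2}{n}\tilde\hbar-\abs{\tilde H}^{2}\Big)\abs{\tilde\ho}^{2}\,d\tilde\mu.$$
Next, integrating the contracted Simons identity \eqref{eq:SimonsContracted} over $\tilde\Sigma$ and integrating by parts the term $\tilde\ho_{ij}\cdot\tilde\nabla_{i}\tilde\nabla_{j}\tilde H$ via the flat-background Codazzi identity $\nabla_{i}\ho_{ij}=\tfrac{n-1}{n}\nabla_{j}H$ gives
$$\int\abs{\tilde\nabla\tilde\ho}^{2}\,d\tilde\mu + \int \tilde Z\,d\tilde\mu = \tfrac{n-1}{n}\int\abs{\tilde\nabla\tilde H}^{2}\,d\tilde\mu,$$
and applying $\tilde Z\geq \epsilon_{Z}\abs{\tilde\ho}^{2}\abs{\tilde H}^{2}$ from Lemma \ref{l: Z pos} yields the Poincar\'e-type bound
$$\int\abs{\tilde\nabla\tilde\ho}^{2}\,d\tilde\mu + \epsilon_{Z}\int\abs{\tilde\ho}^{2}\abs{\tilde H}^{2}\,d\tilde\mu \leq \tfrac{n-1}{n}\int\abs{\tilde\nabla\tilde H}^{2}\,d\tilde\mu.$$

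The final step is to bound $2(\tilde R_{1}-\tfrac{1}{n}\tilde R_{2})\leq C\abs{\tilde\ho}^{2}\abs{\tilde H}^{2}$, which holds by expanding in the adapted frame of Section \ref{l:Preservation of curvature pinching} (where the dangerous $\abs{\tilde H}^{4}$ contributions cancel identically) and invoking the basic pinching $\abs{\tilde\ho}^{2}\leq c_{0}\abs{\tilde H}^{2}$ on the remaining quartic pieces. Combining this with the Poincar\'e-type bound together with the gradient relation $\abs{\tilde\nabla\tilde H}^{2}\leq \tfrac{n(n+2)}{2(n-1)}\abs{\tilde\nabla\tilde\ho}^{2}$ (which follows from $\abs{\nabla h}^{2}\geq\tfrac{3}{n+2}\abs{\nabla H}^{2}$) lets one absorb the reaction integral into the leading $-2\int\abs{\tilde\nabla\tilde\ho}^{2}$ term. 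For the scaling correction $\int(\tfrac{n-2}{n}\tilde\hbar-\abs{\tilde H}^{2})\abs{\tilde\ho}^{2}$ I would exploit $\tilde\hbar\leq \abs{\tilde H}_{\max}^{2}$ together with Lemma \ref{l: Hmin over Hmax} to conclude that for all $\tilde{t}\geq \tilde{t}_{0}$ sufficiently large the integrand is bounded above by $-\eta\abs{\tilde\ho}^{2}$ for some $\eta>0$. The main obstacle is this coefficient bookkeeping: the constant $C$ in the reaction bound must be small compared to $\epsilon_{Z}$ for the absorption to succeed, and I would resolve this by invoking Theorem \ref{t: pinch trace 2ff} in its rescaled form (which gives $\abs{\tilde\ho}^{2}/\abs{\tilde H}^{2}\to 0$), so that $c_{0}$ and hence $C$ may be taken arbitrarily small on $[\tilde{t}_{0},\infty)$ by choosing $\tilde{t}_{0}$ large. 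With the differential inequality in hand, Gr\"onwall delivers the desired exponential decay.
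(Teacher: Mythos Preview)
Your approach differs from the paper's, and the ``coefficient bookkeeping'' obstacle you flag cannot be resolved in the way you propose. The paper does not work with $\int_{\Sigma}\abs{\tilde\ho}^{2}\,d\tilde\mu$ directly; it works with the scale-invariant quotient $\tilde f = \abs{\tilde\ho}^{2}/\abs{\tilde H}^{2}$. Setting $\sigma=0$ in \eqref{e: evol eqn f_sigma 1}, the reaction term appearing in $\partial_{\tilde t}\tilde f$ is $R_1 - \tfrac{\abs{h}^{2}}{\abs{H}^{2}}R_2$, which is \emph{non-positive} under the pinching and is simply dropped. Following Propositions~\ref{p: using Z pos} and~\ref{p: d/dt f_sigma} with $\sigma=0$ then yields $\tfrac{d}{d\tilde t}\int\tilde f^{p} \leq -\delta\int\tilde f^{p}\abs{\tilde H}^{2} + \int\tilde f^{p}(\tilde\hbar - \abs{\tilde H}^{2})$; the last term is handled by Lemma~\ref{l: Hmin over Hmax} for $\tilde t$ large, and exponential decay of $\int\tilde f^{p}$ (hence of $\int\abs{\tilde\ho}^{2}$, since $\abs{\tilde H}\leq C_{\max}$) follows.

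The gap in your argument is that expanding $R_1 - \tfrac{1}{n}R_2$ in the adapted frame gives
\[
R_1 - \tfrac{1}{n}R_2 = \tfrac{1}{n}\abs{\ho_1}^{2}\abs{H}^{2} + \abs{\ho_1}^{4} + 2\sum_{\alpha>1}\Big(\sum_{i,j}\ho_{ij1}\ho_{ij\alpha}\Big)^{2} + \abs{\Rp}^{2} + \ldots,
\]
so $C = \tfrac{2}{n} + O(c_0)$, not $C = O(c_0)$: the term $\tfrac{1}{n}\abs{\ho_1}^{2}\abs{H}^{2}$ is quadratic in $\ho$ and does not shrink as $\abs{\tilde\ho}^{2}/\abs{\tilde H}^{2}\to 0$; Theorem~\ref{t: pinch trace 2ff} only kills the quartic pieces. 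Your Poincar\'e--gradient combination gives $\int\abs{\tilde\ho}^{2}\abs{\tilde H}^{2} \leq \tfrac{n}{2\epsilon_Z}\int\abs{\tilde\nabla\tilde\ho}^{2}$, so absorption into $-2\int\abs{\tilde\nabla\tilde\ho}^{2}$ requires $C < \tfrac{4\epsilon_Z}{n}$; since $\epsilon_Z$ is at best of order $\tfrac{1}{n}$ (inspect the $\abs{\ho}^{2}\abs{H}^{2}$ coefficient in the expansion of $Z$ in Lemma~\ref{l: Z pos}), this demands $\tfrac{2}{n} < \tfrac{4}{n^{2}}$, which fails for $n\geq 3$. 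Your route can be rescued by observing that for large $\tilde t$ the scaling correction contributes $-\tfrac{2}{n}\int\abs{\tilde H}^{2}\abs{\tilde\ho}^{2}$, which exactly cancels the offending $\tfrac{2}{n}\abs{\ho_1}^{2}\abs{H}^{2}$ (with $-\tfrac{2}{n}\abs{\ho_-}^{2}\abs{H}^{2}$ to spare), leaving only genuinely quartic terms; but you should treat the reaction and scaling correction together rather than separately. The paper's choice of the quotient $\tilde f$ performs this cancellation automatically.
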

\begin{proof}
Consider the function
\begin{equation*}
	\tilde{f} := \frac{ \abs{\tilde{h}}^2 }{ \abs{\tilde{H}}^2 } - \frac{1}{n},
\end{equation*}
which is scale-invariant.  The evolution equation for $\tilde{f}$ is easily obtained from equation \eqref{e: evol eqn f_sigma 1} by taking $\sigma = 0$: 
\begin{equation*}
	\frac{\p}{\p t}\tilde{f} = \tilde{\Delta} \tilde{f} - \frac{4}{ \tilde{\abs{H}} } \langle \tilde{\nabla}_i \tilde{H}, \tilde{\nabla}_i \tilde{f} \rangle - \frac{ 2\epsilon_{\nabla} }{ \tilde{\abs{H}}^2 } \abs{ \tilde{\nabla} \tilde{H} }^2.
\end{equation*}
In the same manner as Proposition \ref{p: using Z pos} we obtain the differential inequality
\begin{equation*}
	\frac{d}{dt} \int \tilde{f}^p \, d\mu_{\tilde{g}} \leq -\delta \int \tilde{f}^p\abs{ \tilde{H} }^2 \, d\mu_{\tilde{g}} + \int \tilde{f}^p ( \hbar - \abs{\tilde{H}}^2 ) \, d\mu_{\tilde{g}},
\end{equation*}
where $\delta$ is some small positive constant and the second integral on the right arises from differentiating the normalised measure.  Using estimate \eqref{eqn: normalised grad est} and $\abs{ \tilde{H} }^2_{ \text{min} } > C_{\text{min}}$ we see there exists some time $\tilde{t}_0$ such that for all $\tilde{t} \geq \tilde{t}_0$ we have
\begin{equation}\label{eqn: improved pinch normalised}
	\frac{d}{dt} \int \tilde{f}^p \, d\tilde{\mu}_{\tilde{g}} \leq -\delta \int \tilde{f}^p\abs{ \tilde{H} }^2 \, d\tilde{\mu}_{\tilde{g}}
\end{equation}
for some smaller $\delta$.  This implies
\begin{equation*}
	\int_{ \Sigma } \tilde{f}^p \, d\tilde{\mu}_{ \tilde{g}( \tilde{t} ) } \leq \int_{ \Sigma } \tilde{f}^p \, d\tilde{\mu}_{ \tilde{g}(0) } \cdot e^{ -\delta C^2_{\text{min}} \tilde{t} },
\end{equation*}
from which the proposition follows easily.
\end{proof}

\begin{prop}\label{eqn: H min mix normalised}
	We have the estimate
		\begin{equation*}
			\tilde{H}_{ \text{max} } - \tilde{H}_{ \text{min} } \leq Ce^{-\delta \tilde{t} }
		\end{equation*}
for all time $\tilde{t} \geq \tilde{t}_1$, where $\tilde{t}_1$ is some sufficiently long time.
\end{prop}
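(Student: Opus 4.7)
The plan is to bootstrap the integral decay of $|\tilde\ho|^2$ from Proposition \ref{p: normalised integral Pinching Improves} to a pointwise exponential decay, then use this to show that $|\tilde\nabla\tilde H|$ decays exponentially, and finally convert this into an oscillation estimate by integrating along geodesics. Recall that in the normalised setting we have a priori bounds $0 < C_{\text{min}} \leq |\tilde H|_{\text{min}} \leq |\tilde H|_{\text{max}} \leq C_{\text{max}}$, uniform bounds on $|\tilde h|$ (hence on $|\tilde h|^2$), and from the interior higher derivative estimates (Proposition \ref{prop: interior-in-time higher derivative estimates}) applied to the normalised flow, uniform bounds on $|\tilde\nabla^m \tilde h|$ for every $m \geq 0$.

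The first step is to upgrade the integral estimate to a sup-estimate: for each $p \geq 1$ the differential inequality \eqref{eqn: improved pinch normalised} applies with the same $\delta$, yielding $\int_\Sigma \tilde f^p\, d\tilde\mu \leq C_p e^{-\delta p \tilde t}$ for all $\tilde t \geq \tilde t_0$. Running a Stampacchia iteration in the spirit of Section \ref{s: A pinching estimate for the traceless second fundamental form}, using the Michael--Simon Sobolev inequality together with the uniform upper bound on $|\tilde H|$ to control the critical term, I obtain a pointwise decay $|\tilde\ho|^2 \leq C e^{-\delta_1 \tilde t}$ for some $\delta_1 \in (0,\delta)$ and all $\tilde t \geq \tilde t_0'$.

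The second step is to pass from decay of $|\tilde\ho|^2$ to decay of $|\tilde\nabla\tilde H|^2$. The evolution equation for $|\tilde\nabla \tilde H|^2$ in the normalised setting has the schematic form
\begin{equation*}
\frac{\partial}{\partial\tilde t}|\tilde\nabla\tilde H|^2 \leq \tilde\Delta|\tilde\nabla\tilde H|^2 - 2|\tilde\nabla^2\tilde H|^2 + C|\tilde\nabla \tilde h|^2 + \tfrac{2}{n}\tilde\hbar\,|\tilde\nabla\tilde H|^2,
\end{equation*}
while the analogue of Proposition \ref{p: evol length of the second fundamental form squared} for $|\tilde\nabla\tilde h|^2$ gives a similar structure. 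The idea is to consider $\tilde G := |\tilde\nabla \tilde h|^2 + N |\tilde\ho|^2$ for $N$ large; the good reaction term $-\epsilon_\nabla |\tilde\nabla\tilde H|^2/|\tilde H|^{2}$ hidden in the evolution of $|\tilde\ho|^2$ (already exploited in the pinching estimates) controls the bad $|\tilde\nabla\tilde h|^2$ contribution after absorption. Since $|\tilde\ho|^2$ is now known to decay exponentially, a parabolic maximum principle argument on $e^{\delta_2 \tilde t}\tilde G$ (with $\delta_2 < \delta_1$ chosen small enough that the residual positive term is dominated) yields $|\tilde\nabla\tilde h|^2 \leq C e^{-\delta_2 \tilde t}$, and in particular $|\tilde\nabla\tilde H|^2 \leq C e^{-\delta_2 \tilde t}$.

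The final step is the integration step. Because the sectional curvature of $\tilde\Sigma_{\tilde t}$ is bounded below by $\epsilon^2 C_{\text{min}}^2 > 0$ by \eqref{eqn: Chen's est normalised}, Bonnet's theorem gives a uniform upper bound $\tilde d$ for the diameter of $\tilde\Sigma_{\tilde t}$. Given any two points $p, q \in \tilde\Sigma_{\tilde t}$, choose a minimising geodesic between them of length at most $\tilde d$; then
\begin{equation*}
\big| |\tilde H|(p) - |\tilde H|(q) \big| \leq \tilde d \cdot \sup_{\tilde\Sigma_{\tilde t}} |\tilde\nabla\tilde H| \leq C e^{-\delta \tilde t}
\end{equation*}
for $\tilde t \geq \tilde t_1$, where $\delta = \delta_2/2$. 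Taking the supremum and infimum over $p$ and $q$ gives the stated estimate. I expect the principal technical obstacle to be step two: carefully arranging the auxiliary function $\tilde G$ and choosing $N$ and $\delta_2$ so that the lower-order perturbations coming from the normalisation terms (which appear with factors of $\tilde\hbar$) are all absorbed, while still retaining a strictly negative reaction; Lemma \ref{lem: degree} and the uniform bounds on $\tilde H$ make this bookkeeping tractable but slightly delicate.
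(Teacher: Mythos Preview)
Your proposal is correct and follows essentially the same three-stage strategy as the paper: obtain pointwise exponential decay of $|\tilde\ho|^2$, feed this into a maximum-principle argument on an auxiliary function combining $|\tilde\nabla\tilde H|^2$ (or $|\tilde\nabla\tilde h|^2$) with a large multiple of the traceless term to deduce exponential decay of the gradient, and finally integrate along geodesics using the uniform diameter bound from Bonnet. The auxiliary function the paper actually uses is $\tilde f = |\tilde\nabla\tilde H|^2 + N|\tilde H|^2|\tilde\ho|^2$ (the normalised version of the function from Section~\ref{s: A gradient estimate for the mean curvature}) rather than your $\tilde G = |\tilde\nabla\tilde h|^2 + N|\tilde\ho|^2$; both choices produce the required $-c|\tilde\nabla\tilde h|^2$ absorption term from the evolution of $|\tilde\ho|^2$ via \eqref{eqn: basic grad est 2}, and the paper exploits the degree-$(-4)$ normalisation term $-\tfrac{4}{n}\tilde\hbar\tilde f$ together with $\tilde\hbar \geq C_{\min}^2$ to generate the $-\delta\tilde f$ needed for decay.

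One simplification you are missing: your Step~1 (re-running a Stampacchia iteration in the normalised setting to get pointwise decay of $|\tilde\ho|^2$) is unnecessary. The un-normalised estimate of Theorem~\ref{t: pinch trace 2ff}, namely $|\ho|^2 \leq C_0|H|^{2-\delta}$, already yields this directly: under rescaling it becomes $|\tilde\ho|^2 \leq C_0\psi^{-\delta}|\tilde H|^{2-\delta} \leq C\psi^{-\delta}$, and since $\tfrac{d}{d\tilde t}\log\psi = \tilde\hbar/n \geq C_{\min}^2/n$, the factor $\psi^{-\delta}$ decays exponentially in $\tilde t$. This is the source of the ``$Ce^{-\delta\tilde t}$'' term in the paper's proof and of the statement that the cross term $C(N)|\tilde\ho|\,|\tilde\nabla\tilde h|^2$ can be absorbed after a sufficiently long time $\tilde t_1$.
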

\begin{proof}
Consider $\tilde{f} := \abs{ \tilde{\nabla} \tilde{H} }^2 + N\abs{ \tilde{H} }^2\abs{ \tilde{\ho} }^2$.  This function is of degree $-4$ and from the relevant un-normalised evolution equations and Lemma \ref{lem: degree} we derive

\begin{equation*}
	\frac{\p}{\p t}\tilde{f} \leq \tilde{\Delta}\tilde{f} + c_1\abs{\tilde{H}}^2\abs{ \tilde{\nabla}\tilde{h} }^2 - 2N\langle \tilde{\nabla}_i \abs{ \tilde{H} }^2, \tilde{\nabla}_i \abs{ \tilde{\ho} }^2 \rangle - \frac{4N(n-1)}{3n}\abs{ \tilde{H} }^2\abs{ \tilde{\nabla}\tilde{h} }^2 + Ce^{-\delta\tilde{t}} - \frac{4}{n}\tilde{\hbar}\tilde{f}.
\end{equation*}
The second term on the right can be absorbed by choosing $N$ sufficiently large.  The term $2N\langle \tilde{\nabla}_i \abs{ \tilde{H} }^2, \tilde{\nabla}_i \abs{ \tilde{\ho} }^2 \rangle$ can be estimated by $C(N)\abs{ \tilde{\ho} }\abs{ \tilde{\nabla}\tilde{h} }^2$, which after some time $t_1$ will be absorbed by the negative term $\abs{ \tilde{H} }^2\abs{ \tilde{\nabla}\tilde{h} }^2$.  We use $\abs{\tilde{ H } }_{\text{min}} > C_{\text{min} }$ to estimate the last term on the right, obtaining the differential inequality
\begin{equation*}
	\frac{\p}{\p t}\tilde{f} = \tilde{\Delta}\tilde{f}+ Ce^{-\delta\tilde{t}} - \delta\tilde{f}
\end{equation*}
which holds for all time $\tilde{t} \geq \tilde{t}_1$.
We then have
\begin{equation*}
	\frac{\p}{\p \tilde{t} } (e^{\delta\tilde{t}} \tilde{f} - C\tilde{t} ) \leq \tilde{\Delta}(e^{\delta\tilde{t}} \tilde{f} - C\tilde{t} ),
\end{equation*}
and from the maximum principle conclude
\begin{align*}
	\tilde{f} &\leq C(1 + \tilde{t})e^{ -\delta\tilde{t} } \\
	&\leq Ce^{ -\delta\tilde{t} }
\end{align*}
for some $\delta$ smaller again.  The proposition now follows by integrating this estimate along geodesics and using that the diameter is bounded above.
\end{proof}

\begin{prop}\label{l: integral normalised higher deriv}
	For every $m \geq 1$ and $p \geq 2$, the estimate
		\begin{equation*}
			\int_{ \Sigma } \abs{ \tilde{\nabla}^m \tilde{h} }^p \, d\tilde{\mu}_{ \tilde{g}(\tilde{t}) } \leq C_m e^{ -\delta_m \tilde{t} }
		\end{equation*}
holds for all time $\tilde{t} \in [\tilde{t}_3, \infty)$.
\end{prop}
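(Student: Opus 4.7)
The plan is to establish the uniform pointwise estimate $\|\tilde\nabla^m\tilde h\|_\infty \leq C_m e^{-\delta_m\tilde t}$ for every $m\geq 1$. The statement of the proposition then follows at once, since $\int_\Sigma|\tilde\nabla^m\tilde h|^p\,d\tilde\mu \leq \|\tilde\nabla^m\tilde h\|_\infty^p\,|\Sigma_0|$ for every $p\geq 2$, using that $\vol(\tilde\Sigma_{\tilde t})=|\Sigma_0|$.

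First I would establish uniform $C^\infty$-bounds in the normalised flow. The pinching $|\tilde h|^2\leq c|\tilde H|^2$ together with the established upper bound $\abs{\tilde H}_{\max}\leq C_{\max}$ gives $|\tilde h|_\infty\leq C$ independently of $\tilde t$. Applying Proposition \ref{prop: interior-in-time higher derivative estimates} to the normalised flow on each unit interval $[\tilde t-1,\tilde t]$ then yields $|\tilde\nabla^m\tilde h|_\infty\leq C_m$ for every $m\geq 0$, with $C_m$ independent of $\tilde t$. Together with the uniform lower bound on the injectivity radius obtained in Section \ref{sec: The normalised flow and and convergence to the sphere} via Klingenberg's lemma, this provides the uniformly bounded geometry needed to apply Hamilton-type interpolation with $\tilde t$-independent constants.

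Second I would promote the existing integral and oscillation decay estimates to pointwise ones. Given $|\tilde\ho|(x_0)=M$, the uniform bound on $|\tilde\nabla\tilde\ho|_\infty$ implies $|\tilde\ho|\geq M/2$ on a geodesic ball of radius $r=\min\{M/(2|\tilde\nabla\tilde\ho|_\infty),\,\tilde\imath_{\min}/2\}$; combined with G\"unther's $\vol(B_r)\geq c_nr^n$ and Proposition \ref{p: normalised integral Pinching Improves}, this yields $\|\tilde\ho\|_\infty\leq Ce^{-\delta\tilde t/(n+2)}$. Proposition \ref{eqn: H min mix normalised} and the Kolmogorov-type inequality $\|\nabla f\|_\infty\leq C\sqrt{\|\nabla^2 f\|_\infty\cdot\osc f}$ (proved by integrating $\nabla f$ along a geodesic issuing from a maximum point until its length halves, stopping at the injectivity radius if needed) similarly give $\|\tilde\nabla\tilde H\|_\infty\leq Ce^{-\delta\tilde t/2}$. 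Iterating the Hamilton-type interpolation $\|\tilde\nabla^k T\|_\infty\leq C\|\tilde\nabla^{k-1}T\|_\infty^{1/2}\|\tilde\nabla^{k+1}T\|_\infty^{1/2}$ (valid with uniform constants on closed manifolds of bounded geometry) applied to $T=\tilde\ho$ and $T=\tilde H-\bar{\tilde H}$ then yields $\|\tilde\nabla^m\tilde\ho\|_\infty\leq C_m e^{-\delta_m\tilde t}$ and $\|\tilde\nabla^m\tilde H\|_\infty\leq C_m e^{-\delta_m\tilde t}$ for every $m\geq 1$ and some $\delta_m>0$.

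Since the spatial connection satisfies $\tilde\nabla\tilde g=0$, differentiating the decomposition $\tilde h=\tilde\ho+\tfrac{1}{n}\tilde H\tilde g$ gives $\tilde\nabla^m\tilde h=\tilde\nabla^m\tilde\ho+\tfrac{1}{n}(\tilde\nabla^m\tilde H)\otimes\tilde g$; tracelessness of $\tilde\nabla^m\tilde\ho$ over its last two tangent indices produces the orthogonal decomposition $|\tilde\nabla^m\tilde h|^2=|\tilde\nabla^m\tilde\ho|^2+\tfrac{1}{n}|\tilde\nabla^m\tilde H|^2$, so $\|\tilde\nabla^m\tilde h\|_\infty\leq C_m e^{-\delta_m\tilde t}$ as required. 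The main obstacle is ensuring that the constants in the Hamilton-type interpolation are uniform across $\tilde t\in[\tilde t_3,\infty)$; this reduces to uniform upper sectional-curvature bounds (from the pinching lemma and $|\tilde H|_{\max}\leq C_{\max}$), uniform bounds on all derivatives of the second fundamental form (Step 1), and a uniform lower bound on the injectivity radius (Klingenberg), all of which are in place from the preceding subsections.
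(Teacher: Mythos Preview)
Your strategy---prove pointwise exponential decay of $|\tilde\nabla^m\tilde h|$ via $C^0$ interpolation and then integrate---is genuinely different from the paper's route, which simply cites Huisken's Lemma~10.4 in \cite{gH84} (an inductive argument on the evolution equations for $\int_\Sigma|\tilde\nabla^m\tilde h|^p\,d\tilde\mu$). Your approach is more direct and, if it works, proves Proposition~\ref{l: integral normalised higher deriv} and Lemma~\ref{pinching improves normalised} simultaneously; it is also close in spirit to the ``new maximum-principle argument'' the paper sketches just before Proposition~\ref{p: normalised integral Pinching Improves}, though via interpolation rather than parabolic maximum principle.

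There is, however, a genuine gap in your treatment of $\tilde H$. In high codimension $\tilde H$ is a section of the normal bundle $\mathcal N$, not a scalar. The Kolmogorov-type inequality you invoke applies to the scalar $|\tilde H|$ and yields only decay of $\big|\nabla|\tilde H|\big|$; by Kato this is \emph{dominated} by $|\tilde\nabla\tilde H|$, not the other way round, so it does not give $\|\tilde\nabla\tilde H\|_\infty\to 0$. Likewise the expression ``$\tilde H-\bar{\tilde H}$'' is not well defined: there is no parallel section of $\mathcal N$ to play the role of a constant, so you cannot subtract an ``average'' and then interpolate. Consequently the base case $\|\tilde\nabla\tilde H\|_\infty\le Ce^{-\delta\tilde t}$ and the iteration step for $\tilde\nabla^m\tilde H$ are both unjustified as written.

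The repair is easy and already implicit in what you cite: the \emph{proof} (not merely the statement) of Proposition~\ref{eqn: H min mix normalised} establishes $\tilde f=|\tilde\nabla\tilde H|^2+N|\tilde H|^2|\tilde\ho|^2\le Ce^{-\delta\tilde t}$ via the maximum principle, which directly gives $\|\tilde\nabla\tilde H\|_\infty\le Ce^{-\delta\tilde t/2}$. With this in hand, apply your $C^0$ interpolation to the tensor $S:=\tilde\nabla\tilde H\in\Gamma(\mathcal H^*\otimes\mathcal N)$ (so $\|S\|_\infty$ decays and $\|\tilde\nabla^K S\|_\infty$ is bounded by Step~1) to obtain decay of $\|\tilde\nabla^m\tilde H\|_\infty$ for all $m\ge 1$. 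The $\tilde\ho$ half of your argument is fine as stated, and the orthogonal decomposition $|\tilde\nabla^m\tilde h|^2=|\tilde\nabla^m\tilde\ho|^2+\tfrac1n|\tilde\nabla^m\tilde H|^2$ then finishes the proof.
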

For a proof we refer the reader to Lemma 10.4 in \cite{gH84}.  The usual Sobolev inequality on a compact manifold now implies that $\max_{ \tilde{ \Sigma }_{ \tilde{t} } } \abs{ \tilde{\nabla}^m \tilde{h} } \leq C_m$.  With these higher derivative estimates in place we can prove the crucial pointwise bound on $\abs{ \tilde{ \ho } }$:

\begin{lem}\label{pinching improves normalised}
There exist positive constants $C$ and $\delta$ both depending only on $\Sigma_0$ such that the estimate
	\begin{equation*}
		\abs{ \tilde{ \ho } }^2 \leq Ce^{-\delta\tilde{t} }
	\end{equation*}
holds for all time $\tilde{t} \in [\tilde{t}_4, \infty)$.
\end{lem}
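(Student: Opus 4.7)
The plan is to convert the integral decay of $\abs{\tilde{\ho}}^2$ from Proposition \ref{p: normalised integral Pinching Improves} into the desired pointwise decay by coupling it with a uniform pointwise bound on the gradient of $\abs{\tilde{\ho}}^2$, which follows from the higher derivative estimates of Proposition \ref{l: integral normalised higher deriv} via Sobolev embedding and a simple covering argument in the spirit of Huisken.

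First I would upgrade Proposition \ref{l: integral normalised higher deriv} to a uniform $C^1$ control
\[
\max_{\tilde{\Sigma}_{\tilde{t}}}\bigl(\abs{\tilde{\ho}}^2 + \babs{\tilde{\nabla}\abs{\tilde{\ho}}^2}\bigr) \leq C_1,\qquad \tilde{t}\geq \tilde{t}_3,
\]
with $C_1$ depending only on $\Sigma_0$. This is essentially the remark following Proposition \ref{l: integral normalised higher deriv}: since the intrinsic geometry of $\tilde{\Sigma}_{\tilde{t}}$ is uniformly controlled in $\tilde{t}$ (the submanifold has bounded volume, a uniform upper bound on $\abs{\tilde{h}}$, and uniform two-sided bounds $C_{\text{min}}\leq \abs{\tilde{H}}\leq C_{\text{max}}$), the Sobolev constants are uniform, and the $L^p$ bounds on $\tilde{\nabla}^m\tilde{h}$ with $p>n$ and $m$ sufficiently large give uniform pointwise bounds on $\tilde{h}$ and $\tilde{\nabla}\tilde{h}$. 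The estimate for the gradient then follows from $\babs{\tilde{\nabla}\abs{\tilde{\ho}}^2}\leq 2\abs{\tilde{\ho}}\abs{\tilde{\nabla}\tilde{h}}$.

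Next I would convert this Lipschitz control into pointwise decay. Fix $\tilde{t}\geq \tilde{t}_3$, let $M(\tilde{t}):=\max_{\tilde{\Sigma}_{\tilde{t}}}\abs{\tilde{\ho}}^2$, attained at some $x_0$, and set $r:=M(\tilde{t})/(2C_1)$. The gradient bound forces $\abs{\tilde{\ho}}^2\geq M(\tilde{t})/2$ throughout the geodesic ball $\tilde{B}_r(x_0)\subset \tilde{\Sigma}_{\tilde{t}}$. Since the pinching assumption and the uniform bound on $\abs{\tilde{h}}$ provide a uniform upper bound on the intrinsic sectional curvature (via the Gauss equation), while Klingenberg's lemma together with the pointwise bound on $\abs{\tilde{h}}$ (as used in the proof of the lower bound on $\tilde{H}_{\text{min}}$ above) provides a uniform positive lower bound on $\inj(\tilde{\Sigma}_{\tilde{t}})$, G\"unther's comparison theorem yields $\vol(\tilde{B}_r(x_0))\geq c_0 r^n$ whenever $r$ is below a fixed constant. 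Because $M(\tilde{t})$ is uniformly bounded above, after possibly enlarging the starting time to some $\tilde{t}_4\geq \tilde{t}_3$ this smallness condition on $r$ is automatic, giving
\[
\int_{\tilde{\Sigma}_{\tilde{t}}} \abs{\tilde{\ho}}^2 \, d\tilde{\mu}_{\tilde{g}} \geq \frac{M(\tilde{t})}{2}\,\vol\bigl(\tilde{B}_r(x_0)\bigr) \geq c_1 M(\tilde{t})^{n+1}.
\]
Combining with Proposition \ref{p: normalised integral Pinching Improves} produces $M(\tilde{t})^{n+1}\leq C\,e^{-\delta \tilde{t}}$, and taking the $(n+1)$-st root yields the lemma with a smaller value of $\delta$.

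The only real obstacle is the uniformity in $\tilde{t}$ of the $C^1$ bound on $\abs{\tilde{\ho}}^2$: everything else is elementary once that is established, but the uniformity relies crucially on the time-uniform control of the geometry of $\tilde{\Sigma}_{\tilde{t}}$ (volume, injectivity radius, $\abs{\tilde{h}}_{C^0}$) developed in this section, so one must be careful to invoke only those bounds that do not themselves deteriorate as $\tilde{t}\to\infty$.
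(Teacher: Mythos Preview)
Your argument is correct. The paper itself does not prove this lemma but refers to Theorem~10.5 of \cite{gH84} (and the analogous result in \cite{rH82}), where the route is different: Huisken combines the exponentially decaying $L^p$-bounds on $\tilde{\nabla}^m\tilde{h}$ from Proposition~\ref{l: integral normalised higher deriv} with Gagliardo--Nirenberg-type interpolation inequalities (valid with uniform constants because of the uniform geometry of $\tilde{\Sigma}_{\tilde{t}}$) to pass directly from integral to pointwise exponential decay. Your approach instead uses only the uniform $C^0$-bound on $\tilde{\nabla}\tilde{h}$, the resulting Lipschitz control of $\abs{\tilde{\ho}}^2$, and G\"unther volume comparison to squeeze the $L^1$-decay of Proposition~\ref{p: normalised integral Pinching Improves} into the inequality $M(\tilde{t})^{n+1}\leq C\int_{\tilde{\Sigma}_{\tilde{t}}}\abs{\tilde{\ho}}^2\,d\tilde{\mu}$. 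This is more elementary---no interpolation theory is needed, only the mean value theorem and volume comparison---at the price of producing the slightly smaller exponent $\delta/(n+1)$. Both methods rely on exactly the same uniform geometric control (bounded $\abs{\tilde h}$, two-sided bounds on $\abs{\tilde H}$, uniform injectivity radius via Klingenberg and Heintze--Karcher), so your caution about only invoking time-uniform bounds is well placed and correctly handled.
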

For a proof we refer the reader to Theorem 10.5 in \cite{gH84}.  See also \cite{rH82} for the above two results.  In particular, the reason why the Sobolev constant is uniformly bounded, and thus why can in fact use the Sobolev inequality is explained in \cite{rH82}.

\begin{prop}
The normalised submanifold $\tilde{\Sigma}_{ \tilde{t} }$ converges uniformly to a smooth limit submanifold $\tilde{\Sigma}_{\infty}$ as $\tilde{t} \rightarrow \infty$.
\end{prop}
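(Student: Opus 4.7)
The plan is to follow Huisken's original strategy for the analogous hypersurface convergence, adapted to the normal bundle. After a single rigid motion placing the contraction point $q$ of the un-normalised flow at the origin of $\mathbb{R}^{n+k}$, the family $\{\tilde F(\cdot,\tilde t)\}_{\tilde t \geq 0}$ is uniformly bounded in $C^{0}$; combined with the bounds on $|\tilde\nabla^{m}\tilde h|$ coming from Proposition \ref{l: integral normalised higher deriv} together with the lower and upper bounds on $|\tilde H|$ already established, this should give uniform $C^{\infty}$-bounds on $\tilde F$. By Arzel\`a--Ascoli, every sequence $\tilde t_{k}\to\infty$ admits a subsequence along which $\tilde F(\cdot,\tilde t_{k})$ converges in $C^{\infty}(\Sigma,\mathbb{R}^{n+k})$ to a limit immersion $\tilde F_{\infty}$. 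Passing the pointwise estimates $|\tilde\ho|^{2}\leq Ce^{-\delta\tilde t}$ (Lemma \ref{pinching improves normalised}) and $\tilde H_{\max}-\tilde H_{\min}\leq Ce^{-\delta\tilde t}$ (Proposition \ref{eqn: H min mix normalised}) to the limit, one obtains $\ho_{\infty}\equiv 0$ and $|H_{\infty}|$ constant; the Chen--Okumura classification recalled in the Introduction then forces $\tilde F_{\infty}$ to parametrise a round $n$-sphere in some affine $(n+1)$-plane of $\mathbb{R}^{n+k}$.

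To upgrade subsequential convergence to convergence of the full family, I would first interpolate the pointwise decay of $|\tilde\ho|$ against the $C^{m}$ bounds on $\tilde h$ to extract exponential $C^{m}$ decay of $\tilde h-\tfrac{1}{n}\tilde H\otimes\tilde g$ for every $m$. The normalised flow equation
\[
\partial_{\tilde t}\tilde F = \tilde H + \frac{1}{n}\tilde\hbar\,\tilde F
\]
then splits into a normal part $\tilde H+\tfrac{1}{n}\tilde\hbar\,\pip\tilde F$ and a tangential part $\tfrac{1}{n}\tilde\hbar\,\pi\tilde F$. The tangential part is geometrically trivial and can be absorbed by a family of diffeomorphisms $\varphi_{\tilde t}:\Sigma\to\Sigma$ defined by the ODE $\partial_{\tilde t}\varphi_{\tilde t} = -\big(\tfrac{1}{n}\tilde\hbar\,\pi\tilde F\big)\circ\varphi_{\tilde t}$, which admits a global solution since the driving vector field is uniformly bounded on the compact $\Sigma$. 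The normal part is shown to decay exponentially in every $C^{m}$: on a sphere centred at the origin the identity $\pip\tilde F = -\tilde\hbar^{-1}\tilde H$ is exact and the normal speed vanishes, and near such a sphere the cancellation persists up to the exponentially small deviation from umbilicity. Combined, the reparametrised map $\tilde F\circ\varphi_{\tilde t}$ has exponentially small $C^{\infty}$-velocity and is therefore Cauchy in $C^{\infty}(\Sigma,\mathbb{R}^{n+k})$; the unique $C^{\infty}$-limit $\tilde F_{\infty}$ yields a smooth limit submanifold $\tilde\Sigma_{\infty}=\tilde F_{\infty}(\Sigma)$, and the convergence $\tilde\Sigma_{\tilde t}\to\tilde\Sigma_{\infty}$ holds smoothly as subsets of $\mathbb{R}^{n+k}$.

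The main obstacle will be the quantitative control of the normal velocity $\tilde H+\tfrac{1}{n}\tilde\hbar\,\pip\tilde F$. The underlying cancellation is robust only when the approximating sphere is centred at the origin, which is precisely why the preliminary rigid motion is essential; at the technical level, $\pip\tilde F$ is parametrisation-dependent and one must exploit Equation \eqref{eq:nablapiN}, which expresses $\tilde\nabla(\pip\tilde F)$ in terms of $\tilde h$ acting on $\pi\tilde F$, in order to bootstrap pointwise decay of $|\tilde\ho|$ to $C^{m}$-decay of the entire error term. The uniform bounds on diameter, on $|\tilde H|_{\min}$, and on every $|\tilde\nabla^{m}\tilde h|$ established in the preceding sections are exactly what makes this bootstrap possible.
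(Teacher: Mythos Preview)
Your approach differs substantially from the paper's, and the route you propose contains a genuine difficulty that you identify but do not actually resolve.

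The paper never works with $\tilde F$ or its normal velocity at all. It controls the evolution of the \emph{induced metric}:
\[
\partial_{\tilde t}\tilde g_{ij}
= -2\tilde H\cdot\tilde h_{ij} + \tfrac{2}{n}\tilde\hbar\,\tilde g_{ij}
= -2\tilde H\cdot\tilde{\ho}_{ij} - \tfrac{2}{n}\big(|\tilde H|^2-\tilde\hbar\big)\tilde g_{ij},
\]
so that $|\partial_{\tilde t}\tilde g|$ is bounded by $C|\tilde{\ho}| + C\big(|\tilde H|_{\max}^{2}-|\tilde H|_{\min}^{2}\big)$. Both pieces decay exponentially by Lemma~\ref{pinching improves normalised} and Proposition~\ref{eqn: H min mix normalised}, so $\int_0^\infty|\partial_{\tilde t}\tilde g|\,d\tilde t<\infty$, and Hamilton's Lemma 14.2 in \cite{rH82} gives uniform convergence of the metrics; higher regularity follows exactly as in the un-normalised argument using the exponential decay of $|\tilde\nabla^m\tilde h|$. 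The key advantage is that $\partial_{\tilde t}\tilde g$ is automatically built from the \emph{traceless} second fundamental form and the \emph{oscillation} of $|\tilde H|$, so no information about where the submanifold sits in $\mathbb{R}^{n+k}$ is ever needed. There is no Arzel\`a--Ascoli step and no tangential/normal splitting.

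Your scheme, by contrast, hinges on exponential decay of the normal speed $\tilde H+\tfrac{1}{n}\tilde\hbar\,\pip\tilde F$. This cancels exactly only on a sphere \emph{centred at the origin}; on a sphere of radius $r$ centred at $c\neq 0$ one computes that the velocity equals $\tfrac{n}{r^{2}}c$, which is neither small nor tangential. Translating the contraction point $q$ of the un-normalised flow to the origin does not by itself pin the barycentre of $\tilde\Sigma_{\tilde t}$ to the origin with exponential rate, and the identity \eqref{eq:nablapiN} that you invoke only relates the \emph{gradient} of $\pip\tilde F$ to $\tilde h$; it yields no control on the zeroth-order quantity $\pip\tilde F + n\tilde\hbar^{-1}\tilde H$ without an independent anchoring condition (incidentally, the correct identity on the sphere is $\pip\tilde F=-n\tilde\hbar^{-1}\tilde H$, not $-\tilde\hbar^{-1}\tilde H$). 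Closing this gap would require a separate proof that some centring quantity converges exponentially, which is essentially as hard as the original statement. The paper's metric-based route bypasses this issue entirely.
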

\begin{proof}
The first step is to show $\tilde{ \Sigma }_{ \infty }$ is continuous.  As we have done in the un-normalised setting, using Lemma 14.2 of \cite{rH82} it suffices to show
\begin{equation*}
	 \int_0^{ \infty } \Big\lvert \frac{ \p \tilde{g} }{ \p \tilde{t} } \Big\rvert \, d\tilde{t} \leq C < \infty.
\end{equation*}
We estimate
\begin{align*}
	\int_0^{ \infty } \Big\lvert \frac{ \p \tilde{g} }{ \p \tilde{t} } \Big\rvert_{\tilde{g}(\tilde{t})} \, d\tilde{t} &= 2\int_0^{ \infty } \Big\lvert \tilde{H} \cdot \tilde{h}_{ij} - \frac{1}{n} \tilde{\hbar} \tilde{g}_{ij} \Big\rvert_{\tilde{g}(\tilde{t})} \, d\tilde{t} \\
	&\leq C\int_0^{ \infty } \Big\lvert \tilde{h}_{ij} - \frac{1}{n} \tilde{H} \tilde{g}_{ij} \Big\rvert_{\tilde{g}(\tilde{t})} + \frac{1}{n} \Big\lvert ( \abs{ \tilde{H} }^2 - \tilde{ \hbar } ) \tilde{g}_{ij} \Big\rvert_{\tilde{g}(\tilde{t})} \, d\tilde{t} \\
	&\leq C\int_0^{ \infty } \Big\lvert \tilde{h}_{ij} - \frac{1}{n} \tilde{H} \tilde{g}_{ij} \Big\rvert_{\tilde{g}(\tilde{t})} + \frac{1}{n} \Big\lvert ( \abs{ \tilde{H}_{ \text{max} } }^2 - \abs{ \tilde{H}_{ \text{min} } }^2 ) \tilde{g}_{ij} \Big\rvert_{\tilde{g}(\tilde{t})} \, d\tilde{t} \\
	&\leq \int_0^{ \infty } Ce^{ -\delta \tilde{t} },
\end{align*}
which is finite as desired.  In going to the last line we have used Proposition \ref{eqn: H min mix normalised} and Lemma \ref{pinching improves normalised}.  The proof that $\tilde{ \Sigma }_{\infty}$ is smooth mimics that of the un-normalised setting, where here the exponential decay of the normalised estimates guarantees that the indefinite integrals in time which arise are finite.
\end{proof}
\begin{prop}
The limit submanifold $\tilde{\Sigma}_{\infty}$ is a $n$-sphere lying in some $(n+1)$-dimensional subspace of $\mathbb{R}^{n+k}$.
\end{prop}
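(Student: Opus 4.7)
The plan is to exploit the exponential decay estimates already established (Lemma \ref{pinching improves normalised} and Proposition \ref{eqn: H min mix normalised}) together with smooth convergence to transfer pointwise rigidity from the limit. First I would observe that since $\tilde{\Sigma}_{\tilde t}\to\tilde\Sigma_\infty$ smoothly and $\abs{\tilde\ho}^2\leq Ce^{-\delta\tilde t}$, on the limit we have $\ho_\infty\equiv 0$, i.e.\ $h_\infty=\tfrac{1}{n}H_\infty\otimes g_\infty$ (totally umbilical). Likewise Proposition \ref{eqn: H min mix normalised} forces $\tilde H_{\max}-\tilde H_{\min}\to 0$, while $C_{\min}\leq \abs{\tilde H}\leq C_{\max}$ throughout, so $\abs{H_\infty}$ is a positive constant $H_0$ on $\tilde\Sigma_\infty$.

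Next, on $\tilde\Sigma_\infty$ pick a local orthonormal normal frame with $\nu_1=H_\infty/H_0$, so that $h_{ij1}=\tfrac{H_0}{n}g_{ij}$ and $h_{ij\alpha}=0$ for $\alpha>1$. The goal is to show the line bundle spanned by $\nu_1$ is parallel in ${\mathcal N}$. Apply the Codazzi identity \eqref{eq:spatialCodazzi} in the Euclidean setting (so $\bar R=0$). For $\alpha>1$, using the definition of $\nabla$ on ${\mathcal N}$-valued tensors,
\begin{equation*}
\nabla_ih_{jk\alpha}=\p_ih_{jk\alpha}-\Gamma_{ij}^lh_{lk\alpha}-\Gamma_{ik}^lh_{jl\alpha}-C_{i\alpha}^\beta h_{jk\beta}=-C_{i\alpha}^1\,\tfrac{H_0}{n}g_{jk},
\end{equation*}
so Codazzi gives $C_{i\alpha}^1g_{jk}-C_{j\alpha}^1g_{ik}=0$. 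Contracting with $g^{jk}$ (and using $n\geq 2$, $H_0\neq 0$) forces $C_{i\alpha}^1\equiv 0$ for every $\alpha>1$, i.e.\ $\nablap\nu_1=0$.

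Since $\nu_1$ is parallel in the normal bundle, and in $\mathbb{R}^{n+k}$ the ambient connection acts on $\nu_1$ as $\bar\nabla_{e_i}\nu_1=\nablap_{e_i}\nu_1-h_{ij1}g^{jk}F_*e_k=-\tfrac{H_0}{n}F_*e_i$, the vector $\nu_1$ takes values in a fixed affine line in $\mathbb{R}^{n+k}$ (or equivalently, the orthogonal complement of $\operatorname{span}(\nu_1)$ at each point is the same $(n+1)$-dimensional subspace of $\mathbb{R}^{n+k}$, up to translation). The same computation shows $\bar\nabla_{e_i}(F_\infty+\tfrac{n}{H_0}\nu_1)=0$, so $F_\infty+\tfrac{n}{H_0}\nu_1$ is a constant point $q\in\mathbb{R}^{n+k}$, and $|F_\infty-q|=n/H_0$ everywhere. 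Therefore $\tilde\Sigma_\infty$ lies inside the Euclidean sphere of radius $n/H_0$ centred at $q$, contained in the $(n+1)$-dimensional affine subspace $q+\operatorname{span}(\nu_1)^\perp$; being a closed connected $n$-dimensional submanifold of an $n$-sphere, it equals that sphere. The radius is then determined by the normalisation $\vol(\tilde\Sigma_\infty)=\abs{\Sigma_0}$.

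The bulk of the work is really already done by the previous sections; the only genuinely new ingredient is the Codazzi computation showing $C_{i\alpha}^1=0$, which is where the high-codimension specific obstacle sits. The mild delicacy is that one must work on the limit (where $h_{ij\alpha}=0$ strictly for $\alpha>1$) rather than on $\tilde\Sigma_{\tilde t}$, so one should first justify that smooth convergence passes the Codazzi identity and the identity $\ho=0$ to $\tilde\Sigma_\infty$; this is automatic from the higher-derivative estimates of Proposition \ref{l: integral normalised higher deriv} combined with Arzel\`a--Ascoli, exactly as in the preceding proposition establishing smoothness of $\tilde\Sigma_\infty$.
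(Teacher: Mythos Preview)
Your argument is correct in substance but takes a more hands-on route than the paper. The paper's proof is two lines: Lemma \ref{pinching improves normalised} gives $\ho_\infty\equiv 0$, and then the classical Codazzi Theorem (cited from Spivak \cite[Thm.~26]{mS79}) is invoked as a black box to conclude that any closed totally umbilic $n$-submanifold of $\mathbb{R}^{n+k}$ is a round $n$-sphere in an affine $(n+1)$-plane. You instead reprove that classification directly via the Codazzi equation, which is a perfectly valid and more self-contained alternative. Note also that your appeal to Proposition \ref{eqn: H min mix normalised} to get $\abs{H_\infty}$ constant is not needed in the paper's route (and in fact constancy of $\abs{H}$ already follows from $\ho=0$ together with Codazzi).

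There is, however, a slip in your final geometric assembly. You claim $\nu_1$ takes values in a fixed affine line and that the image lies in $q+\operatorname{span}(\nu_1)^\perp$; neither is correct. The vector $\nu_1$ is not ambient-parallel (indeed $\bar\nabla_{e_i}\nu_1=-\tfrac{H_0}{n}F_*e_i\neq 0$), and $\operatorname{span}(\nu_1)^\perp$ has dimension $n+k-1$, not $n+1$. The right conclusion from your computation $C_{i\alpha}^1=0$ (and $h_{ij\alpha}=0$) for $\alpha>1$ is that $\bar\nabla_{e_i}\nu_\alpha=\sum_{\beta>1}C_{i\alpha}^\beta\nu_\beta$, so the subbundle $\operatorname{span}(\nu_2,\dots,\nu_k)$ is ambient-parallel and hence equals a fixed $(k-1)$-plane $W\subset\mathbb{R}^{n+k}$. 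Then $F-q=-\tfrac{n}{H_0}\nu_1\in W^\perp$, so the image lies in the affine $(n+1)$-plane $q+W^\perp$ and on the sphere of radius $n/H_0$ there; being closed, connected and $n$-dimensional, it is that sphere. With this correction your direct argument goes through.
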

\begin{proof}
Lemma \ref{pinching improves normalised} implies that $\tilde{\Sigma}_{\infty}$ is totally umbilic.  By the Codazzi Theorem (see \cite[Thm. 26]{mS79} for a proof), the only closed, totally umbilic $n$-dimensional submanifold immersed in $\mathbb{R}^{n+k}$ is a $n$-sphere lying in some $(n+1)$-dimensional subspace of $\mathbb{R}^{n+k}$.
\end{proof}
The last proposition completes the proof of the second part of the Main Theorem \ref{mthm: main thm 2}.

\chapter{Submanifolds of the sphere}

In the previous chapter we studied the evolution of submanifolds of Euclidean space by the mean curvature flow.  We now want to consider the situation where the background space is a sphere of constant curvature $\bar{K}$.  Our main result is the following:

\begin{mthm}\label{main theorem sphere}
Let $\Sigma_0^n = F_0(\Sigma^n)$ be a closed submanifold smoothly immersed in $\mathbb{S}^{n+k}$.  If $\Sigma_0$ satisfies
\begin{equation*}
	\begin{cases}
		\abs{h}^2 \leq \frac{4}{3n}\abs{H}^2 + \frac{2 (n-1) }{3} \bar K, \quad n = 2,3 \\
		\abs{h}^2 \leq \frac{1}{n-1}\abs{H}^2 + 2\bar K, \quad n \geq 4,
	\end{cases}
\end{equation*}
then either
\begin{enumerate}
	\item MCF has a unique, smooth solution on a finite, maximal time interval $0 \leq t < T < \infty$ and the submanifold $\Sigma_t$ contracts to a point as $t \rightarrow T$; or
	\item MCF has a unique, smooth solution for all time $0 \leq t < \infty$ and the submanifold $\Sigma_t$ converges to a totally geodesic submanifold $\Sigma_{\infty}$.
\end{enumerate}
\end{mthm}

We highlight again that no assumption on the size $H$ is required.  The pinching condition $\abs{h}^2 < 1/(n-1)\abs{H}^2 + 2\bar K$ implies that the submanifold has positive intrinsic curvature.  A natural question to ask is whether some other geometric flow will deform all submanifolds of positive intrinsic curvature to either round points or totally geodesic submanifolds.  In the case of hypersurfaces this problem has a very nice resolution due to Andrews.  Beginning with the assumption that positive intrinsic curvature is preserved by some flow, in \cite{A02} the desired speed of the flow is found as an explicit solution of an ordinary differential equation.  He then goes on to show that this flow does indeed deform an initial hypersurface of positive intrinsic curvature to either a point or a totally geodesic hypersurface.  We point out that in the high codimension case such a theorem cannot be true (in dimension two) because of the Veronese surface.

This proof of this theorem proceeds similarly to \cite{gH87} using the high codimension techniques developed in the previous chapter.  After the relevant evolution equations are derived, we prove a version of the Pinching Lemma that holds in a sphere. The Pinching Lemma states that if the initial submanifold satisfies a certain curvature pinching, then the mean curvature flow preserves this pinching.  A stronger pinching estimate is then deduced by a Stampacchia iteration argument.  The essential content of this estimate is that in regions of large mean curvature, or after sufficiently long time, the submanifold is nearly totally umbilic.  This estimate allows us to characterise the long time shape of the evolving submanifolds, which is completed in the last sections.

\section{The evolution equations in a sphere}\label{s: The evolution equations in a sphere}

In the previous chapter we derived the evolution equation for the second fundamental form of submanifolds of arbitrary codimension in an arbitrary background space:
\begin{align*}
\nabla_{\partial_t}h_{ij} &= \Delta h_{ij}+h_{ij}\cdot h_{pq}h_{pq}+h_{iq}\cdot h_{qp}h_{pj}
+h_{jq}\cdot h_{qp}h_{pi}-2h_{ip}\cdot h_{jq} h_{pq}\notag\\
&\quad +2\bar R_{ipjq}h_{pq}-\bar R_{kjkp}h_{pi}-\bar R_{kikp}h_{pj}+h_{ij\alpha}\bar R_{k\alpha k\beta}\nu_\beta\notag\\
&\quad -2h_{jp\alpha}\bar R_{ip\alpha\beta}\nu_\beta-2h_{ip\alpha}\bar R_{jp\alpha\beta}\nu_\beta
+\bar\nabla_k\bar R_{kij\beta}\nu_\beta-\bar\nabla_i\bar R_{jkk\beta}\nu_\beta.\label{eq:reactdiffuse}
\end{align*}

In the case where the background space is a sphere the above evolution equation can be simplified significantly.   If $e_a$, $1 \leq a \leq n+k$, is an arbitrary local frame for the background sphere, then in such a frame the Riemann curvature tensor takes the form
\begin{equation}\label{e: curv tensor of sphere}
	\bar R(e_a, e_b, e_c, e_d) = \bar{K} ( \langle e_a, e_c \rangle \langle e_b, e_d \rangle - \langle e_a, e_d \rangle \langle e_b, e_c \rangle ).
\end{equation}
The derivation of the evolution equation for $\abs{ h }^2$ follows that of the Euclidean case, however extra terms are now present due to the background curvature. We will show how to deal with these extra terms.  First of all, as a sphere is a symmetric space, the first derivatives of the the background curvature are zero.  The extra ambient curvature terms that remain are
\begin{equation*}
	4 h_{ij\alpha}h_{pq\alpha}\bar{R}_{ipjq} - 4 h_{ij\alpha}h_{ip\alpha}\bar{R}_{kjkp} + 2 h_{ij\alpha}h_{ij\beta}\bar{R}_{k\alpha k \beta} - 8 h_{ij\beta}h_{ip\alpha}\bar{R}_{jp\alpha\beta}
\end{equation*}
Now, using the form of the Riemann curvature tensor of the sphere given by equation \eqref{e: curv tensor of sphere}, for example, $\bar{R}_{k \alpha k \beta} = \bar{K}(\delta_{k k}\delta_{\alpha \beta} - \delta_{k\beta}\delta_{\alpha k})$, one finds various terms are zero or cancel, ultimately leaving only
\begin{equation*}
	4\bar{K} \abs{H}^2 - 2n\bar{K}\abs{h}^2.
\end{equation*}
The evolution equation for $\abs{ h }^2$ is therefore given by
\begin{equation}\label{e: evol h2 sphere}
	\frac{\partial}{\partial t}\abs{h}^2 = \Delta\abs{h}^2 - 2\abs{\nabla h}^2 + 2R_1 + 4\bar K\abs{H}^2 - 2n\bar K\abs{h}^2,
\end{equation}
or equivalently
\begin{equation}\label{e: evol h2 sphere 1}
	\frac{\partial}{\partial t}\abs{h}^2 = \Delta\abs{h}^2 - 2\abs{\nabla h}^2 + 2R_1 + 2\bar K\abs{H}^2 - 2n\bar K\abs{ \ho }^2,
\end{equation}
The ambient curvature terms appearing in the derivation of the evolution equation for $\abs{ H }^2$ can be dealt with similarly, and we obtain
\begin{equation}\label{e: evol H2 sphere 2}
	\frac{ \p }{ \p t } \abs{H}^2 = \Delta\abs{H}^2 - 2\abs{ \nabla H }^2 + 2 R_1 + 2n\bar{K}\abs{H}^2.
\end{equation}
The contracted form of Simons' identity takes the form
\begin{equation}\label{e: contracted Simons' identity}
	\frac{1}{2} \Delta \abs{ \ho }^2 = \ho_{ij} \cdot \nabla_i\nabla_j H + \abs{ \nabla \ho }^2 + Z + n\bar K\abs{ \ho  }^2,
\end{equation}
where again
\begin{equation*}
	Z = -\sum_{ \alpha, \beta } \Big( \sum_{i,j} h_{ij\alpha} h_{ij\beta} \Big)^2 - \abs{ \Rp }^2 + \sum_{ \substack{ i,j,p \\ \alpha, \beta } } H_{\alpha} h_{ip\alpha} h_{ij\beta} h_{pj\beta}.
\end{equation*}
And finally, the basic gradient estimate
\begin{equation} \label{eqn: basic grad est 1 sphere} 
	\abs{ \nabla h }^2 \geq \frac{ 3 }{ n+2 } \abs{ \nabla H }^2
\end{equation}
carries over unchanged.

\section{Curvature pinching is preserved}\label{s: Curvature pinching is preserved}
We now prove the version of the Pinching Lemma that holds in sphere.  Whenever we make reference to the Pinching Lemma in this chapter we obviously mean the following lemma.

\begin{lem}
If a solution $F : \Sigma \times [0,T) \rightarrow \mathbb{S}^{n+k}$ of the mean curvature flow satisfies
\begin{equation}
	\begin{cases}
		\abs{h}^2 \leq \frac{4}{3n}\abs{H}^2 + \frac{n}{2}\bar K, \quad n = 2,3 \\
		\abs{h}^2 \leq \frac{1}{n-1}\abs{H}^2 + 2\bar K, \quad n \geq 4
	\end{cases}
\end{equation}
at $t = 0$, then this remains true as long as the solution exists.
\end{lem}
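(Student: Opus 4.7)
The plan is to mimic the Euclidean pinching argument (Lemma 3.7) with the quantity $\mathcal{Q} = \abs{h}^2 - c\abs{H}^2 - d\bar K$, where the constants $(c,d)$ are read off from the hypotheses. Combining the sphere evolution equations \eqref{e: evol h2 sphere 1} and \eqref{e: evol H2 sphere 2} (with $R_2$ in the second, as is consistent) gives
\begin{equation*}
\frac{\p}{\p t}\mathcal{Q} = \Delta \mathcal{Q} - 2\bigl(\abs{\nabla h}^2 - c\abs{\nabla H}^2\bigr) + 2R_1 - 2cR_2 + 2\bar K\abs{H}^2 - 2n\bar K\abs{\ho}^2 - 2cn\bar K\abs{H}^2,
\end{equation*}
since the constant $d\bar K$ contributes nothing to the parabolic operator. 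The plan is to show that at a first interior point where $\mathcal{Q}=0$ the right-hand side is strictly negative, thereby contradicting the maximum principle.

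The gradient terms are handled exactly as in the Euclidean case: since in every admissible dimension $c \leq 3/(n+2)$, the basic inequality \eqref{eqn: basic grad est 1 sphere} gives $\abs{\nabla h}^2 \geq c\abs{\nabla H}^2$, so this contribution is non-positive. The novelty is in analysing the reaction plus ambient-curvature terms. At a point where $\mathcal{Q}=0$ and $\abs{H}\neq 0$ (the case $\abs{H}=0$ is handled separately, using the constraint $\abs{h}^2\leq d\bar K$ to conclude directly from \eqref{e: evol h2 sphere 1} that $\partial_t\abs{h}^2 \leq 0$), I will adopt the frame of Section \ref{l:Preservation of curvature pinching} with $\nu_1 = H/\abs{H}$ and write the Euclidean part of $2R_1 - 2cR_2$ exactly as in \eqref{eq:reaction.terms}, but with the substitution $(c-1/n)\abs{H}^2 = \abs{\ho}^2 - d\bar K$ in place of $(c-1/n)\abs{H}^2 = \abs{\ho}^2 + a$. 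Carrying out this substitution (with $a=-d\bar K$) and combining with the codimension-independent inequality of Lu and Li, the polynomial estimate becomes
\begin{equation*}
2R_1 - 2cR_2 \leq \Bigl(6 - \tfrac{2}{n(c-1/n)}\Bigr)\abs{\ho_1}^2\abs{\ho_-}^2 + \Bigl(3 - \tfrac{2}{n(c-1/n)}\Bigr)\abs{\ho_-}^4 + \bar K\, P_1(\abs{\ho_1}^2,\abs{\ho_-}^2) + \bar K^2\, P_2,
\end{equation*}
where $P_1$ and $P_2$ are explicit linear/constant polynomials whose coefficients are determined by $c$, $d$ and $n$.

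The remaining task, which I regard as the main obstacle, is Step 3: collecting every $\bar K$-contribution (the polynomial $\bar K P_1 + \bar K^2 P_2$ from the reaction terms, together with the three new terms $2\bar K\abs{H}^2 - 2n\bar K\abs{\ho}^2 - 2cn\bar K\abs{H}^2$ from $\p_t\mathcal{Q}$) and checking that the result is non-positive after the substitution $\abs{H}^2 = (\abs{\ho}^2 - d\bar K)/(c-1/n)$. In the range $c=1/(n-1)$, $n\geq 4$ the coefficients of $\bar K\abs{\ho_1}^2$ and $\bar K^2$ line up nicely with $d=2$ — indeed the $\abs{\ho_1}^2$ coefficient vanishes and the $\bar K^2$ coefficient becomes non-positive — but the $\bar K\abs{\ho_-}^2$ coefficient comes out proportional to $n-2$, so I must absorb it either into the strictly negative quartic $(3 - 2(n-1))\abs{\ho_-}^4 = -(2n-5)\abs{\ho_-}^4$ or into the negative $\bar K^2$ remainder via the pointwise bound $\abs{\ho_-}^2 \leq \abs{\ho}^2 = (c-1/n)\abs{H}^2 + d\bar K$; an analogous but gentler accounting works for $n=2,3$ with $c=4/(3n)$. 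Once every coefficient is shown to be non-positive the strict inequality at the first zero of $\mathcal{Q}$ follows, and a version of Proposition \ref{prop:strongMP} (applied to the strong maximum principle to rule out the equality case, again yielding $\nabla h\equiv 0$ and umbilicity as the only obstruction) completes the argument.
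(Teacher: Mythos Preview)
Your proposal is correct and follows essentially the same route as the paper: the same quantity $\mathcal{Q}=\abs{h}^2-\alpha\abs{H}^2-\beta\bar K$, the same case split on $H=0$ versus $H\neq 0$, the same substitution $(\alpha-1/n)\abs{H}^2=\abs{\ho}^2-\beta\bar K$ in the special frame, and the same absorption of the positive $\bar K\abs{\ho_-}^2$ term into the quartic $-(2n-5)\abs{\ho_-}^4$ and the negative $\bar K^2$ contribution (the paper phrases this last step as a discriminant computation on the quadratic form in $\abs{\ho_-}^2$ and $\bar K$). Your coefficient checks for $c=1/(n-1)$, $d=2$ are exactly right, and the paper's $\epsilon$-slack reformulation after the lemma plays the role of your invocation of a strong-maximum-principle step.
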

\begin{proof}
The proof closely follows the Euclidean case.  Here we consider $\mathcal{Q} = \abs{ h }^2 - \alpha{ H }^2 - \beta \bar{ K }$, where $\alpha$ and $\beta$ are constants.  Because we are allowing the initial submanifold to have $H = 0$, in order to compute in a local frame for the normal bundle where $\nu_1 = H / \abs{ H }$ we need to consider two cases: 1) $H = 0$ and 2) $ H \neq 0$.  For the first case, from the evolution equations for $\abs{ h }^2$ and $\abs{ H }^2$ we derive
\begin{equation}\label{e: pinch lem sphere 1}
	\frac{ \p }{ \p t } \mathcal{Q} = \Delta \mathcal{Q} - 2 \abs{ \nabla \ho }^2 + 2 R_1 - 2n\bar{K} \abs{ \ho }^2.
\end{equation}
In this case there is no need to split up directions of the second fundamental form as we did in the Euclidean case, and using the estimate of \cite{LiLi92} on all the normal directions of $R1$ we get
\begin{equation*}
	R1 = \sum_{ \alpha,\beta }\Big( \sum_{i,j} \ho_{ij\alpha}\ho_{ij\beta}\Big)^2 + \sum_{ \alpha, \beta } N(\ho_{\alpha}\ho_{\beta} - \ho_{\beta}\ho_{\alpha}) \leq \frac{3}{2} \abs{ \ho }^4.
\end{equation*}
The reaction terms of \eqref{e: pinch lem sphere 1} may therefore be estimated by
\begin{equation*}
2 R_1 - 2n\bar{K} \abs{ \ho }^2 \leq 3 \abs{ \ho }^4 - 2n\bar{K} \abs{ \ho }^2.
\end{equation*}
If $\mathcal{Q}$ doesn't stay (strictly) negative, then $\abs{ \ho }^2 = \beta \bar{K}$ and
\begin{equation*}
	3 \abs{ \ho }^4 - 2n\bar{K} \abs{ \ho }^2 < -\beta (2n - 3\beta) \bar{K}^2
\end{equation*}
which is (strictly) negative as long as $\beta < (2/3)n$.  This is a contradiction and the lemma follows in this case.  Now consider the case $H \neq 0$.  We may now work in the special local frames of the previous chapter, and the evolution equation becomes
\begin{equation}\label{eqn: pinch lem sphere 2}
	\begin{split}
		\frac{\p}{\p t} \mathcal{Q} &= \Delta \mathcal{Q} - 2( \abs{ \nabla h }^2 - \alpha\abs{ \nabla H }^2 ) \\
			&\quad+ 2R_1 - 2\alpha R_2 - 2n \bar K\abs{ \ho }^2 - 2 n ( \alpha  - 1/n ) \bar K\abs{ H }^2.
	\end{split}
\end{equation}
Arguing as in Euclidean case, if $\mathcal{Q}$ doesn't remain (strictly) negative, we may replace $\abs{ H }^2$ with $( \abs{ \ho }^2 - \beta \bar{K} ) / ( \alpha - 1/n )$, and estimating as before we get
\begin{align*}
	&2R_1 - 2\alpha R_2 - 2n \bar K\abs{ \ho }^2 - 2 n ( \alpha  - 1/n ) \bar K\abs{ H }^2 \\
		&\quad \leq 2\abs{ \ho_1 }^2 - 2( \alpha - \frac{1}{n} )\abs{ \ho_1 }^2\abs{ H }^2 + \frac{2}{n}\abs{ \ho_1 }^2\abs{ H }^2 - \frac{2}{n}(\alpha - \frac{1}{n})\abs{ H }^4 + 8\abs{ \ho_1 }^2\abs{ \ho_- }^2 + 3\abs{ \ho_- }^4 \\
		&\quad - 2n\bar K( \abs{ \ho_1 }^2 + \abs{ \ho_- }^2 ) - 2n ( \alpha - 1/n) \bar{K} \abs{ H }^2 \\
		&\quad\leq \Big( 6 - \frac{2}{ n(\alpha - \frac{1}{n}) } \Big)\abs{ \ho_1 }^2\abs{ \ho_- }^2 + \Big( 3 - \frac{2}{ n(\alpha - \frac{1}{n}) } \Big)\abs{ \ho_- }^4  \\
			&\quad + \Big( 2\beta - 4n + \frac{2\beta}{ n(\alpha - \frac{1}{n}) } \Big)\abs{ \ho_1 }^2 \bar K + 4 \Big( \frac{ \beta }{ n(\alpha - \frac{1}{n}) } - n \Big)\abs{ \ho_- }^2 \bar K \\ 
			&\quad - 2 \beta \Big( \frac{ \beta }{ n(\alpha - \frac{1}{n}) } - n \Big) \bar K^2 \\
	&\quad= \Big( 6 - \frac{2}{ n(\alpha - \frac{1}{n}) } \Big)(\abs{ \ho_1 }^2\abs{ \ho_- }^2 + \abs{ \ho_- }^4 ) + \Big( 2\beta - 4n + \frac{2\beta}{ n(\alpha - \frac{1}{n}) } \Big)\abs{ \ho_1 }^2 \bar K \\
		&\quad- 3\abs{ \ho_- }^4 + 4 \Big( \frac{ \beta }{ n(\alpha - \frac{1}{n}) } - n \Big)\abs{ \ho_- }^2 \bar K  - 2 \beta \Big( \frac{ \beta }{ n(\alpha - \frac{1}{n}) } - n \Big) \bar K^2.
\end{align*}
We have rewritten the last line as such to highlight that after choosing the coefficient of the $\abs{ \ho_1 }^2\abs{ \ho_- }^2$ term as large as we can (namely $4/(3n)$), we still have the good term $-3\abs{ \ho_- }^2$ left over.  The last line above is a quadratic form, so by requiring that its discriminant be negative we will have a strictly negative term.  The discriminant is 
\begin{equation*}
	\Delta = 8 \Big( \frac{ \beta }{ n( \alpha - \frac1n ) } - n \Big) \Big\{ 2 \Big( \frac{ \beta }{ n( \alpha - \frac1n ) } \Big) - 3\beta \Big\},
\end{equation*}
which is negative for our values of $\alpha$ and $\beta$ in dimensions two to four.  For dimensions $n \geq 4$ the best value of $\alpha$ we can expect is $1/(n-1)$, and so with this restriction, the amount of the good terms $\abs{ \ho_- }^4$ is increases to $-2(n-4) - 3$.  The discriminant is now
\begin{equation*}
	\Delta = 8 \Big( \frac{ \beta }{ n( \alpha - \frac1n ) } - n \Big) \Big\{ 2 \Big( \frac{ \beta }{ n( \alpha - \frac1n ) } \Big) - ( 2(n-4) + 3 ) \beta \Big\},
\end{equation*}
and which is strictly negative for $\beta = 2$ for all $n \geq 4$.  The most restrictive condition on the size of $\beta$ comes from the coefficient of the $\abs{ \ho_1 }^2 \bar K$ term, which gives the values of $\beta$ in the statement of the lemma.  With the chosen values of $\alpha$ and $\beta$ the right hand side of equation \label{eqn: pinch lem sphere 2} is strictly negative, which is contradiction, and so $\mathcal{Q}$ must stay strictly negative.
\end{proof}

We now want to formulate a slightly different statement of the Pinching Lemma that will be useful in later setions.  For $\epsilon > 0$, set
\begin{equation*}
	\begin{cases}
		\alpha_{\epsilon} := \frac{ 4 }{ 3n + n\epsilon }, \quad n = 2,3 \\
		\alpha_{\epsilon} := \frac{ 1 }{ n-1+\epsilon }, \quad n \geq 4
	\end{cases} \text{and} \quad
	\begin{cases}
		\beta_{\epsilon} := \frac{n}{2}(1-\epsilon), \quad n = 2,3 \\
		\beta_{\epsilon} := 2(1-\epsilon), \quad n \geq 4.
	\end{cases}
\end{equation*}
If the strict inequality $\abs{ h }^2 < \alpha \abs{ H }^2 + \beta \bar{K}$ holds everywhere on the initial submanifold, then there exists an $\epsilon > 0$ such that $\abs{ h }^2 \leq \alpha_{ \epsilon } \abs{ H }^2 + \beta_{ \epsilon } \bar{K}$ on $\Sigma_0$.  On the other hand, if equality of the pinching condition holds somewhere on the initial submanifold, that is $\abs{ h }^2 \leq \alpha \abs{ H }^2 + \beta \bar{K}$, and the pinching does not immediately improve, then the same strong maximum principle argument as in Proposition \ref{prop:strongMP} of the previous chapter shows that $\Sigma_0 = \mathbb{S}^{p} \times \mathbb{S}^{n-p}$, where $0 \leq p \leq n$.  If $p=0$, then $\Sigma_0$ is a totally umbilic sphere, in which case there exists an $\epsilon > 0$ such that $\abs{ h }^2 \leq \alpha_{ \epsilon } \abs{ H }^2 + \beta_{ \epsilon } \bar{K}$ holds.  If $p \neq 0$, the above-mentioned product of spheres all lie outside of the pinching cone being considered.  Therefore, if the equality of the pinching condition holds initially, after some short time the submanifold satisfies $\abs{ h }^2 \leq \alpha_{ \epsilon } \abs{ H }^2 + \beta_{ \epsilon } \bar{K}$ for some $\epsilon > 0$.

\section{Pinching improves along the flow}\label{s: Pinching improves along the flow}
In this section we prove an important estimate that allows us to characterise the asymptotic behaviour of the solution.  As mentioned in the introduction to this chapter, the essential content of this theorem is that in regions where the mean curvature is large, or after long enough time, the submanifold is increasingly becoming totally umbilic.  This can be interpreted by saying that the pinching improves along the flow.

\begin{thm}\label{t: pinching improves}
There exist constants $C_0 < \infty$, $\sigma_0 > 0$, and $\delta_0 > 0$ all depending only on $\Sigma_0$ such that for all time $0 \leq t < T \leq \infty$, the estimate
	\begin{equation*}
		\abs{\ho}^2 \leq C_0(\abs{H}^2 + \bar K)^{1-\sigma_0}e^{-\delta_0 t}
	\end{equation*}
holds.
\end{thm}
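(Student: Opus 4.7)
The plan is to adapt the Stampacchia iteration of Theorem \ref{t: pinch trace 2ff} to the spherical setting, replacing $\abs{H}^{2(1-\sigma)}$ in the denominator of the auxiliary function by $(\abs{H}^2+\bar K)^{1-\sigma}$. The heuristic is that at points where $\abs{H}^2\gg\bar K$ the Euclidean pinching mechanism takes over, while at points where $\abs{H}^2\lesssim\bar K$ the new negative reaction terms $-2n\bar K\abs{\ho}^2$ and $-2n\bar K\abs{\ho}^2$ appearing in \eqref{e: evol h2 sphere 1} and \eqref{e: contracted Simons' identity} force exponential decay; the quantity
\[
f_{\sigma}:=\frac{\abs{\ho}^2}{(\abs{H}^2+\bar K)^{1-\sigma}}
\]
interpolates the two regimes, and the claimed estimate is equivalent to a uniform bound $f_{\sigma_0}\leq C_0 e^{-\delta_0 t}$.

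First I would derive the evolution inequality for $f_\sigma$ by combining \eqref{e: evol h2 sphere 1}, \eqref{e: evol H2 sphere 2}, and $\nabla_i(\abs{H}^2+\bar K)=\nabla_i\abs{H}^2$. Following the manipulations that yielded \eqref{e: evol eqn f_sigma 1}, and using the spherical Pinching Lemma of Section \ref{s: Curvature pinching is preserved} to discard the reaction combinations $R_1-(\abs{h}^2/\abs{H}^2)R_2$ and the sign-definite $\bar K$-cross-terms, the result is
\[
\frac{\p f_\sigma}{\p t}\leq \Delta f_\sigma+\frac{4(1-\sigma)}{\abs{H}^2+\bar K}\bigl\langle\nabla_i\abs{H}^2,\nabla_if_\sigma\bigr\rangle-\frac{2\epsilon_\nabla}{(\abs{H}^2+\bar K)^{1-\sigma}}\abs{\nabla H}^2+2\sigma\abs{h}^2 f_\sigma-2n\bar K(1-\sigma)\frac{\bar K}{\abs{H}^2+\bar K}f_\sigma.
\]
The last term is the source of the exponential decay and, because $\bar K/(\abs{H}^2+\bar K)\in(0,1]$, it can be usefully combined with the $|H|^2 f_\sigma$ terms produced by Simons.

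Second, I would establish the spherical analogue of Proposition \ref{p: using Z pos}. The positivity Lemma \ref{l: Z pos} is a pointwise algebraic statement and remains valid under our spherical pinching hypothesis (where the bound on $c$ is the same). Multiplying the contracted Simons identity \eqref{e: contracted Simons' identity} by $f_\sigma^{p-1}$ and integrating by parts, the extra $n\bar K\abs{\ho}^2$ term on the right of \eqref{e: contracted Simons' identity} produces, after multiplication by $2$, an additional nonnegative contribution $2n\bar K\int f_\sigma^p\,d\mu_g$ on the left-hand side. Combined with Lemma \ref{l: Z pos} this yields, for $p\geq p_0$ and $\eta>0$,
\[
\int_{\Sigma}f_\sigma^p\,(\abs{H}^2+\bar K)\,d\mu_g\leq \frac{p\eta+4}{\epsilon_Z}\int_{\Sigma}\frac{f_\sigma^{p-1}}{(\abs{H}^2+\bar K)^{1-\sigma}}\abs{\nabla H}^2\,d\mu_g+\frac{p-1}{\epsilon_Z\eta}\int_\Sigma f_\sigma^{p-2}\abs{\nabla f_\sigma}^2\,d\mu_g.
\]

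Third, combining this Poincaré-type estimate with $\frac{d}{dt}\int f_\sigma^p\,d\mu_g$ computed from the evolution inequality for $f_\sigma$ (following Proposition \ref{p: d/dt f_sigma}) absorbs all indefinite gradient terms, provided $p\geq p_0$ and $\sigma\leq c_8/\sqrt p$. Because the spherical Simons estimate controls $\int f_\sigma^p(\abs{H}^2+\bar K)\,d\mu_g\geq\bar K\int f_\sigma^p\,d\mu_g$, the outcome is a strict differential inequality
\[
\frac{d}{dt}\int_{\Sigma_t}f_\sigma^p\,d\mu_g\leq -\delta_1 \bar K\int_{\Sigma_t}f_\sigma^p\,d\mu_g,
\]
and hence $\bigl(\int f_\sigma^p\,d\mu_g\bigr)^{1/p}\leq C_1 e^{-\delta_1\bar K t/p}$ for all $t\in[0,T)$. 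A Stampacchia iteration (Lemma \ref{l: Stampacchia}) identical in structure to Section \ref{s: A pinching estimate for the traceless second fundamental form}, using the Michael--Simon--Sobolev inequality and Corollary \ref{cor: powers of H absorbed}, then converts this $L^p$ decay into the pointwise bound $f_{\sigma_0}\leq C_0 e^{-\delta_0 t}$. The main obstacle is the bookkeeping in the last two steps: one must choose $\sigma$ small enough (depending only on the pinching margins $\epsilon_Z$, $\epsilon_\nabla$ produced by the spherical Pinching Lemma and the Lemma \ref{l: Z pos} analogue) so that the Euclidean iteration scheme transfers intact, while simultaneously verifying that the extra spherical reaction $-2n\bar K(1-\sigma)f_\sigma\cdot\bar K/(\abs{H}^2+\bar K)$ survives every absorption step and contributes the nontrivial exponential factor in the final estimate.
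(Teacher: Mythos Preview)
Your overall strategy matches the paper's: define $f_\sigma$ with $(\abs{H}^2+\bar K)^{1-\sigma}$ in the denominator, derive an evolution inequality, combine with a Poincar\'e-type estimate from Simons' identity, and iterate. Two points, however, are not right as written.

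First, the claim that the Euclidean Lemma~\ref{l: Z pos} ``remains valid under our spherical pinching hypothesis'' is false. That lemma's proof uses $\abs{\ho}^2\leq (c-\tfrac1n)\abs{H}^2$ to replace $\abs{H}^2$ by $\abs{\ho}^2/(c-\tfrac1n)$ from below; under the spherical pinching $\abs{\ho}^2\leq(\alpha-\tfrac1n)\abs{H}^2+\beta\bar K$ this fails (indeed $H$ may vanish). What is needed is the combined estimate $Z+n\bar K\abs{\ho}^2\geq\epsilon\abs{\ho}^2(a\abs{H}^2+\beta_\epsilon\bar K)$, and proving it requires redoing the algebra with the $\beta\bar K$ shift in the pinching. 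This is the paper's Lemma~\ref{l: Z pos sphere}, and it is exactly here that the more restrictive condition $\beta<\tfrac{2(n-1)}{3}$ in dimensions $n=2,3$ (stronger than the $\beta<\tfrac{n}{2}$ needed for the Pinching Lemma) enters. You cannot simply import the Euclidean lemma and add $n\bar K\abs{\ho}^2$ on top.

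Second, your mechanism for exponential decay is not sound. The Simons-based Poincar\'e inequality gives an \emph{upper} bound $\int f_\sigma^p(\abs{H}^2+\bar K)\,d\mu_g\leq(\text{gradients})$; the trivial inequality $\abs{H}^2+\bar K\geq\bar K$ then bounds $\bar K\int f_\sigma^p$ from above by gradient terms, not from below, so it cannot produce a negative $-\delta_1\bar K\int f_\sigma^p$ on the right of $\tfrac{d}{dt}\int f_\sigma^p$. Moreover, your displayed evolution inequality carries only the degenerate term $-2n\bar K(1-\sigma)\tfrac{\bar K}{\abs{H}^2+\bar K}f_\sigma$, which vanishes where $\abs{H}$ is large. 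The paper obtains the uniform term $-2n\epsilon'\bar K f_\sigma$ by a careful analysis of \emph{all} reaction terms in $\partial_t f_\sigma$ (not just the explicit $\bar K$-terms in \eqref{e: evol h2 sphere 1} and \eqref{e: evol H2 sphere 2}, but also the combination $R_1-\tfrac1nR_2-\tfrac{aR_2\abs{\ho}^2}{a\abs{H}^2+\beta_\epsilon\bar K}-\cdots$), using the spherical pinching throughout. This uniform negative term survives into Proposition~\ref{p: d/dt f_sigma} and is what drives the differential inequality $\tfrac{d}{dt}\int f_\sigma^p\leq -2n\epsilon\bar K p\int f_\sigma^p$; the Poincar\'e estimate is used only to absorb the bad $+2\sigma p\int f_\sigma^p(a\abs{H}^2+\beta_\epsilon\bar K)$ term.
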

For technical reasons it is more convenient to work initially with the auxiliary function  $f_{\sigma} := \abs{\ho}^2/(a\abs{H}^2 + \beta_{\epsilon} \bar K)^{1-\sigma}$, where $a := 1/(n(n-1+\epsilon))$.
\begin{proof}
We begin by deriving the evolution equation for $f_{\sigma}$.

\begin{prop}
For any $\sigma \in [0, \epsilon/2]$ we have the evolution equation
	\begin{equation}\label{eqn: evol eqn f_sigma 1 sphere}
		\begin{split}
				\p_t f_{\sigma} &\leq \Delta f_{\sigma} + \frac{ 4a(1-\sigma)\abs{H} }{ a\abs{H} + \beta_{\epsilon} \bar K } \big\langle \nabla_i\abs{H}, \nabla_i f_{\sigma} \big\rangle - \frac{ 2\epsilon_{\nabla} }{ (a\abs{H}^2 + \beta_{\epsilon} \bar K)^{1-\sigma} }  \abs{ \nabla H }^2 - 2n\epsilon' \bar K f_{\sigma} \\
				&\quad + 2\sigma\abs{h}^2f_{\sigma}. \end{split}
	\end{equation}
\end{prop}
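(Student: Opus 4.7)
The strategy follows the Euclidean analogue (equation \eqref{e: evol eqn f_sigma 1}) closely, with extra care required to track the background curvature contributions. Write $P := a\abs{H}^2 + \beta_\epsilon \bar K$, so that $f_\sigma = \abs{\ho}^2/P^{1-\sigma}$, $\nabla P = 2a\abs{H}\nabla\abs{H}$, and $(\p_t - \Delta)P = a(-2\abs{\nabla H}^2 + 2R_2 + 2n\bar K\abs{H}^2)$ by \eqref{e: evol H2 sphere 2}. First I would subtract $\frac{1}{n}$ times \eqref{e: evol H2 sphere 2} from \eqref{e: evol h2 sphere 1} to derive the key evolution
\begin{equation*}
(\p_t - \Delta)\abs{\ho}^2 = -2\Big(\abs{\nabla h}^2 - \tfrac{1}{n}\abs{\nabla H}^2\Big) + 2\Big(R_1 - \tfrac{1}{n}R_2\Big) - 2n\bar K\abs{\ho}^2,
\end{equation*}
where the decisive observation is that the $\bar K\abs{H}^2$ terms cancel exactly, leaving only $-2n\bar K\abs{\ho}^2$. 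This is what will ultimately produce the favourable term $-2n\epsilon'\bar K f_\sigma$.

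Next I would apply the quotient rule to $f_\sigma$, expressing $\nabla\abs{\ho}^2 = P^{1-\sigma}\nabla f_\sigma + (1-\sigma)f_\sigma P^{-\sigma}\nabla P$ so that all gradient couplings reorganise as
\begin{equation*}
\p_t f_\sigma - \Delta f_\sigma = \frac{(\p_t - \Delta)\abs{\ho}^2}{P^{1-\sigma}} - \frac{(1-\sigma)f_\sigma}{P}(\p_t - \Delta)P + \frac{4a(1-\sigma)\abs{H}}{P}\langle \nabla\abs{H}, \nabla f_\sigma\rangle - \sigma(1-\sigma)\frac{f_\sigma\abs{\nabla P}^2}{P^2}.
\end{equation*}
The last term is non-positive for $\sigma \in [0,1)$ and can be discarded; this produces precisely the convective term in the claim.

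The reaction/gradient pieces now split as follows. Combining the two $\bar K$ contributions yields $-2n\bar K f_\sigma - 2n(1-\sigma)a\bar K\abs{H}^2 f_\sigma/P \leq -2n\bar K f_\sigma$, which is stronger than needed and allows a margin $\epsilon'$ to absorb any small residual positive terms from the reaction estimate below. For the gradient piece, combining the $-2(\abs{\nabla h}^2 - \tfrac{1}{n}\abs{\nabla H}^2)/P^{1-\sigma}$ contribution from $(\p_t - \Delta)\abs{\ho}^2$ with the $+2(1-\sigma)af_\sigma\abs{\nabla H}^2/P$ contribution from $(\p_t - \Delta)P$, the basic gradient estimate \eqref{eqn: basic grad est 1 sphere} gives $\abs{\nabla h}^2 - \tfrac{1}{n}\abs{\nabla H}^2 \geq (3/(n+2) - 1/n)\abs{\nabla H}^2 = \epsilon_\nabla \abs{\nabla H}^2$ where $\epsilon_\nabla = 3/(n+2) - \alpha_\epsilon > 0$ under our pinching. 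The positive $af_\sigma\abs{\nabla H}^2/P$ term is dominated because $af_\sigma/P \leq 1/\abs{H}^2$ when $f_\sigma$ is bounded, and the excess is again hidden in $\epsilon_\nabla$.

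It remains to deal with $R_1 - \tfrac{1}{n}R_2 - (1-\sigma)\abs{\ho}^2 R_2/P$. As in the Euclidean case (and the proof of the Pinching Lemma in Section 5.2), the pinching $\abs{\ho}^2 \leq (\alpha_\epsilon - 1/n)\abs{H}^2 + \beta_\epsilon\bar K$ controls $R_1$ against a suitable multiple of $R_2$; rewriting this as $R_1 - (\abs{h}^2/(\abs{H}^2 + \text{correction}))R_2 \leq 0$, the discrepancy between the natural pinching factor and the coefficient $1/n + (1-\sigma)a\abs{\ho}^2/P$ is of order $\sigma$, producing the small error $2\sigma\abs{h}^2 f_\sigma$ and leaving a nonpositive remainder to be discarded. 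The main obstacle will be precisely this last step: bookkeeping the reaction estimate so that the residual positive contribution takes exactly the clean form $2\sigma\abs{h}^2 f_\sigma$ while any leftover $\bar K$ terms of indefinite sign are consumed by the spare $\epsilon - \epsilon'$ of the background curvature coefficient. Once this is arranged, assembling the four pieces produces the claimed inequality.
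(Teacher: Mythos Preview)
Your overall architecture matches the paper's: compute the quotient-rule identity for $f_\sigma$, isolate the convective term, discard the nonpositive $-\sigma(1-\sigma)f_\sigma|\nabla P|^2/P^2$, and split the rest into gradient, $\bar K$, and reaction pieces. Your derivation of $(\partial_t-\Delta)|\ho|^2 = -2(|\nabla h|^2-\tfrac1n|\nabla H|^2)+2(R_1-\tfrac1n R_2)-2n\bar K|\ho|^2$ and the identification of the $2\sigma|h|^2 f_\sigma$ error (from the $\sigma$ part of the $(1-\sigma)$ coefficient multiplying $R_2$, estimated via $R_2\le |h|^2|H|^2$ and $a|H|^2\le P$) are both correct and agree with the paper. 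One small slip: your $\epsilon_\nabla$ should be $3/(n+2)-1/n-a$, not $3/(n+2)-\alpha_\epsilon$; the paper gets this by bounding $a|\ho|^2/P\le a$ using the pinching, which absorbs the positive $a(1-\sigma)f_\sigma|\nabla H|^2/P$ piece cleanly.

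The genuine gap is in your reaction estimate. You propose to establish something of the form $R_1-\big(\tfrac1n+\tfrac{a|\ho|^2}{P}\big)R_2\le 0$ up to the $\sigma$-error, by analogy with the Euclidean fact $R_1-\tfrac{|h|^2}{|H|^2}R_2\le 0$. But in the sphere the pinching allows $|H|$ to vanish while $|\ho|^2\le \beta_\epsilon\bar K$ stays of unit size, so the coefficient $\tfrac1n+\tfrac{a|\ho|^2}{P}$ is \emph{not} comparable to $|h|^2/|H|^2$ and no such inequality holds pointwise. Concretely, at a point with $H=0$ the reaction contribution reduces to $2R_1/P^{1-\sigma}$, which is genuinely positive (of order $|\ho|^4/P^{1-\sigma}\le \beta_\epsilon\bar K f_\sigma$); it is only after cancellation against the $-2n\bar K f_\sigma$ term that the sum becomes negative, and only because $\beta_\epsilon$ has been chosen below the threshold dictated by the Pinching Lemma. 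Your parenthetical remark that leftover $\bar K$ terms ``are consumed by the spare $\epsilon-\epsilon'$'' is therefore carrying the entire weight of the argument, not cleaning up a residual.

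This is exactly why the paper does \emph{not} separate the $\bar K$ terms from the reaction terms. Instead it keeps the full combination
\[
R_1-\tfrac1n R_2 - n\bar K|\ho|^2 - \tfrac{aR_2|\ho|^2}{P} - \tfrac{an(1-\sigma)\bar K|\ho|^2|H|^2}{P},
\]
expands in the special frames $\nu_1=H/|H|$, substitutes the pinching in the form $|H|^2\ge (|\ho|^2-\beta_\epsilon\bar K)/(\alpha_\epsilon-\tfrac1n)$, and verifies term-by-term (with free parameters $\epsilon_1,\epsilon_2$) that every coefficient is nonpositive for the stated values of $\alpha_\epsilon,\beta_\epsilon$, retaining only $-2n\min\{\epsilon_1,\epsilon_2\}\bar K f_\sigma$. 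To close your argument you must either reproduce that frame computation or give a quantitative bound $R_1-\tfrac1n R_2-\tfrac{a|\ho|^2}{P}R_2\le C\bar K|\ho|^2$ with an explicit $C<n$ uniform over the pinching region (including $H=0$); the Euclidean-style shortcut you invoke does not supply this.
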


\begin{proof}
From the evolutions equations for $\abs{ h }^2$ and $\abs{ H }^2$ we get
\begin{equation}\label{eqn: evol eqn f_sigma 2 sphere}
	\begin{split}
		\p_t f_{\sigma} &= \frac{\Delta\abs{h}^2 - 2\abs{\nabla h}^2 + 2R_1 + 4 \bar K\abs{H}^2 - 2n \bar K\abs{h}^2}{(a\abs{H}^2 + \beta_{\epsilon} \bar K)^{1-\sigma}} \\
		&\quad- \frac{1}{n}\frac{ ( \Delta\abs{H}^2 - 2\abs{\nabla H}^2 + 2R_2 + 2n \bar K\abs{H}^2 ) }{ ( a\abs{H}^2 + \beta_{\epsilon} \bar K )^{1-\sigma} } \\
		&\quad- \frac{ a(1-\sigma)(\abs{h}^2 - 1/n\abs{H}^2) }{ (a\abs{H}^2 + \beta_{\epsilon} \bar K)^{2-\sigma} }( \Delta\abs{H}^2 - 2\abs{\nabla H}^2 + 2R_2 + 2n \bar K\abs{H}^2 ).
	\end{split}
\end{equation}
The Laplacian of $f_{\sigma}$ is given by
	\begin{equation*}
		\begin{split}
			\Delta f_{\sigma} &= \frac{ \Delta ( \abs{h}^2 - 1/n\abs{H}^2 ) }{ ( a\abs{H}^2 + \beta_{\epsilon} \bar K )^{1-\sigma} } - \frac{ 2a(1-\sigma) }{ ( a\abs{H}^2 + \beta_{\epsilon} \bar K )^{2-\sigma} } \big\langle \nabla_i (\abs{h}^2 - 1/n\abs{H}^2), \nabla_i\abs{H}^2 \big\rangle  \\
			&\quad- \frac{ a(1-\sigma)(\abs{h}^2 - 1/n\abs{H}^2) }{ ( a\abs{H}^2 + \beta_{\epsilon} \bar K )^{2-\sigma} } \Delta\abs{H}^2 + \frac{ a^2(2-\sigma)(1-\sigma)(\abs{h}^2 - 1/n\abs{H}^2) }{ ( a\abs{H}^2 + \beta_{\epsilon} \bar K )^{3-\sigma} } \abs{\nabla\abs{H}^2}^2.
		\end{split}
	\end{equation*}
Using this expression for the Laplacian as well as the identity
	\begin{align*}
	&-\frac{ 2a(1-\sigma) }{ ( a\abs{H}^2 + \beta_{\epsilon} \bar K )^{2-\sigma} } \big\langle \nabla_i (\abs{h}^2 - 1/n\abs{H}^2), \nabla_i\abs{H}^2 \big\rangle \\
	&\quad = -\frac{ 2a(1-\sigma) }{ a\abs{H}^2 + \beta_{\epsilon} \bar K } \big\langle \nabla_i\abs{H}^2, \nabla_i f_{\sigma} \big\rangle - \frac{ 8a^2(1-\sigma)^2 }{ (a\abs{H}^2 + \beta_{\epsilon} \bar K)^2 } f_{\sigma}\abs{H}^2\abs{ \nabla\abs{H} }^2,
	\end{align*}
equation \eqref{eqn: evol eqn f_sigma 2 sphere} can be manipulated into the form
\begin{equation}
	\begin{split}
		\frac{ \p }{ \p t } f_{\sigma} &= \Delta f_{\sigma} + \frac{ 2a(1-\sigma) }{ a\abs{H}^2 + \beta_{\epsilon} \bar K } \big\langle \nabla_i\abs{H}^2, \nabla_i f_{\sigma} \big\rangle \\
		&\quad  - \frac{ 2 }{ (a\abs{H}^2 + \beta_{\epsilon} \bar K)^{1-\sigma} } \bigg\{ \abs{\nabla h}^2 - \frac{1}{n}\abs{ \nabla H }^2 - \frac{ a\abs{ \ho }^2 }{ a\abs{H}^2 +\beta_{\epsilon} \bar K }\abs{ \nabla H }^2 \bigg\} \\ 		
		&\quad- \frac{ 4a^2\sigma(1-\sigma) }{ (a\abs{H}^2 + \beta_{\epsilon} \bar K)^2 } f_{\sigma}\abs{H}^2\abs{ \nabla\abs{H} }^2 - \frac{ 2a\sigma f_{\sigma} }{ a\abs{H}^2 + \beta_{\epsilon} \bar K }\abs{\nabla H}^2 \\
		&\quad+\frac{2}{ (a\abs{H}^2 + \beta_{\epsilon} \bar K)^{1-\sigma} }\bigg\{ R_1 - \frac{1}{n}R_2 - n\bar K\abs{ \ho }^2 - \frac{ aR_2\abs{ \ho }^2 }{ a\abs{H}^2 + \beta_{\epsilon} \bar K } - \frac{ an(1-\sigma)\bar K \abs{ \ho }^2\abs{ H }^2 }{ a\abs{H}^2 + \beta_{\epsilon} \bar K } \bigg\} \\
			&\quad + \frac{ 2a\sigma R_2f_{\sigma} }{ a\abs{H}^2 + \beta_{\epsilon} \bar K}.
	\end{split}\label{e: f_sigma 3}
\end{equation}
The gradient terms on the third line are non-positive under our pinching assumption and we discard them.  Using equation \eqref{eqn: basic grad est 1 sphere} and the Pinching Lemma we estimate the useful gradient terms on the second line as follows:
\begin{align*}
	&\frac{ -2 }{ a\abs{H}^2 + \beta_{\epsilon} \bar K } \bigg\{ \abs{ \nabla h }^2 - \frac{1}{n}\abs{ \nabla H }^2 - \frac{ a\abs{ \ho }^2 }{ a\abs{H}^2 + \beta_{\epsilon} \bar K } \abs{ \nabla H }^2 \bigg\} \\
	&\quad \leq \frac{ -2 }{ a\abs{H}^2 + \beta_{\epsilon} \bar K } \bigg\{ \frac{ 3 }{ n+2 } - \frac{1}{n} - \frac{ a\big( (\alpha_{\epsilon} - 1/n)\abs{H}^2 + \beta_{\epsilon} \bar K \big) }{ a\abs{H}^2 + \beta_{\epsilon} \bar K } \bigg\}\abs{ \nabla H }^2 \\
	&\quad \leq \frac{ -2 }{ a\abs{H}^2 + \beta_{\epsilon} \bar K } \bigg\{ \frac{ 3 }{ n+2 } - \frac{1}{n} - a \bigg\}\abs{ \nabla H }^2 \\
	&\quad := \frac{ -2\epsilon_{ \nabla }\abs{ \nabla H }^2 }{ a\abs{H}^2 + \beta_{\epsilon} \bar K },
\end{align*}
where importantly, $\epsilon_{ \nabla }$ is positive for all $n \geq 2$.  Next we estimate the reactions terms on the second last line of \eqref{e: f_sigma 3}.  Expanding these reaction terms in the special local frames and estimating $\abs{ H }^2 \geq ( \abs{ \ho }^2 - \beta_{ \epsilon } \bar{ K } ) / ( \alpha_{ \epsilon } - 1/n )$ we obtain
\begin{align*}
&\frac{2}{ (a\abs{H}^2 + \beta_{\epsilon} \bar K)^{1-\sigma} } \bigg\{ R_1 - \frac{1}{n}R_2 - n\bar K\abs{ \ho }^2 - \frac{ aR_2\abs{ \ho }^2 }{ a\abs{H}^2 + \beta_{\epsilon} \bar K } - \frac{ an(1-\sigma)\bar K \abs{ \ho }^2\abs{ H }^2 }{ a\abs{H}^2 + \beta_{\epsilon} \bar K } \bigg\} \\
	&\quad \leq \frac{2}{ (a\abs{H}^2 + \beta_{\epsilon} \bar K)^{2 -\sigma} } \bigg\{ \frac{ a }{ \alpha_{ \epsilon } - \frac{ 1 }{ n } } \Big( 3 - \frac{ 1 }{ n(\alpha_{ \epsilon } - \frac{ 1 }{ n }) } \Big) \abs{ \ho_1 }^4 \abs{ \ho_- }^2 \\
	&\quad + \frac{ a }{ \alpha_{ \epsilon } - \frac{ 1 }{ n } } \Big( 3 + \frac{ 3 }{ 2 } - \frac{ 2 }{ n(\alpha_{ \epsilon } - \frac{ 1 }{ n }) } \Big) \abs{ \ho_1 }^2 \abs{ \ho_- }^4 + \frac{ a }{ \alpha_{ \epsilon } - \frac{ 1 }{ n } } \Big( \frac{ 3 }{ 2 } - \frac{ 1 }{ n(\alpha_{ \epsilon } - \frac{ 1 }{ n }) } \Big) \abs{ \ho_1 }^4 \abs{ \ho_- }^6 \\
	&\quad + \bigg( 1 + \frac{ 1 }{ n ( \alpha_{ \epsilon } - \frac{ 1 }{ n } ) } - \frac{ an [ 2 - ( \sigma + \epsilon_2 ) ] }{ \beta_{ \epsilon } ( \alpha_{ \epsilon } - \frac{ 1 }{ n } ) } \bigg) \beta_{ \epsilon } \bar{K} \abs{ \ho_1 }^4 \\
	&\quad + \bigg( \frac{ -3a }{ \alpha_{ \epsilon } - \frac{ 1 }{ n } } + \frac{ 2a }{ n ( \alpha_{ \epsilon } - \frac{ 1 }{ n } )^2 } + \frac{ 1 }{ n ( \alpha_{ \epsilon } - \frac{ 1 }{ n } ) } + 4 - \frac{ 2an [ 2 - ( \sigma + \epsilon_2 ) ] }{ \beta_{ \epsilon } ( \alpha_{ \epsilon } - \frac{ 1 }{ n } ) } \bigg) \beta_{ \epsilon } \bar{K}\abs{ \ho_1 }^2 \abs{ \ho_- }^2 \\
	&\quad + \bigg( -\frac{ 3 }{ 2 } \frac{ a }{ \alpha_{ \epsilon } - \frac{ 1 }{ n } } + \frac{ 2a }{ n ( \alpha_{ \epsilon } - \frac{ 1 }{ n } )^2 } + \frac{ 3 }{ 2 } - \frac{ an [ 2 - ( \sigma + \epsilon_2 ) ] }{ \beta_{ \epsilon } ( \alpha_{ \epsilon } - \frac{ 1 }{ n } ) } \bigg) \beta_{ \epsilon } \bar{K} \abs{ \ho_- }^4 \\
	&\quad - \beta_{ \epsilon } \bigg( \frac{ \beta_{ \epsilon } }{ n ( \alpha_{ \epsilon } - \frac1n ) } + n ( 1 - \epsilon_1 ) - \frac{ an [ 2 - ( \sigma + \epsilon_2 ) ] }{ \alpha_{ \epsilon } - \frac1n }  \bigg) \abs{ \ho_1 }^2 \bar{K}^2 \\
	&\quad - \beta_{ \epsilon } \bigg( \frac{ a\beta_{ \epsilon } }{ n ( \alpha_{ \epsilon } - \frac1n )^2 } + n ( 1 - \epsilon_1 ) - \frac{ an [ 2 - ( \sigma + \epsilon_2 ) ] }{ \alpha_{ \epsilon } - \frac1n }  \bigg) \abs{ \ho_- }^2 \bar{K}^2 \\
	&\quad - n\beta_{ \epsilon } \epsilon_1 \bar{K}^2 \abs{ \ho }^2 - an\epsilon_2 \bar{K} \abs{ \ho }^2 \abs{ H }^2 \bigg\}.
	\end{align*}
Provided $\epsilon_1$, $\epsilon_2$ and $\sigma$ are all chosen sufficiently small all terms in the above expression can be made negative, and we discard them with the exception of the two terms on the last line.  The above terms contain two quadratic forms, which are estimated in a similar manner to the Pinching Lemma.  We have 
\begin{align*}
	&\frac{2}{ (a\abs{H}^2 + \beta_{\epsilon} \bar K)^{1-\sigma} } \bigg\{ R_1 - \frac{1}{n}R_2 - n\bar K\abs{ \ho }^2 - \frac{ aR_2\abs{ \ho }^2 }{ a\abs{H}^2 + \beta_{\epsilon} \bar K } - \frac{ an(1-\sigma)\bar K \abs{ \ho }^2\abs{ H }^2 }{ a\abs{H}^2 + \beta_{\epsilon} \bar K } \bigg\} \\
	&\quad \leq \frac{2}{ (a\abs{H}^2 + \beta_{\epsilon} \bar K)^{2 -\sigma} } \cdot -n \abs{ \ho }^2 \bar{ K } ( a \epsilon_2 \abs{ H }^2 + \epsilon_1 \beta_{ \epsilon } \bar{ K } ) \\
	&\quad \leq -2n \min\{ \epsilon_1, \epsilon_2 \} \bar{ K } f_{ \sigma } \\
	&\quad := -2n\epsilon' \bar{ K } f_{ \sigma }.
\end{align*}
Finally, we estimate the last term on the right of equation \eqref{eqn: evol eqn f_sigma 1 sphere} by $R_2 \leq \abs{h}^2\abs{H}^2$, and the proposition is complete.

\end{proof}
As in the prevous chapter, the small positive $2\sigma\abs{h}^2f_{\sigma}$ prevents us from using the maximum principle and we proceed by deriving integral estimates and an iteration procedure.  The thrust of this iteration procedure is to exploit the good negative $\abs{ \nabla H }^2$ term in \eqref{eqn: evol eqn f_sigma 1 sphere} using the contracted Simons' identity and the Divergence theorem.  In order to do this we need a lower bound on the Laplacian of $f_{\sigma}$, and as Huisken points out in \cite{gH87}, this can be achieved because the pinching condition (compare the pinching condition in \cite{gH87}) implies that the submanifold has positive intrinsic curvature.  The next estimate is the part of the argument that relies on the intrinsic curvature of the submanifold being positive.

\begin{lem}\label{l: Z pos sphere}
	Let $\Sigma_0$ be a n-dimensional submanifold immersed in a spherical background of constant curvature $\bar K$.  If $\Sigma_0$ satisifies $\abs{h}^2 \leq \alpha \abs{H}^2 + \beta \bar K$, where
	\begin{equation*}
		\alpha \begin{cases}
			\leq \frac{4}{3n}, \quad n = 2, 3 \\
			< \frac{1}{n-1}, \quad n \geq 4
		\end{cases} \text{and } \quad \beta <
			\begin{cases}
				 \frac{2(n-1)}{3}, \quad n = 2, 3 \\
				2, \quad n \geq 4,
			\end{cases}
	\end{equation*}
then there exists a positive constant $\epsilon$ depending only on $\Sigma_0$ such that the estimate
	\begin{equation*}
		Z + n\bar K\abs{ \ho  }^2\geq \epsilon\abs{\ho}^2( a \abs{H}^2 + \beta_{ \epsilon } \bar K )
	\end{equation*}
holds for all time.
\end{lem}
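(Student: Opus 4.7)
The plan is to imitate the Euclidean Z-positivity argument (Lemma \ref{l: Z pos}) and use the additional term $n\bar K\abs{\ho}^2$ on the left to absorb both the ambient curvature contribution and the $\beta_{\epsilon}\bar K$ slack in the spherical pinching. I split into the cases $H=0$ and $H\neq 0$ at the point under consideration.

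If $H=0$, the cubic term in $Z$ vanishes and the pinching gives $\abs{\ho}^2=\abs{h}^2\leq\beta_{\epsilon}\bar K$. The Li--Li inequality used in the proof of Lemma \ref{l: Z pos} bounds $\sum_{\alpha,\beta}\bigl(\sum_{i,j}h_{ij\alpha}h_{ij\beta}\bigr)^2+\abs{\Rp}^2\leq\tfrac{3}{2}\abs{\ho}^4$, so
\[
Z+n\bar K\abs{\ho}^2\geq-\tfrac{3}{2}\abs{\ho}^4+n\bar K\abs{\ho}^2\geq\bigl(n-\tfrac{3}{2}\beta_{\epsilon}\bigr)\abs{\ho}^2\bar K.
\]
The constraints on $\beta_{\epsilon}$ force $n-\tfrac{3}{2}\beta_{\epsilon}>0$ in every dimension considered ($n=2,3$: $\beta_{\epsilon}<2(n-1)/3\leq 2/3n$; $n\geq 4$: $\beta_{\epsilon}<2$), and since $a\abs{H}^2=0$ the claim follows.

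For $H\neq 0$ I work in the same adapted frames as the Euclidean proof (a normal frame with $\nu_1=H/\abs{H}$ and a tangent frame diagonalising $h_1$) and rerun the term-by-term expansion of $Z$. The Euclidean proof invoked the pinching through the substitution $\abs{H}^2\geq\abs{\ho}^2/(c-1/n)$ to dominate the negative quartic contributions; I replace this by the spherical version $\abs{H}^2\geq(\abs{\ho}^2-\beta_{\epsilon}\bar K)/(\alpha_{\epsilon}-1/n)$, which generates an additional error of order $\abs{\ho}^2\bar K$ each time it is applied. Because $\alpha_{\epsilon}$ is chosen \emph{strictly} less than the critical value ($1/(n-1)$ for $n\geq 4$, respectively $4/(3n)$ for $n=2,3$), the surviving quartic coefficients are strictly positive and I obtain $Z\geq c_5\abs{\ho}^4-C_0\abs{\ho}^2\bar K$ with $c_5,C_0>0$ depending on $\epsilon$. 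The complementary Peter--Paul bound $Z\geq c_6\abs{\ho}^2\abs{H}^2-c_7\abs{\ho}^4$ from the Euclidean argument is purely algebraic in $h$ and $H$ and carries over verbatim.

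Taking the convex combination of these two estimates with $\tau=c_5/(c_5+c_7)$ cancels the quartic terms, giving
\[
Z+n\bar K\abs{\ho}^2\geq\eta_1\abs{\ho}^2\abs{H}^2+\bigl(n-(1-\tau)C_0\bigr)\abs{\ho}^2\bar K=:\eta_1\abs{\ho}^2\abs{H}^2+\eta_2\abs{\ho}^2\bar K.
\]
Shrinking $\epsilon$ (which shrinks $\beta_{\epsilon}$ and hence $C_0$) if necessary makes $\eta_2>0$, and then setting $\epsilon:=\min(\eta_1/a,\,\eta_2/\beta_{\epsilon})$ delivers $Z+n\bar K\abs{\ho}^2\geq\epsilon\abs{\ho}^2(a\abs{H}^2+\beta_{\epsilon}\bar K)$, as required. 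The main obstacle is purely algebraic bookkeeping: one must verify that after applying the spherical pinching every quartic coefficient stays strictly positive and every $\abs{\ho}^2\bar K$ error produced is dominated by the $n\bar K\abs{\ho}^2$ available on the left, just as in the corresponding coefficient tracking inside the proof of Lemma \ref{l: Z pos}. No new geometric input is required beyond this absorption trick.
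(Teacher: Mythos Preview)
Your overall strategy---splitting into $H=0$ and $H\neq 0$, working in the special frames, replacing the Euclidean substitution $\abs{H}^2\geq\abs{\ho}^2/(c-1/n)$ by the spherical version $\abs{H}^2\geq(\abs{\ho}^2-\beta\bar K)/(\alpha-1/n)$, and absorbing the resulting $\bar K$-errors with the extra $n\bar K\abs{\ho}^2$---is exactly the paper's approach, and your $H=0$ case is correct.

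There is, however, a genuine gap in the final step of the $H\neq 0$ case. Your claim that ``shrinking $\epsilon$ (which shrinks $\beta_\epsilon$ and hence $C_0$) if necessary makes $\eta_2>0$'' fails on two counts. First, $\beta_\epsilon=\tfrac{n}{2}(1-\epsilon)$ (or $2(1-\epsilon)$) is \emph{decreasing} in $\epsilon$, so shrinking $\epsilon$ increases $\beta_\epsilon$ and hence $C_0$. Second, and more importantly, the $\epsilon$ in $\alpha_\epsilon,\beta_\epsilon$ is fixed by the initial submanifold $\Sigma_0$; the lemma is asserted for the given $\alpha,\beta$ satisfying the stated strict inequalities, and you are not free to tighten the pinching after the fact.

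What actually forces $\eta_2>0$ is precisely the hypothesis on $\beta$. Tracing the substitution through the explicit expansion (as in Lemma~\ref{l: Z pos}), the net positive coefficient of $\abs{\ho}^2\abs{H}^2$ is $\tfrac{1}{n}-\tfrac{n-2}{2n(n-1)}$, so the $\bar K$-error produced is $C_0=\dfrac{\beta}{\alpha-1/n}\Bigl(\tfrac{1}{n}-\tfrac{n-2}{2n(n-1)}\Bigr)$, and the requirement $C_0\leq n$ reads
\[
-\frac{\beta}{\alpha-\tfrac{1}{n}}\Bigl(\tfrac{1}{n}-\tfrac{n-2}{2n(n-1)}\Bigr)+n\geq 0,
\]
which is exactly the condition $\beta<2(n-1)/3$ for $n=2,3$ and $\beta<2$ for $n\geq 4$. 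This is the computation the paper performs directly on the expanded inequality for $Z+n\bar K\abs{\ho}^2$. Your convex-combination route would also close (since $(1-\tau)<1$ only relaxes the required bound on $C_0$), but the justification must come from the $\beta$-hypothesis, not from adjusting $\epsilon$.
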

The proof of this lemma is similar to Lemma \ref{l: Z pos} of the previous chapter.  For the same reasons as in the Pinching Lemma, the cases $H = 0$ and $H \neq 0$ need to be examined seperately, and again the case $H=0$ is treated easily.  Let us briefly examine the case $H \neq 0$.  The computations are the same as those in Lemma \ref{l: Z pos} and one finds, in dimensions two to five
\begin{align*}
		Z + n\abs{ \ho }^2 \bar K &\geq -\abs{\ho_1}^4 + \frac{1}{n}\abs{\ho_1}^2\abs{H}^2 + \frac{1}{n}\abs{\ho_-}^2\abs{H}^2  -\frac{n-2}{2}\abs{\ho_1}^4 - 4 \abs{\ho_1}^2\abs{\ho_-}^2  \\
		& \qquad - \frac{3}{2}\abs{\ho_-}^4 - \frac{n-2}{2n(n-1)}\abs{\ho_1}^2\abs{H}^2 - \frac{n-2}{2n(n-1)}\abs{\ho_-}^2\abs{H}^2 + n( \abs{ \ho_1 }^2 + \abs{ \ho_- }^2 ) \bar K,
\end{align*}
and in dimensions six and higher
	\begin{align*}
		Z + n \abs{ \ho }^2 \bar K &\geq -\abs{\ho_1}^4 + \frac{1}{n}\abs{\ho_1}^2\abs{H}^2 + \frac{1}{n}\abs{\ho_-}^2\abs{H}^2  - \frac{n-2}{2}\abs{\ho_1}^4 -\frac{n+2}{2}\abs{\ho_1}^2\abs{\ho_-}^2 \\
		& \qquad - \frac{3}{2}\abs{\ho_-}^4  - \frac{n-2}{2n(n-1)}\abs{\ho_1}^2\abs{H}^2 - \frac{n-2}{2n(n-1)}\abs{\ho_-}^2\abs{H}^2 + n( \abs{ \ho_1 }^2 + \abs{ \ho_- }^2 ) \bar K .
	\end{align*}
The size of $\alpha$ is computed in the same way as the Euclidean case, and for $\beta$, in all dimensions $n \geq 2$ we require
\begin{equation*}
	-\frac{ \beta }{ \alpha - \frac1n } \Big( \frac1n - \frac{ n - 2 }{ 2n(n-1) } \Big)  + n \geq 0.
\end{equation*}
In dimension two and three this gives $\beta < 2(n-1)/3$, which is more restrictive than that required by the Pinching Lemma.  For $n \geq 4$ we require $\beta < 2$.  We now commence with the integral estimates.

\begin{prop}\label{p: using Z pos}
For any $\eta > 0$ we have the estimate
	\begin{equation*}
		\begin{split}
		&\int_{\Sigma_t } f_{\sigma}^p ( a \abs{H}^2 + \beta_{ \epsilon } \bar K ) \, dV_g \\
		&\quad \leq \frac{ (2p\eta + 5) }{\epsilon} \int_{ \Sigma_t } \frac{ f_{\sigma}^{p-1} }{ (a \abs{H}^2 + \beta_{ \epsilon } \bar K )^{1-\sigma} } \abs{\nabla H}^2 \, dV_g + \frac{ 2(p-1) }{ \epsilon\eta } \int_{\Sigma_t } f_{\sigma}^{p-2}\abs{\nabla f_{\sigma}}^2 \, dV_g. \end{split}
	\end{equation*}
\end{prop}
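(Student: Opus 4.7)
The plan is to mimic the proof of Proposition \ref{p: using Z pos} of Chapter 3 in the spherical setting, substituting the contracted Simons' identity \eqref{e: contracted Simons' identity} for its Euclidean counterpart and Lemma \ref{l: Z pos sphere} for its Euclidean analog. Throughout the argument $t$ is fixed, so integration is spatial.

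First, using \eqref{e: contracted Simons' identity} together with the quotient rule applied to $f_\sigma = \abs{\ho}^2/(a\abs{H}^2+\beta_\epsilon\bar K)^{1-\sigma}$, I would express the Laplacian of $f_\sigma$ in the form
\begin{equation*}
\Delta f_\sigma = \frac{2\,\ho_{ij}\cdot\nabla_i\nabla_j H}{(a\abs{H}^2+\beta_\epsilon\bar K)^{1-\sigma}} + \frac{2(Z+n\bar K\abs{\ho}^2)}{(a\abs{H}^2+\beta_\epsilon\bar K)^{1-\sigma}} + \mathcal G,
\end{equation*}
where $\mathcal G$ collects the gradient terms produced by differentiating the denominator. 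As in the Euclidean proof of equation \eqref{e: f_sigma 3}, the non-positive pieces of $\mathcal G$ can be discarded after the pinching estimate $\abs{h}^2\leq\alpha_\epsilon\abs{H}^2+\beta_\epsilon\bar K$ is used to sign the relevant gradient combinations. I would then multiply through by $f_\sigma^{p-1}$ and integrate over $\Sigma_t$, rearranging to isolate the term containing $Z+n\bar K\abs{\ho}^2$.

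The integral $\int_{\Sigma_t}f_\sigma^{p-1}\Delta f_\sigma\,dV_g$ equals $-(p-1)\int f_\sigma^{p-2}\abs{\nabla f_\sigma}^2\,dV_g$ by Green's identity, and this non-positive contribution gets carried to the right-hand side. For the term involving $\ho_{ij}\cdot\nabla_i\nabla_j H$ I would integrate by parts; crucially, since the ambient space is a sphere the Codazzi identity \eqref{eq:spatialCodazzi} still reduces to $\nabla_i h_{jk}=\nabla_j h_{ik}$ (the ambient curvature $\bar R(F_*u,F_*v)F_*w$ is tangential for a space form, so $\pip$ of it vanishes), so integration by parts against the $\nabla^2 H$ factor proceeds exactly as in Chapter 3. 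The boundary-free contribution is a good negative term $-\frac{2(n-1)}{n}\int\abs{\nabla H}^2 f_\sigma^{p-1}/(a\abs{H}^2+\beta_\epsilon\bar K)^{1-\sigma}$ (modulo the usual $\tfrac{1}{n}\abs{\nabla H}^2$ recombination), accompanied by two sign-indefinite cross terms of the schematic form $\int f_\sigma^{p-2}\langle\nabla_if_\sigma\cdot\ho_{ij},\nabla_j H\rangle$ and $\int f_\sigma^{p-1}\abs{H}\ho_{ij}\cdot\nabla_iH\cdot\nabla_j H/(a\abs{H}^2+\beta_\epsilon\bar K)^{2-\sigma}$. I will dispatch each using Cauchy--Schwarz and Young's inequality with parameter $\mu=\eta$, as in the Euclidean proof, exploiting the pointwise bound $\abs{\ho}^2\leq C(a\abs{H}^2+\beta_\epsilon\bar K)$ supplied by the Pinching Lemma. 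Finally, invoking Lemma \ref{l: Z pos sphere} to estimate $Z+n\bar K\abs{\ho}^2\geq \epsilon\abs{\ho}^2(a\abs{H}^2+\beta_\epsilon\bar K)=\epsilon f_\sigma(a\abs{H}^2+\beta_\epsilon\bar K)^{2-\sigma}$ produces precisely the integrand $f_\sigma^p(a\abs{H}^2+\beta_\epsilon\bar K)$ appearing on the left of the claimed inequality.

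The main obstacle, compared with the Euclidean proof, will be the bookkeeping around the shift $\beta_\epsilon\bar K$ in the denominator. In Chapter 3 the denominators are pure powers of $\abs{H}^2$, so derivatives yield clean factors of $1/\abs{H}$; here they yield $a\abs{H}/(a\abs{H}^2+\beta_\epsilon\bar K)$, and I have to use $a\abs{H}^2\leq a\abs{H}^2+\beta_\epsilon\bar K$ repeatedly to replace these ratios by constants without damaging the final coefficient. Choosing $\mu=\eta$ so that the two $\eta$-dependent cross-term coefficients match, then dividing through by the pre-factor $2\epsilon$ from the Simons' estimate, the coefficients $(2p\eta+5)/\epsilon$ and $2(p-1)/(\epsilon\eta)$ emerge after collecting the numerical constants produced by the three cross-term bounds.
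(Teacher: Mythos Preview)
Your proposal is correct and follows essentially the same approach as the paper: express $\Delta f_\sigma$ via the contracted Simons' identity (with the extra $n\bar K\abs{\ho}^2$ term), discard the non-negative gradient combinations using the pinching, multiply by $f_\sigma^{p-1}$ and integrate, use Green's identity and the Codazzi equation (which, as you correctly note, is unchanged in the sphere since $\pip\bar R(F_*u,F_*v)F_*w=0$), estimate the resulting cross-terms with Cauchy--Schwarz and Young, and finish with Lemma~\ref{l: Z pos sphere} before dividing by $2\epsilon$. Your remark about the bookkeeping for the shifted denominator $a\abs{H}^2+\beta_\epsilon\bar K$ is exactly the adjustment the paper makes.
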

\begin{proof}
The proof this lemma follows Lemma \ref{l: Z pos} of the previous chapter.
Using the contracted Simons' identity and $\Delta \abs{ H }^2 = 2\abs{ H }\Delta \abs{ H } + 2\abs{ \nabla \abs{ H } }^2$, the Laplacian of $f_{ \sigma }$ can be written as
\begin{align*}
		\Delta f_{\sigma} &= \frac{2}{ (a \abs{H}^2 + \beta_{ \epsilon } \bar K)^{1-\sigma} } \ho_{ ij } \cdot \nabla_i\nabla_j H + \frac{ 2 }{ ( a \abs{ H }^2 + \beta_{ \epsilon} \bar K)^{ 1 - \sigma } } Z + \frac{ 2 }{ ( a \abs{ H }^2 + \beta_{ \epsilon} \bar K)^{ 1 - \sigma } } n \bar K\abs{ \ho }^2 \\
		&\quad- \frac{ 4a(1-\sigma)\abs{ H } }{ a \abs{H}^2 + \beta_{ \epsilon } \bar K }\big\langle \nabla_i\abs{H}, \nabla_i f_{\sigma} \big\rangle  - \frac{ 2a(1-\sigma) }{ a \abs{H}^2 + \beta_{ \epsilon } \bar K }f_{ \sigma }\abs{ H }\Delta\abs{H} \\
		&\quad+  \frac{ 4a^2\sigma(1-\sigma) }{ (a \abs{H}^2 + \beta_{ \epsilon } \bar K)^2 }f_{\sigma}\abs{H}^2\abs{ \nabla\abs{H} }^2 + \frac{2}{ (a \abs{H}^2 + \beta_{ \epsilon } \bar K)^{1-\sigma} } \big( \abs{ \nabla h }^2 - \frac{1}{n}\abs{ \nabla H }^2 \big) \\
		&\quad - \frac{ 2a(1-\sigma) }{ a \abs{H}^2 + \beta_{ \epsilon } \bar K }f_{ \sigma }\abs{ \nabla \abs{H} }^2.
	\end{align*}
We want to estimate $\Delta f_{ \sigma }$ from below.  The first term on the third line is non-negative and we discard it.  Working with the last two terms of line three, using the Kato-type inequality $\abs{ \nabla \abs{H} }^2 \leq \abs{ \nabla H }^2$, equation \eqref{eqn: basic grad est 1 sphere} and the Pinching Lemma we estimate
\begin{align*}
	 &\frac{2}{ (a \abs{H}^2 + \beta_{ \epsilon } \bar K)^{1-\sigma} } \big( \abs{ \nabla h }^2 - \frac{1}{n}\abs{ \nabla H }^2 \big) - \frac{ 2a(1-\sigma) }{ a \abs{H}^2 + \beta_{ \epsilon } \bar K }f_{ \sigma }\abs{ \nabla \abs{H} }^2 \\
	 &\quad\geq \frac{ 2 }{ (a \abs{H}^2 + \beta_{ \epsilon } \bar K)^{1-\sigma} } \bigg\{ \frac{3}{ n+2 } - \frac{1}{n} - \frac{ a \abs{ \ho }^2 }{ a\abs{H}^2 + \beta_{ \epsilon } \bar K } \bigg\} \abs{ \nabla H }^2,
\end{align*}
which is non-negative and we discard this term.  Having discarded these terms we are left with
\begin{align*}
		\Delta f_{\sigma} &\geq \frac{2}{ (a \abs{H}^2 + \beta_{ \epsilon } \bar K)^{1-\sigma} } \ho_{ ij } \cdot \nabla_i\nabla_j H + \frac{ 2 }{ (a \abs{H}^2 + \beta_{ \epsilon } \bar K)^{ 1 - \sigma } } Z + \frac{ 2 }{ (a \abs{H}^2 + \beta_{ \epsilon } \bar K)^{ 1 - \sigma } } n \bar K\abs{ \ho }^2 \\
		&\quad- \frac{ 4a(1-\sigma)\abs{ H } }{ a \abs{H}^2 + \beta_{ \epsilon } \bar K } \big\langle \nabla_i\abs{H}, \nabla_i f_{\sigma} \big\rangle  - \frac{ 2a(1-\sigma) }{ a \abs{H}^2 + \beta_{ \epsilon } \bar K }f_{ \sigma }\abs{ H }\Delta\abs{H}.
\end{align*}
We now multiply this equation by $f_{ \sigma }^{p-1}$ and integrate it over the submanifold.  The terms integrate as follows:
\begin{align*}
	&\int_{ \Sigma_t } f_{ \sigma }^{p-1} \Delta f_{\sigma} \, dV_g = -(p-1) \int_{ \Sigma_t } f_{\sigma}^{p-2} \abs{ \nabla f_{\sigma} }^2 \, dV_g; \\
	&2 \int_{ \Sigma_t} \frac{ f_{ \sigma }^{p-1} }{ ( a \abs{H}^2 + \beta_{ \epsilon } \bar K )^{ 1-\sigma } } \big\langle \ho_{ij}, \nabla_i\nabla_j \big\rangle \, dV_g = -2(p-1) \int_{ \Sigma_t} \frac{ f_{ \sigma }^{p-2} }{ ( a \abs{H}^2 + \beta_{ \epsilon } \bar K )^{1-\sigma} } \big\langle \nabla_i f_{ \sigma } \ho_{ij}, \nabla_j H \big\rangle \, dV_g \\
	&\quad- 2\frac{ (n-1) }{n} \int_{ \Sigma_t } \frac{ f_{\sigma}^{p-1} }{ (a \abs{H}^2 + \beta_{ \epsilon } \bar K)^{1-\sigma} } \abs{ \nabla H }^2 \, dV_g \\
	&\quad + 4(1-\sigma)\int_{ \Sigma_t } \frac{ f_{\sigma}^{p-1} \abs{H} }{ (a \abs{H}^2 + \beta_{ \epsilon } \bar K)^{2-\sigma} } \big\langle \ho_{ij} \nabla_i \abs{H}, \nabla_j \abs{H} \big\rangle \, dV_g; \\
	&-2(1-\sigma) \int_{ \Sigma_t } \frac{ f_{\sigma}^p \abs{H} \Delta\abs{H} }{ (a \abs{H}^2 + \beta_{ \epsilon } \bar K) } \, dV_g =2(1-\sigma) \int_{ \Sigma_t } \frac{ f_{ \sigma }^{p-1} \abs{H} }{ a \abs{H}^2 + \beta_{ \epsilon } \bar K } \big\langle \nabla_i f_{\sigma}, \nabla_i \abs{H} \big\rangle \, dV_g \\
	&\quad+ 2(1-\sigma) \int_{ \Sigma_t } \frac{ f_{\sigma} }{ a \abs{H}^2 + \beta_{ \epsilon } \bar K } \abs{ \nabla \abs{H} }^2 \, dV_g - 4(1-\sigma) \int_{ \Sigma_t } \frac{ f_{\sigma} \abs{H}^2 }{ ( a \abs{H}^2 + \beta_{ \epsilon } \bar K )^2 } \abs{ \nabla \abs{H} }^2 \, dV_g.
\end{align*}	
In performing the integration we have made use of Green's First Identity, the Codazzi equation and the Divergence Theorem.  We discard two terms that have the appropriate sign, noting that the terms with and inner product do not have a sign.  After some factoring and rearranging, we estimate the terms with an inner product using the Cauchy-Schwarz and Kato inequalities to obtain
\begin{align*}
	&2\int_{ \Sigma_t } \frac{ f_{\sigma}^{p-1} }{ (a \abs{H}^2 + \beta_{ \epsilon } \bar K)^{ 1 - \sigma } } Z \, dV_g + 2n \bar K \int_{ \Sigma_t } \frac{ f_{\sigma}^{p-1} }{ (a \abs{H}^2 + \beta_{ \epsilon } \bar K)^{ 1 - \sigma } } \abs{ \ho }^2 \, dV_g \\
	&\quad \leq 2(p-1) \int_{ \Sigma_t} \frac{ f_{ \sigma }^{p-2} }{ ( a \abs{H}^2 + \beta_{ \epsilon } \bar K )^{1-\sigma} } \abs{ \nabla f_{\sigma} } \abs{ \ho } \abs{ \nabla H } \, dV_g \\
	&\quad + 2\frac{ (n-1) }{n} \int_{ \Sigma_t } \frac{ f_{\sigma}^{p-1} }{ (a \abs{H}^2 + \beta_{ \epsilon } \bar K)^{1-\sigma} } \abs{ \nabla H }^2 \, dV_g \\
	&\quad + 4 \int_{ \Sigma_t} \frac{ f_{ \sigma }^{p-1} }{ ( a \abs{H}^2 + \beta_{ \epsilon } \bar K )^{2-\sigma} } \abs{H} \abs{ \ho } \abs{ \nabla H }^2 \, dV_g \\
	&\quad + 4(1-\sigma)(p-2) \int_{ \Sigma_t} \frac{ f_{ \sigma }^{p-1} }{ ( a \abs{H}^2 + \beta_{ \epsilon } \bar K ) } \abs{H} \abs{ \nabla H } \abs{ \nabla f_{\sigma} } \, dV_g \\
	&\quad + 4 \int_{ \Sigma_t } \frac{ f_{\sigma}^p }{ ( a \abs{H}^2 + \beta_{ \epsilon } \bar K )^2 } \abs{H}^2 \abs{ \nabla H }^2 \, dV_g.
\end{align*}
Using the Peter-Paul inequality, as well as the inequalities $\abs{ \ho }^2 \leq f_{\sigma} ( a \abs{H}^2 + \beta_{ \epsilon } \bar K )^{1-\sigma}$,  $f_{\sigma} \leq ( a \abs{H}^2 + \beta_{ \epsilon } \bar K )^{\sigma}$ and $(1 - \sigma) \leq 1$, we estimate each term on the right as follows:
\begin{align*}
	&2(p-1) \int_{ \Sigma_t } \frac{ f_{\sigma}^{p-2} }{ ( a \abs{H}^2 + \beta_{ \epsilon } \bar K )^{1-\sigma} } \abs{ \nabla f_{\sigma} } \abs{ \ho } \abs{ \nabla H } \, dV_g \\
	&\quad \leq \frac{ (1-\sigma) }{ \eta } \int_{ \Sigma_t } f_{\sigma}^{p-2} \abs{ \nabla f_{\sigma} }^2 \, dV_g + (p-1)\eta \int_{ \Sigma_t } \frac{ f_{\sigma}^{p-1} }{ ( a \abs{H}^2 + \beta_{ \epsilon } \bar K )^{1-\sigma} } \abs{ \nabla H }^2 \, dV_g; \\
	&4 \int_{ \Sigma_t } \frac{ f_{\sigma}^{p-1} }{ ( a \abs{H}^2 + \beta_{ \epsilon } \bar K )^{2-\sigma} } \abs{H} \abs{ \ho } \abs{ \nabla H }^2 \, dV_g \leq 4 \int_{ \Sigma_t } \frac{ f_{\sigma}^{p-1} }{ ( a \abs{H}^2 + \beta_{ \epsilon } \bar K )^{1-\sigma} } \abs{ \nabla H }^2 \, dV_g; \\
	&4(p-2) \int_{ \Sigma_t } \frac{ f_{\sigma}^{p-1} }{ ( a \abs{H}^2 + \beta_{ \epsilon } \bar K ) } \abs{H} \abs{ \nabla H } \abs{ \nabla f_{\sigma} } \, dV_g \leq \frac{2}{\eta} (p-2) \int_{\Sigma_t} f_{\sigma}^{p-2} \abs{ \nabla f_{\sigma} }^2 \, dV_g \\
	&\quad + 2(p-2)\eta \int_{ \Sigma_t } \frac{ f_{\sigma}^{p-1} }{ ( a \abs{H}^2 + \beta_{ \epsilon } \bar K )^{1-\sigma} } \, dV_g; \\
	&4 \int_{ \Sigma_t } \frac{ f_{\sigma}^p }{ (a \abs{H}^2 + \beta_{ \epsilon } \bar K)^2 } \abs{H}^2 \abs{ \nabla H }^2 \, dV_g \leq 4 \int_{ \Sigma_t } \frac{ f_{\sigma}^{p-1} }{ (a \abs{H}^2 + \beta_{ \epsilon } \bar K)^{1-\sigma} } \abs{ \nabla H }^2 \, dV_g.
\end{align*}
We use Lemma \ref{l: Z pos} to estimate the two terms on the left:
\begin{equation*}
	2\int_{ \Sigma_t } \frac{ f_{\sigma}^{p-1} }{ (a \abs{H}^2 + \beta_{ \epsilon } \bar K)^{ 1 - \sigma } } ( Z + n \bar K\abs{ \ho }^2 ) \, dV_g \geq 2\epsilon \int_{ \Sigma_t } f_{\sigma}^p ( a \abs{H}^2 + \beta_{ \epsilon } \bar K ) \, dV_g.
\end{equation*}
Putting everything together with a little rough estimation of the coefficients to coax them into a more convenient form we obtain
\begin{equation*}
	\begin{split}
&2\epsilon \int_{ \Sigma_t } f_{\sigma}^p ( a \abs{H}^2 + \beta_{ \epsilon } \bar K ) \, dV_g \\
&\quad \leq (3p\eta + 10 ) \int_{ \Sigma_t } f_{\sigma}^p ( a \abs{H}^2 + \beta_{ \epsilon } \bar K ) \, dV_g = \frac{ 3(p-1) }{\eta} \int_{ \Sigma_t } f_{\sigma}^{p-2} \abs{ \nabla f_{\sigma} }^2 \, dV_g. \end{split}
\end{equation*}
Dividing through by $2\epsilon$ completes the proposition.
\end{proof}
The next step is to show that sufficiently high $L^p$ norms of $f_{\sigma}$ are bounded, and in fact decay exponentially in time.
\begin{prop}\label{p: d/dt f_sigma}
For any $p \geq 8/(\epsilon_{\nabla} +1)$ we have the estimate
	\begin{equation*}
		\begin{split}
		\frac{d}{dt} \int_{\Sigma} f_{\sigma}^p \, dV_g &\leq -\frac{p(p-1)}{2}\int_{\Sigma}f_{\sigma}^{p-2}\abs{\nabla f_{\sigma}}^2 \, dV_g - 2p\epsilon_{\nabla}\int_{\Sigma}\frac{f_{\sigma}^{p-1}}{\abs{H}^{2-\sigma}}\abs{\nabla H}^2 \, dV_g \\ 
		&\quad- 2 n \epsilon \bar Kp \int_{\Sigma} f_{\sigma}^p \, dV_g +2\sigma p \int_{\Sigma}f_{\sigma}^p (a\abs{H}^2+ \beta_{\epsilon} \bar K)\, dV_g.
		\end{split}
	\end{equation*} 
\end{prop}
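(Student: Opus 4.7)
The plan is to follow the same strategy as Proposition \ref{p: d/dt f_sigma} of the Euclidean chapter, differentiating under the integral sign, integrating by parts, and then applying Young's inequality to the cross-gradient term; the only genuinely new feature is to track the good $-2n\epsilon'\bar K f_\sigma$ term produced by the background curvature. Concretely, I would start by writing
\begin{equation*}
\frac{d}{dt}\int_\Sigma f_\sigma^p\,dV_g = \int_\Sigma\bigl(p f_\sigma^{p-1}\partial_t f_\sigma - \abs{H}^2 f_\sigma^p\bigr)\,dV_g,
\end{equation*}
discard the non-positive term $-\int \abs{H}^2 f_\sigma^p\,dV_g$ (which is actually a bonus), and substitute the evolution inequality \eqref{eqn: evol eqn f_sigma 1 sphere}. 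Integration by parts on $\int f_\sigma^{p-1}\Delta f_\sigma$ produces $-p(p-1)\int f_\sigma^{p-2}\abs{\nabla f_\sigma}^2\,dV_g$, and the ambient-curvature reaction term of \eqref{eqn: evol eqn f_sigma 1 sphere} contributes $-2n\epsilon' p\bar K\int f_\sigma^p\,dV_g$ directly. The final term $2\sigma\abs{h}^2 f_\sigma$, after integration, is estimated using the pinching lemma as $\abs{h}^2\leq \alpha_\epsilon\abs{H}^2+\beta_\epsilon\bar K\leq C\,(a\abs{H}^2+\beta_\epsilon\bar K)$, which is exactly the shape of the last term in the stated inequality.

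The only mildly delicate step is the cross term
\begin{equation*}
\frac{4a(1-\sigma)\abs{H}}{a\abs{H}^2+\beta_\epsilon\bar K}\langle\nabla\abs{H},\nabla f_\sigma\rangle,
\end{equation*}
which after multiplication by $pf_\sigma^{p-1}$ and integration produces a term with no definite sign. I would apply Peter--Paul with parameter $\rho$:
\begin{equation*}
4p(1-\sigma)\!\int_\Sigma\!\frac{f_\sigma^{p-1}\abs{H}}{a\abs{H}^2+\beta_\epsilon\bar K}\abs{\nabla \abs{H}}\abs{\nabla f_\sigma}\,dV_g \leq \frac{2p}{\rho}\!\int_\Sigma\!f_\sigma^{p-2}\abs{\nabla f_\sigma}^2\,dV_g + 2p\rho\!\int_\Sigma\!\frac{f_\sigma^{p-1}}{(a\abs{H}^2+\beta_\epsilon\bar K)^{1-\sigma}}\abs{\nabla H}^2\,dV_g,
\end{equation*}
using $\abs{H}^2/(a\abs{H}^2+\beta_\epsilon\bar K)\leq 1/a$ and absorbing the constant into $\rho$. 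Choosing $\rho=4/(p-1)$ makes the resulting coefficient in front of $\int f_\sigma^{p-2}\abs{\nabla f_\sigma}^2\,dV_g$ no worse than $-p(p-1)/2$, and then requiring $p\geq 8/(\epsilon_\nabla+1)$ ensures that $2p\rho\leq p\epsilon_\nabla$, so that after subtracting from the existing $-2p\epsilon_\nabla\int f_\sigma^{p-1}(a\abs{H}^2+\beta_\epsilon\bar K)^{-(1-\sigma)}\abs{\nabla H}^2\,dV_g$ at least half of the gradient term survives.

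The one subtlety specific to the spherical case, and the step I expect will require the most care, is keeping track of the pinching-dependent denominator $a\abs{H}^2+\beta_\epsilon\bar K$ rather than the pure $\abs{H}^2$ that appears in the Euclidean argument; in particular one should verify that the rough bound $\abs{H}^2\leq (1/a)(a\abs{H}^2+\beta_\epsilon\bar K)$ is enough to absorb every occurrence of $\abs{H}$ produced by the Peter--Paul estimate without losing the good $\bar K$ contribution. Once that is verified, combining all the pieces yields precisely the stated inequality, with the term $-2n\epsilon\bar K p\int f_\sigma^p$ inherited from the reaction term in \eqref{eqn: evol eqn f_sigma 1 sphere} and $2\sigma p\int f_\sigma^p(a\abs{H}^2+\beta_\epsilon\bar K)\,dV_g$ coming from the pinched form of $2\sigma\abs{h}^2 f_\sigma$.
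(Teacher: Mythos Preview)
Your proposal is correct and follows essentially the same route as the paper: differentiate under the integral, discard the $-\abs{H}^2 f_\sigma^p$ term, integrate the Laplacian by parts, apply Peter--Paul with parameter $\rho=4/(p-1)$ to the cross-gradient term, and use the pinching to estimate $\abs{h}^2$ by (a constant times) $a\abs{H}^2+\beta_\epsilon\bar K$. The only cosmetic difference is that the paper uses the sharper bound $\abs{\ho}^2\leq a\abs{H}^2+\beta_\epsilon\bar K$ (which follows since $\alpha_\epsilon-1/n\leq a$) directly to get $f_\sigma\leq (a\abs{H}^2+\beta_\epsilon\bar K)^\sigma$, rather than invoking the full $\abs{h}^2$ pinching with an extra constant.
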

\begin{proof}
We differentiate under the integral sign and substitute in the evolution equations for $f_{\sigma}$ and the measure $dV_g$ to get
	\begin{align} 
		\notag &\frac{d}{dt} \int_{\Sigma} f_{\sigma}^p \, dV_g  \\ 
		&\quad = \int_{\Sigma}(pf_{\sigma}^{p-1}\frac{\partial f_{\sigma}}{\partial t} - \abs{H}^2f_{\sigma}^p) \, dV_g \notag \\
			 \notag&\quad \leq  \int_{\Sigma}pf_{\sigma}^{p-1}\frac{\partial f_{\sigma}}{\partial t} \, dV_g \\
		\begin{split}\label{e: d/dt f_sigma 1}
				&\quad \leq -p(p-1)\int_{\Sigma} f_{\sigma}^{p-2}\abs{\nabla f_{\sigma}}^2 \, dV_g + 4(1-\sigma)p\int_{\Sigma} \frac{f_{\sigma}^{p-1}}{ a \abs{H}^2 + \beta_{ \epsilon } \bar K } \abs{H} \abs{\nabla \abs{H}} \abs{\nabla f_{\sigma}} \, dV_g \\
						&\quad \quad- 2p\epsilon_{\nabla}\int_{\Sigma}\frac{f_{\sigma}^{p-1}}{ ( a \abs{H}^2 + \beta_{ \epsilon } \bar K )^{1-\sigma} } \abs{\nabla H}^2 \, dV_g - 2 n \epsilon \bar Kp \int_{\Sigma} f_{\sigma}^p \, dV_g + 2\sigma p \int_{\Sigma}\abs{h}^2f_{\sigma}^p \, dV_g.
			\end{split}
	\end{align}
We estimate the second integral by
	\begin{equation*}
		\begin{split}
		&4(1-\sigma)p\int_{\Sigma} \frac{f_{\sigma}^{p-1}}{ a \abs{H}^2 + \beta_{ \epsilon } \bar K } \abs{H}\abs{\nabla \abs{H}}\abs{\nabla f_{\sigma}} \, dV_g \\
		&\quad \leq \frac{2p}{\mu} \int_{\Sigma} f_{\sigma}^{p-2}\abs{\nabla f_{\sigma}}^2 \, dV_g + 2p\mu \int_{\Sigma} \frac{ f_{\sigma}^{p-1} }{ (a \abs{H}^2 + \beta_{ \epsilon } \bar K)^{1-\sigma} } \abs{\nabla H}^2 \, dV_g, \end{split}
	\end{equation*}
then substituting this estimate back into \eqref{e: d/dt f_sigma 1} gives
	\begin{align*}
		&\frac{d}{dt}\int_{\Sigma} f_{\sigma}^p \, dV_g \\
		&\quad \leq \Big(-p(p-1) + \frac{2p}{\mu}\Big)\int_{\Sigma} f_{\sigma}^{p-2}\abs{\nabla f_{\sigma}}^2 \, dV_g \\
		&\quad \quad - ( 2p\epsilon_{\nabla} - 2p\mu )\int_{\Sigma} \frac{f_{\sigma}^{p-1}}{ ( a \abs{H}^2 + \beta_{ \epsilon } \bar K )^{1-\sigma} }\abs{\nabla H}^2 \, dV_g \\
		&\quad \quad+ 2 n \epsilon \bar Kp \int_{\Sigma} f_{\sigma}^p \, dV_g + 2\sigma p \int_{\Sigma}\abs{h}^2f_{\sigma}^p \, dV_g \\
		&\quad = -p(p-1) \Big( 1 - \frac{2}{\mu(p-1)} \Big) \int_{\Sigma} f_{\sigma}^{p-2}\abs{\nabla f_{\sigma}}^2 \, dV_g \\
		&\quad \quad - 2p\epsilon_{\nabla} \Big( 1 - \frac{\mu}{\epsilon_{\nabla}} \Big) \int_{\Sigma} \frac{f_{\sigma}^{p-1}}{ (a \abs{H}^2 + \beta_{ \epsilon } \bar K)^{1-\sigma} }\abs{\nabla H}^2 \, dV_g \\
		&\quad \quad - 2 n \epsilon \bar Kp \int_{\Sigma} f_{\sigma}^p \, dV_g +2\sigma p \int_{\Sigma}\abs{h}^2f_{\sigma}^p \, dV_g.
	\end{align*}
We want to choose $\mu$ so that $1 - 2/(\mu(p-1)) \geq 1/2$ and $p$ so that $1 - \mu/\epsilon_{\nabla} \geq 1/2$.  We therefore choose $\mu = 4/(p-1)$ and $p \geq \max\{2, 8/(\epsilon_{\nabla} + 1)\}$.  In the last term we estimate $\abs{h}^2 \leq ( a \abs{H}^2 + \beta_{ \epsilon } \bar K )$, and the proposition is complete.
\end{proof}

\begin{lem}\label{l: high Lp bounded}
There exist constants $C_2$ and $C_3$ both depending only on $\Sigma_0$ such that if $p \geq C_2$ and $\sigma \leq C_3/\sqrt{p}$, then for all time $t \in [0, \infty)$ we have the estimate
	\begin{equation*}
		\Big( \int_{\Sigma_t } f_{\sigma}^p \, dV_g \Big)^{1/p} \leq C_1 e^{-\delta_1 t}.
	\end{equation*}
\end{lem}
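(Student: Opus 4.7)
The plan is to combine Propositions \ref{p: using Z pos} and \ref{p: d/dt f_sigma} in exactly the manner of Lemma \ref{l: f_sigma non-increase} from Chapter 3, but exploiting the extra favourable term $-2n\epsilon\bar K p\int f_\sigma^p\,dV_g$ in Proposition \ref{p: d/dt f_sigma} coming from the positive ambient curvature. First I would multiply the inequality of Proposition \ref{p: using Z pos} by $2\sigma p$ and substitute the right-hand side into the last (positive) term of Proposition \ref{p: d/dt f_sigma}. This yields, with $I_1:=\int_{\Sigma_t}\frac{f_\sigma^{p-1}}{(a\abs{H}^2+\beta_\epsilon\bar K)^{1-\sigma}}\abs{\nabla H}^2\,dV_g$ and $I_2:=\int_{\Sigma_t}f_\sigma^{p-2}\abs{\nabla f_\sigma}^2\,dV_g$, the estimate
\[
\frac{d}{dt}\int_{\Sigma_t} f_\sigma^p\,dV_g
\leq\Bigl(-\tfrac{p(p-1)}{2}+\tfrac{4\sigma p(p-1)}{\epsilon\eta}\Bigr)I_2
+\Bigl(-2p\epsilon_\nabla+\tfrac{2\sigma p(2p\eta+5)}{\epsilon}\Bigr)I_1
-2n\epsilon\bar K p\int_{\Sigma_t} f_\sigma^p\,dV_g.
\]

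Next I would make the two coefficient choices. Setting $\eta=8\sigma/\epsilon$ kills the $I_2$ coefficient (it becomes non-positive). With this choice, the $I_1$ coefficient is $-2p\epsilon_\nabla+\frac{32\sigma^2p^2}{\epsilon^2}+\frac{10\sigma p}{\epsilon}$. Requiring $\sigma\leq C_3/\sqrt{p}$ with $C_3:=\epsilon\sqrt{\epsilon_\nabla}/8$, and taking $p\geq C_2$ with $C_2$ large enough that $\frac{10}{8\sqrt{p}}\sqrt{\epsilon_\nabla}\leq \epsilon_\nabla$ (and also $p\geq 8/(\epsilon_\nabla+1)$ so Proposition \ref{p: d/dt f_sigma} applies), the $I_1$ coefficient is also non-positive. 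Discarding both gradient terms then leaves the clean differential inequality
\[
\frac{d}{dt}\int_{\Sigma_t} f_\sigma^p\,dV_g\leq -2n\epsilon\bar K p\int_{\Sigma_t} f_\sigma^p\,dV_g.
\]

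Integrating this by Gr\"onwall's inequality and taking the $p$-th root gives
\[
\Bigl(\int_{\Sigma_t} f_\sigma^p\,dV_g\Bigr)^{1/p}\leq \Bigl(\int_{\Sigma_0} f_\sigma^p\,dV_g\Bigr)^{1/p}e^{-2n\epsilon\bar K t},
\]
so $\delta_1=2n\epsilon\bar K$ and $C_1$ depends only on $\Sigma_0$ (through the uniform upper bound $\max_\sigma\max_{\Sigma_0}f_\sigma\cdot\abs{\Sigma_0}^{1/p}$, which is controlled by the initial pinching since $f_\sigma\leq(a\abs{H}^2+\beta_\epsilon\bar K)^\sigma$ is dominated by the initial data).

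The only delicate point is the bookkeeping in choosing the constants $C_2,C_3$ so that all three coefficients acquire the right sign simultaneously; once the sphere term $-2n\epsilon\bar K p\int f_\sigma^p$ is isolated, the maximum principle/Gr\"onwall step is automatic, and no Stampacchia iteration is required here, in contrast to the Euclidean case. The remaining work of the chapter (pointwise decay, higher derivative estimates, and convergence to a totally geodesic limit) then proceeds from this $L^p$ decay by the standard Sobolev/bootstrap arguments sketched in Chapter 3.
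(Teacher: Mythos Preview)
Your proposal is correct and follows essentially the same route as the paper: substitute Proposition \ref{p: using Z pos} into the last term of Proposition \ref{p: d/dt f_sigma}, set $\eta$ proportional to $\sigma/\epsilon$ to make the $I_2$ coefficient nonpositive, impose $\sigma\leq C_3/\sqrt{p}$ (with $C_3=\epsilon\sqrt{\epsilon_\nabla}/8$) and $p\geq C_2$ to make the $I_1$ coefficient nonpositive, and then apply Gr\"onwall to the remaining $-2n\epsilon\bar Kp\int f_\sigma^p$ term. The only differences are cosmetic bookkeeping in the constants, and your identification $\delta_1=2n\epsilon\bar K$ (after the $1/p$ root) is exactly what the paper obtains.
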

\begin{proof}
Combining Propositions \ref{p: using Z pos} and \ref{p: d/dt f_sigma} we get
\begin{align*}
		&\frac{d}{dt}\int_{\Sigma}f_{\sigma}^p \, dV_g \\
		&\quad \leq -{p(p-1)} \Big( \frac{1}{2} - \frac{4\sigma}{\epsilon\eta} \Big) \int_{\Sigma}f_{\sigma}^{p-2}\abs{\nabla f_{\sigma}}^2 \, dV_g \\
		&\quad \quad- 2\Big(p\epsilon_{\nabla} - \frac{p\sigma(2p\eta+5)}{\epsilon} \Big) \int_{\Sigma}\frac{f_{\sigma}^{p-1}}{ (a \abs{H}^2 + \beta_{ \epsilon } \bar K)^{1-\sigma} }\abs{\nabla H}^2 \, dV_g  \\
		&\quad \quad - 2 n \epsilon \bar Kp \int_{\Sigma} f_{\sigma}^p \, dV_g.
	\end{align*}
Recall we are already assuming that $p \geq \max\{2, 8c/(\epsilon_{\nabla} + 1)\}$. Now suppose that
	\begin{equation*}
		\sigma \leq \frac{\epsilon}{8}\sqrt{\frac{\epsilon_{\nabla}}{p}}.
	\end{equation*}
Set $\eta = 4c\sigma/\epsilon$, then
	\begin{equation*}
		\begin{cases}
			\frac{4\sigma}{\epsilon\eta} = \frac{1}{2} \\
			\frac{p\sigma(2p\eta + 5)}{\epsilon} \leq \frac{1}{16}\sqrt{p\epsilon_{\nabla}}(\sqrt{p\epsilon_{\nabla}} + 5) \leq \frac{p\epsilon_{\nabla}}{8} < p\epsilon_{\nabla}.
			\end{cases}
	\end{equation*}
We require $p \geq 25/\epsilon_{\nabla}$ for the last inequality to hold.  With these assumptions on $p$ and $\sigma$ we have
	\begin{equation*}
		\frac{d}{dt}\int_{\Sigma}f_{\sigma}^p \, dV_g \leq - 2 n \epsilon \bar Kp \int_{\Sigma} f_{\sigma}^p \, dV_g,
	\end{equation*}
and thus
\begin{equation*}
		\frac{d}{dt}\int_{\Sigma}f_{\sigma}^p \, dV_g \leq - \int_{\Sigma}f_{\sigma}^p \, dV_g \Big\lvert_{t=0} e^{-2 n \epsilon \bar Kpt}.
\end{equation*}
This implies the lemma with $C_1 = (\abs{\Sigma_0} + 1) \max_{ \sigma \in [0, 1/2] } ( \max \Sigma_0 f_{\sigma} )$, $\delta_1 \leq 2 n \epsilon \bar Kp$, \\$C_2 := \max \{ 8/(\epsilon_{\nabla} +1), 25/\epsilon_{\nabla} \}$ and $C_3 := \epsilon\sqrt{ \epsilon_{\nabla} }/8$.
\end{proof}

Lemma \ref{l: high Lp bounded} shows that for $\sigma$ sufficiently small, sufficiently high $L^p$ norms of $f_{\sigma}$ are bounded and exponentially decaying in time. We can now proceed as in \cite{gH87} via a Stampacchia iteration procedure to uniformly bound $g_{\sigma} := f_{\sigma}e^{(\delta_1/2) t}$, from which the theorem easily follows.  We point out that during the course of this argument, $\sigma$ is fixed sufficiently small once and for all.
\end{proof}

\section{A gradient estimate for the mean curvature}\label{s: A gradient estimate for the mean curvature}
Here we establish a gradient estimate for the mean curvature.  This estimate is required in the following section to compare the mean curvature at different points of the submanifold.

\begin{thm}\label{t: grad est}
For each $\eta > 0$ there exists a constant $C_{\eta}$ depending only on $\eta$ such that for all time the estimate
	\begin{equation*}
		\abs{ \nabla H }^2 \leq ( \eta \abs{H}^4 + C_{\eta} ) e^{-\delta_0 t/2}
	\end{equation*}
holds.
\end{thm}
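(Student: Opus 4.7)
The plan is to adapt the Euclidean gradient estimate from Theorem \ref{t: gradient estimate} of Chapter \ref{ch: The flow of submanifolds of Euclidean space} to the spherical background, exploiting the exponential decay from Theorem \ref{t: pinching improves} to gain the extra factor $e^{-\delta_0 t/2}$. First I would derive the three evolution equations needed: a reaction-diffusion inequality for $\abs{\nabla H}^2$, for $\abs{H}^4$, and for $(N_1+N_2\abs{H}^2)\abs{\ho}^2$. In the spherical setting these differ from their Euclidean counterparts from Section \ref{s: A gradient estimate for the mean curvature} only by linear $\bar K$-terms: for instance, using \eqref{e: evol h2 sphere 1} and \eqref{e: evol H2 sphere 2} together with the timelike Ricci identity one obtains
\begin{equation*}
\frac{\p}{\p t}\abs{\nabla H}^2 \leq \Delta\abs{\nabla H}^2 - 2\abs{\nabla^2 H}^2 + A_1\abs{H}^2\abs{\nabla h}^2 + A_2\bar K\abs{\nabla H}^2,
\end{equation*}
\begin{equation*}
\frac{\p}{\p t}\abs{H}^4 \geq \Delta\abs{H}^4 - 12\abs{H}^2\abs{\nabla H}^2 + \tfrac{4}{n}\abs{H}^6 + 4n\bar K \abs{H}^4,
\end{equation*}
with an analogous $\bar K$-modified version of \eqref{e: evol N}. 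The Pinching Lemma will be used throughout to bound the cross-terms as in Chapter \ref{ch: The flow of submanifolds of Euclidean space}.

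The central auxiliary function I would then form is
\begin{equation*}
g := e^{\delta_0 t/2}\bigl(\abs{\nabla H}^2 + (N_1+N_2\abs{H}^2)\abs{\ho}^2\bigr) - \eta\abs{H}^4 - C,
\end{equation*}
for constants $N_1, N_2$ to be chosen and $C=C(\eta, N_1, N_2, \Sigma_0)$ large. The argument proceeds exactly as in Theorem \ref{t: gradient estimate}: choose $N_2$ large enough so that the negative $-\tfrac{4(n-1)}{3n}N_2\abs{H}^2\abs{\nabla h}^2$ absorbs the bad $A_1\abs{H}^2\abs{\nabla h}^2$ term from the evolution of $\abs{\nabla H}^2$; choose $N_1$ large enough so that the negative $\abs{\nabla h}^2$ term absorbs the cross terms of order $\abs{\ho}\abs{H}\abs{\nabla h}$; and finally use the $-\tfrac{4\eta}{n}\abs{H}^6$ term from $-\eta\abs{H}^4$ to consume the reaction contribution $c_2\abs{\ho}^2(\abs{H}^4 + \bar K^2)$.

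The key new input, and the reason the exponential factor appears, is that the surviving $\abs{\ho}^2$ factors are no longer merely bounded by $C_0 \abs{H}^{2-\delta}$ (as in the Euclidean case) but by Theorem \ref{t: pinching improves} satisfy
\begin{equation*}
\abs{\ho}^2 \leq C_0 (\abs{H}^2+\bar K)^{1-\sigma_0} e^{-\delta_0 t}.
\end{equation*}
Thus every term of the form $e^{\delta_0 t/2}\abs{\ho}^2 P(\abs{H},\bar K)$ carries a net factor $e^{-\delta_0 t/2}(\abs{H}^2+\bar K)^{1-\sigma_0} P(\abs{H},\bar K)$, which after a Young-inequality split into $\epsilon\abs{H}^6$ plus a controlled remainder can be absorbed by the good $-\tfrac{4\eta}{n}\abs{H}^6$ term (consuming the time-differentiation $\tfrac{\delta_0}{2}e^{\delta_0 t/2}\abs{\nabla H}^2$ via the already-present $\abs{\nabla H}^2$ coefficients by choosing $N_2$ slightly larger). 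A routine maximum principle argument then yields $g \leq \max_{\Sigma_0} g$, and rearranging gives the claimed estimate with $C_\eta := (\max_{\Sigma_0} g + C)/\eta$ (after discarding the non-negative $(N_1+N_2\abs{H}^2)\abs{\ho}^2 e^{\delta_0 t/2}$ term on the left).

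The main obstacle is bookkeeping the additional $\bar K$-terms: in particular, the $A_2\bar K\abs{\nabla H}^2$ contribution and the positive $4n\bar K\abs{H}^4$ term from $\tfrac{\p}{\p t}\abs{H}^4$ work against the sign of the Euclidean analogue. Controlling them requires using $\bar K\abs{\nabla H}^2 \leq \tfrac{1}{2}\bar K^2\abs{\nabla H}^2/\abs{H}^2 + \tfrac{1}{2}\abs{H}^2\abs{\nabla H}^2$ and absorbing the $\bar K^2/\abs{H}^2$ piece into the large additive constant $C_\eta$ (after another split via the Pinching Lemma)—this is where the parameter $\delta_0$ must be chosen smaller than the gain from $N_2$, so that the extra exponential factor $e^{\delta_0 t/2}\cdot \bar K \abs{\nabla H}^2$ does not dominate.
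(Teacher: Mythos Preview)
Your approach is essentially the paper's: form $g = e^{\delta_0 t/2}\bigl(\abs{\nabla H}^2 + (N_1+N_2\abs{H}^2)\abs{\ho}^2\bigr) - \eta\abs{H}^4$, absorb all bad gradient terms using the negative $\abs{H}^2\abs{\nabla h}^2$ and $\abs{\nabla h}^2$ contributions from the evolution of $(N_1+N_2\abs{H}^2)\abs{\ho}^2$, and then use the exponential decay of $\abs{\ho}^2$ from Theorem \ref{t: pinching improves} together with Young's inequality to reduce to $\partial_t g \leq \Delta g + c\,e^{-\delta_0 t/2}$.

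Your closing paragraph, however, misidentifies the obstacle. The $+4n\bar K\abs{H}^4$ term has the \emph{good} sign: since you subtract $\eta\abs{H}^4$ in $g$, a positive term in the lower bound for $\partial_t\abs{H}^4$ contributes $-4n\eta\bar K\abs{H}^4 \leq 0$ to $\partial_t g$ and is simply discarded. As for the extra $\bar K$-order terms in $\partial_t\abs{\nabla H}^2$, the paper observes that the ambient curvature piece $\gp\bigl(\bar R(F_*\partial_k,H)H,\nabla_k H\bigr)$ vanishes identically in a sphere, so the net reaction is just $A\abs{H}^2\abs{\nabla h}^2 + B\abs{\nabla h}^2$; the new $B\abs{\nabla h}^2$ is absorbed by taking $N_1$ larger. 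Similarly the time-differentiation term $\tfrac{\delta_0}{2}e^{\delta_0 t/2}\abs{\nabla H}^2$ is absorbed into the same negative gradient terms via $\abs{\nabla h}^2 \geq \tfrac{3}{n+2}\abs{\nabla H}^2$. No Peter--Paul splitting of $\bar K\abs{\nabla H}^2$ and no constraint linking $\delta_0$ to $N_2$ is needed; $\delta_0$ is fixed by Theorem \ref{t: pinching improves} and $N_1,N_2$ are chosen afterward.
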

We begin by deriving an evolution equation for $\abs{ \nabla H }^2$.
\begin{prop}
We have the evolution equation
\begin{equation}
	\begin{split}\label{e: evol nabla H}
	\frac{ \p }{ \p t }\abs{ \nabla H }^2 &= \Delta\abs{ \nabla H }^2 - 2\abs{ \nabla^2 H }^2 + 2\gp \big( \Rp(\p_k, \p_t)H - \nabla_p\big( \Rp(\p_k, \p_p)H \big) + \Rp(\p_k, \p_p)\nabla_p H \\
	&\quad + Rc_{pk}\nabla_p H + H \cdot h_{pq}h_{pq} + n \bar K H, \nabla_k H \big).
	\end{split}
\end{equation}
\end{prop}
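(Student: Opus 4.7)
The plan is to follow the template of the analogous Euclidean computation carried out in the previous chapter, adjusting for the two contributions that come from the non-flat ambient space: an additional reaction term $n\bar K H$ in the evolution of $H$, and the fact that the curvature of $N$ could, in principle, interfere with the commutation of space and time derivatives (though for a symmetric space $\bar\nabla \bar R = 0$, so no derivative-of-curvature terms survive). Since we have already assembled the needed commutation identities in the previous chapter, the proof is pure bookkeeping.

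First I would differentiate $|\nabla H|^2 = \gp(\nabla_k H, \nabla_k H)$ in time and commute $\nabla_{\partial_t}$ past $\nabla_k$ using the timelike Ricci identity \eqref{eq:timelikeRicci}, giving
\begin{equation*}
\frac{\p}{\p t}|\nabla H|^2 = 2\gp(\nabla_k \nabla_t H, \nabla_k H) + 2\gp(\Rp(\p_k,\p_t)H, \nabla_k H).
\end{equation*}
Next, tracing the reaction--diffusion equation \eqref{eq:evol_h_in_N} and computing the ambient-curvature contribution $\pip\bar R(F_*\cdot,\iota H)F_*\cdot$ for the sphere via \eqref{e: curv tensor of sphere}, one obtains
\begin{equation*}
\nabla_t H = \Delta H + H \cdot h_{pq} h_{pq} + n\bar K H,
\end{equation*}
which is exactly the Euclidean formula \eqref{eq:evolveH} augmented by $n\bar K H$. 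Applying $\nabla_k$ and substituting produces the three contributions $H \cdot h_{pq} h_{pq}$, $n\bar K H$, and $\Delta H$ inside the inner product with $\nabla_k H$.

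It then remains to convert $\gp(\nabla_k\Delta H, \nabla_k H)$ into $\tfrac12\Delta|\nabla H|^2 - |\nabla^2 H|^2$ plus curvature error terms. For this I would apply the commutation formula already invoked in the proof of Proposition \ref{p: commute time and kth covariant derivative of h}, namely
\begin{equation*}
\Delta \nabla_k H = \nabla_k \Delta H + \nabla_p(\Rp(\p_k,\p_p)H) + \Rp(\p_k,\p_p)\nabla_p H + Rc_{pk}\nabla_p H,
\end{equation*}
in conjunction with the identity $\Delta|\nabla H|^2 = 2\gp(\Delta \nabla_k H, \nabla_k H) + 2|\nabla^2 H|^2$. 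Rearranging for $\gp(\nabla_k\Delta H, \nabla_k H)$ and grouping all the remaining terms into a single inner product against $\nabla_k H$ yields the stated formula.

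The only mild obstacle is sign-tracking. The paper's curvature convention in Section~2 of the first chapter is opposite to the usual geometric one, so care is needed when reading off $\Rp(\p_k,\p_t)H$ from \eqref{eq:timelikeRicci} and when distributing $\nabla_p$ across $\Rp(\p_k,\p_p)H$; in particular the Ricci-type contribution from $R(\p_k,\p_p)\p_p$ must be consolidated with $Rc_{pk}\nabla_p H$. No pinching hypothesis, no estimate, and no new ingredient beyond the machinery of Chapters~1--3 is needed, and one does not use $\bar\nabla\bar R = 0$ anywhere except to note that terms of that shape vanish automatically.
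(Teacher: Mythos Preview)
Your proposal is correct and follows essentially the same approach as the paper: differentiate $|\nabla H|^2$ in time, commute $\nabla_t$ past $\nabla_k$ via the timelike Ricci equation, substitute the spherical evolution $\nabla_t H = \Delta H + H\cdot h_{pq}h_{pq} + n\bar K H$, and then swap $\nabla_k\Delta H$ for $\Delta\nabla_k H$ using the stated commutation formula together with $\Delta|\nabla H|^2 = 2\gp(\Delta\nabla_k H,\nabla_k H)+2|\nabla^2 H|^2$. The paper's proof is exactly this computation, presented more tersely.
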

\begin{proof}
We compute
	\begin{align}
		\notag \frac{ \p }{ \p t }\abs{ \nabla H }^2 &= \frac{\p}{\p t} \left\langle\nabla_k H, \nabla_k H\right\rangle \\
		\notag &= 2\gp (  \nabla_t\nabla_k H, \nabla_k H ) \\
		\notag &= 2\gp\big( \nabla_k\nabla_t H + \Rp(\p_k, \p_t)H, \nabla_k H \big) \\
		&= 2\gp\big( \nabla_k(\Delta H + H \cdot h_{pq}h_{pq} + n \bar K H ) + \Rp(\p_k, \p_t)H, \nabla_k H \big). \label{e: gradient est 1}
	\end{align}
The proposition now follows after the use of following two identities:
	\begin{align*}
		\Delta\abs{ \nabla H }^2 &= 2\gp( \Delta\nabla_k H, \nabla_k H) + 2\abs{ \nabla^2 H }^2 \\
		\Delta\nabla_k H &= \nabla_k\Delta H + \nabla_p\big( \Rp(\p_k, \p_p)H \big) + \Rp(\p_k, \p_p)\nabla_p H + Rc_{pk}\nabla_p H.
	\end{align*}
\end{proof}
\begin{cor}
There exist constants $A$ and $B$, depending only on $\Sigma_0$, such that we have the estimate
	\begin{equation*}
		\frac{ \p }{ \p t }\abs{ \nabla H }^2 \leq \Delta\abs{ \nabla H }^2 + A\abs{ H }^2\abs{ \nabla h }^2 + B\abs{ \nabla h }^2.
	\end{equation*}
\end{cor}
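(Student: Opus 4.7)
The strategy is to bound each of the reaction terms on the right-hand side of \eqref{e: evol nabla H} using the Gauss--Ricci--Codazzi identities for an ambient sphere, the Pinching Lemma of Section \ref{s: Curvature pinching is preserved}, and the basic gradient inequality $\abs{\nabla H}^2 \leq \tfrac{n+2}{3}\abs{\nabla h}^2$ from \eqref{eqn: basic grad est 1 sphere}. First I would discard the non-positive term $-2\abs{\nabla^2 H}^2$. Since the ambient space has constant curvature the connection is parallel, so $\bar\nabla\bar R\equiv 0$, and in the computations below the only ambient contributions are multiples of $\bar K$ coming from $\bar R=\bar K(g\owedge g)$ itself. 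By the Gauss and Ricci equations reviewed in Section \ref{s: The evolution equations in a sphere}, the curvature tensors $R$ and $\Rp$ are schematically of the form $h\ast h+\bar K$.

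I would then analyse each remaining term inside the inner product with $\nabla_k H$ using Hamilton's $\ast$ notation. The two direct contractions $\Rp(\p_k,\p_p)\nabla_pH$ and $Rc_{pk}\nabla_pH$ are of the form $(h\ast h+\bar K)\ast\nabla H$; the divergence $\nabla_p(\Rp(\p_k,\p_p)H)$ expands, via $\nabla\bar R=0$ and the product rule, as $h\ast\nabla h\ast H + (h\ast h+\bar K)\ast\nabla H$; and the gradient $\nabla_k(H\cdot h_{pq}h_{pq})+n\bar K\nabla_kH$ contributes $h\ast h\ast \nabla H + H\ast h\ast\nabla h + \bar K\nabla H$. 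For the remaining timelike piece $\Rp(\p_k,\p_t)H$ I would apply identity \eqref{eq:timelikeRicci} with $F_*\partial_t=\iota H$: since $F_*v$ is tangential while $\iota H$ is normal, every factor $\bar R(F_*\partial_t,F_*v,\iota\xi,\iota\eta)$ vanishes in an ambient sphere, leaving a term of schematic form $h\ast H\ast\nabla H$. No $\bar K\abs{H}^3$-type contribution therefore arises, which is the only small bookkeeping point of the proof.

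Finally, after pairing with $\nabla_kH$ each reaction term is dominated by some combination of $\abs{h}^2\abs{\nabla H}^2$, $\abs{H}\abs{h}\abs{\nabla h}\abs{\nabla H}$, and $\bar K\abs{\nabla H}^2$. Young's inequality in the form
\begin{equation*}
\abs{H}\abs{h}\abs{\nabla h}\abs{\nabla H}\leq \tfrac{1}{2}\abs{h}^2\abs{\nabla h}^2+\tfrac{1}{2}\abs{H}^2\abs{\nabla H}^2,
\end{equation*}
the Pinching Lemma $\abs{h}^2\leq\alpha_\varepsilon\abs{H}^2+\beta_\varepsilon\bar K$, and the Kato-type inequality $\abs{\nabla H}^2\leq\tfrac{n+2}{3}\abs{\nabla h}^2$ together convert each piece into an expression of the form $A\abs{H}^2\abs{\nabla h}^2+B\abs{\nabla h}^2$ for constants $A,B$ depending only on $n$, $\bar K$ and the initial pinching. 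Summing these estimates and recombining with $\Delta\abs{\nabla H}^2$ gives the stated inequality. The proof is essentially routine once the timelike Ricci identity is handled correctly; the main (modest) obstacle is tracking the $\bar K$-contributions through the algebra to confirm that no uncontrollable $\bar K\abs{H}^4$ or $\abs{h}^4$ term survives.
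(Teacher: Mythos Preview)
Your proposal is correct and follows essentially the same route as the paper: discard $-2\abs{\nabla^2 H}^2$, express the curvature terms schematically via the spatial Gauss and Ricci equations, observe that the ambient-curvature piece of the timelike Ricci term $\bar R(F_*\partial_t,F_*v,\iota\xi,\iota\eta)$ vanishes in the sphere (the paper phrases this as the vanishing of $\gp(\bar R(F_*\p_k,\iota H)H,\nablap_k H)$, which is the same computation), and then use the Pinching Lemma together with $\abs{\nabla H}^2\leq\tfrac{n+2}{3}\abs{\nabla h}^2$ to absorb everything into $A\abs{H}^2\abs{\nabla h}^2+B\abs{\nabla h}^2$. One minor correction: in a sphere $\bar R_{ij\alpha\beta}=0$, so the spatial normal curvature is purely $\Rp=h\ast h$ with no $\bar K$ contribution---your over-counting there is harmless, but worth noting.
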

\begin{proof}
We estimate the reaction terms of \eqref{e: evol nabla H}.  Using the spacelike Gauss and Ricci equations, all the reaction terms except the first one look like $h^{*2} * \nabla H^{*2}$ and $\nabla H^{*2}$.  We need to use the timelike Ricci equation to estimate the first reaction term, however simple estimation of the ambient curvature term in the timelike Ricci equation gives rise to a term that looks like $H^{*2} * \nabla H$.  A closer inspection of this term shows that in fact it is zero:
\begin{align*}
	\gp\big( \bar{R}(F_*\p_k, F_*\p_t)H, \nablap_k H \big) &= \gp\big( \bar{R}(F_*\p_k, H)H, \nablap_k H \big) \\
	&= \langle F_*\p_k, H \rangle \langle H, \nablap_k H  \rangle -  \langle F_*\p_k, \nablap_k H \rangle \langle H, H \rangle \\
	&= 0.
\end{align*}
All the reaction terms now look like $h^{*2} * \nabla H^{*2}$ and $\nabla H^{*2}$, which we can estimate by $A\abs{ H }^2\abs{ \nabla h }^2$ and  $B\abs{ \nabla h }^2$ using the Pinching Lemma the Cauchy-Schwarz inequality.
\end{proof}
We need two more estimate to complete the proof.
\begin{prop}
We have the estimates
\begin{align}
	&\frac{ \p }{ \p t } \abs{ H }^4 \geq \Delta\abs{ H }^2 - 12\abs{H}^2\abs{ \nabla H }^2 + \frac{4}{n}\abs{H}^6 \label{e: evol H4} \\
	\begin{split}
		&\frac{ \p }{ \p t } \big( (N_1 + N_2\abs{H}^2)\abs{ \ho }^2 \big) \leq \Delta\big( (N_1 + N_2\abs{H}^2)\abs{ \ho }^2 \big) - \frac{ 4(n-1) }{ 3n }(N_2 - 1)\abs{ H }^2\abs{ \nabla h }^2 \\
		&\quad- \frac{ 4(n-1) }{ 3n }(N_1 - c_1(N_2))\abs{ \nabla h }^2 - c_2(N_1, N_2)\abs{ \ho }^2( \abs{ H }^4 + 1 ). \label{e: evol N}
	\end{split}
\end{align}
\end{prop}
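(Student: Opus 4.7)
The plan is to derive each inequality directly from the evolution equations of $\abs{h}^2$ and $\abs{H}^2$ established in Section \ref{s: The evolution equations in a sphere}, mirroring the argument in the Euclidean case but keeping track of the extra ambient curvature terms, which all turn out to be favourable (i.e.\ either non-negative or small and absorbable).

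For \eqref{e: evol H4}, I would start from the identity $\tfrac{\p}{\p t}\abs{H}^4 = 2\abs{H}^2\tfrac{\p}{\p t}\abs{H}^2$ and substitute \eqref{e: evol H2 sphere 2}. Using $\Delta\abs{H}^4 = 2\abs{H}^2\Delta\abs{H}^2 + 2\bigl|\nabla\abs{H}^2\bigr|^2$ together with the Kato-type inequality $\bigl|\nabla\abs{H}^2\bigr|^2 \leq 4\abs{H}^2\abs{\nabla H}^2$, one rewrites the Laplacian term as $\Delta\abs{H}^4$ at the cost of a $-8\abs{H}^2\abs{\nabla H}^2$ error. Combined with the $-4\abs{H}^2\abs{\nabla H}^2$ arising directly from $-2\abs{\nabla H}^2$ this yields the claimed $-12\abs{H}^2\abs{\nabla H}^2$. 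Next, $R_2 \geq \tfrac{1}{n}\abs{H}^4$ gives $4R_2\abs{H}^2 \geq \tfrac{4}{n}\abs{H}^6$, and the leftover ambient term $4n\bar K\abs{H}^4$ is non-negative and is simply discarded. (Note that the Laplacian on the right-hand side should read $\Delta\abs{H}^4$.)

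For \eqref{e: evol N}, I would apply the product rule in time and in space to the quantity $(N_1+N_2\abs{H}^2)\abs{\ho}^2$, using the evolution equations \eqref{e: evol H2 sphere 2} and the one for $\abs{\ho}^2$ derived by subtracting $\tfrac{1}{n}$ times \eqref{e: evol H2 sphere 2} from \eqref{e: evol h2 sphere 1}. The Laplacian produces a cross term $-2N_2\bigl\langle \nabla_i\abs{H}^2,\nabla_i\abs{\ho}^2\bigr\rangle$ which I would estimate via Cauchy--Schwarz and Young as
\[
-2N_2\bigl\langle \nabla_i\abs{H}^2,\nabla_i\abs{\ho}^2\bigr\rangle \leq 8N_2\abs{H}\abs{\ho}\abs{\nabla H}\abs{\nabla h} \leq \tfrac{4(n-1)}{3n}\abs{H}^2\abs{\nabla h}^2 + c_1(N_2)\abs{\ho}^2\abs{\nabla h}^2,
\]
where the key input is the pointwise estimate $\abs{\ho}^2 \leq C_0(\abs{H}^2+\bar K)^{1-\sigma_0}$ from Theorem \ref{t: pinching improves}, which lets the second factor be absorbed into $c_1(N_2)\abs{\nabla h}^2$ after using the Pinching Lemma to bound $\abs{\ho}^2$ by a constant times a small negative power of $\abs{H}$ when $\abs{H}$ is large, and by a constant otherwise. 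The genuine diffusive gradient contribution $-2(N_1+N_2\abs{H}^2)\bigl(\abs{\nabla h}^2 - \tfrac{1}{n}\abs{\nabla H}^2\bigr)$ is then estimated from below using \eqref{eqn: basic grad est 2}, yielding the $-\tfrac{4(n-1)}{3n}(N_2-1)\abs{H}^2\abs{\nabla h}^2$ and $-\tfrac{4(n-1)}{3n}(N_1-c_1(N_2))\abs{\nabla h}^2$ terms.

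Finally, the reaction terms are grouped as $2N_2 R_2 \abs{\ho}^2 + 2(N_1+N_2\abs{H}^2)(R_1 - \tfrac{1}{n}R_2)$ together with the ambient contributions $2\bar K\abs{H}^2 \cdot \tfrac{1}{\text{stuff}} - 2n\bar K\abs{\ho}^2$ coming from \eqref{e: evol h2 sphere 1} and its $\tfrac{1}{n}$-multiple of \eqref{e: evol H2 sphere 2}. Using $R_2\leq \abs{h}^2\abs{H}^2$, the algebraic bound $R_1 - \tfrac{1}{n}R_2 \leq 2\abs{\ho}^2\abs{h}^2$, and the Pinching Lemma to bound $\abs{h}^2$ by a constant times $\abs{H}^2 + \bar K \leq C(\abs{H}^2 + 1)$ (as $\bar K$ is a fixed constant), all of these collapse into a single term of the form $c_2(N_1,N_2)\abs{\ho}^2(\abs{H}^4+1)$, completing the proof. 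The main obstacle is purely bookkeeping---extracting the factor $\tfrac{4(n-1)}{3n}$ sharp enough from the cross term so that the coefficient $(N_2 - 1)$ (rather than something weaker) appears, which is what permits the gradient terms in Proposition \ref{prop: interior-in-time higher derivative estimates} to be absorbed later.
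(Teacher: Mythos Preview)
Your approach is essentially the same as the paper's: both derive \eqref{e: evol H4} from the product rule, the Kato inequality, $R_2\geq \tfrac{1}{n}\abs{H}^4$, and discarding the non-negative ambient term $4n\bar K\abs{H}^4$; and both obtain \eqref{e: evol N} by the product rule, estimating the cross term via Cauchy--Schwarz, the improved pinching Theorem~\ref{t: pinching improves}, and Young's inequality, then handling the diffusive gradient by \eqref{eqn: basic grad est 2} and the reaction terms by $R_2\leq\abs{h}^2\abs{H}^2$ and $R_1-\tfrac{1}{n}R_2\leq 2\abs{\ho}^2\abs{h}^2$.

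One small slip: in your displayed Young inequality the right-hand side should read $\tfrac{4(n-1)}{3n}\abs{H}^2\abs{\nabla h}^2 + c_1(N_2)\abs{\nabla h}^2$, without the extra $\abs{\ho}^2$ factor. The point (which your prose does indicate) is that one first replaces $\abs{\ho}$ by $C_0^{1/2}(\abs{H}^2+\bar K)^{(1-\sigma_0)/2}$ via Theorem~\ref{t: pinching improves}, obtaining a term of order $\abs{H}^{2-\sigma_0}\abs{\nabla h}^2$, and \emph{then} applies Young to split off $\epsilon\abs{H}^2\abs{\nabla h}^2$ plus a constant multiple of $\abs{\nabla h}^2$. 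Also note that the last reaction term in the statement should carry a $+c_2$ rather than $-c_2$ (a sign typo in the paper), consistent with your reading of it as an upper bound.
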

\begin{proof}
The evolution equation for $\abs{H}^4$ is easily derived from that of $\abs{H}^2$:
\begin{equation*}
	\frac{ \p }{ \p t } \abs{ H }^4 = \Delta\abs{ H }^2 - 2\abs{ \nabla \abs{H}^2 }^2 - 4\abs{H}^2\abs{ \nabla H }^2 + 4R_2\abs{H}^2 + 4n \bar K\abs{H}^4.
\end{equation*}
We discard the last term and the proposition follows from the use of $\abs{ \nabla\abs{H} }^2 \leq \abs{ \nabla H }^2$ and \\ $R_2 \geq 1/n \abs{H}^4$.  To prove \eqref{e: evol N}, from the evolution equations for $\abs{h}^2$ and $\abs{H}^2$ we derive
\begin{align*}
	&\frac{ \p }{ \p t } \big( (N_1 + N_2\abs{H}^2)\abs{ \ho }^2 \big) \\
	&\quad= \Delta\big( (N_1 + N_2\abs{H}^2)\abs{ \ho }^2 \big) - 2N_2 \big\langle \nabla_i \abs{H}^2, \nabla_i \abs{ \ho }^2 \big\rangle - 2N_2\abs{ \ho }^2\abs{ \nabla h }^2 + 2N_2R_2\abs{ \ho }^2  \\
	&\quad- 2(N_1 + N_2\abs{ H }^2)( \abs{ \nabla h }^2 - \frac{1}{n}\abs{ \nabla H }^2 ) + 2(N_1 + N_2\abs{ H }^2 )(R_1 - \frac{1}{n} R_2) - 2n \bar KN_1\abs{ \ho }^2.
\end{align*}
We estimate the second term on the right as follows:
\begin{align*}
	-2N_2 \big\langle \nabla_i \abs{H}^2, \nabla_i \abs{ \ho }^2 \big\rangle &\leq 8N_2\abs{h}\abs{ \ho }\abs{ \nabla H }\abs{ \nabla h } \\
	&\leq 8N_2\abs{ H }\sqrt{n}\abs{ \nabla h }^2 C_0( \abs{H}^2 + \bar K )^{(1-\sigma)/2} \\
	&\leq \frac{ 4(n-1) }{3n}\abs{H}^2\abs{ \nabla h }^2 + c_1(N_2)\abs{ \nabla h }^2.
\end{align*}
Using Young's inequality, $R_2 \leq \abs{h}^2\abs{H}^2$, and $R_1 - 1/n \, R_2 \leq 2\abs{ \ho }^2\abs{ h }^2$ we estimate
\begin{equation*}
	2N_2R_2\abs{ \ho }^2  + 2(N_1 + N_2\abs{ H }^2 )(R_1 - \frac{1}{n} R_2) \leq c_2(N_1, N_2)\abs{ \ho }^2(\abs{ H }^4 + 1).
\end{equation*}
The constants depend on more that just $N_1$ and $N_2$, however we only highlight the dependence on $N$ as this is relevant in the following proof.  We discard the last term on the right, and equation \eqref{e: evol N} now follows.
\end{proof}
\begin{proof}[Proof of Theorem \ref{t: grad est}]
Consider $f := \abs{ \nabla H }^2 + (N_1 + N_2\abs{H}^2)\abs{ \ho }^2$.  From the evolution equations derived above we see $f$ satisfies
\begin{align*}
	\frac{ \p }{ \p t } f &\leq \Delta f + A\abs{H}^2\abs{ \nabla h }^2 + B\abs{ \nabla h }^2 + \frac{ 4(n-1) }{ 3n }(N_2 - 1)\abs{ H }^2\abs{ \nabla h }^2 \\
	&\quad + \frac{ 4(n-1) }{ 3n }(N_1 - c_1(N_2))\abs{ \nabla h }^2 + c_2(N_1, N_2)\abs{ \ho }^2 ( \abs{ H }^4 + 1 ).
\end{align*}
We choose $N_1$ and $N_2$ large enough to consume the positive terms arising from the evolution equation for $\abs{ \nabla H }^2$.  This leaves
\begin{equation*}
	\begin{split}
	\frac{ \p }{ \p t } f &\leq \Delta f + \frac{ 4(n-1) }{ 3n }(N_2 - 1)\abs{ H }^2\abs{ \nabla h }^2 - \frac{ 4(n-1) }{ 3n }(N_1 - c_3(N_2))\abs{ \nabla h }^2 \\
	&\quad + c_4(N_1, N_2)\abs{ \ho }^2 ( \abs{ H }^4 + 1 ). \end{split}
\end{equation*}
Now consider $g := e^{(\delta_0/2) t} f - \eta\abs{H}^4$.  From the above evolution equations we have
\begin{align*}
	&\frac{ \p }{ \p t } ( e^{(\delta_0/2) t} f - \eta\abs{H}^4 ) \\
	&\quad \leq \frac{ \delta_0 }{ 2 }e^{ (\delta_0/2) t } \big( \abs{ \nabla H }^2 + (N_1 + N_2\abs{H}^2)\abs{ \ho }^2 \big) \\
	&\quad \quad + e^{(\delta_0/2) t}  \big( \Delta f + \frac{ 4(n-1) }{ 3n }(N_2 - 1)\abs{ H }^2\abs{ \nabla h }^2 - \frac{ 4(n-1) }{ 3n }(N_1 - c_3(N_2))\abs{ \nabla h }^2 \\
	&\quad \quad + c_4(N_1, N_2)\abs{ \ho }^2 ( \abs{ H }^4 + 1 ) \big) - \eta \big( \Delta\abs{ H }^2 - 12 \abs{H}^2\abs{ \nabla H }^2 + \frac{4}{n}\abs{H}^6 \big).
\end{align*}
The terms on the first line can be absorbed into those on the second line by suitable estimation.  By choosing $N_2$ sufficiently large the gradient term on the last line can be absorbed, and then we choose $N_1$ larger again to make the $\abs{ \nabla h }^2$ term negative.  We finally discard the negative gradient terms to get
\begin{equation*}
	\frac{ \p }{ \p t }g \leq \Delta g + c_5e^{(\delta_0/2) t}\abs{ \ho }^2\abs{H}^4 - \frac{4\eta}{n}\abs{H}^6.
\end{equation*}
Using Theorem \ref{t: pinching improves} then Young's inequality we obtain
\begin{equation*}
	\frac{ \p }{ \p t }g \leq \Delta g + c_6e^{-(\delta_0/2) t}
\end{equation*}
from which we conclude $g \leq c_7$.  The gradient estimate now follows from the definition of $g$.
\end{proof}

\section{Asymptotic behaviour of the solution}\label{s: Asymptotic behaviour of the solution}
In this final section we study the long time behaviour of the solution.  Two limit profiles are possible, determined by whether or not the mean curvature blows up.  We first examine the case where the mean curvature becomes unbounded.  We do this by using the gradient estimate and Bonnet's Theorem to compare the submanifold at different points.  In the case of a spherical background, the Chen's estimate combined with our pinching condition gives
\begin{equation}
	K_{\text{min}}(x) \geq \frac{1}{2}\Big( \frac{1}{n-1} - \alpha_{\epsilon} \Big)\abs{ H }^2(x) + \frac{ (2 - \beta_{\epsilon})\bar{K} }{ 2 }. \label{e: Kmin pos}
\end{equation}

\begin{thm}
If $\abs{H}_{ \text{max} } \rightarrow \infty$ as $t \rightarrow T$, then $T$ must be finite and $\diam \Sigma_t \rightarrow 0$ as $t \rightarrow T$.
\end{thm}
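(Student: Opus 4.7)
The plan is to adapt the Euclidean argument of Theorem \ref{t: diam goes to zero}, replacing the evolution-of-$\abs{F}^2$ step (which is unavailable in a spherical background) with an ODE comparison for $\abs{H}_{\max}^2$, and then combining Theorem \ref{t: grad est}, the Chen-style lower bound \eqref{e: Kmin pos} and Bonnet's theorem exactly as before. The positive background curvature will only help.

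First I would prove that $T<\infty$. From the evolution equation \eqref{e: evol H2 sphere 2}, discarding the non-positive terms $\Delta\abs{H}^2$ and $-2\abs{\nabla H}^2$ and using $R_2\geq\tfrac1n\abs{H}^4$, Hamilton's maximum principle for functions yields in the barrier sense
\begin{equation*}
\frac{d}{dt}\abs{H}_{\max}^2 \;\geq\; \frac{2}{n}\abs{H}_{\max}^4 \,+\, 2n\bar K\,\abs{H}_{\max}^2 .
\end{equation*}
Since by hypothesis $\abs{H}_{\max}(t)\to\infty$, there exists $t_0\in[0,T)$ with $\abs{H}_{\max}(t_0)>0$; integrating the separable ODE $y'=\tfrac{2}{n}y^2+2n\bar K y$ forward from $y(t_0)=\abs{H}_{\max}^2(t_0)$ shows it blows up in finite time, and this forces $T<\infty$.

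Next I would convert Theorem \ref{t: grad est} into a usable gradient bound. Since $T$ is now finite, $e^{-\delta_0 t/2}$ is bounded on $[0,T)$, so the estimate may be rewritten as $\abs{\nabla H}^2\leq \eta\abs{H}^4+C'_\eta$ for every $\eta>0$. Because $\abs{H}_{\max}(t)\to\infty$, there is $\tau(\eta)\in[0,T)$ with $C'_{\eta/2}\leq\tfrac12\eta\abs{H}_{\max}^4(t)$ for $t\in[\tau(\eta),T)$, giving $\abs{\nabla H}\leq \eta\abs{H}_{\max}^2$ on this interval. Chen's estimate together with our pinching yields \eqref{e: Kmin pos}, so that $K_{\min}(x)\geq\varepsilon^2\abs{H}^2(x)$ for some $\varepsilon>0$ depending only on $\Sigma_0$ (the extra $(2-\beta_\varepsilon)\bar K/2$ term is nonnegative and can be dropped).

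Finally, for each $\sigma\in(0,1)$ I would set $\eta=\frac{\varepsilon\sigma(1-\sigma)}{\pi}$. Fix $t\in[\tau(\eta),T)$ and let $x\in\Sigma_t$ realise $\abs{H}(x)=\abs{H}_{\max}(t)$. Along any geodesic of length at most $\frac{\pi}{\varepsilon\sigma\abs{H}_{\max}}$ issuing from $x$,
\begin{equation*}
\abs{H} \;\geq\; \abs{H}_{\max}-\frac{\pi}{\varepsilon\sigma\abs{H}_{\max}}\cdot\eta\abs{H}_{\max}^2 \;=\; \sigma\abs{H}_{\max},
\end{equation*}
and hence $K\geq\varepsilon^2\sigma^2\abs{H}_{\max}^2$ along all such geodesics. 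Bonnet's theorem then gives $\diam\Sigma_t\leq\frac{\pi}{\varepsilon\sigma\abs{H}_{\max}(t)}$, which tends to $0$ as $t\to T$. There is no serious obstacle: every step of the Euclidean proof carries over, and the only novelty is the ODE comparison that replaces the position-vector argument for establishing finiteness of $T$.
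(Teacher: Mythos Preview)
Your Bonnet--gradient argument for $\diam\Sigma_t\to 0$ is fine and matches the paper exactly. The gap is in your proof that $T<\infty$. You write that discarding $\Delta\abs{H}^2$ and $-2\abs{\nabla H}^2$ from \eqref{e: evol H2 sphere 2} yields
\[
\frac{d}{dt}\abs{H}_{\max}^2 \;\geq\; \frac{2}{n}\abs{H}_{\max}^4 + 2n\bar K\,\abs{H}_{\max}^2,
\]
but the inequality goes the wrong way. Hamilton's trick says that at a spatial maximum $\Delta\abs{H}^2\le 0$, so one obtains an \emph{upper} bound on $\tfrac{d}{dt}\abs{H}_{\max}^2$, not a lower one; and discarding the nonpositive term $-2\abs{\nabla H}^2$ likewise gives an upper bound. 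There is no direct ODE comparison forcing $\abs{H}_{\max}^2$ to blow up from its own evolution equation, precisely because the bad gradient term $-2\abs{\nabla H}^2$ sits there with the wrong sign.

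The paper handles this differently. It first runs your Bonnet argument (noting explicitly that $T$ might still be infinite at that stage) to get $\abs{H}_{\min}\geq\sigma\abs{H}_{\max}$, then uses Theorem~\ref{t: pinching improves} to conclude that after some large time the submanifold satisfies $\abs{h}^2<\tfrac{4}{3n}\abs{H}^2$. Only then does it prove $T<\infty$, via the auxiliary quantity $Q=\abs{H}^2-a\abs{h}^2-b(t)$ with $a=\tfrac{3n}{4}$. The point of this combination is that the gradient contribution $-2(\abs{\nabla H}^2-a\abs{\nabla h}^2)$ is \emph{nonnegative} by \eqref{eqn: basic grad est 1 sphere}, so the minimum principle applies to $Q$; one then shows $Q\geq 0$ is preserved provided $b'\leq \tfrac{8}{n}b^2$, and the resulting $b(t)$ blows up in finite time, forcing $T<\infty$. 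Your approach cannot be repaired without introducing some such combination that neutralises the $-2\abs{\nabla H}^2$ term.
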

\begin{proof}
From Theorem \ref{t: grad est}, we know that for any $\eta > 0$ there exists a constant $C_{\eta}$ such that $\abs{\nabla H} \leq \eta\abs{H}^{2} + C_{\eta}$ on $0 \leq t < T$.  We highlight that at this stage, $T$ could be infinite. Since by assumption $\abs{H}_{\text{max}} \rightarrow \infty$ as $t \rightarrow T$, there exists a $\tau(\eta)$ such that $C_{\eta/2} \leq 1/2\eta\abs{H}^{2}_{\text{max}}$ for all $\tau \leq t < T$.  Thus $\abs{\nabla H} \leq \eta\abs{H}^{2}_{\text{max}}$ for all $t \geq \tau$.  Fix some $\sigma \in(0,1)$ and set $\eta = \frac{\sigma(1-\sigma)\varepsilon}{\pi}$.  Let $t\in [\tau(\eta),T)$, and $x$ be a point with $\abs{H}(x)=\abs{H}_\text{max}$.  Along any geodesic of length $\frac{\pi}{\varepsilon\sigma H_\text{max}}$ from $x$, we have $\abs{H}\geq \abs{H}_\text{max}-\frac{\pi}{\varepsilon\sigma \abs{H}_\text{max}}\eta\abs{H}^2_\text{max}=\sigma\abs{H}_\text{max}$, and consequently the sectional curvatures satisfy $K\geq \varepsilon^2\sigma^2\abs{H}_\text{max}^2$. From Bonnet's Theorem it follows that $\diam \Sigma \leq \frac{\pi}{\varepsilon\sigma H_\text{max}}$, from which we conclude that $\abs{H}_\text{min}\geq \sigma\abs{H}_\text{max}$ on the whole of $\Sigma_t$ for $t\in [\tau(\eta),T)$.

The previous line shows that by choosing $\tau$ sufficiently large, $\abs{H}_{\text{min} }$ can be made arbitrarily large.  It follows from Theorem \ref{t: pinching improves} that after some sufficiently large time the submanifold is as pinched as we like (and in particular can be made to satisfy $\abs{h}^2 < 1/(n-1)\abs{H}^2$ in dimensions $n \geq 4$ and $\abs{h}^2 < 4/(3n)\abs{H}^2$ in dimensions $2 \leq n \leq 4$).  We now show that once the submanifolds are pinched as such, the maximal time of existence must be finite.  Define $Q=\abs{H}^2 - a\abs{h}^2 - b(t)$, where $a=\frac{3n}{4}$ and $b$ is some time-dependent function.  Because $\abs{ H }_{ \text{min} } > 0$ and the submanifolds are as pinched as we like, for some sufficiently large time $\tau$ we can choose a $b(\tau) = b_{\tau} >0$ such that $Q\geq 0$ for $t = \tau$.  The evolution equation for $\mathcal{Q}$ is
\begin{align*}
	\frac{\p}{\p t}Q &= \Delta Q - 2(\abs{\nabla H}^2 - a\abs{\nabla h}^2) + 2R_2 - 2aR_1 + 2(n-a)\bar{K}\abs{ \ho }^2 + 2an\bar{K}\abs{H}^2 - b'(t) \\
	&\geq \Delta Q - 2(\abs{\nabla H}^2 - a\abs{\nabla h}^2) + 2R_2 - 2aR_1 - b'(t).
\end{align*}
Estimating the reaction terms as before we obtain
\begin{align*}
&2R_2-2aR_1-b'(t) \\
	&\quad= \sum_{i,j} \Big( \sum_{\alpha} H_{\alpha}h_{ij\alpha} \Big)^2 - 2a\sum_{\alpha, \beta} \Big( \sum_{i,j} h_{ij\alpha}h_{ij\beta} \Big)^2 - 2a\abs{\Rp}^2 - b'(t) \\
	&\quad \geq  2\abs{\ho_1}^2(a\abs{\ho_1}^2 + a\abs{\ho_-}^2 + b) + \frac{2}{n(1-a/n)}(a\abs{\ho_-}^2 + b)(a\abs{\ho_1}^2 + a\abs{\ho_-}^2 + b) \\
	&\quad - 2a\abs{\ho_1}^4 - 8a\abs{\ho_1}^2\abs{\ho_-}^2 - 3a\abs{\ho_-}^4 - b'(t).
\end{align*}
Equating coefficients, we find $Q \geq 0$ is preserved if
$\frac{db}{dt} \leq \frac{8b^2}{n}$. We can therefore take
\begin{equation*}
	b(t) = \frac{nb_0}{n-8b_0(t - \tau)}.
\end{equation*}
This is unbounded as $t \rightarrow \tau +\frac{n}{8b_0}$, so we must have $T \leq \tau +\frac{n}{8b_0}$.

\end{proof}

Let us now consider the case where the mean curvature stays bounded for all time.
\begin{thm}
If $\abs{H}_{ \text{max} }$ remains bounded, then the flow exists for all time and $\Sigma_t$ converges to a totally geodesic submanifold $\Sigma_{\infty}$.
\end{thm}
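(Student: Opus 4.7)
The plan is to show that $\abs{H}$ decays exponentially to zero, from which smooth convergence to a totally geodesic limit follows by integration. First I would use the standing hypothesis $\abs{H}_{\max} \leq C$ together with the Pinching Lemma to obtain a uniform bound on $\abs{h}$. The higher-derivative estimates of Proposition \ref{p: evol length of the second fundamental form squared} carry over to the spherical background with minor modifications (the curvature terms $\bar R \ast h$ produced by Simons' identity are absorbed by Cauchy--Schwarz against the existing diffusion), and supply uniform bounds on every $\abs{\nabla^k h}$. The maximality argument of Theorem \ref{t: MCF exists on maximal time interval} then extends the flow smoothly to $[0,\infty)$.

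I would next invoke the pinching-improvement theorem (Theorem \ref{t: pinching improves}) and the gradient estimate (Theorem \ref{t: grad est}): under bounded $\abs{H}$ these yield $\abs{\ho}^2 \leq C_1 e^{-\delta_0 t}$ and $\abs{\nabla H}^2 \leq C_1 e^{-\delta_0 t/2}$. Chen's inequality combined with the strict pinching $\abs{h}^2 \leq \alpha_\varepsilon \abs{H}^2 + \beta_\varepsilon \bar K$ (available after an arbitrarily short time by the strong-maximum-principle argument at the end of Section 5.2) shows that the intrinsic sectional curvature is bounded below by $\varepsilon\bar K > 0$. Bonnet's theorem gives a uniform diameter bound $\diam(\Sigma_t) \leq D$, while Synge's theorem gives $\abs{\pi_1(\Sigma_t)} < \infty$; Klingenberg applied to the universal cover together with G\"unther's volume comparison then produces a uniform lower bound $\vol(\Sigma_t) \geq v_0 > 0$. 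Using $\babs{\nabla\abs{H}^2} \leq 2\abs{H}\abs{\nabla H}$ together with the diameter bound, integration along intrinsic geodesics yields $\abs{H}^2_{\max}(t) - \abs{H}^2_{\min}(t) \leq C_2 e^{-\delta_0 t/4}$.

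The crucial step, and the main obstacle, is to show that $\abs{H}$ itself decays. At a spatial minimum of $\abs{H}^2$ the reaction $R_2 \geq 0$, so by Hamilton's maximum principle the evolution equation \eqref{e: evol H2 sphere 2} together with the gradient decay yields
\begin{equation*}
\frac{d}{dt}\abs{H}^2_{\min}(t) \geq 2n\bar K\, \abs{H}^2_{\min}(t) - 2C_1 e^{-\delta_0 t/2}
\end{equation*}
in the sense of Dini derivatives. If for some large $t_0$ one had $\abs{H}^2_{\min}(t_0) > \tfrac{2C_1}{n\bar K} e^{-\delta_0 t_0/2}$, ODE comparison would force $\abs{H}^2_{\min}(t) \gtrsim e^{2n\bar K(t-t_0)}$, contradicting $\abs{H}_{\max}\leq C$. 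Hence $\abs{H}^2_{\min}(t) \leq C_3 e^{-\delta_0 t/2}$ for $t$ large, and combining with the oscillation bound gives $\abs{H}^2_{\max}(t) \leq C_4 e^{-\delta_0 t/4}$. It is here that the positivity of $\bar K$, absent in the Euclidean chapter, is decisive: the ambient curvature forces $\abs{H}^2$ to grow wherever it is not already small, so under our boundedness hypothesis it cannot persist.

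Finally, for smooth convergence and identification of the limit: since $\babs{\p_t F} = \abs{H} \leq C e^{-\delta_0 t/8}$ is integrable in time, the maps $F(\cdot,t)$ are uniformly Cauchy and converge uniformly to some continuous $F_\infty : \Sigma \to \mathbb{S}^{n+k}$. An induction on $k$, analogous to the argument for Proposition \ref{l: integral normalised higher deriv}, uses the evolution equations for $\nabla^k h$ together with the exponential decay of $\abs{\ho}$, $\abs{\nabla H}$ and $\abs{H}$ to show that every $\abs{\nabla^k h}$ decays exponentially; hence $F(\cdot,t) \to F_\infty$ in $C^\infty$ and $\Sigma_\infty := F_\infty(\Sigma)$ is a smooth immersed submanifold. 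Because $\abs{h}^2 \leq \abs{\ho}^2 + \tfrac{1}{n}\abs{H}^2 \to 0$ uniformly, $\Sigma_\infty$ has $h \equiv 0$ and is therefore totally geodesic.
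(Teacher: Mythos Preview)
Your argument is correct, and the overall structure matches the paper: exponential decay of $\abs{\ho}^2$ and $\abs{\nabla H}^2$, a Bonnet diameter bound, integration along geodesics for the oscillation of $\abs{H}$, then exponential decay of $\abs{h}^2$ and smooth convergence. The one substantive difference is the step where you force $\abs{H}_{\min}$ to decay.

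The paper does this by contradiction with the finite-time case: once $\abs{\ho}^2$ is small, if $\abs{H}_{\min}>0$ then the barrier $Q=\abs{H}^2-\tfrac{3n}{4}\abs{h}^2-b(t)$ from the preceding theorem becomes nonnegative with $b_\tau>0$, and its evolution forces $b(t)\to\infty$ in finite time, contradicting $T=\infty$. You instead apply the minimum principle directly to the evolution equation \eqref{e: evol H2 sphere 2}: at a spatial minimum $\Delta\abs{H}^2\geq 0$ and $R_2\geq 0$, so
\[
\frac{d^-}{dt}\abs{H}^2_{\min}\;\geq\;2n\bar K\,\abs{H}^2_{\min}-2C_1e^{-\delta_0 t/2},
\]
and ODE comparison shows $\abs{H}^2_{\min}$ must stay below an exponentially decaying threshold or else blow up. This is a genuinely different and somewhat cleaner route: it is self-contained, avoids invoking the $Q$-barrier machinery, and makes explicit the role of the term $2n\bar K\abs{H}^2$ coming from the ambient curvature. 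The paper's route, on the other hand, ties the two cases of the dichotomy together more tightly and makes clear that the obstruction to long-time existence is precisely the finite-time mechanism.

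Two minor remarks. The Synge--Klingenberg--G\"unther paragraph establishing a uniform volume lower bound is not used anywhere downstream in your argument and can be omitted. Also, the label you cite for the higher-derivative estimates points to the evolution equation for $\abs{\nabla^m h}^2$ rather than to the interior-in-time estimate (Proposition~\ref{prop: interior-in-time higher derivative estimates}); the latter is what you need.
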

\begin{proof}
Since $\abs{H}_{\text{max}}$ is bounded, from Theorem \ref{t: pinching improves} and Theorem \ref{t: grad est} we have the estimates
\begin{equation}\label{e: H bounded est}
	\abs{ \ho }^2 \leq C_0e^{-\delta_0 t}
\end{equation}
and
\begin{equation}\label{e: grad H bounded est}
	\abs{ \nabla H }^2 \leq Ce^{-(\delta_0/2) t}.
\end{equation}
From \eqref{e: Kmin pos} we know that smallest sectional curvature is positive, so from Bonnet's Theorem it follows that the diameter of $\Sigma_t$ is bounded.  Using this fact, integrating the second estimate of \eqref{e:  grad H bounded est} along geodesics gives
\begin{equation}\label{e: Hmin - Hmax bounded}
	\abs{H}_{ \text{max} } - \abs{H}_{ \text{min} } \leq Ce^{-(\delta_0/2) t}.
\end{equation}
Now observe that if the time of existence is infinite, then $\abs{H}_{ \text{min} }$ must remain zero:  From \eqref{e: H bounded est} it follows that after sufficiently a long time the submanifolds are again as pinched as we like, and if $\abs{H}_{ \text{min} } > 0$, then the same argument just given in the previous case would show that $T$ must be finite.  Therefore we must have $\abs{H}_{ \text{min} } = 0$.  From equation \eqref{e: Hmin - Hmax bounded} it now follows that $\abs{H}_{ \text{max} } \leq Ce^{-(\delta_0/2) t}$.  Thus $\abs{H}^2$ decays exponentially and consequently 
\begin{equation*}
	\abs{ h }^2 \leq C_0e^{-\delta_0 t}.
\end{equation*}
We now have all the necessary estimates in place to repeat the convergence arguments of the previous chapter to obtain smooth exponential convergence of the submanifolds to a totally geodesic submanifold.
\end{proof}

\chapter{A partial classification of type I singularities}

In Chapter \ref{ch: The flow of submanifolds of Euclidean space} we show that if a submanifold satisfies a suitable pinching condition, then the mean curvature flow evolves the submanifold to round point in finite time. In this chapter we relax the pinching of the initial submanifold and seek to understand the asymptotic shape of the evolving submanifold as we approach the maximal time of existence.  We still assume that $\abs{ H }$ is everywhere positive initially, however having relaxed the pinching assumption, we no longer necessarily expect the entire submanifold to disappear at the maximal time.  In the case of mean-convex hypersurfaces, a classification of type I singularities was achieved by Huisken in \cite{gH90a} and \cite{gH90b}.  A key ingredient in this analysis was Huisken's monontoncity formula, introduced in \cite{gH90a}, which also holds in arbitrary codimension.  The singularities classified by Husiken in \cite{gH90a} are a special kind of type I singularity called a `special' type I singularity.  The more general kind of singularity is naturally called a `general' type I singularity and in order to have a complete understanding of type I singularity formation, it is desirable to be able to treat general singularities (definitions of the various kinds of singularities follow).  In the case of embedded hypersurfaces, the classification of general type I singularities is due to Stone \cite{aS94}.

Here we follow \cite{gH90a} and \cite{gH90b} to give a partial classification of special type I singularities of the mean curvature flow in high codimension. Instead of using the continuous rescaling argument used in \cite{gH90a}, we proceed slightly differently by considering a sequence of parabolically rescaled flows.  Huisken's original argument is recast in terms of rescaled flows in \cite{kE04} and also \cite{mtW04b}.  We cannot make Stone's argument to classify general type I singularities work in high codimension, essentially because a pointwise curvature condition does not seem enough to conclude that embeddedness is preserved.  We point out that even in the codimension one case, the classification of general type I singularities of immersed hypersurfaces is an outstanding problem.  For an excellent account of singularity analysis in the mean curvature flow of hypersurfaces (as well as a wonderful introduction to the mean curvature flow) we recommend to the reader the recent book \cite{cM10} by Mantegazza.

A similar classification of type I singularities of the mean curvature flow in high codimension has previously been obtained by Smoczyk \cite{kS05}.  In \cite{kS05} Smoczyk classifies blow-up limits of the the mean curvature flow that have flat normal normal bundle.  Although this curvature condition is much more restrictive than the pinching condition we have been working with, Smoczyk's classification includes additional submanifolds that do not feature in our classification, namely, products of Euclidean space with an Abresch-Langer curve, which also appear in the hypersurface classification.  These spaces do not appear in our classification as they do not satisfy the pinching assumption.  It's worthwhile to point out that the condition of having flat normal bundle is not preserved by the mean curvature flow.

With regards to this last chapter, we wish to express our gratitude to Patrick Breuning for sending us a draft of his PhD thesis \cite{Br}, in which he improves upon and extends Langer's compactness theorem \cite{La} to submanifolds of arbitrary codimension.  We would also like to thank Andrew Stone for friendly correspondence and for sending us a copy of his PhD thesis.

\section{The blow-up argument}
We shall need some basic concepts from measure theory in this chapter.  We remind the reader that $\Sigma$ is a fixed manifold, and that $\Sigma_t := F_t(\Sigma)$ refers to the immersed submanifold.   For a function $f \in C^{\infty}(\mathbb{R}^{n+k} \times \mathbb{R})$ defined on the ambient space, we follow standard abuse of notation and denote
\[ \int_{\Sigma} f(F(p)) \, d\mu_g(p) = \int_{\Sigma} f(p)\, d\mu_g(p). \]
Integration over the manifold $\Sigma$ with respect to $d\mu_g$ and integration over the image $F(\Sigma)$ in $\mathbb{R}^{n+k}$ are linked by the area formula.  We denote the pushforward measure by $\mu = F(\mu_g)$, where for $U \in \mathbb{R}^{n+k}$ an open set, $\mu(U) := \mu_g(F^{-1}(U))$.  In order for the pushforward measure $\mu$ to be a Radon measure, the immersion $F$ must be a proper immersion.  Recall an immersion is proper if the inverse image of a compact set is also compact.  The area formula relates the induced measure on $\Sigma$ to the Hausdorff measure on $\mathbb{R}^{n+k}$ restricted to the image $F(\Sigma)$ of the immersion.  We denote Hausdorff measure on the ambient space by $d\mathcal{H}^{n+k}$ or simply by $d\mathcal{H}$.  For a $\mu_g$-measurable function $f : \Sigma \rightarrow \mathbb{R}$, by the area formula we have
\[ \int_{\Sigma} f(p) \, d\mu_g(p) = \int_{\mathbb{R}^{n+k} } \left( \sum_{p \in F^{-1}\{x\} } f(p) \right) \, d\mathcal{H}^{n+k}(x). \]
Choosing $f = \chi_{[F^{-1}(F(\Sigma))]}$ gives
\[ \int_{\Sigma} d\mu_g(p) = \int_{F(\Sigma)} \left( \sum_{p \in F^{-1}\{x\} } \right) \, d\mathcal{H}^{n+k}(x). \]
Thus, denoting by $\theta$ the multiplicity function, we have $\mu = \mathcal{H} \llcorner \theta$.  In particular, if $F : \Sigma^n \rightarrow \mathbb{R}^{n+k}$ is a properly embedded submanifold, then $\theta \equiv 1$ and
\[ \int_{\Sigma} d\mu_g(p) = \int_{F(\Sigma)} d\mathcal{H}^{n+k}(x). \]

 For more details on Hausdorff measure and the area formula we refer the reader to \cite{EG}.  One final piece of notation before getting underway, for a point $p \in \Sigma$, we put $\lim_{t\rightarrow T} F(p) := \hat p \in \mathbb{R}^{n+k}$.

In order to study the asymptotic shape of the evolving submanifold $\Sigma_t$ around a singular point as the first singular time is approached, we progressively `magnify' the solution around this point by considering a sequence of rescaled flows.  The limit of such rescaled flows is called a blow-up limit.  Our first task is to show how to obtain such a limit.  In order to obtain a smooth blow-up, we assume that the submanifold is developing a so-called type I singularity.  This imposes a natural maximum rate at which the singularity can develop, which then enables us to rescale at a rate that keeps the maximum curvature of the rescaled solution bounded.

A submanifold is said to be developing a type I singularity at $T$ if there exists a constant $C_0 \geq 1$ such that
\begin{equation*}
	\max_{p \in \Sigma} \abs{ h }^2(p,t) \leq \frac{ C_0 }{ 2(T - t) }.
\end{equation*}
The blow-up rate of any singularity also satisfies the lower bound
\begin{equation*}
	\max_{ p \in \Sigma } \abs{ h }^2(p,t) \geq \frac{ 1 }{ 2(T-t) }
\end{equation*}
(see \cite{gH84} or \cite{cM10}), and so in the case of a type I singularity
\begin{equation*}
	\frac{1}{2(T-t)} \leq \max_{p \in \Sigma} \abs{ h }^2(p,t) \leq \frac{ C_0 }{ 2(T - t) }.
\end{equation*}

Let $q \in \Sigma$ be a fixed point and assume that the type I condition holds.  We want to rescale the solution around the point $\hat{q} \in \mathbb{R}^{n+k}$ by remaining time.  Let $(t_k)_{ k \in \mathbb{N} }$ be any sequence of times such that $t_k \rightarrow T$ as $k \rightarrow \infty$.  For example, we could take $t_k := T - 1/k$.  To rescale by remaining time we set the scale $\lambda_k = 1/\sqrt{2(T - t_k)}$.  We then define a sequence of parabolically rescaled flows
\begin{equation}\label{eqn: rescaled flows}
	F_k(p, s):= \lambda_k \big( F(p, T + s/\lambda_k^2)  - \hat q \big).
\end{equation}
Then for each $k$, $F_k : \Sigma \times [-\lambda_k^2T, 0)$ is a solution to the mean curvature flow (in the time variable $s$) that exists on the time interval $s \in [-\lambda_k^2 T, 0)$.  Under our parabolic rescaling the second fundamental form rescales like $\abs{ h }^2_{ \lambda_k } = \abs{ h }^2/ \lambda_k^2$, so using the type I hypothesis
\begin{align*}
	\abs{ h }^2_{\lambda_k}(p,s) &= \frac{ \abs{ h }^2(p, T + s/\lambda_k^2) }{ \lambda_k^2 } \\
	&\leq 2(T - t_k) \cdot \frac{ C_0 }{ 2(T - T - s/\lambda_k^2) } \\
	&= \frac{ - C_0 }{ 2s }
\end{align*}
which holds on $s \in [-\lambda_k^2 T, 0)$.  Consequently, on the time intervals $I_k := (-\lambda_k^2T, 1/k)$, the rescaled flows have bounded second fundamental form.  We would now like to apply a compactness theorem for immersed submanifolds in order to obtain a limit flow.  The compactness theorem usually quoted in this context is \cite{La}.  The result presented in \cite{La} is for a sequence of two-surfaces of Euclidean three-space with $L^p$-bounded second fundamental form and a global area bound, whereas we need to apply the result to a sequence of $n$-dimensional submanifolds of codimension $k$ in the presence of bounds on all higher derivatives of the second fundamental form and only a local area bound.  Very recently we learnt that in his PhD thesis Patrick Breuning has extended Langer's result to submanifolds of arbitrary codimension in the presence of a local area bound \cite{Br}.  We record Breuning's compactness theorem as follows:
\begin{thm}[Breuning-Langer compactness theorem for immersed submanifolds]
Let  $F_k : M_k^n \rightarrow \mathbb{R}^N$ be a sequence of proper immersions, where $M_k$ is a $n$-manifold without boundary and $0 \in F_k(M_k)$.  Assume the following conditions are satisfied:
\begin{enumerate}
 	\item Uniform curvature derivative bounds: \\
	For each $k \in \mathbb{N}$,  for every $m \in \mathbb{N}$ there exists a constant $C_m(R)$ depending on $m$ and $R$ such that $\abs{ \nabla_k^m h_k}_{ F_k } \leq C_m$.
	\item Local area bound: \\
	For every $R > 0$ there exists a constant $C_R$ depending on R such that $\mu^k(B_R) \leq C_R$.
\end{enumerate}
Then there exists a proper immersion $F_{\infty} : M_{\infty} \rightarrow \mathbb{R}^{n+k}$, where $M_{\infty}$ is again a $n$-manifold without boundary, such that after passing to a subsequence there exists a sequence of diffeomorphisms $\phi_k : U_k \rightarrow (F_k)^{-1}(B_k) \subset M_k$, where $U_k \subset M_{\infty}$ are open sets with $U_k \subset\subset U_{k+1}$ and $M_{\infty} = \bigcup_{j=1}^{\infty} U_j$ such that $\phi_k^*F_k |_{U_j}$ converges in $C^{\infty}( U_j, \mathbb{R}^N)$ to $F_{\infty} |_{U_j}$.
\end{thm}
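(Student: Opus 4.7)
The plan is to deduce the theorem from the Cheeger-Gromov compactness theorem for pointed Riemannian manifolds applied to the induced metrics $g_k := F_k^{\ast}\langle\cdot,\cdot\rangle_{\mathbb{R}^N}$, based at points $p_k \in M_k$ chosen so that $F_k(p_k) = 0$. The Gauss equation gives schematically $R_{g_k} = h_k \ast h_k$, and commuting with $\nabla$ yields inductively $\nabla^m R_{g_k} = \sum_{i+j = m} \nabla^{i} h_k \ast \nabla^{j} h_k$. The extrinsic derivative bounds of hypothesis (1) therefore furnish uniform bounds $|\nabla^m R_{g_k}|_{g_k} \leq C'_m$ for every $m$, supplying the curvature half of the Cheeger-Gromov hypotheses.

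The main obstacle is producing a uniform lower bound $\inj_{g_k}(p_k) \geq \iota_0 > 0$. The bound $|h_k| \leq C_0$ alone implies, via the inverse function theorem applied to the composition of $F_k$ with orthogonal projection onto $T_{p_k}M_k \subset \mathbb{R}^N$, that there is a radius $r_0 > 0$ (independent of $k$) such that the connected component of $F_k^{-1}(B_{r_0}(0))$ through $p_k$ is mapped to a smooth graph over $T_{p_k}M_k$ with uniform $C^m$ bounds. This alone does not control $\inj_{g_k}(p_k)$ because several distinct sheets of $F_k(M_k)$ might pass through the ambient ball $B_{r_0}(0)$, and a short geodesic loop at $p_k$ could arise from travelling along one sheet and returning via another. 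The local area hypothesis (2) remedies this: it caps the number of such sheets through $B_{r_0}(0)$ by $C_{r_0}/(\omega_n r_0^n / 2)$, where $\omega_n$ is the unit ball volume, so that a sufficiently short loop through $p_k$ would force two of these graphs to accumulate on top of one another and so violate the uniform graph representation. I expect the careful execution of this sheet-counting and its conversion into a quantitative injectivity radius bound to be the most delicate step of the argument.

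Granted the bounds $|\nabla^m R_{g_k}|_{g_k} \leq C'_m$ and $\inj_{g_k}(p_k) \geq \iota_0$, the Cheeger-Gromov compactness theorem delivers a complete pointed Riemannian manifold $(M_\infty, g_\infty, p_\infty)$ and diffeomorphisms $\phi_k : U_k \to \phi_k(U_k) \subset M_k$, with $\phi_k(p_\infty) = p_k$ and open sets $U_k \subset\subset U_{k+1}$ exhausting $M_\infty$, such that $\phi_k^{\ast} g_k \to g_\infty$ smoothly on every relatively compact subset. Setting $\widetilde F_k := F_k \circ \phi_k : U_k \to \mathbb{R}^N$, we have $\widetilde F_k(p_\infty) = 0$, the differential $d\widetilde F_k$ is isometric with respect to $\phi_k^{\ast} g_k$, and all covariant derivatives $\nabla_{\phi_k^{\ast} g_k}^m d\widetilde F_k$ are controlled on each $U_j$ by contractions of pulled-back $\nabla^{i} h_k$ against powers of $\phi_k^{\ast} g_k$. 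Arzela-Ascoli on each $U_j$ combined with a diagonal argument then extracts a subsequence converging in $C^\infty_{\mathrm{loc}}(M_\infty, \mathbb{R}^N)$ to a smooth isometric immersion $F_\infty : (M_\infty, g_\infty) \to \mathbb{R}^N$ with $F_\infty(p_\infty) = 0$. Properness of $F_\infty$ follows from the area bound: for each $R > 0$ the closed set $F_\infty^{-1}(\overline{B_R(0)})$ is carried by $\phi_k$ into $F_k^{-1}(\overline{B_{R+1}(0)})$ for $k$ large, whose $\mu^k$-area is at most $C_{R+1}$; combined with the uniform graph representation, this forces $F_\infty^{-1}(\overline{B_R(0)})$ to be contained in finitely many embedded geodesic balls of $(M_\infty, g_\infty)$ and hence to be compact, yielding the claimed properness.
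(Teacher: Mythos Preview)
The paper does not actually prove this theorem: it records the statement from Breuning's thesis and then immediately states and proves its own variant (the theorem labeled \texttt{thm: my compactness thm}) via Cheeger--Gromov compactness, which is precisely the route you take. So your overall strategy matches the paper's approach to its \emph{own} compactness result rather than reproducing Breuning's original argument.

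There are two issues with your execution. First, your injectivity-radius discussion is confused: other sheets of $F_k(M_k)$ passing through $B_{r_0}(0)$ are irrelevant to $\inj_{g_k}(p_k)$, since a geodesic loop through $p_k$ lives entirely in the connected component of $M_k$ containing $p_k$ and cannot ``jump'' between sheets in the ambient picture. The local graph representation of that single component already yields the required lower bound on $\inj_{g_k}(p_k)$; no sheet-counting is needed here. (The paper obtains the injectivity bound by a different route, invoking Klingenberg's lemma together with a Heintze--Karcher lower bound on the length of the shortest closed geodesic.)

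Second, and more seriously, your argument only captures the connected component of the limit through $p_\infty$. The statement requires $\phi_k : U_k \to F_k^{-1}(B_k)$ --- the diffeomorphisms must cover \emph{all} of $F_k^{-1}(B_k)$, not just one sheet --- and Cheeger--Gromov applied at the single basepoint $p_k$ sees only the intrinsic metric ball around $p_k$. Additional sheets through $B_k(0)$ that are far from $p_k$ in the $g_k$-metric, or that lie in other components of $M_k$, are invisible to this construction. This is exactly where the local area bound enters in the paper's proof: one iteratively picks new basepoints on uncaptured sheets, applies Cheeger--Gromov again to enlarge $M_\infty$ by a disjoint component, and uses the area bound to show only finitely many sheets meet any fixed ambient ball, so the process terminates at each scale. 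Your properness paragraph gestures at sheet-counting but does not address how those extra sheets get incorporated into $M_\infty$ in the first place.
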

This is the essentially the statement of the Breuning's theorem in his thesis; we have simply changed some notation to conform with our own.  Note Breuning states the local area bound in terms of the pushforward measure.  Before we learnt of Breuning's compactness theorem we did not know whether Langer's theorem did in fact hold in arbitrary codimension and we produced the following compactness theorem for immersed submanifolds in arbitrary codimension using the well-known compactness theorem of Cheeger and Gromov for abstract manifolds.  We refer the reader to \cite{HA} for an introduction to Cheeger-Gromov convergence, and to \cite{rH95, pP06} for proofs of the Cheeger-Gromov compactness theorem.  We have been influenced by the treatment in \cite{HA}.

We consider the following notion of convergence of a sequence of immersed submanifolds: For each $k \in \mathbb{N}$, let $M_k$ be a complete smooth manifold, $F_k : M_k \rightarrow \mathbb{R}^{N}$ a smooth immersion and $p_k \in M_k$ a basepoint. We say that $(M_k , F_k )$ converges to $(M_{\infty}, F_{\infty})$ on compact sets of $\mathbb{R}^N \times \mathbb{R}$ if there exists an exhaustion 
$\{U_k \}_{k \in \mathbb{N} }$ of $M_{\infty}$ and a sequence of smooth diffeomorphisms $\phi_k : U_k \rightarrow V_k \subset M_k$ satisfying: 
\begin{enumerate}
	\item For every compact $K \subset M_{\infty}$, $\phi_k^*F_k |_K$ converges in $C^{\infty}( K, \mathbb{R}^N)$ to $F_{\infty} |_K$
	\item For any compact $A \subset \mathbb{R}^N$ there is some $k_0 \in \mathbb{N}$ such that $(\phi_k^* F_k)(U_k) \cap A = F_k(M_k) \cap A$ for all $k \geq k_0$
\end{enumerate}
We remark that the Langer-Breuning compactness theorem in the form we have stated it is not quite satisfactory for our purposes, as it does not address the second criterion of the above definition of convergence.
 
\begin{thm}\label{thm: my compactness thm}
Suppose $(F_k , M_k )_{k \in \mathbb{N} }$ is a sequence of proper immersions $F_k : M_k \rightarrow \mathbb{R}^N$ of smooth complete $n$-dimensional manifolds $M_k$ that satisfy the following conditions:
\begin{enumerate}
	\item Uniform curvature derivative bounds: \\
	For each $k \in \mathbb{N}$,  for every $m \in \mathbb{N}$ there exists a constant $C_m(R)$ depending on $m$ and $R$ (and independent of $k$) such that $\abs{ \nabla_k^m h_k}_{ F_k } \leq C_m$
	\item The sequence $( F_k )_{k \in \mathbb{N} }$ does not disappear at infinity: \\
	There exists a radius $R > 0$ such that $B_R(0) \cap F_k(M_k) \neq \emptyset$
for all $k \in N$. 
\item Local area bound: \\
	For every $R > 0$ there exists a constant $C_R$ depending on $R$ (and independent of $k$), such that
	\begin{equation*} \int_{F^{-1}_k(B_R)} d\mu_g^k \leq C_R. \end{equation*}

\end{enumerate} 
Then there exists a subsequence of $(F_k, M_k)_{ k \in \mathbb{N} }$ which converges on compact sets of $\mathbb{R}^N \times \mathbb{R}$ to a complete proper immersion $(M_{\infty}, F_{\infty})$ that also satisfies the the same local area bound.
\end{thm}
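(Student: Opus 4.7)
The plan is to convert the extrinsic data into intrinsic $C^\infty$ control on $(M_k, g_k)$, extract an abstract pointed limit $(M_\infty, g_\infty)$ via the Cheeger-Gromov compactness theorem, and then lift this limit to an immersion $F_\infty : M_\infty \to \mathbb{R}^N$. Hypothesis (2) lets us choose basepoints $p_k \in F_k^{-1}(\overline{B_R(0)})$. The Gauss equation from Chapter 2 expresses the intrinsic Riemann tensor $R_k$ as a quadratic polynomial in $h_k$, so the uniform bounds $|\nabla^m h_k| \le C_m$ immediately convert into uniform bounds on $|\nabla^m R_k|$ for every $m$.

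Before invoking Cheeger-Gromov one must establish a uniform lower bound on the intrinsic injectivity radius at $p_k$. Since $|h_k| \le C_0$, there exists $r_0 = r_0(C_0) > 0$ such that on an intrinsic ball of radius $r_0$ about $p_k$ the image of $F_k$ is a graph over the tangent plane $T_{p_k}M_k$, with graph function whose $C^m$ norm is bounded purely in terms of the $C_j$'s. This yields a universal lower bound $\mathrm{vol}_{g_k}(B^{g_k}_{r_0}(p_k)) \ge v_0 > 0$; combined with the sectional curvature bound coming from Gauss, Cheeger's lemma produces $\mathrm{inj}_{g_k}(p_k) \ge \iota_0 > 0$.

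Applying Cheeger-Gromov and passing to a subsequence then gives a complete pointed limit $(M_\infty, g_\infty, p_\infty)$, an exhaustion $\{U_j\}$ of $M_\infty$ with $U_j \subset\subset U_{j+1}$, and smooth diffeomorphisms $\phi_k : U_k \to V_k \subset M_k$ such that $\phi_k^* g_k \to g_\infty$ in $C^\infty_{\mathrm{loc}}$. To upgrade to immersion convergence, consider the pulled-back maps $\phi_k^* F_k : U_k \to \mathbb{R}^N$. The identity \eqref{eq:kth.deriv.embedding} from Chapter 2 expresses every covariant derivative $\tilde\nabla^m(\phi_k^* F_k)$ (taken with the fixed connection of $g_\infty$) as a universal polynomial in $h_k \circ \phi_k$, its derivatives, and the difference tensor of the connections of $\phi_k^* g_k$ and $g_\infty$; every ingredient is uniformly controlled in $k$, and the base value $\phi_k^* F_k(p_\infty) = F_k(p_k)$ is bounded by $R$. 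Arzelà-Ascoli therefore delivers a $C^\infty_{\mathrm{loc}}$ subsequential limit $F_\infty : M_\infty \to \mathbb{R}^N$, which induces $g_\infty$ and is hence an immersion.

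The main obstacle will be verifying the second criterion of convergence — that for every compact $A \subset \mathbb{R}^N$ one has $(\phi_k^* F_k)(U_k) \cap A = F_k(M_k) \cap A$ for all $k$ large. This is where the local area bound (3) is indispensable: without it, far-away sheets of $M_k$ could fold into $A$ and be lost in the limit. I would argue by contradiction. If there existed points $q_k \in M_k \setminus V_k$ with $F_k(q_k) \in A$, the intrinsic graphical patches of radius $r_0$ around $p_k$ and around each such $q_k$ would contribute disjoint volumes $\ge c\, r_0^n$ to $\mu_g^k(F_k^{-1}(B_{R'}))$ for $R'$ depending only on $R$ and $\mathrm{diam}\,A$, and hypothesis (3) bounds the total. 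Hence such $q_k$ must lie intrinsically close to $p_k$ on a scale controlled by $r_0$ and $C_{R'}$, placing them inside $V_k$ for $k$ large. The local area bound and the properness of $F_\infty$ then follow from lower semicontinuity of Hausdorff measure under $C^0_{\mathrm{loc}}$ convergence applied to the pushforward measures $F_k(\mu_g^k)$.
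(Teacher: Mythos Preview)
Your argument through the Cheeger--Gromov step and the Arzel\`a--Ascoli extraction of $F_\infty$ matches the paper's approach. The genuine gap is in your treatment of the second convergence criterion. You argue that if $q_k \in M_k \setminus V_k$ with $F_k(q_k) \in A$, then the area bound forces such $q_k$ to be intrinsically close to $p_k$, hence inside $V_k$ for large $k$. This conclusion is false: the area bound only tells you that the number of pairwise intrinsically $r_0$-separated points of $F_k^{-1}(B_{R'})$ is finite, not that all such points lie within a fixed intrinsic distance of $p_k$. A concrete counterexample is a sequence of submanifolds consisting of two nearly parallel $n$-planes through the origin joined by a thin handle located far from the origin; the basepoint $p_k$ on one sheet and a point $q_k$ on the other are both in $B_1(0)$ but intrinsically far apart, and the Cheeger--Gromov limit based at $p_k$ sees only one sheet.

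The paper explicitly warns that the limit may be disconnected and that a single pointed Cheeger--Gromov limit captures only one component. Its remedy is iterative: if such $q_k$ exist, pass to a subsequence and apply Cheeger--Gromov again with basepoints $\tilde p_k = q_k$ to produce a second limit component $\tilde M_\infty$, then replace $M_\infty$ by $M_\infty \sqcup \tilde M_\infty$ and repeat. Each component contributes a definite amount of area inside $B_2(0)$, so the local area bound forces this process to terminate after finitely many steps on $\bar B_1(0)$; one then inducts on the radius via a diagonal argument. Your volume-counting idea is the right ingredient, but it should be used to bound the number of components added, not to rule out their existence.
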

Before we prove this theorem, we mention two issues that need to be dealt with in the proof.  First of all, the limit produced by applying the Cheeger-Gromov compactness theorem is an abstract limit that a priori loses all knowledge of the background space.  Second, the limit may be disconnected (e.g. a lengthening cylinder), and so the metric produced by the Cheeger-Gromov compactness theorem only sees the connected component of which is is part.  We therefore need to take care to capture all connected components of the limit.

\begin{proof}
Let $g_k$ denote the metric induced by $F_k$ . We want to use the Cheeger-Gromov compactness theorem to extract a convergent sequence of manifolds $(M_k, g_k )_{ k \in \mathbb{N} }$.  The second assumption of the theorem guarantees that there is at least one sequence of points $(p_k)_{ k \in \mathbb{N} } \in M_k$ whose image lies some ball of finite radius.  As we have already mentioned in Section 7 of Chapter 4, bounds on all higher derivatives of the second fundamental form imply a lower injectivity radius bound.  We may apply the Cheeger-Gromov compactness theorem, and upon passing to a subsequence, we obtain a complete pointed limit manifold $(M_{\infty}, g_{\infty}, p_{\infty})$, an exhaustion $\{U_k \}_{k \in \mathbb{N}}$ of $M_{\infty}$, and a sequence of diffeomorphisms $(\phi_k : U_k \rightarrow V_k \subset M_k )_{k \in \mathbb{N}}$ such that $\phi_k^*g_k$ converges smoothly to $g_{\infty}$ on each compact set $K \subset M$.  By induction, similar to the proof of the higher derivative estimates in Chapter 4, it follows that all higher derivatives of $\phi_k^*F_k$ are uniformly bounded with respect to $g_{\infty}$ on compact sets of $M_{\infty}$.  Passing to a futher subsequence, we obtain smooth convergence of $\phi_k^*F_k$ to a limit immersion $F_{\infty}$ on compact sets of $M_{\infty}$.  At this stage we have shown the first condition in our definition of convergence on compact sets is satisfied.

We now need to show the second condition of our definition is also satisified.  The fact that the area bound holds on the limit is a simple consequence of the $C^1$-convergence of the metrics.  The remaining argument is accomplished by induction and a Cantor diagonal sequence argument.  We begin by looking inside a ball $\bar B_1(0)$ in the ambient space.  Suppose that there exists no $k_0 \in \mathbb{N}$ such that $\phi_k^*F_k(U_k) \cap \bar B_1(0) = F_k(M_k) \cap \bar B_1(0)$ for all $k \geq k_0$, then we can pass to a subsequence such that there exists $\tilde{p}_k \in M_k$ such that for all $k$, $\tilde{p}_k \notin V_k$, whilst $F_k(\tilde{p}_k) \in \bar B_1(0)$.  Passing to a further subsequence, we can assume the sequence of pointed manifolds $(M_k, g_k, \tilde{p}_k)$ converges to a limit $(\tilde{M}_{\infty}, \tilde{g}_{\infty}, \tilde{p}_{\infty})$, so that there is an exhaustion $\{ \tilde U_k \}_{k \in \mathbb{N} }$ and diffeomorphisms $\tilde \phi_k : \tilde U_k \rightarrow \tilde V_k \subset M_k$ with $\tilde \phi_k(\tilde p) = (\tilde p_k)$ such that $\tilde \phi_k^*g_k$ converges to $\tilde g$ smoothly on compact sets in $\tilde M$.  As before, by the Arzela-Ascoli theorem, passing to another subsequence, we can assume that $\tilde \phi_k^*F_k$ converges smoothly on compact subsets to a limit immersion $\tilde F_{\infty}$.  Now we replace $M_{\infty}$ with $M_{\infty} \sqcup \tilde M_{\infty}$ and repeat the process again.  All of these components intersect with $\bar B_1(0)$, and have area inside $B_2(0)$ bounded below, so by the local area bound this process must stop after finitely may steps, and we have produced a manifold $M$ with finitely many connected components with both parts of the compactness theorem holding on $\bar B_1(0)$.

We complete the proof by induction on the size of the balls in the ambient space:  If we have subsequence for which both parts of the theorem hold on $\bar B_n(0)$, then we add in more components if there are points in $\bar B_{n+1}(0)$ that are in $F_k(M_k)$ but not in $\phi_k^*F_k(U_k)$.  By the same argument, after adding in finitely many components we produce a subsequence and a limit immersion satisfying both parts of the compactness theorem on $\bar B_{n+1}(0)$.
\end{proof}

We can use the above compactness theorem for immersed submanifolds to obtain a compactness theorem for mean curvature flows.  The proof follows Hamilton's compactness theorem for Ricci flows.  One applies the compactness theorem for immersed submanifolds at the initial time, and then using the higher derivative bounds and the Arzela-Ascoli theorem combined with a diagonal sequence argument, one obtains a properly immersed limit solution to the mean curvature flow that satisfies the first condition of our definition of convergence on compact sets of $\mathbb{R}^N \times \mathbb{R}$.  That the second convergence criterion is also satisfied again follows quickly from the curvature bounds.  In particular, we can apply the compactness theorem to our sequence of rescaled flows \eqref{eqn: rescaled flows} (where we have assumed the type I hypothesis).  By analogy with Hamilton's compactness theorem for Ricci flows (\cite{rH95}), we only require a bound on the second fundamental form itself (and not any higher derivatives), as the type 1 assumption ensures that all higher derivatives are indeed bounded above.  The missing essential ingredient is the local area bound, which we shall address in the next section, it being a consequence of Huisken's monotonicity formula.

\begin{thm}[Compactness theorem for mean curvature flows]\label{thm: compactness for flows}
Suppose that $(F_k , M_k )_{k \in \mathbb{N} }$ is a sequence of proper time-dependent immersions of smooth complete $n$-dimensional manifolds $M_k$ that satisfy the mean curvature flow on the time interval $I = [t_0, T)$.  Assume the following conditions are satisfied:
\begin{enumerate}
	\item Uniform curvature derivative bounds: \\
	For each $k \in \mathbb{N}$, there exists a uniform constant $C_0$ such that $\abs{ h_k}_{ F_k } \leq C_0$ on $M_k \times I$
	\item The sequence doesn't (initially) disappear at infinity: \\
	There exists a time $t_0$ and radius $R > 0$ such that $B_R(0) \cap F_k(M_k, t_0) \neq \emptyset$
for all $k \in \mathbb{N}$. 
\item Initial local area bound: \\
	For every $R > 0$ there exists a constant $C_R$ depending on $R$ (and independent of $k$), such that
	\begin{equation*} \int_{F_k^{-1}(\cdot, \, t_0)(B_R(0))} d\mu_{g_{t_0}}^k \leq C_R. \end{equation*}
\end{enumerate} 
Then there exists a subsequence $(F_k, M_k)_{ k \in \mathbb{N} }$ which converges on compact sets of $\mathbb{R}^N \times \mathbb{R}$ to a complete proper time-dependent immersion $(M_{\infty}, F_{\infty})$ that is also a solution to the mean curvature flow on the time interval $I$.
\end{thm}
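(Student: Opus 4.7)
The plan is to mimic Hamilton's proof of the compactness theorem for Ricci flows, substituting the immersed-submanifold compactness theorem (Theorem \ref{thm: my compactness thm}) in place of the Cheeger--Gromov theorem, and exploiting the uniform bound $|h_k|\leq C_0$ together with the interior-in-time derivative estimates of Proposition \ref{prop: interior-in-time higher derivative estimates} to upgrade it to bounds on all spatial derivatives of $h_k$ on every compact subinterval of $I$.

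First I would apply the compactness theorem for immersed submanifolds at the single time slice $t=t_0$. The three hypotheses required there are exactly conditions (1), (2), (3) of the present theorem at $t=t_0$ (using that bounds on $h_k$ together with the short-time existence and Proposition \ref{prop: interior-in-time higher derivative estimates} give bounds on $\nabla^m h_k$ on any compact subinterval away from the initial time; at $t_0$ itself we use the bound $|h_k|\leq C_0$ and interior estimates on a small interval to the future of $t_0$). This yields, after passing to a subsequence, a complete proper immersion $F_\infty(\cdot,t_0):M_\infty\to\mathbb{R}^N$, an exhaustion $\{U_j\}$ of $M_\infty$, and diffeomorphisms $\phi_k:U_k\to V_k\subset M_k$ such that $\phi_k^*F_k(\cdot,t_0)\to F_\infty(\cdot,t_0)$ smoothly on compact subsets of $M_\infty$ and so that every point of $F_k(M_k,t_0)$ inside a given ambient ball is eventually captured by $\phi_k^*F_k(U_k,t_0)$.

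Next I would propagate this initial convergence to the whole time interval. Pull back the flows by the diffeomorphisms $\phi_k$ to obtain solutions $\tilde F_k:=\phi_k^*F_k$ of MCF on $U_k\times I$. The uniform bound on $|h_k|$, combined with Proposition \ref{prop: interior-in-time higher derivative estimates}, gives uniform bounds on $|\nabla_{\tilde g_k}^m h_k|$ on each compact $K\times J\subset M_\infty\times I$ (where $J$ has positive distance from $t_0$, with the $t_0$-endpoint handled by extending the initial bounds via short-time existence). Because $\partial_t\tilde F_k=\iota_k H_k$ is bounded in every $C^m$-norm with respect to the pullback connections, and because $\phi_k^*g_k(\cdot,t_0)\to g_\infty$ smoothly, the argument at the end of Section \ref{s: Higher derivative estimates and long time existence} (the bootstrap via equation \eqref{eq:kth.deriv.embedding} and the evolution \eqref{eq:evol.kth.deriv.embedding}) shows that $\tilde F_k$ and all its space--time derivatives with respect to a fixed background connection on $M_\infty$ are uniformly bounded on compact sets. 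Arzel\`a--Ascoli combined with a Cantor diagonal argument over an exhaustion $K_j\times J_j\nearrow M_\infty\times I$ then yields a subsequence converging smoothly on compacta to a limit immersion $F_\infty:M_\infty\times I\to\mathbb{R}^N$ which, by continuity of $\partial_t$ and of $H$ under smooth convergence, satisfies the mean curvature flow.

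Finally I must verify the two remaining items. Property (2) of the convergence definition (eventual capture of $F_k(M_k,t)\cap A$ for any compact $A\subset\mathbb{R}^N$) at arbitrary times $t\in I$ follows from property (2) at $t=t_0$ together with the uniform mean curvature bound: the flow equation $|\partial_t F_k|=|H_k|\leq\sqrt{n}\,C_0$ gives a uniform Lipschitz estimate in time for the position vectors, so any point of $F_k(M_k,t)$ lying in a ball $B_R(0)$ came from a point of $F_k(M_k,t_0)$ in $B_{R+\sqrt{n}C_0|t-t_0|}(0)$, which is captured by $\phi_k^*F_k(U_k,t_0)$ for $k$ large. Completeness and properness of $F_\infty$ at later times are preserved by the same Lipschitz bound. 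Propagation of the local area bound to later times follows from $\frac{\partial}{\partial t}d\mu_{g_k}=-|H_k|^2\,d\mu_{g_k}$, which shows areas are non-increasing. The principal obstacle is the patching step that produces property (2) of convergence when new components appear later; this is handled exactly as in the proof of Theorem \ref{thm: my compactness thm}, by repeating the component-extraction argument finitely many times on each ambient ball $\bar B_n(0)$ and invoking the local area bound to terminate the process.
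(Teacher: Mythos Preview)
Your proposal is correct and follows essentially the same approach as the paper, which only gives a one-paragraph sketch: apply Theorem~\ref{thm: my compactness thm} at a fixed time slice, then use the interior-in-time derivative estimates plus Arzel\`a--Ascoli and a diagonal argument to propagate convergence in time, with the second convergence criterion following from the curvature bounds. Your expansion of the Lipschitz-in-time argument for criterion~(2) and the area propagation is exactly what the paper means by ``follows quickly from the curvature bounds''; the only minor point is that the higher derivative bounds from Proposition~\ref{prop: interior-in-time higher derivative estimates} are not available at $t_0$ itself, so strictly one should apply Theorem~\ref{thm: my compactness thm} at a time $t_1>t_0$ (transferring the area and non-vanishing hypotheses via the same Lipschitz bound you already use), but the paper's sketch glosses over this as well.
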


\subsection{Huisken's monotonicity formula}
For a fixed point $(x_0, t_0) \in \mathbb{R}^{n+k} \times \mathbb{R}$ we define the backwards heat kernel centred at $(x_0, t_0)$ by
\begin{equation*}
	\rho_{ x_0, t_0 }(x,t) := \frac{ 1 }{ (4\pi(t_0 - t) )^{n/2} } \text{exp} \Big( \frac{ -\abs{ x - x_0 }^2 }{ 4(t_0 - t) } \Big),
\end{equation*}
which is well-defined on $\mathbb{R}^{n+k} \times (-\infty, t_0)$.  The centre of our backward heat kernel will most often be $(\hat{p}, T) \in \mathbb{R}^{n+k} \times \mathbb{R}$.  Note the backwards heat kernel is defined on the ambient space and so we are adhering to the abuse of notation mentioned at the beginning of this chapter.  Huisken's montonicity formula, which holds in arbitrary codimension, is the following:
\begin{thm}[Huisken's monotonicity formula]
Let $F: \Sigma \times [0,T) \rightarrow \mathbb{R}^{n+k}$ be a solution of the mean curvature flow.  For any fixed point $p \in \Sigma$, the formula
\begin{equation*}
	\frac{d}{dt} \int_{\Sigma} \rho_{ \hat{p}, T } \, d\mu_{g_t} = - \int_{\Sigma} \rho_{ \hat{p}, T } \Big| H + \frac{ F^{\bot} }{ 2(T - t) } \Big| \, d\mu_{g_t} \leq 0
\end{equation*}
holds for all time $0 \leq t  < T$.
\end{thm}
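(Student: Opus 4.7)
The plan is to compute the time derivative of the weighted area functional directly, isolating a term that can be recognised as a perfect square involving $H$ and the normal component of the position vector. Writing $y := F - \hat p$ and $\tau := T - t$, I will combine two ingredients: the evolution $\tfrac{d}{dt} d\mu_{g_t} = -|H|^2 d\mu_{g_t}$ derived in Chapter 3, and the total derivative of $\rho_{\hat p,T}$ along the flow, namely $\tfrac{d}{dt}\rho = \p_t \rho + \bar\nabla\rho\cdot H$, using that $\partial_t F = H$ under MCF.

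First I would compute the ambient derivatives of $\rho = (4\pi\tau)^{-n/2}\exp(-|y|^2/(4\tau))$ once and for all:
\begin{equation*}
\p_t \rho = \tfrac{n}{2\tau}\rho - \tfrac{|y|^2}{4\tau^2}\rho,\qquad \bar\nabla\rho = -\tfrac{y}{2\tau}\rho,\qquad \bar\nabla^2\rho(v,w) = -\tfrac{\langle v,w\rangle}{2\tau}\rho + \tfrac{\langle v,y\rangle\langle w,y\rangle}{4\tau^2}\rho.
\end{equation*}
Next I would pull $\rho$ back to $\Sigma$ via $F$ and compute $\Delta_{\Sigma}\rho$ using the Gauss relation $\nabla_i\nabla_j(\rho\circ F) = \bar\nabla^2\rho(F_*\p_i,F_*\p_j) + \bar\nabla\rho\cdot h_{ij}$, giving $\Delta_\Sigma \rho = -\tfrac{n}{2\tau}\rho + \tfrac{|y^T|^2}{4\tau^2}\rho - \tfrac{y^\bot\cdot H}{2\tau}\rho$, where $y^T, y^\bot$ denote the tangential and normal projections of $y$. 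This step is where the intrinsic dimension $n$ in the normalisation of $\rho$ does its job: the trace of $\bar\nabla^2\rho$ along $\Sigma$ contributes $-n/(2\tau)$, exactly matching the $n/(2\tau)$ in $\p_t\rho$.

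Combining everything,
\begin{equation*}
\frac{d}{dt}(\rho\, d\mu_{g_t}) = \Big(\p_t\rho + \bar\nabla\rho\cdot H - \rho|H|^2\Big) d\mu_{g_t},
\end{equation*}
and I would substitute $\tfrac{|y^T|^2}{4\tau^2}\rho$ using the expression for $\Delta_\Sigma\rho$ above, together with $|y|^2 = |y^T|^2 + |y^\bot|^2$, to cancel the dimensional terms $\pm n/(2\tau)$ and collect the remaining quadratic terms into the identity
\begin{equation*}
\frac{d}{dt}(\rho\, d\mu_{g_t}) = -\Delta_\Sigma\rho\, d\mu_{g_t} - \rho\,\Big|H + \tfrac{y^\bot}{2\tau}\Big|^2 d\mu_{g_t}.
\end{equation*}
Finally I would integrate over $\Sigma$; since $\Sigma$ is closed, the divergence theorem kills $\int_\Sigma \Delta_\Sigma \rho\, d\mu_{g_t}=0$, yielding the claimed formula.

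The computation is essentially algebraic once the Gauss relation is invoked, so there is no deep obstacle. The only delicate point is keeping track of tangential versus normal decompositions of $y$ and recognising that the cross term $-y^\bot\cdot H/(2\tau)$ appearing in $\Delta_\Sigma\rho$ is precisely what is needed — together with the contribution $\bar\nabla\rho\cdot H$ from pulling $\rho$ along the flow — to complete the square $|H + y^\bot/(2\tau)|^2$. Non-negativity of this square then gives the monotonicity $\tfrac{d}{dt}\int_\Sigma \rho\, d\mu_{g_t} \le 0$.
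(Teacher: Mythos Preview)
Your argument is correct and is precisely the standard derivation of Huisken's monotonicity formula. The paper, however, does not supply a proof of this theorem at all: it simply states the formula, attributes it to \cite{gH90a}, notes that it holds in arbitrary codimension, and then proceeds directly to use its consequences (the existence of $\Theta(p)$, parabolic scale invariance, and the local area bound). So there is nothing to compare against here; you have filled in what the paper leaves as a citation.

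One small remark on bookkeeping: the paper writes $F^\bot$ in the integrand where you correctly have $y^\bot = (F-\hat p)^\bot$, and the norm in the paper's displayed formula is missing a square; your computation produces the correct expression $\big|H + (F-\hat p)^\bot/(2(T-t))\big|^2$, and both discrepancies are evidently typographical in the paper.
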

For each pair of times $0 < t_1 < t_2 < T$, the monotonicity formula implies that 
\begin{equation*}
	\int_{ \Sigma } \rho_{ \hat{p}, T } \, d\mu{g_{ t_2 }} \leq \int_{ \Sigma } \rho_{ \hat{p}, T } \, d\mu{g_{ t_1 }}
\end{equation*}
and being the limit of a monotone sequence of decreasing functions, the limit
\begin{equation*}
	\lim_{ t \rightarrow T } \int_{\Sigma} \rho_{ \hat{p}, T } \, d\mu_{g_t}
\end{equation*}
certainly exists and is finite.  We shall also use the notation
\begin{equation*}
	\theta(p,t) := \int_{\Sigma} \rho_{ \hat{p}, T } \, d\mu_{g_t}
\end{equation*}
and
\begin{equation*}
	\Theta(p) := \lim_{ t \rightarrow T } \theta(p,t).
\end{equation*}
Since $\Theta$ is the limit of a monotone sequence of continuous functions, it follows that $\Theta$ is upper-semicontinuous.  We refer to $\theta$ as the heat density and $\Theta$ as the limit heat density.  An important property of the monotonicity formula is that it is invariant under parabolic rescalings.  By the definition of our parabolic rescaling, for each $k$ we have
\begin{align*}
	\int_{\Sigma} \rho_{ \hat{p}, T } \, d\mu_{g_t} &=  \frac{ 1 }{ ( 4\pi(T - t) )^{n/2} } \int_{\Sigma} e^{ - \frac{ \abs{x-\hat{p}}^2 }{ 4(T-t) } } \, d\mu_{g_t} \\
	&= \frac{ 1 }{ (-4\pi s)^{n/2} } \int_{ \Sigma } e^{ -\frac{ \abs{y}^2}{-4s} } \, d\mu_{g_s}^{(\hat{p}, T), \lambda_k} \\
	&= \int_{ \Sigma } \rho \, d\mu_{g_s}^{(\hat{p}, T), \lambda_k}.
\end{align*}
Recalling that $t = T + s/\lambda_k^2$, for each fixed $s \in [-\lambda_k^2 T, 0)$ and all $k$ we have
\[ \int_{\Sigma} \rho_{\hat{p},T} \, d\mu_{g_t} =  \int_{\Sigma} \rho \, d\mu_{g_{s}}^{(\hat{p}, T), \lambda_{k}}, \]
and consequently
\begin{equation}\label{eqn: rescaled mono 1}
	\lim_{ t \rightarrow T } \int_{\Sigma} \rho_{ \hat{p}, T } \, d\mu_{g_t} = \lim_{ k \rightarrow \infty } \int_{\Sigma } \rho \, d\mu_s^{\lambda_k}.
\end{equation}
When it is (reasonably) clear which point we are rescaling around, we will often omit the notation $(\hat{p}, T)$ above the measure as we have just done to reduce clutter.  An important application of the monotonicity formula is that it provides the local area bound (independent of k) necessary to apply the compactness theorem for mean curvature flows.  It suffices to obtain the area bound on bounded subintervals $I_l := [-\lambda_l^2T, 1/l] \subset [-\lambda_l^2T, 0)$, as the final argument will be completed by a diagonal sequence argument sending $l$ to infinity.  Let us fix a point $p \in \Sigma$ and some $k_0 >> 0$ sufficiently large.  With these choices of $p$ and $k_0$, then for all $s \in I_{k_0}$ and every $k > k_0$ monotonicity formula gives the estimate
\begin{equation*}
	\int_{\Sigma } \rho \, d\mu_ {g_{ s }}^{ \lambda_k } \leq \int_{ \Sigma } \rho_{ \hat{p}, T } d\mu_{ g_{t_0} } \leq \frac{ \mu_{ g_{t_0} }(\Sigma) }{ (4\pi T)^ \frac{ n }{ 2 } }.
\end{equation*}
We then compute
\begin{align*}
	\int_{F_k^{-1}(B_R)} \, d\mu_{g_s}^{\lambda_k} &= \int_{F_k^{-1}(B_R)} \chi_{B_R} \, d\mu_{g_s}^{\lambda_k} \\
	&\leq \int_{F_k^{-1}(B_R)} \chi_{B_R} e^{ \frac{ R^2 - \abs{y}^2 }{ -4s } }  \, d\mu_{g_s}^{\lambda_k} \\
	&\leq e^{ \frac{ k_0R^2 }{ 4 } } \int_{F_k^{-1}(B_R)} e^{ \frac{- \abs{y}^2 }{ -4s } }  \, d\mu_{g_s}^{\lambda_k} \\
	&\leq e^{ \frac{ k_0R^2 }{ 4 } } (4\pi \lambda_{k_0}^2 T )^{n/2} \int_{ \Sigma } \frac{ 1 }{ (-4\pi s)^{n/2} } e^{ \frac{- \abs{y}^2 }{ -4s } }  \, d\mu_{g_s}^{\lambda_k} \\
	&\leq e^{ \frac{ k_0R^2 }{ 4 } } \lambda_{k_0}^n \mu_{g_{t_0} }(\Sigma),
\end{align*}
and thus
\[ \int_{F_k^{-1}(B_R)} \, d\mu_{g_s}^{\lambda_k} \leq C_R(\Sigma_0, T, I). \]

We can now apply Theorem \ref{thm: compactness for flows} to our sequence of rescaled flows $F_k : \Sigma \times [-\lambda_l^2T, 1/l] \rightarrow \mathbb{R}^{n+k}$ defined by
\[ F_k(p, s) =  \lambda_k \big( F(p, T + s/\lambda_k^2)  - \hat q \big). \]  We highlight that here $\Sigma$ is fixed, and by assumption closed, however $\Sigma_{\infty}$ is complete and not necessarily compact.  The existence of the limit flow $(F_{\infty}, \Sigma_{\infty})$ on the time interval $(-\infty, 0)$ follows by diagonal sequence argument letting $l \rightarrow \infty$.

Another consequence of the monotonicty formula is the following important result, which enables us to pass the limit through the integral in the rescaled heat densities.  The result is due independently to Ilmanen \cite{tI95} and Stone \cite{aS94}, who proved it slightly different contexts. Ilmanen proved it in the setting of Brakke flows, while Stone proved it in the context of Huisken's original continuous rescaling argument.  We recast their proof in our setting.

\begin{prop}\label{p: tightness of measures}
Let $F_k : \Sigma \times [-\lambda_k^2T, 0) \rightarrow \mathbb{R}^{N}$ be a sequence of proper mean curvature flows of a closed manifold $\Sigma$ that subconverges on compact sets of $\mathbb{R}^N \times \mathbb{R}$ to a proper mean curvature flow $F_{\infty} : \Sigma_{\infty} \times (\infty, 0) \rightarrow \mathbb{R}^{N}$, where $\Sigma_{\infty}$ is a complete manifold.  Assume that for all $R > 0$ the initial submanifold satisfies the area bound
\[ \int_{F^{-1}_0(B_R)} \, d\mu_{g_{t_0}} \leq AR^m. \]
Then for any given $\epsilon > 0$ and any fixed point $p \in \Sigma$, there exists a sufficiently large radius radius $R = R(\epsilon, \Sigma_0, s)$ such that for each fixed $s \in [ -\lambda_{k_0}^2 T, 0)$ and all $k > k_0$ we have
\begin{equation*}
	\int_{ \Sigma \setminus F^{-1}_k(B_{R}) } \rho \, d\mu_{g_s}^{(\hat{p}, T), \lambda_k} \leq \epsilon.
\end{equation*}
\end{prop}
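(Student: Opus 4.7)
The plan is to translate the rescaled integral back to an integral on the unrescaled flow, then exploit Huisken's monotonicity formula with a slightly time-shifted backwards heat kernel, which introduces an exponentially small factor that absorbs the restriction to the complement of a large ball. First, by the scaling invariance of the heat density computed just before equation \eqref{eqn: rescaled mono 1}, I would rewrite
\begin{equation*}
\int_{\Sigma \setminus F_k^{-1}(B_R)} \rho\, d\mu_{g_s}^{(\hat p, T),\lambda_k} = \int_{\Sigma \setminus F(\cdot,t)^{-1}(B_{R/\lambda_k}(\hat p))} \rho_{\hat p,T}(F(q,t),t)\, d\mu_{g_t}(q),
\end{equation*}
with $t = T + s/\lambda_k^2$. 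On the domain of integration $|x-\hat p|^2 \geq R^2/\lambda_k^2$, and since $4(T-t) = -4s/\lambda_k^2$, this translates to $|x-\hat p|^2/(4(T-t)) \geq R^2/(-4s)$.

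Next, for any fixed $\alpha \in (0,1)$ I would split the Gaussian weight as
\begin{equation*}
e^{-|x-\hat p|^2/(4(T-t))} = e^{-\alpha|x-\hat p|^2/(4(T-t))}\cdot e^{-(1-\alpha)|x-\hat p|^2/(4(T-t))}
\end{equation*}
and bound the first factor on the tail region by $e^{-\alpha R^2/(-4s)}$. Recognising that the remaining factor is exactly a shifted backwards heat kernel based at $(\hat p, T_\alpha)$ with $T_\alpha := t + (T-t)/(1-\alpha) > T$, this yields the pointwise estimate
\begin{equation*}
\rho_{\hat p,T}(x,t)\,\chi_{|x-\hat p|\geq R/\lambda_k} \leq e^{-\alpha R^2/(-4s)}\,(1-\alpha)^{-n/2}\,\rho_{\hat p,T_\alpha}(x,t).
\end{equation*}
The crucial observation is that $T_\alpha$ lies strictly past the singular time $T$, so Huisken's monotonicity formula with this base point is valid on the entire history $[0,T)$ of the flow and can be pushed back to the initial time.

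I would then apply Huisken's monotonicity with base point $(\hat p,T_\alpha)$ to obtain
\begin{equation*}
\int_\Sigma \rho_{\hat p,T_\alpha}(F(q,t),t)\, d\mu_{g_t} \leq \int_\Sigma \rho_{\hat p,T_\alpha}(x,0)\, d\mu_{g_0}(x),
\end{equation*}
and bound the right-hand side via a layer-cake decomposition combined with the polynomial area growth $\mu_{g_0}(B_r(\hat p)) \leq Ar^m$, giving $\int \rho_{\hat p,T_\alpha}(x,0)\, d\mu_{g_0} \leq A(4T_\alpha)^{m/2}\Gamma(m/2+1)/(4\pi T_\alpha)^{n/2}$. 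For fixed $\alpha$ the shifted time lies in the bounded interval $T_\alpha \in [T,T/(1-\alpha)]$ as $k$ varies, so this constant is uniform in $k > k_0$. Combining the estimates produces $\int_{\Sigma \setminus F_k^{-1}(B_R)} \rho\, d\mu^{\lambda_k}_{g_s} \leq C(\Sigma_0,\alpha)\,e^{-\alpha R^2/(-4s)}$, and for fixed $s$ one chooses $R = R(\epsilon,\Sigma_0,s)$ sufficiently large (e.g.\ with $\alpha = 1/2$) so that the right-hand side is at most $\epsilon$.

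The essential idea, and the only place where real insight is required, is the replacement of $\rho_{\hat p,T}$ by the slightly shifted kernel $\rho_{\hat p,T_\alpha}$. Without this shift, the naive monotonicity bound at the initial time involves $(4\pi(T-t))^{-n/2}$, which blows up as $t \to T$ and hence as $\lambda_k \to \infty$; the shift trades this divergence for the very mild dependence of $T_\alpha$ on $t$, at the cost of only a harmless $(1-\alpha)^{-n/2}$ prefactor. Once the shift is in place, the remainder of the argument is routine estimation.
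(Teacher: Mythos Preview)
Your proof is correct and takes a genuinely different route from the paper's. The paper first invokes a \emph{localised} monotonicity formula (cited from Ilmanen and Ecker but not proved in the paper) together with the initial area bound to obtain a polynomial area bound $\int_{F_k^{-1}(B_R)} d\mu_{g_s}^k \leq C R^m$ at the rescaled time itself, and then performs a dyadic decomposition of $\Sigma \setminus F_k^{-1}(B_R)$ into annuli $F_k^{-1}(B_{R^{j+1}} \setminus B_{R^j})$, bounding the Gaussian by $e^{-R^{2j}/(-4s)}$ on each and summing the resulting series. Your time-shift trick replaces all of this machinery with a single application of the \emph{standard} Huisken monotonicity formula, pushed back to $t=0$; the only price is the harmless prefactor $(1-\alpha)^{-n/2}$ and the mild $k$-dependence of $T_\alpha$, which you correctly control via $T_\alpha \in [T, T/(1-\alpha)]$. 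Your approach is more self-contained and arguably cleaner. One small simplification you might note: since $\Sigma$ is closed, $\mu_{g_0}(\Sigma)$ is finite, so the layer-cake argument is not even needed---the initial integral is bounded outright by $\mu_{g_0}(\Sigma)/(4\pi T)^{n/2}$.
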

\begin{proof}
By localising the mononicity formula (see \cite{tI95} or \cite{kE04}) and using the initial area bound we see for each fixed $s \in [-\lambda_k^2T, 0)$ and all $k > k_0$ that
\[ \int_{F^{-1}_k(B_R)} d\mu_{g_s}^k \leq C(A, m) R^m \]
for every $R > R_0$.  For every $R >  R_0$ and each fixed $s \in [ -\lambda_k^2 T, 0)$ we estimate
\begin{align*}
	\int_{ \Sigma  \setminus F_k^{-1}(B_{R}) } \Phi \, d\mu_{g_s}^{ (\hat{p}, T), \lambda_k} &\leq \frac{ C }{ (-s)^{ \frac{ n }{ 2 } } } \sum_{j=1}^{\infty} \int_{ F_k^{-1} (B_{ R^{j+1} } \setminus B_{R^j} ) } e^{ -R^{2j} / (-4s) } \, d\mu_{g_s}^{ (\hat{p}, T), \lambda_k } \\
	&\leq \frac{ C }{ (-s)^{ \frac{ n }{ 2 } } } \sum_{j=1}^{\infty} R^{ m(j+1) } e^{ -R^{2j}/(-4s) }.
\end{align*}
For each fixed $s \in [ -\lambda_k^2 T, 0)$, the term on the right can be made as small as we like by choosing $R$ sufficiently large, so for any given $\epsilon$ we can fix $R$ sufficiently large so that the desired estimate holds for all $R \geq R_1$.
\end{proof}

The proposition is, by definition, the statement that the family of weighted measures $\rho \, d\mu_{g_s}^{\lambda_k}$ is tight for each fixed $s$.  By Prohorov's Theorem we immediately obtain the following important corollary:
\begin{equation*}
	\lim_{k \rightarrow \infty} \int_{\Sigma } \rho \, d\mu_{g_s}^{\lambda_k} = \int_{ \Sigma_{\infty} } \rho \, d\mu_{g_s}^{\lambda_{\infty} } < \infty.
\end{equation*}
Let us dwell for a second on why this result is important:  The limit manifold $\Sigma_{\infty}$ we obtain from the compactness theorem is complete, and not necessarily compact.  Certainly if $\Sigma_{\infty}$ contains a compact component, then this component is diffeomorphic to $\Sigma$ by definition of the convergence.  However, if $\Sigma_{\infty}$ is only complete, as it often will be, then the integral
\[ \int_{\Sigma_{\infty}} \rho \, d\mu_{g_s}^{\lambda_{\infty}} \]
could very well be infinite.  The fact that the weighted family of measure is tight ensures that the measure `does not escape to infinity' in the limit.  We remark that the $C^1$-convergence of $F_k$ and $g_k$ obtained from the compactness theorem implies that $\mu_k \rightarrow \mu$, that is the pushforward measures converge weak-${*}$ in $\mathbb{R}^{n+k}$.

\section{A partial classification of special type I singularities}
In order to probe the shape of the evolving submanifold as the first  singular time is approached, we want to rescale the monotonicity formula around the singular point $\hat{p}$.  A point $p \in \Sigma$ is called a general singular point if there exists a sequence of points $p_k \rightarrow p$ and times $t_k \rightarrow T$ such that for some constant $\delta > 0$,
\begin{equation*}
	\abs{ h }^2(p_k, t_k) \geq \frac{ \delta }{ T - t_k }.
\end{equation*}
A point $p \in \Sigma$ is called a special singular point if there exists a sequence times $t_k \rightarrow T$ such that for some constant $\delta > 0$,
\begin{equation*}
	\abs{ h }^2(p, t_k) \geq \frac{ \delta }{ T - t_k }.
\end{equation*}
This distinction between singular points is not made in \cite{gH90a}, and the points studied in \cite{gH90a} are actually special singular points (see Defintion 2.1 of \cite{gH90a}).  The analysis to cope with moving points was subsequently contributed by Stone in \cite{aS94}.  We now give a partial classification of special type I singularities in high codimension.

\begin{prop}
Let $\Sigma : \times [0, T) \rightarrow \mathbb{R}^{n+k}$ be a solution of the mean curvature flow.  If the evolving submanifold exhibits a special type 1 singularity as $t \rightarrow T$, then there exists a sequence of rescaled flows $F_k(\Sigma)$ that subconverges to a limit flow $F_{\infty}(\Sigma_{\infty})$ on compact set of $\mathbb{R}^{n+k} \times \mathbb{R}$ as $k \rightarrow \infty$.  Moreover, $F_{\infty} : \Sigma_{\infty} \times (-\infty, 0) \rightarrow \mathbb{R}^{n+k}$ satisfies $ H = - 1/(2s)F^{\bot}$ and is not a plane.
\end{prop}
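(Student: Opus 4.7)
The plan is to construct the limit flow by a standard parabolic rescaling and invoke the compactness theorem for mean curvature flows (Theorem \ref{thm: compactness for flows}), then extract the self-shrinker equation from Huisken's monotonicity formula and rule out the planar case using the lower curvature bound at the special singular point. Let $t_k\to T$ be the sequence of times furnished by the special singularity hypothesis at $p$, set $\lambda_k = 1/\sqrt{2(T-t_k)}$, and define $F_k(q,s) := \lambda_k\bigl(F(q, T + s/\lambda_k^2) - \hat p\bigr)$. Each $F_k$ is a mean curvature flow on $[-\lambda_k^2 T,\,0)$, and a diagonal argument over an exhaustion of $(-\infty,0)$ by bounded subintervals $[s_0,s_1]$ will produce a limit on all of $(-\infty,0)$.

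First I would verify the three hypotheses of the compactness theorem on such a fixed interval $[s_0,s_1]\subset(-\infty,0)$. The type I assumption gives $|h_k|^2(q,s) = |h|^2/\lambda_k^2 \leq -C_0/(2s)$, uniform on $[s_0,s_1]$. For non-disappearance at infinity at initial time $s_0 = -1/2$, take the basepoint $p\in\Sigma$; using $|\partial_tF| = |H| \leq \sqrt{nC_0/(2(T-t))}$ one estimates
\begin{equation*}
|F_k(p,-1/2)| = \lambda_k\,|F(p,t_k)-\hat p| \leq \lambda_k\!\int_{t_k}^{T}\!|H|(p,t)\,dt \leq \sqrt{nC_0},
\end{equation*}
so the basepoint stays in a fixed ball of $\mathbb{R}^{n+k}$. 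The local area bound at $s_0$ comes from the scaling-invariance of Huisken's formula: $\int_\Sigma \rho\,d\mu_{g_s}^{\lambda_k} \leq \theta(p,0)$ for every $s\in[s_0,0)$, combined with the pointwise lower bound $\rho \geq c(R,s_0)>0$ on $B_R(0)\times\{s_0\}$, yields $\mu_{g_{s_0}}^{\lambda_k}(F_k^{-1}(B_R)) \leq C(R,s_0,\Sigma_0,T)$ independently of $k$.

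To derive the shrinker equation, integrate Huisken's formula between $t_1<t_2<T$ to obtain
\begin{equation*}
\int_{t_1}^{t_2}\!\!\int_\Sigma \Bigl|H + \frac{F^\bot}{2(T-t)}\Bigr|^2 \rho\,d\mu\,dt = \theta(t_1) - \theta(t_2) \longrightarrow 0
\end{equation*}
as $t_1,t_2\to T$, since $\Theta(p)$ is finite. Using $(F-\hat p)^\bot = \lambda_k^{-1}F_k^\bot$, $H = \lambda_k H_k$, and $T-t = -s/\lambda_k^2$, a direct computation shows that the rescaled integrand equals $|H_k + F_k^\bot/(-2s)|^2\rho$ and that the total integral is unchanged by rescaling. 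Passing to the limit requires both the $C^\infty_{\text{loc}}$ convergence from the compactness theorem and the tightness of the weighted measures $\rho\,d\mu_k$ furnished by Proposition \ref{p: tightness of measures}; with these, the integral of the limiting squared norm over $\Sigma_\infty\times[s_1,s_2]$ vanishes, and smoothness gives the pointwise shrinker identity on $\Sigma_\infty\times(-\infty,0)$ (up to the sign convention in the statement).

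Finally, that $F_\infty$ is not a plane uses crucially that the singularity is \emph{special}: the fixed point $p$ satisfies $|h|^2(p,t_k) \geq \delta/(T-t_k)$, which rescales to $|h_k|^2(p,-1/2) \geq 2\delta$. Because $p$ is a fixed basepoint whose image $F_k(p,-1/2)$ lies in a bounded subset of $\mathbb{R}^{n+k}$ by the estimate above, the pointed smooth convergence transfers the lower bound to some point $p_\infty\in\Sigma_\infty$ at time $s=-1/2$, so $h_\infty\not\equiv 0$ and $F_\infty$ cannot be a plane. The main obstacle I expect is the passage to the limit inside the monotonicity integral: the convergence is only $C^\infty_{\text{loc}}$ and $\Sigma_\infty$ need not be compact, so ensuring that the mass of $\rho\,d\mu_k$ does not escape to infinity is precisely the content of the tightness result from Proposition \ref{p: tightness of measures}, and this is where the argument is most delicate.
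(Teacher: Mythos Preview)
Your proposal is correct and follows essentially the same approach as the paper: compactness via the type~I bound plus the monotonicity-derived local area bound, the self-shrinker equation from integrating the rescaled monotonicity formula and using tightness (Proposition~\ref{p: tightness of measures}) to pass to the limit, and non-flatness from the lower bound $|h_k|^2(p,-1/2)\geq 2\delta$ at the fixed special singular point. Your explicit verification that the basepoint $F_k(p,-1/2)$ stays in a fixed ball via the integral estimate on $|H|$ is a nice detail that the paper leaves implicit in the preceding section.
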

\begin{proof}
The existence of the limit flow, which exists for $s \in (\infty, 0)$, was shown in preceeding section.  It remains to show the last two assertions of the proposition.  Suppose the special type I singulariy is forming at some point $(\hat{p}, T) \in \mathbb{R}^{n+k} \times \mathbb{R}$, so by definition there exists a sequence times $t_k \rightarrow T$ such that for some constant $\delta > 0$, we have $\abs{ h }^2(p, t_k) \geq \frac{ \delta }{ T - t_k }$.
Rescaling Huisken's monontonicity formula at each scale $\lambda_k = 1/\sqrt{2(T-t_k)}$ about the single fixed point $\hat{p}$ gives
\begin{equation*}
	\frac{ d }{ ds } \int_{ \Sigma } \rho \, d\mu_{g_s}^{\lambda_k} = - \int_{ \Sigma } \rho \Big| H_{ \lambda_k } + \frac{ 1 }{ 2s } F_{ \lambda_k }^{ \bot } \Big| \, d\mu_{g_s}^{ \lambda_k },
\end{equation*}
which holds for all $k$ and $s \in [-\lambda_k^2T, 0)$.  For any fixed $s_0 \in [-\lambda_k^2T, 0)$ and $\sigma > 0$ we integrate this from $s_0 -  \sigma$ to $s_0$ and rearrange a little to get
\begin{equation*}
			 \int_{s_0 - \sigma }^{ s_0 } \int_{ \Sigma } \rho \Big| H_{ \lambda_k } + \frac{ F_{ \lambda_k }^{ \bot } }{ 2s } \Big| \, d\mu_{g_s}^{ \lambda_k } = \int_{ \Sigma } \rho \, d\mu_{g_{ s_0 - \sigma } }^{ \lambda_k } - \int_{ \Sigma } \rho \, d\mu_{g_{ s_0 }}^{ \lambda_k }.
\end{equation*}
We take the limit as $k \rightarrow \infty$, and by equation \eqref{eqn: rescaled mono 1} and Proposition \ref{p: tightness of measures} we have
\begin{equation*}
	\int_{ \Sigma_{\infty} } \rho \, d\mu_{g_{ s_0 - \sigma } }^{ \lambda_{ \infty } } = \lim_{ t \rightarrow T } \int_{\Sigma} \rho_{ ( \hat{p}, T ) } \, d\mu_{g_t} = \int_{ \Sigma_{\infty} } \rho \, d\mu_{g_{s_0} }^{ \lambda_{ \infty } } < \infty.
\end{equation*}
We then conclude, using Proposition \ref{p: tightness of measures} again, that
\begin{align*}
	\lim_{ k \rightarrow \infty } \int_{s_0 - \sigma }^{ s_0 } \int_{ \Sigma } \rho \Big| H_{ \lambda_k } + \frac{ F_{ \lambda_k }^{ \bot } }{ 2s } \Big| \, d\mu_{g_s}^{ \lambda_k } &= \int_{ s_0 - \sigma }^{ s_0 } \int_{ \Sigma_{\infty} } \rho \Big| H_{ \lambda_{ \infty } } +\frac{ 1 }{ 2s } F_{ \lambda_{ \infty } }^{ \bot } \Big| \, d\mu_{g_s}^{ \lambda_{ \infty } } \\
	&= 0,
\end{align*}
and therefore $ H_{ \lambda_{ \infty } } = - 1/(2s) F_{ \lambda_{ \infty } }^{\bot}$ on $s \in [s_0 - \sigma, s_0]$.  Finally, for every scale $\lambda_k$, at the fixed point $p$ at time $s_k = \lambda_k^2( T - t_k ) = -1/2$ the rescaled second fundamental form satifies the lower bound
\begin{align*}
	\abs{ h }_{ \lambda_k }^2(p,s_k) &= \frac{ \abs{ h }^2(p, t_k) }{ \lambda_k^2 } \\
	&\geq 2(T - t_k) \cdot \frac{ \delta }{ T - t_k } \\
	&= 2\delta.
\end{align*}
Thus the the limit flow also satisifies $\abs{ h }_{ \lambda_{ \infty } }^2(p, -1/2 ) \geq 2\delta$ and consequently it is not flat.
\end{proof}

We have just shown that the blow-up limit of a type I singularity is self-similar. In order to give a partial classification of these solutions, in addition to assuming that $\Sigma_0$ satisifes $\abs{ H }_{ \text{min} } > 0$, we also assume it satisfies the pinching condition $\abs{h}^2 \leq 4/(3n) \abs{ H }^2$.  The pinching condition allows us to eventually reduce the problem to that of classifying hypersurfaces of a $\mathbb{R}^{n+1}$.  This classification result was also used in the application of the strong maximum principle in Chapter \ref{ch: The flow of submanifolds of Euclidean space}
and for completeness we give a proof, adopting the proof in \cite{CdCK70} to the case of a flat background.  We mention that this classification first appeared in \cite{bL69}, where different techniques were used.

\begin{prop}\label{prop: covariant const class}
Let $F : M^n \rightarrow \mathbb{R}^{n+1}$ be an immersion of a closed manifold.  If $F(M)$ satisfies $\nabla h = 0$, then $F(M)$ is of the form $\mathbb{S}^{p} \times \mathbb{R}^{n-p}$, where $0 \leq p \leq n$.
\end{prop}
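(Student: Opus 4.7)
My plan is to follow the classical strategy of Lawson (also executed in Chen--do Carmo--Kobayashi for a spherical background; here adapted to the flat case). First I would observe that since $\nabla h \equiv 0$, the symmetric Weingarten endomorphism $\mathcal{W}$ is a parallel $(1,1)$-tensor. Its eigenvalues (the principal curvatures) are therefore locally constant functions on $M$, hence constant by connectedness; call the distinct values $\lambda_1, \ldots, \lambda_r$ with eigendistributions $V_1, \ldots, V_r \subset TM$ of ranks $n_1, \ldots, n_r$ summing to $n$. A short computation using the Weingarten formula and $\nabla \mathcal{W} = 0$ shows that each $V_j$ is a parallel subbundle of $TM$: if $Y \in \Gamma(V_j)$ and $X \in \Gamma(TM)$, then $\mathcal{W}(\nabla_X Y) = \nabla_X (\mathcal{W}Y) = \lambda_j \nabla_X Y$, so $\nabla_X Y \in V_j$.

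Second, I would invoke the de Rham decomposition theorem on the universal cover $\tilde M$ to get an isometric splitting $\tilde M = M_1 \times \cdots \times M_r$ whose tangential factors are precisely the lifts of $V_1, \ldots, V_r$. By Moore's reducibility theorem for isometric immersions into Euclidean space (the ambient connection is flat, and the second fundamental form restricted to any pair of distinct eigenspaces vanishes since $h$ is diagonal in this splitting), the immersion itself splits: up to rigid motion, $\tilde F = F_1 \times \cdots \times F_r$ with $F_j : M_j \to \mathbb{R}^{n_j + c_j}$ into orthogonal affine subspaces of $\mathbb{R}^{n+1}$, and $\sum (n_j + c_j) = n+1$.

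Third, each factor $F_j(M_j)$ is totally umbilic in its own ambient subspace: the restricted second fundamental form is $\lambda_j$ times the metric on $V_j$. So $F_j(M_j)$ is either a round sphere $\mathbb{S}^{n_j}(1/|\lambda_j|)$ (when $\lambda_j \neq 0$, requiring one extra ambient dimension, i.e.\ $c_j = 1$) or an affine subspace $\mathbb{R}^{n_j}$ (when $\lambda_j = 0$, i.e.\ $c_j = 0$). Counting dimensions, $n + 1 = \sum n_j + \sum c_j = n + \#\{j : \lambda_j \neq 0\}$, so exactly one eigenvalue is nonzero. Combining the flat eigenspaces into a single $\mathbb{R}^{n-p}$ factor yields $\tilde F(\tilde M) = \mathbb{S}^{p}(1/|\lambda|) \times \mathbb{R}^{n-p}$. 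Rescaling by $|\lambda|$ if desired, we obtain the asserted product form (and closedness of $M$ in the intended application forces $p = n$).

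The main obstacle will be the ambient splitting step: deducing from the intrinsic de Rham splitting of the induced metric that the immersion itself splits as a product of immersions into orthogonal affine subspaces of $\mathbb{R}^{n+1}$. A fully self-contained proof here requires either invoking Moore's theorem or redoing its argument: one shows that the position vector $F$ decomposes along $V_1 \oplus \cdots \oplus V_r$ by differentiating and using that mixed second fundamental form terms $h(V_i, V_j)$ vanish for $i \neq j$ (a direct consequence of the eigendecomposition of $h$), hence each $\nabla_{V_i}(\pi_{V_j} F)$ is zero for $i \neq j$, giving the flat product structure of the image. Everything else is standard tensor manipulation and dimension counting.
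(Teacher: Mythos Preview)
Your argument is correct and arrives at the same conclusion, but it proceeds quite differently from the paper. The paper works entirely with Cartan's moving frames: after diagonalising $h$ and reading off from the covariant-derivative formula that the principal curvatures are constant and that $\omega^i_j = 0$ whenever $\kappa_i \neq \kappa_j$, it feeds this into the curvature structure equation $d\omega^i_j = -\omega^i_k \wedge \omega^k_j - \omega^i_{n+1} \wedge \omega^{n+1}_j$ to deduce directly that $\kappa_i \kappa_j = 0$ whenever $\kappa_i \neq \kappa_j$. Thus the ``at most one nonzero eigenvalue'' fact drops out of a pointwise algebraic identity, and the product structure then follows from Frobenius applied to the two eigendistributions. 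No de Rham decomposition, no Moore reducibility, no passage to the universal cover.

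Your route---parallel eigendistributions, de Rham splitting of $\tilde M$, Moore's theorem to split the immersion, then dimension counting $\sum c_j = 1$ to force a single spherical factor---is cleaner conceptually and generalises more readily (to higher codimension or to other space forms), at the cost of invoking two nontrivial external results. The paper's approach is more elementary and self-contained, extracting the key constraint $\kappa_i \kappa_j = 0$ from the flat ambient curvature via a two-line computation rather than from codimension bookkeeping after a global splitting. Both are standard; the moving-frames version is closer in spirit to the Chern--do Carmo--Kobayashi reference the paper cites.
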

\begin{proof}
The proof is a very nice application of the method of moving frames and Frobenius' Theorem.  Recall from Chapter \ref{ch: submanifold geometry in high codimension} that the structure equations of $\mathbb{R}^{n+1}$ restricted to the hypersurface $F(M)$ are
\begin{gather}
	d\omega^i = -\omega_j^i \wedge \omega^j \label{e: structure eqn 3} \\
	\omega_j^i = - \omega_i^j \label{e: structure eqn 1} \\
	d\omega_j^i = - \omega_k^i \wedge \omega_j^k - \omega_{n+1}^i \wedge \omega_j^{n+1} \label{e: structure eqn 2},
\end{gather}
and that the first covariant derivative of $h$ is 
\begin{equation}\label{e: covariant const 1}
	h_{ijk}\omega^k = dh_{ij} - h_{il}\omega_j^l - h_{lj}\omega_i^l.
\end{equation}
Choose a local frame $\{ e_1, \ldots, e_n, \nu \}$ for $M$ that diagonalises the second fundamental form. So $h_{ij} = 0$ for ${i \neq j}$.  If the principal curvatures are all zero, in which case $M = \mathbb{R}$, the lemma is clearly true, and so from now on we assume that at least one of the principle curvatures is non-zero.  If $h_{ijk} = 0$, then setting $i = j$ in the above equation and using that $h_{ij} = 0$ for ${i \neq j}$ we get
\begin{equation*}
	0 = dh_{ii} - 2 h_{il}\omega_i^l,
\end{equation*}
and since $h_{il}$ is symmetric in $i$ and $l$, and $\omega_i^l$ is antisymmentric (from \eqref{e: structure eqn 1}), we have $dh_{ii} = 0$, so $h_{ii}$ is constant.  Since $h_{ijk} = 0$, $dh_{ij}$ must be zero, and with these two conditions equation \eqref{e: covariant const 1} becomes
\begin{equation*}
	0 = h_{il}\omega_j^l - h_{lj}\omega_i^l = (h_i - h_j)\omega_j^i,
\end{equation*}
which shows that $\omega_j^i = 0$ whenever $h_{ii} \neq h_{jj}$.  Thus if $h_{ii} \neq h_{jj}$, equation \eqref{e: structure eqn 2} reads
\begin{equation*}
	0 = d\omega_j^i = - \omega_k^i \wedge \omega_j^k - \omega_{n+1}^i \wedge \omega_j^{n+1}.
\end{equation*}
The term $- \omega_k^i \wedge \omega_j^k$ must be zero because $\omega_k^i \neq 0$ and $\omega_j^k \neq 0$ would imply $h_{ii} = h_{jj} = h_{kk}$, which contradicts our assuption that $h_{ii} \neq h_{jj}$.  Therefore,
\begin{align*}
	0 &= -\omega_{n+1}^i \wedge \omega_j^{n+1} \\
	&= h_{ik}h_{jl} \omega^k \wedge \omega^l \\
	&=  h_{ii}h_{jj} \omega^i \wedge \omega^j.
\end{align*}
We conclude that if $h_{ii} \neq h_{jj}$, then either $h_{ii}$ or $h_{jj}$ is zero, but not both.  By reordering the indices of the frame if necessary, for each $0 \leq p \leq n$ and a constant $\kappa \neq 0$ we have now shown
\begin{equation}\label{e: covariant const 2}
	\begin{cases}
		\kappa_i = \ldots \kappa_p = \kappa \\
		\kappa_{p+1} = \kappa_{n} = 0 \\
		\omega_j^i = 0 \text{ for } 1 \leq i \leq p \text{ and } p + 1 \leq j \leq n.
	\end{cases}
\end{equation}
Now define two distributions by $\omega^1 = \ldots = \omega^p = 0$ and $\omega^{p+1} = \ldots = \omega^n = 0$.  Frobenius's Theorem states that a distibution $\omega^k = 0$, $1 \leq k \leq n$, is integrable if and only if $d\omega^k = 0$ for every $k$.  From the structure equation \eqref{e: structure eqn 3} we have $d\omega^i = -\omega^i_j \wedge \omega^j$, and so by the third equation of \eqref{e: covariant const 2} both the distributions just defined are integrable.  We therefore obtain a local decomposition at every point of $M$ given by $\mathbb{S}^{p} \times \mathbb{R}^{n-p}$, where $0 \leq p \leq n$.
\end{proof}
	
Let us now commence with classification in the compact case.

\begin{thm}
Suppose $F_{\infty} : \Sigma_{\infty}^n \times (-\infty, 0) \rightarrow \mathbb{R}^{n+k}$ arises as the blow-up limit of the mean curvature flow $F: \Sigma^n \times [0, T) \rightarrow \mathbb{R}^{n+k}$ about a special singular point.  Additionally, suppose that $\Sigma_0$ satisfies $\abs{ H }_{ \text{min} } > 0$ and $\abs{ h }^2 \leq 4/(3n) \abs{ H }^2$.  If $F_{\infty}(\Sigma_{\infty})$ is compact, then at time $s = -1/2$, $F_{\infty}(\Sigma_{\infty})$ must be a sphere $\mathbb{S}^m(m)$ or one of the cylinders $\mathbb{S}^{m}(m) \times \mathbb{R}^{n-m}$, where $1 \leq m \leq n-1$.
\end{thm}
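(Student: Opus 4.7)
The plan is to combine the improved pinching estimate of Theorem~\ref{t: pinch trace 2ff} with the parabolic rescaling used to produce $F_\infty$. Under the standing hypotheses $\abs{H}_{\text{min}}>0$ and $\abs{h}^2 \leq \tfrac{4}{3n}\abs{H}^2$, that theorem gives $\abs{\ho}^2 \leq C_0 \abs{H}^{2-\delta}$ on the whole flow, with constants $C_0$ and $\delta>0$ depending only on $\Sigma_0$. I will use this non-scale-invariant estimate to force the limit to be totally umbilic, and then apply the classification of compact umbilic submanifolds together with the self-shrinker identity from the preceding proposition to identify the limit as a sphere of the claimed radius.

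Under the rescaling $F_k(\cdot,s) = \lambda_k\bigl(F(\cdot, T+s/\lambda_k^2) - \hat q\bigr)$, the transformation rules from Proposition~\ref{p: rescaling} give $\abs{\ho}_{\lambda_k}^2 = \lambda_k^{-2}\abs{\ho}^2$ and $\abs{H}_{\lambda_k}^{2-\delta} = \lambda_k^{-(2-\delta)} \abs{H}^{2-\delta}$, so Theorem~\ref{t: pinch trace 2ff} rewrites on the rescaled flow as
\[
 \abs{\ho}_{\lambda_k}^2 \;\leq\; C_0\, \lambda_k^{-\delta}\, \abs{H}_{\lambda_k}^{2-\delta}.
\]
The type~I hypothesis yields $\abs{h}_{\lambda_k}^2 \leq -C_0/(2s)$, so on any relatively compact subset of $\Sigma_\infty \times (-\infty,0)$ the quantity $\abs{H}_{\lambda_k}^{2-\delta}$ is uniformly bounded. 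Since $\lambda_k \to \infty$ as $t_k \to T$, sending $k\to\infty$ and using the $C^\infty$-convergence from Theorem~\ref{thm: compactness for flows} forces $\abs{\ho}_\infty \equiv 0$ on $\Sigma_\infty \times (-\infty,0)$; that is, the limit flow is totally umbilic at every time.

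From here the argument concludes quickly. At each fixed $s<0$, a compact totally umbilic immersed $n$-submanifold of $\mathbb{R}^{n+k}$ must, by the Codazzi theorem invoked at the end of Section~\ref{sec: The normalised flow and and convergence to the sphere}, be a round $n$-sphere contained in some $(n+1)$-dimensional affine subspace; the totally geodesic alternative is ruled out by compactness. Inserting this geometry into the shrinker identity $H = -\tfrac{1}{2s}F^\perp$ at $s=-\tfrac12$: for a sphere of radius $R$ centered at some $c \in \mathbb{R}^{n+k}$, both $H$ and the normal component of the position vector are radial, so matching the two expressions at every point forces $c$ to be at the origin and fixes $R$ as a function of $n$ alone, yielding the asserted sphere.

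The principal obstacle is bookkeeping rather than new analysis: one must verify that the hypotheses of Theorem~\ref{t: pinch trace 2ff} persist all the way up to the singular time (using $\abs{H}_{\text{min}}>0$ together with the strict pinching preserved by Lemma~\ref{l: preservation of pinching}); that Theorem~\ref{thm: compactness for flows} applies to the sequence of rescaled flows, for which the uniform second fundamental form bound comes from the type~I assumption and the required local area bound comes from the rescaled monotonicity formula established earlier in this section; and that $\abs{\ho}^2$ and $\abs{H}^2$ pass correctly to the limit, which is immediate from the $C^\infty$-convergence on compact subsets.
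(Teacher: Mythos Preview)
Your argument is correct, but it is genuinely different from the paper's proof of this theorem. The paper does not invoke Theorem~\ref{t: pinch trace 2ff} here at all; instead it works directly on the self-shrinker slice, differentiating the identity $H=-F^\perp$ twice and combining with Simons' identity to derive an elliptic equation for $\abs{h}^2/\abs{H}^2$. Compactness then allows the strong maximum principle, which forces $\abs{\nabla h}=0$ and $\abs{\ho_-}=0$; the codimension-one classification of $\nabla h\equiv 0$ hypersurfaces (Proposition~\ref{prop: covariant const class}) finishes the job.

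Your route---pushing the non-scale-invariant estimate $\abs{\ho}^2\le C_0\abs{H}^{2-\delta}$ through the rescaling and using $\lambda_k\to\infty$---is exactly the argument the paper deploys later in the Hamilton blow-up section, and it is both shorter and sharper: it yields $\abs{\ho}_\infty\equiv 0$ directly, hence total umbilicity, so the limit is a round $n$-sphere and the cylinders listed in the statement cannot actually occur under these hypotheses. Nor do you need compactness of the limit. What the paper's approach buys is that it is self-contained within the self-shrinker framework and mirrors Huisken's original hypersurface argument; it only needs the pinching $\abs{h}^2\le c\abs{H}^2$ pointwise on the limit, not the full Stampacchia machinery behind Theorem~\ref{t: pinch trace 2ff}. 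The paper's weaker conclusion ($\nabla h=0$, $\ho_-=0$) is why cylinders survive in the stated list, even though your argument shows they are in fact excluded.
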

\begin{proof}
Choose a local frame $\{ e_i : 1 \leq i \leq n \}$ for $\Sigma$.  We advise the reader that in the following we are making the indentification $e_j = F_*e_j$.  Take inner product of $ H = - F^{\bot}$ with $H$ and differentiate in the ambient space in the chosen local frame to get
\[ 2 \langle \nabla_j  H, H \rangle = -\langle \nabla_j F, H \rangle - \langle F, \nabla_j H \rangle. \]
Using the Gauss relation and $\nabla_j F = e_j$ we continue to compute
\[ \langle \nablap_j H, H \rangle = \big\langle H, \langle F, e_p \rangle h_{ip} \big\rangle \]
and therefore $\nablap_j H = \langle F, e_p \rangle h_{jp}$.  A further differentiation gives
\begin{align}
	\notag \nabla_i \nabla_j H &= \langle e_i, e_p \rangle h_{jp} + \langle F, h_{ip} \rangle h_{jp} + \langle F, e_p \rangle \nabla_p h_{ij} \\
	\notag &=	\langle e_i, e_p \rangle h_{jp} - \langle H, h_{ip} \rangle h_{jp} + \langle F, e_p \rangle \nabla_p h_{ij} \\
	&= h_{ij} - H \cdot h_{ip} h_{jp} + \langle F, e_p \rangle \nabla_p h_{ij}. \label{e: compact 2}
\end{align}
Contracting \eqref{e: compact 2} with $g_{ij}$ gives
\begin{equation*}
	\Delta H = H - H \cdot h_{ip} h_{ip} + \langle F, e_p \rangle \nabla_p H,
\end{equation*}
and after taking the inner product with $H$ we obtain
\begin{equation}\label{e: compact H2}
	\Delta \abs{H}^2 = 2 \abs{H}^2 - 2 \sum_{ i,j } ( H \cdot h_{ij} )^2 + \langle F, e_p \rangle \nabla_p H \cdot H + 2 \abs{ \nabla H }^2.
\end{equation}
On the other hand, contracting \eqref{e: compact 2} with $g_{ij}$ we get
\begin{equation}\label{e: compact 3}
	h_{ij} \cdot \nabla_i \nabla_j H = \abs{h}^2 - H \cdot h_{ip} h_{ij} \cdot h_{jp} + \langle F, e_p \rangle \nabla_p h_{ij} \cdot h_{ij}.
\end{equation}
Now recall Simons' indentity: $\Delta \abs{h}^2 = 2 h_{ij} \cdot \nabla_i \nabla_j H + 2 \abs{ \nabla h }^2 + 2Z$.  Combining Simons' indentity and \eqref{e: compact 3} gives
\begin{equation}\label{e: compact h2}
	\Delta \abs{ h }^2 = 2 \abs{ h }^2 + 2\langle F, e_p \rangle \nabla_p h_{ij} \cdot h_{ij} + 2 \abs{ \nabla h }^2 - 2 \sum_{ \alpha, \beta } \Big( \sum_{ i, j } h_{ij\alpha} h_{ij\beta} \Big) - \abs{ \Rp }^2.
\end{equation}
Note that the term $H \cdot h_{ip} h_{ij} \cdot h_{jp}$ cancels.  The idea now is to examine the scaling-invariant quantitiy $\abs{ h }^2 / \abs{ H }^2$, and to do so, we first establish $\abs{ H } \neq 0$ in order to perform the division.  The strong elliptic minimum principle applied to equation \eqref{e: compact H2} shows that either $\abs{ H } \equiv 0$ or $\abs{ H } > 0$ everywhere.  Since $F_{\infty}(\Sigma_{\infty})$ is assumed to be compact, it must be that $\abs{ H } > 0$ everywhere.  Using equations \eqref{e: compact H2} and \eqref{e: compact h2} we compute $\Delta( \abs{ h }^2 / \abs{ H }^2 )$ and obtain
\begin{equation}\label{e: compact h2 / H2}
	\begin{split}
		0 &= \Delta \left( \frac{ \abs{ h }^2 }{ \abs{ H }^2 } \right) - \frac{ 2 }{ \abs{ H }^2 } \big( \abs{ \nabla h }^2 - \frac{ \abs{ h }^2 }{ \abs{ H }^2 } \abs{ \nabla H }^2 \big) + \frac{ 2 }{ \abs{ H }^2 } \big( R_1 - \frac{ \abs{ h }^2 }{ \abs{ H }^2 } R_2 \big) \\		&\qquad + \frac{ 2 }{ \abs{ H }^2 } \nabla_i \abs{ H }^2 \nabla_i \left( \frac{ \abs{ h }^2 }{ \abs{ H }^2 } \right) - \langle F, e_i \rangle \nabla_i \left( \frac{ \abs{ h }^2 }{ \abs{ H }^2 } \right).
	 \end{split}
\end{equation}
Since $F_{\infty}(\Sigma_{\infty})$ is assumed to be compact, the function $\abs{ h }^2 / \abs{ H }^2 $ attains a maximum somewhere in $\Sigma$.  At a maximum $\nabla_i ( \abs{ h }^2 / \abs{ H }^2 ) = 0$ and $\Delta ( \abs{ h }^2 / \abs{ H }^2 ) \leq 0$, and so at a maximum we have
\begin{equation*}
	0 = \Delta \left( \frac{ \abs{ h }^2 }{ \abs{ H }^2 } \right) - \frac{ 2 }{ \abs{ H }^2 } \big( \abs{ \nabla h }^2 - \frac{ \abs{ h }^2 }{ \abs{ H }^2 } \abs{ \nabla H }^2 \big) + \frac{ 2 }{ \abs{ H }^2 } \big( R_1 - \frac{ \abs{ h }^2 }{ \abs{ H }^2 } R_2 \big).
\end{equation*}
Moreover, from the basic gradient estimate \eqref{eqn: basic grad est 2} and the Pinching Lemma we can estimate
\begin{equation}
	0 \leq \Delta \left( \frac{ \abs{ h }^2 }{ \abs{ H }^2 } \right) - c_1(n) \abs{ \nabla h }^2 - c_2(n) \abs { \ho_1 }^2\abs{ \ho_- }^2 - c_3(n) \abs{ \ho_- }^4,
\end{equation}
where $c_1$, $c_2$ and $c_3$ are positive constants that depend only on $n$. We conclude from the strong elliptic maximum principle that $\abs{ h }^2 / \abs{ H }^2$ must be equal to a constant and $\abs{ \nabla h }^2 = \abs{ \ho_- }^2 = 0$.  This implies that $F_{\infty}(\Sigma_{\infty})$ is a hypersurface of some $(n+1)$-subspace of $\mathbb{R}^{n+k}$ with covariant constant second fundamental form, and since was assumed to be compact, from Proposition \ref{prop: covariant const class} it must be a $n$-sphere.
\end{proof}
If $F_{\infty}(\Sigma_{\infty})$ is no longer compact then we cannot apply the maximum principle as we have just done.  In this more general case, following \cite{gH90b}, we multiply equation \eqref{e: compact h2 / H2} by $\nabla_i e^{ - \abs{ x }^2 / 2 } $ and integrate by parts.  The following theorem includes the previous one as a special case.
\begin{mthm}
Suppose $F_{\infty} : \Sigma_{\infty}^n \times (-\infty, 0) \rightarrow \mathbb{R}^{n+k}$ arises as the blow-up limit of the mean curvature flow $F: \Sigma^n \times [0, T) \rightarrow \mathbb{R}^{n+k}$ about a special singular point.  If $\Sigma_0$ satisfies $\abs{ H }_{ \text{min} } > 0$ and $\abs{ h }^2 \leq 4/(3n) \abs{ H }^2$, then at time $s=-1/2$, $F_{\infty}(\Sigma_{\infty})$ must be a sphere $\mathbb{S}^m(m)$ or one of the cylinders $\mathbb{S}^{m}(m) \times \mathbb{R}^{n-m}$, where $1 \leq m \leq n-1$.
\end{mthm}
\begin{proof}
We multiply equation \eqref{e: compact h2 / H2} by $\nabla_i e^{ - \abs{ x }^2 / 2 } $ and integrate the term involving the Laplacian by parts to achieve
\begin{equation*}
 	\begin{split}
		0 &= -\int_{ \Sigma_{\infty} } \Big| \nabla \left( \frac{ \abs{ h }^2 }{ \abs{ H }^2 } \right) \Big|^2 e^{ \frac{ - \abs{ x }^2 }{ 2 } } \, d\mu_{g} - 2 \int_{ \Sigma_{\infty} } \frac{ \abs{ h }^2 }{ \abs{ H }^2 } \big( \abs{ \nabla h }^2 - \frac{ \abs{ h }^2 }{ \abs{ H }^2 } \abs{ \nabla H }^2 ) e^{ \frac{ - \abs{ x }^2 }{ 2 } } \, d\mu_{g} \\
		&\qquad + 2 \int_{ \Sigma_{\infty} } \frac{ \abs{ h }^2 }{ \abs{ H }^2 } ( R_1 - \frac{ \abs{ h }^2 }{ \abs{ H }^2 } R_2 ) e^{ \frac{ - \abs{ x }^2 }{ 2 } } \, d\mu_{g}. \end{split}
\end{equation*}
The above equation again implies that $\abs{ h }^2 / \abs{ H }^2$ must be equal to a constant and $\abs{ \nabla h }^2 = \abs{ \ho_- }^2 = 0$ and the theorem follows.

\section{General type I singularities}

As we mentioned in the introduction to this chapter, because the mean curvature flow in high codimension does not preserve embeddedness we are not able to extend Stone's hypersurface argument to high codimension.  Let us explore a little why this is the case.  Stone's result for hypersurfaces is the following:

\begin{prop}\label{prop: Stone}
Let $F : \Sigma^n \times [0, T) \rightarrow \mathbb{R}^{n+1}$ be a solution of the mean curvature flow.  Suppose that $\Sigma_0$ is embedded and satisfies $\abs{ H }_{\min} > 0$.  If the evolving submanifold develops a type I singularity at some point $p \in \Sigma$ as $t \rightarrow T$, then $p$ is a special singular point.
\end{prop}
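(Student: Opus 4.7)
The plan is a proof by contradiction modelled on Stone's argument in \cite{aS94}. Assume $p$ is a general singular point but not a special one: there exist $p_k \to p$, $t_k \to T$ and $\delta > 0$ with $\abs{h}^2(p_k, t_k) \geq \delta/(T-t_k)$, while $\liminf_{t\to T}(T-t)\abs{h}^2(p,t) = 0$. After passing to subsequences I would arrange that both the curvature blow-up at $p_k$ and the bound $\abs{h}^2(p, t_k)(T-t_k) \to 0$ hold along the same sequence $t_k$.

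First I would parabolically rescale about $(\hat p, T)$ at scales $\lambda_k = 1/\sqrt{2(T-t_k)}$ and apply the compactness theorem for mean curvature flows (Theorem \ref{thm: compactness for flows}), whose local area hypothesis is supplied by Huisken's monotonicity formula as in the previous section. This yields a subsequential limit $F_\infty : \Sigma_\infty \times (-\infty, 0) \to \mathbb{R}^{n+1}$, and the monotonicity formula applied exactly as in the preceding proposition shows that $F_\infty$ is self-shrinking, $H_\infty = -F_\infty^\perp/(2s)$.

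Next I would invoke two features specific to codimension one. Firstly, embeddedness of a hypersurface is preserved by MCF, so $F_\infty$ is embedded. Secondly, positivity of $\abs{H}$ is preserved by the scalar maximum principle applied to $\partial_t H = \Delta H + \abs{h}^2 H$, so $H_\infty \geq 0$. Huisken's classification of embedded mean-convex self-shrinking hypersurfaces from \cite{gH90b} then forces $F_\infty(\Sigma_\infty)$ at $s = -1/2$ to be either a hyperplane, a sphere $\sqrt{n}\,\mathbb{S}^n$, or a cylinder $\sqrt{m}\,\mathbb{S}^m \times \mathbb{R}^{n-m}$. The rescaled image of the base-point is uniformly bounded, since
\begin{equation*}
\abs{F_k(p, -1/2)} = \lambda_k \babs{F(p,t_k) - \hat p} \leq \lambda_k \int_{t_k}^T \abs{H}(p,\tau)\,d\tau \leq C\lambda_k\sqrt{T-t_k},
\end{equation*}
by the type I hypothesis, so up to a further subsequence $F_k(p, -1/2) \to q_\infty \in F_\infty(\Sigma_\infty)$. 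The non-specialness of $p$ rescales to $\abs{h_\infty}^2(q_\infty, -1/2) = 0$; since spheres and cylinders have nowhere-vanishing second fundamental form, $F_\infty$ must be a hyperplane through the origin.

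Finally the contradiction comes from the Gaussian density. A hyperplane has Gaussian density $1$, so combining the rescaling identity \eqref{eqn: rescaled mono 1} with the tightness of the weighted measures (Proposition \ref{p: tightness of measures}) gives $\Theta(p) = 1$. By White's local regularity theorem, applied to the smooth integer rectifiable Brakke flow associated to $F$, a point of density one admits a spacetime neighbourhood of $(\hat p, T)$ in which $\abs{h}$ is uniformly bounded. This contradicts $\abs{h}^2(p_k, t_k) \geq \delta/(T-t_k) \to \infty$ with $p_k \to p$, $t_k \to T$. The main obstacle is the third step: both the preservation of embeddedness and Huisken's classification of mean-convex self-shrinkers are genuinely codimension-one results, which is precisely why this argument does not extend to the setting of the rest of the thesis and why the paper restricts to special singular points.
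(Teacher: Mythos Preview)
Your argument is correct, and it coincides with the paper's proof through the point where you establish $\Theta(p)=1$: both rescale about the fixed point $\hat p$, obtain a self-shrinking limit, use preservation of embeddedness together with Huisken's classification (Theorem~\ref{thm: Huisken classification}) to see the limit must be a plane, sphere, or cylinder, and then use the vanishing of $\abs{h_\infty}$ at the limit of the base-point to rule out the non-flat cases. Your bound on $\abs{F_k(p,-1/2)}$ via the type~I estimate is exactly what is needed here, and note that non-specialness actually gives $\lim_{t\to T}(T-t)\abs{h}^2(p,t)=0$, not merely the $\liminf$, so no subsequence juggling is required.

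Where you diverge from the paper is in extracting the contradiction from $\Theta(p)=1$. You invoke White's local regularity theorem to obtain a uniform curvature bound in a spacetime neighbourhood of $(\hat p,T)$, directly contradicting $\abs{h}^2(p_k,t_k)\to\infty$. The paper instead stays within the monotonicity machinery developed in the chapter: from $\Theta(p)=1$, upper-semicontinuity of $\Theta$, and $\Theta\geq 1$, it deduces continuity of $\Theta$ at $p$; Dini's theorem then upgrades the monotone convergence $\theta(\cdot,t)\searrow\Theta(\cdot)$ to uniform convergence near $p$, giving $\theta(p_k,t_k)\to 1$. The paper then rescales about the \emph{moving} points $\hat p_k$, so that the origin carries the curvature lower bound $\abs{h_k}^2\geq 2\delta$; the integrated monotonicity formula together with $\theta(p_k,t_k)\to 1$ forces this second blow-up limit to be simultaneously self-shrinking with density $1$ (hence a plane) and non-flat, which is the contradiction. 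Your route is shorter and perhaps more conceptual, but imports a substantial external theorem; the paper's argument is more self-contained and makes transparent exactly where embeddedness enters, namely in the step that prevents the limit from being a higher-multiplicity plane---precisely the obstruction you identify to extending the result to high codimension.
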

Stone's analysis shows that it is in fact enough to understand special singular points.  We follow closely \cite{aS94}, adapting his proof from the continuous rescaling setting to that of rescaled flows.  Stone's argument requires the classification of special type I singularities for hypersurfaces obtained by Huisken in \cite{gH90a} and \cite{gH90b}:

\begin{thm}\label{thm: Huisken classification}
Let $F_{ \infty} (\Sigma_{\infty}^n) \subset \mathbb{R}^{n+1}$ be a hypersurface that arises as a blow-up limit of the mean curvature flow.  If $\Sigma_0$ is embedded and satisfies $H \geq 0$, then $\Sigma_{ \infty }$ must be a hyperplane, the sphere $\mathbb{S}^m(m)$ or one of the cylinders $\mathbb{S}^{m}(m) \times \mathbb{R}^{n-m}$, where $1 \leq m \leq n-1$.
\end{thm}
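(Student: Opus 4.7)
The plan is to follow the same strategy as in Main Theorem 4, specialising to codimension one, where the pinching hypothesis becomes unnecessary because the terms $|\ho_-|^2$ and $|\Rp|^2$ vanish identically and the Gauss reaction term in the evolution of $|\ho|^2/H^2$ has a favourable sign merely under the assumption $H\geq 0$. First I would check that the condition $H\geq 0$ is preserved by the hypersurface mean curvature flow: the evolution equation $\partial_t H=\Delta H+|h|^2H$ satisfies the scalar maximum principle, so $H\geq 0$ persists on $\Sigma\times[0,T)$, and passes to the blow-up limit by smooth convergence. Combined with the rescaled monotonicity formula and Proposition \ref{p: tightness of measures}, the blow-up limit $F_\infty$ is a properly embedded self-shrinker, and at $s=-1/2$ satisfies the self-similar equation $H=F^\perp$. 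Embeddedness passes to the limit because the mean curvature flow of an embedded hypersurface stays embedded (by Huisken's avoidance principle in codimension one).

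Next I would dichotomise on the sign of $H$ on $\Sigma_\infty$. By the strong elliptic maximum principle applied to the time-independent equation $\Delta H-\langle F,\nabla H\rangle+(|h|^2-1)H=0$ satisfied by self-shrinkers (derived by taking a trace of the computation leading to \eqref{e: compact 2}), either $H\equiv 0$ or $H>0$ everywhere. In the first case, $H=F^\perp$ forces $F^\perp\equiv 0$, so every tangent plane contains the origin; a smooth complete hypersurface with this property is an affine hyperplane through the origin. This disposes of the ``plane'' case cleanly.

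In the case $H>0$ I would study the scale-invariant quantity $|\ho|^2/H^2$, deriving exactly as in \eqref{e: compact h2 / H2} the identity
\begin{equation*}
0=\Delta\!\left(\frac{|\ho|^2}{H^2}\right)-\langle F,e_i\rangle\nabla_i\!\left(\frac{|\ho|^2}{H^2}\right)+\frac{2}{H^2}\nabla_i H^2\,\nabla_i\!\left(\frac{|\ho|^2}{H^2}\right)-\frac{2}{H^2}\!\left(|\nabla h|^2-\frac{|\ho|^2}{H^2}|\nabla H|^2\right),
\end{equation*}
in which the codimension-one reaction terms $R_1-(|\ho|^2/H^2)R_2$ vanish identically because the normal bundle is one-dimensional. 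Since the basic gradient estimate \eqref{eqn: basic grad est 2} forces the last parenthesis to be non-negative, an integration against the Gaussian weight $e^{-|x|^2/2}d\mu_g$ as in the proof of Main Theorem 4 shows (after the weighted divergence theorem absorbs the drift and conformal gradient terms) that $|\nabla h|\equiv 0$ on $\Sigma_\infty$. The codimension-one analogue of Proposition \ref{prop: covariant const class} then forces $\Sigma_\infty=\mathbb{S}^m\times\mathbb{R}^{n-m}$ for some $1\leq m\leq n$, and the self-shrinker equation $H=F^\perp$ fixes the radius of the spherical factor to be $\sqrt{m}$.

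The main obstacle will be justifying the weighted integration by parts in the non-compact case: one has to verify that the Gaussian factor $e^{-|x|^2/2}$ decays fast enough against the polynomial volume growth of $\Sigma_\infty$ (which follows from the uniform local area bound inherited from the monotonicity formula) and against the uniformly bounded geometry of the blow-up, so that boundary terms at infinity vanish and Fubini applies. The subtlety particular to this setting—and the reason embeddedness is needed rather than mere immersion—is that to promote the cylindrical conclusion $\nabla h=0$ to a genuine cylinder $\mathbb{S}^m(\sqrt{m})\times\mathbb{R}^{n-m}$ one must rule out immersed counterexamples such as Abresch--Langer shrinking curves crossed with a Euclidean factor, which have $H\geq 0$ and are self-shrinkers but are never embedded in dimension $\geq 2$; this exclusion is immediate once one knows $F_\infty$ is properly embedded.
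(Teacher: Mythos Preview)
The paper does not prove this theorem; it is cited from \cite{gH90a} and \cite{gH90b} as input to Stone's argument. Your outline follows Huisken's strategy, but there is a real gap where you invoke the basic gradient estimate. That estimate gives $|\nabla h|^2\geq \tfrac{3}{n+2}|\nabla H|^2$, so the parenthesis $|\nabla h|^2-\tfrac{|\ho|^2}{H^2}|\nabla H|^2$ is non-negative only if $|\ho|^2/H^2\leq \tfrac{3}{n+2}$ --- a pinching condition, which is precisely what you do \emph{not} have under $H\geq 0$ alone. In the paper's proof of Main Theorem~4 this step works because of the pinching hypothesis; here $|\ho|^2/H^2$ can be arbitrarily large (take a point where some principal curvature is negative but $H>0$), and the parenthesis can be negative.

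Huisken's repair is to rewrite the identity so that the good term is the manifestly non-negative full square $\tfrac{2}{H^4}|H\nabla_l h_{ij}-h_{ij}\nabla_l H|^2$, at the price of an extra first-order term $\tfrac{2}{H}\langle\nabla H,\nabla f\rangle$. This extra drift is \emph{not} absorbed by integrating against $e^{-|x|^2/2}$ as you suggest; one must multiply by $H^2 e^{-|x|^2/2}$, so that the $\nabla H^2$ produced when integrating $\Delta f$ by parts cancels it exactly. The conclusion is then $H\nabla h=h\nabla H$, i.e.\ $h/H$ is parallel --- which is weaker than $\nabla h=0$, so Proposition~\ref{prop: covariant const class} does not apply directly. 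A further Codazzi argument shows that either $\nabla H\equiv 0$ (hence $\nabla h=0$ and a round cylinder) or exactly one eigenvalue of $h/H$ is nonzero (giving $\Gamma\times\mathbb{R}^{n-1}$ with $\Gamma$ a self-shrinking curve). Only at this last step does embeddedness enter, to exclude the non-circular Abresch--Langer factors; your instinct about where embeddedness is needed is correct, but the route to that point requires the corrections above.
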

Our equivalent theorem for submanifolds is Main Theorem \ref{mthm: main thm 4}.  We also need to know that embeddedness of hypersurfaces is preserved by the mean curvature flow, and that the blow-up limit is also embedded.  A proof of the former follows the next proposition, whilst for embeddedness of the limit we refer the reader to \cite{cM10}.

\begin{proof}[Proof of Proposition \ref{prop: Stone}]
Suppose that $\Sigma_t$ is developing a general type I singularity at some point $(p, T)$.  By definition, there exists a sequence of points $p_k \rightarrow p$ and times $t_k \rightarrow T$ such that for some constant $\delta > 0$,
\begin{equation*}
	\abs{ h }^2(p_k, t_k) \geq \frac{ \delta }{ T - t_k }.
\end{equation*}
As before, we want rescale the monotonicity formula, but now we need to rescale about the moving point $\hat{p}_k$.  Rescaling the monotonicity formula about the moving points $\hat{p}_k$ gives
\begin{equation*}
	\frac{ d }{ ds } \int_{ \Sigma } \rho \, d\mu_{g_s}^{ (\hat{p}_k, T), \lambda_k} = - \int_{ \Sigma } \rho \Big| H_{ \lambda_k } + \frac{ 1 }{ 2s } F_{ \lambda_k }^{ \bot } \Big| \, d\mu_{g_s}^{ (\hat{p}_k, T), \lambda_k },
\end{equation*}
which holds for each $k$ and $s \in [-\lambda_k^2T, 0)$.  For any fixed $s_0 \in [-\lambda_k^2T, 0)$ and $\sigma > 0$ we integrate this from $s_0 -  \sigma$ to $s_0$ and rearrange a little to get
\begin{equation}\label{eqn: Stone's Thm 1}
			 \int_{s_0 - \sigma }^{ s_0 } \int_{ \Sigma } \rho \Big| H_{ \lambda_k } + \frac{ F_{ \lambda_k }^{ \bot } }{ 2s } \Big| \, d\mu_{g_s}^{ (\hat{p}_k, T), \lambda_k } = \int_{ \Sigma } \rho \, d\mu_{g_{ s_0 - \sigma } }^{ (\hat{p}_k, T), \lambda_k } - \int_{ \Sigma } \rho \, d\mu_{g_{ s_0 } }^{ (\hat{p}_k, T), \lambda_k }.
\end{equation}
The difficulty now is that in general, $\lim_{ k \rightarrow \infty } \theta(p_k, t_k) \neq \Theta(p)$.  The proof is now by contradiction.  If $p$ is a general singular point but is not a special singular point, then by definition there exists some function $\epsilon(t)$ with $\epsilon(t) \rightarrow 0$ as $t \rightarrow T$ such that
\begin{equation*}
	\abs{ h }^2(p,t) \leq \frac{ \epsilon(t) }{ 2( T - t ) }
\end{equation*}
for all time $t \in [0, T)$.  This implies that any blow-up about the single fixed point $\hat{p}$ would satisfy $\abs{ h }^2 = 0$.  From Theorem \ref{thm: Huisken classification} we know that a blow-up around a special singular point is one of $n+1$ different hypersurfaces.  Furthermore, the heat density function evaluated on these hypersurfaces takes on $n+1$ distinct values, of which $1$ is the smallest, which corresponds to a unit multiplicity plane.  Full details of these calculations can be found in the Appendix of \cite{aS94}.  Crucially, since $\Sigma_{ \infty }$ is also embedded, it can only be a unit multiplicity plane, and not a plane of higher mulitplicity.  Since $\Theta$ is upper-semicontinuous, it is actually continuous at $p$, and therefore $\Theta = 1$ in a whole neighbourhood of $p$.  Dini's Theorem on the monotone convergence of functions now implies for $k$ sufficiently large, that $\theta(p_k, t_k) \rightarrow \Theta(p)$ uniformly.  This is the point at which the argument breaks down in high codimenion: since embeddedness of the initial submanifold is not preserved, the blow-up limit may be a plane of higher multiplicity, and thus  $\Theta(p)$ could be any integer.  Therefore, we cannot conclude that $\Theta$ is continuous at $p$, and Dini's Theorem is no longer applicable.

We complete Stone's argument: Returning now to equation \eqref{eqn: Stone's Thm 1}, for every fixed $s_0$ and every fixed point $\hat{p}_k$ the monontonicty formula implies
\begin{equation*}
-\int_{ \Sigma } \rho \, d\mu_{g_{ s_0 }}^{ (\hat{p}_k, T), \lambda_k } \leq -\int_{ \Sigma } \rho \, d\mu_{g_{s_0}}^{ (\hat{p}_k, T), \lambda_l }
\end{equation*}
for all $l > k$.  Estimating as such, for all $l > k$ we have
\begin{equation*}
			 \int_{s_0 - \sigma }^{ s_0 } \int_{ \Sigma } \rho \Big| H_{ \lambda_k } + \frac{ F_{ \lambda_k }^{ \bot } }{ 2s } \Big| \, d\mu_{g_s}^{ (\hat{p}_k, T), \lambda_k } \leq \int_{ \Sigma } \rho \, d\mu_{g_{ s_0 - \sigma }}^{ (\hat{p}_k, T), \lambda_k } - \int_{ \Sigma } \rho \, d\mu_{g_{ s_0 }}^{ (\hat{p}_k, T), \lambda_l }.
\end{equation*}
Sending $l \rightarrow \infty$ and using Proposition \ref{p: tightness of measures} we obtain
\begin{equation*}
	\int_{s_0 - \sigma }^{ s_0 } \int_{ \Sigma } \rho \Big| H_{ \lambda_k } + \frac{ F_{ \lambda_k }^{ \bot } }{ 2s } \Big| \, d\mu_{g_s}^{ (\hat{p}_k, T), \lambda_k } \leq \int_{ \Sigma } \rho \, d\mu_{g_{ s_0 - \sigma } }^{ (\hat{p}_k, T), \lambda_k } - \Theta(p_k).
\end{equation*}
By Dini's Theorem, given any $\epsilon > 0$, there exists a $k_0$ such that for all $k > k_0$ we have
\begin{equation*}
	\int_{s_0 - \sigma }^{ s_0 } \int_{ \Sigma } \rho \Big| H_{ \lambda_k } + \frac{ F_{ \lambda_k }^{ \bot } }{ 2s } \Big| \, d\mu_{g_s}^{ (\hat{p}_k, T), \lambda_k } \leq \epsilon
\end{equation*}
and thus
\begin{equation*}
	\lim_{ k \rightarrow \infty } \int_{s_0 - \sigma }^{ s_0 } \int_{ \Sigma } \rho \Big| H_{ \lambda_k } + \frac{ F_{ \lambda_k }^{ \bot } }{ 2s } \Big| \, d\mu_{g_s}^{ (\hat{p}_k, T), \lambda_k } = 0.
\end{equation*}
Using the blow-up procedure of the previous section we obtain a limit flow on $(-\infty, 0)$, and by Proposition \ref{p: tightness of measures} the limit solution satisfies $H_{ \lambda_{ \infty } } = -1/(2s)F_{ \lambda_{ \infty } }$ and is again not flat.  This is a contradicton, since by Proposition \ref{p: tightness of measures},

\begin{equation*}
	\lim_{ k \rightarrow \infty } \theta(p_k, t_k) = \Theta(p) = 1,
\end{equation*}
which implies the limit solution is a plane and hence flat.
\end{proof}

In order to extend Stone's argument to submanifolds, we must conclude that the blow-up limit is a unit multiplicity plane.  As in the case of hypersurfaces, it would be enough to show that the limit is embedded.  In high codimension embeddedness is not in general preserved by the mean curvature flow, and unfortunately for us, a pointwise pinching condition alone does not seem enough to guarantee the preservation of embeddedness.  We give a proof of that the mean curvature flow preserves the embeddedness of hypersurfaces to highlight the problem the high codimension introduces.

\begin{prop}\label{prop: preservation of embeddedness}
Let $\Sigma^n$ be a closed manifold, and $F : \Sigma^n \times [0, T) \rightarrow \mathbb{R}^{n+1}$ a solution of the mean curvature flow.  If $\Sigma_0$ is embedded, then it remains embedded for as long as the flow is defined.
\end{prop}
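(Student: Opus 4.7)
The plan is to rule out self-intersection at a first time $t_0\in(0,T)$ by reducing to a scalar parabolic comparison problem and invoking the strong maximum principle. First I would let $t_0:=\sup\{s\in[0,T): F(\cdot,t)\text{ is an embedding for all }t\in[0,s]\}$. This supremum is strictly positive, since $\Sigma$ is compact and the embeddings form an open subset of $C^1(\Sigma,\mathbb{R}^{n+1})$, while $F(\cdot,t)$ varies smoothly in $t$. Assuming for contradiction that $t_0<T$, compactness of $\Sigma\times\Sigma$ together with the smoothness of $F$ yields distinct points $p,q\in\Sigma$ with $F(p,t_0)=F(q,t_0)=:x_0$.

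Next I would show that the tangent planes $F_{*}(T_p\Sigma)$ and $F_{*}(T_q\Sigma)$ must coincide at $x_0$. If instead they were transverse, the smooth map $\Phi(x,y,t):=F(x,t)-F(y,t)$ from $\big((\Sigma\times\Sigma)\setminus\Delta\big)\times[0,T)$ into $\mathbb{R}^{n+1}$ would be transverse to $0$ at $(p,q,t_0)$, so by the implicit function theorem its zero set would contain a smooth curve through $(p,q,t_0)$ meeting $\{t<t_0\}$; this would provide self-intersections of $F(\cdot,t)$ at times arbitrarily close to but less than $t_0$, contradicting the definition of $t_0$.

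With the tangent planes agreeing, I would choose ambient coordinates so that $x_0=0$ and the common tangent plane is $\mathbb{R}^n\times\{0\}$, reversing the local parametrization of one sheet if necessary so that both unit normals at $t_0$ equal $e_{n+1}$. For $(x,t)$ in a small parabolic neighborhood of $(0,t_0)$ each sheet is then a graph $x_{n+1}=u_i(x,t)$, $i=1,2$, satisfying the graphical mean curvature flow equation $\p_t u_i=\sqrt{1+|Du_i|^2}\,H[u_i]$ with $u_i(0,t_0)=0$ and $Du_i(0,t_0)=0$. Writing $H[u_1]-H[u_2]=\int_0^1 \tfrac{d}{d\tau}H[\tau u_1+(1-\tau)u_2]\,d\tau$ expresses the difference $w:=u_1-u_2$ as a solution of a linear uniformly parabolic equation $\p_t w=a^{ij}(x,t)\p_i\p_j w+b^i(x,t)\p_i w+c(x,t)w$ with smooth coefficients on a parabolic past cylinder $Q^-$ based at $(0,t_0)$.

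Finally I would apply the strong parabolic maximum principle. Embeddedness of $F(\cdot,t)$ for $t<t_0$ means the two local graphs cannot coincide for such $t$, so by continuity $w$ has a definite sign on $Q^-$, say $w\geq 0$. Since $w(0,t_0)=0$ and $w$ solves a linear parabolic equation on $Q^-$, the strong maximum principle forces $w\equiv 0$ throughout $Q^-$, which means the two local sheets of $\Sigma_t$ coincide identically for $t$ slightly less than $t_0$, contradicting injectivity of $F(\cdot,t)$ at pairs near $(p,q)$. Hence $t_0=T$. The main delicacy lies in the third step: writing both sheets as graphs over a common codimension-one axis so that their difference is a single scalar function, to which the scalar strong maximum principle applies. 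This reduction requires codimension one, and this is precisely why the argument does not extend to submanifolds of higher codimension, where the normal space is multidimensional and the natural difference is $\mathbb{R}^k$-valued.
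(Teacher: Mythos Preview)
Your argument is correct and constitutes a genuinely different proof from the one in the paper. The paper works globally with the extrinsic distance function $d^2(x,y,t)=|F(x,t)-F(y,t)|^2$ on $(\Sigma\times\Sigma)\setminus B_\epsilon(\Delta)$: at a spatial minimum the two tangent planes are parallel, and a direct computation of $\partial_t d^2$ together with a carefully chosen second-order operator (including the mixed $\partial_{x^i}\partial_{y^j}$ terms) shows the combination vanishes identically, so the weak maximum principle forces $\min d^2$ to be non-decreasing. Your approach is instead purely local at the first touching point: you argue the tangent planes must coincide (via stability of transverse intersections), write both sheets as graphs over that plane, and apply the strong maximum principle to the scalar difference $w=u_1-u_2$.

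Two small expositional points. First, the zero set of $\Phi$ near $(p,q,t_0)$ is an $n$-dimensional manifold, not a curve; what you actually need is that the time projection $t|_{\Phi^{-1}(0)}$ is a submersion there, which follows because $d\Phi$ restricted to the spatial directions is already onto $\mathbb{R}^{n+1}$. Second, when you invoke the strong maximum principle with a zero-order coefficient $c(x,t)$ of indeterminate sign, it is worth noting explicitly that this is legitimate because you are looking at the vanishing of a non-negative solution (or equivalently one may replace $w$ by $e^{-Kt}w$ for large $K$).

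As for what each approach buys: the paper's computation is quantitative (it shows the minimum separation cannot drop below its initial value) and makes transparent exactly which term fails in higher codimension---the cross-derivative contribution $+4n$ depends on being able to choose the two tangent frames parallel, which need not be possible when the normal bundle has rank $>1$. Your argument is shorter and more geometric, and the obstruction to higher codimension is equally clear in your framework: with a multidimensional normal space the two sheets cannot be written as scalar graphs over a common hyperplane, so there is no scalar $w$ to which the strong maximum principle applies.
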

\begin{proof}
We follow \cite[pg. 25]{cM10} initially, but give an alternate argument to show that the distance squared $d^2$ between two points is non-decreasing in time.  Let $F : \Sigma^n \times [0,T]$ be a closed hypersurface, initially embedded, moving by the mean curvature flow, and suppose for a contradiction that $T$ is the first time at which the hypersurface fails to be embedded.  The set $S$ of pairs of points $(x,y)$, $x \neq y$, such that $F(x,T) = F(y,T)$ is a nonempty closed set disjoint from the diagonal in $\Sigma \times \Sigma$, otherwise $\Sigma_T$ fails to be an immersion at some point of $\Sigma$.  We may therefore remove a small open neighbourhood $B_{ \epsilon }( \Delta )$ from around the diagonal such that $ \overline{B_{ \epsilon }( \Delta ) } \cap S = \emptyset$.  We consider the quantity
\begin{equation*}
	\delta = \inf_{ t \in [0,t] } \inf_{ (p,q) \in \p B_{ \epsilon }(\Delta) } \abs{ F(y,T) -F(x,T) },
\end{equation*}
and note that $\delta$ is positive, since $ \overline{ B_{ \epsilon }( \Delta ) } \cap S = \emptyset$ and $\p B_{ \epsilon }(\Delta)$  is compact.  Next we claim that the square of the minimum of the distance function
\begin{equation*}
	d^2(t) = \min_{ (x,y) \in \Sigma \times \Sigma \setminus B_{ \epsilon }(\Delta) } \abs{ F(y,T) -F(x,T) }^2
\end{equation*}
is bounded below by $\min \{ d^2(0), \delta \} > 0$ on $[0,T]$.  This contradicts the fact that $S$ is nonempty and contained in $\Sigma \times \Sigma \setminus B_{ \epsilon }(\Delta)$.  To this end, if at some time $d^2(t) < \delta$, then this must occur at points not belonging to $\p B_{ \epsilon }(\Delta)$, that is at points $(x,y) \in \Sigma \times \Sigma \setminus B_{ \epsilon }(\Delta)$.  We now want to show that $d^2$ is non-decreasing on $ \Sigma \times \Sigma \setminus B_{ \epsilon }(\Delta) \times [0,T]$, which proves the claim and the theorem.  We compute the first, second and time derivates of $d^2$ in some choice of local coordinates $\{ x^i \}$ near $x$ and $\{ y^i \}$ near $y$.  The first derivatives are
\begin{gather*}
	\p_{ y^j } d^2 = 2 \langle y - x, \p_{y^j} \rangle \\
	\p_{ x^j } d^2 = -2 \langle y - x, \p_{x^j} \rangle;
\end{gather*}
the second derivatives
\begin{gather*}
	\p_{ y^i } \p_{ y^j } d^2 = 2 g_{ij}^y - 2 \langle y - x, h_{ij}^y \nu_y \rangle \\
	\p_{ x^i } \p_{ x^j } d^2 = 2 g_{ij}^x + 2 \langle y - x, h_{ij}^x \nu_x \rangle  \\
	\p_{ x^i } \p_{ y^j } d^2 = -2 \langle \p_{x^i}, \p_{y^j} \rangle;
\end{gather*}
and last of all the time derivative is
\begin{equation*}
	\p_t d^2 = 2 \langle y - x, -H_y \nu_y + H_x \nu_x \rangle.
\end{equation*}
Importantly, observe that at a minimum of the distance function the tangent planes at $x$ and $y$ are parallel to each other.  We may therefore choose local coordinates such that  $\{ x^i \}$ and $\{ y^i \}$ are parallel for each $i$.  We now compute
 \begin{align*}
 	\frac{ \p d^2 }{ \p t } &- \left( g^{ij}_x \frac{ \p^2 d^2 }{ \p x^i \p x^j } + g^{ij}_y \frac{ \p^2 d^2 }{ \p y^i \p y^j } + 2g_x^{ik} g_y^{jl} \langle \p_{ x^k }, \p_{ y^l } \rangle\frac{ \p^2 d^2 }{ \p x^i \p y^j } \right) \\
	&= 2 \langle y - x, -H_y \nu_y + H_x \nu_x \rangle - 2n - 2 \langle y - x, H_x \nu_x \rangle - 2n + 2 \langle y - x, H_y \nu_y \rangle + 4n \\
	&=0.
\end{align*}
We conclude by the maximum principle that $d^2$ is non-decreasing in time.
\end{proof}
In the above proof it was crucial that we were able to choose parallel orthonormal frames at the points $x$ and $y$ : without the good $4n$ contribution from the cross-derivative terms the proof does not work.  In high codimension this is not possible to do in general since the tangent planes could easily be orthogonal to each other at a point of minimum distance, which results in zero contribution from the cross-terms.
  
\section{Hamilton's blow-up procedure}
In the above singularity analysis the type 1 assumption was essential to obtain a smooth blow-up limit.  It is tempting to think that the above blow-up analysis could be simplified by using a Hamilton blow-up argument, in which case even for a type II singularity we could obtain a smooth blow-up limit.  If one performs a type II Hamilton blow-up and uses this in combination with the monotonicity formula in a similar fashion to what we have done above, then one again obtains a smooth limit solution to the mean curvature flow that satisfies $\abs{ H } = - F$.  The subtle problem with doing this is that the point of maximum curvature may not actually lie in this limit.  As an example, blowing-up the grim reaper in such a fashion would in fact result in a cylindrical limit.

Next we want to give a more successful application of a Hamilton blow-up to give a short proof of the limiting spherical shape of the evolving submanifolds considered in Main Theorem 2.  The interested reader may like compare the following with the corresponding argument in the Ricci flow, which can found, for example, in \cite{pT06}.   Here the Codazzi equation performs the same role as the contracted second Bianchi indentity, and the Codazzi Theorem that of Schur's Theorem.  For a proof of the Codazzi Theorem we refer the reader to \cite[Thm. 26]{mS79}. To begin, pick any sequence of times $(t_k)_{k \in \mathbb{N}}$ such that $t_k \rightarrow T$ as $k \rightarrow \infty$.  The Pinching Lemma implies that $\abs{ h }^2$ and $\abs{ H }^2$ have equivalent blow-up rates, so we can in fact rescale by $\abs{ H }^2$.  Then, since $\Sigma$ is assumed to be closed, we can pick a sequence of points $(p_k)_{ k \in \mathbb{N} }$ defined by
\begin{equation*}
	\abs{H}(p_k, t_k)  = \max_{ p \in \Sigma }\abs{ H }( p, t_k ).
\end{equation*}
For notational convenience, set $\lambda_k := \abs{H}(p_k, t_k)$.  We now define a sequence of rescaled and translated flows by
\begin{equation*}
	F_k(q,s) = \lambda_k \big( F(q, t_k + s/\lambda_k^2) - F(p_k,t_k) \big),
\end{equation*}
where for each $k$, $F_k : \Sigma \times [\lambda_k^2T, 0] \rightarrow \mathbb{R}^{n+k}$ is a solution of the mean curvature flow (in the time variable $s$).  The second fundamental form of the rescaled flows is uniformly bounded above independent of $k$ and we can apply the compactness theorem for mean curvature flows to obtain a smooth limit solution of the mean curvature flow $F_{\infty} : \Sigma_{\infty} \times (-\infty, 0] \rightarrow \mathbb{R}^{n+k}$.  Futhermore, at $s=0$ the limit solution satisfies $\abs{ H }^2_{ \lambda_k } = 1$ by construction, so the limit is not flat.  By definition of the rescaling, the second fundamental form rescales as $\abs{ h }_{\lambda_k}^2 = \abs{h}^2/ \lambda_k^2$, and so estimate \eqref{eqn: pinch trace 2ff} of Chapter \ref{ch: The flow of submanifolds of Euclidean space} rescales as
\begin{equation*}
	\abs{ \ho }^2_{ \lambda_k } \leq C_0 \lambda_k^{ -\delta }\abs{ H }^2_{ \lambda_k }.
\end{equation*}
The limit therefore satisfies
\begin{equation}\label{e: blow up 1}
	\abs{ \ho }^2_{ \lambda_{ \infty } } = 0,
\end{equation}
and thus $F_{\infty}(\Sigma_{\infty})$ is a totally umbilic submanifold.  By the Codazzi Theorem, $F_{\infty}(\Sigma_{\infty})$ must be plane or a $n$-sphere lying in a $(n+1)$-dimensional affine subspace of $\mathbb{R}^{n+k}$. We know that $\abs{ H }_{ \lambda_{ \infty } } = 1$, and so $F_{\infty}(\Sigma_{\infty})$ is not a plane.
\end{proof}

\begin{bibdiv}
\begin{biblist}

\bib{AdC94}{article}{
   author={Alencar, Hil{\'a}rio},
   author={do Carmo, Manfredo},
   title={Hypersurfaces with constant mean curvature in spheres},
   journal={Proc. Amer. Math. Soc.},
   volume={120},
   date={1994},
   number={4},
   pages={1223--1229},
}

\bib{AE}{book}{
 author={Amann, Herbert},
   author={Escher, Joachim},
   title={Analysis. II},
   note={Translated from the 1999 German original by Silvio Levy and Matthew
   Cargo},
   publisher={Birkh\"auser Verlag},
   place={Basel},
   date={2008},
   pages={xii+400},
 }

\bib{AmbSon1}{article}{
   author={Ambrosio, Luigi},
   author={Soner, Halil Mete},
   title={Level set approach to mean curvature flow in arbitrary
   codimension},
   journal={J. Differential Geom.},
   volume={43},
   date={1996},
   number={4},
   pages={693--737},
}

\bib{AmbSon2}{article}{
   author={Ambrosio, Luigi},
   author={Soner, Halil Mete},
   title={A measure-theoretic approach to higher codimension mean curvature
   flows},
   note={Dedicated to Ennio De Giorgi},
   journal={Ann. Scuola Norm. Sup. Pisa Cl. Sci. (4)},
   volume={25},
   date={1997},
   number={1-2},
   pages={27--49 (1998)},
}

\bib{A02}{article}{
author={Andrews, B.},
   title={Positively curved surfaces in the three-sphere},
   conference={
      title={},
      address={Beijing},
      date={2002},
   },
   book={
      publisher={Higher Ed. Press},
      place={Beijing},
   },
   date={2002},
   pages={221--230},
}

\bib{BelNov1}{article}{
   author={Bellettini, G.},
   author={Novaga, M.},
   title={A result on motion by mean curvature in arbitrary codimension},
   journal={Differential Geom. Appl.},
   volume={11},
   date={1999},
   number={3},
   pages={205--220},
}

\bib{BW08}{article}{
   author={B{\"o}hm, Christoph},
   author={Wilking, Burkhard},
   title={Manifolds with positive curvature operators are space forms},
   journal={Ann. of Math. (2)},
   volume={167},
   date={2008},
   number={3},
   pages={1079--1097},
}

\bib{Brakke}{book}{
   author={Brakke, Kenneth A.},
   title={The motion of a surface by its mean curvature},
   series={Mathematical Notes},
   volume={20},
   publisher={Princeton University Press},
   place={Princeton, N.J.},
   date={1978},
   pages={i+252},
}

\bib{sB09}{article}{
   author={Brendle, Simon},
   title={A general convergence result for the Ricci flow in higher
   dimensions},
   journal={Duke Math. J.},
   volume={145},
   date={2008},
   number={3},
   pages={585--601},
}
	
\bib{BS09}{article}{
   author={Brendle, Simon},
   author={Schoen, Richard},
   title={Manifolds with $1/4$-pinched curvature are space forms},
   journal={J. Amer. Math. Soc.},
   volume={22},
   date={2009},
   number={1},
   pages={287--307},
}

\bib{Br}{report}{
	author={Breuning, Patrick},
	label={Br}
	title={Immersions with local Lipschitz representation},
	note={PhD Thesis}
	date={2010}
}

\bib{byC93}{article}{
   author={Chen, Bang-Yen},
   title={Some pinching and classification theorems for minimal
   submanifolds},
   journal={Arch. Math. (Basel)},
   volume={60},
   date={1993},
   number={6},
   pages={568--578},
}

\bib{CO73}{article}{
   author={Chen, Bang-Yen},
   author={Okumura, Masafumi},
   title={Scalar curvature, inequality and submanifold},
   journal={Proc. Amer. Math. Soc.},
   volume={38},
   date={1973},
   pages={605--608},
}

\bib{CL01}{article}{
   author={Chen, Jingyi},
   author={Li, Jiayu},
   title={Mean curvature flow of surface in $4$-manifolds},
   journal={Adv. Math.},
   volume={163},
   date={2001},
   number={2},
   pages={287--309},
}

\bib{CL04}{article}{
   author={Chen, Jingyi},
   author={Li, Jiayu},
   title={Singularity of mean curvature flow of Lagrangian submanifolds},
   journal={Invent. Math.},
   volume={156},
   date={2004},
   number={1},
   pages={25--51},
}

\bib{CLN}{book}{
   author={Chow, Bennett},
   author={Lu, Peng},
   author={Ni, Lei},
   title={Hamilton's Ricci flow},
   series={Graduate Studies in Mathematics},
   volume={77},
   publisher={American Mathematical Society},
   place={Providence, RI},
   date={2006},
   pages={xxxvi+608}
 }

\bib{CW}{book}{
author={Chen, Ya-Zhe},
   author={Wu, Lan-Cheng},
   title={Second order elliptic equations and elliptic systems},
   series={Translations of Mathematical Monographs},
   volume={174},
   note={Translated from the 1991 Chinese original by Bei Hu},
   publisher={American Mathematical Society},
   place={Providence, RI},
   date={1998},
   pages={xiv+246},
 }

\bib{CLT02}{article}{
   author={Chen, Jing Yi},
   author={Li, Jia Yu},
   author={Tian, Gang},
   title={Two-dimensional graphs moving by mean curvature flow},
   journal={Acta Math. Sin. (Engl. Ser.)},
   volume={18},
   date={2002},
   number={2},
   pages={209--224},
}
		
\bib{CdCK70}{article}{
   author={Chern, S. S.},
   author={do Carmo, M.},
   author={Kobayashi, S.},
   title={Minimal submanifolds of a sphere with second fundamental form of
   constant length},
   conference={
      title={Functional Analysis and Related Fields (Proc. Conf. for M.
      Stone, Univ. Chicago, Chicago, Ill., 1968)},
   },
   book={
      publisher={Springer},
      place={New York},
   },
   date={1970},
   pages={59--75},
}

\bib{kE04}{book}{
author={Ecker, Klaus},
   title={Regularity theory for mean curvature flow},
   series={Progress in Nonlinear Differential Equations and their
   Applications, 57},
   publisher={Birkh\"auser Boston Inc.},
   place={Boston, MA},
   date={2004},
   pages={xiv+165},
}

\bib{Ev}{book}{
 author={Evans, Lawrence C.},
 label={Ev},
   title={Partial differential equations},
   series={Graduate Studies in Mathematics},
   volume={19},
   publisher={American Mathematical Society},
   place={Providence, RI},
   date={1998},
   pages={xviii+662},
  }
  
\bib{EG}{book}{
 author={Evans, Lawrence C.},
   author={Gariepy, Ronald F.},
   title={Measure theory and fine properties of functions},
   series={Studies in Advanced Mathematics},
   publisher={CRC Press},
   place={Boca Raton, FL},
   date={1992},
   pages={viii+268},
 }

\bib{FL07}{book}{
author={Fonseca, Irene},
   author={Leoni, Giovanni},
   title={Modern methods in the calculus of variations: $L^p$ spaces},
   series={Springer Monographs in Mathematics},
   publisher={Springer},
   place={New York},
   date={2007},
   pages={xiv+599},
  }

\bib{GHL04}{book}{
 author={Gallot, Sylvestre},
   author={Hulin, Dominique},
   author={Lafontaine, Jacques},
   title={Riemannian geometry},
   series={Universitext},
   edition={3},
   publisher={Springer-Verlag},
   place={Berlin},
   date={2004},
   pages={xvi+322},
 }
 
 \bib{Gi}{book}{
  AUTHOR = {Giaquinta, Mariano},
  label={Gi}
     TITLE = {Introduction to regularity theory for nonlinear elliptic
              systems},
    SERIES = {Lectures in Mathematics ETH Z\"urich},
 PUBLISHER = {Birkh\"auser Verlag},
   ADDRESS = {Basel},
      YEAR = {1993},
     PAGES = {viii+131},
    }
 
 \bib{GT83}{book}{
   author={Gilbarg, David},
   author={Trudinger, Neil S.},
   title={Elliptic partial differential equations of second order},
   series={Grundlehren der Mathematischen Wissenschaften [Fundamental
   Principles of Mathematical Sciences]},
   volume={224},
   edition={2},
   publisher={Springer-Verlag},
   place={Berlin},
   date={1983},
   pages={xiii+513},
}

\bib{Gu}{book}{
	author={Gu, Liankun}
	label={Gu}
	title={Parabolic equations of second order}
	publisher={Xiamen University Press}
	date={1995}
}
		
\bib{rH75}{book}{
	label={Ha1}
author={Hamilton, Richard S.},
   title={Harmonic maps of manifolds with boundary},
   series={Lecture Notes in Mathematics, Vol. 471},
   publisher={Springer-Verlag},
   place={Berlin},
   date={1975},
   pages={i+168},
 }

\bib{rH82}{article}{
   author={Hamilton, Richard S.},
   title={Three-manifolds with positive Ricci curvature},
   journal={J. Differential Geom.},
   label={Ha2}
   volume={17},
   date={1982},
   number={2},
   pages={255--306},
}

\bib{rH86}{article}{
   author={Hamilton, Richard S.},
   title={Four-manifolds with positive curvature operator},
   journal={J. Differential Geom.},
   label={Ha3},
   volume={24},
   date={1986},
   number={2},
   pages={153--179},
}

\bib{rH89}{report}{
	author={Hamilton, Richard S.},
	title={Heat equations in geometry},
	label={Ha4},
	date={1989},
	organization={Hawaii},
	note={Lecture notes}
}	

\bib{rH93a}{article}{
   author={Hamilton, Richard S.},
   title={Monotonicity formulas for parabolic flows on manifolds},
   journal={Comm. Anal. Geom.},
   label={Ha5}
   volume={1},
   date={1993},
   number={1},
   pages={127--137},
}

\bib{rH93b}{article}{
	author={Hamilton, Richard S.},
	label={Ha6}
   title={The formation of singularities in the Ricci flow},
   conference={
      title={Surveys in differential geometry, Vol.\ II},
      address={Cambridge, MA},
      date={1993},
   },
   book={
      publisher={Int. Press, Cambridge, MA},
   },
   date={1995},
   pages={7--136}
  }
   
 \bib{rH95}{article}{
 label={Ha7}
 author={Hamilton, Richard S.},
   title={A compactness property for solutions of the Ricci flow},
   journal={Amer. J. Math.},
   volume={117},
   date={1995},
   number={3},
   pages={545--572},
  }
  
\bib{He}{book}{
author={Hebey, Emmanuel},
	label={He}
   title={Nonlinear analysis on manifolds: Sobolev spaces and inequalities},
   series={Courant Lecture Notes in Mathematics},
   volume={5},
   publisher={New York University Courant Institute of Mathematical
   Sciences},
   place={New York},
   date={1999},
   pages={x+309},
  }
  
 \bib{HA}{book}{
 	author={Hopper, Christopher}
	author={Andrews, Ben}
	title={The Ricci flow in Riemannian geometry}
	date={2010}
	note={To appear}
}
	
\bib{gH84}{article}{
   author={Huisken, Gerhard},
   title={Flow by mean curvature of convex surfaces into spheres},
   journal={J. Differential Geom.},
   volume={20},
   label={Hu1}
   date={1984},
   number={1},
   pages={237--266},
}

\bib{gH86}{article}{
	label={Hu2}
	author={Huisken, Gerhard},
   title={Contracting convex hypersurfaces in Riemannian manifolds by their
   mean curvature},
   journal={Invent. Math.},
   volume={84},
   date={1986},
   number={3},
   pages={463--480},
}
	
\bib{gH87}{article}{
   author={Huisken, Gerhard},
   title={Deforming hypersurfaces of the sphere by their mean curvature},
   journal={Math. Z.},
   volume={195},
   label={Hu3}
   date={1987},
   number={2},
   pages={205--219},
}

\bib{gH90a}{article}{
   author={Huisken, Gerhard},
   title={Asymptotic behavior for singularities of the mean curvature flow},
   journal={J. Differential Geom.},
   label={Hu4}
   volume={31},
   date={1990},
   number={1},
   pages={285--299},
}

\bib{gH90b}{article}{
   label={Hu5}
   author={Huisken, Gerhard},
   title={Local and global behaviour of hypersurfaces moving by mean
   curvature},
   conference={
      title={Differential geometry: partial differential equations on
      manifolds (Los Angeles, CA, 1990)},
   },
   book={
      series={Proc. Sympos. Pure Math.},
      volume={54},
      publisher={Amer. Math. Soc.},
      place={Providence, RI},
   },
   
   date={1993},
   pages={175--191},
}

\bib{HK}{article}{
author={Heintze, Ernst},
   author={Karcher, Hermann},
   title={A general comparison theorem with applications to volume estimates
   for submanifolds},
   journal={Ann. Sci. \'Ecole Norm. Sup. (4)},
   volume={11},
   date={1978},
   number={4},
   pages={451--470},
 }
 
 \bib{HP}{article}{
  author={Huisken, Gerhard},
   author={Polden, Alexander},
   title={Geometric evolution equations for hypersurfaces},
   conference={
      title={Calculus of variations and geometric evolution problems
      (Cetraro, 1996)},
   },
   book={
      series={Lecture Notes in Math.},
      volume={1713},
      publisher={Springer},
      place={Berlin},
   },
   date={1999},
   pages={45--84},
 }

\bib{HS99}{article}{
   author={Huisken, Gerhard},
   author={Sinestrari, Carlo},
   title={Mean curvature flow singularities for mean convex surfaces},
   journal={Calc. Var. Partial Differential Equations},
   volume={8},
   date={1999},
   number={1},
   pages={1--14},
}

\bib{tI95}{report}{
	author={Ilmanen, Tom},
	title={Singularities of mean curvature flow of surfaces},
	date={1995},
	note={Preprint},
}

\bib{KS00}{book}{
	author={Kinderlehrer, David},
   author={Stampacchia, Guido},
   title={An introduction to variational inequalities and their
   applications},
   series={Classics in Applied Mathematics},
   volume={31},
   note={Reprint of the 1980 original},
   publisher={Society for Industrial and Applied Mathematics (SIAM)},
   place={Philadelphia, PA},
   date={2000},
   pages={xx+313},
 }
 
 \bib{Kr}{book}{
 author={Krylov, N. V.},
 label={Kr}
   title={Lectures on elliptic and parabolic equations in Sobolev spaces},
   series={Graduate Studies in Mathematics},
   volume={96},
   publisher={American Mathematical Society},
   place={Providence, RI},
   date={2008},
   pages={xviii+357},
 }

\bib{tL}{report}{
	author={Lamm, Tobias},
	title={Biharmonischer W\"{a}rmeflu},
	note={Diploma Thesis}
}

\bib{La}{article}{
  author={Langer, Joel},
  label={La}
   title={A compactness theorem for surfaces with $L_p$-bounded second
   fundamental form},
   journal={Math. Ann.},
   volume={270},
   date={1985},
   number={2},
   pages={223--234},
  }

\bib{bL69}{article}{
label={Law},
   author={Lawson, H. Blaine, Jr},
   title={Local rigidity theorems for minimal hypersurfaces},
   journal={Ann. of Math. (2)},
   volume={89},
   date={1969},
   number={1},
   pages={187--197},
}

\bib{Lee}{book}{
 label={Le}
author={Lee, John M.},
   title={Introduction to smooth manifolds},
   series={Graduate Texts in Mathematics},
   volume={218},
   publisher={Springer-Verlag},
   place={New York},
   date={2003},
   pages={xviii+628},
}

\bib{LiLi92}{article}{
   author={Li, An-Min},
   author={Li, Jimin},
   title={An intrinsic rigidity theorem for minimal submanifolds in a
   sphere},
   journal={Arch. Math. (Basel)},
   volume={58},
   date={1992},
   number={6},
   pages={582--594},
}

\bib{LiLi03}{article}{
   author={Li, Jiayu},
   author={Li, Ye},
   title={Mean curvature flow of graphs in $\Sigma\sb 1\times\Sigma\sb 2$},
   journal={J. Partial Differential Equations},
   volume={16},
   date={2003},
   number={3},
   pages={255--265},
}

\bib{Li.Tian}{article}{
   author={Li, Jiayu},
   author={Tian, Gang},
   title={The blow-up locus of heat flows for harmonic maps},
   journal={Acta Math. Sin. (Engl. Ser.)},
   volume={16},
   date={2000},
   number={1},
   pages={29--62},
}

\bib{cM10}{book}{
	author={Mantegazza, Carlo},
	title={Lecture Notes on Mean Curvature Flow},
	note={To appear},
	date={2010},
}
   
\bib{MS73}{article}{
   author={Michael, J. H.},
   author={Simon, L. M.},
   title={Sobolev and mean-value inequalities on generalized submanifolds of
   $R^{n}$},
   journal={Comm. Pure Appl. Math.},
   volume={26},
   date={1973},
   pages={361--379},
}

\bib{aN07}{article}{
   author={Neves, Andr{\'e}},
   title={Singularities of Lagrangian mean curvature flow: zero-Maslov class
   case},
   journal={Invent. Math.},
   volume={168},
   date={2007},
   number={3},
   pages={449--484},
}

\bib{mO73}{article}{
   author={Okumura, Masafumi},
   title={Submanifolds and a pinching problem on the second fundamental
   tensors},
   journal={Trans. Amer. Math. Soc.},
   volume={178},
   date={1973},
   pages={285--291},
}
	
\bib{mO74}{article}{
   author={Okumura, Masafumi},
   title={Hypersurfaces and a pinching problem on the second fundamental
   tensor},
   journal={Amer. J. Math.},
   volume={96},
   date={1974},
   pages={207--213},
}

\bib{pP06}{book}{
   author={Petersen, Peter},
   title={Riemannian geometry},
   series={Graduate Texts in Mathematics},
   volume={171},
   edition={2},
   publisher={Springer},
   place={New York},
   date={2006},
   pages={xvi+401},
}

\bib{wS94}{article}{
   author={Santos, Walcy},
   title={Submanifolds with parallel mean curvature vector in spheres},
   journal={Tohoku Math. J. (2)},
   label={Sa}
   volume={46},
   date={1994},
   number={3},
   pages={403--415},
}

\bib{Sch}{article}{
 author={Schlag, Wilhelm},
 label={Sch}
   title={Schauder and $L^p$ estimates for parabolic systems via
   Campanato spaces},
   journal={Comm. Partial Differential Equations},
   volume={21},
   date={1996},
   number={7-8},
   pages={1141--1175},
 }

\bib{Sh}{article}{
   author={Sharples, Jason J.},
   label={Sh}
   title={Linear and quasilinear parabolic equations in Sobolev space},
   journal={J. Differential Equations},
   volume={202},
   date={2004},
   number={1},
   pages={111--142},
  }
  
 \bib{Sim1}{article}{
   author={Simon, Leon},
   title={Schauder estimates by scaling},
   label={Sim1}
   journal={Calc. Var. Partial Differential Equations},
   volume={5},
   date={1997},
   number={5},
   pages={391--407},
 }

 \bib{Sim2}{book}{
   author={Simon, Leon},
   title={Theorems on regularity and singularity of energy minimizing maps},
   label={Sim2}
   series={Lectures in Mathematics ETH Z\"urich},
   note={Based on lecture notes by Norbert Hungerb\"uhler},
   publisher={Birkh\"auser Verlag},
   place={Basel},
   date={1996},
 }
 
 \bib{Sim3}{report}{
	author={Simon, Leon},
	   label={Sim3}
	title={Lecture notes on elliptic PDE},
}

\bib{Smoc2}{article}{
   author={Smoczyk, Knut},
   title={Closed Legendre geodesics in Sasaki manifolds},
   journal={New York J. Math.},
    label={Sm1}
   volume={9},
   date={2003},
   pages={23--47 (electronic)},
}

\bib{kS04}{article}{
   author={Smoczyk, Knut},
   title={Longtime existence of the Lagrangian mean curvature flow},
   journal={Calc. Var. Partial Differential Equations},
   volume={20},
   label={Sm2}
   date={2004},
   number={1},
   pages={25--46},
}

\bib{kS05}{article}{
   author={Smoczyk, Knut},
   title={Self-shrinkers of the mean curvature flow in arbitrary
   codimension},
   label={Sm3}
   journal={Int. Math. Res. Not.},
   date={2005},
   number={48},
   pages={2983--3004},
}

\bib{SW02}{article}{
   author={Smoczyk, Knut},
   author={Wang, Mu-Tao},
   title={Mean curvature flows of Lagrangians submanifolds with convex
   potentials},
   journal={J. Differential Geom.},
   volume={62},
   date={2002},
   number={2},
   pages={243--257},
}

\bib{mS79}{book}{
 label={Sp},
author={Spivak, Michael},
   title={A comprehensive introduction to differential geometry. Vol. IV},
   edition={2},
   publisher={Publish or Perish Inc.},
   place={Wilmington, Del.},
   date={1979},
   pages={viii+561}
 }
 
\bib{aS94}{article}{
	author={Stone, Andrew},
   title={A density function and the structure of singularities of the mean
   curvature flow},
   journal={Calc. Var. Partial Differential Equations},
   volume={2},
   label={St}
   date={1994},
   number={4},
   pages={443--480},
  }
  
 \bib{pT06}{book}{
  author={Topping, Peter},
   title={Lectures on the Ricci flow},
   series={London Mathematical Society Lecture Note Series},
   volume={325},
   publisher={Cambridge University Press},
   place={Cambridge},
   date={2006},
   pages={x+113},
 }
 
 \bib{nT86}{article}{
 author={Trudinger, Neil S.},
 label={Tr}
   title={A new approach to the Schauder estimates for linear elliptic
   equations},
   conference={
      title={},
      address={North Ryde},
      date={1986},
   },
   book={
      series={Proc. Centre Math. Anal. Austral. Nat. Univ.},
      volume={14},
      publisher={Austral. Nat. Univ.},
      place={Canberra},
   },
   date={1986},
   pages={52--59},
 }

\bib{TW04}{article}{
   author={Tsui, Mao-Pei},
   author={Wang, Mu-Tao},
   title={Mean curvature flows and isotopy of maps between spheres},
   journal={Comm. Pure Appl. Math.},
   volume={57},
   date={2004},
   number={8},
   pages={1110--1126},

}

\bib{mtW01}{article}{
   author={Wang, Mu-Tao},
   title={Deforming area preserving diffeomorphism of surfaces by mean
   curvature flow},
   journal={Math. Res. Lett.},
   volume={8},
   date={2001},
   number={5-6},
   pages={651--661},
}

\bib{mtW02}{article}{
   author={Wang, Mu-Tao},
   title={Long-time existence and convergence of graphic mean curvature flow
   in arbitrary codimension},
   journal={Invent. Math.},
   volume={148},
   date={2002},
   number={3},
   pages={525--543},
}

\bib{mtW03}{article}{
   author={Wang, Mu-Tao},
   title={Gauss maps of the mean curvature flow},
   journal={Math. Res. Lett.},
   volume={10},
   date={2003},
   number={2-3},
   pages={287--299},
}

\bib{mtW04}{article}{
   author={Wang, Mu-Tao},
   title={The mean curvature flow smoothes Lipschitz submanifolds},
   journal={Comm. Anal. Geom.},
   volume={12},
   date={2004},
   number={3},
   pages={581--599},
}

\bib{mtW04b}{article}{
author={Wang, Mu-Tao},
   title={Mean curvature flows in higher codimension},
   conference={
      title={Second International Congress of Chinese Mathematicians},
   },
   book={
      series={New Stud. Adv. Math.},
      volume={4},
      publisher={Int. Press, Somerville, MA},
   },
   date={2004},
   pages={275--283},
 }

\bib{mtW05}{article}{
   author={Wang, Mu-Tao},
   title={Subsets of Grassmannians preserved by mean curvature flows},
   journal={Comm. Anal. Geom.},
   volume={13},
   date={2005},
   number={5},
   pages={981--998},
}

\bib{xW}{article}{
 author={Wang, Xu Jia},
 label={Wa}
   title={Schauder estimates for solutions to second-order linear parabolic
   equations},
   language={Chinese},
   journal={J. Partial Differential Equations Ser. B},
   volume={1},
   date={1988},
   number={2},
   pages={17--34},
 }

\bib{tW93}{book}{
 author={Willmore, T. J.},
 label={Wi}
   title={Riemannian geometry},
   series={Oxford Science Publications},
   publisher={The Clarendon Press Oxford University Press},
   place={New York},
   date={1993},
  }

\bib{bW05}{article}{
author={White, Brian},
label={Wh}
   title={A local regularity theorem for mean curvature flow},
   journal={Ann. of Math. (2)},
   volume={161},
   date={2005},
   number={3},
   pages={1487--1519},
 }

\end{biblist}
\end{bibdiv}

\end{document}